\documentclass[10pt,twoside]{amsart}

\usepackage{amssymb}
\usepackage{amscd}
\usepackage{amsmath}
\usepackage{amsrefs}
\usepackage{stmaryrd}
\usepackage{xypic}
\usepackage[mathscr]{euscript}
\usepackage{bbold}
\usepackage{cancel}
\usepackage{amsfonts}
\usepackage{hyperref}

\newcommand{\ba}{\mathbb A}
\newcommand{\bq}{\mathbb Q}

\newcommand{\bz}{\mathbb Z}
\newcommand{\br}{\mathbb R}
\newcommand{\bc}{\mathbb C}
\newcommand{\bg}{\mathbb G}

\newcommand{\bn}{\mathbb N}
\newcommand{\bF}{\mathbb F}
\newcommand{\be}{\mathbb E}
\newcommand{\bs}{\mathbb S}

\newcommand{\cd}{\mathcal D}
\newcommand{\co}{\mathcal O}
\newcommand{\ca}{\mathcal A}
\newcommand{\cf}{\mathcal F}
\newcommand{\cm}{\mathcal M}

\newcommand{\fm}{\mathfrak m}

\newcommand{\X}{\mathcal X}

\newcommand{\mydet}{\mathrm{det}}

\DeclareMathOperator{\id}{id}
\DeclareMathOperator{\Hom}{Hom} 
\DeclareMathOperator{\End}{End}

\DeclareMathOperator{\fil}{Fil}
\DeclareMathOperator{\Cone}{Cone} 
\DeclareMathOperator{\fibre}{Fibre}
\DeclareMathOperator{\Tot}{Tot}
\DeclareMathOperator{\Spec}{Spec}
\DeclareMathOperator{\Ind}{Ind}
\DeclareMathOperator{\Gal}{Gal}

\newtheorem{theorem}{Theorem}[subsection]
\newtheorem{prop}{Proposition}[subsection]
\newtheorem{lemma}{Lemma}[subsection]
\newtheorem{corollary}{Corollary}[subsection]
\newtheorem{conjecture}{Conjecture}[subsection]
\newtheorem{definition}{Definition}[subsection]
\newtheorem{example}{Example}[subsection]
\newtheorem{notation}{Notation}[subsection]
\newtheorem{remark}{Remark}[subsection]

\usepackage{relsize} 
\usepackage[bbgreekl]{mathbbol} 
\DeclareSymbolFontAlphabet{\mathbb}{AMSb} 
\DeclareSymbolFontAlphabet{\mathbbl}{bbold} 
\newcommand{\Prism}{{\mathlarger{\mathbbl{\Delta}}}}
\newcommand{\prism}{{\mathlarger{\mathbbl{\Delta}}}}

\DeclareMathOperator{\Mod}{Mod}
\DeclareMathOperator{\CAlg}{CAlg}
\DeclareMathOperator{\Map}{Map}
\DeclareMathOperator{\Fun}{Fun}
\DeclareMathOperator{\Sym}{Sym}
\DeclareMathOperator{\QCoh}{QCoh}
\DeclareMathOperator{\Spf}{Spf}

\DeclareMathOperator{\can}{can}
\DeclareMathOperator{\rel}{rel}
\DeclareMathOperator{\ani}{ani}
\DeclareMathOperator{\syn}{syn}
\DeclareMathOperator{\add}{add}
\DeclareMathOperator{\et}{\acute et}

\newcommand{\csheaf}{\mathscr{C}^{\mathrm{syn}}}
\newcommand{\gsheaf}{\mathscr{G}^{\mathrm{syn}}}
\newcommand{\synlog}{\mathrm{slog}}
\newcommand{\cdf}{\mathcal{DF}}

\begin{document}
\title{The de Rham and the syntomic logarithm}

\author{Matthias Flach}
\author{Achim Krause}
\author{Baptiste Morin}

\begin{abstract} We define and study an integral refinement of the inverse of the Bloch-Kato exponential map which we call the de Rham logarithm. Our main tool to analyze the de Rham logarithm is the syntomic logarithm, a certain limit construction based on the theory of filtered prismatic cohomology initiated by Antieau, Krause and Nikolaus. We use the syntomic logarithm to prove a version of the Beilinson fibre square for all quasicompact, quasiseparated derived formal schemes. We also use our techniques to prove Conjecture $C_{EP}(\bq_p(n))$ of Fontaine and Perrin-Riou for all local fields $K/\bq_p$ and to compute the correction factor $C(X,n)$ introduced by Flach and Morin in their reformulation of the Bloch-Kato Tamagawa number conjecture for the Zeta function of a smooth projective scheme $X$ over a number ring. \end{abstract}
\date{\today}
\maketitle
\tableofcontents

\section{Introduction} In this article we generalize the logarithm map (from the multiplicative to the additive group of a $p$-complete ring) to a map on higher weight syntomic cohomology which we call the de Rham logarithm. The de Rham logarithm is an integral refinement of the inverse of the Bloch-Kato exponential map and will allow us to prove new properties of the latter. Our main tool to analyze the de Rham logarithm is another, more involved construction which we call the syntomic logarithm.  In particular we use the syntomic logarithm to prove a version of the Beilinson fibre square for all quasicompact, quasiseparated derived formal schemes. First, a brief introduction to these topics in historical order.

\subsection{The Bloch-Kato exponential map} Let $p$ be a prime number, $K/\bq_p$ be a finite extension and $V$ a $\bq_p$-representation of the absolute Galois group $G_K$ of $K$. The Bloch-Kato exponential map 
$$ \exp_V: D_{dR}(V)/D^0_{dR}(V)\to H^1(K,V)$$
was invented in \cite{bk88} in order to formulate the seminal "Tamagawa number conjecture" on special values of motivic L-functions. Its definition employs ideas of $p$-adic Hodge theory which were brand new at the time, most notably the "fundamental exact sequence" \cite{bk88}[Prop. 1.17 or Cor. \ref{funseq}]. With the classical logarithm appearing in the analytic class number formula, and the $p$-adic logarithm in its $p$-adic analogue, it seems reasonable that some kind of logarithm might also be needed for motivic cohomology groups $H^1(K,V)$ over local fields in general in order to formulate a special value conjecture (for the exact role that $p$-adic numbers play in describing a real number - the L-value - we must refer to \cite{bk88} and \cite{fpr91}). A reassuring result is that $\exp_V$ does reproduce the classical exponential map if $V$ is the Tate module of a commutative formal group over $\co_K$, e.g. if $V=\bq_p(1)$ \cite{bk88}[Example 3.10.1]. However, validating that $\exp_V$ is the correct map in other cases requires some control over its relation to integral lattices in the source and target. Even the basic case of Tate motives $V=\bq_p(n)$, $n\geq 2$  (where $D_{dR}(V)/D^0_{dR}(V)=K$ and $\exp_V$ is an isomorphism) has remained out of computational reach for general $K$. If $K/\bq_p$ is unramified then Bloch and Kato accomplished this comparison of lattices \cite{bk88}{Thm. 4.2] and thereby verified the Tamagawa number conjecture for the Riemann Zeta function at $s=n\geq 2$ (modulo powers of $2$).

The recent advent of prismatic cohomology \cite{bhatt-scholze, bhatt-lurie-22, akn23} has already revolutionized integral $p$-adic Hodge theory in a very short period of time. In this article we show that prismatic cohomology also gives a new construction of the inverse of the Bloch-Kato exponential map (for crystalline representations of $G_K$ arising from smooth, proper formal schemes $\mathscr{X}/\co_K$) which provides precise integral information. As a consequence we obtain, for example, the following generalization of \cite{bk88}{Thm. 4.2] to all local fields $K/\bq_p$. This was previously only known if $K/\bq_p$ is contained in a cyclotomic extension \cite{pr94}, or if $e(K/\bq_p)<p/4$ and $n=2$ \cite{flach-daigle-16}. In the statement of Thm. \ref{bktheorem} and throughout this article we use the formalism of determinants of perfect complexes \cite{Knudsen-Mumford-76}, \cite{bhatt-scholze-17}[App.].

\begin{theorem} Let $K/\bq_p$ be a finite extension with discriminant $D_K$ and residue field of cardinality $q$. Then for $n\geq 2$
$$\mydet_{\bz_p}\left(\exp_{\bq_p(n)}(\co_K)\right)\cdot (1-q^{-n})^{-1}\cdot (n-1)!^{[K:\bq_p]}\cdot D_K^{n-1}=\mydet_{\bz_p}^{-1}R\Gamma(K,\bz_p(n)) $$
inside $\mydet_{\bq_p}^{-1}R\Gamma(K,\bq_p(n))\simeq\mydet_{\bq_p}H^1(K,\bq_p(n))$.
\label{bktheorem} \end{theorem}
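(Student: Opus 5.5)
The plan is to replace $\exp_{\bq_p(n)}^{-1}$ by the de Rham logarithm, which is an integral map, and then compute determinants integrally via syntomic cohomology of $\co_K$. For $n\geq 2$ we have $H^0(K,\bq_p(n))=0$ and $H^2(K,\bq_p(n))=0$, the latter because $H^0(K,\bq_p(1-n))=0$. Moreover $H^1(K,\bq_p(n))$ has dimension $[K:\bq_p]$, and $\exp$ is an isomorphism $K\to H^1(K,\bq_p(n))$. The identity therefore says the following: the index of the lattice $\exp(\co_K)$ relative to the "lattice" $\mydet^{-1}R\Gamma(K,\bz_p(n))$ equals $(1-q^{-n})\,(n-1)!^{-[K:\bq_p]}\,D_K^{1-n}$. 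Here the torsion in $H^1$ and in $H^2$ (the latter being $\mu$-type torsion $\bz_p(n-1)_{G_K}$-dual) enters the determinant.

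\emph{Step 1.} Use the comparison $R\Gamma(K,\bz_p(n))\simeq \bz_p(n)(\co_K)[1/p]$-type results, i.e.\ syntomic cohomology of $\co_K$. Fibre sequences with étale cohomology of the residue field (purity/localization) give $R\Gamma(K,\bz_p(n))$ up to the factor $R\Gamma(\bF_q,\bz_p(n-1))[-1]$. The determinant of the latter is $\bz_p/(1-q^{n-1})$, which combines with local Tate duality terms to produce the Euler factor $(1-q^{-n})$, up to $p$-adic units and powers of $q$ that are units.

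\emph{Step 2.} Define the de Rham logarithm on syntomic cohomology. Syntomic cohomology $\bz_p(n)(\co_K)$ is the fibre of $\varphi/p^n-\can$ on $\mathcal N^{\geq n}\prism$ (Nygaard-completed), and the map to $\fil^{\geq n}$ and then to $R\Gamma_{dR}(\co_K)/\fil^n$ gives a map $\bz_p(n)(\co_K)\to (\widehat{L\Omega}_{\co_K}/\fil^n)[-1]$. Show that this map is rationally $\exp^{-1}$, via the fundamental exact sequence (Cor.~\ref{funseq}) and the syntomic logarithm as a limit construction compatible with the Bloch--Kato construction. I will then compute $\mydet_{\bz_p}$ of its cone.

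\emph{Step 3.} Compute the determinant of $\widehat{L\Omega}_{\co_K}/\fil^n$ relative to $\co_K$. Derived de Rham cohomology of $\co_K$ has graded pieces $\wedge^i L_{\co_K/\bz_p}[-i]$ divided-power-twisted. Since $L_{\co_K/\bz_p}\simeq \Omega^1=\co_K/\mathfrak D$ with $\mathfrak D$ the different, one gets $\mydet$ contributions $D_K$ from each graded piece $i=1,\dots,n-1$, giving $D_K^{n-1}$. The divided powers (or equivalently the comparison between $\fil^n$ in de Rham and the Nygaard/$\varphi/p^n$ normalization) produce the factor $(n-1)!^{[K:\bq_p]}$.

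\emph{Step 4 (main obstacle).} Show that the cone of the de Rham logarithm, after removing the Step~3 terms, has determinant exactly $(1-q^{-n})$-related, i.e.\ that the fibre of $\varphi/p^n-\can$ contributes only the residue field Euler factor. My approach is the syntomic logarithm. For $n\geq 2$ it should make the map $\varphi/p^n$ topologically nilpotent in a suitable sense on a filtered piece (the Beilinson fibre square), so that its contribution is a unit away from a perfect-field (Witt vector) part. There $1-\varphi$ gives $1-q^{\cdot}$ factors. Controlling the integral (not just rational) nilpotence for ramified $K$ is the crux.
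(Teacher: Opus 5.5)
Your Step~4, which you flag as the main obstacle, is the heart of the theorem, and the proposal does not supply it. Nothing in Steps~1--3 compares the lattice $\mydet_{\bz_p}R\Gamma_{\syn}(\co_K,\bz_p(n))$ with a de Rham lattice in $\mydet_{\bq_p}K$. The heuristic that ``$\varphi/p^n$ is topologically nilpotent on a filtered piece'' would give at best an isomorphism up to unspecified torsion, not an exact determinant identity.

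The missing idea is the paper's integral comparison of syntomic with \emph{additive} syntomic cohomology, on which Thm.~\ref{maintheorem} c) rests. It goes in four steps:
\begin{enumerate}
\item Equip $\co_K$ with its $\varpi$-adic filtration and use filtered prismatic cohomology. Since $c^0\varphi\{n\}$ carries $F^{\geq i}$ into $F^{\geq pi}$, it vanishes on the pieces $F^{[i,pi[}$. Hence syntomic and additive syntomic cohomology have isomorphic graded pieces there (Prop.~\ref{griso}).
\item Pass to the $G$-filtration. On it, multiplication by $p$ lifts to isomorphisms $G^{\geq k}\to G^{\geq k+e}$ for $k\gg 0$ on both sides.
\item Use this periodicity and completeness to glue the graded isomorphisms into an integral isomorphism $G^{\geq k}\synlog^{\rel}$.
\item The complementary pieces $G^{[0,k[}$ also have isomorphic torsion, perfect graded pieces. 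So $\mydet_{\bz_p}R\Gamma_{\syn}(\co_K,\bz_p(n))^{\rel}$ and $\mydet_{\bz_p}R\Gamma_{\add}(\co_K,\bz_p(n))^{\rel}$ correspond exactly.
\end{enumerate}
Only after this does $(n-1)!^{[K:\bq_p]}=C_\infty(\Spf(\co_K),n)$ enter. It is $\chi^\times$ of the fibre of $\gamma$, because the Nygaard graded pieces contain $\fibre(L\widehat\Omega^{i-j}\xrightarrow{j}L\widehat\Omega^{i-j})$: the Sen operator acts by $j$ on $\co_{\mathrm{WCart}^{\mathrm{HT}}}\{j\}$. It does not come from divided powers in $\widehat{\mathrm{dR}}$. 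Your Step~3 correctly gives $\mydet_{\bz_p}\widehat{\mathrm{dR}}^{<n}_{\co_K}=\mydet_{\bz_p}(\co_K)\cdot D_K^{n-1}$, but without this input it is disconnected from the syntomic side.

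Your Steps~1 and~2 also mishandle the Euler factor.

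First, the map in Step~2 is not defined as you describe. The composite $\mathcal N^{\geq n}\prism\{n\}\to\fil^{\geq n}_{Hod}\widehat{\mathrm{dR}}\to\widehat{\mathrm{dR}}^{<n}$ is canonically null. A map into $\widehat{\mathrm{dR}}^{<n}[-1]$ needs a nullhomotopy of $R\Gamma_{\syn}(\co_K,\bz_p(n))\to\widehat{\mathrm{dR}}_{\co_K}$. The fundamental square provides this only on $R\Gamma_{\syn}(\co_K,\bz_p(n))^{\rel}$, via $\beta:\prism_{\co_K/p}\simeq\widehat{\mathrm{dR}}_{\co_K}$. So you must account for the torsion factor $\mydet_{\bz_p}R\Gamma_{\syn}(\co_K/p,\bz_p(n))$, and that is where the Euler factor lives.

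Second, $q$ is a power of $p$, so $(1-q^{-n})^{-1}=q^n(q^n-1)^{-1}\sim q^n$ is not a $p$-adic unit. Your claim that powers of $q$ are units discards exactly this factor. The Gysin term $R\Gamma(\bF_q,\bz_p(n-1))$ is also not meaningful with $p$-adic coefficients in characteristic $p$, and $1-q^{n-1}\in\bz_p^\times$ anyway, so it could never produce $q^n$. For $n\geq 2$ no residue-field correction is needed on the Galois side: \cite{bhatt-mathew}[Thm.~1.8] gives $R\Gamma_{\syn}(\Spf(\co_K),\bz_p(n))\simeq R\Gamma(K,\bz_p(n))$ integrally.

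In the paper, the $\co_K/p$-terms $\mydet^{-1}_{\bz_p}R\Gamma_{\syn}(\co_K/p,\bz_p(n))\cdot\mydet^{-1}_{\bz_p}\widehat{\mathrm{dR}}^{<n}_{\co_K/p}$ collapse to $\mydet^{-1}_{\bz_p}\widehat{\mathrm{dR}}^{<n}_\kappa=q^n\bz_p$. This uses two inputs:
\begin{itemize}
\item Lemma~\ref{mulemma}, another $\varpi$-adic filtration argument, needed because $\co_K/p\neq\kappa$ for ramified $K$;
\item $R\Gamma_{\syn}(\kappa,\bz_p(n))=0$ for $n\geq 1$.
\end{itemize}
So on the residue field $\can-\varphi\{n\}$ is an isomorphism. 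The factor $q^n$ is the index of $\can$ alone, not the ``$1-\varphi$ on Witt vectors'' contribution you anticipate in Step~4.
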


\begin{proof} See Thm. \ref{bktheorem2}.
\end{proof}

\begin{corollary} Conjecture $C_{EP}(\bq_p(n))$ of \cite{pr95}[App. C.2.9]  holds true for all local fields $K/\bq_p$ and all integers $n$. In particular, the Tamagawa number conjecture for the Dedekind Zeta function $\zeta_F(s)$ of any number field $F$ holds true at $s=n\in\bz$ if and only if it holds true at $s=1-n$.
\label{fneqcomp}\end{corollary}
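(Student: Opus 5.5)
Corollary \ref{fneqcomp} will be deduced from Theorem \ref{bktheorem} by matching it against Perrin-Riou's formulation of $C_{EP}(V)$. I expect the only real difficulty to be keeping track of normalizations.

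\textbf{Recalling the conjecture.} $C_{EP}(V)$ says the following. The ``$\epsilon$-isomorphism'' built from the Bloch--Kato exponential (or dual exponential) and the comparison between $D_{dR}(V)$ and $V$ sends a fixed integral lattice structure on $\mydet_{\bq_p}R\Gamma(K,V)\otimes\mydet D_{dR}(V)$ to the integral structure $\mydet_{\bz_p}R\Gamma(K,T)$. The identification involves the Euler-like factors $\mydet(1-\varphi\mid D_{cris})$, the Gamma factors $\Gamma^*(-h)$ over the Hodge--Tate weights $h$, and the discriminant or $\epsilon$-constants. Moreover, $C_{EP}(V)$ is compatible with Tate duality, meaning $C_{EP}(V)\Leftrightarrow C_{EP}(V^*(1))$ (\cite{pr95}, App. C). It therefore suffices to treat $V=\bq_p(n)$ for $n\geq 1$ (via $n\mapsto 1-n$) together with $n=0,1$.

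\textbf{The cases $n\leq 1$.} For $n=1$ the statement reduces to Kummer theory and the classical logarithm on $\co_K^\times$, and is known (Perrin-Riou/Benois, or \cite{bk88} Example 3.10.1). The case $n=0$ follows from it by duality.

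\textbf{The case $n\geq 2$.} Here $D_{dR}/D^0_{dR}=K$, $H^0=H^2=0$ rationally, and $\exp$ is an isomorphism. The conjecture then amounts to an equality of lattices in $\mydet H^1(K,\bq_p(n))$, namely
$$\mydet_{\bz_p}\exp(\co_K)\cdot (1-q^{-n})^{-1}(1-p^{-n}\varphi^{-1}\ldots)\cdot\Gamma^*(n)^{[K:\bq_p]}\cdot(\text{discriminant term}) = \mydet^{-1}_{\bz_p}R\Gamma(K,\bz_p(n)).$$
I will check that the Euler factor $\mydet(1-p^{-n}\varphi^{-1}\mid D_{cris})/\mydet(1-\varphi\mid\cdots)$ reduces to $(1-q^{-n})^{-1}$ up to units. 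I will also check that $\Gamma^*(n)=(n-1)!$, and that the $\epsilon$-factor of the trivial character together with the lattice $\co_K\subset K$ versus a $\bz_p$-basis of $\mathbb{B}_{dR}$-periods gives $D_K^{n-1}$ up to units. These identities are exactly Theorem \ref{bktheorem}.

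\textbf{The Dedekind zeta statement.} The equivalence of the Tamagawa number conjecture for $\zeta_F$ at $s=n$ and at $s=1-n$ follows from the known reduction of this functional-equation compatibility to $C_{EP}$ at all $p$-adic places. This is Burns--Flach's result that compatibility holds given $C_{EP}(\bq_p(n))$ for $K=F_v$, $v\mid p$; at $p=2$ one also uses the local-field version.

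The main obstacle is the bookkeeping of the precise normalization in \cite{pr95} App. C.2.9, in particular signs, $\Gamma^*$ conventions and the epsilon constant. This has to be carried out carefully but involves no new ideas.
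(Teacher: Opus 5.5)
Your argument is essentially the paper's. The paper also deduces the corollary from Theorem~\ref{bktheorem}, but instead of redoing the normalizations it cites the equivalence of $C_{EP}(\bq_p(n))$ with that identity from equation (89) in the proof of \cite{Flach-Morin-16}[Prop.~5.34], whose discriminant exponent had to be corrected (item 3) of Remark~\ref{errata}) --- exactly the kind of slip you anticipate --- and it takes the functional-equation statement from \cite{pr95}[App.~C.3.3], while your duality reduction for $n\leq 1$ makes explicit what the paper leaves implicit.

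If you do redo the bookkeeping, the numerator of the Euler factor must be $\mydet(1-p^{-1}\varphi^{-1}\mid D_{cris}(V))$, not $\mydet(1-p^{-n}\varphi^{-1}\mid D_{cris}(V))$: the latter vanishes for $V=\bq_p(n)$ because $\varphi^f=q^{-n}$ on $D_{cris}(\bq_p(n))$ with $q=p^f$. With the correct numerator the ratio is $(1-q^{n-1})/(1-q^{-n})$, which is a unit multiple of $(1-q^{-n})^{-1}$ for $n\geq 2$.
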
 

\begin{proof} Conjecture $C_{EP}(\bq_p(n))$ was shown to be equivalent to the formula in Thm. \ref{bktheorem} in equation (89) in the proof of \cite{Flach-Morin-16}[Prop. 5.34] (see also Remark \ref{errata} 3)). The fact that Conjecture $C_{EP}$ amounts to compatibility of the Tamagawa number conjecture with the functional equation is \cite{pr95}[App. C.3.3].
\end{proof}
\begin{corollary} The Tamagawa number conjecture for the Dedekind Zeta function $\zeta_F(s)$ holds true in the following cases.
\begin{itemize} \item[a)] $F$ is arbitrary and $n=0,1$.
\item[b)] $F/\bq$ is abelian and $n\in\bz$.
\item[c)] $F$ is totally real and $n<0$ is odd or $n>1$ is even.
\end{itemize}
\end{corollary}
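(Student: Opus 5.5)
This corollary is deduced by combining Corollary \ref{fneqcomp} with the cases of the Tamagawa number conjecture already in the literature. In each case we first establish the conjecture on one side of the functional equation, that is, at $s=n$ or at $s=1-n$. Corollary \ref{fneqcomp} then transfers it to the other side, since it gives the needed compatibility for every number field $F$ and every integer $n$. The conjecture for $\zeta_F$ is a statement at each prime $p$, including $p=2$. For each case below we cite a source that covers all primes, or we note that the missing $p$-part is exactly the local compatibility supplied by Theorem \ref{bktheorem}.

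For a), take $n=0$ and $n=1$. At $s=0$ the Tamagawa number conjecture for $\zeta_F$ is the analytic class number formula, in its leading-term form $\zeta_F^*(0)=-hR/w$. This is classical and holds for all $F$, and the Bloch--Kato/Fontaine--Perrin-Riou formalism at $s=0$ was checked to reduce to it (Bloch--Kato, Flach--Morin). Applying Corollary \ref{fneqcomp} with $n=1$ then gives the case $s=1$, where the residue formula is again the class number formula.

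For b), the case $F/\bq$ abelian is the equivariant theorem of Burns--Greither, extended to $p=2$ by Flach. It was proved at $s=1-n$ for $n\ge 1$, and hence for all $s=m\le 0$, using Iwasawa theory. In those works the passage to $s=n\ge 2$ was made through cyclotomic local computations, namely Perrin-Riou's proof of $C_{EP}$ for abelian fields. That input is now also covered by Corollary \ref{fneqcomp}. Together with a), this gives all $n\in\bz$.

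For c), the case $F$ totally real rests on the main conjecture of Iwasawa theory for totally real fields, due to Wiles, with $p=2$ handled via Ritter--Weiss/Kakde-type results or Dasgupta--Kakde--Ventullo. This main conjecture yields the Tamagawa number conjecture at $s=1-n$ for $n\ge 2$ even, that is, at odd negative integers $1-n<0$ for odd values. Here $\zeta_F(1-n)\neq 0$ and everything is finite: the motivic cohomology is $K$-theory, related to étale cohomology by the Quillen--Lichtenbaum conjecture, now a theorem of Voevodsky--Rost. Corollary \ref{fneqcomp} then transfers this to $s=n$ with $n>1$ even.

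The main obstacle is being careful about which values are known for which $p$, in particular $p=2$ and the sign and orientation conventions. I would state c) exactly on the range where the main conjecture plus Quillen--Lichtenbaum give the $s=1-n$ statement integrally, and then invoke Corollary \ref{fneqcomp} without further computation.
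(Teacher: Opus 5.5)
Your skeleton is the same as the paper's. You cite the known cases, and for c) you transfer from $s=1-n$ to $s=n$ using Corollary~\ref{fneqcomp}. For a) and b) the paper simply cites \cite{flach03,flach06-3}, which already cover both sides of the functional equation. Routing a) through Corollary~\ref{fneqcomp} is harmless but adds nothing: its $n=1$ instance is not a consequence of Theorem~\ref{bktheorem}, which needs $n\geq 2$.

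The gap is the $2$-primary part of c) at $s=n<0$ odd, and the whole case rests on it. None of the results you name supplies it:
\begin{itemize}
\item Ritter--Weiss and Kakde treat odd $p$ and assume $\mu=0$.
\item Dasgupta--Kakde--Ventullo is about the Gross--Stark conjecture, not the $2$-adic input at negative odd integers.
\item Wiles' theorem does not give the $2$-part unconditionally for a general totally real $F$.
\end{itemize}
The paper takes the $p=2$ case from \cite{dasgupta}. For odd $p$ it cites \cite{johnston-nickel}[Thm. 12.4], which gives the $p$-part at $s=n<0$ odd directly. Your route via Wiles' main conjecture is an acceptable substitute for odd $p$, provided you carry out the reduction to the Fontaine--Perrin-Riou form.

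Your proposed fallback, stating c) only on the range where your inputs work integrally, would prove strictly less than the corollary claims. The corollary asserts the conjecture at every prime, including $p=2$. Once the correct $2$-adic input at $s=n<0$ odd is in place, your final step via Corollary~\ref{fneqcomp} to reach $n>1$ even is exactly the paper's.
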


\begin{proof} Cases a) and b) are well known \cite{flach03,flach06-3}. Case c) for $n<0$ is \cite{johnston-nickel}[Thm. 12.4] for the $p$-primary part for odd $p$ and similarly follows for $p=2$ from \cite{dasgupta}. Case c) for $n>1$ then follows from Cor. \ref{fneqcomp}.
\end{proof}

Our analysis of the Bloch-Kato exponential map proceeds in two steps. On the one hand we relate the Bloch-Kato exponential map to the fundamental fibre square, a version of the Beilinson fibre square \cite{beil14, antieau-mmn21}, and on the other hand we give a new, prismatic proof of this fibre square which yields more precise integral information than the proofs of the Beilinson fibre square in \cite{beil14} and \cite{antieau-mmn21}. 

\subsection{The fundamental fibre square} Recall that for a derived $p$-adic formal scheme $\mathscr{X}$ and $n\in\bz$ the {\em syntomic cohomology} is defined by the fibre sequence
\begin{equation} R\Gamma_{\syn}(\mathscr{X},\bz_p(n))\to\mathcal N^{\geq n}\prism_\mathscr{X}\{n\}\xrightarrow{\varphi\{n\}-\can}\prism_\mathscr{X}\{n\}\label{syndef}\end{equation}
where $\mathcal N^{\geq n}=\fil^{\geq n}_{\mathcal N}$ denotes the Nygaard filtration on Breuil-Kisin twisted absolute prismatic cohomology $\prism_\mathscr{X}\{n\}$, $\varphi\{n\}$ denotes the Frobenius and $\can$ the canonical map \cite{bhatt-lurie-22}[Construction 7.4.1]. 
We denote $p$-complete derived de Rham cohomology \cite{bhatt12} of $\mathscr{X}/\bz_p$ by $\widehat{\mathrm{dR}}_{\mathscr{X}}$ and we denote by
$\widehat{\mathrm{dR}}_{\mathscr{X}}^{<n}:=\widehat{\mathrm{dR}}_{\mathscr{X}}/ \fil^{\geq n}_{Hod}\widehat{\mathrm{dR}}_{\mathscr{X}}$
its Hodge truncation. For any contravariant functor $F$ from derived $p$-adic formal schemes to some stable $\infty$-category we set 
$$ F(\mathscr{X})^{\rel}:=\fibre\left(F(\mathscr{X})\to F(\mathscr{X}/p)\right) $$
where $\mathscr{X}/p:=\mathscr{X}\otimes^L\bF_p$.  The following proposition is immediate from the results of \cite{bhatt-lurie-22}.

\begin{prop} (The fundamental square and the de Rham logarithm) For a derived $p$-adic formal scheme $\mathscr{X}$ there is a commutative diagram
\begin{equation}\xymatrix{ 
R\Gamma_{\syn}(\mathscr{X},\bz_p(n)) \ar[rr]\ar[d] & & R\Gamma_{\syn}(\mathscr{X}/p,\bz_p(n))\ar[d] & \\ 
\fil^{\geq n}_{\mathcal N}\prism_\mathscr{X}\{n\}\ar[dd]_{\fil^{\geq n}\gamma^{dR}_{\prism}\{n\}} \ar@{-->}[dr]^\can \ar@{-->}[rr]& &\fil^{\geq n}_{\mathcal N}\prism_{\mathscr{X}/p}\{n\}\ar[dd]^(.3){\beta\circ\can} \ar@{-->}[dr]^\can &\\
 & \prism_\mathscr{X}\{n\}\ar@{-->}[rr] \ar@{-->}[dr]^{\gamma^{dR}_{\prism}\{n\}}& &\prism_{\mathscr{X}/p}\{n\}\ar@{-->}[dl]_\beta\\
\fil^{\geq n}_{Hod}\widehat{\mathrm{dR}}_{\mathscr{X}}\ar[rr]  && \widehat{\mathrm{dR}}_{\mathscr{X}}& 
}\label{beilsquare}\end{equation}
where $\fil^{\geq \bullet}\gamma^{dR}_{\prism}\{n\}$ was defined in \cite{bhatt-lurie-22}[Construction 5.5.3] and the isomorphism $\beta$ in \cite{bhatt-lurie-22}[Thm. 5.4.2]. We call the square of solid arrows the fundamental square. Taking horizontal fibres in (\ref{beilsquare}) we obtain a map
\begin{equation} \log_\mathscr{X}:R\Gamma_{\syn}(\mathscr{X},\bz_p(n))^{\rel} \xrightarrow{} \widehat{\mathrm{dR}}_{\mathscr{X}}^{<n}[-1],\label{comp1}\end{equation}
functorial in $\mathscr{X}$, which we call the de Rham logarithm.
\label{beilsquareprop}\end{prop}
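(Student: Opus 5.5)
The plan is to build the diagram (\ref{beilsquare}) face by face from the functoriality of the constructions in \cite{bhatt-lurie-22}, and then obtain $\log_\mathscr{X}$ by taking horizontal fibres.

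\textbf{Top square.} The top square consists of the map $R\Gamma_{\syn}(\mathscr{X},\bz_p(n))\to\fil^{\geq n}_{\mathcal N}\prism_\mathscr{X}\{n\}$ from (\ref{syndef}) and its analogue for $\mathscr{X}/p$. It commutes because the fibre sequence (\ref{syndef}) is natural in $\mathscr{X}$ and we apply it to $\mathscr{X}/p\to\mathscr{X}$. The dashed horizontal arrows are likewise restriction along $\mathscr{X}/p\to\mathscr{X}$. The dashed faces involving $\can$ commute because $\can$ is a natural transformation of functors on derived formal schemes.

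\textbf{Front face.} This face has vertices $\fil^{\geq n}_{\mathcal N}\prism_\mathscr{X}\{n\}$, $\prism_\mathscr{X}\{n\}$, $\fil^{\geq n}_{Hod}\widehat{\mathrm{dR}}_\mathscr{X}$ and $\widehat{\mathrm{dR}}_\mathscr{X}$. It commutes because $\fil^{\geq\bullet}\gamma^{dR}_\prism\{n\}$ is a map of filtered objects \cite{bhatt-lurie-22}[Construction 5.5.3], compatible with the forgetful maps to underlying objects: Nygaard to $\prism\{n\}$ and Hodge to $\widehat{\mathrm{dR}}$.

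\textbf{Triangle through $\beta$.} This is the step I expect to be the main obstacle: showing that
\[
\prism_\mathscr{X}\{n\}\to\prism_{\mathscr{X}/p}\{n\}\xrightarrow{\beta}\widehat{\mathrm{dR}}_\mathscr{X}
\]
agrees with $\gamma^{dR}_\prism\{n\}$. Since $\beta$ is a \emph{de Rham comparison} for $\mathscr{X}/p$ landing in de Rham cohomology of the lift $\mathscr{X}$, a genuine comparison is needed here. I would try to deduce it from the construction in \cite{bhatt-lurie-22}[Thm. 5.4.2], where $\beta$ is built from the crystalline/de Rham comparison. There $\gamma^{dR}_\prism$ for $\mathscr{X}$ is by definition the composite of pullback to $\mathscr{X}/p$ (or to the crystalline prism over $\bz_p$) with that comparison. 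Naturality of $\beta$ then also handles the right-hand triangle and the vertical arrow $\beta\circ\can$. One should also make sure the Breuil--Kisin twists are trivialised compatibly on both sides, which again follows from the functoriality of the trivialisation over the crystalline prism $(\bz_p,(p))$.

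\textbf{Bottom square.} With all faces commuting, the solid outer bottom square commutes: $\fil^{\geq n}_{Hod}\widehat{\mathrm{dR}}_\mathscr{X}\to\widehat{\mathrm{dR}}_\mathscr{X}$ composed with $\fil^{\geq n}\gamma^{dR}_\prism\{n\}$ equals $\beta\circ\can$ composed with restriction. In the $\infty$-categorical setting one should record the homotopies explicitly, but they are all supplied by naturality, so this gives a commutative diagram in the $\infty$-category of functors.

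\textbf{Definition of $\log_\mathscr{X}$.} Taking horizontal fibres of the composite outer square gives a map
\[
R\Gamma_{\syn}(\mathscr{X},\bz_p(n))^{\rel}\to\fibre\bigl(\fil^{\geq n}_{Hod}\widehat{\mathrm{dR}}_\mathscr{X}\to\widehat{\mathrm{dR}}_\mathscr{X}\bigr)\simeq\widehat{\mathrm{dR}}^{<n}_\mathscr{X}[-1].
\]
This map is functorial in $\mathscr{X}$ because every ingredient is.
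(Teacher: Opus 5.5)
Your decomposition is the one the paper uses (Prop.~\ref{relativebeilsquare}). The top square and all faces involving $\can$ commute by naturality along $i:\mathscr{X}/p\to\mathscr{X}$, and the front face commutes because $\fil^{\geq\bullet}\gamma^{dR}_\prism\{n\}$ is a filtered map. Everything then reduces to the triangle $\gamma^{dR}_\prism\{n\}=\beta\circ\psi$, where $\psi:\prism_\mathscr{X}\{n\}\to\prism_{\mathscr{X}/p}\{n\}$ is restriction.

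The gap is in how you justify this triangle. It does not hold ``by definition''. $\gamma^{dR}_\prism$ is defined as $c_{dR}\circ\mathrm{ev}\circ c_{\square,*}^{-1}$, where $\mathrm{ev}$ evaluates the crystal $\mathscr{H}_\prism(\mathscr{X})$ at the de Rham point $\rho_{dR}$, i.e.\ at the crystalline prism $(\bz_p,(p))$; this definition refers neither to $\mathscr{X}/p$ nor to $\beta$. Naturality does not close the gap either. Naturality of $\gamma^{dR}_\prism$ along $i$ only gives $\gamma^{dR}_{\prism,\mathscr{X}/p}\circ\psi=i^*\circ\gamma^{dR}_{\prism,\mathscr{X}}$, which lands in $\widehat{\mathrm{dR}}_{\mathscr{X}/p}$, not in $\widehat{\mathrm{dR}}_{\mathscr{X}}$. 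The reason is that evaluating the crystal of $\mathscr{X}/p$ at $\rho_{dR}$ computes crystalline cohomology of $(\mathscr{X}/p)\otimes^L\bF_p$, whose structure sheaf is $\co_\mathscr{X}/p\oplus\co_\mathscr{X}/p[1]$, rather than that of $\mathscr{X}/p$.

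The missing idea, which is the proof of Prop.~\ref{betaprop}, is how $\beta$ gets back to $\widehat{\mathrm{dR}}_\mathscr{X}$. One has $\beta=c_{dR}\circ\alpha$, where $\alpha$ compares absolute prismatic cohomology of $\mathscr{X}/p$ with prismatic cohomology of $\mathscr{X}/p$ relative to $(\bz_p,(p))$. Concretely, $\alpha$ is $\mathrm{ev}$ for $\mathscr{X}/p$ followed by the map $\mu$ induced by the multiplication $(\co_\mathscr{X}/p)\otimes^L\bF_p\to\co_\mathscr{X}/p$. Naturality of $\mathrm{ev}$ along $i$ then rewrites $\beta\circ\psi$ as $c_{dR}\circ\mu\circ r\circ\mathrm{ev}\circ c_{\square,*}^{-1}$, where $r$ is induced by $\co_\mathscr{X}/p\to(\co_\mathscr{X}/p)\otimes^L\bF_p$. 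Since the composite $\co_\mathscr{X}/p\to(\co_\mathscr{X}/p)\otimes^L\bF_p\to\co_\mathscr{X}/p$ is the identity, $\mu\circ r=\id$, and the expression reduces to $\gamma^{dR}_\prism\{n\}$.

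Prop.~\ref{betaprop} is stated as the relative generalization of \cite{bhatt-lurie-22}[Thm. 5.4.2]. So in the absolute case you may simply cite that theorem for the triangle, but as a theorem, not as a definition. Once the triangle is in place, the rest of your argument goes through, including the definition of $\log_\mathscr{X}$ via horizontal fibres and its functoriality.
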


\begin{proof}  See Prop. \ref{relativebeilsquare} for a slightly more general version of the fundamental square.
\end{proof}

The following theorem is the main result of this article. 

\begin{theorem}\label{main} Let $\mathscr{X}$ be a quasi-compact, quasi-separated derived formal scheme.
\begin{enumerate}
\item[a)] (The fundamental fibre square). The map
$$\log_\mathscr{X}:R\Gamma_{\syn}(\mathscr{X},\bz_p(n))^{\rel} \to \widehat{\mathrm{dR}}_{\mathscr{X}}^{<n}[-1]$$
is a rational isomorphism, i.e. the fundamental square is rationally Cartesian.
\item[b)] (Small Tate twist) For $0<n<p-1$ the map $\log_\mathscr{X}$ factors through an isomorphism
$$\log_{\mathscr{X}}:R\Gamma_{\syn}(\mathscr{X},\bz_p(n))^{\rel}\simeq\widehat{\mathrm{dR}}_{\mathscr{X}}^{<n,\rel}[-1]\xrightarrow{\iota} \widehat{\mathrm{dR}}_{\mathscr{X}}^{<n}[-1].$$
\item[c)] (Local volume computation) If $\mathscr{X}/\bz_p$ is proper of finite tor-amplitude and $(L_{\mathscr{X}/\bz_p})^\wedge_p\in\cd(\mathscr{X})$ is perfect then the source and target of $\log_\mathscr{X}$ are perfect complexes of $\bz_p$-modules and $\widehat{\mathrm{dR}}_{\mathscr{X}/p}^{<n}$ is $\bz_p$-perfect with finite cohomology. Moreover, there is an identity of $\bz_p$-lines in $\mydet^{-1}_{\bq_p}\widehat{\mathrm{dR}}_{\mathscr{X},\bq}^{<n}$
\begin{align*} &\mydet_{\bq_p}(\log_{\mathscr{X},\bq})\left(\mydet_{\bz_p}R\Gamma_{\syn}(\mathscr{X},\bz_p(n))^{\rel}\right)\\
=&\mydet_{\bq_p}(\iota_\bq)\left(\mydet^{-1}_{\bz_p}\widehat{\mathrm{dR}}_\mathscr{X}^{<n,\rel}\right)\cdot C_\infty(\mathscr{X},n)^{-1}
\notag\end{align*}
where
\begin{equation}\notag
\label{cinftydef}
C_\infty(\mathscr{X},n):=\prod_{ i\leq n-1;\, j}(n-1-i)!^{(-1)^{i+j}\mathrm{dim}_{\bq_p}H^j(\mathscr{X}_{\bq_p},L\Omega^i)}
\end{equation}
was defined in \cite{Flach-Morin-20}[Eq. (2)].
\item[d)] (The Bloch-Kato exponential map) Let $K/\bq_p$ be a finite extension and $\mathscr{X}/\co_K$ a smooth proper formal scheme. Choose a $G_K$-equivariant decomposition 
$$ R\Gamma_{\et}(\mathscr{X}_{\bc_p},\bq_p)\simeq \bigoplus_{i\geq 0}V^i[-i]$$
where $V^i$ is the $G_K$-representation $V^i:=H^i_{\et}(\mathscr{X}_{\bc_p},\bq_p)$.
Then there is a commutative diagram
$$\xymatrix{R\Gamma_{\syn}(\mathscr{X},\bz_p(n))^{\rel}\ar[r] \ar[d]^{\log_{\mathscr{X}}}& R\Gamma_{\syn}(\mathscr{X},\bz_p(n))\ar[r]^{c_{\et}}& \bigoplus_{i}R\Gamma(K,V^i(n))[-i]\\
\widehat{\mathrm{dR}}_{\mathscr{X}}^{<n}[-1]\ar[rr]^{c_{dR}} & &\bigoplus_i D_{dR}(V^i(n))/F^{\geq 0}[-i-1]
\ar[u]^{\bigoplus_i\exp_{V^i(n)}[-i]}}$$
where $c_{\et}$ is the composite of the \'etale comparison map \cite{bhatt-lurie-22}[Thm. 8.3.1]
$$\gamma^{\et}_{\syn}\{n\}:R\Gamma_{\syn}(\mathscr{X},\bz_p(n))\to R\Gamma_{\et}(\mathscr{X}_{\bq_p},\bz_p(n))$$
with the decomposition
$$ R\Gamma_{\et}(\mathscr{X}_{\bq_p},\bq_p(n))\simeq R\Gamma(K, R\Gamma_{\et}(\mathscr{X}_{\bc_p},\bq_p(n)))\simeq R\Gamma(K,\bigoplus_{i\geq 0}V^i(n)[-i])$$
and $c_{dR}$ is induced by the de Rham comparison isomorphism
$$\widehat{\mathrm{dR}}_{\mathscr{X},\bq_p}\simeq\bigoplus_{i\geq 0}H^i_{dR}(\mathscr{X}_{\bq_p}/K)[-i]\simeq \bigoplus_{i\geq 0} D_{dR}(V^i)[-i].$$
\end{enumerate}
\label{maintheorem}\end{theorem}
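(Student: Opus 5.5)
The strategy is to construct the \emph{syntomic logarithm}: a filtered refinement of the de Rham logarithm, built from the filtered prismatic cohomology of Antieau--Krause--Nikolaus. From it we extract explicit integral control of $\log_\mathscr{X}$ in each part of the theorem. The first step is a reduction. Both sides of $\log_\mathscr{X}$ are Zariski (even quasisyntomic) sheaves that commute with the relevant limits. Hence for a), b) and c) we may reduce, via Zariski descent for qcqs $\mathscr{X}$ and left Kan extension along animated rings, to $\mathscr{X}=\Spf R$ with $R$ a $p$-complete polynomial (or quasiregular semiperfectoid) ring. In that case everything is concentrated in degree $0$, and the Nygaard and Hodge filtrations are explicit.

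\textbf{Parts a) and b).} For a) we compare graded pieces. Write $\prism^{\rel}$ for the fibre of $\prism_R\to\prism_{R/p}$, and similarly for the other theories. Using $\beta$ from Prop. \ref{beilsquareprop}, the fibre of the fundamental square is identified with the fibre of
\[
\varphi\{n\}-\can\colon \fil^{\geq n}_{\mathcal N}\prism^{\rel}\to\prism^{\rel},
\]
compared with $\fil^{\geq n}_{Hod}\widehat{\mathrm{dR}}^{\rel}\to\widehat{\mathrm{dR}}^{\rel}$. The key input is that on relative terms (a divided-power thickening $R\to R/p$) the Frobenius $\varphi\{n\}$ is topologically nilpotent up to bounded denominators. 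Concretely, on the $i$-th graded piece of the Nygaard filtration, $\varphi$ is divisible by $p^{n-i}/(n-i)!$-type factors. Therefore $1-\varphi\{n\}$, or more precisely $\can^{-1}\varphi\{n\}$ where $\can$ becomes invertible after inverting $p$, is invertible after inverting $p$. The inverse is given by a geometric series, and this geometric series is exactly the syntomic logarithm. It converges rationally, and integrally whenever $n<p-1$, because the denominators $(n-1-i)!$ are then units. This yields a), and for b) it yields the integral isomorphism with $\widehat{\mathrm{dR}}^{<n,\rel}[-1]$.

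\textbf{Part c).} Here we track determinants through the same graded comparison. The factor contributed by the $i$-th Hodge graded piece $L\Omega^i[-i]$ is the index $(n-1-i)!$ coming from the divided Frobenius. Taking alternating products over $i\le n-1$ and cohomological degree $j$ gives $C_\infty(\mathscr{X},n)$. The perfectness claims follow from properness, finite tor-amplitude, and perfectness of $L_{\mathscr{X}/\bz_p}$, by the finiteness of the Hodge graded pieces. Finiteness of the cohomology of $\widehat{\mathrm{dR}}^{<n}_{\mathscr{X}/p}$ follows since it is $p$-power torsion: it is killed by $p$ after derived reduction.

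\textbf{Part d).} We compare with the Bloch--Kato construction, which uses the fundamental exact sequence $\bq_p(n)\to B_{cris}^{\varphi=p^n}\oplus B_{dR}^+\to B_{dR}$. We pass to $\co_{\bc_p}$ via the prismatic comparison $\prism\to A_{cris}$ and use Galois descent. This identifies the fundamental square, after base change to $\co_{\bc_p}$ and rationalization, with the square defining the fundamental exact sequence tensored with $D_{dR}(V^i)$. The boundary map of that square is $\exp$. The degenerate decomposition is then used to split into the summands $V^i$.

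\textbf{Main obstacle.} I expect the hardest step to be the precise integral divisibility estimate for $\varphi$ on relative Nygaard graded pieces. This estimate is needed to get the exact factorial constants in c), not merely rational invertibility, and the filtered prismatic machinery is required to make it uniform in the filtration.
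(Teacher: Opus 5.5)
\textbf{There is a genuine gap in your argument for a) and c).} The central step is false, and it actually contradicts the statement you are trying to prove. On the relative terms, the map $\can\colon\mathcal N^{\geq n}\prism^{\rel}\{n\}\to\prism^{\rel}\{n\}$ is \emph{not} a rational isomorphism. Its cofibre is $R\Gamma_{\add}(\mathscr X,\bz_p(n))^{\rel}[1]$, which by Lemma \ref{gammaiso} (applied to $\mathscr X$ and $\mathscr X/p$) is rationally $\widehat{\mathrm{dR}}^{<n}_{\mathscr X}$.

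Suppose $\can$ were rationally invertible and $\can^{-1}\varphi\{n\}$ topologically nilpotent. Then $\can-\varphi\{n\}$ would be a rational isomorphism, so $R\Gamma_{\syn}(\mathscr X,\bz_p(n))^{\rel}$ would vanish rationally. But already $R\Gamma_{\syn}(\Spf(\co_K),\bz_p(1))^{\rel}[1]\simeq 1+p\co_K$, which is not torsion.

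What you actually need is a rational identification $\fibre(\can-\varphi\{n\})\simeq\fibre(\can)$. A geometric series would require a factorization of $\varphi\{n\}$ through $\can$ with a topologically nilpotent cofactor, and you do not provide one.

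The paper instead uses a filtration-theoretic smallness of $\varphi\{n\}$, and this is the missing idea.
\begin{itemize}
\item With the $p$-adic ($e$-)filtration $F$ on $\co_{\mathscr X}$, $\varphi\{n\}$ sends $F^{\geq i}$ into $F^{\geq pi}$. So deep graded pieces of syntomic and additive syntomic cohomology are canonically identified ($\mathrm{triv}$, Lemma \ref{lem-triv}).
\item These pieces are $p$-torsion, so this carries no rational information by itself. The paper therefore passes to the $G$-filtration, on which $p$ lifts to equivalences $\tilde p\colon G^{\geq k}\to G^{\geq k+e}$ for both theories (Lemmas \ref{mult-by-p}, \ref{psyn}).
\item It defines $G^{[k,k+c[}\synlog:=(\cdot\tilde p^m)^{-1}\circ\mathrm{triv}\circ\tilde p_{\syn}^m$ and takes the limit over $c$ using completeness, which rests on weak filtered amplitude.
\item It checks compatibility with $\log_{\mathscr X}$ and $\gamma_{\mathscr X}$. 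It then shows that the cones of $\iota_{\syn}$ and $\iota_{\add}$ have bounded filtrations whose graded pieces are $p^N$-torsion and isomorphic for the two theories.
\end{itemize}
This works for arbitrary qcqs $\mathscr X$ directly. Your reduction to polynomial or qrsp rings would anyway need uniform torsion bounds, since rational isomorphisms do not survive $p$-completed sifted colimits or infinite descent limits.

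\textbf{Consequences for c).} The identity comes from $\log'$ carrying $\mydet_{\bz_p}R\Gamma_{\syn}^{\rel}$ \emph{exactly} onto $\mydet_{\bz_p}R\Gamma_{\add}^{\rel}$. The factor $C_\infty(\mathscr X,n)$ comes entirely from $\gamma_{\mathscr X}$, not from a divided Frobenius. It arises from the filtration of $\mathrm{gr}^i_{\mathcal N}\prism\{n\}$ with pieces $\fibre(L\widehat\Omega^{i-j}\xrightarrow{j}L\widehat\Omega^{i-j})$, where the Sen operator acts by $j$. Likewise, perfectness of $R\Gamma_{\syn}(\mathscr X,\bz_p(n))^{\rel}$ does not follow from finiteness of Hodge pieces. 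The paper deduces it from $G^{\geq k}\synlog$ being an isomorphism.

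\textbf{Part b).} Your heuristic gives the wrong range. The $(n-1-i)!$ are units for $n\leq p$, which is the range where $\gamma$ is integral (Lemma \ref{gammaiso}). It also does not explain why $\log_{\mathscr X}$ factors through $\widehat{\mathrm{dR}}^{<n,\rel}_{\mathscr X}[-1]$.

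Your geometric series does appear in the paper, but in a different place. On $\fibre(\fil^{\geq n}\gamma^{dR}_{\prism}\{n\})\to\fibre(\gamma^{dR}_{\prism}\{n\})$, the map $\can$ is an isomorphism and $\varphi\{n\}=p\,\tilde\varphi\{n+1\}$ for $n\leq p-2$. This identifies syntomic cohomology with Fontaine--Messing syntomic cohomology, which is defined via $\fil^{\geq n}\beta^{-1}$; that map exists for $n\leq p-1$ by a divided power estimate (Prop. \ref{filteredbeta}). The factorization then comes from the swap automorphism $\tau^n$ of Lemma \ref{swlemma}.

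\textbf{Part d).} Your outline follows the paper's route. You still need to show that the identification obtained from the $B_{cris}$/$B_{dR}$ square, after the colimit over twists by $\epsilon$, is the Bhatt--Lurie \'etale comparison map (Prop. \ref{compatibility-comp-thms}). Otherwise the map in the diagram is not $c_{\et}$.
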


\begin{proof} Part a) is Cor. \ref{beilinsonfibresquare}, part b) is Thm. \ref{smalln}, part c) is Thm.  \ref{thm:synlogproper} and part d) is Thm. \ref{exptheorem}. \end{proof}

\begin{remark} (Previous work) For quasisyntomic $\mathscr{X}$ Theorem \ref{maintheorem} a) reproves \cite{antieau-mmn21}[Thm. 6.17 (55)] and part b) reproves \cite{antieau-mmn21}[Thm. 6.17 (56)] provided one can verify that $\log_{\mathscr{X},\bq}$ coincides with the map constructed in \cite{antieau-mmn21}. Note that the latter has also not been verified to coincide with Beilinson's original K-theoretic map \cite{antieau-mmn21}[Footnote 1]. We therefore refrain from calling the fundamental (fibre) square the Beilinson (fibre) square for the time being. If $\mathscr{X}/\bz_p$ is smooth and $n < p$ the fundamental square was implicit in the definition of Fontaine-Messing syntomic cohomology from the beginning \cite{kato87}[Rem. 3.5] but was first systematically discussed by Bloch, Esnault and Kerz \cite{Bloch-Esnault-Kerz-14}. In particular Theorem \ref{maintheorem} b) for Fontaine-Messing syntomic cohomology is due to them \cite{Bloch-Esnault-Kerz-14}[Thm. 5.4] (see Remark \ref{bke} for further details). 

For $\mathscr{X}/\co_K$ smooth and proper and $n\geq\dim(\mathscr{X})$ we expect that Theorem \ref{maintheorem} c) is implied by conjecture $C_{EP}(V^i(n))$ for all $i$ although the details remain to be written out. Conjecture $C_{EP}$ was proven by Benois and Berger \cite{bb05} for crystalline representations over unramified $K'$ base changed to cyclotomic $K/K'$ after previous work by Perrin-Riou for ordinary representations \cite{pr94}. 

In any case, our methods of proof are quite different from any of these references and will be explained in the next section.
\end{remark}

We briefly sketch the proof of Theorem \ref{maintheorem} d) for $\mathscr{X}=\Spf(\co_K)$ and the deduction of Theorem \ref{bktheorem}. The key observation, already made in \cite{antieau-mmn21}[Thm. 7.7], is that the fundamental fibre square for $\mathscr{X}=\Spf(\co_{\bc_p})$ recovers the fundamental exact sequence of $p$-adic Hodge theory. More precisely, for $n\geq 0$ there are $G_K$-isomorphisms 
\begin{align*} \gamma^{\et}_{\syn}\{n\}: R\Gamma_{\syn}(\Spf(\co_{\bc_p}),\bz_p(n))\simeq & R\Gamma(\bc_p,\bz_p(n))=\bz_p(n)\\
R\Gamma_{\syn}(\Spf(\co_{\bc_p}/p),\bz_p(n))_{\bq}\simeq & (B^+_{cris})^{p^{-n}\varphi=1}\\
\log_{\co_{\bc_p}}[1]: R\Gamma_{\syn}(\Spf(\co_{\bc_p}),\bz_p(n))_\bq^{\rel}[1]\simeq &\bigl(\widehat{\mathrm{dR}}_{\co_{\bc_p}}^{< n}\bigr)_\bq\simeq  \bigl(A_{cris}/\ker(\theta)^n\bigr)_\bq\simeq B^+_{dR}/F^n.
\end{align*} 
The top row in (\ref{beilsquare}) then yields the following corollary.
\begin{corollary} (The fundamental exact sequence) For $n\geq 0$ there is a short exact sequence of $G_K$-modules
\[0\to\bq_p(n)\to (B^+_{cris})^{p^{-n}\varphi=1}\to B^+_{dR}/F^n\to 0.\]
\label{funseq} \end{corollary}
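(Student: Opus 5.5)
Take $\mathscr{X}=\Spf(\co_{\bc_p})$ in the fundamental square (\ref{beilsquare}), read the top row as a fibre sequence, pass to cohomology after inverting $p$, and identify the three terms via the three displayed $G_K$-isomorphisms preceding the corollary. By Proposition \ref{beilsquareprop}, the top row and the definition of relative cohomology give the fibre sequence
$$R\Gamma_{\syn}(\co_{\bc_p},\bz_p(n))^{\rel}\to R\Gamma_{\syn}(\co_{\bc_p},\bz_p(n))\to R\Gamma_{\syn}(\co_{\bc_p}/p,\bz_p(n)).$$
It is $G_K$-equivariant because all constructions are functorial in $\mathscr{X}$ and $G_K$ acts on $\co_{\bc_p}$. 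Rotating and rationalising, we get a cofibre sequence
$$\bq_p(n)\to R\Gamma_{\syn}(\co_{\bc_p}/p,\bz_p(n))_\bq\to R\Gamma_{\syn}(\co_{\bc_p},\bz_p(n))^{\rel}_\bq[1].$$
Here the first term is identified using $\gamma^{\et}_{\syn}\{n\}$. By Theorem \ref{maintheorem} a), $\log_{\co_{\bc_p}}$ is a rational isomorphism, so the third term is $\bigl(\widehat{\mathrm{dR}}^{<n}_{\co_{\bc_p}}\bigr)_\bq$.

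Next I would check that every term is concentrated in degree $0$.
\begin{itemize}
\item For $\bq_p(n)$ this is immediate.
\item For the derived de Rham term, $\co_{\bc_p}$ is perfectoid, so $\widehat{\mathrm{dR}}_{\co_{\bc_p}}\simeq A_{cris}$ with the Hodge filtration given by the divided-power filtration of $\ker\theta$. This is a discrete module, so the Hodge truncation is $A_{cris}/\fil^n$. After inverting $p$ this becomes $B^+_{dR}/F^n$, since $A_{cris}[1/p]$ modulo the $n$-th divided-power ideal agrees with $B_{dR}^+/\ker(\theta)^n$.
\item For the middle term, $\co_{\bc_p}/p$ is quasiregular semiperfect, so the prismatic cohomology is $A_{cris}$ with discrete Nygaard filtration. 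The syntomic complex is then the two-term complex $\fil^n_{\mathcal N}A_{cris}\xrightarrow{\varphi/p^n-1}A_{cris}$. After inverting $p$, its $H^0$ is $(B^+_{cris})^{\varphi=p^n}$.
\end{itemize}

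I expect the vanishing of $H^1$ of the middle term rationally to be the main obstacle. It amounts to surjectivity of $\varphi/p^n-1$ on $\fil^n A_{cris}[1/p]$, which I would deduce from the long exact sequence itself. Since the outer terms sit in degree $0$, the middle term is concentrated in degrees $0$ and $1$. In the resulting long exact sequence, $H^1$ of the middle term is squeezed between $0$ and $H^1$ of the third term, which vanishes; hence it is $0$. Equally, $H^{-1}$ of the third term is $0$, so the long exact sequence collapses to the desired short exact sequence
\[0\to\bq_p(n)\to (B^+_{cris})^{p^{-n}\varphi=1}\to B^+_{dR}/F^n\to 0.\]
The remaining work is compatibility: the maps obtained should be the classical ones, namely the inclusion via $t^n$ and the natural inclusion $B^+_{cris}\subset B^+_{dR}$ followed by projection. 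I would check this through the comparison maps $\beta$ and $\gamma^{dR}_\prism$ on $A_{cris}$, using the lower part of diagram (\ref{beilsquare}).
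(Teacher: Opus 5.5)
Your proposal is correct and is essentially the paper's argument. You specialize the top row of the fundamental square to $\Spf(\co_{\bc_p})$ and identify the three terms via $\gamma^{\et}_{\syn}\{n\}$, via $\prism_{\co_{\bc_p}/p}\simeq A_{cris}$, and via $\log_{\co_{\bc_p}}[1]$ together with $\widehat{\mathrm{dR}}_{\co_{\bc_p}}\simeq A_{cris}$; the only, harmless, difference is that you derive the rational vanishing of $H^1$ of the middle term from the long exact sequence and Theorem \ref{maintheorem} a), whereas the paper simply asserts $R\Gamma_{\syn}(\Spf(\co_{\bc_p}/p),\bz_p(n))_{\bq}\simeq (B^+_{cris})^{p^{-n}\varphi=1}$ as a complex concentrated in degree $0$.
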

Functoriality of syntomic cohomology gives a map
\begin{equation} R\Gamma_{\syn}(\Spf(\co_K),\bz_p(n))\to R\Gamma_{\syn}(\Spf(\co_{\bc_p}),\bz_p(n))^{hG_K}\simeq R\Gamma(K,\bz_p(n))\label{etcomp}\end{equation}
which (by functoriality of $c_{\et}$) coincides with the map $c_{\et}$ for $\Spf(\co_K)$. Similar considerations apply to the other terms in the fundamental exact sequence.
Hence a map of exact triangles
$$\minCDarrowwidth1em\begin{CD}
R\Gamma(K,\bq_p(n)) @>>>  R\Gamma(K,(B^+_{cris})^{p^{-n}\varphi=1}) @>>> R\Gamma(K,B^+_{dR}/F^n)\\
@AAc_{\et} A @AAA @AA\log_{\co_K}[1]A\\
R\Gamma_{\syn}(\Spf(\co_K),\bz_p(n))_\bq @>>> 0 @>>> R\Gamma_{\syn}(\Spf(\co_K),\bz_p(n))^{\rel}_\bq[1].
\end{CD}$$ 
Taking cohomology we obtain a commutative diagram (of isomorphisms if $n\geq 2$)
\begin{equation}\begin{CD}  K@>\exp_{\bq_p(n)}>> H^1(K,\bq_p(n))\\ @A\log_{\co_K}A\sim A @Ac_{\et} AA\\ H^1_{\syn}(\Spf(\co_K),\bz_p(n))^{\rel}_\bq@>\sim>> H^1_{\syn}(\Spf(\co_K),\bz_p(n))_\bq\end{CD}\label{logandexp}\end{equation}
which is Theorem \ref{maintheorem} d). On the other hand, by \cite{bhatt-mathew}[Thm. 1.8] the map (\ref{etcomp}) is an isomorphism if $n\geq 2$. Theorem \ref{bktheorem} readily follows from this fact together with Theorem \ref{maintheorem} c) for $\mathscr{X}=\Spf(\co_K)$.

\subsection{The syntomic logarithm} We define {\em additive syntomic cohomology} by the fibre sequence
\begin{equation} R\Gamma_{\add}(\mathscr{X},\bz_p(n))\xrightarrow{}\mathcal N^{\geq n}\prism_\mathscr{X}\{n\}\xrightarrow{\can}\prism_\mathscr{X}\{n\}\notag\end{equation}
analogous to (\ref{syndef}). Replacing the top row in (\ref{beilsquare}) by additive syntomic cohomology we obtain a map 
\begin{equation} \gamma_\mathscr{X}: R\Gamma_{\add}(\mathscr{X},\bz_p(n))^{\rel} \xrightarrow{} \widehat{\mathrm{dR}}_{\mathscr{X}}^{<n}[-1] \notag\end{equation}
analogous to $\log_\mathscr{X}$ and functorial in the formal scheme $\mathscr{X}$.

\begin{remark} (Concerning terminology) If $\mathscr{X}$ is quasisyntomic the main result of \cite{bms19} gives equivalences
\begin{align*} R\Gamma_{\syn}(\mathscr{X},\bz_p(n))\simeq &\mathrm{gr}^n_{\mathrm{Mot}}TC(\mathscr{X})[-2n]\simeq \mathrm{gr}^n_{\mathrm{Mot}}K_{\et}(\mathscr{X})^\wedge_p[-2n]\\
R\Gamma_{\add}(\mathscr{X},\bz_p(n))\simeq &\mathrm{gr}^n_{\mathrm{Mot}}TC^+(\mathscr{X})[1-2n]
\end{align*}
where $TC^+(\mathscr{X}):=THH(\mathscr{X})_{S^1}$ is the analogue over the sphere spectrum of cyclic homology $HC(\mathscr{X}):=HH(\mathscr{X})_{S^1}$ over $\bz$. Since cyclic homology is sometimes called additive K-theory we call $R\Gamma_{\add}(\mathscr{X},\bz_p(n))$ additive syntomic cohomology. 
There is also a clear analogy between $R\Gamma_{\add}(\mathscr{X},\bz_p(n)):= \prism_\mathscr{X}\{n\}/ \fil^n_{\mathcal N}  \prism_\mathscr{X}\{n\}[-1]$ and Hodge truncated (derived) de Rham cohomology over $\bz$
$$  \mathrm{dR}_{\mathscr{X}}^{<n}[-1]= \mathrm{dR}_{\mathscr{X}}/ \fil^n_{Hod}\mathrm{dR}_{\mathscr{X}}[-1]\simeq \mathrm{gr}^n_{\mathrm{Mot}}HC(\mathscr{X})[1-2n].$$
However,  this analogy is somewhat imperfect since $\prism_\mathscr{X}\{n\}$ depends on $n$ whereas  $\mathrm{dR}_{\mathscr{X}}$ does not. We therefore abandon the notation $ \mathrm{dR}_{\mathscr{X}/\bs}^{<n}=R\Gamma(\mathscr{X},L\Omega_{\mathscr{X}/\mathbb{S}}/F^n)$ used in \cite{Flach-Morin-20} for $R\Gamma_{\add}(\mathscr{X},\bz_p(n))[1]$, at least in this article.
\label{tc}\end{remark}

\begin{theorem}\label{addmain} Let $\mathscr{X}$ be a quasi-compact, quasi-separated derived formal scheme.
There is a commutative diagram
\begin{equation} \xymatrix{
G^{\geq k} R\Gamma_{\syn}(\mathscr{X},\bz_p(n))^{\rel}\ar[d]^{\iota_{\syn}} \ar[rr]^{G^{\geq k}\synlog_\mathscr{X}^{\rel}}_{\sim}&&G^{\geq k} R\Gamma_{\add}(\mathscr{X},\bz_p(n))^{\rel} \ar[d]^{\iota_{\add}}\\
R\Gamma_{\syn}(\mathscr{X},\bz_p(n))^{\rel}\ar[r]^(.6){\log_\mathscr{X}}&    \widehat{\mathrm{dR}}_{\mathscr{X}}^{< n}[-1]   &R\Gamma_{\add}(\mathscr{X},\bz_p(n))^{\rel}\ar[l]_(.6){\gamma_\mathscr{X}}\\
}
\label{synlogdiagram}\end{equation}
such that the following hold.
\begin{enumerate}
\item[a)] The map $G^{\geq k}\synlog_\mathscr{X}^{\rel}$, which we call the {\em syntomic logarithm} is an isomorphism.
\item[b)] The maps $\iota_{\syn}, \iota_{\add}$  are rational isomorphisms.
\item[c)] If $\mathscr{X}/\bz_p$ is proper of finite tor-amplitude and $(L_{\mathscr{X}/\bz_p})^\wedge_p$ is perfect then all complexes in (\ref{synlogdiagram}) are perfect complexes of $\bz_p$-modules. Moreover
$$\mydet_{\bq_p}(\log'_{\bq_p})\left(\mydet_{\bz_p}R\Gamma_{\syn}(\mathscr{X},\bz_p(n))^{\rel}\right)=\mydet_{\bz_p}R\Gamma_{\add}(\mathscr{X},\bz_p(n))^{\rel}$$
where $\log'_{\bq_p}=\iota_{\add,\bq_p}\circ G^{\geq k}\synlog_{\mathscr{X},\bq_p}^{\rel} \circ\iota^{-1}_{\syn,\bq_p}$.
\item[d)] The map $\gamma_\mathscr{X}$ is a rational isomorphism. If $\mathscr{X}/\bz_p$ is proper of finite tor-amplitude and $(L_{\mathscr{X}/\bz_p})^\wedge_p$ is perfect then
\begin{equation} \mydet_{\bq_p}(\gamma_{\mathscr{X},\bq})\left(\mydet_{\bz_p}R\Gamma_{\add}(\mathscr{X},\bz_p(n))^{\rel}\right)=\mydet_{\bq_p}(\iota_\bq)\bigl(\mydet^{-1}_{\bz_p}\widehat{\mathrm{dR}}_\mathscr{X}^{<n,\rel}\bigr) \cdot C_\infty(\mathscr{X},n)^{-1}.
\notag\end{equation}
\end{enumerate}
\end{theorem}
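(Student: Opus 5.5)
The plan is to build the syntomic logarithm from the filtered prismatic cohomology of Antieau--Krause--Nikolaus. Both $R\Gamma_{\syn}(\mathscr{X},\bz_p(n))^{\rel}$ and $R\Gamma_{\add}(\mathscr{X},\bz_p(n))^{\rel}$ are fibres of a map $\mathcal N^{\geq n}\prism_\mathscr{X}\{n\}^{\rel}\to\prism_\mathscr{X}\{n\}^{\rel}$: in the first case the map is $\varphi\{n\}-\can$, in the second it is $\can$. I would equip both relative complexes with a descending filtration $G^{\geq k}$. On $\prism^{\rel}$ it is induced by the $(p)$-adic, or rather $\ker(\prism_\mathscr{X}\to\prism_{\mathscr{X}/p})$-adic, filtration; it is formalized via the "filtered prismatization" of $\mathscr{X}$ relative to $\mathscr{X}/p$, i.e. the relative Nygaard/PD-type filtration coming from the filtered prism $(\bz_p,(p))$. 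The key property is that on $G^{\geq k}$, for $k$ large in terms of $n$ and $p$, Frobenius becomes divisible by a growing power of $p$. Concretely, $\varphi\{n\}$ maps $G^{\geq k}\mathcal N^{\geq n}$ into $p^{c(k)}G^{\geq k}$ with $c(k)\to\infty$, so that $\can-\varphi\{n\}=\can\circ(1-\can^{-1}\varphi\{n\})$ on the graded pieces.

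The syntomic logarithm is then defined as a limit. Morally it is $\synlog=\sum_{m\geq 0}(\can^{-1}\varphi)^m$, or a telescoping limit over $k$, which converges $p$-adically because $\varphi$ is topologically nilpotent on $G^{\geq k}$. One needs it to intertwine $\varphi-\can$ with $-\can$, giving a map of fibres $G^{\geq k}\synlog^{\rel}$. This map is an isomorphism because $1-\can^{-1}\varphi$ is an automorphism of a derived $p$-complete object once $\can^{-1}\varphi$ is topologically nilpotent. The check is done on graded pieces: on $\mathrm{gr}^k_G$ the Frobenius term vanishes, and both fibres agree with $\mathrm{gr}^k$ of the fibre of $\can$. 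Derived $p$-completeness and completeness of $G$ then give a) by a standard limit argument. To extend from quasisyntomic affines to arbitrary qcqs derived formal schemes, I would use left Kan extension from polynomial algebras (animated rings) and Zariski descent. Both sides, and the filtration, are sifted-colimit preserving after $p$-completion, and they are sheaves.

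For b), the cofibre of $\iota$ on $G^{\geq k}\to$ everything is built from finitely many graded pieces $\mathrm{gr}^j_G$ with $j<k$. Each of these is killed by a bounded power of $p$, because it is a quotient of $\ker/\ker^{j+1}$ of objects that vanish mod nothing rationally relative to $\mathscr{X}/p$; more precisely, relative objects are rationally controlled by $\prism^{\rel}$, and $\mathrm{gr}^j$ is $p^j$-torsion up to bounded error. So $\iota_{\syn}$ and $\iota_{\add}$ are rational isomorphisms. Commutativity of (\ref{synlogdiagram}) reduces to the compatibility $\gamma^{dR}_\prism\circ\varphi\{n\}=0$ on the relevant filtration piece after passing to relative Hodge-truncated de Rham. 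Equivalently, Frobenius factors through $\mathcal N^{\geq n}\to$ Hodge-filtration $\geq n$, which dies in $\widehat{\mathrm{dR}}^{<n}$, so both compositions to $\widehat{\mathrm{dR}}^{<n}_\mathscr{X}[-1]$ agree.

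For c), perfectness follows from perfectness of the Hodge--Tate and de Rham complexes under the properness and cotangent-complex hypotheses, via finite filtrations by $\wedge^iL$. The determinant identity follows since $\synlog$ is an integral isomorphism and the $\iota$'s have cofibres $\mathrm{gr}^{<k}_G$ whose determinants coincide for syn and add, as their graded pieces agree. For d), rationally $\prism\{n\}^{\rel}$ and $\mathcal N^{\geq n}$ relate to $\widehat{\mathrm{dR}}$ and $\fil_{Hod}^{\geq n}$ up to the factors $\can$ on $\mathrm{gr}^i_{\mathcal N}\to\mathrm{gr}^i$, which are multiplication by $(n-1-i)!$-type constants arising from the divided-power/Breuil--Kisin twist comparison. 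Taking determinants over $\mathrm{gr}^i$ with multiplicities $\dim H^j(L\Omega^i)$ yields $C_\infty(\mathscr{X},n)^{-1}$. I expect this factorial bookkeeping, identifying exactly the index of $\gamma$ on each graded piece, together with establishing the topological nilpotence estimate $c(k)\to\infty$ for general (non-quasisyntomic) $\mathscr{X}$, to be the main obstacles.
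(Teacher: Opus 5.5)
Your construction of the map has a genuine gap. The operator $\can^{-1}\varphi\{n\}$ is not defined, and so neither are $1-\can^{-1}\varphi\{n\}$ and $\sum_m(\can^{-1}\varphi\{n\})^m$. There are two reasons.
\begin{itemize}
\item $\can\colon\mathcal N^{\geq n}\prism\{n\}\to\prism\{n\}$ is not invertible. Its fibre is $R\Gamma_{\add}$, the very object you want to compare with.
\item $\varphi\{n\}$ does not factor through $\can$, not even on deep filtration pieces. Landing in $G^{\geq K}\prism\{n\}$ gives no lift, because that piece is built from $F^{\geq a}\mathcal N^{\geq b}$ with $b<n$.
\end{itemize}
Concretely, take $\Spf(\co_{\bc_p})$. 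There $\varphi\{n\}$ carries the proper submodule $\mathcal N^{\geq n}\prism\{n\}\subsetneq\prism\{n\}\simeq A_{\inf}\{n\}$ isomorphically onto $\prism\{n\}$, and it commutes with multiplication by $p$. So on $p^m\mathcal N^{\geq n}\prism\{n\}$ its image $p^m\prism\{n\}$ is not contained in $\mathcal N^{\geq n}\prism\{n\}$. The same example shows that $\gamma^{dR}\circ\varphi\{n\}$ does not vanish in $\widehat{\mathrm{dR}}^{<n}$, so your commutativity argument fails as well.

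Falling back on ``the graded pieces agree, now use completeness'' does not rescue a). Completeness only upgrades a filtered map you have already constructed to an isomorphism. To construct a map on $G^{\geq k}=\lim_cG^{[k,k+c[}$ you need compatible maps on $G^{[k,k+c[}$ for all $c$ at fixed $k$. But $\mathrm{triv}$ exists on $G^{[k,k+c[}$ only when $(p-1)k\geq pe(n-1)+c$, because $\varphi$ merely multiplies $F$-degrees by $p$. Likewise, the claim that $\varphi\{n\}$ is divisible by $p^{c(k)}$ presupposes a lift of $p$ to $G^{\geq k}\to G^{\geq k+e}$ on the syntomic side, which you never construct.

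The missing idea is the derived analogue of $\log\alpha=\lim_m(\alpha^{p^m}-1)/p^m$. The paper proceeds as follows.
\begin{itemize}
\item It filters $\co_{\mathscr{X}}$ $p$-adically and takes the AKN bifiltration $F^{\geq\star}\mathcal N^{\geq\star}\prism\{n\}$. Since $\varphi$ sends $F^{\geq i}$ to $F^{\geq pi}$, this gives $\mathrm{triv}$.
\item It re-indexes to $G^{\geq k}\mathcal N^{\geq j}=\mathrm{colim}_{a+be\geq k,\,b\geq j}F^{\geq a}\mathcal N^{\geq b}$ (with $e=1$ here). This is done precisely so that $G^{[0,e[}$ is a module over $\co/p$.
\item Hence $p$ is nullhomotopic on $G^{[k,k+e[}$ and lifts to $\tilde p\colon G^{\geq k}\to G^{\geq k+e}$. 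On the additive side the lift comes from the module structure. On the syntomic side it comes from transporting the nullhomotopy through $\mathrm{triv}$ (Lemmas \ref{plift}, \ref{psyn}).
\item The cotangent complex of the Rees map has weak filtered amplitude in $[0,1]$ for every qcqs $\mathscr{X}$ (Lemma \ref{lcicor}). So no reduction to polynomial or quasisyntomic rings is needed. This makes the additive lift an equivalence for $k\gg0$. The syntomic lift then follows from $\mathrm{triv}$ on graded pieces and completeness.
\item One defines $G^{[k,k+c[}\synlog:=(\cdot\tilde p^{m})^{-1}\circ\mathrm{triv}\circ\tilde p_{\syn}^{m}$ for $m\gg0$. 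One checks that this is independent of $m$ and compatible in $c$, and then passes to the limit (Prop. \ref{logdef}).
\item Commutativity with $\log_{\mathscr{X}}$ and $\gamma_{\mathscr{X}}$ is checked on these truncations. It uses that $\mathrm{triv}$ commutes with the projections to $F^{[k-\epsilon,k-\epsilon+c[}\mathcal N^{\geq n}\prism^{\rel}$, where $\epsilon=e(n-1)$, and that the nullhomotopies are compatible with both maps.
\end{itemize}
Your outlines of c) and d) are in line with the paper. In b), however, the $\mathrm{gr}^0_F$ piece of the syntomic cone is relative syntomic cohomology of $Y=\mathscr{X}/p$ with respect to $Y/p$. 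Showing it is torsion requires the splitting $\co_{Y/p}\simeq\co_Y\oplus\co_Y[1]$ (Prop. \ref{charp}) and a further use of $\mathrm{triv}$.
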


\begin{proof} Take $\mathscr{X}$ with the $p$-adic filtration in Thm. \ref{thm:synlog} to obtain a) and b) and use Thm. \ref{thm:synlogproper} for c). Part d) is immediate from \cite{bhatt-lurie-22}[Prop. 5.5.12] and \cite{bhatt-lurie-22}[4.7.2, 4.7.14, 5.5.8].
\end{proof}

As indicated by our notation, the source and target of the syntomic logarithm are sufficiently deep pieces of a certain descending filtration on syntomic and additive syntomic cohomology. The $G$-filtration, which also exists on de Rham cohomology, is the key innovation of this paper and will be explained below. A fairly straightforward analysis of graded pieces (of the closely related $F$-filtration) will give Theorem \ref{addmain} b) and c). 

Theorem \ref{addmain} a), b) and d) imply that $\log_{\mathscr{X}}$ is a rational isomorphisms and hence Theorem \ref{maintheorem} a). Theorem \ref{maintheorem} c) follows from Theorem \ref{addmain} c) and d). Finally, our proof of Theorem \ref{maintheorem} b) is independent of Theorem \ref{addmain} or the $F$- or $G$-filtration and will instead use Fontaine-Messing syntomic cohomology as defined in \cite{antieau-mmn21}.

In order to explain the $F$- and $G$-filtration and the syntomic logarithm we go back to the classical case $n=1$ and $\mathscr{X}=\Spf(\co_K)$ where
\begin{align*} R\Gamma_{\syn}(\mathscr{X},\bz_p(1))[1]\simeq &(\co_K^\times)^\wedge_p\simeq U^1:=1+\fm\\
R\Gamma_{\add}(\mathscr{X},\bz_p(1))[1]\simeq &\co_K.
\end{align*}
The exponential and logarithm are given by the usual power series which are formally inverse to each other and converge on sufficiently small $p$-adic disks. In particular, they yield an isomorphism
$$ \exp: R\Gamma_{\add}(\mathscr{X},\bz_p(1))^{\rel}[1]\simeq p\cdot\co_K \simeq 1+p\cdot \co_K\simeq R\Gamma_{\syn}(\mathscr{X},\bz_p(1))^{\rel}[1]:\log$$ 
if $p$ is odd (if $p=2$ replace $p$ by $p^2$). However, there is an alternative construction of this isomorphism which uses the natural filtrations 
\begin{equation} \co_K\supset \fm\supseteq \cdots\supseteq \fm^i\supseteq \cdots \label{addfil}\end{equation}
$$ \co_K^\times\supseteq U^1:=1+\fm\supseteq \cdots\supseteq U^i:=1+\fm^i\supseteq \cdots$$
on the additive and the multiplicative group. It is immediate that the map $\alpha\mapsto \alpha-1$ gives a group isomorphism $U^i/U^{i+1}\simeq\fm^i/\fm^{i+1}$ for $i\geq 1$ but in fact it gives slightly more, namely a group isomorphism
\begin{equation} U^i/U^{i+c}\simeq\fm^i/\fm^{i+c};\quad \alpha\mapsto \alpha-1 \label{point1}\end{equation}
for $i\geq c\geq 1$. On the other hand it is easy to analyze the effect of multiplication by $p$ on both filtrations. By \cite{serre61}[1.7, Cor. 1] one has 
$$ p:U^i \simeq U^{i+e}$$
if $i>e/(p-1)$ and it is also clear that 
$$p:\fm^i\simeq \fm^{i+e}$$
if $i\geq 0$. So for $i>e/(p-1)$ and $k\geq 1+(c-i)/e\geq 1$ one obtains a commutative diagram of isomorphisms
\begin{equation}\begin{CD} U^{i}/U^{i+c} @>p^k>> U^{i+ke}/U^{i+ke+c}@>p>> U^{i+(k+1)e}/U^{i+(k+1)e+c}\\
@V\log_c VV @VV\alpha\mapsto \alpha-1 V  @VV\alpha\mapsto \alpha-1 V \\
\fm^{i}/\fm^{i+c} @>p^k>> \fm^{i+ke}/\fm^{i+ke+c}@>p>> \fm^{i+(k+1)e}/\fm^{i+(k+1)e+c}
\end{CD}\label{multiplicationbyp}\end{equation}
which defines the isomorphism $\log_c$ and at the same time shows that $\log_c$ is independent of the choice of $k$. This in turn implies commutativity of
$$\begin{CD} U^{i}/U^{i+c+1} @>>> U^{i}/U^{i+c} \\
@V\log_{c+1} VV @V\log_c VV  \\
\fm^{i}/\fm^{i+c+1}@>>> \fm^{i}/\fm^{i+c} 
\end{CD}$$ 
and completeness of the filtrations then yields an isomorphism 
$$\log=\varprojlim_c\log_c:U^{i}\simeq \fm^{i}$$
for $i>e/(p-1)$. 
That one has indeed recovered the classical logarithm follows from diagram (\ref{multiplicationbyp}) and the formula
$$\log(\alpha)=\left.\frac{d}{ds}\alpha^s\right\vert_{s=0}=\lim_{k\to\infty} \frac{\alpha^{p^k}-1}{p^k}.$$
We generalize this to higher Tate twists as follows. 
The filtration (\ref{addfil}) is multiplicative and therefore turns $\co_K$ into a filtered ring $F^{\geq \star}\co_K$. By one of the main results of \cite{akn23} such a  filtration on a ring induces a multiplicative bifiltration $F^{\geq \star}{\mathcal N}^{\geq \star}\prism_{\co_K}\{n\}$ on (Breuil-Kisin twisted) absolute prismatic cohomology so that
$\varphi\{n\}$ in filtration degree $i$ factors  \cite{akn23}[Footnote 21]
$$ \varphi\{n\}: F^{\geq i} {\mathcal N^{\geq n}}\prism_{\co_K}\{n\} \to F^{\geq pi} \prism_{\co_K}\{n\}\to F^{\geq i}\prism_{\co_K}\{n\}.$$
Since $p\geq 2$ this implies  
$$ F^{[i,i+c[}(\varphi\{n\})=0$$
for $i\geq c\geq 1$. The identity map on $F^{\geq \star}{\mathcal N}^{\geq \star}\prism_{\co_K}\{n\}$ therefore induces an isomorphism 
$$ F^{[i,i+c[}R\Gamma_{\syn}(\co_K,\bz_p(n))\simeq F^{[i,i+c[}R\Gamma_{\add}(\co_K,\bz_p(n)),\quad i\geq c\geq 1,$$
a generalization of (\ref{point1}). Unfortunately, if $n\geq 2$ it may not be true that multiplication by $p$ factors through a map
$\tilde{p}: F^{\geq i}C\to F^{\geq i+e}C$
for $C$ either of the two complexes since $F^{[i,i+e[}C$ may not be annihilated by $p$. We remedy this defect by replacing the multiplicative bifiltration $F^{\geq \star}{\mathcal N}^{\geq \star}\prism_{\co_K}\{n\}$ with the multiplicative bifiltration $G^{\geq \star}{\mathcal N}^{\geq \star}\prism_{\co_K}\{n\}$ where
$$ G^{\geq i}{\mathcal N}^{\geq j}\prism_{\co_K}\{n\}:=\mathrm{colim}_{a+be\geq i, b\geq j} F^{\geq a}{\mathcal N}^{\geq b}\prism_{\co_K}\{n\}.$$
Now $G^{[i,i+e[}{\mathcal N}^{\geq j}\prism_{\co_K}\{n\}$ is a module over 
$$G^{[0,e[}{\mathcal N}^{\geq 0}\prism_{\co_K}\{0\}\simeq F^{[0,e[}\mathrm{gr}^0_{\mathcal N}\prism_{\co_K}\{0\}\simeq F^{[0,e[}\co_K\simeq\co_K/p$$ and hence annihilated by $p$. We also show that there is an induced filtration $G^{\geq i} R\Gamma_{\syn}(\co_K,\bz_p(n))$ and an isomorphism
\begin{equation} G^{[i,i+c[}R\Gamma_{\syn}(\co_K,\bz_p(n))\simeq G^{[i,i+c[}R\Gamma_{\add}(\co_K,\bz_p(n)),\quad i\geq c\geq 1,\label{gtriv}\end{equation}
for $i$ large enough, and that moreover multiplication by $p$ factors through an {\em isomorphism} 
$$\tilde{p}: G^{\geq i}C\xrightarrow{\sim} G^{\geq i+e}C$$
for both $C=R\Gamma_{\syn}(\co_K,\bz_p(n))$ and $C=R\Gamma_{\add}(\co_K,\bz_p(n))$ and $i$ large enough. We moreover show that both filtrations are complete. This means that all ingredients are in place for the above construction of a logarithm isomorphism to go through, albeit in the derived (infinity) category rather than the category of abelian groups. We finally show that the so constructed syntomic logarithm fits into the commutative diagram (\ref{synlogdiagram}) by extending both the $F$- and $G$- bifiltration to Hodge filtered derived de Rham cohomology.

\begin{remark} (Concerning $\log_{\mathscr{X}}$)  The de Rham logarithm and the syntomic logarithm agree rationally but are of quite different nature. The map $\log_{\mathscr{X}}$ is the map of main interest because it is an integral version of the Bloch-Kato logarithm. Like the Bloch-Kato exponential map the de Rham logarithm is easy to construct but hard to analyze. The maps $G^{\geq k}\synlog_\mathscr{X}^{\rel}$ on the other hand are hard to construct and of auxilliary nature but very useful by virtue of being isomorphisms.
\end{remark}

\subsection{Organisation of this article} In section \ref{prismaticsection} we give the definition of prismatic cohomology of graded and filtered animated rings (and derived formal schemes with filtered structure sheaf), expanding on the discussion in \cite{akn23}[Sec. 10]. In section \ref{beilsection} we establish a filtered version of the fundamental square and the de Rham logarithm. Section \ref{synlogsection} is the heart of this article. Here we construct the syntomic logarithm following the outline given above and prove Thm.  \ref{addmain}, or rather a slightly more general version relative to a $\delta$-ring. In section \ref{smalltwist} we define Fontaine-Messing syntomic cohomology relative to a $\delta$-ring and prove a generalization of Thm. \ref{main} b) relative to a $\delta$-ring. In section \ref{bksection} we prove Thm. \ref{main} d). Finally, in section \ref{zetasection} we discuss an application of Thm. \ref{main} c) to special value conjectures for Zeta functions \cite{Flach-Morin-16} and prove Thm. \ref{bktheorem}.

\subsection{Acknowledgements} We would like to thank Lars Hesselholt for bringing the authors together at a master class in Kopenhagen in January 2023 to report about their respective work on Zeta-values \cite{Flach-Morin-16,Flach-Morin-20} and the K-theory of $\bz/p^n$ \cite{akn23,akn24}. It was on this occasion where the idea arose that the theory of filtered prismatic cohomology initiated in \cite{akn23} might be useful for the local volume computation in Thm. \ref{main} c). We would also like to thank Ben Antieau, H\'el\`ene Esnault, Akhil Mathew, Matthew Morrow and Thomas Nikolaus for helpful comments regarding this project.

\subsection{Conventions} In this article all rings are implicitly assumed commutative and we fix a prime $p$ throughout.
 A {\em (derived) formal scheme} is a (derived) formal scheme with respect to the ideal $(p)$. A  {\em (derived) formal stack} is a sheaf of anima for the fpqc topology on the opposite of the category of $p$-nilpotent (animated) rings. Formal stacks form a full subcategory of derived formal stacks \cite{bhatt-lurie-22b}[Notation 8.1]. For an animated ring $R$ we denote by $\Spf(R)$ the functor on $p$-nilpotent animated rings corepresented by $R$. 
 
 We refer to \cite{akn23}[App. A] for a discussion of the category $\cd(\mathfrak{X})$ of quasi-coherent sheaves on a (derived) formal stack $\mathfrak{X}$. If $R$ is an animated ring $\cd(R):=\cd(\Spf(R))$ is the category $(\Mod_R)^\wedge_p$ of $p$-complete objects in $\Mod_R$. We denote by 
 $$\cdf(\mathfrak{X}):=\Fun(\bz^{op},\cd(\mathfrak{X}))$$ the category of filtered objects in $\cd(\mathfrak{X})$, and for $F^{\geq\star}\mathcal{R}\in\CAlg(\cdf(\mathfrak{X}))$ we put $\cdf(F^{\geq\star}\mathcal{R}):=\Mod_{F^{\geq\star}\mathcal{R}}(\cdf(\mathfrak{X}))$.

We introduce the following definition of "$p$-quasisyntomic" in order to state Prop. \ref{deRham} and Thm. \ref{packageproperties} in optimal generality. Otherwise quasisyntomicity will only play a minor role in this article (in the definition of Fontaine-Messing syntomic cohomology in section \ref{FM}). If $A=\bz_p$ and $R$ is discrete with bounded $p$-power torsion then our definition coincides with \cite{bhatt-lurie-22}[C.6].

\begin{definition} Let $A\to R$ be a morphism of animated rings. We call $R/A$ \underline{$p$-quasisyntomic} if the following hold.
\begin{enumerate}
\item[a)] $A^\wedge_p$ is $m$-truncated for some $m\geq 0$.
\item[b)] $R^\wedge_p\in\cd(A)$ has finite $p$-complete tor-amplitude
\item[c)] $(L_{R/A})^\wedge_p\in\cd(R)$ has $p$-complete tor-amplitude in $[-1,0]$.
\end{enumerate}
\label{lcidef}\end{definition}
The prime $p$ being fixed we usually shorten this to "quasisyntomic" and we will omit "$p$-complete" in "$p$-complete tor-amplitude" and "$p$-completely flat". We will often omit the notation $(-)^\wedge_p$ since $p$-completion is automatic when we consider modules as objects of $p$-complete derived categories.

\section{Prismatic cohomology of graded and filtered rings}\label{prismaticsection}

In this section we recall the construction of the bifiltration $F^{\geq \star}{\mathcal N}^{\geq \star}\prism_{\mathscr{X}}\{n\}$ from \cite{akn23}[Sec. 10] and define a bifiltration $F^{\geq \star}{\fil}^{\geq \star}_{Hod}\widehat{\mathrm{dR}}_{\mathscr{X}}$ in an entirely parallel way. In order to illustrate the basic idea we give a direct construction of $F^{\geq \star}{\fil}^{\geq \star}_{Hod}\widehat{\mathrm{dR}}_{R/A}$ in the affine case in \ref{sec:gradeddeRham}. In \ref{filtrations} we briefly recall the relation between filtrations and stacks over $\widehat{\ba}^1/\widehat{\bg}_m$ and define $F^{\geq \star}{\fil}^{\geq \star}_{Hod}\widehat{\mathrm{dR}}_{\mathscr{X}/A}$ for a derived formal scheme $\mathscr{X}$ with filtered structure sheaf. Constructing $F^{\geq \star}{\mathcal N}^{\geq \star}\prism_{\mathscr{X}/A}\{n\}$ requires a definition of prismatic cohomology relative to a $\delta$-stack, based in turn on a theory of prismatic cohomology relative to a $\delta$-ring, as first laid out in \cite{akn23}.  In order to accommodate $F^{\geq \star}{\fil}^{\geq \star}_{Hod}\widehat{\mathrm{dR}}_{\mathscr{X}/A}$ we need to slightly expand the foundational framework of \cite{akn23} by working directly with prismatic $F$-gauges rather than their global sections. We do this in \ref{deltapairs}. In \ref{deltastacks} we construct prismatic cohomology of graded and filtered rings and formal schemes following the pattern laid out in \ref{sec:gradeddeRham} and \ref{filtrations}.

\subsection{Derived de Rham cohomology of graded and filtered rings} \label{sec:gradeddeRham}For the definition of $p$-complete Hodge filtered derived de Rham cohomology $\fil^{\geq \star}_{Hod}\widehat{\mathrm{dR}}_{R/A}$ over a fixed base ring $A$ we refer to \cite{bhatt-lurie-22}[App. E]. Here we need to consider $\fil^{\geq \star}_{Hod}\widehat{\mathrm{dR}}_{R/A}$ more fully as a functor of pairs $R/A$. The following definition is justified by the fact that $\fil^{\geq \star}_{Hod}\widehat{\mathrm{dR}}_{R/A}$ commutes with sifted colimits of pairs (see the proof of \cite{akn23}[Lemma 3.16]). For the general process of animation of $1$-categories with compact projective generators we refer to \cite{scholze-cesnavicius}[5.1.4].

\begin{definition} ($p$-complete Hodge filtered derived de Rham cohomology) Let $\mathrm{Pairs}$ be the category of ring homomorphisms $A\to R$ and $\mathrm{Poly}_{\mathrm{Pairs}}$ the full subcategory of objects
$$\bz[X_1,\dots,X_n]\to \bz[X_1,\dots,X_n, Y_1,\dots,Y_m]$$ (the free objects on finitely many variables). 
Let $\mathrm{Pairs}^{\ani}$ be the free completion of $\mathrm{Poly}_{\mathrm{Pairs}}$ under sifted colimits (equivalent to the category of morphisms of animated rings). Define $\fil^{\geq \star}_{Hod}\widehat{\mathrm{dR}}_{R/A}$ as the left Kan extension of the functor
$$\mathrm{Poly}_{\mathrm{Pairs}} \to \CAlg(\Fun(\bn^{op},\cd(?)));\quad\quad R/A\mapsto \widehat{\Omega}^{\geq \star}_{R/A}$$
along $\mathrm{Poly}_{\mathrm{Pairs}}\to\mathrm{Pairs}^{\ani}$. Here $\widehat{\Omega}^\bullet_{R/A}$ is the algebraic de Rham complex with $p$-completed terms and the target is the category of pairs $(A,F^{\geq\star}M)$ with $A$ an animated ring and $F^{\geq\star}M\in\CAlg(\Fun(\bn^{op},\cd(A)))$.
\label{padicderiveddeRham}\end{definition}

For a morphism of animated rings $R\to R^0$ we denote by $R^\bullet$ its Cech conerve.
\begin{prop} Let $A\to R$ be a morphism of animated rings.
\begin{itemize}
\item[a)] (Base change) For any morphism of animated rings $A\to A'$ the natural map 
$$ \fil^{\geq \star}_{Hod}\widehat{\mathrm{dR}}_{R/A}\otimes_AA'\to \fil^{\geq \star}_{Hod}\widehat{\mathrm{dR}}_{R\otimes_AA'/A'}$$
is an equivalence. Here $\otimes$ denotes the $p$-complete tensor product.
\item[b)] (\'Etale descent in $R$) For an \'etale faithfully flat map $R\to R^0$ the natural map
$$\fil^{\geq \star}_{Hod}\widehat{\mathrm{dR}}_{R/A}\to \Tot \fil^{\geq \star}_{Hod}\widehat{\mathrm{dR}}_{R^\bullet/A}$$
is an equivalence.
\item[c)] (Descent for quasisyntomic $R/A$) Assume $A\to R$ is quasisyntomic.
\begin{itemize}
\item[c1)] (Quasisyntomic descent in $R$) If $R\to R^0$ is quasisyntomic and faithfully flat then the natural map
$$\fil^{\geq \star}_{Hod}\widehat{\mathrm{dR}}_{R/A}\to \Tot \fil^{\geq \star}_{Hod}\widehat{\mathrm{dR}}_{R^\bullet/A}$$
is an equivalence.
\item[c2)] (Flat descent) If $A\to A^0$ is faithfully flat and $R^0:=R\otimes_AA^0$ then the natural map
$$\fil^{\geq \star}_{Hod}\widehat{\mathrm{dR}}_{R/A}\to \Tot \fil^{\geq \star}_{Hod}\widehat{\mathrm{dR}}_{R^\bullet/A^\bullet}$$
is an equivalence.
\end{itemize}
\item[d)] (Invariance under quasi-\'etale extensions) For $A'\to A$ with vanishing $L_{A/A'}\in\cd(A)$ the natural map
$$  \fil^{\geq \star}_{Hod}\widehat{\mathrm{dR}}_{R/A'}\to \fil^{\geq \star}_{Hod}\widehat{\mathrm{dR}}_{R/A}$$
is an equivalence.
\end{itemize}
\label{deRham}\end{prop}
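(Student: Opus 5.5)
The plan is to reduce every statement to the case of polynomial pairs, using that $\fil^{\geq \star}_{Hod}\widehat{\mathrm{dR}}_{R/A}$ is the left Kan extension of Definition \ref{padicderiveddeRham} and commutes with sifted colimits of pairs. All the descent statements will be handled on associated graded pieces, where Hodge filtered de Rham cohomology becomes $\mathrm{gr}^i_{Hod}\simeq (\wedge^i L_{R/A})^\wedge_p[-i]$. This works because the Hodge filtration is complete, or at least because the descent claims can be checked on $\fil^{\geq \star}$ through its graded pieces together with the underlying object. For the underlying object the claims can be deduced, one index at a time, from the finite quotients $\widehat{\mathrm{dR}}/\fil^{\geq i}$; these are finite extensions of graded pieces, and then one passes to the limit. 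Alternatively one uses completeness of the Hodge filtration in the $p$-complete setting after reduction mod $p$.

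For a), both sides commute with sifted colimits in the pair $(A,R)$. The base change $A'$ can also be resolved, so it suffices to treat $A=\bz[X]$, $R=A[Y]$ and $A'$ polynomial. There the claim is the base change property of the algebraic de Rham complex, $\widehat\Omega^{\geq\star}_{R/A}\otimes_AA'\simeq\widehat\Omega^{\geq\star}_{R\otimes_AA'/A'}$, which holds termwise since $\Omega^i$ is free and compatible with base change. For b) I will pass to graded pieces. There $\wedge^iL_{R/A}$ satisfies \'etale descent: since $L_{R^0/R}=0$, we get $\wedge^iL_{R^\bullet/A}\simeq \wedge^iL_{R/A}\otimes_RR^\bullet$, and faithfully flat descent for modules applies. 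The filtered statement then follows by induction on the quotients $\fil^{\geq j}/\fil^{\geq i}$ and passage to limits, using that $\Tot$ commutes with limits. For c1) and c2) the same graded reduction leaves quasisyntomic descent for $(\wedge^iL_{-/A})^\wedge_p$. This is the analogue of \cite{bms19}[Thm.~3.1] in the form of \cite{bhatt-lurie-22}[App.~C]. Hypotheses a)--c) of Definition \ref{lcidef} are there to guarantee the required boundedness: tor-amplitude of $L$ in $[-1,0]$ makes $\wedge^iL[-i]$ uniformly bounded below, and truncatedness of $A$ makes totalizations commute with the relevant colimits and completions. For c2) one additionally uses a), so that $\fil^{\geq\star}_{Hod}\widehat{\mathrm{dR}}_{R^n/A^n}\simeq\fil^{\geq\star}_{Hod}\widehat{\mathrm{dR}}_{R/A}\otimes_AA^n$, and then flat descent for $A$-modules, which requires the same boundedness in order to commute $\otimes_A$ with $\Tot$.

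For d), on graded pieces the transitivity sequence $L_{A/A'}\otimes_AR\to L_{R/A'}\to L_{R/A}$ gives $L_{R/A'}\simeq L_{R/A}$, hence equivalences on all $\mathrm{gr}^i_{Hod}$. It remains to upgrade this to the filtered objects. For $\fil^{\geq\star}$ one needs completeness, or else a comparison of the underlying objects. I expect this to be done by reducing mod $p$ and using the conjugate filtration, whose graded pieces are again $\wedge^i L[-i]$ (Frobenius twisted). The conjugate filtration is exhaustive, so this gives equivalence of the underlying $\widehat{\mathrm{dR}}$ by $p$-completeness.

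The main obstacle will be the completeness and convergence issues. The Hodge filtration on derived de Rham cohomology is not complete in general, so graded equivalences do not automatically give filtered equivalences. I would handle this uniformly by checking underlying objects mod $p$ through the conjugate filtration, as in d), and using the Hodge filtration only for the finite-stage quotients. In c1) and c2) the totalizations must be commuted with the colimit defining the conjugate filtration, and this is where the boundedness hypotheses of Definition \ref{lcidef} get used.
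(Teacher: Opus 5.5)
Your reduction of the filtered descent statements to the graded pieces plus the underlying object $\fil^{\geq 0}=\widehat{\mathrm{dR}}_{R/A}$ is the right one, and it is the paper's. However, your treatment of the underlying object has a genuine gap in b).

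Induction over the quotients $\fil^{\geq j}/\fil^{\geq i}$ followed by ``passage to limits'' only proves descent for the Hodge completion $\lim_i\widehat{\mathrm{dR}}_{R/A}/\fil^{\geq i}$. The Hodge filtration is not complete, not even mod $p$. For $\bF_p[x]\to\bF_p$ ($x\mapsto 0$) one gets the divided power algebra $\bF_p\langle x\rangle=\bigoplus_m\bF_p x^{[m]}$ with its divided power filtration, whose completion is $\prod_m\bF_p x^{[m]}$. So the claims in your first paragraph are false: that the filtration is complete, that it is complete ``after reduction mod $p$'', and that ``one passes to the limit''. Your last paragraph partly concedes this.

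Your fallback via the conjugate filtration is what the paper does in c). There it works only because the quasisyntomic hypotheses put all $\fil^{conj}_i\widehat{\mathrm{dR}}_{R^k/A^k}$ in cohomological degrees $\geq -m$, so that $\Tot$ commutes with $\mathrm{colim}_i$. In b) the map $A\to R$ is arbitrary and no such bound exists.

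The missing idea is to avoid commuting $\Tot$ with that colimit altogether. After reducing to characteristic $p$ via a), use the \'etale base change formula $\widehat{\mathrm{dR}}_{R^0/A}\simeq\widehat{\mathrm{dR}}_{R/A}\otimes_{R^{(1)}}R^{0,(1)}$, where $R^{(1)}=R\otimes_{A,\mathrm{Frob}_A}A$ (\cite{bhatt12}[Cor. 3.14]). This formula can itself be obtained from the conjugate filtration: its graded pieces $L\widehat{\Omega}^i_{R^{(1)}/A}[-i]$ base change along the \'etale map $R^{(1)}\to R^{0,(1)}$, and one passes to the colimit before descending. Then $\widehat{\mathrm{dR}}_{R^\bullet/A}$ is the \v{C}ech object of the single $R^{(1)}$-module $\widehat{\mathrm{dR}}_{R/A}$ along the faithfully flat map $R^{(1)}\to R^{0,(1)}$. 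Faithfully flat descent for modules requires no boundedness, so it gives descent for $\fil^{\geq 0}$.

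Two smaller points. First, in d), after reducing mod $p$ the conjugate graded pieces over $A'$ and over $A$ are Frobenius twists along different bases: $\wedge^iL_{R/A'}\otimes_{A',\mathrm{Frob}}A'$ versus $\wedge^iL_{R/A}\otimes_{A,\mathrm{Frob}}A$, up to shift. Even granting $L_{R/A'}\simeq L_{R/A}$, identifying them requires the relative Frobenius $A\otimes_{A',\mathrm{Frob}}A'\to A$ to be an equivalence. For $R=A$ this is equivalent to the underlying part of d) itself, so it is an input you have not supplied; the paper simply cites \cite{bhatt12}[Lemma 8.3 (5)].

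Second, your c1) is the paper's argument. Your route to c2), via a) and descent for the $A$-modules $\fil^{\geq i}\widehat{\mathrm{dR}}_{R/A}$ along $A\to A^\bullet$, is a valid and simpler alternative to the paper's conjugate-filtration argument. Since flat descent holds for unbounded modules (the paper uses exactly this in b)), your route does not need the quasisyntomic hypothesis, contrary to your worry about boundedness.
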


\begin{proof} Base change can be directly checked for $\widehat{\Omega}^{\geq \star}_{R/A}$ and follows by compatibility of both sides with sifted colimits (see also \cite{bhatt12}[Prop. 2.7] for the discrete case). Since $\mathrm{gr}^i_{Hod}\widehat{\mathrm{dR}}_{R/A}\simeq L\widehat{\Omega}^i_{R/A}[-i]$ satisfies descent for $(A\to R)\to (A^0\to R^0)$ with $A\to A^0$ flat and $R\to R^0$ faithfully flat \cite{akn23}[Lemma 3.14] (flat descent for short),  in order to prove b) and c) it suffices to prove the respective descent property for $\fil^{\geq 0}_{Hod}\widehat{\mathrm{dR}}_{R/A}=\widehat{\mathrm{dR}}_{R/A}$. By $p$-completeness and base change, in order to prove descent for $\widehat{\mathrm{dR}}_{R/A}$ it suffices to do so for $R$ and $A$ of characteristic $p$. By \cite{bhatt12}[Cor. 3.14] (stated for discrete $R$ and $R^0$ but true in general) one has an isomorphism
$$\widehat{\mathrm{dR}}_{R^0/A}\simeq \widehat{\mathrm{dR}}_{R/A}\otimes_{R^{(1)}}R^{0,(1)}$$
for \'etale $R\to R^0$ where $R^{(1)}:=R\otimes_{A,\mathrm{Frob}_A}A$. \'Etale descent then follows from faithfully flat descent for modules \cite{lurieSAG}[Cor. D.6.3.4]. Part c) can be proven along the lines of \cite{bhatt-lurie-22}[E.17]. The comparison map 
\begin{equation}\widehat{\mathrm{dR}}_{R/A}\to \Tot \widehat{\mathrm{dR}}_{R^\bullet/A^\bullet}\label{d11}\end{equation}
is a map of filtered complexes using the conjugate filtration \cite{bhatt-lurie-22}[E.7]. The associated graded $\mathrm{gr}^{conj}_i\widehat{\mathrm{dR}}_{R/A}\simeq L\widehat{\Omega}^i_{R^{(1)}/A}[-i]$ satisfy flat descent as already remarked above. Note here that $R^{(1)}\to R^{0,(1)}$ is again faithfully flat, by base change along $\mathrm{Frob}_A$ in c1) and since $A\to A^0$ is faithfully flat and hence so is
$$R^{(1)}\to R^{(1)}\otimes_AA^0\simeq R^0\otimes_{A^0,\mathrm{Frob}_{A^0}}A^0=R^{0,(1)}$$ 
in c2). It follows in both cases that all $R^{k,(1)}$ are $m$-truncated if $R^{(1)}$ is $m$-truncated for some $m\geq 0$. Such $m$ exists since $R^{(1)}/A$ is again quasisyntomic. In fact all $R^{k,(1)}/A^k$ are quasisyntomic so that $L\widehat{\Omega}^i_{R^{k,(1)}/A^k}[-i]$ has $R^{k,(1)}$-tor-amplitude in $[0,i]$, hence is concentrated in cohomological degrees $\geq -m$. It follows that all $\fil^{conj}_i\widehat{\mathrm{dR}}_{R^k/A^k}$ are concentrated in cohomological degrees $\geq -m$ and that therefore totalization commutes with the colimit of the conjugate filtration. This concludes the proof that (\ref{d11}) is an equivalence. Part d) is \cite{bhatt12}[Lemma 8.3 (5)].
\end{proof}

Let $\Mod_\bz$ be the presentable, stable $\infty$-category of $\bz$-modules. We denote by $\bz^\delta$ the discrete category on the set $\bz$ with symmetric monoidal structure given by addition. The category $\Fun(\bz^{\delta},\Mod_\bz)$ of $\bz$-graded $\bz$-modules with Day convolution symmetric monoidal structure may be viewed in several equivalent ways. $\Fun(\bz^{\delta},\Mod_\bz)$ is also the category of algebraic $\bg_m$-representations over $\bz$ or the category of $\co(\bg_m)$-comodules in $\Mod_\bz$. For $(i\mapsto M^i)\in \Fun(\bz^{\delta},\Mod_\bz)$ we denote by
$$M:=\bigoplus_{i\in\bz}M^i;\quad M\to \co(\bg_m)\otimes M$$
the underlying $\co(\bg_m)$-comodule. We often denote a graded module just by $M$ and leave the grading implicit.

A graded ring is an object of $\CAlg(\Fun(\bz^\delta,\Mod_\bz^\heartsuit))$ and an animated graded ring is an object of 
$\CAlg(\Fun(\bz^\delta,\Mod_\bz^\heartsuit))^{\ani}$. An animated graded ring $A$ gives rise to an object $A^{gr}$ of $\CAlg(\Fun(\bz^\delta,\Mod_\bz))$ and we abbreviate $$\Mod_{A^{gr}}:=\Mod_{A^{gr}}(\Fun(\bz^\delta,\Mod_\bz)).$$

\begin{definition} (Graded derived de Rham cohomology) Let $A\to R$ be a morphism of animated graded rings. Denote by 
$$ \fil^{\geq \star}_{Hod}\widehat{\mathrm{dR}}_{ R/A}\in\CAlg(\Fun(\bn^{op},\Mod_{A^{gr}})^\wedge_p)$$
the multiplicative graded filtration whose $\co(\bg_m)$-comodule structure is obtained by functoriality along 
$$\begin{CD} A @>>> \co(\bg_m)\otimes A\\ @VVV@VVV\\  R @>>> \co(\bg_m)\otimes R\end{CD}$$ 
combined with base change (Prop. \ref{deRham} a)). The commutative diagram
$$\begin{CD} A\otimes \widehat{\mathrm{dR}}_{R/A} @>>> \widehat{\mathrm{dR}}_{R/A}\\
@VVV@VVV\\
(\co(\bg_m)\otimes A)\otimes \widehat{\mathrm{dR}}_{\co(\bg_m)\otimes R/\co(\bg_m)\otimes A} @>>> \widehat{\mathrm{dR}}_{\co(\bg_m)\otimes R/\co(\bg_m)\otimes A}\end{CD}$$
shows that the $A$-module structure of $\widehat{\mathrm{dR}}_{R/A}$ is compatible with the $\co(\bg_m)$-comodule structure of both objects so that we do get an object of $\Mod_{A^{gr}}$. 
\label{gradeddeRham}\end{definition}

Viewing $\bz$ as an ordered set, i.e. category with morphisms $i\to j$ for $j\geq i$, the category $\Fun(\bz^{op},\Mod_\bz)$ is the category of decreasing filtrations $F^{\geq\star} M$, again with Day convolution symmetric monoidal structure. There is a $t$-exact symmetric monoidal equivalence
\begin{equation}  \mathrm{Rees}: \Fun(\bz^{op},\Mod_\bz) \simeq \Mod_{\bz[t]^{gr}}(\Fun(\bz^\delta,\Mod_\bz))\label{Reeseq}\end{equation}
sending a filtered $\bz$-module $F^{\geq \star} M$ to the graded $\bz[t]$-modules with underlying $\co(\bg_m)$-comodule
$$ \mathrm{Rees}(F^{\geq \star} M):=\bigoplus_{i\in\bz}F^{\geq i}M\cdot t^{-i} $$
where $t$ is an element of grading weight $-1$.  

A filtered ring is an object of $\CAlg(\Fun(\bz^{op},\Mod_\bz^\heartsuit))$ and an animated filtered ring is an object of 
$\CAlg(\Fun(\bz^{op},\Mod_\bz^\heartsuit))^{\ani}$.  An animated filtered ring $F^{\geq\star} A$ gives rise to an object of $\CAlg(\Fun(\bz^{op},\Mod_\bz))$ and we abbreviate 
$$\Mod_{F^{\geq\star} A}:=\Mod_{F^{\geq\star} A}(\Fun(\bz^{op},\Mod_\bz))\simeq \Mod_{ \mathrm{Rees}(F^{\geq \star} A)^{gr}}(\Fun(\bz^\delta,\Mod_\bz)).$$
\begin{example} The initial filtered ring, given by the filtration
$$ F^{\geq i}\bz=\begin{cases} \bz & i\leq 0 \\ 0 & i\geq 1\end{cases} $$
is the unit for the symmetric monoidal structure of $\Fun(\bz^{op},\Mod_\bz)$ and $$\mathrm{Rees}(F^{\geq\star}\bz)=\bz[t].$$
\label{initial}\end{example}

\begin{definition} (Filtered derived de Rham cohomology) Let $F^{\geq\star} A\to F^{\geq\star} R$ be a morphism of animated filtered rings.
Denote by 
$$ F^{\geq\star}\fil^{\geq \star}_{Hod}\widehat{\mathrm{dR}}_{F^{\geq\star}R/F^{\geq\star} A}\in\CAlg(\Fun(\bn^{op},\Mod_{F^{\geq\star}A})^\wedge_p)$$
the multiplicative bifiltration obtained under the Rees equivalence (\ref{Reeseq}) from
$$ \fil^{\geq \star}_{Hod}\widehat{\mathrm{dR}}_{\mathrm{Rees}(F^{\geq\star}R)/\mathrm{Rees}(F^{\geq\star} A)}\in\CAlg(\Fun(\bn^{op},\Mod_{\mathrm{Rees}(F^{\geq\star}A)^{gr}})^\wedge_p)$$
as defined in Def. \ref{gradeddeRham}.
\label{filtereddeRham}\end{definition}

\begin{definition} For a filtration $F^{\geq\star} M\in \Fun(\bz^{op},\Mod_\bz)$ we define the \\ \underline{underlying module} of the filtration as the colimit
$$ F^\infty M:=\mathrm{colim}_{i\to -\infty}F^{\geq i} M$$
and the \underline{associated graded} as
$$ \mathrm{gr}^i_FM:=F^{\geq i} M/F^{\geq i+1} M.$$
We call a filtration $F^{\geq\star}M$ 
\begin{enumerate}
\item[a)] \underline{$\bn^{op}$-indexed} if $\mathrm{gr}^i_FM=0$ for $i<0$.
\item[b)] \underline{complete} if $\varprojlim_jF^{\geq j}M=0$.
\item[c)] \underline{bounded} (in some interval $[r,s]$) if $F^{\geq\star}M$ is complete and $\mathrm{gr}^i_FM=0$ for all but finitely many $i$ (all but $i\in [r,s]$).
\item[d)] \underline{strict} if all $F^{\geq j}M$ are discrete and all maps $F^{\geq j+1}M\to F^{\geq j}M$ are injective.
 \end{enumerate}
\end{definition} 

We have isomorphisms of $\co(\bg_m)$-comodules 
\begin{align}\mathrm{Rees}(F^{\geq \star} M)\otimes_{\bz[t]}\bz[t,t^{-1}]\simeq\  &F^\infty M\otimes_\bz\bz[t,t^{-1}]\label{reesunder}\\
\mathrm{Rees}(F^{\geq \star} M)\otimes_{\bz[t]}\bz\simeq\ & \bigoplus_{i\in\bz}\mathrm{gr}^i_FM\label{reesgraded}\end{align}
where in the last tensor product $t\mapsto 0$. Note that $F^\infty M$ can be (functorially and multiplicatively) recovered from (\ref{reesunder}) as the $\bg_m$-invariants, i.e. the weight $0$ graded part. In fact one has a symmetric monoidal $t$-exact equivalence
\begin{equation}  \Mod_\bz \simeq \Mod_{\bz[t,t^{-1}]^{gr}}(\Fun(\bz^\delta,\Mod_\bz));\quad\quad M\mapsto M\otimes_\bz\bz[t,t^{-1}].\label{modequivalence}\end{equation}
Also note that if $F^{\geq\star} M$ is $\bn^{op}$-indexed then $F^\infty M=F^{\geq 0}M$.

\begin{prop} Let $F^{\geq\star} A\to F^{\geq\star} R$ be a morphism of animated filtered rings. 

a) We have natural isomorphisms of $F^{\infty} A$-modules
$$ F^{\infty}\fil^{\geq \star}_{Hod}\widehat{\mathrm{dR}}_{F^{\geq\star}R/F^{\geq\star} A}\simeq \fil^{\geq \star}_{Hod}\widehat{\mathrm{dR}}_{F^{\infty}R/F^{\infty} A}$$
and of graded $\bigoplus_i\mathrm{gr}^i_FA$-modules
$$ \bigoplus_{i\in\bz}\mathrm{gr}^i_F\fil^{\geq \star}_{Hod}\widehat{\mathrm{dR}}_{F^{\geq\star}R/F^{\geq\star} A}\simeq \fil^{\geq \star}_{Hod}\widehat{\mathrm{dR}}_{\bigoplus_i\mathrm{gr}^i_FR/\bigoplus_i\mathrm{gr}^i_FA}$$

b) If $F^{\geq\star} A$ and $F^{\geq\star} R$ are $\bn^{op}$-indexed, so is $F^{\geq\star}\fil^{\geq \star}_{Hod}\widehat{\mathrm{dR}}_{F^{\geq\star}R/F^{\geq\star} A}$. Hence in this case
$$ F^{\geq 0}\fil^{\geq \star}_{Hod}\widehat{\mathrm{dR}}_{F^{\geq\star}R/F^{\geq\star} A}\simeq \fil^{\geq \star}_{Hod}\widehat{\mathrm{dR}}_{F^{\geq 0}R/F^{\geq 0} A}.$$
Moreover in this case
$$ \mathrm{gr}^0_F\fil^{\geq \star}_{Hod}\widehat{\mathrm{dR}}_{F^{\geq\star}R/F^{\geq\star} A}\simeq \fil^{\geq \star}_{Hod}\widehat{\mathrm{dR}}_{\mathrm{gr}^0_FR/\mathrm{gr}^0_FA}$$
where the map is induced by a) and the map of animated graded pairs 
$$(\mathrm{gr}^0_FR/\mathrm{gr}^0_FA)\to (\bigoplus_i\mathrm{gr}^i_FR/\bigoplus_i\mathrm{gr}^i_FA).$$
\label{deRhambc}\end{prop}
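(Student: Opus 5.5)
The plan is to reduce both statements of a) to the base change property, Prop. \ref{deRham} a), applied to the Rees pairs. We regard $\mathrm{Rees}(F^{\geq\star}A)\to\mathrm{Rees}(F^{\geq\star}R)$ as a morphism of animated graded $\bz[t]$-algebras. By Def. \ref{filtereddeRham}, the bifiltration is $\fil^{\geq\star}_{Hod}\widehat{\mathrm{dR}}_{\mathrm{Rees}(R)/\mathrm{Rees}(A)}$, a graded module over $\mathrm{Rees}(A)$ and hence over $\bz[t]$.

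The first key point is that the functors "underlying module" and "associated graded" correspond under the Rees equivalence (\ref{Reeseq}) to base change along $\bz[t]\to\bz[t,t^{-1}]$ (followed by taking weight $0$), respectively along $\bz[t]\to\bz$. This is (\ref{reesunder}), (\ref{reesgraded}) and (\ref{modequivalence}). Base change for Hodge filtered de Rham cohomology along $\mathrm{Rees}(A)\to\mathrm{Rees}(A)\otimes_{\bz[t]}\bz[t^{\pm1}]$ gives
$$\fil^{\geq\star}_{Hod}\widehat{\mathrm{dR}}_{\mathrm{Rees}(R)/\mathrm{Rees}(A)}\otimes_{\bz[t]}\bz[t^{\pm1}]\simeq \fil^{\geq\star}_{Hod}\widehat{\mathrm{dR}}_{F^\infty R[t^{\pm1}]/F^\infty A[t^{\pm1}]}\simeq \fil^{\geq\star}_{Hod}\widehat{\mathrm{dR}}_{F^\infty R/F^\infty A}\otimes\bz[t^{\pm1}],$$
where the last step is base change along $F^\infty A\to F^\infty A[t^{\pm 1}]$. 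Taking weight $0$ parts then yields the first isomorphism. The analogous argument with $t\mapsto 0$ gives the second isomorphism.

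The step needing care is that these isomorphisms respect the $\co(\bg_m)$-comodule structures, i.e.\ that they are equivalences of graded objects and not merely of underlying modules. Since the grading in Def. \ref{gradeddeRham} is defined by functoriality along the coaction maps together with base change, naturality of the base change map in the pair gives compatibility. A second subtlety is that tensoring with $\bz[t^{\pm1}]$ may not commute with $p$-completion. Here we use that the comodule structure is computed weightwise: each weight piece is $p$-complete, so we work in $\Fun(\bz^\delta,\Mod_\bz)^\wedge_p$, where the colimit $F^\infty$ is taken after $p$-completion.

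For b), the plan is to check $\bn^{op}$-indexedness, i.e.\ that $\mathrm{gr}^i_F$ vanishes for $i<0$. By a) this equals the weight $i$ part of $\fil_{Hod}\widehat{\mathrm{dR}}$ of the graded pair $\bigoplus_i\mathrm{gr}^i_F$, which is nonnegatively graded. Negative weights vanish for nonnegatively graded pairs, since the left Kan extension from polynomial pairs with generators in nonnegative weights preserves this property: de Rham complexes of such polynomial pairs live in nonnegative weights, and sifted colimits and completion preserve this. Then $F^\infty=F^{\geq0}$ gives the first claim of b), combined with a). For the statement about $\mathrm{gr}^0_F$, we take the weight $0$ part of the second isomorphism in a). It remains to show that the weight $0$ part of $\fil_{Hod}\widehat{\mathrm{dR}}$ of a nonnegatively graded pair $B/C$ is $\fil_{Hod}\widehat{\mathrm{dR}}_{B_0/C_0}$, via the map induced by $(B_0/C_0)\to(B/C)$. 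We check this again on polynomial pairs, where weight $0$ forms in the de Rham complex involve only weight $0$ variables. The claim then extends by sifted colimits, since $B\mapsto B_0$ commutes with sifted colimits. I expect this identification of the weight $0$ part to be the main, though mild, obstacle.
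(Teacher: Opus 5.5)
Your proposal is correct and follows essentially the same route as the paper. Part a) is base change (Prop.~\ref{deRham} a)) for the Rees pair along $\bz[t]\to\bz[t,t^{-1}]$ and $\bz[t]\to\bz$, using (\ref{reesunder}), (\ref{reesgraded}) and the $p$-complete version of (\ref{modequivalence}). Part b) reduces to the statement for non-negatively graded pairs (Lemma~\ref{gr0lemma}), which is checked on free polynomial pairs and extended by sifted colimits. Your additional remarks on compatibility with the $\co(\bg_m)$-comodule structure and on $p$-completion address points the paper leaves implicit, and you handle them correctly.
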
 

\begin{proof} Part a) is an immediate consequence of (\ref{reesunder}) and (\ref{reesgraded}) combined with base change (Prop. \ref{deRham} a)). For example
\begin{align*}
&\fil^{\geq \star}_{Hod}\widehat{\mathrm{dR}}_{F^{\infty}R/F^{\infty} A}\hat{\otimes}_\bz\bz[t,t^{-1}]\\ \simeq &\fil^{\geq \star}_{Hod}\widehat{\mathrm{dR}}_{F^{\infty}R\otimes_\bz\bz[t,t^{-1}]/F^{\infty} A\otimes_\bz\bz[t,t^{-1}]}\\
\simeq &\fil^{\geq \star}_{Hod}\widehat{\mathrm{dR}}_{\mathrm{Rees}(F^{\geq\star}R)\otimes_{\bz[t]}\bz[t,t^{-1}]/\mathrm{Rees}(F^{\geq\star}A)\otimes_{\bz[t]}\bz[t,t^{-1}]}\\
\simeq &\fil^{\geq \star}_{Hod}\widehat{\mathrm{dR}}_{\mathrm{Rees}(F^{\geq\star}R)/\mathrm{Rees}(F^{\geq\star}A)}\hat{\otimes}_{\bz[t]}\bz[t,t^{-1}]\\
\simeq & \mathrm{Rees}\left(F^{\geq\star}\fil^{\geq \star}_{Hod}\widehat{\mathrm{dR}}_{F^{\geq\star}R/F^{\geq\star} A}\right)\hat{\otimes}_{\bz[t]}\bz[t,t^{-1}]\\
\simeq & F^{\infty}\fil^{\geq \star}_{Hod}\widehat{\mathrm{dR}}_{F^{\geq\star}R/F^{\geq\star} A}\hat{\otimes}_{\bz}\bz[t,t^{-1}]
\end{align*}
which implies the first isomorphism in a) by the $p$-complete version of (\ref{modequivalence}).

Part b) follows from the second isomorphism in a) and the following Lemma. 
\end{proof}

\begin{lemma} For a morphism of non-negatively graded animated rings $A\to R$ $\fil^{\geq \star}_{Hod}\widehat{\mathrm{dR}}_{ R/A}$ is non-negatively graded and $(R^0/A^0)\to(R/A)$ induces an isomorphism
$$\left(\fil^{\geq \star}_{Hod}\widehat{\mathrm{dR}}_{ R/A}\right)^0\simeq \fil^{\geq \star}_{Hod}\widehat{\mathrm{dR}}_{R^0/A^0}.$$
\label{gr0lemma}\end{lemma}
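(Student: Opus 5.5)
Both claims will be reduced to the polynomial case by left Kan extension, and then checked by hand on the explicit de Rham complex. The graded version of Definition \ref{padicderiveddeRham} is the left Kan extension from graded polynomial pairs
$$\bz[X_1,\dots,X_n]\to \bz[X_1,\dots,X_n,Y_1,\dots,Y_m],$$
where each variable carries an integer weight. The $\co(\bg_m)$-coaction in Definition \ref{gradeddeRham} comes from functoriality and base change. Both operations commute with sifted colimits, so on polynomial pairs the grading is the evident weight grading on $\widehat\Omega^{\geq\star}_{R/A}$, with $dX$ given the weight of $X$. Animated non-negatively graded pairs are exactly the sifted colimits of polynomial pairs with all weights $\geq 0$. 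Two further facts are needed. First, $p$-completion and sifted colimits commute with taking weight components, since both are computed weightwise in $\Fun(\bz^\delta,\Mod_\bz)$. Second, the functor $R\mapsto R^0$ on animated graded rings is the animation of its restriction to polynomial rings, because it preserves sifted colimits.

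For the polynomial case, let all weights be $\geq 0$. The $A$-module $\widehat\Omega^i_{R/A}$ is spanned by monomials $Y^\alpha\, dY_{j_1}\wedge\dots\wedge dY_{j_i}$ with $A$-coefficients. All weights occurring are non-negative, so every term of the de Rham complex and of its Hodge filtration is non-negatively graded. Passing to colimits then gives the first claim.

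For the second claim, the weight-$0$ part of $\Omega^\bullet_{R/A}$ (before completion) is spanned by products of weight-$0$ coefficients in $A^0$ and monomials in which only weight-$0$ variables and differentials occur. Indeed, a positive-weight $Y_j$ or $dY_j$ forces positive total weight, and $A$ contributes only non-negative weight. Write $R^0=A^0[Y_j:\mathrm{wt}\,Y_j=0]$ and $A^0=\bz[X_k:\mathrm{wt}\,X_k=0]$. Then the weight-$0$ part of $\Omega^{\geq\star}_{R/A}$ is identified with $\Omega^{\geq\star}_{R^0/A^0}$, compatibly with $d$ and with the stupid filtration. Taking $p$-completion weightwise gives the isomorphism for polynomial pairs, and this isomorphism is induced by the map $(R^0/A^0)\to(R/A)$. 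Naturality in maps of graded polynomial pairs is clear.

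The remaining step is to extend to all animated pairs. The left-hand side $(\fil^{\geq\star}_{Hod}\widehat{\mathrm{dR}}_{R/A})^0$ commutes with sifted colimits of pairs, because taking weight $0$ does. The right-hand side commutes with sifted colimits because $(R/A)\mapsto(R^0/A^0)$ preserves sifted colimits of non-negatively graded pairs and $\fil^{\geq\star}_{Hod}\widehat{\mathrm{dR}}$ commutes with sifted colimits. The step I expect to need the most care is this identification of $(-)^0$ on animated graded rings with the animation of $(-)^0$ on polynomial rings, together with the compatibility of the comodule structure from Definition \ref{gradeddeRham} with the naive weight grading. For the latter I would check on polynomial pairs that the coaction map $\widehat{\mathrm{dR}}_{R/A}\to\widehat{\mathrm{dR}}_{\co(\bg_m)\otimes R/\co(\bg_m)\otimes A}$ sends a monomial form of weight $w$ to $u^w\otimes(\text{form})$, where $u$ is the coordinate on $\bg_m$. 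This is immediate since the coaction is a map of dg-algebras and is determined by its values on the generators $X,Y,dY$.
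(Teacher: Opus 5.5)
Your proposal is correct and follows essentially the same route as the paper: you reduce via sifted colimits to free non-negatively graded pairs $\bz[X_1,\dots,X_n]\to\bz[X_1,\dots,X_n,Y_1,\dots,Y_m]$, and then read off from the basis $dY_{i_1}\wedge\cdots\wedge dY_{i_k}$ that $\Omega^k_{R/A}$ is non-negatively graded with weight-$0$ part $\Omega^k_{R^0/A^0}$. The paper leaves implicit the extra points you check: weightwise $p$-completion, $(-)^0$ commuting with sifted colimits, and compatibility of the coaction with the naive weight grading. These are exactly what justify that reduction.
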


\begin{proof}
Writing $A\to R$ as a sifted colimit of free non-negatively graded pairs we can assume
$$A:=\bz[X_1,\dots,X_n]\to \bz[X_1,\dots,X_n, Y_1,\dots,Y_m]=: R$$ 
where $X_i,Y_j$ are variables of degrees $\deg(X_i),\deg(Y_j)\geq 0$. We need to verify that
$\Omega^k_{ R/A}$ is non-negatively graded and that
$$\left(\Omega^k_{ R/A} \right)^0=\Omega^k_{R^0/A^0}.$$
But $\Omega^k_{ R/A}$ is a free $ R$-module on generators $\{dY_{i_1}\wedge\cdots\wedge dY_{i_k} \vert \ 1\leq i_1<\cdots < i_k\leq m\}$ of degrees $\deg(Y_{i_1})+\cdots +\deg(Y_{i_k})\geq 0$, hence is non-negatively graded. We have 
$$A^0=\bz[X_i\vert\ \deg(X_i)=0]\to \bz[X_i,Y_j\vert \deg(X_i)=\deg(Y_j)=0]=R^0$$ 
and
$$\left(\Omega^k_{ R/A} \right)^0=\bigoplus_{\deg(Y_{i_j})=0} R^0\cdot dY_{i_1}\wedge\cdots\wedge dY_{i_k} =\Omega^k_{R^0/A^0}.$$
This proves Lemma \ref{gr0lemma}.
\end{proof}

\subsection{Filtrations and derived formal stacks}\label{filtrations} There is yet another way to view the category of graded modules over a graded ring $A$, namely as $\bg_m$-equivariant quasi-coherent sheaves on $\Spec(A)$, or equivalently quasi-coherent sheaves on the quotient stack $\Spec(A)/\bg_m$. We will use this point of view primarily as a convenient notational device to globalize the constructions of the previous section. Since
$$ \fil^{\geq \star}_{Hod}\widehat{\mathrm{dR}}_{R/A}\simeq\fil^{\geq \star}_{Hod}\widehat{\mathrm{dR}}_{R^\wedge_p/A^\wedge_p}$$
(by $p$-completeness and base change to the mod p reduction) we do this in the context of derived $p$-adic formal geometry.  
For any animated graded ring $A$ we have
$$ (\Mod_{A^{gr}})^\wedge_p\simeq\cd(\Spf(A)/\widehat{\bg}_m)\simeq\Tot \cd(\Spf(\ca^\bullet))$$
where the simplicial derived formal scheme $\Spf(\ca^\bullet)$ is the action groupoid of the $\widehat{\bg}_m$-action on $\Spf(A)$, i.e.
$$ \ca^i\simeq A\hat{\otimes}\co(\widehat{\bg}_m)^{\hat{\otimes} i}\simeq A[u_1^{\pm 1},\dots,u_{i}^{\pm 1}]_p^\wedge$$
for $i\geq 0$. From this perspective, the construction of the $\co(\widehat{\bg}_m)$-coaction on $\widehat{\mathrm{dR}}_{R/A}$ in Def. \ref{gradeddeRham} becomes a definition
$$\widehat{\mathrm{dR}}_{R\otimes_A\ca^\bullet/\ca^\bullet}\in\Tot \cd(\Spf(\ca^\bullet))$$
via descent (this also makes precise higher coherences which we have ignored in Def. \ref{gradeddeRham}). The association 
$$R\quad \mapsto\quad \left( \Spf(R)/\widehat{\bg}_m \to \mathfrak{A}:=\Spf(A)/\widehat{\bg}_m\right)$$ sets up a contravariant equivalence between the category of $p$-complete animated graded $A$-algebras and the category of affine morphisms of derived formal stacks 
$$g:\mathfrak{X}\to \mathfrak{A}$$
with inverse
$$g\mapsto\co(\mathfrak{X}\times_\mathfrak{A}\Spf(A))\simeq \co(\Spf(R))=R.$$

\begin{definition} ($\widehat{\bg}_m$-equivariant  derived de Rham cohomology of derived formal schemes) Let $A$ be a animated graded ring and
$$\mathfrak{X}\to \mathfrak{A}:=\Spf(A)/\widehat{\bg}_m$$ 
a morphism of derived formal stacks such that $\mathscr{X}_0:=\mathfrak{X}\times_{\mathfrak{A}}\Spf(A)$ is a derived formal scheme.
Define
$$\fil^{\geq \star}_{Hod}\widehat{\mathrm{dR}}_{\mathfrak{X}/\mathfrak{A}}:=\left(\fil^{\geq \star}_{Hod}\widehat{\mathrm{dR}}_{\mathscr{X}_\bullet/\ca^\bullet}\right) \in\Tot\CAlg(\Fun(\bn^{op},\cd(\Spf(\ca^\bullet))))$$
where  
$$\ca^i:= A\hat{\otimes}\co(\widehat{\bg}_m)^{\hat{\otimes} i},\quad
\mathscr{X}_i:=\mathfrak{X}\times_\mathfrak{A}\Spf(\ca^i)\simeq \mathscr{X}_0\times\widehat{\bg}_m^i.$$ 
\label{schematicdeRham} \end{definition}

\begin{remark} Note that for a morphism of derived formal schemes $\mathscr{X}\to\Spf(A)$ $\fil^{\geq \star}_{Hod}\widehat{\mathrm{dR}}_{\mathscr{X}/A}$ is well defined by Zariski descent (Prop. \ref{deRham} b)) and satisfies base change by Prop. \ref{deRham} a).
\end{remark}

For any animated filtered ring $F^{\geq\star} A$ define the derived formal stack
$$ F\Spf(A):=\Spf(\mathrm{Rees}(F^{\geq \star} A))/\widehat{\bg}_m.$$
We have the following stack theoretic incarnation of the Rees construction (see also \cite{bhatt22}[Prop. 2.2.6]).

\begin{prop} Let $F^{\geq\star} A$ be a animated filtered ring. The Rees construction (\ref{Reeseq}) gives a $t$-exact symmetric monoidal equivalence
\begin{equation} (\Mod_{F^{\geq\star} A})^\wedge_p =\Mod_{ \mathrm{Rees}(F^{\geq \star} A)^{gr}}(\Fun(\bz^\delta,\Mod_\bz))^\wedge_p\simeq \cd(F\Spf(A))
\notag\end{equation}
with inverse sending $\cf\in \cd(F\Spf(A))$ to the filtered complex
$$ \cdots\to R\Gamma( F\Spf(A)  ,\cf(-n-1)) \to R\Gamma(F\Spf(A) ,\cf(-n)) \to\cdots $$
where the transition maps are induced by the tautological cosection \cite{bhatt22}[Construction 2.2.5] 
$$\co_{F\Spf(A)}(-1)\to \co_{F\Spf(A)}.$$
Base change to 
$$ \Spf(\mathrm{Rees}(F^{\geq \star} A)[t^{-1}])/\widehat{\bg}_m \simeq \Spf(F^{\infty} A\otimes_\bz\bz[t,t^{-1}])/\widehat{\bg}_m\simeq \Spf(F^{\infty} A)$$
recovers the underlying complex of the filtered complex (i.e. its $p$-complete colimit), and base change to
$$\Spf\left(\bigoplus_i\mathrm{gr}^i_FA\right)/\widehat{\bg}_m$$
recovers the associated graded.
\label{filtered}\end{prop}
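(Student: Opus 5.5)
The plan is to reduce everything to the affine action-groupoid description of $F\Spf(A)$ and the algebraic Rees equivalence (\ref{Reeseq}). First, by definition $F\Spf(A)=\Spf(\mathrm{Rees}(F^{\geq\star}A))/\widehat{\bg}_m$. As recalled at the start of \ref{filtrations}, for any animated graded ring $B$ we have
$$(\Mod_{B^{gr}})^\wedge_p\simeq \cd(\Spf(B)/\widehat{\bg}_m)\simeq \Tot\,\cd(\Spf(B\hat{\otimes}\co(\widehat{\bg}_m)^{\hat{\otimes}\bullet})).$$
This holds because $p$-complete $\co(\widehat{\bg}_m)$-comodules over $B$ are the same as $p$-complete graded $B$-modules: a $\bz$-grading is exactly a coaction of $\co(\bg_m)=\bz[u^{\pm1}]$, and $p$-completion commutes with this identification. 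I apply this with $B=\mathrm{Rees}(F^{\geq\star}A)$ and compose with the Rees equivalence (\ref{Reeseq}), upgraded to modules over $F^{\geq\star}A$, which is symmetric monoidal and $t$-exact. Both steps are symmetric monoidal. For $t$-exactness, the $t$-structure on $\cd(F\Spf(A))$ is detected after pullback along the flat cover $\Spf(B)\to F\Spf(A)$, where it becomes the $t$-structure on underlying $B$-modules. This gives the displayed equivalence.

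Second, I identify the inverse. For $\cf$ corresponding to a graded module $M=\bigoplus_i M^i$, the global sections $R\Gamma(F\Spf(A),\cf)$ are the $\widehat{\bg}_m$-invariants, i.e. the weight-$0$ part $M^0$. Twisting by $\co(-n)$ shifts the grading, so $R\Gamma(F\Spf(A),\cf(-n))=M^{n}$ up to the sign convention for the twist. Under $\mathrm{Rees}$ this weight piece is $F^{\geq n}$ of the filtration, since $F^{\geq n}M\cdot t^{-n}$ sits in weight $n$. The tautological cosection $\co(-1)\to\co$ corresponds to multiplication by $t$, of weight $-1$, and $t\cdot: F^{\geq n+1}t^{-n-1}\to F^{\geq n}t^{-n}$ is exactly the transition map of the filtration. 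I expect the main obstacle to be fixing the sign and normalization conventions of \cite{bhatt22}[Construction 2.2.5] consistently with $t$ having weight $-1$. I would handle this by computing directly on the structure sheaf, i.e. on $F^{\geq\star}A$ itself, and then arguing by linearity.

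Third, the base-change statements. Pulling back along the open substack $t\neq 0$ corresponds to $-\otimes_{\bz[t]}\bz[t,t^{-1}]$, and by (\ref{reesunder}) this gives $F^\infty M\otimes\bz[t,t^{-1}]$. Identifying the stack with $\Spf(F^\infty A)$ uses the equivalence (\ref{modequivalence}): $\bg_m$ acts freely and transitively on $\Spf(\bz[t^{\pm1}])$, so the quotient is a point. This recovers the $p$-complete colimit, since the tensor product here is $p$-completed. Similarly, base change along $t\mapsto 0$ gives $\Spf(\bigoplus_i\mathrm{gr}^i_FA)/\widehat{\bg}_m$, and by (\ref{reesgraded}) the pullback of $\cf$ is the graded module $\bigoplus_i\mathrm{gr}^i_FM$. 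Both identifications hold at the level of the cosimplicial action groupoids, and compatibility with descent gives the stacky statements.
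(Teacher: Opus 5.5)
Your proposal is correct and follows essentially the route the paper intends. The paper writes out no separate proof: it points to \cite{bhatt22}[Prop. 2.2.6] and relies on the same ingredients you use, namely the action-groupoid identification $(\Mod_{B^{gr}})^\wedge_p\simeq\cd(\Spf(B)/\widehat{\bg}_m)$, the Rees equivalence (\ref{Reeseq}), global sections as weight-$0$ parts with the tautological cosection acting as $t$, and (\ref{reesunder}), (\ref{reesgraded}), (\ref{modequivalence}) for the two base changes.
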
 

The association 
$$F^{\geq\star} R\quad \mapsto\quad \left( F\Spf(R) \to F\Spf(A)\right)$$ sets up a contravariant equivalence between the category of $p$-complete animated filtered  $F^{\geq\star} A$-algebras and the category of affine morphisms of derived formal stacks 
$$\mathfrak{X}\to \mathfrak{A}:=F\Spf(A).$$
The Rees construction localizes on $\Spf(F^{\geq 0} R)$: For a multiplicative system $S$ in $\pi_0F^{\geq 0}R$ we obtain a multiplicative system $S$ in $\pi_0\mathrm{Rees}(F^{\geq \star} R)$ and 
$$ \left(S^{-1}\mathrm{Rees}(F^{\geq \star} R)\right)_p^\wedge \simeq \left(\bigoplus_{i\in\bz}S^{-1}\fil^{\geq i}R\cdot t^{-i}\right)_p^\wedge \simeq  \mathrm{Rees}\left((S^{-1}F^{\geq \star} R)_p^\wedge\right).$$
Note that this is not in general true for a multiplicative system of the underlying ring $F^\infty R=\varinjlim_iF^{\geq i}R$ of the filtration unless it happens to agree with $F^{\geq 0}R$. Hence when gluing filtered rings to schemes it is natural to restrict to $\bn^{op}$-indexed filtered rings.

\begin{definition} Let $F^{\geq\star} A$ be an animated filtered ring and set $\mathfrak{A}:=F\Spf(A)$. We call a morphism of derived formal stacks 
$$\mathfrak{X}\to \mathfrak{A}$$ 
 \underline{schematic} if there exists a quasi-compact and quasi-separated derived formal scheme $\mathscr{X}$, a $\bn^{op}$-indexed, filtered, quasi-coherent $F^{\geq \star} A$-algebra $F^{\geq\star}\co_{\mathscr{X}}$ with $F^{\geq 0}\co_{\mathscr{X}}=\co_{\mathscr{X}}$ and an isomorphism
 $$\mathfrak{X} \simeq \mathscr{S}pf(\mathrm{Rees}(F^{\geq\star}\co_{\mathscr{X}}))/\widehat{\bg}_m$$
 over $\mathfrak{A}$. Here $\mathscr{S}pf$ denotes the relative derived formal spectrum of a quasi-coherent sheaf of algebras on $\mathscr{X}$.
\label{schematic} \end{definition}

\begin{lemma} If $\mathfrak{X}\xrightarrow{} \mathfrak{A} $ is schematic we have equivalences
\begin{align*} \mathfrak{X}\times_{\mathfrak{A}} \Spf(\mathrm{Rees}(F^{\geq \star} A))\simeq & \mathscr{S}pf(\mathrm{Rees}(F^{\geq\star}\co_{\mathscr{X}}))\\ 
\mathfrak{X}\times_{\mathfrak{A}}\Spf(F^\infty A)\simeq &\mathscr{X}\\
\mathfrak{X}\times_{\mathfrak{A}}\Spf(\bigoplus_i\mathrm{gr}^i_FA)/\widehat{\bg}_m \simeq & \mathscr{S}pf(\bigoplus_i \mathrm{gr}^i_F\co_{\mathscr{X}})/\widehat{\bg}_m\\
 \cd(\mathfrak{X})\simeq &\cdf(\mathscr{X},F^{\geq\star}\co_{\mathscr{X}})\simeq \cd^{gr}(\mathscr{X},\mathrm{Rees}(F^{\geq\star}\co_{\mathscr{X}})) .
\end{align*}
In particular, the derived formal stack $\mathscr{X}_0:=\mathfrak{X}\times_{\mathfrak{A}}\Spf(\mathrm{Rees}(F^{\geq\star}A))$ is a derived formal scheme, hence
$$\fil^{\geq \star}_{Hod}\widehat{\mathrm{dR}}_{\mathfrak{X}/\mathfrak{A}}\in\CAlg(\Fun(\bn^{op},\cd(\mathfrak{A})))$$
is defined as in Def. \ref{schematicdeRham}.
\label{schematiclemma}\end{lemma}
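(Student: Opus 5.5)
The plan is to reduce all four equivalences to base change for the relative formal spectrum, using the presentation $\mathfrak{X}\simeq\mathscr{S}pf(\mathrm{Rees}(F^{\geq\star}\co_{\mathscr{X}}))/\widehat{\bg}_m$ over $\mathfrak{A}=\Spf(\mathrm{Rees}(F^{\geq\star}A))/\widehat{\bg}_m$. The morphism $\mathfrak{X}\to\mathfrak{A}$ is affine, being relatively $\mathscr{S}pf$ of a quasi-coherent algebra. Indeed, $\mathscr{X}\to\Spf(F^{\geq 0}A)$ together with the $F^{\geq\star}A$-algebra structure on $F^{\geq\star}\co_{\mathscr{X}}$ gives a $\widehat{\bg}_m$-equivariant map
$$\mathscr{S}pf(\mathrm{Rees}(F^{\geq\star}\co_{\mathscr{X}}))\to\Spf(\mathrm{Rees}(F^{\geq\star}A)).$$

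\emph{First equivalence.} The quotient by $\widehat{\bg}_m$ is compatible with the map to $\mathfrak{A}$. Hence
$$\mathfrak{X}\times_{\mathfrak{A}}\Spf(\mathrm{Rees}(F^{\geq\star}A))\simeq \bigl(\mathscr{S}pf(\mathrm{Rees}(F^{\geq\star}\co_{\mathscr{X}}))/\widehat{\bg}_m\bigr)\times_{\Spf(\mathrm{Rees}A)/\widehat{\bg}_m}\Spf(\mathrm{Rees}A).$$
The right side is $\mathscr{S}pf(\mathrm{Rees}(F^{\geq\star}\co_{\mathscr{X}}))$, because $Y/G\times_{S/G}S\simeq Y$ for any $G$-equivariant $Y\to S$. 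This can be checked after pullback along the atlas $\Spf(\mathrm{Rees}A)\to\mathfrak{A}$, or directly from the Čech nerve description in Def. \ref{schematicdeRham}.

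\emph{Second and third equivalences.} Each is a further base change of the first. For the second, pull back along
$$\Spf(F^\infty A)\simeq\Spf(\mathrm{Rees}(F^{\geq\star}A)[t^{-1}])/\widehat{\bg}_m\to\mathfrak{A}$$
from Prop. \ref{filtered}. Relative $\mathscr{S}pf$ commutes with base change, giving $\mathscr{S}pf(\mathrm{Rees}(F^{\geq\star}\co_{\mathscr{X}})[t^{-1}])/\widehat{\bg}_m$. By (\ref{reesunder}), applied Zariski locally on $\mathscr{X}$, this is $\mathscr{S}pf(F^\infty\co_{\mathscr{X}}[t^{\pm1}])/\widehat{\bg}_m\simeq\mathscr{S}pf(F^\infty\co_{\mathscr{X}})$. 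Since the filtration is $\bn^{op}$-indexed, $F^\infty\co_{\mathscr{X}}=F^{\geq 0}\co_{\mathscr{X}}=\co_{\mathscr{X}}$, so this is $\mathscr{X}$. The third equivalence is the same argument with $t\mapsto 0$, using (\ref{reesgraded}).

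To make the Zariski-local reductions legitimate, I will use the localization property of the Rees construction proved just before Def. \ref{schematic}. For affine opens $\Spf(R)\subseteq\mathscr{X}$ and $S\subset\pi_0R=\pi_0F^{\geq 0}\co_{\mathscr{X}}(R)$, it gives $(S^{-1}\mathrm{Rees})^\wedge_p\simeq\mathrm{Rees}(S^{-1}F^{\geq\star})$. Thus the formation of $\mathrm{Rees}(F^{\geq\star}\co_{\mathscr{X}})$ is a quasi-coherent sheaf of graded algebras on $\mathscr{X}$ compatible with restriction. This is exactly where $\bn^{op}$-indexedness is used.

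\emph{Fourth equivalence.} Because $\mathfrak{X}\to\mathfrak{X}'':=\mathscr{X}''/\widehat{\bg}_m$ is affine, where $\mathscr{X}'':=\mathscr{X}\times\Spf(\bz_p[t])$ and $\mathfrak{X}''$ is the stack $\mathscr{X}\times\widehat{\ba}^1/\widehat{\bg}_m$, we have
$$\cd(\mathfrak{X})\simeq\Mod_{\mathrm{Rees}(F^{\geq\star}\co_{\mathscr{X}})}\bigl(\cd(\mathscr{X}\times\widehat{\ba}^1/\widehat{\bg}_m)\bigr).$$
The category $\cd(\mathscr{X}\times\widehat{\ba}^1/\widehat{\bg}_m)$ is graded $\bz[t]$-modules on $\mathscr{X}$, which is $\cdf(\mathscr{X})$ by Zariski descent and the affine case, Prop. \ref{filtered}. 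This yields both $\cd^{gr}(\mathscr{X},\mathrm{Rees}(F^{\geq\star}\co_{\mathscr{X}}))$ and $\cdf(\mathscr{X},F^{\geq\star}\co_{\mathscr{X}})$.

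The final assertion follows from the first equivalence, since $\mathscr{S}pf$ of a quasi-coherent algebra over the qcqs formal scheme $\mathscr{X}$ is a derived formal scheme.

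The main obstacle is the fourth equivalence: identifying quasi-coherent sheaves on an affine morphism with modules over the pushforward algebra in the derived formal stack setting, compatibly with $\widehat{\bg}_m$-descent. I would handle this by reducing, via qcqs Zariski descent on $\mathscr{X}$ and the Čech nerve of $\widehat{\bg}_m$, to the affine statement Prop. \ref{filtered} for the filtered rings $F^{\geq\star}\co_{\mathscr{X}}(U)$.
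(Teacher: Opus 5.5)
Your proposal is correct and follows the paper's route: the paper's proof is just ``clear from the definitions,'' and you unpack exactly those definitions. The ingredients you use are the torsor base change $(Y/\widehat{\bg}_m)\times_{S/\widehat{\bg}_m}(S'/\widehat{\bg}_m)\simeq (Y\times_S S')/\widehat{\bg}_m$ (for $S'=S$, for its localization at $t$, and for its reduction mod $t$), the identities (\ref{reesunder}) and (\ref{reesgraded}), and Prop. \ref{filtered} globalized by Zariski descent, with $\bn^{op}$-indexedness giving $F^\infty\co_{\mathscr{X}}=\co_{\mathscr{X}}$.
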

\begin{proof} Clear from the definitions. \end{proof}

The following examples describe some formal stacks of interest for this article.

\begin{example} The (derived) formal stack $\mathfrak{A}:=\widehat{\ba}^1/\widehat{\bg}_m$ corresponds to the initial filtered ring of Example \ref{initial}.
\label{initialstack}\end{example}

\begin{example} Let $K/\bq_p$ be a finite extension with ring of integers $\co_K$ and maximal ideal $\fm$. Then $\co_K$ can be viewed as a filtered ring with 
$$ F^{\geq i}_\fm\co_K=\begin{cases} \co_K & i\leq 0\\ \fm^i & i\geq 1.\end{cases}$$
For every (animated) $\co_K$-algebra $R$ we obtain an induced filtration
$$ F^{\geq i}_\fm\co_K\otimes_{\co_K}R$$
compatibly with $p$-complete localization. Hence for any quasi-compact and  quasi-separated (derived) formal scheme $\mathscr{X}\to \Spf(\co_K)$
we obtain a schematic morphism of (derived) formal stacks
$$ \mathfrak{X}=F_\fm\mathscr{X}\to \widehat{\ba}^1/\widehat{\bg}_m=\mathfrak{A} $$ with underlying (derived) formal scheme $\mathscr{X}$. 
The choice of a uniformizer $\varpi\in\fm$ gives an alternative description 
$$ \cdots \xrightarrow{\cdot \varpi}R\xrightarrow{\cdot \varpi}R\xrightarrow{\cdot \varpi}R$$
of the filtration $F^{\geq i}_\fm\co_K\otimes_{\co_K}R$.
\label{keyexample}\end{example}

\subsection{Prismatic cohomology relative to a $\delta$-ring} \label{deltapairs}
The original definition of prismatic cohomology of Bhatt and Scholze \cite{bhatt-scholze} associates a $\be_\infty$-$A$-algebra $\prism_{R/(A,I)}$ (together with Breuil-Kisin twists and Hodge-Tate and Nygaard filtrations) to a pair $(R, (A, I))$ where $(A,I)$ is a prism and $R$ is an $A/I$-algebra. This was generalized in \cite{akn23} to arbitrary $\delta$-pairs $(R,A)$ where $A$ is only assumed to be a $\delta$-ring and $R$ is any (animated) $A$-algebra, also recovering absolute prismatic cohomology of Bhatt and Lurie \cite{bhatt-lurie-22} for $A=\bz_p$.  More precisely, for any $\delta$-pair $(R,A)$ one obtains a structure (the "prismatic package" \cite{akn23}[Sec. 8]) 
\begin{equation} \underline{\prism}_{R/A}=\left(\prism_{R/A}^{[\star]}\{\star\}, \mathcal N^{\geq\star}\widehat{\prism}^{(1)}_{R/A}\{\star\},c,\varphi\right)\in\mathscr{C}_A\label{package}\end{equation}
where $\mathscr{C}_A$ is the $\infty$-category consisting of quadruples $(H,N,c,\varphi)$ of the following objects
\begin{itemize} 
\item[a)] $H$ is a complete $\bz$-filtered, $\bz$-graded $\be_\infty$-$A$-algebra 
\item[b)] $N$ is a complete $\bn$-filtered, $\bz$-graded $\be_\infty$-$A$-algebra 
\item[c)] $c$ is a $\varphi_A$-semilinear $\bn$-filtered, $\bz$-graded map of $A$-algebras $H\to \mathrm{scp}(N)$ where
$$ \mathrm{scp}(N)^{\geq \star}\{n\}:=N^{\geq p\star}\{n\}\otimes p^\star\bz_p.$$
\item[d)] $\varphi$ is a map of $\bn$-filtered, $\bz$-graded $A$-algebras $N\to\mathrm{sh}(H)$ where $$\mathrm{sh}(H)^{\geq \star}\{n\}:=H^{\geq \star-n}\{n\}.$$
\end{itemize} 
The notation $\prism_{R/A}^{[\star]}$ refers to the Hodge-Tate filtration, $\widehat{\prism}$ indicates completion with respect to the Nygaard filtration and the notation $\prism^{(1)}_{R/A}$ denotes scalar extension with respect to the Frobenius of $A$ on the underlying prismatic crystal \cite{akn23}[Def. 6.1]. 

\subsubsection{Prismatic F-gauges} For our purposes we need to include $\fil^{\geq \star}_{Hod}\widehat{\mathrm{dR}}_{R/A}$ into the prismatic package, as well as additional filtrations  which allow to make reductions to de Rham or Hodge cohomology (e.g. the conjugate filtration). Enlarging the prismatic package to include these objects would be too unwieldy but working with sheaves on the Cartier-Witt stack leads to a somewhat more manageable structure. The following definition is essentially a description of $A$-modules in (a slight weakening of) absolute prismatic $F$-gauges, directly in terms of the Cartier-Witt stack whose theory is well documented in \cite{bhatt-lurie-22}.

First, recall the commutative diagram
\begin{equation}\begin{CD} \mathrm{WCart}^{\mathrm{HT}} @>\iota >> \mathrm{WCart}\\
@V\pi VV @V F VV\\
\Spf(\bz_p) @>\rho_{dR}>> \mathrm{WCart}
\end{CD}\label{wcart}\end{equation}
from \cite{bhatt-lurie-22}[Thm. 3.6.7].
Second, recall that for the $\bz$-filtered quasicoherent algebra $\mathscr{I}^\star\co_{\mathrm{WCart}}$ in $\cd(\mathrm{WCart})$ there is an equivalence $$ \cdf(\mathscr{I}^\star\co_{\mathrm{WCart}})\simeq\cd(\mathrm{WCart}),\quad \mathscr{I}^\star\mathscr{E}\mapsto \mathscr{I}^0\mathscr{E}=\mathscr{E}$$
analogous to (\ref{modequivalence}). Lastly, note that the filtered relative Frobenius map \cite{bhatt-lurie-22}[Constr. 5.7.1]
$$\Phi:\fil^{\geq\star}_{\mathcal N}F^*\co_{\mathrm{WCart}}\to \mathscr{I}^\star\co_{\mathrm{WCart}}$$
is an isomorphism with the $\bn^{op}$-indexed truncation of the target by  \cite{bhatt-lurie-22}[Example 5.1.4].

\begin{definition} (Lax prismatic F-gauges with coefficients) For a $\delta$-ring $A$ denote by $\gsheaf_A$ the $\infty$-category of sextuples 
$$\underline{\mathscr{E}}=(\fil^{\geq\star}_{\mathcal N}\mathscr{E}',\fil^{\geq\star}_HD,\alpha, c_{dR}, c_F, c_{\square})$$ 
where  
\begin{itemize}
\item[a)] $\fil^{\geq\star}_{\mathcal N}\mathscr{E}'$ is an $\bn^{op}$-indexed $A$-module in $\cdf(\fil^{\geq\star}_{\mathcal N}F^*\co_{\mathrm{WCart}})$. Put
$$ \mathscr{I}^\star\mathscr{E}:= \fil^{\geq\star}_{\mathcal N}\mathscr{E}'\otimes_{ \fil^{\geq\star}_{\mathcal N}F^*\co_{\mathrm{WCart}},\Phi}\mathscr{I}^\star\co_{\mathrm{WCart}} , \quad \mathscr{E}:=\mathscr{I}^0\mathscr{E},\quad \mathscr{E}^{(1)}:=\mathscr{E}\otimes_{A,\varphi}A$$
and denote by 
$$\Phi:   \fil^{\geq\star}_{\mathcal N}\mathscr{E}' \to \mathscr{I}^\star\mathscr{E}$$
the tautological filtered $A$-linear map.
\item[b)] $\fil^{\geq\star}_HD$ is an $\bn^{op}$-indexed object of $\mathcal{DF}(A)$. 
\item[c)] $\alpha$ is an $A$-linear isomorphism 
$$\alpha: \fil^{\geq\star}_{\mathcal N}\mathscr{E}'/\mathscr{I}\fil^{\geq\star-1}_{\mathcal N}\mathscr{E}'\simeq \fil^{\geq\star}_HD\otimes\co_{\mathrm{WCart}^{\mathrm{HT}}}\simeq \pi^*\fil^{\geq\star}_HD$$
in $\cdf(\mathrm{WCart}^{\mathrm{HT}})$.
\item[d)] $c_{dR}$ is an $A$-linear morphism 
$$ c_{dR}:\rho_{dR}^*\mathscr{E}^{(1)}\to \fil^{\geq 0}_HD.$$ 
\item[e)] $c_F$ is an $A$-linear morphism in $\cd(\mathrm{WCart})$
$$c_F: F^*\mathscr{E}^{(1)}\to \fil^{\geq 0}_{\mathcal N}\mathscr{E}'.$$
\item[f)] $c_{\square}$ is an $A$-linear commutative diagram in $\cd(\mathrm{WCart}^{\mathrm{HT}})$
$$\begin{CD} \iota^*F^*\mathscr{E}^{(1)} @>\sim>> \pi^*\rho_{dR}^*\mathscr{E}^{(1)}\\
@V\iota^*c_F VV @VV \pi^*c_{dR}V\\
\iota^*\fil^{\geq 0}_{\mathcal N}\mathscr{E}' @>\alpha^0>\sim> \pi^*\fil^{\geq 0}_HD . \end{CD}$$ 
\end{itemize}  
\label{fgauges}\end{definition} 

For the proper definition of $\gsheaf_A$ as a symmetric monoidal presentable, stable $\infty$-category we refer to Def. \ref{rigorousfgauges} in App. A. We set
$$ \csheaf_A=\CAlg(\gsheaf_A).$$
For a morphism of $\delta$-rings $A\to A'$ there is a symmetric monoidal base change functor
\begin{equation} \gsheaf_A\xrightarrow{-\otimes_AA'} \gsheaf_{A'}\label{basechange}\end{equation}
defined by 
\begin{multline*}\underline{\mathscr{E}}=(\fil^{\geq\star}_{\mathcal N}\mathscr{E}',\fil^{\geq\star}_HD,\alpha, c_{dR}, c_F, c_{\square})   \mapsto \\ 
\underline{\mathscr{E}}\otimes_AA':=(\fil^{\geq\star}_{\mathcal N}\mathscr{E}'\otimes_AA',\fil^{\geq\star}_HD\otimes_AA',\alpha, c_{dR}\otimes_AA',c_F\otimes_AA', c_{\square}\otimes_AA') 
\end{multline*}
where $-\otimes_AA'$ denotes the base change functor of Lemma \ref{DAgeneralities} a) in App. A.  The right adjoint $\mathrm{Res}^{A'}_{A}$ of (\ref{basechange}) is given by restricting the module structures of $\fil^{\geq\star}_{\mathcal N}\mathscr{E}'$ and $\fil^{\geq\star}_HD$ along $A\to A'$. This simple description of the right adjoint is the reason we do not work with prismatic $F$-gauges, i.e. we do not require $c_{dR}$ and $c_F$ to be isomorphisms.

\begin{remark} While every object of $\gsheaf_A$ gives rise to an object of $\gsheaf_{\bz}$ by restriction, this restriction does not in general have an $A$-module structure. Hence the equivalence $\mathscr{G}_A\simeq \Mod_A(\mathscr{G}_\bz)$ of \cite{akn23}[Rem. 8.2 (ii)] does not hold for $\gsheaf_A$.
\end{remark}

In order to accommodate graded and filtered objects we need to further extend the definition of $\gsheaf_A$ from $\delta$-rings $A$ to $\delta$-stacks $\mathfrak{A}$. We are only interested in simple quotient stacks such as $\mathfrak{A}=F\Spf(A)$ for a filtered $\delta$-ring $F^{\geq\star} A$ in the sense of Def. \ref{fildefs}. Therefore we restrict our use of the terminology "formal $\delta$-stack" to the following situation.

\begin{definition} A \underline{formal $\delta$-stack} is a formal stack $\mathfrak{A}$ (with a Frobenius lift $\varphi_{\mathfrak{A}}$) that can be presented as a colimit  
\begin{equation} \mathfrak{A} \simeq \mathrm{colim}_{\Delta^{op}}\Spf(\ca^\bullet)\label{deltastack}\end{equation}
where $\ca^\bullet$ is a cosimplicial $\delta$-ring with flat differentials and $\Spf(\ca^\bullet)$ is a groupoid object in the $\infty$-topos of formal stacks \cite{lurieHTT}[Def. 6.1.2.7].
\label{deltastackdef}\end{definition}

For a formal $\delta$-stack (\ref{deltastack}) the definition of $\gsheaf_{\mathfrak{A}}$ is also given in Def. \ref{rigorousfgauges} in App. A. Lemma \ref{omnibus} a) in App. A gives equivalences 
$$\gsheaf_{\mathfrak{A}}\simeq \Tot \gsheaf_{\ca^\bullet},\quad\quad \csheaf_{\mathfrak{A}}:=\CAlg(\gsheaf_{\mathfrak{A}})\simeq \Tot \csheaf_{\ca^\bullet}$$
which will be our key tool to define objects of $\gsheaf_{\mathfrak{A}}$ by descent. We note however that the definition of  $\gsheaf_{\mathfrak{A}}$ is independent of the presentation (\ref{deltastack}).

\begin{definition} (Nygaard global sections) For a formal $\delta$-stack $\mathfrak{A}$ and
$$ \underline{\mathscr{E}}=(\fil^{\geq\star}_{\mathcal N}\mathscr{E}',\fil^{\geq\star}_HD,\alpha, c_{dR}, c_F, c_{\square})\in\gsheaf_\mathfrak{A}$$ 
let $R\Gamma_{\mathcal N}^{\geq\bullet}(\underline{\mathscr{E}}^{(1)}\{\star\})$ be the filtered graded object of $\cd(\mathfrak{A})$ (of  $\CAlg(\cd(\mathfrak{A}))$ if $\underline{\mathscr{E}}\in\csheaf_\mathfrak{A}$)  defined by the fibre product
\begin{equation}\begin{CD} R\Gamma_{\mathcal N}^{\geq\bullet}(\underline{\mathscr{E}}^{(1)}\{\star\})  @>>> \fil^{\geq\bullet}_HD\\
@VVV@VVV\\
R\Gamma(\mathrm{WCart}, \fil^{\geq\bullet}_{\mathcal N}\mathscr{E}'\otimes F^*\co\{\star\}) @>\alpha_*>>R\Gamma(\mathrm{WCart}^{\mathrm{HT}},\fil^{\geq\bullet}_HD\otimes\co_{\mathrm{WCart}^{\mathrm{HT}}}). \end{CD}\label{nygaardef}\end{equation}
The diagram $c_\square$ together with \cite{bhatt-lurie-22}[Thm. 3.6.7] gives a map
$$c_{\square,*} :\,R\Gamma(\mathrm{WCart},\mathscr{E}^{(1)}\{\star\})\to R\Gamma_{\mathcal N}^{\geq 0}(\underline{\mathscr{E}}^{(1)}\{\star\}).$$
\label{globalnygaard}\end{definition}

\begin{definition} (Syntomic global sections) For a formal $\delta$-stack $\mathfrak{A}$,
$$ \underline{\mathscr{E}}=(\fil^{\geq\star}_{\mathcal N}\mathscr{E}',\fil^{\geq\star}_HD,\alpha, c_{dR}, c_F, c_{\square})\in\gsheaf_\mathfrak{A}$$ 
and $n\in\bz$ define \underline{syntomic cohomology} by the fiber sequence
$$ R\Gamma_{\syn}(\underline{\mathscr{E}}^{(1)}\{n\})\to R\Gamma_{\mathcal N}^{\geq n}(\underline{\mathscr{E}}^{(1)}\{n\})\xrightarrow{\can-c^0\varphi\{n\}}R\Gamma_{\mathcal N}^{\geq 0}(\underline{\mathscr{E}}^{(1)}\{n\})$$
where $\varphi\{n\}$ is the  composite
$$ R\Gamma_{\mathcal N}^{\geq n}(\underline{\mathscr{E}}^{(1)}\{n\})\to R\Gamma(\mathrm{WCart}, \fil^{\geq n}_{\mathcal N}\mathscr{E}'\otimes F^*\co\{n\}) \xrightarrow{\Phi_*}
R\Gamma(\mathrm{WCart},\mathscr{E}\{n\})$$
using $F^*\co\{n\}\simeq\mathscr{I}^{-n}\{n\}$, and $c^0$ is the map
$$c^0:R\Gamma(\mathrm{WCart},\mathscr{E}\{n\})\to R\Gamma(\mathrm{WCart},\mathscr{E}^{(1)}\{n\})\to R\Gamma_{\mathcal N}^{\geq 0}(\underline{\mathscr{E}}^{(1)}\{n\})$$
induced by the natural $\varphi_{\mathfrak{A}}$-semilinear map $\mathscr{E}\to \mathscr{E}^{(1)}$ together with $c_{\square,*}$.

Define \underline{additive syntomic cohomology} by the fiber sequence
$$ R\Gamma_{\add}(\underline{\mathscr{E}}^{(1)}\{n\})\to R\Gamma_{\mathcal N}^{\geq n}(\underline{\mathscr{E}}^{(1)}\{n\})\xrightarrow{\can}R\Gamma_{\mathcal N}^{\geq 0}(\underline{\mathscr{E}}^{(1)}\{n\}).$$
\label{globalsyntomic}\end{definition}

The following Lemma will allow to reduce many properties of an object $\underline{\mathscr{E}}\in\gsheaf_\mathfrak{A}$ to properties of $\fil^{\geq\star}_HD$.

\begin{lemma} Let $\mathfrak{A}$ be a formal $\delta$-stack and 
$$ \underline{\mathscr{E}}=(\fil^{\geq\star}_{\mathcal N}\mathscr{E}',\fil^{\geq\star}_HD,\alpha, c_{dR}, c_F, c_{\square})\in\gsheaf_\mathfrak{A}.$$

a) The object $\fil^{\geq\star}_{\mathcal N}\mathscr{E}'$ has a complete $\bn^{op}$-indexed filtration
$\mathscr{I}^{\bullet}\fil^{\geq\star-\bullet}_{\mathcal N}\mathscr{E}'$ with graded pieces
$$\mathscr{I}^{i}\fil^{\geq\star-i}_{\mathcal N}\mathscr{E}'/\mathscr{I}^{i+1}\fil^{\geq\star-i-1}_{\mathcal N}\mathscr{E}'\simeq (\pi^*\fil^{\geq\star-i}_HD)\{i\}.$$

b) For $m\geq 0$ the object $\mathrm{gr}^m_{\mathcal N}\mathscr{E}'$ has a bounded filtration with graded pieces
$$ \mathscr{I}^{i}\mathrm{gr}^{m-i}_{\mathcal N}\mathscr{E}'/\mathscr{I}^{i+1}\mathrm{gr}^{m-i-1}_{\mathcal N}\mathscr{E}'\simeq (\pi^*\mathrm{gr}_H^{m-i}D)\{i\}$$
for $i=0,1,\dots,m$.

c) For any $n\in\bz$ the object $R\Gamma_{\mathcal N}^{\geq\star}(\underline{\mathscr{E}}^{(1)}\{n\})$ has a complete $\bn^{op}$-indexed filtration with graded pieces
$$\mathrm{gr}^i=\begin{cases} \fil^{\geq\star}_HD & i=0\\ R\Gamma(\mathrm{WCart}^{\mathrm{HT}},(\pi^*\fil^{\geq{\star-i}}_HD)\{i\}) &i\geq 1.\end{cases} $$

d) For $m\geq 0$ and any $n\in\bz$ the object $\mathrm{gr}_{\mathcal N}^m R\Gamma_{\mathcal N}^{\geq\star}(\underline{\mathscr{E}}^{(1)}\{n\})$ has a bounded filtration with graded pieces
$$\mathrm{gr}^i=\begin{cases} \mathrm{gr}^m_HD & i=0\\ R\Gamma(\mathrm{WCart}^{\mathrm{HT}},(\pi^*\mathrm{gr}^{m-i}_HD)\{i\}) &i = 1,\dots,m.\end{cases} $$

\label{filtration}\end{lemma}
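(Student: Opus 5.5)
Part a) is the $\mathscr{I}$-adic filtration of the Nygaard filtered object, and the remaining parts follow from it. Define $\mathscr{I}^{i}\fil^{\geq\star-i}_{\mathcal N}\mathscr{E}'$ as the image of $\mathscr{I}^{\otimes i}\otimes\fil^{\geq\star-i}_{\mathcal N}\mathscr{E}'$ under the module structure. Since $\fil^{\geq\star}_{\mathcal N}\mathscr{E}'$ is a filtered module over $\fil^{\geq\star}_{\mathcal N}F^*\co_{\mathrm{WCart}}$ and $\mathscr{I}\subset\fil^{\geq 1}_{\mathcal N}F^*\co$, we get maps $\mathscr{I}\otimes\fil^{\geq\star-1}_{\mathcal N}\mathscr{E}'\to\fil^{\geq\star}_{\mathcal N}\mathscr{E}'$. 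Concretely, set $\mathscr{I}^i\fil^{\geq\star-i}_{\mathcal N}\mathscr{E}':=\mathscr{I}^{\otimes i}\otimes_{\co}\fil^{\geq\star-i}_{\mathcal N}\mathscr{E}'$; this makes sense because $\mathscr{I}$ is an invertible sheaf. The transition maps are these multiplication maps, which gives an $\bn^{op}$-indexed filtration by objects of $\cdf$. The successive cofibre is
$$\mathscr{I}^{\otimes i}\otimes\bigl(\fil^{\geq\star-i}_{\mathcal N}\mathscr{E}'/\mathscr{I}\fil^{\geq\star-i-1}_{\mathcal N}\mathscr{E}'\bigr).$$
By the isomorphism $\alpha$ of Def. \ref{fgauges} c) this is $\mathscr{I}^{\otimes i}\otimes\pi^*\fil^{\geq\star-i}_HD$ on $\mathrm{WCart}^{\mathrm{HT}}$. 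Restricted to $\mathrm{WCart}^{\mathrm{HT}}$, $\mathscr{I}^{\otimes i}$ is the Breuil--Kisin twist $\co\{i\}$, using $\mathscr{I}/\mathscr{I}^2\simeq\co_{\mathrm{WCart}^{\mathrm{HT}}}\{1\}$ \cite{bhatt-lurie-22}. This yields the claimed graded pieces $(\pi^*\fil^{\geq\star-i}_HD)\{i\}$.

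Completeness is the step I expect to need the most care. $\fil^{\geq\star}_{\mathcal N}\mathscr{E}'$ is a module in $\cd(\mathrm{WCart})$, i.e. a $p$-complete object on a formal stack over which $\mathscr{I}$ is locally generated by a distinguished element $d$. Since $d$ divides $p$ modulo $(p,d)$-nilpotence, $(p,\mathscr{I})$-adic completeness follows from $p$-completeness together with the fact that $\mathscr{I}$ is topologically nilpotent modulo $p$ on $\mathrm{WCart}$. Then $\varprojlim_i\mathscr{I}^{\otimes i}\otimes M=0$ for every $p$-complete $M$. I would check this after pullback along a flat cover by prisms $\Spf(B)\to\mathrm{WCart}$, where it is derived $(p,d)$-completeness of $B$-modules. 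This is exactly the setup of \cite{bhatt-lurie-22} for quasi-coherent sheaves on the Cartier--Witt stack.

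For b), pass to the cofibre of $\fil^{\geq m+1}_{\mathcal N}\mathscr{E}'\to\fil^{\geq m}_{\mathcal N}\mathscr{E}'$ in a), i.e. take $\mathrm{gr}^m$ in the $\star$ variable. The filtration index satisfies $\star-i\geq 0$, and the lower part is trivial because the filtration is $\bn^{op}$-indexed, with $\fil^{\geq j}=\fil^{\geq 0}$ for $j\leq 0$. Hence $\mathrm{gr}^{m-i}$ vanishes for $i>m$ and the filtration is bounded in $[0,m]$, with graded pieces $(\pi^*\mathrm{gr}^{m-i}_HD)\{i\}$ by exactness of $\pi^*$ and of twisting.

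For c), tensor the filtration from a) with $F^*\co\{n\}$ and apply $R\Gamma(\mathrm{WCart},-)$, which commutes with limits and so preserves completeness. Then form the pullback (\ref{nygaardef}). The $i=0$ term $R\Gamma(\mathrm{WCart},\fil^{\geq\star}_{\mathcal N}\mathscr{E}'/\mathscr{I}\fil^{\geq\star-1}_{\mathcal N}\mathscr{E}')$ is identified by $\alpha_*$ with the bottom right corner. Replacing this $\mathrm{gr}^0$ through the pullback by $\fil^{\geq\star}_HD$, and leaving the terms $i\geq1$ unchanged, gives the filtration on $R\Gamma_{\mathcal N}^{\geq\star}$: the filtration steps for $i\geq 1$ are the same sub-objects, and the fibre of the pullback onto $\fil_HD$ is the fibre of the bottom map. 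The twist by $F^*\co\{n\}$ becomes trivial on $\mathrm{WCart}^{\mathrm{HT}}$ (it restricts to $\mathscr{I}^{-n}\{n\}$) up to the $\{i\}$ already present, which is consistent with the stated formula. Part d) follows from c) exactly as b) follows from a).
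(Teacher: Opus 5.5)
Your proposal follows the paper's proof. The filtration in a) comes from tensoring $\alpha$ with powers of $\mathscr{I}$. Part c) comes from taking global sections and using that (\ref{nygaardef}) is a fibre square. Parts b) and d) follow by passing to associated graded.

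The one step to repair is completeness in a). The $i$-th term of the filtration is $\mathscr{I}^{i}\fil^{\geq\star-i}_{\mathcal N}\mathscr{E}'$, not $\mathscr{I}^{\otimes i}\otimes M$ for a fixed $M$. So the vanishing of $\varprojlim_i\mathscr{I}^{\otimes i}\otimes M$ does not apply as you state it. What makes it apply is that $\fil^{\geq\star}_{\mathcal N}\mathscr{E}'$ is $\bn^{op}$-indexed. For $i\geq\star$ one has $\fil^{\geq\star-i}_{\mathcal N}\mathscr{E}'\simeq\fil^{\geq 0}_{\mathcal N}\mathscr{E}'$. Hence the tail of the tower is the $\mathscr{I}$-adic filtration on the single object $\fil^{\geq 0}_{\mathcal N}\mathscr{E}'$, which is complete by the $(p,\mathscr{I})$-completeness you describe. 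This is exactly the paper's argument.

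The hypothesis is really needed. For the $\bz$-indexed filtration $\mathscr{I}^{\star}\co_{\mathrm{WCart}}$, the analogous tower $\mathscr{I}^{i}\otimes\mathscr{I}^{\star-i}\simeq\mathscr{I}^{\star}$ is constant, with vanishing graded pieces, so it is not complete.

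A minor correction in c): $F^*\co\{n\}\simeq\mathscr{I}^{-n}\{n\}$ holds on all of $\mathrm{WCart}$. It is the restriction $\iota^*F^*\co\{n\}$ that is trivial, since $\iota^*\mathscr{I}\simeq\co_{\mathrm{WCart}^{\mathrm{HT}}}\{1\}$.
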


\begin{proof} The filtration in a) is immediate from tensoring the isomorphism $\alpha$ in Def. \ref{fgauges} c) with $\mathscr{I}$. Since $\fil^{\geq\star}_{\mathcal N}\mathscr{E}'$ is $\bn^{op}$-indexed the filtration agrees with the $\mathscr{I}$-adic filtration on $\fil^{\geq 0}_{\mathcal N}\mathscr{E}'$ for $\bullet\geq \star$ and is therefore complete.

Part c) follows by taking global sections in a) together with the fact that (\ref{nygaardef}) is a fibre square. Part b), resp. d), is immediate from part a), resp. c), by passing to associated graded objects.
\end{proof}

\subsubsection{The syntomic package} For any $\delta$-pair $(R,A)$ recall the prismatic crystal $\mathscr{H}_\prism(R/A)\in\cd(\mathrm{WCart})$ from \cite{akn23}[Def. 3.3] and define 
$\mathscr{H}^{(1)}_\prism(R/A):=\mathscr{H}_\prism(R/A)\otimes_{A,\varphi}A$ \cite{akn23}[Def. 6.1]. 

\begin{lemma} a) For any $\delta$-pair $(R,A)$ there is a cofibre sequence of filtered $A$-modules in $\cd(\mathrm{WCart})$
\begin{equation}\mathscr{I}\,\mathcal N^{\geq {\bullet-1}}F^*\mathscr{H}^{(1)}_\prism(R/A)\xrightarrow{\tau^\bullet} \mathcal N^{\geq\bullet}F^*\mathscr{H}^{(1)}_\prism(R/A)\xrightarrow{\alpha^\bullet} \iota_*\pi^*\fil^{\geq \bullet}_{Hod}\widehat{\mathrm{dR}}_{R/A}\end{equation}
and a commutative diagram
$$\begin{CD} \iota^*F^*\mathscr{H}^{(1)}_\prism(R/A) @>>> \pi^*\rho_{dR}^*\mathscr{H}^{(1)}_\prism(R/A)\\
\Vert@. @VV \pi^*c_{dR}V\\
\iota^*\mathcal N^{\geq 0}F^*\mathscr{H}^{(1)}_\prism(R/A) @>\alpha^0>> \pi^*\fil^{\geq 0}_{Hod}\widehat{\mathrm{dR}}_{R/A}\end{CD}$$ 
where $c_{dR}:\rho_{dR}^*\mathscr{H}^{(1)}_\prism(R/A)\simeq \widehat{\mathrm{dR}}_{R/A}$ is the isomorphism of \cite{akn23}[Lemma 6.4].

b) The colimit of the diagram
$$ F^*\mathscr{H}^{(1)}_\prism(R/A)\xrightarrow{\mathscr{I}^{-1}\tau^1} \mathscr{I}^{-1}\mathcal{N}^{\geq 1}F^*\mathscr{H}^{(1)}_\prism(R/A)\xrightarrow{\mathscr{I}^{-2}\tau^2} \mathscr{I}^{-2}\mathcal{N}^{\geq 2}F^*\mathscr{H}^{(1)}_\prism(R/A)\to\cdots$$
identifies with $\mathscr{H}_\prism(R/A)$ under the relative Frobenius maps $\Phi$ of \cite{bhatt-lurie-22}[Prop. 5.1.1].
\label{alphadef}\end{lemma}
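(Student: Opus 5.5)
The plan is to reduce both parts to the case of a polynomial $\delta$-pair and then extend by left Kan extension and descent. This mirrors how $\fil^{\geq\star}_{Hod}\widehat{\mathrm{dR}}_{R/A}$ is defined in Def. \ref{padicderiveddeRham} and how $\mathscr{H}_\prism(R/A)$ is built in \cite{akn23}. All objects involved commute with sifted colimits in the pair $(R,A)$. For the Nygaard filtration on $F^*\mathscr{H}^{(1)}_\prism$ this is by \cite{akn23}[Sec. 5--6], and for Hodge filtered de Rham cohomology it is by the remark preceding Def. \ref{padicderiveddeRham}. Moreover, $\iota_*\pi^*$ preserves colimits. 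It therefore suffices to construct the cofibre sequence and the diagram naturally on free $\delta$-pairs $A=\bz_p\{X_1,\dots\}^\wedge$, $R=A[Y_1,\dots,Y_m]$, and then animate. On such pairs $R/A$ is smooth.

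For part a), the key input is the absolute statement of \cite{bhatt-lurie-22}. For a quasisyntomic (here smooth) $R$, the Nygaard filtered prismatic crystal $\mathcal N^{\geq\bullet}F^*\mathscr{H}_\prism$ restricted to the Hodge–Tate locus has graded pieces described by the Hodge filtration. Concretely, \cite{bhatt-lurie-22}[Thm. 5.5.? / Prop. 5.5.12] identifies $\mathcal N^{\geq\bullet}/\mathscr{I}\mathcal N^{\geq\bullet-1}$ with $\iota_*$ of the pullback of the Hodge filtered de Rham complex along $\pi$. This is the sheafy form of the statement that $\mathrm{gr}^i_{\mathcal N}$ is an extension of conjugate-filtration pieces, with quotient by $\mathscr{I}$ equal to $\fil^{\geq i}_{Hod}$.

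I will carry this out relative to $A$ by the device of \cite{akn23}[Def. 6.1]. The Frobenius twist $\mathscr{H}^{(1)}$ is exactly what makes the relative de Rham comparison $c_{dR}:\rho_{dR}^*\mathscr{H}^{(1)}_\prism(R/A)\simeq\widehat{\mathrm{dR}}_{R/A}$ hold (\cite{akn23}[Lemma 6.4]). The map $\tau^\bullet$ is multiplication by the generator of $\mathscr{I}$, which lands in the correct Nygaard level because $\mathscr{I}\cdot\mathcal N^{\geq i-1}\subset\mathcal N^{\geq i}$. Its cofibre is supported on $\mathrm{WCart}^{\mathrm{HT}}=V(\mathscr{I})$, so it has the form $\iota_*(-)$. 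To identify this cofibre with $\pi^*\fil^{\geq\bullet}_{Hod}\widehat{\mathrm{dR}}_{R/A}$, I will use diagram (\ref{wcart}): $F\circ\iota=\rho_{dR}\circ\pi$. This gives $\iota^*F^*\mathscr{H}^{(1)}\simeq\pi^*\rho_{dR}^*\mathscr{H}^{(1)}\simeq\pi^*\widehat{\mathrm{dR}}_{R/A}$, which is the top row of the square and yields commutativity in filtration degree $0$. In positive degrees I will compare filtrations. Both sides are $\bn^{op}$-indexed and complete, so it suffices to produce a filtered map and check it on graded pieces. On graded pieces, the polynomial case reduces by base change (Prop. \ref{deRham} a)) to $A=\bz_p$ and $R=\bz_p[Y]$, where the identification is the computation of \cite{bhatt-lurie-22} (Sen operator/Hodge–Tate structure of $\mathrm{gr}_{\mathcal N}$).

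Part b) should follow from part a) together with \cite{bhatt-lurie-22}[Prop. 5.1.1]. The relative Frobenius $\Phi$ sends $\mathcal N^{\geq i}F^*\mathscr{H}^{(1)}$ to $\mathscr{I}^i\mathscr{H}$, compatibly with $\tau$ (multiplication by $\mathscr{I}$). This produces a map from the colimit to $\mathscr{H}_\prism(R/A)$. To see it is an isomorphism, I will check that the cofibres of the maps $\mathscr{I}^{-i}\mathcal N^{\geq i}\to\Phi^{-1}$-image stabilize. Equivalently, the Nygaard filtration becomes exhaustive after inverting $\mathscr{I}$, using the fact that $\Phi$ is an isomorphism on $\mathcal N^{\geq i}$ onto $\mathscr{I}^i$-truncations in the range where the Hodge filtration exhausts (degrees $i>\dim$ in the smooth case). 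For animated pairs, I will pass to the colimit, using that filtered colimits commute with sifted ones.

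I expect the main obstacle to be making part a) genuinely relative to a $\delta$-ring $A$ and coherent, rather than just a pointwise identification. In particular, the map $\alpha^\bullet$ must be constructed functorially in $(R,A)$ so that left Kan extension applies. My first attempt will be to construct $\alpha^\bullet$ on the polynomial subcategory via \cite{akn23}[Sec. 5]'s explicit description of the relative Nygaard filtration. Failing that, I will define $\alpha^\bullet$ as the cofibre of $\tau^\bullet$ and verify the Hodge identification only on graded pieces.
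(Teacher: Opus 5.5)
Your proposal has a genuine gap in part a). The content of a) is the construction of the filtered map $\alpha^\bullet$ into $\iota_*\pi^*\fil^{\geq\bullet}_{Hod}\widehat{\mathrm{dR}}_{R/A}$ that exhibits this object as the cofibre of $\tau^\bullet$, together with the identification of $\alpha^0$ with $\pi^*c_{dR}$. You supply neither.

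You say it suffices to produce a filtered map and check it on graded pieces, but no map is produced in positive filtration degrees. Your fallback, defining $\alpha^\bullet$ as $\Cone(\tau^\bullet)$ and matching graded pieces with $\mathrm{gr}^i_{Hod}$, cannot work: abstract isomorphisms of graded pieces do not assemble into an equivalence of filtered objects.

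The step relative to $A$ is also missing. \cite{akn23}[Def. 6.1] is only the Frobenius twist, not a mechanism for transporting absolute results. The reference you lean on, Prop. 5.5.12 of \cite{bhatt-lurie-22}, is a statement about global sections (the case $A=\bz_p$, $\star=0$ of Lemma \ref{gammaiso}), not the crystal-level cofibre sequence.

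Your reduction of the graded comparison to $A=\bz_p$ has a further problem. It needs base change in $A$ for the Nygaard-filtered crystal. Prop. \ref{deRham} a) covers only the Hodge side, and in the paper this base change is deduced from the present lemma (Thm. \ref{packageproperties} a) via Lemma \ref{filtration}).

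In degree $0$, the isomorphism $\iota^*F^*\mathscr{H}^{(1)}_\prism(R/A)\simeq\pi^*\widehat{\mathrm{dR}}_{R/A}$ that you get from $F\circ\iota=\rho_{dR}\circ\pi$ is only the top row of the square. Without an independently constructed $\alpha^0$, there is nothing to show commutes.

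Separately, support on $V(\mathscr{I})$ does not by itself make an object of the form $\iota_*(-)$. The right reason is that $\Cone(\tau^\bullet)$ is a filtered module over $\fil^{\geq\star}_{\mathcal N}F^*\co_{\mathrm{WCart}}/\mathscr{I}\fil^{\geq\star-1}_{\mathcal N}F^*\co_{\mathrm{WCart}}$, which is $\iota_*\co_{\mathrm{WCart}^{\mathrm{HT}}}$ with the trivial filtration.

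The missing idea is to work pointwise on transversal prisms, where everything becomes prismatic cohomology relative to an honest prism.
\begin{itemize}
\item On $(B,I)$ the crystal $\mathscr{H}_\prism(R/A)$ takes the value $\prism_{R\hat{\otimes}\bar{B}/(A\hat{\otimes}B,A\hat{\otimes}I)}$. Hence $F^*\mathscr{H}^{(1)}_\prism(R/A)$ takes the value $\prism^{(1)}_{R\hat{\otimes}\bar{B}/A\hat{\otimes}B}$, the Frobenius-twisted cohomology relative to the prism $(A\hat{\otimes}B,A\hat{\otimes}I)$.
\item \cite{bhatt-lurie-22}[Cor. 5.2.8] then gives, functorially in $(B,I)$, a fibre sequence $A\hat{\otimes}I\otimes\mathcal N^{\geq\bullet-1}\prism^{(1)}_{R\hat{\otimes}\bar{B}/A\hat{\otimes}B}\to\mathcal N^{\geq\bullet}\prism^{(1)}_{R\hat{\otimes}\bar{B}/A\hat{\otimes}B}\xrightarrow{\alpha^\bullet}\fil^{\geq\bullet}_{Hod}\widehat{\mathrm{dR}}_{R\hat{\otimes}\bar{B}/\overline{A\hat{\otimes}B}}$.
\item Base change for de Rham cohomology identifies the target with $\fil^{\geq\bullet}_{Hod}\widehat{\mathrm{dR}}_{R/A}\otimes\bar{B}$. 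This is exactly the value of $\iota_*\pi^*\fil^{\geq\bullet}_{Hod}\widehat{\mathrm{dR}}_{R/A}$ on $(B,I)$.
\item The map $\alpha^0$ induces the relative de Rham comparison of \cite{bhatt-lurie-22}[Prop. 5.2.5], and the proof of \cite{akn23}[Lemma 6.4] identifies that comparison with $\pi^*c_{dR}$. This gives the square.
\end{itemize}
Part b) is then \cite{bhatt-lurie-22}[Cor. 5.2.16] read as a statement about functors on transversal prisms. No reduction to polynomial pairs or to $A=\bz_p$ is needed.

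Your sketch of b) is essentially a reproof of Cor. 5.2.16 and is reasonable. It uses stabilization of $\mathscr{I}^{-i}\mathcal N^{\geq i}$ beyond the relative dimension in the smooth case, followed by sifted colimits. It too must be carried out over $(A\hat{\otimes}B,A\hat{\otimes}I)$ rather than absolutely.
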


\begin{proof}As a functor on transversal prisms $\mathscr{H}_\prism(R/A)$ is given by the assignment
$$ (B,I)\mapsto \prism_{R\hat{\otimes}\bar{B}/(A\hat{\otimes} B,A\hat{\otimes}I)}.$$
Defining $\mathscr{H}^{(1)}_\prism(R/A):=\mathscr{H}_\prism(R/A)\otimes_{A,\varphi}A$ the crystal $F^*\mathscr{H}^{(1)}_\prism(R/A)$ is given by the assignment
$$ (B,I)\mapsto \prism_{R\hat{\otimes}\bar{B}/(A\hat{\otimes} B,A\hat{\otimes}I)}\otimes_{A\hat{\otimes} B,\varphi}A\hat{\otimes} B=: \prism^{(1)}_{R\hat{\otimes}\bar{B}/A\hat{\otimes} B}$$
and the relative de Rham comparison \cite{bhatt-lurie-22}[Prop. 5.2.5] gives an equivalence
$$ \prism_{R\hat{\otimes}\bar{B}/(A\hat{\otimes} B,A\hat{\otimes}I)}\otimes_{A\hat{\otimes} B,\varphi}\overline{A\hat{\otimes} B}\simeq \widehat{\mathrm{dR}}_{R\hat{\otimes}\bar{B}/\overline{A\hat{\otimes} B}} \simeq \widehat{\mathrm{dR}}_{R/A}\otimes \overline{B}$$
and hence an equivalence
$$ \pi^*\rho_{dR}^*\mathscr{H}^{(1)}_\prism(R/A)\simeq\iota^*F^*\mathscr{H}^{(1)}_\prism(R/A)\simeq \pi^*\widehat{\mathrm{dR}}_{R/A}.$$
It was shown in the proof of \cite{akn23}[Lemma 6.4] that this isomorphism coincides with $\pi^*c_{dR}$.
On the other hand by \cite{bhatt-lurie-22}[Cor. 5.2.8] for any transversal prism $(B,I)$ there is a fibre sequence of $A\hat{\otimes}B$-modules 
\begin{equation} A\hat{\otimes}I\otimes_{A\hat{\otimes}B}^L\mathcal N^{\geq {\bullet-1}}\prism^{(1)}_{R\hat{\otimes}\bar{B}/A\hat{\otimes} B} \to \mathcal N^{\geq\bullet}\prism^{(1)}_{R\hat{\otimes}\bar{B}/A\hat{\otimes} B}\xrightarrow{\alpha^\bullet}\fil^{\geq \bullet}_{Hod}\widehat{\mathrm{dR}}_{R\hat{\otimes}\bar{B}/\overline{A\hat{\otimes} B}} \notag\end{equation}
where $\alpha^0$ induces the comparison isomorphism of \cite{bhatt-lurie-22}[Prop. 5.2.5]. This gives part a). Part b) is immediate from \cite{bhatt-lurie-22}[Cor. 5.2.16] after translating it into a statement about functors on transversal prisms.
\end{proof}

Note that the colimit in Lemma \ref{alphadef} b) computes $\mathscr{I}^0\mathscr{E}$ where 
$$ \mathscr{I}^\bullet\mathscr{E}:= \mathcal N^{\geq\bullet}F^*\mathscr{H}^{(1)}_\prism(R/A)\otimes_{ \mathcal N^{\geq\bullet}F^*\co_{\mathrm{WCart}},\Phi}\mathscr{I}^\bullet\co_{\mathrm{WCart}}.$$

\begin{definition} For any $\delta$-pair $(R,A)$ define the \underline{syntomic package}
$$ \underline{\mathscr{E}}({R/A})=\left(\mathcal N^{\geq\bullet}F^*\mathscr{H}^{(1)}_\prism(R/A),\fil^{\geq \bullet}_{Hod}\widehat{\mathrm{dR}}_{R/A},\alpha, c_{dR},c_F, c_\square\right)\in\csheaf_A$$
where $\alpha, c_{dR}, c_\square$ were defined in Lemma \ref{alphadef} a) and $c_F$ in Lemma \ref{alphadef} b). 
\label{sheafpackage}\end{definition}

\begin{remark} The maps $c_{dR}$ and $c_F$ in  $\underline{\mathscr{E}}({R/A})$ are isomorphisms by \cite{akn23}[Lemma 6.4] and \cite{bhatt-lurie-22}[Prop. 5.1.1].
\end{remark}

For a map of rings $R\to R^0$ denote by $R^\bullet$ its Cech conerve.

\begin{theorem} The objects $\underline{\mathscr{E}}({R/A})\in\csheaf_A$ depend functorially on the $\delta$-pair $(R,A)$ and satisfy the following properties
\begin{itemize}
\item[a)] (Base Change) For any map $A\to A'$ of $\delta$-rings the natural map in $\gsheaf_{A'}$
$$\underline{\mathscr{E}}({R/A})\otimes_AA'\to \underline{\mathscr{E}}({R\otimes_AA'/A'}) $$ 
is an equivalence.
\item[b)] (\'Etale descent in $R$) For any $\delta$-pair $(R,A)$ and for an \'etale faithfully flat map $R\to R^0$ the natural map in $\csheaf_A$
$$ \underline{\mathscr{E}}({R/A}) \to \Tot \underline{\mathscr{E}}({R^\bullet/A}) $$ 
is an equivalence.
\item[c)] (Descent for quasisyntomic $R/A$) Assume $(R,A)$ is a $\delta$-pair with $A\to R$ quasisyntomic.
\begin{itemize}
\item[c1)] (Quasisyntomic descent in $R$) If $R\to R^0$ is quasisyntomic and faithfully flat then the natural map in $\csheaf_A$
$$ \underline{\mathscr{E}}({R/A}) \to \Tot \underline{\mathscr{E}}({R^\bullet/A}) $$ 
is an equivalence.
\item[c2)] (Flat descent) If $A\to A^0$ is faithfully flat and $R^0:=R\otimes_AA^0$ then the natural map in $\csheaf_A$
$$ \underline{\mathscr{E}}({R/A}) \to \Tot \underline{\mathscr{E}}({R^\bullet/A^\bullet}) $$ 
is an equivalence.
\end{itemize}
\item[d)] (Invariance under quasi-\'etale extensions) For $A'\to A$ with vanishing $L_{A/A'}\in\cd(A)$ the natural map in $\csheaf_{A'}$
$$  \underline{\mathscr{E}}({R/A'}) \to \mathrm{Res}^A_{A'} \underline{\mathscr{E}}({R/A})$$
is an equivalence.
\item[e)] The functor $(R,A)\mapsto \underline{\mathscr{E}}({R/A})$ commutes with sifted colimits. 
\item[f)] The natural map $\underline{\mathscr{E}}({R/A})\to\underline{\mathscr{E}}({R^\wedge_p/A^\wedge_p})$ is an equivalence.
\end{itemize} 
\label{packageproperties}\end{theorem}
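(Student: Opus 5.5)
The plan is to reduce every assertion to the corresponding assertion for the Hodge filtered de Rham component, via the structure of $\gsheaf_A$ and Lemma \ref{filtration}. First I treat functoriality. The construction of $\underline{\mathscr{E}}(R/A)$ in Def. \ref{sheafpackage} uses only three inputs: the crystal $\mathscr{H}_\prism(R/A)$, its Nygaard filtration, and the maps of Lemma \ref{alphadef}. Each of these is functorial in the $\delta$-pair, by \cite{akn23}[Def. 3.3] and \cite{bhatt-lurie-22}[Cor. 5.2.8, 5.2.16] applied to the transversal prisms $(A\hat{\otimes}B, A\hat{\otimes}I)$. Functoriality of $\underline{\mathscr{E}}(R/A)$ follows.

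Next I would isolate a conservativity principle. A map $f:\underline{\mathscr{E}}_1\to\underline{\mathscr{E}}_2$ in $\gsheaf_A$ is an equivalence as soon as two conditions hold: it induces an equivalence on the $\fil^{\geq\star}_HD$ components, and it is compatible with the isomorphisms $\alpha$. The reason is Lemma \ref{filtration} a). The $\fil^{\geq\star}_{\mathcal N}\mathscr{E}'$ component carries a complete filtration whose graded pieces are $(\pi^*\fil^{\geq\star-i}_HD)\{i\}$. So an equivalence on $D$ gives an equivalence on each graded piece, and completeness then gives an equivalence on $\fil^{\geq\star}_{\mathcal N}\mathscr{E}'$. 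Totalizations and base change commute with the formation of these graded pieces: $\pi^*$, $\iota_*$ and twisting by $\mathscr{I}^i$ are exact. They also preserve completeness, because limits commute with limits, and for base change one uses $p$-completeness and bounded filtrations. For sifted colimits, completeness is not automatically preserved, so more care is needed there.

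With this principle the items become routine. In a) the de Rham component is Prop. \ref{deRham} a). Base change of $\pi^*$ and of the Hodge--Tate graded pieces along $A\to A'$ is clear, so a) follows. For b) and c) I invoke Prop. \ref{deRham} b), c1), c2) on the $D$ component. On the graded pieces $R\Gamma$-type expressions I must commute $\Tot$ with $\pi^*$ and with the $\mathscr{I}$-adic limit; both are limits, so this is harmless. The quasisyntomic hypothesis enters only through Prop. \ref{deRham} c). Part d) is Prop. \ref{deRham} d) on the $D$ component. Here I also note that $\mathrm{Res}^A_{A'}$ acts componentwise, so the comparison map is checked on $D$ and the graded pieces in the same way. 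Part f) follows from $\fil^{\geq\star}_{Hod}\widehat{\mathrm{dR}}_{R/A}\simeq\fil^{\geq\star}_{Hod}\widehat{\mathrm{dR}}_{R^\wedge_p/A^\wedge_p}$, as noted in \ref{filtrations}, combined with the principle.

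I expect e), commutation with sifted colimits, to be the main obstacle. Colimits in $\gsheaf_A$ do not obviously commute with the completions built into the $\fil_{\mathcal N}$ component. I would argue on the $m$-th Nygaard graded pieces. By Lemma \ref{filtration} b), $\mathrm{gr}^m_{\mathcal N}\mathscr{E}'$ has a finite filtration whose pieces are $(\pi^*\mathrm{gr}^{m-i}_HD)\{i\}$, and $\mathrm{gr}^j_H$ of derived de Rham cohomology is $L\widehat{\Omega}^j_{R/A}[-j]$, which commutes with sifted colimits. Hence each $\mathrm{gr}^m_{\mathcal N}$ commutes with sifted colimits. Since $\fil^{\geq\star}_{\mathcal N}$ is complete and $\bn^{op}$-indexed, and since \cite{akn23}[Lemma 3.16] shows the Nygaard-completed package commutes with sifted colimits in the completed sense, this yields e) once the colimit in $\gsheaf_A$ is interpreted as the completed colimit, as in the categorical definition of App. A.
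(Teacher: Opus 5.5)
Your reduction of a)--d) and f) to Prop. \ref{deRham} through the filtration of Lemma \ref{filtration} a) is the paper's argument. Your treatment of e), however, does not go through. You use that $\fil^{\geq\star}_{\mathcal N}\mathscr{E}'$ is complete for the Nygaard filtration, and you propose to read colimits in $\gsheaf_A$ as completed colimits. Neither holds.

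The syntomic package carries the \emph{uncompleted} Nygaard filtration; the completion $\widehat{\prism}$ only appears later, cf. Lemma \ref{completesyn}. This filtration cannot be complete in general. The $0$-th graded piece of the filtration of Lemma \ref{filtration} a) on $\fil^{\geq m}_{\mathcal N}\mathscr{E}'$ is $\pi^*\fil^{\geq m}_{Hod}\widehat{\mathrm{dR}}_{R/A}$. So Nygaard completeness would force $\widehat{\mathrm{dR}}_{R/A}$ to be Hodge complete, which derived de Rham cohomology is not in general.

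Moreover, Def. \ref{rigorousfgauges} builds $\gsheaf_A$ as a lax equalizer of fibre products of module categories, with no completion anywhere. Replacing colimits by completed colimits would change the statement being proved. Consequently, knowing that each $\mathrm{gr}^m_{\mathcal N}$ commutes with sifted colimits does not determine the colimit of the $\fil_{\mathcal N}$-components.

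What you overlooked is that no completeness ever needs to be preserved. For \emph{every} object of $\gsheaf_A$, the filtration $\mathscr{I}^\bullet\fil^{\geq\star-\bullet}_{\mathcal N}\mathscr{E}'$ is complete, because for $\bullet\geq\star$ it is the $\mathscr{I}$-adic filtration on the object $\fil^{\geq 0}_{\mathcal N}\mathscr{E}'$ of $\cd(\mathrm{WCart})$. Your conservativity principle therefore applies to any map in $\gsheaf_A$, and e) is as routine as the other items:
\begin{itemize}
\item All functors entering Def. \ref{rigorousfgauges} preserve colimits, so colimits in $\gsheaf_A$ are formed componentwise.
\item On the $D$-component the comparison map is an equivalence, because $\fil^{\geq\star}_{Hod}\widehat{\mathrm{dR}}$ is defined by left Kan extension. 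The paper also points to \cite{bhatt-lurie-22}[Rem. 4.4.2].
\end{itemize}
The same observation disposes of your worries about completeness under base change.

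A smaller inaccuracy concerns totalizations. Exactness of $\pi^*$ only gives commutation with finite limits, and $\pi^*$ is not itself a limit, so ``both are limits'' is no argument. The paper's actual input is that $\pi^*\{i\}=-\otimes_{\bz_p}\co_{\mathrm{WCart}^{\mathrm{HT}}}\{i\}$ has finite tor-dimension and hence commutes with $\Tot$. This is needed for two things: to identify the graded pieces of $\Tot\,\underline{\mathscr{E}}(R^\bullet/A)$, and to know that $\Tot$ in $\gsheaf_A$, which is a fibre product over $\fil\pi^*$, is computed componentwise.
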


\begin{proof} All properties follow from the corresponding properties of $\fil^{\geq \star}_{Hod}\widehat{\mathrm{dR}}_{R/A}$ and the complete filtration of Lemma \ref{filtration} a).  It suffices to show the maps on associated graded 
$$ \mathrm{gr}^i\mathcal N^{\geq\bullet}F^*\mathscr{H}^{(1)}_\prism(R/A)\simeq \pi^*\fil^{\geq \bullet}_{Hod}\widehat{\mathrm{dR}}_{R/A}\{i\},\quad i\geq 0$$
are isomorphisms. Since $\pi^*\{i\}=-\otimes_{\bz_p}\co_{\mathrm{WCart}^{\mathrm{HT}}}\{i\}$ has finite tor-dimension it commutes with totalization and hence Prop. \ref{deRham} proves the isomorphisms on $\mathrm{gr}^i$.  For a) note that $\pi^*\{i\}$ also commutes with $-\otimes_AA'$, as does the colimit in Lemma \ref{alphadef} b) because of base change for relative prismatic cohomology. 
For e) see also \cite{bhatt-lurie-22}[Rem. 4.4.2].
\end{proof}

\begin{definition} For a derived formal scheme $\mathscr{X}$ over a $\delta$-ring $A$ we define $\underline{\mathscr{E}}(\mathscr{X}/A)\in\csheaf_A$ by Zariski descent, using Theorem \ref{packageproperties} b). Define 
\begin{align*}
\mathcal N^{\geq\bullet}\prism^{(1)}_{\mathscr{X}/A}\{\star\}:= &R\Gamma_{\mathcal N}^{\geq\bullet}(\underline{\mathscr{E}}(\mathscr{X}/A)^{(1)}\{\star\})\\
R\Gamma_{\syn}(\mathscr{X}/A,\bz_p(n)):= &R\Gamma_{\syn}(\underline{\mathscr{E}}(\mathscr{X}/A)^{(1)}\{n\})\\
R\Gamma_{\add}(\mathscr{X}/A,\bz_p(n)):= &R\Gamma_{\add}(\underline{\mathscr{E}}(\mathscr{X}/A)^{(1)}\{n\})
\end{align*}
\end{definition}

\subsubsection{Compatibility with the definitions of  \cite{akn23}} The uncompleted Nygaard filtration was defined in \cite{akn23}[Def. 6.6] as a fibre product
\begin{equation}\xymatrix{ \mathcal N^{\geq\bullet}\prism^{(1)}_{\mathscr{X}/A}\{\star\}\ar[d]
\ar[r]^{\fil^{\geq \bullet}\gamma^{dR}_{\prism,\mathscr{X}/A}\{\star\}} & \fil^{\geq \bullet}_{Hod}\widehat{\mathrm{dR}}_{\mathscr{X}/A}\ar[d]\\
R\Gamma(\mathrm{WCart}, \mathcal N^{\geq\bullet}F^*\mathscr{H}^{(1)}_\prism(\mathscr{X}/A)\{\star\})\ar[r]& \fil^{\geq \bullet}_{Hod}\widehat{\mathrm{dR}}_{\mathscr{X}/A}\otimes R\Gamma(\mathrm{WCart}^{\mathrm{HT}},\co_{\mathrm{HT}}),
}\label{gammadef}\end{equation}
which is our Definition \ref{globalnygaard}. Syntomic cohomology was defined in \cite{akn23}[Def. 7.8] by the fibre sequence
$$R\Gamma_{\syn}(\mathscr{X}/A,\bz_p(n)) \to\mathcal N^{\geq n}\widehat{\prism}^{(1)}_{\mathscr{X}/A}\{n\}\xrightarrow{\can-c^0\varphi\{n\}}\widehat{\prism}^{(1)}_{\mathscr{X}/A}\{n\}$$
which is equivalent to our Definition \ref{globalsyntomic} by \cite{akn23}[Prop. 7.12] (see also Lemma \ref{completesyn} below). Finally, the prismatic package (\ref{package}) of \cite{akn23}[Thm. 8.8] can be recovered from the syntomic package by taking global sections
$$\underline{\prism}_{R/A}=R\Gamma( \underline{\mathscr{E}}(R/A))$$
in the sense of the following definition.

\begin{definition} (Prismatic global sections) Define 
$$R\Gamma:\gsheaf_A\to \mathscr{G}_A$$ 
$$\underline{\mathscr{E}}= (\fil^{\geq\star}_{\mathcal N}\mathscr{E}',\fil^{\geq\star}_HD,\alpha, c_{dR}, c_F, c_{\square})   \mapsto (H,N,c^0,\varphi)$$ 
by
\begin{itemize}
\item[a)] $H:=R\Gamma(\mathrm{WCart},\mathscr{I}^\bullet\mathscr{E}\{\star\})$
\item[b)] $N$ is the filtration completion of $R\Gamma_{\mathcal N}^{\geq\bullet}(\underline{\mathscr{E}}^{(1)}\{\star\})$
\item[c)] $c^0$ is the map from Def. \ref{globalsyntomic} composed with the map to filtration completion.
\item[d)] As in Def. \ref{globalsyntomic} the map $\varphi^\bullet\{n\}$ is the filtration completion of the composite
$$R\Gamma_{\mathcal N}^{\geq\bullet}(\underline{\mathscr{E}}^{(1)}\{\star\})\to R\Gamma(\mathrm{WCart}, \fil^{\geq\bullet}_{\mathcal N}\mathscr{E}'\otimes F^*\co\{n\}) \xrightarrow{\Phi_*}
R\Gamma(\mathrm{WCart},\mathscr{I}^{\bullet-n}\mathscr{E}\{n\})$$
\end{itemize}
\label{globalpackage}\end{definition}

We will not try to define $c$ in arbitrary filtration degrees. This was done in \cite{akn23}[Lemma 7.5] if $\underline{\mathscr{E}}= \underline{\mathscr{E}}(R/A)$ is a syntomic package as in Def. \ref{sheafpackage}. We remark that the functor $R\Gamma$ is lax symmetric monoidal and therefore induces a functor
$$ R\Gamma: \csheaf_A\to\mathscr{C}_A.$$

\subsection{Prismatic cohomology of graded and filtered $\delta$-pairs}\label{deltastacks} 

In this section we recast the definitions of \cite{akn23}[Sec. 10] in terms of $\csheaf_A$ and generalize to derived formal schemes. 
  
\begin{definition} A \underline{graded $\delta$-ring} is a graded ring $A=\bigoplus_{i\in\bz}A^i$  with a $\delta$-ring structure on $A$ such that $\delta(A^i)\subseteq A^{pi}$. 

A \underline{graded $\delta$-pair} $( R,A)$ is a map of animated graded rings $A\to  R$ where $A$ is a graded $\delta$-ring.

\cite{akn23}[Def. 10.3] A \underline{filtered $\delta$-ring} is a strict, $\bn^{op}$-indexed filtered ring $F^{\geq \star} A$ with a $\delta$-ring structure on $A=F^{\geq 0} A$ such that $\delta(F^{\geq i}A)\subseteq F^{\geq pi}A$. 

 \cite{akn23}[Def. 10.10] A \underline{filtered $\delta$-pair} $(F^{\geq \star} R,F^{\geq \star} A)$ is a map of $\bn^{op}$-indexed animated filtered rings $F^{\geq \star} A\to F^{\geq \star} R$ where $F^{\geq \star} A$ is a filtered $\delta$-ring.
 
\label{fildefs}\end{definition}

\begin{example} The Hopf algebra $\co(\bg_m)=\bz[u,u^{-1}]$ is a graded $\delta$-ring where $\varphi(u)=u^p$. For any graded $\delta$-ring $A$ the coaction
$$ A\to \co(\bg_m)\otimes A $$
is a map of $\delta$-rings \cite{akn23}[Lemma 10.20].
If $F^{\geq \star} A$ is a filtered $\delta$-ring then $\mathrm{Rees}(F^{\geq \star} A)$ and $\bigoplus_{i\in\bz}\mathrm{gr}_F^iA\simeq \mathrm{Rees}(F^{\geq \star} A)\otimes_{\bz[t]}\bz$ are graded $\delta$-rings (with $\varphi(t)=t^p$).
\label{deltacoaction}\end{example}

\begin{example} The initial filtered ring of Example \ref{initial} is a filtered $\delta$-ring when $\bz$ is equipped with its unique $\delta$-ring structure ($\varphi=\id$). Hence $\mathrm{Rees}(F^{\geq\star}\bz)=\bz[t]$ is a graded $\delta$-ring.
\label{finital}\end{example}

\begin{definition} For a graded $\delta$-ring $A$ denote by $\gsheaf_{A^{gr}}$ the $\infty$-category of sextuples 
$$\underline{\mathscr{E}}=(\fil^{\geq\star}_{\mathcal N}\mathscr{E}',\fil^{\geq\star}_HD,\alpha, c_{dR}, c_F, c_{\square})$$ 
where  
\begin{itemize}
\item[a)] $\fil^{\geq\star}_{\mathcal N}\mathscr{E}'$ is an object of $\Mod_{A^{gr}}\otimes\cdf(\fil^{\geq\star}_{\mathcal N}F^*\co_{\mathrm{WCart}})$.
\item[b)] $\fil^{\geq\star}_HD$ is an object of $\Mod_{A^{gr}}\otimes\cdf(\bz_p)$. 
\item[c)] $\alpha$ is an $A^{gr}$-linear isomorphism, $c_{dR}$, $c_F$ are $A^{gr}$-linear morphisms and $c_\square$ is an $A^{gr}$-linear commutative diagram as in Def. \ref{fgauges}. 
\end{itemize}  
\label{gradedfgauges}\end{definition} 

The precise definition of $\gsheaf_{A^{gr}}$ is analogous to Def. \ref{rigorousfgauges} in App. A. The following definition is slightly informal and will be made more rigorous in Def. \ref{package}.

\begin{definition} (The graded syntomic package) Let $( R,A)$ be a graded $\delta$-pair.  Denote by 
$$  \underline{\mathscr{E}}( R/A)^{gr}\in\csheaf_{A^{gr}}$$
the object $\underline{\mathscr{E}}(R/A)\in\csheaf_{A}$ with $\co(\bg_m)$-comodule structure obtained by functoriality along 
$$\begin{CD} A @>>> \co(\bg_m)\otimes A\\ @VVV@VVV\\  R @>>> \co(\bg_m)\otimes R\end{CD}$$ 
combined with base change (Thm. \ref{packageproperties} a)) and the isomorphism
$$\underline{\mathscr{E}}( R/A)\otimes_A(A\otimes\co(\bg_m))\simeq \underline{\mathscr{E}}( R/A)\otimes\co(\bg_m).$$
\label{gradedprismatic}\end{definition}

\begin{remark} We will use the notation $\underline{\mathscr{E}}(R/A)^{gr}=\bigoplus_{i\in\bz} \underline{\mathscr{E}}(R/A)^{gr,i}$ but caution the reader that the individual graded pieces $\underline{\mathscr{E}}(R/A)^{gr,i}$ do not lie in $\gsheaf_{A^0}$ but only in $\mathscr{G}^{\mathcal N}_{A^0}$ (in the notation of Def. \ref{rigorousfgauges} in App. A). However, $\underline{\mathscr{E}}(R/A)^{gr,0}$ is an object of $\csheaf_{A^{0}}$.
\end{remark}

\begin{lemma} (Graded base change) For a map of graded $\delta$-rings $A\to A'$ the natural map
$$  \underline{\mathscr{E}}(R/A)^{gr}\otimes_{A^{gr}}A'^{gr}\to  \underline{\mathscr{E}}(R\otimes_AA'/A')^{gr}$$
is an equivalence.
\label{gradedbc}\end{lemma}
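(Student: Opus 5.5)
The natural approach is to reduce graded base change to the ungraded base change of Theorem \ref{packageproperties} a), applied to the underlying objects, and then check that the $\co(\bg_m)$-comodule structures match. First I would make precise what $-\otimes_{A^{gr}}A'^{gr}$ means on $\csheaf_{A^{gr}}$. Following Def. \ref{gradedfgauges}, it acts componentwise on $\fil^{\geq\star}_{\mathcal N}\mathscr{E}'$ and $\fil^{\geq\star}_HD$, as the relative tensor product in $\Mod_{A^{gr}}$ tensored with $\cdf(\fil^{\geq\star}_{\mathcal N}F^*\co_{\mathrm{WCart}})$, respectively $\cdf(\bz_p)$.

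The key observation is that the forgetful functor from graded modules to modules (taking the underlying $\co(\bg_m)$-comodule $M=\bigoplus_i M^i$) is conservative and symmetric monoidal. It also carries the graded relative tensor product $M\otimes_{A^{gr}}A'^{gr}$ to the ordinary $M\otimes_AA'$, since direct sums commute with the bar construction computing relative tensor products; after $p$-completion the same holds for the completed versions. Consequently, the underlying object of $\underline{\mathscr{E}}(R/A)^{gr}\otimes_{A^{gr}}A'^{gr}$ in $\csheaf_{A'}$ is $\underline{\mathscr{E}}(R/A)\otimes_AA'$. Likewise, by Def. \ref{gradedprismatic}, the underlying object of $\underline{\mathscr{E}}(R\otimes_AA'/A')^{gr}$ is $\underline{\mathscr{E}}(R\otimes_AA'/A')$.

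On underlying objects, the natural map in the statement is exactly the ungraded base change map. That map is an equivalence by Theorem \ref{packageproperties} a), and conservativity then gives the equivalence in $\csheaf_{A'^{gr}}$.

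It remains to see that the ungraded base change map is a map of comodules, i.e.\ $\co(\bg_m)$-equivariant. The coaction on either side is built by functoriality along the coaction square for $(R,A)$ (resp.\ $(R\otimes_AA',A')$) together with base change along $A\to\co(\bg_m)\otimes A$. Since $A\to A'$ is a graded $\delta$-map, the relevant coaction squares commute, and the base change equivalences of Theorem \ref{packageproperties} a) are functorial and compatible with composition of ring maps. Using the descent description of Def. \ref{schematicdeRham}, I would phrase this as a comparison of two cosimplicial objects over $\ca'^\bullet=A'\hat\otimes\co(\widehat{\bg}_m)^{\hat\otimes\bullet}$, where levelwise the comparison is $\underline{\mathscr{E}}(R\otimes_A\ca^i/\ca^i)\otimes_{\ca^i}\ca'^i\simeq\underline{\mathscr{E}}(R\otimes_A\ca'^i/\ca'^i)$. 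This is Theorem \ref{packageproperties} a) applied to $\ca^i\to\ca'^i$, a map of $\delta$-rings by Example \ref{deltacoaction}.

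The main obstacle is this last coherence point: showing that the levelwise equivalences assemble into a map of cosimplicial objects, so that they induce an equivalence in $\Tot\csheaf_{\ca'^\bullet}\simeq\csheaf_{A'^{gr}}$, rather than only a collection of pointwise isomorphisms. I expect it to follow from naturality of base change in Theorem \ref{packageproperties} a) as a natural transformation of functors on the $\infty$-category of $\delta$-pairs, and the cosimplicial diagrams are images of diagrams of $\delta$-pairs under that functor.
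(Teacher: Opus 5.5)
Your proposal is correct and follows the paper's proof: the forgetful functor $\gsheaf_{A^{gr}}\to\gsheaf_{A}$ is conservative (Lemma \ref{omnibus} b)) and commutes with base change, so the statement reduces to the ungraded base change of Theorem \ref{packageproperties} a). The coherence issue in your last paragraph does not need separate treatment: the map in the statement is already a morphism in $\csheaf_{A'^{gr}}$, coming from functoriality of the graded syntomic package in the graded $\delta$-pair, so all that remains is to identify its image under the forgetful functor with the ungraded base change map.
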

\begin{proof} The forgetful functor $\gsheaf_{A^{gr}}\to\gsheaf_{A}$ is conservative by Lemma \ref{omnibus} b) and commutes with base change. Hence the result follows from Thm. \ref{packageproperties} a).
\end{proof}

\begin{definition} (The filtered syntomic package) Let $( F^{\geq\star}R ,F^{\geq\star} A)$ be a filtered $\delta$-pair. Define
$$  \underline{\mathscr{E}}(F^{\geq\star}R/F^{\geq\star} A):= \underline{\mathscr{E}}(\mathrm{Rees}(F^{\geq\star}R)/\mathrm{Rees}(F^{\geq\star} A))^{gr} \in\csheaf_{\mathrm{Rees}(F^{\geq\star} A)^{gr}}$$
with \underline{underlying object}
$$ F^\infty \underline{\mathscr{E}}(F^{\geq\star}R/F^{\geq\star} A) \in\csheaf_{\mathrm{Rees}(F^{\geq\star} A)[t^{-1}]^{gr}}$$
and \underline{associated graded}
$$ \bigoplus_{i\in\bz}\mathrm{gr}_F^i \underline{\mathscr{E}}(F^{\geq\star}R/F^{\geq\star} A) \in\csheaf_{(\bigoplus_i\mathrm{gr}_F^i A)^{gr}}$$
given by the images of $\underline{\mathscr{E}}(\mathrm{Rees}(F^{\geq\star}R)/\mathrm{Rees}(F^{\geq\star} A))^{gr}$ under the respective base change functors.
\label{filteredprismatic}\end{definition}

For a filtered $\delta$-ring $F^{\geq\star}A$ the underlying ring $F^\infty A\simeq F^{\geq 0}A$ is again a $\delta$-ring. We have an isomorphism of graded $\delta$-rings $$\mathrm{Rees}(F^{\geq\star} A)[t^{-1}]\simeq F^{\geq 0}A\otimes_\bz\bz[t,t^{-1}]$$
and a symmetric monoidal equivalence
\begin{equation} \gsheaf_{F^{\geq 0}A}\simeq \gsheaf_{F^{\geq 0}A\otimes_\bz\bz[t,t^{-1}]^{gr}}\simeq \gsheaf_{\mathrm{Rees}(F^{\geq\star} A)[t^{-1}]^{gr}}\label{gmodequivalence}\end{equation}
given by scalar extension with inverse given by passing to weight $0$ parts.

\begin{prop} Let $(F^{\geq \star} R,F^{\geq \star} A)$ be a filtered $\delta$-pair. 

\begin{itemize}

\item[a)] There is a natural isomorphism in $\csheaf_{F^{\geq 0}A}$ 
$$ F^\infty \underline{\mathscr{E}}(F^{\geq\star}R/F^{\geq\star} A)\simeq \underline{\mathscr{E}}(F^{\geq 0}R/F^{\geq 0} A)$$
using (\ref{gmodequivalence}) to map the left hand side into $\csheaf_{F^{\geq 0}A}$.

\item[b)] There is a natural isomorphism in $\csheaf_{(\bigoplus_i\mathrm{gr}_F^i A)^{gr}}$
$$ \bigoplus_{i\in\bz}\mathrm{gr}_F^i \underline{\mathscr{E}}(F^{\geq\star}R/F^{\geq\star} A)\simeq  \underline{\mathscr{E}}\left( \bigoplus_i\mathrm{gr}_F^i R/\bigoplus_i\mathrm{gr}_F^i A\right)^{gr}.$$

\item[c)] There is a natural isomorphisms in $\csheaf_{\mathrm{gr}_F^0 A}$
$$ \mathrm{gr}_F^0\underline{\mathscr{E}}(F^{\geq\star}R/F^{\geq\star} A)\simeq \underline{\mathscr{E}}\left(\mathrm{gr}_F^0 R/\mathrm{gr}_F^0 A\right) $$
and $\mathrm{gr}_F^i \underline{\mathscr{E}}(F^{\geq\star}R/F^{\geq\star} A)=0$ for $i<0$.
\end{itemize}

\label{affinebc}\end{prop}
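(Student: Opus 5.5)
Parts a) and b) follow from graded base change (Lemma \ref{gradedbc}) applied to the maps of graded $\delta$-rings
$$\mathrm{Rees}(F^{\geq\star}A)\to \mathrm{Rees}(F^{\geq\star}A)[t^{-1}],\qquad \mathrm{Rees}(F^{\geq\star}A)\to \mathrm{Rees}(F^{\geq\star}A)\otimes_{\bz[t]}\bz\simeq\bigoplus_i\mathrm{gr}^i_FA .$$
These are maps of graded $\delta$-rings by Example \ref{deltacoaction}, since $\varphi(t)=t^p$ preserves the ideal $(t)$ and inverting $t$ is compatible with the $\delta$-structure. By Definition \ref{filteredprismatic}, $F^\infty\underline{\mathscr{E}}$ and $\bigoplus_i\mathrm{gr}^i_F\underline{\mathscr{E}}$ are the images of $\underline{\mathscr{E}}(\mathrm{Rees}(F^{\geq\star}R)/\mathrm{Rees}(F^{\geq\star}A))^{gr}$ under these base changes. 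Lemma \ref{gradedbc} therefore identifies them with
$$\underline{\mathscr{E}}\bigl(\mathrm{Rees}(F^{\geq\star}R)[t^{-1}]/\mathrm{Rees}(F^{\geq\star}A)[t^{-1}]\bigr)^{gr}\quad\text{and}\quad\underline{\mathscr{E}}\bigl(\bigoplus_i\mathrm{gr}^i_FR/\bigoplus_i\mathrm{gr}^i_FA\bigr)^{gr}.$$
The second is b). For a), use (\ref{reesunder}) to rewrite the first pair as $(F^{\geq 0}R\otimes\bz[t^{\pm1}],\,F^{\geq 0}A\otimes\bz[t^{\pm1}])$, which uses $\bn^{op}$-indexedness so that $F^\infty=F^{\geq 0}$. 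Then apply ungraded base change (Theorem \ref{packageproperties} a)) along $F^{\geq 0}A\to F^{\geq 0}A\otimes\bz[t^{\pm1}]$, where $\bz[t^{\pm 1}]$ carries the $\delta$-structure $\varphi(t)=t^p$. Finally pass to weight-$0$ parts via the equivalence (\ref{gmodequivalence}). The point to check is that the $\co(\bg_m)$-comodule structure obtained from functoriality matches the one coming from scalar extension. This reduces to the coaction on $F^{\geq 0}A\otimes\bz[t^{\pm1}]$ being induced from the one on $\bz[t^{\pm1}]$, which holds by naturality of base change.

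Part c) reduces to a graded analogue of Lemma \ref{gr0lemma}. Set $B=\bigoplus_i\mathrm{gr}^i_FA$ and $S=\bigoplus_i\mathrm{gr}^i_FR$. These are non-negatively graded, and $\delta(B^i)\subseteq B^{pi}$ shows that $B^0=\mathrm{gr}^0_FA$ is a $\delta$-subring. By b) it suffices to prove two claims: $\underline{\mathscr{E}}(S/B)^{gr,i}=0$ for $i<0$, and $\underline{\mathscr{E}}(S/B)^{gr,0}\simeq\underline{\mathscr{E}}(S^0/B^0)$ via the map induced by $(S^0/B^0)\to(S/B)$.

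Vanishing of an object of $\gsheaf$ and being an equivalence can be detected on the filtration of Lemma \ref{filtration} a). Its graded pieces are $\pi^*\fil^{\geq\star-i}_HD\{i\}$, and $\pi^*\{i\}$ commutes with passing to a fixed weight because it is an exact functor applied weightwise. The filtration is complete, so its completeness is preserved in each weight. Hence everything reduces to the $D$-component, namely $\fil^{\geq\star}_{Hod}\widehat{\mathrm{dR}}_{S/B}$. Lemma \ref{gr0lemma} says exactly that this is non-negatively graded with weight-$0$ part $\fil^{\geq\star}_{Hod}\widehat{\mathrm{dR}}_{S^0/B^0}$.

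The main obstacle is the bookkeeping in c). One must make sure that the comparison map $\underline{\mathscr{E}}(S^0/B^0)\to\underline{\mathscr{E}}(S/B)^{gr,0}$ actually exists in $\csheaf_{B^0}$; it should come from functoriality of $\underline{\mathscr{E}}$ along the map of graded $\delta$-pairs, with $S^0/B^0$ concentrated in weight $0$. One must also make sure this map is compatible with the filtrations of Lemma \ref{filtration}, so that the reduction to the $D$-component is legitimate. Both are formal once everything is phrased via the descent description $\Tot\,\csheaf_{\ca^\bullet}$ of Lemma \ref{omnibus}.
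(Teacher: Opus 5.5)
Your proposal is correct and follows the paper's proof. Parts a) and b) come from graded base change (Lemma \ref{gradedbc}) along $\mathrm{Rees}(F^{\geq\star}A)\to\mathrm{Rees}(F^{\geq\star}A)[t^{-1}]$ and $\mathrm{Rees}(F^{\geq\star}A)\to\bigoplus_i\mathrm{gr}^i_FA$, using $\bn^{op}$-indexedness so that $F^\infty=F^{\geq 0}$; part c) reduces via b) to the non-negatively graded statement, proved exactly as you do with the complete filtration of Lemma \ref{filtration} a) and Lemma \ref{gr0lemma}. The only difference is cosmetic: in a) you could skip the comodule-compatibility check by applying Lemma \ref{gradedbc} directly to the pair $(F^{\geq 0}R,F^{\geq 0}A)$ placed in weight $0$.
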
 

\begin{proof} Note that by definition  $F^{\geq\star} R$ is also $\bn^{op}$-indexed so that $F^\infty R=F^{\geq 0} R$. Part a) and b) then follow by graded base change (Lemma \ref{gradedbc}). Part c) is a consequence of b) and the following Lemma. 
\end{proof}

\begin{lemma} For a non-negatively graded $\delta$-pair $(R,A)$ the object $\underline{\mathscr{E}}(R/A)^{gr}$ is non-negatively graded and the map of $\delta$-pairs $(R^0,A^0)\to(R,A)$ induces an isomorphism
$$ \underline{\mathscr{E}}(R^0/A^0)\xrightarrow{\sim} \underline{\mathscr{E}}(R/A)^{gr,0}$$
in $\csheaf_{A^{0}}$.
\end{lemma}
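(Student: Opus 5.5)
The plan is to mimic the proof of Lemma \ref{gr0lemma} for de Rham cohomology, and to transport the result to the whole syntomic package using the complete filtration of Lemma \ref{filtration} a) and the descent/colimit properties of Theorem \ref{packageproperties}.

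First we reduce to free graded pairs. By Theorem \ref{packageproperties} e) the functor $(R,A)\mapsto\underline{\mathscr{E}}(R/A)$ commutes with sifted colimits, and the $\co(\bg_m)$-comodule structure of Def. \ref{gradedprismatic} is functorial. We may therefore write $A\to R$ as a sifted colimit of free non-negatively graded pairs. The subtlety is that $A$ is a graded $\delta$-ring, so we should only resolve $R$ over the fixed $A$. Concretely, take $R$ to be a sifted colimit of polynomial $A$-algebras $A[Y_1,\dots,Y_m]$ with $\deg Y_j\geq 0$. Passing to weight $0$ commutes with colimits, and so does $(-)^0$ for $A^0\to R^0$. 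This leaves the case $R=A[Y_1,\dots,Y_m]$.

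Next we reduce the claim to the Hodge-filtered de Rham component. An object of $\gsheaf_{A^{gr}}$ is determined, up to completeness, by the graded pieces of the filtration $\mathscr{I}^\bullet\fil^{\geq\star-\bullet}_{\mathcal N}\mathscr{E}'$ of Lemma \ref{filtration} a). These graded pieces are $(\pi^*\fil^{\geq\star-i}_HD)\{i\}$. The functor $\pi^*(-)\{i\}=-\otimes_{\bz_p}\co_{\mathrm{WCart}^{\mathrm{HT}}}\{i\}$ carries no grading of its own, so it preserves both non-negativity of weights and passage to weight $0$.

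It follows that non-negativity of $\underline{\mathscr{E}}(R/A)^{gr}$ reduces to non-negativity of $\fil^{\geq\star}_{Hod}\widehat{\mathrm{dR}}_{R/A}$. That is Lemma \ref{gr0lemma}, whose sifted-colimit argument works equally well with $A$ fixed. The same holds for the comparison map $\underline{\mathscr{E}}(R^0/A^0)\to\underline{\mathscr{E}}(R/A)^{gr,0}$. On graded pieces it becomes
$$\fil^{\geq\star}_{Hod}\widehat{\mathrm{dR}}_{R^0/A^0}\to(\fil^{\geq\star}_{Hod}\widehat{\mathrm{dR}}_{R/A})^0,$$
twisted by $\pi^*\{i\}$, and this is an isomorphism by Lemma \ref{gr0lemma}.

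To conclude from graded pieces we need completeness of the $\mathscr{I}$-adic filtration on both sides after taking weight $0$. Weight $0$ is a direct summand of the underlying object, because the comodule splits as $\bigoplus_i$. Hence limits commute with extracting a summand of a complete object, and completeness is inherited. Finally, the comparison map lies in $\csheaf_{A^0}$: it is induced by a map of $\delta$-pairs, and $A^0\to A$ is a map of $\delta$-rings since $\delta(A^0)\subseteq A^0$.

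The main obstacle is making rigorous that the weight decomposition is compatible with the extra structure $c_{dR},c_F,c_\square$ and with the twisted-Frobenius relation in $\fil_{\mathcal N}$. The concern is that $\varphi$ multiplies weights by $p$, so the Frobenius-semilinear data ($\mathscr{E}^{(1)}$, $\Phi$) might mix weights. I would first check that the base change along $\varphi:A\to A$ sends weight $i$ to weight $pi$. With that, weight $0$ is preserved and weights $\geq 0$ stay $\geq 0$, which is exactly what the statement needs. The fact that $\underline{\mathscr{E}}^{gr,0}$ lands in $\csheaf_{A^0}$, while the other weight pieces do not, reflects precisely this point.
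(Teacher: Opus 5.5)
Your argument is correct and is essentially the paper's proof, which is the single line that the complete $\mathscr{I}$-adic filtration of Lemma \ref{filtration} a), with graded pieces $(\pi^*\fil^{\geq\star-i}_HD)\{i\}$, reduces both claims to Lemma \ref{gr0lemma}. Your remarks on completeness of the weight-$0$ summand, and on $\varphi$ sending weight $i$ to weight $pi$ (so the weight-$0$ part keeps $c_{dR},c_F,c_\square$ and lies in $\csheaf_{A^0}$), only spell out what the paper leaves implicit, and your preliminary reduction to polynomial $A$-algebras is harmless but unnecessary since Lemma \ref{gr0lemma} already covers arbitrary non-negatively graded animated pairs.
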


\begin{proof}
Using the filtration of Lemma \ref{filtration} a) this follows from Lemma \ref{gr0lemma}.
 \end{proof}

For 
$$\underline{\mathscr{E}}= (\mathrm{Rees}(F^{\geq\bullet}\fil^{\geq\star}_{\mathcal N}\mathscr{E}'),\mathrm{Rees}(F^{\geq\bullet}\fil^{\geq\star}_HD),\alpha, c_{dR}, c_F, c_{\square})\in \csheaf_{\mathrm{Rees}(F^{\geq\star} A)^{gr}}$$ 
the object $F^{\geq\bullet}\fil^{\geq\star}_{\mathcal N}\mathscr{E}'$ is a bifiltered quasicoherent sheaf on $\mathrm{WCart}$ with a module structure over the bifiltered quasicoherent sheaf of algebras $F^{\geq \bullet} A\otimes\fil^{\geq\star}_{\mathcal N}F^*\co_{\mathrm{WCart}}$ and $F^{\geq\bullet}\fil^{\geq\star}_HD$ is a bifiltered $F^{\geq \bullet} A$-module. The objects $R\Gamma_{\mathcal N}^{\geq\star}(\underline{\mathscr{E}}^{(1)}\{n\})$ are likewise bifiltered $F^{\geq \bullet} A$-modules, whereas $R\Gamma_{\add}(\underline{\mathscr{E}}^{(1)}\{n\})$ is a filtered $F^{\geq \bullet} A$-module and $R\Gamma_{\syn}(\underline{\mathscr{E}}^{(1)}\{n\})$ is a filtered $(F^{\geq 0}A)^{\varphi=1}$-module.

\begin{prop} Let $F^{\geq \bullet} A$ be a filtered $\delta$-ring and  $\underline{\mathscr{E}}\in \gsheaf_{\mathrm{Rees}(F^{\geq\star} A)^{gr}}$. Then for any $i\geq 1$ there is an isomorphism of bounded filtrations $\mathrm{triv}$ fitting into a commutative diagram
$$\xymatrix{F^{[i,pi[}R\Gamma_{\syn}(\underline{\mathscr{E}}^{(1)}\{n\})\ar[rr]_{\sim}^{\mathrm{triv}}\ar[dr] && F^{[i,pi[}R\Gamma_{\add}(\underline{\mathscr{E}}^{(1)}\{n\})\ar[dl]\\ &F^{[i,pi[}R\Gamma_{\mathcal N}^{\geq n}(\underline{\mathscr{E}}^{(1)}\{n\})&
}$$ 
functorial in $\underline{\mathscr{E}}$.
In particular, for $i\geq c\geq 1$ there is a commutative diagram
$$\xymatrix{F^{[i,i+c[}R\Gamma_{\syn}(\underline{\mathscr{E}}^{(1)}\{n\})\ar[rr]_{\sim}^{\mathrm{triv}}\ar[dr] && F^{[i,i+c[}R\Gamma_{\add}(\underline{\mathscr{E}}^{(1)}\{n\})\ar[dl]\\ &F^{[i,i+c[}R\Gamma_{\mathcal N}^{\geq n}(\underline{\mathscr{E}}^{(1)}\{n\})&
}$$
functorial in $\underline{\mathscr{E}}$. 
\label{griso} \end{prop}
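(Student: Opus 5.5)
The idea is that on the filtration window $F^{[i,pi[}$ the Frobenius term $c^0\varphi\{n\}$ in the definition of syntomic cohomology vanishes. Once it is gone, the fibre sequences defining $R\Gamma_{\syn}$ and $R\Gamma_{\add}$ coincide. So I would first check that the $F$-filtration is compatible with all the maps in Def. \ref{globalsyntomic}, then show that $c^0\varphi\{n\}$ is canonically null on the window.

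First, both fibre sequences are sequences of $F$-filtered objects, since every term of Def. \ref{globalsyntomic} carries the Rees grading. The map $\can$ is $\mathrm{Rees}(F^{\geq\star}A)^{gr}$-linear, so it preserves the $F$-filtration. The map $\varphi\{n\}$ comes from the relative Frobenius $\Phi$, which is $\varphi$-semilinear over the graded $\delta$-ring $\mathrm{Rees}(F^{\geq\star}A)$. The map $\mathscr{E}\to\mathscr{E}^{(1)}$ in $c^0$ is base change along $\varphi$. Since $\varphi(t)=t^p$ (Example \ref{deltacoaction}), weight $-i$ is sent to weight $-pi$. Hence $c^0\varphi\{n\}$ is a filtered map
\[F^{\geq \star}R\Gamma_{\mathcal N}^{\geq n}(\underline{\mathscr{E}}^{(1)}\{n\})\to F^{\geq p\star}R\Gamma_{\mathcal N}^{\geq 0}(\underline{\mathscr{E}}^{(1)}\{n\}),\]
which then maps on to $F^{\geq\star}$ via the tautological $t$-multiplications; this is the factorisation quoted from \cite{akn23}[Footnote 21]. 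I would make it precise as follows. The semilinear structure gives a graded map into the Frobenius-twisted Rees module $\varphi^*\mathrm{Rees}(\cdots)$, and that module is the Rees module of the filtration $F^{\geq \lceil \star/p\rceil}$, reindexed so that weight $-i$ sits in $F^{\geq pi}$. Composing with the canonical comparison $F^{\geq p i}\to F^{\geq i}$, which is the $t^{(p-1)i}$-multiplication, produces $c^0\varphi\{n\}$ as an honestly filtered map.

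Second, I would pass to the quotient window $F^{[i,pi[}=F^{\geq i}/F^{\geq pi}$. The filtered map $F^{\geq i}\to F^{\geq i}$ factors through $F^{\geq pi}$, so the induced map $F^{[i,pi[}\to F^{[i,pi[}$ factors through $F^{[pi,pi[}=0$. This gives a canonical nullhomotopy of $c^0\varphi\{n\}$ on the window. The main obstacle is exactly here: the homotopy must be canonical and functorial in $\underline{\mathscr{E}}$, not just an objectwise vanishing. I would obtain it at the level of filtered objects as the factorisation through the functorially defined $\mathrm{sh}_p(F^{\geq\star})$. Its image in $F^{[i,pi[}$ is null because the composite $F^{\geq pi}\to F^{\geq i}\to F^{\geq i}/F^{\geq pi}$ is canonically null via the defining cofibre sequence.

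With this homotopy, $\can - c^0\varphi\{n\}\simeq\can$ on $F^{[i,pi[}$. Taking fibres gives the equivalence $\mathrm{triv}$, and it commutes with the two projections to $F^{[i,pi[}R\Gamma_{\mathcal N}^{\geq n}$ by construction. The filtrations involved are bounded because they are finite windows. For the second statement, $i\geq c\geq 1$ gives $i+c\leq 2i\leq pi$, so $F^{[i,i+c[}$ is a quotient of $F^{[i,pi[}$. The map $\mathrm{triv}$ is compatible with this quotient map because the nullhomotopy is natural in the window, and $\mathrm{triv}$ therefore descends to the stated diagram.
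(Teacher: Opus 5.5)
Your proposal is correct and follows the paper's proof. Since $\varphi(t)=t^p$ on $\mathrm{Rees}(F^{\geq\star}A)$, the map $c^0$ sends $F^{\geq i}$ into $F^{\geq pi}$, so $F^{[i,pi[}(c^0\varphi\{n\})$ vanishes and the fibre sequences defining $R\Gamma_{\syn}$ and $R\Gamma_{\add}$ agree on the window; $pi\geq i+c$ then gives the second statement. Your factorisation through $F^{[pi,pi[}=0$ just makes the functorial nullhomotopy explicit.

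One small slip: in Def.~\ref{fgauges} the map $\Phi$ is $A$-linear, and the only $\varphi$-semilinear step is $\mathscr{E}\to\mathscr{E}^{(1)}$ inside $c^0$. That is why the degree multiplier is $p$, as you correctly use, rather than $p^2$.
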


\begin{proof} The map $c^0$ from Def. \ref{globalsyntomic} multiplies filtration degrees by $p$
$$c^0:F^{\geq i}R\Gamma(\mathrm{WCart},\mathscr{E}\{n\})\to F^{\geq pi}R\Gamma(\mathrm{WCart},\mathscr{E}^{(1)}\{n\})\to F^{\geq pi}R\Gamma_{\mathcal N}^{\geq 0}(\underline{\mathscr{E}}^{(1)}\{n\})$$
since $\varphi: \mathrm{Rees}(F^{\geq\star} A)\to \mathrm{Rees}(F^{\geq\star} A)$ multiplies grading weights by $p$. Hence $F^{[i,pi[}c^0=0$ and there are fibre sequences
$$ F^{[i,pi[}R\Gamma_{?}(\underline{\mathscr{E}}^{(1)}\{n\})\to F^{[i,pi[}R\Gamma_{\mathcal N}^{\geq n}(\underline{\mathscr{E}}^{(1)}\{n\})\xrightarrow{F^{[i,pi[}\can}F^{[i,pi[}R\Gamma_{\mathcal N}^{\geq 0}(\underline{\mathscr{E}}^{(1)}\{n\})$$
for both $?=\syn,\add$.
For $i\geq  c\geq 1$ we have $pi\geq i+c$, giving the second statement.  

\end{proof}

\subsubsection{Globalisation} We reformulate the definitions of the last section in terms of derived formal stacks and generalize to derived formal schemes. For a graded $\delta$-ring $A$ the formal stack 
$$ \mathfrak{A}:=\Spf(A)/\widehat{\bg}_m$$
is a formal $\delta$-stack in the sense of Def. \ref{deltastackdef}. Indeed, in view of Example \ref{deltacoaction}
we may choose $\Spf(\ca^\bullet)$ to be the action groupoid of the $\widehat{\bg}_m$-action on $\Spf(A)$. Since $(\Mod_{A^{gr}})^\wedge_p\simeq\cd(\mathfrak{A})$  we have a symmetric monoidal equivalence
\begin{equation}\gsheaf_{A^{gr}}\simeq \gsheaf_{\mathfrak{A}}\label{stackyreformulation}\end{equation}
by Lemma \ref{DAgeneralities} c) in App. A.

\begin{definition}  Let $A$ be a graded $\delta$-ring and 
$$ \mathfrak{X}\to\mathfrak{A}:=\Spf(A)/\widehat{\bg}_m$$
a morphism of derived formal stacks such that $\mathscr{X}_0:=\mathfrak{X}\times_{\mathfrak{A}}\Spf(A)$ is a derived formal scheme. Write 
$$\ca^i:= A\hat{\otimes}\co(\widehat{\bg}_m)^{\hat{\otimes} i},\quad
\mathscr{X}_i:=\mathfrak{X}\times_\mathfrak{A}\Spf(\ca^i)\simeq \mathscr{X}_0\times\widehat{\bg}_m^i.$$ 
Define
$$\underline{\mathscr{E}}(\mathfrak{X}/\mathfrak{A}):=\left(\underline{\mathscr{E}}(\mathscr{X}_\bullet/\mathcal{A}^\bullet)\right)\in\Tot \csheaf_{\ca^\bullet}\simeq\csheaf_\mathfrak{A}$$
using base change (Theorem \ref{packageproperties} a)). Define
\begin{align*}
\underline{\mathscr{E}}(\mathfrak{X}/\mathfrak{A})=: &(\fil^{\geq\star}_{\mathcal N}\mathscr{E}'(\mathfrak{X}/\mathfrak{A}),\fil^{\geq \star}_{Hod}\widehat{\mathrm{dR}}_{\mathfrak{X}/\mathfrak{A}} ,\alpha, c_{dR}, c_F, c_{\square})\\
\mathcal N^{\geq\bullet}\prism^{(1)}_{\mathfrak{X}/\mathfrak{A}}\{\star\}:= &R\Gamma_{\mathcal N}^{\geq\bullet}(\underline{\mathscr{E}}(\mathfrak{X}/\mathfrak{A})^{(1)}\{\star\})\\
R\Gamma_{\syn}(\mathfrak{X}/\mathfrak{A},\bz_p(n)):= &R\Gamma_{\syn}(\underline{\mathscr{E}}(\mathfrak{X}/\mathfrak{A})^{(1)}\{n\})\\
R\Gamma_{\add}(\mathfrak{X}/\mathfrak{A},\bz_p(n)):= &R\Gamma_{\add}(\underline{\mathscr{E}}(\mathfrak{X}/\mathfrak{A})^{(1)}\{n\})
\end{align*}
and let $$\fil^{\geq \bullet}\gamma^{dR}_{\prism,\mathfrak{X}/\mathfrak{A}}\{\star\}:\mathcal N^{\geq\bullet}\prism^{(1)}_{\mathfrak{X}/\mathfrak{A}}\{\star\}\to \fil^{\geq \bullet}_{Hod}\widehat{\mathrm{dR}}_{\mathfrak{X}/\mathfrak{A}}$$ be the induced map of filtered graded objects of $\cd(\mathfrak{A})$.
\label{package}\end{definition} 

For a graded $\delta$-pair $(R,A)$ the object $\underline{\mathscr{E}}(R/A)^{gr}\in\csheaf_{A^{gr}}$ of Def. \ref{gradedprismatic} is mapped to 
$$\underline{\mathscr{E}}\left((\Spf(R)/\widehat{\bg}_m)/(\Spf(A)/\widehat{\bg}_m)\right) \in\csheaf_\mathfrak{A}$$ 
under the isomorphism (\ref{stackyreformulation}).

\begin{lemma} In the situation of Def. \ref{package} let
$\mathcal N^{\geq\bullet}\widehat{\prism}^{(1)}_{\mathfrak{X}/\mathfrak{A}}\{\star\}$ be the completion of $\mathcal N^{\geq\bullet}\prism^{(1)}_{\mathfrak{X}/\mathfrak{A}}\{\star\}$ with respect to the Nygaard filtration and define
$$\widehat{R\Gamma}_{\syn}(\mathfrak{X}/\mathfrak{A},\bz_p(n)):=\fibre\left(\mathcal N^{\geq n}\widehat{\prism}^{(1)}_{\mathfrak{X}/\mathfrak{A}}\{n\}\xrightarrow{\can-c^0\varphi\{n\}}\widehat{\prism}^{(1)}_{\mathfrak{X}/\mathfrak{A}}\{n\}\right).$$
Then the natural map 
$$ R\Gamma_{\syn}(\mathfrak{X}/\mathfrak{A},\bz_p(n))\to \widehat{R\Gamma}_{\syn}(\mathfrak{X}/\mathfrak{A},\bz_p(n))$$
is an equivalence.
\label{completesyn}\end{lemma}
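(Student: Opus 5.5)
Both $R\Gamma_{\syn}$ and $\widehat{R\Gamma}_{\syn}$ are fibres of $\can-c^0\varphi\{n\}$, and the natural map between them is induced by the completion maps on source and target. It therefore suffices to show that the induced map on the cofibres of the completion maps is an equivalence. Write $N^{\geq\bullet}:=\mathcal N^{\geq\bullet}\prism^{(1)}_{\mathfrak{X}/\mathfrak{A}}\{n\}$, let $\widehat N^{\geq\bullet}$ be its completion, and set $L:=\varprojlim_k N^{\geq k}$. Then $N^{\geq m}\to\widehat N^{\geq m}$ has fibre $L$ for every $m$. So the fibre of the comparison map is the fibre of
$$\can-c^0\varphi\{n\}:\ L\to L,$$
where $L$ appears once as $\varprojlim_k N^{\geq k}\subset N^{\geq n}$ and once as $\varprojlim_k N^{\geq k}\subset N^{\geq 0}$. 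On this term $\can$ is the identity, so the claim reduces to showing that $\id-c^0\varphi\{n\}$ is an automorphism of $L$. All objects live in $\cd(\mathfrak{A})\simeq\Tot\cd(\Spf(\ca^\bullet))$, and fibres and limits are computed termwise. Hence we may reduce to the affine case $\underline{\mathscr{E}}(\mathscr{X}/A)$ with $A$ a $\delta$-ring, where the statement is \cite{akn23}[Prop. 7.12]. I would nevertheless sketch the argument directly, using only Lemma \ref{filtration}.

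The key step is to show that $\varphi\{n\}$ maps $N^{\geq k}$ into $\mathscr{I}^{k-n}$-filtration degree. By Definition \ref{globalsyntomic}, $\varphi\{n\}$ on $N^{\geq k}$ factors through $R\Gamma(\mathrm{WCart},\fil^{\geq k}_{\mathcal N}\mathscr{E}'\otimes F^*\co\{n\})\xrightarrow{\Phi_*} R\Gamma(\mathrm{WCart},\mathscr{I}^{k-n}\mathscr{E}\{n\})$. Next, $c^0$ sends $\mathscr{I}^{j}\mathscr{E}$ into the part of $N^{\geq 0}$ lying in $\mathscr{I}$-adic filtration degree $j$, i.e.\ into the filtration of Lemma \ref{filtration} a), c). By Lemma \ref{filtration} c), $N^{\geq 0}$ carries a complete $\mathscr{I}$-adic type filtration $\mathrm{Fil}^{\bullet}$, compatible with the Nygaard filtration in the sense that $\mathrm{Fil}^{j}N^{\geq 0}\subseteq N^{\geq j}$ after passing to global sections. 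Indeed, $\mathscr{I}^j\fil^{\geq \star-j}_{\mathcal N}\mathscr{E}'$ at $\star=j$ lies in $\fil^{\geq j}_{\mathcal N}$. Consequently $c^0\varphi\{n\}(N^{\geq k})\subseteq N^{\geq k-n}$ in a compatible way, but more usefully $c^0\varphi\{n\}$ is topologically nilpotent on $L$ for the $\mathrm{Fil}^\bullet$-filtration, which is complete by Lemma \ref{filtration} a). To get a strict improvement of filtration degree, I would use that $c^0$ involves the Frobenius of $\mathscr{I}$, sending $\mathscr{I}^j$ to $\mathscr{I}^{pj}\subseteq\mathscr{I}^{j+1}$ for $j\geq 1$, together with the standard relation $\varphi(I)\subseteq (p,I^p)$. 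With these, $c^0\varphi\{n\}$ raises the complete filtration $\mathrm{Fil}^\bullet\cap L$ (combined with the $p$-adic filtration) by at least one step. The geometric series $\sum_r (c^0\varphi\{n\})^r$ then converges, and $\id-c^0\varphi\{n\}$ is invertible on $L$.

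The main obstacle is making this filtration-raising statement precise on the non-completed piece $L$, which is not $\varprojlim$ of the Nygaard graded pieces themselves. I would first try to argue instead that $L$ is $\mathrm{Fil}^\bullet$-complete and that on $\mathrm{gr}^j_{\mathrm{Fil}}L$ the map $c^0\varphi\{n\}$ lands in strictly higher degree, by checking this on the graded pieces $R\Gamma(\mathrm{WCart}^{\mathrm{HT}},(\pi^*\fil^{\geq k-j}_HD)\{j\})$ of Lemma \ref{filtration} c). If this direct route becomes too unwieldy, I would fall back on descent: verify the statement termwise on $\Spf(\ca^i)$ via \cite{akn23}[Prop. 7.12] applied to $\underline{\mathscr{E}}(\mathscr{X}_i/\ca^i)$, and conclude because totalization commutes with fibres and with the limit defining the completion.
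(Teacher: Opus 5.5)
Your first reduction matches the paper's structure: both complexes are fibres of $\can-c^0\varphi\{n\}$, and the two completion maps share the fibre $L=\varprojlim_k N^{\geq k}$, on which $\can$ induces $\id_L$. Everything then hinges on what $c^0\varphi\{n\}$ does on $L$. Your direct argument for this rests on false filtration claims.

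The map $c^0$ passes through $c_F\colon F^*\mathscr{E}^{(1)}\to\fil^{\geq 0}_{\mathcal N}\mathscr{E}'$, and $F^*\mathscr{I}$ does not land in $\mathscr{I}$. On a prism $(B,(d))$ it is $(\varphi(d))=(d^p+p\delta(d))$, and by (\ref{wcart}) its restriction to $\mathrm{WCart}^{\mathrm{HT}}$ is $\pi^*\rho_{dR}^*\mathscr{I}=(p)$. This has three consequences:
\begin{enumerate}
\item $c^0$ does not carry $\mathscr{I}^j\mathscr{E}$ into $\mathrm{Fil}^jN^{\geq 0}$.
\item Frobenius does not carry $\mathscr{I}^j$ into $\mathscr{I}^{pj}$.
\item The inclusion $c^0\varphi\{n\}(N^{\geq k})\subseteq N^{\geq k-n}$ fails. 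For $S=\mathcal O_{\mathbb C_p}$ and $A=\bz_p$, one has $\mathcal N^{\geq k}\prism_S=\varphi^{-1}(\xi)^kA_{\mathrm{inf}}$. After trivialising the twist, $\varphi\{n\}$ maps this onto $\xi^{k-n}A_{\mathrm{inf}}$, which is not contained in $\varphi^{-1}(\xi)^{k-n}A_{\mathrm{inf}}$ for $k>n$, because $\theta(\varphi(\xi))=p\cdot(\text{unit})\neq 0$.
\end{enumerate}
So the topological-nilpotence and geometric-series step is unsupported, as you suspected.

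The missing observation is that no nilpotence is needed, because $c^0\varphi\{n\}$ is null on $L$. You already have the decisive fact: by Def.~\ref{globalpackage} d), $\varphi\{n\}$ carries $N^{\geq k}$ into $R\Gamma(\mathrm{WCart},\mathscr{I}^{k-n}\mathscr{E}\{n\})$. This filtration of the target is complete, so $L\to N^{\geq n}\xrightarrow{\varphi\{n\}}R\Gamma(\mathrm{WCart},\mathscr{E}\{n\})$ factors through $\varprojlim_k R\Gamma(\mathrm{WCart},\mathscr{I}^{k-n}\mathscr{E}\{n\})=0$. The same completeness is what makes $c^0\varphi\{n\}$ extend to $\widehat N^{\geq n}$ in the first place; this is the remark in the paper's proof. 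It follows that the map induced on fibres is $\can|_L-0=\id_L$. This is the argument of \cite{bhatt-lurie-22}[Prop. 7.4.6], which the paper applies directly to $\mathfrak{X}/\mathfrak{A}$.

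Your descent fallback is not a valid substitute as stated, for two reasons.
\begin{enumerate}
\item The sequential Nygaard limit is not computed termwise in $\Tot\cd(\Spf(\ca^\bullet))$. The coface functors are completed base changes along infinite free modules (e.g.\ forgetting the $\widehat{\bg}_m$-grading), and these do not commute with $\varprojlim_k$. So the graded Nygaard completion is not the completion of the underlying ungraded object.
\item $c^0\varphi\{n\}$ is $\varphi_{\mathfrak A}$-semilinear and multiplies weights by $p$, so the relevant fibres are not objects of $\cd(\mathfrak A)$ at all.
\end{enumerate}
Hence \cite{akn23}[Prop. 7.12] for the $\mathscr{X}_i/\ca^i$ does not formally imply the lemma, whereas the one-line vanishing argument above does.
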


\begin{proof} The proof of \cite{bhatt-lurie-22}[Prop. 7.4.6] applies, see also \cite{akn23}[Prop. 7.12].  Note that the map $c^0\varphi\{n\}$ is well defined since the target of the map in Def. \ref{globalpackage} d) is already filtration complete.
\end{proof}

\begin{prop} Let $F^{\geq\star} A$ be a filtered $\delta$-ring and 
$$\mathfrak{X}\to \mathfrak{A}:=\Spf(\mathrm{Rees}(F^{\geq\star} A))/\widehat{\bg}_m$$
a schematic morphism of derived formal stacks in the sense of Def. \ref{schematic}. Put 
$$A=:F^{\geq 0}A=F^\infty A$$ and let $\mathscr{X}\simeq \mathfrak{X}\times_{\mathfrak{A}}\Spf(A)$ be the underlying derived formal scheme of $\mathfrak{X}$.

\begin{itemize}

\item[a)] There is a natural isomorphism in $\csheaf_{A}$ 
$$ F^\infty \underline{\mathscr{E}}(\mathfrak{X}/\mathfrak{A})\simeq \underline{\mathscr{E}}(\mathscr{X}/A).$$

\item[b)] There is a natural isomorphism in $\csheaf_{(\bigoplus_i\mathrm{gr}_F^i A)^{gr}}$
$$ \bigoplus_{i\in\bz}\mathrm{gr}_F^i \underline{\mathscr{E}}(\mathfrak{X}/\mathfrak{A})\simeq  \underline{\mathscr{E}}\left(\mathscr{S}pf\bigl(\bigoplus_i\mathrm{gr}_F^i \co_{\mathscr{X}}\bigr)/\widehat{\bg}_m/\Spf\bigl(\bigoplus_i\mathrm{gr}_F^i A\bigr)/\widehat{\bg}_m\right).$$

\item[c)] There is a natural isomorphisms in $\csheaf_{\mathrm{gr}_F^0 A}$
$$ \mathrm{gr}_F^0\underline{\mathscr{E}}(\mathfrak{X}/\mathfrak{A})\simeq \underline{\mathscr{E}}\left(\mathscr{S}pf(\mathrm{gr}_F^0 \co_{\mathscr{X}})/\mathrm{gr}_F^0 A\right) $$
and $\mathrm{gr}_F^i \underline{\mathscr{E}}(\mathfrak{X}/\mathfrak{A})=0$ for $i<0$.
\end{itemize}

\label{schematicbc}\end{prop}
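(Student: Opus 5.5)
The plan is to reduce to the affine case, Prop. \ref{affinebc}, by Zariski descent on the underlying derived formal scheme $\mathscr{X}$. All three statements are compatible with descent along a Zariski cover of $\mathscr{X}$, for the following reasons.

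\begin{itemize}
\item Since $\mathfrak{X}\to\mathfrak{A}$ is schematic, $\mathfrak{X}\simeq \mathscr{S}pf(\mathrm{Rees}(F^{\geq\star}\co_{\mathscr{X}}))/\widehat{\bg}_m$ with $F^{\geq\star}\co_{\mathscr{X}}$ an $\bn^{op}$-indexed quasi-coherent filtered $F^{\geq\star}A$-algebra and $F^{\geq 0}\co_{\mathscr{X}}=\co_{\mathscr{X}}$.
\item For an affine open $\Spf(R)\subseteq\mathscr{X}$ we obtain a filtered $\delta$-pair $(F^{\geq\star}R,F^{\geq\star}A)$ with $F^{\geq 0}R=R$.
\item The remark after Prop. \ref{filtered} shows that the Rees construction localizes on $\Spf(F^{\geq 0}R)$. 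Hence a finite affine Zariski cover of $\mathscr{X}$, refined to handle quasi-separatedness via intersections, gives a \v{C}ech nerve of filtered $\delta$-pairs. These are Zariski (in particular \'etale) localizations of one another, compatibly with the Rees algebras.
\end{itemize}

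The first step is to check that $\underline{\mathscr{E}}(\mathfrak{X}/\mathfrak{A})$, which is defined in Def. \ref{package} via the action groupoid $\mathscr{X}_\bullet=\mathscr{X}_0\times\widehat{\bg}_m^\bullet$ with $\mathscr{X}_0=\mathscr{S}pf(\mathrm{Rees}(F^{\geq\star}\co_{\mathscr{X}}))$, is the totalization over the \v{C}ech nerve of the affine pieces. On each piece it agrees with $\underline{\mathscr{E}}(F^{\geq\star}R/F^{\geq\star}A)$ of Def. \ref{filteredprismatic}, under (\ref{stackyreformulation}) and the comment following Def. \ref{package}. The argument runs as follows.

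\begin{itemize}
\item $\mathscr{X}_0$ is covered by $\Spf(\mathrm{Rees}(F^{\geq\star}R))$, which is a Zariski cover.
\item $\underline{\mathscr{E}}(\mathscr{X}_0/\ca^0)$ is defined by Zariski descent via Thm. \ref{packageproperties} b).
\item Descent commutes with the totalization over the $\widehat{\bg}_m$-action groupoid, since both are limits and we may interchange them.
\end{itemize}

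The second step is to apply the base change functors to the Rees algebras, namely inverting $t$, setting $t=0$, and taking weight $0$. Each of these functors has to commute with the finite totalization, i.e.\ be exact in the relevant sense. We then invoke Prop. \ref{affinebc} a), b), c) on each affine piece, using naturality of those isomorphisms in the filtered $\delta$-pair to glue them.

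\begin{itemize}
\item On the right hand sides, $\underline{\mathscr{E}}(\mathscr{X}/A)$, $\underline{\mathscr{E}}(\mathscr{S}pf(\bigoplus_i\mathrm{gr}^i_F\co_{\mathscr{X}})/\widehat{\bg}_m/\ldots)$ and $\underline{\mathscr{E}}(\mathscr{S}pf(\mathrm{gr}^0_F\co_{\mathscr{X}})/\mathrm{gr}^0_FA)$ are also defined by Zariski descent.
\item The underlying and associated graded pieces of the cover are again Zariski covers: $\Spf(R)$ covers $\mathscr{X}$, and $\Spf(\mathrm{gr}^\star_F R)$ covers the graded scheme, by Lemma \ref{schematiclemma}.
\item Vanishing of $\mathrm{gr}^i_F$ for $i<0$ is then local and follows from Prop. \ref{affinebc} c).
\end{itemize}

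The main obstacle is the commutation of the base change functors $-\otimes_{\mathrm{Rees}(F^{\geq\star}A)^{gr}}(\ldots)$ on $\gsheaf$ with the totalization. I would handle this through Lemma \ref{filtration} a), reducing to the Hodge component $\fil^{\geq\star}_HD$, i.e.\ to Hodge filtered de Rham cohomology. There the required statement is base change for a finite limit of quasi-coherent sheaves. This holds because the cover is finite and the \v{C}ech nerve may be truncated (qcqs), so the totalization is a finite limit and commutes with the exact tensor functors (after $p$-completion). Conservativity of the forgetful functor (Lemma \ref{omnibus} b)) allows these checks to be made on underlying objects.
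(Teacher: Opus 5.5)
Your proposal is correct and is essentially the paper's argument: the paper proves the statement in one line, by choosing an affine cover of $\mathscr{X}$ and applying Prop.~\ref{affinebc}. The step you flag as the main obstacle is easier than you suggest. The base change functors (inverting $t$, setting $t=0$, taking weight $0$) are exact functors of stable $\infty$-categories, and for qcqs $\mathscr{X}$ the descent limit can be taken to be finite. So no reduction through Lemma~\ref{filtration} a) is needed.
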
 

\begin{proof} This follows from Prop. \ref{affinebc} by choosing an affine cover of $\mathscr{X}$. 
\end{proof}

\begin{corollary} (Filtrations on global sections) Let $F^{\geq\star} A$ be a filtered $\delta$-ring, $\mathfrak{A}=F\Spf(A)$ and $\mathfrak{X}\to \mathfrak{A}$ a schematic morphism of derived formal stacks. Denote by $\mathscr{X}$ the underlying derived formal scheme of $\mathfrak{X}$, set  $\mathrm{gr}^0_F\mathscr{X}:=\mathscr{S}pf(\mathrm{gr}_F^0 \co_{\mathscr{X}})$ and $A:=F^{\geq 0}A$.

a) The $F^{\geq\star} A$-module $\mathcal N^{\geq\bullet}\prism^{(1)}_{\mathfrak{X}/\mathfrak{A}}\{\star\}$ is $\bn^{op}$-indexed and 
the underlying module (i.e. $F^{\geq 0}$) coincides with $\mathcal N^{\geq\bullet}\prism^{(1)}_{\mathscr{X}/A}\{\star\}$.

b) For $?=\syn,\add$ the filtration $R\Gamma_{?}(\mathfrak{X}/\mathfrak{A},\bz_p(n))$ is $\bn^{op}$-indexed and the underlying module (i.e. $F^{\geq 0}$) coincides with $R\Gamma_{?}(\mathscr{X}/A,\bz_p(n))$. 

c) There are natural isomorphisms 
\begin{align*}\mathrm{gr}^0_F\mathcal N^{\geq\bullet}\prism^{(1)}_{\mathscr{X}/A}\{\star\}\simeq\, &\mathcal N^{\geq\bullet}\prism^{(1)}_{\mathrm{gr}^0_F\mathscr{X}/\mathrm{gr}^0_FA}\{\star\}\\
\mathrm{gr}^0_FR\Gamma_{\syn}(\mathscr{X}/A,\bz_p(n))\simeq\, & R\Gamma_{\syn}(\mathrm{gr}^0_F\mathscr{X}/\mathrm{gr}^0_FA,\bz_p(n))\\
\mathrm{gr}^0_FR\Gamma_{\add}(\mathscr{X}/A,\bz_p(n))\simeq\, & R\Gamma_{\add}(\mathrm{gr}^0_F\mathscr{X}/\mathrm{gr}^0_FA,\bz_p(n))
\end{align*}

\label{globalfiltrations} \end{corollary}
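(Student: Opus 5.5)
The plan is to deduce all three parts from Prop.~\ref{schematicbc}, applied to $\underline{\mathscr{E}}(\mathfrak{X}/\mathfrak{A})$, by passing the package-level statements through the global section functors of Def.~\ref{globalnygaard} and Def.~\ref{globalsyntomic}. The basic observation is that these functors are built from finite limits and from $R\Gamma$ over $\mathrm{WCart}$ and $\mathrm{WCart}^{\mathrm{HT}}$. Under the equivalence (\ref{stackyreformulation}) and the Rees equivalence, the filtration $F^{\geq\star}$ corresponds to the grading, and taking the underlying object or the associated graded corresponds to base change along $\mathrm{Rees}(F^{\geq\star}A)\to \mathrm{Rees}(F^{\geq\star}A)[t^{-1}]$, respectively along $\mathrm{Rees}(F^{\geq\star}A)\to\bigoplus_i\mathrm{gr}^i_FA$. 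So the key step is to check that $R\Gamma_{\mathcal N}^{\geq\bullet}(-^{(1)}\{\star\})$ commutes with these two base changes.

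For commuting with base change I would argue as follows. Inverting $t$ is a filtered colimit, and killing $t$ is the cofibre of multiplication by $t$, which is a finite colimit. Each term in the fibre square (\ref{nygaardef}) is computed by global sections over $\mathrm{WCart}$ or $\mathrm{WCart}^{\mathrm{HT}}$ of a quasicoherent sheaf with a graded $\mathrm{Rees}(F^{\geq\star}A)$-module structure. I would use that $R\Gamma(\mathrm{WCart},-)$ and $R\Gamma(\mathrm{WCart}^{\mathrm{HT}},-)$ commute with cofibres trivially. They also commute with colimits of the relevant objects up to $p$-completion, since $\mathrm{WCart}$ has bounded cohomological dimension on the objects in question; alternatively I would reduce, via the complete filtration of Lemma~\ref{filtration} c), to the terms $R\Gamma(\mathrm{WCart}^{\mathrm{HT}},\pi^*(-)\{i\})$, where this is clear because $\pi^*\{i\}$ is a tensor with a fixed object of finite tor-dimension. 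The Frobenius twist $(1)$ is a base change along $\varphi$ of the $\delta$-ring. It commutes with inverting $t$ and with killing $t$, because $\varphi(t)=t^p$, so inverting $t^p$ is the same as inverting $t$, and mod $t^p$ versus mod $t$ only matters in weights, which I handle separately.

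For part a) I first get $\bn^{op}$-indexedness. Since $\mathrm{gr}^i_F\underline{\mathscr{E}}(\mathfrak{X}/\mathfrak{A})=0$ for $i<0$ by Prop.~\ref{schematicbc} c), the compatibility with killing $t$ gives $\mathrm{gr}^i_F\mathcal N^{\geq\bullet}\prism^{(1)}_{\mathfrak{X}/\mathfrak{A}}\{\star\}=0$ for $i<0$. The underlying object is the base change of the $t$-inverted version, so Prop.~\ref{schematicbc} a) identifies it with $\mathcal N^{\geq\bullet}\prism^{(1)}_{\mathscr{X}/A}\{\star\}$. Because the filtration is $\bn^{op}$-indexed, this underlying object equals $F^{\geq 0}$. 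Part c) for the Nygaard term follows in the same way from Prop.~\ref{schematicbc} c): the $\mathrm{gr}^0_F$ summand of the associated graded of global sections is the global sections of $\mathrm{gr}^0_F\underline{\mathscr{E}}\simeq\underline{\mathscr{E}}(\mathrm{gr}^0_F\mathscr{X}/\mathrm{gr}^0_FA)$.

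Part b) and the remaining parts of c) then follow by taking fibres, since $F^\infty$ and $\mathrm{gr}^i_F$ are exact. The one point needing care is the map $\can-c^0\varphi\{n\}$. Here $\can$ is $F$-filtered, and $c^0$ sends $F^{\geq i}$ into $F^{\geq pi}\subseteq F^{\geq i}$ (as in the proof of Prop.~\ref{griso}), so the fibre is a filtered object. On $\mathrm{gr}^0_F$, the map $c^0\varphi\{n\}$ induces the corresponding map for $\mathrm{gr}^0_F\mathscr{X}/\mathrm{gr}^0_FA$, by functoriality of the package along $(\mathrm{gr}^0_F\mathscr{X},\mathrm{gr}^0_FA)\to(\mathfrak{X},\mathfrak{A})$ (the weight-$0$ part). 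On underlying objects it is the map for $\mathscr{X}/A$. I expect the main obstacle to be exactly this compatibility of $c^0$ and $\varphi$ with the grading, together with the colimit-commutation of $R\Gamma(\mathrm{WCart},-)$; I would handle the latter through the filtration of Lemma~\ref{filtration} and the $p$-complete Rees equivalence of Prop.~\ref{filtered}.
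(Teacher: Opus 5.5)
Your overall route is the paper's. You apply Prop.~\ref{schematicbc} to $\underline{\mathscr{E}}(\mathfrak{X}/\mathfrak{A})$ to get $\bn^{op}$-indexedness, the underlying object and $\mathrm{gr}^0_F$ for $\fil^{\geq\star}_{\mathcal N}\mathscr{E}'$ and $\fil^{\geq\star}_{Hod}\widehat{\mathrm{dR}}$. You then pass $F^{\geq i}$ and $\mathrm{gr}^i_F$ through the fibre square of Def.~\ref{globalnygaard}, the fibre sequences of Def.~\ref{globalsyntomic}, and global sections. Your remark that $\can-c^0\varphi\{n\}$ is a filtered map, because $c^0$ sends $F^{\geq i}$ into $F^{\geq pi}$, is exactly what the paper's last sentence tacitly uses.

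The one step you set up differently is also the one whose justification fails, though the step is dispensable. You compute the underlying object as base change along $t$-inversion, i.e.\ a filtered colimit along $t$, and then want $R\Gamma(\mathrm{WCart},-)$ to commute with it. That is false in general. For example, $\mathrm{colim}_n\mathscr{I}^{-n}=0$ in $\cd(\mathrm{WCart})$ by $\mathscr{I}$-completeness. Yet the unit sections $1\in H^0(\mathrm{WCart},\mathscr{I}^{-n})$ are compatible and not $p$-divisible (evaluate at $(A_{\mathrm{inf}},\ker(\theta))$), so the colimit of global sections is nonzero even after $p$-completion. Your fallback via the complete filtration of Lemma~\ref{filtration}~c) does not help either. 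Completeness is an inverse limit, and a filtered colimit does not commute past it.

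None of this is needed. $F^{\geq i}$ is evaluation in a single filtration degree, i.e.\ a single weight of the Rees module. It therefore commutes on the nose with fibre products, fibres, and global sections on $\mathrm{WCart}$ and $\mathrm{WCart}^{\mathrm{HT}}$. You already obtain $\bn^{op}$-indexedness correctly from $\mathrm{gr}^i_F$, which is a cofibre. Once you have it, the underlying object is $F^{\geq 0}$, the colimit along $t$ is eventually constant, and you simply apply $F^{\geq 0}$ to (\ref{nygaardef}). This is how the paper argues.

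Relatedly, the superscript $(1)$ in $R\Gamma_{\mathcal N}^{\geq\bullet}(\underline{\mathscr{E}}^{(1)}\{\star\})$ involves no Frobenius base change. The square (\ref{nygaardef}) is built directly from $\fil^{\geq\bullet}_{\mathcal N}\mathscr{E}'$ and $\fil^{\geq\bullet}_HD$. So your discussion of how the Frobenius twist interacts with inverting or killing $t$ can be dropped. Frobenius enters only through $c^0$, which you already handle correctly.
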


\begin{proof}  The filtrations $\fil^{\geq\star}_{\mathcal N}\mathscr{E}'(\mathfrak{X}/\mathfrak{A})$ and $\fil^{\geq \bullet}_{Hod}\widehat{\mathrm{dR}}_{\mathfrak{X}/\mathfrak{A}}$ are $\bn^{op}$-indexed by Prop. \ref{schematicbc} c). Their underlying modules (i.e. $F^{\geq 0}$) coincide with $ \mathcal N^{\geq\bullet}F^*\mathscr{H}^{(1)}_\prism(\mathscr{X}/A)$ and $\fil^{\geq \bullet}_{Hod}\widehat{\mathrm{dR}}_{\mathscr{X}/A}$, respectively, by Prop. \ref{schematicbc} a).  By Prop. \ref{schematicbc} c) there are natural isomorphisms 
\begin{align*}\mathrm{gr}^0_F\mathcal N^{\geq\bullet}F^*\mathscr{H}^{(1)}_\prism(\mathscr{X}/A)\simeq &\mathcal N^{\geq\bullet}F^*\mathscr{H}^{(1)}_\prism( \mathrm{gr}^0_F\mathscr{X}/ \mathrm{gr}^0_FA)\\
\mathrm{gr}^0_F\fil^{\geq \bullet}_{Hod}\widehat{\mathrm{dR}}_{\mathscr{X}/A}\simeq &\fil^{\geq \bullet}_{Hod}\widehat{\mathrm{dR}}_{\mathrm{gr}^0_F\mathscr{X}/\mathrm{gr}^0_FA}.\end{align*}
The corollary then follows from the fact that $F^{\geq i}$ and $\mathrm{gr}^i_F$ commute with the fibre products, resp. sequences, in Def. \ref{globalnygaard}, resp. Def. \ref{globalsyntomic}, as well as with taking global sections on $\mathrm{WCart}$.
\end{proof}

\begin{notation} In the situation of Cor. \ref{globalfiltrations} we will also denote ${\mathcal N}^{\geq \star}\prism^{(1)}_{ \mathfrak{X}/\mathfrak{A}}\{n\}$
by
$$F^{\geq\star}{\mathcal N}^{\geq \star}\prism^{(1)}_{ \mathscr{X}/A}\{n\}\in \cd(F^{\geq\star}A)$$ and similarly for $\fil^{\geq\star}_{\mathcal N}\mathscr{E}'(\mathfrak{X}/\mathfrak{A})$, $\fil^{\geq \star}_{Hod}\widehat{\mathrm{dR}}_{\mathfrak{X}/\mathfrak{A}}$ and $R\Gamma_{?}(\mathfrak{X}/\mathfrak{A},\bz_p(n))$ for $?=\syn,\add$.  We also use the notation 
$$F^{\geq\star}L_{\mathscr{X}/A}\in\cdf(\mathscr{X},F^{\geq\star}\co_{\mathscr{X}})\simeq \cd(\mathfrak{X})$$ 
for the cotangent complex $L_{\mathfrak{X}/\mathfrak{A}}:= (L_{\mathscr{X}_\bullet/\mathcal{A}^\bullet})\in\Tot\cd(\mathscr{X}_\bullet)\simeq \cd(\mathfrak{X})$.
\end{notation}

Note that $F^{\geq\star} R\Gamma_{\add}(\mathscr{X}/A ,\bz_p(n))$ is a $F^{\geq\star} A$-module but $F^{\geq\star} R\Gamma_{\syn}(\mathscr{X}/A ,\bz_p(n))$ is only a filtered $A^{\varphi =1}$-module.

\subsection{Filtered amplitude} In this section we briefly discuss completeness and boundedness of the filtrations $F^{\geq\star} R\Gamma_{\add}(\mathscr{X}/A ,\bz_p(n))$ and  $F^{\geq\star} R\Gamma_{\syn}(\mathscr{X}/A ,\bz_p(n))$ assuming that the filtration $F^{\geq\star}\co_{\mathscr{X}}$ is complete, resp. bounded. The additional assumption which allows us to connect one to the other is that $F^{\geq\star}L_{\mathscr{X}/A}$ has finite filtered amplitude in the sense of Def. \ref{amplitude}. The main result of this section, Prop. \ref{completeandbounded}, is an elaboration of \cite{akn24}[Prop. 2.11, Rem. 2.15] but is not needed in the remainder of this article (except for a brief appearance  in the proof of Lemma \ref{mulemma}).

\begin{definition} Let $F^{\geq\star}R$ be an animated filtered ring. A filtered $F^{\geq\star}R$-module $F^{\geq\star}M$ has \underline{finite filtered amplitude}, resp. \underline{filtered amplitude in $[a,b]$}, if $F^{\geq\star}M$ lies in the thick subcategory of $\cd(F^{\geq\star}R)$ generated by the filtered shifts $F^{\geq\star-m}R$ with $m\in\bz$, resp. with $m\in [a,b]$. 

Let $g:\mathfrak{X}\to \widehat{\ba}^1/\widehat{\bg}_m$ be a schematic morphism of derived formal stacks.
An object $F^{\geq\star}\cm\in\cdf(\mathscr{X},F^{\geq\star}\co_{\mathscr{X}})\simeq\cd(\mathfrak{X})$ has \underline{finite filtered amplitude}, resp. \underline{filtered ampli-} \underline{tude in $[a,b]$}, if its restriction to any affine open does.  
\label{amplitude}\end{definition}

Any complex of finite filtered amplitude has filtered amplitude in some interval $[a,b]$. This is clear in the affine case and follows in general since $\mathscr{X}$ is quasi-compact.

\begin{remark} Since the $F^{\geq\star}R$-modules $F^{\geq\star-m}R$ correspond to line bundles on the derived formal stack
$$ F\Spf(R):=\Spf(\mathrm{Rees}(F^{\geq \star} R))/\widehat{\bg}_m$$
any module $F^{\geq\star}M$ of finite filtered amplitude is a perfect complex on $F\Spf(R)$. However, we do not know if conversely any perfect complex on $F\Spf(R)$ has finite filtered amplitude.
\end{remark}

\begin{lemma} If $F^{\geq\star}\cm$ has filtered amplitude in $[a,b]$ then the derived exterior power $L\bigwedge_{F^{\geq\star}\co_\mathscr{X}}^iF^{\geq\star}\cm$ has filtered amplitude in $[ia,ib]$.
\label{exterior}\end{lemma}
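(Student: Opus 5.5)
The plan is to reduce to the affine case and then to the generators of the thick subcategory, using that derived exterior powers are computed by a filtration whose graded pieces are tensor products of lower exterior powers. Filtered amplitude is defined by restriction to affine opens, and derived exterior powers commute with restriction to affine opens. We may therefore assume $\mathscr{X}=\Spf(R)$ with $F^{\geq\star}R$ an animated filtered ring, and work in $\cd(F^{\geq\star}R)$. Let $\mathcal{T}_{[a,b]}$ denote the thick subcategory generated by the shifts $F^{\geq\star-m}R$, $m\in[a,b]$. Under the Rees equivalence these shifts are the line bundles $\co(m)$ on $F\Spf(R)$, and the filtered tensor product corresponds to the tensor product of line bundles. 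Hence $F^{\geq\star-m}R\otimes F^{\geq\star-m'}R\simeq F^{\geq\star-m-m'}R$, and $\mathcal{T}_{[a,b]}\otimes\mathcal{T}_{[a',b']}\subseteq\mathcal{T}_{[a+a',b+b']}$, since the tensor product is exact in each variable.

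We prove, by induction on $i$, the statement: if $M\in\mathcal{T}_{[a,b]}$ then $L\bigwedge^iM\in\mathcal{T}_{[ia,ib]}$ (with $i\geq 0$, and $\bigwedge^0=R$ lying in $\mathcal{T}_{[0,0]}$). The class of $M\in\mathcal{T}_{[a,b]}$ for which this holds for all $i$ contains the generators. Indeed, $L\bigwedge^i$ of the invertible object $F^{\geq\star-m}R[k]$ is a shift of $F^{\geq\star-im}R$ or zero. To see this, use the décalage identities $L\bigwedge^i(L[1])\simeq L\Gamma^i(L)[i]$, and note that for a line bundle $L$ the objects $L\Gamma^i L$ and $\Sym^i L$ are both $L^{\otimes i}$. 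These identities hold for animated functors on the Rees side, or equivalently on the stack, after extending the functors from polynomial algebras in the usual way.

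Next we check that the class is closed under shifts, retracts and cofibres. Retracts are clear, since $L\bigwedge^i$ is a functor. Shifts follow from décalage, because $L\Gamma^i$ and $\Sym^i$ satisfy the same filtration property as $\bigwedge^i$. Therefore I would prove the stronger statement that $\bigwedge^i$, $\Sym^i$ and $\Gamma^i$ all send $\mathcal{T}_{[a,b]}$ into $\mathcal{T}_{[ia,ib]}$.

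The cofibre step is the key one. Given a cofibre sequence $M'\to M\to M''$, the derived exterior power $L\bigwedge^iM$ carries a natural finite filtration with graded pieces $L\bigwedge^jM'\otimes L\bigwedge^{i-j}M''$ for $j=0,\dots,i$, and similarly for $\Sym$ and $\Gamma$. By induction each graded piece lies in $\mathcal{T}_{[ja,jb]}\otimes\mathcal{T}_{[(i-j)a,(i-j)b]}\subseteq\mathcal{T}_{[ia,ib]}$, and thick subcategories are closed under finite extensions.

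The main obstacle is the cofibre step. Establishing this filtration for derived (non-additive) functors in the filtered, animated setting is not formal, because a thick subcategory is closed under cofibres but $L\bigwedge^i$ is not exact. I would argue as in Illusie and Lurie: the filtration exists for maps of finite free modules and is natural, so it extends by animation to connective objects. For nonconnective objects I would use the Brantner–Mathew extension of derived functors to all perfect complexes, which preserves these filtrations. All of this applies in $\cd(F\Spf(R))$ by descent along the $\widehat{\bg}_m$-action groupoid, that is, in graded $\mathrm{Rees}(F^{\geq\star}R)$-modules, where everything is computed termwise.
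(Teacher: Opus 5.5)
Your d\'evissage is a genuinely different route from the paper's, but two of its steps fail as written.

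\emph{The computation at the generators is wrong.} It is not true that $L\bigwedge^i(F^{\geq\star-m}R[k])$ is in general a shift of $F^{\geq\star-im}R$ or zero. D\'ecalage only reduces $k=\pm1$ to $k=0$. For $|k|\geq 2$ you need $L\Gamma^i(L[1])$ or $\Sym^i(L[-1])$, which d\'ecalage does not compute, and the answer is not of the claimed form. Already over $\bz$, the cofibre sequence $L\bigwedge^2\to\otimes^2\to L\Sym^2$ evaluated at $\bz[2]$ gives $L\bigwedge^2(\bz[2])\simeq\fibre(\bz[4]\xrightarrow{2}\bz[4])\simeq\bz/2[3]$, since $x\otimes x\mapsto x^2=2\gamma_2(x)$. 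Likewise the Koszul complex gives $L\bigwedge^p(\bz[2])\otimes\bz_{(p)}\simeq\bz/p[p+1]$, so the problem persists $p$-adically. What you actually need still holds: $L\bigwedge^i(\mathcal{L}(m)[k])$ is $\mathcal{L}(im)$ tensored with the perfect $\bz$-complex $L\bigwedge^i_\bz(\bz[k])$, so it lies in $\mathcal{T}_{[im,im]}$.

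\emph{The shift step is circular.} Enlarging the class to $\bigwedge^i,\Sym^i,\Gamma^i$ does not make it closed under $[1]$ or $[-1]$. D\'ecalage expresses $\bigwedge^i(M[1])$ and $\Sym^i(M[1])$ through $\Gamma^i(M)$ and $\bigwedge^i(M)$. Nothing, however, expresses $\Gamma^i(M[1])$ (resp.\ $\Sym^i(M[-1])$) through the three functors at $M$. On its own, your filtration step only gives closure under extensions, not under cofibres.

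\emph{How to repair it.} Both defects disappear if you drop $\Sym$ and $\Gamma$ and run a two-out-of-three argument by induction on $i$. Assume $L\bigwedge^j(\mathcal{T}_{[a,b]})\subseteq\mathcal{T}_{[ja,jb]}$ for all $j<i$. For a cofibre sequence $M'\to M\to M''$ in $\mathcal{T}_{[a,b]}$, the filtration on $L\bigwedge^iM$ has all middle graded pieces in $\mathcal{T}_{[ia,ib]}$, and its extreme pieces are $L\bigwedge^iM'$ and $L\bigwedge^iM''$. Hence $\{M\in\mathcal{T}_{[a,b]}:L\bigwedge^iM\in\mathcal{T}_{[ia,ib]}\}$ satisfies two-out-of-three and is closed under retracts, so it is thick. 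Closure under shifts is the special case $M\to 0\to M[1]$. This class contains the generators, since $L\bigwedge^i\mathcal{L}(m)=0$ for $i\geq 2$.

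\emph{What remains, and how the paper avoids it.} The repaired argument still rests on the input you flag but only assert: the filtration on $L\bigwedge^i$ for non-split cofibre sequences of possibly non-connective objects of $\cd(F^{\geq\star}R)$. The paper sidesteps this entirely. It uses the universal property of the idempotent-completed stable envelope for polynomial functors (the Brantner--Glasman--Mathew--Nikolaus theorem). By that property, $L\bigwedge^i$ on $\mathscr{D}^{[a,b]}$ is determined by its restriction to the additive category of finite sums of the $\mathcal{L}(m)$, $m\in[a,b]$. On that category only $L\bigwedge^i$ of a direct sum is needed, and there the filtration comes from a splitting. The natural way to justify your cofibre filtration rigorously is the same theorem applied to the arrow category $\Fun(\Delta^1,-)$. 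So the repaired d\'evissage is correct, but it is not really more elementary than the paper's argument.
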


\begin{proof} We may assume $\mathfrak{X}=F\Spf(R)$. Denote by 
$$\mathscr{C}^{[a,b]}\subseteq \mathscr{D}^{[a,b]}\subseteq\cd(F^{\geq\star}R)$$ 
the additive and thick (i.e. idempotent complete stable) subcategory generated by the set of objects $\{\mathcal{L}(m):=F^{\geq\star-m}R\,\vert\,m\in [a,b]\}$. Denote by 
$\mathrm{Stab}(\mathscr{C}^{[a,b]})$ the stable envelope of the additive category $\mathscr{C}^{[a,b]}$ \cite{bgmn21}[Constr. 2.16] and by $\overline{\mathrm{Stab}(\mathscr{C}^{[a,b]})}$ its idempotent completion. Let $\mathscr{E}$ be a stable $\infty$-category. By \cite{bgmn21}[Thm. 2.19] restriction along the natural additive functor induces an equivalence
$$ \Fun_{\leq i}(\mathscr{C}^{[a,b]},\mathscr{E})\simeq \Fun_{\leq i}(\mathrm{Stab}(\mathscr{C}^{[a,b]}),\mathscr{E})$$
between polynomial functors of degree $\leq i$ in the sense of additive categories \cite{bgmn21}[Def. 2.4] and polynomial functors of degree $\leq i$ in the sense of stable categories \cite{bgmn21}[Def. 2.11]. By the universal property of idempotent completion combined with \cite{lurieHA}[Prop. 6.1.5.4] (using that the idempotent completion is a full subcategory of the Ind-category) restriction along the natural exact functor gives an equivalence
$$\Fun_{\leq i}(\mathrm{Stab}(\mathscr{C}^{[a,b]}),\mathscr{E})\simeq \Fun_{\leq i}(\overline{\mathrm{Stab}(\mathscr{C}^{[a,b]})},\mathscr{E})$$
if $\mathscr{E}$ is idempotent complete. Assume that the functor $\bigwedge^i:=L\bigwedge_{F^{\geq\star}R}^i$ on $\mathscr{C}^{[a,b]}$ takes values in a thick subcategory $\mathscr{E}\subseteq \cd(F^{\geq\star}R)$. Then the unique extension $\bigwedge^i:\overline{\mathrm{Stab}(\mathscr{C}^{[a,b]})}\to\mathscr{E}$ composed with the inclusion $\mathscr{E}\subseteq \cd(F^{\geq\star}R)$ coincides with the composite
$$ \overline{\mathrm{Stab}(\mathscr{C}^{[a,b]})}\xrightarrow{\pi}\mathscr{D}^{[a,b]}\xrightarrow{\bigwedge^i}\cd(F^{\geq\star}R)$$
since both restrict to the same functor on $\mathscr{C}^{[a,b]}$. Since $\mathscr{D}^{[a,b]}$ is generated as a thick subcategory by $\mathscr{C}^{[a,b]}$ the exact functor $\pi$ is essentially surjective. It follows that $\bigwedge ^i$ on $\mathscr{D}^{[a,b]}$ takes values in $\mathscr{E}$. Taking $\mathscr{E}= \mathscr{D}^{[ia,ib]}$ it therefore suffices to show 
\begin{equation} \bigwedge^i(\mathcal{L}(m_1)\oplus\cdots\oplus\mathcal{L}(m_k))\in \mathscr{D}^{[ia,ib]}\label{ithpower}\end{equation}
since any object of $\mathscr{C}^{[a,b]}$ is such a finite direct sum of generators $\mathcal L(m)$. The module in (\ref{ithpower}) is trivial for $k<i$ and has a bounded filtration with graded pieces
$$ \bigwedge^j(\mathcal{L}(m_1)\oplus\cdots\oplus\mathcal{L}(m_i))\otimes \bigwedge^{i-j}(\mathcal{L}(m_1)\oplus\cdots\oplus\mathcal{L}(m_{k-i}))$$
for $k\geq i$. By induction on $i$ (the case $i=0$ being trivial) and then induction over $k\geq i$, the case $k=i$ being
$$\bigwedge^i(\mathcal{L}(m_1)\oplus\cdots\oplus\mathcal{L}(m_i))\simeq\mathcal{L}(m_1)\otimes\cdots\otimes\mathcal{L}(m_i)$$
we find that indeed (\ref{ithpower}) has filtered amplitude in $[ia,ib]$. Here we use that if $\mathcal{N}$, resp. $\mathcal{M}$, has filtered amplitude in $[a,b]$, resp. $[c,d]$, then $\mathcal{N}\otimes\mathcal{M}$ has filtered amplitude in $[a+c,b+d]$.
\end{proof}

We call a filtration $F^{\geq\star}\cm$ {\em $\square$-complete} if $\varprojlim_k F^{\geq k}\cm=0$ in $\cd_{\square}(\mathscr{X})$, the solid enlargement of $\cd(\mathscr{X})$ \cite{scholze-sixfunctor}[Lecture IX]. This is a stronger condition than completeness in $\cd(\mathscr{X})$ (even in the affine case) but has the advantage of passing to open subspaces $j:U\to\mathscr{X}$ as $j^*$ has a left adjoint and hence commutes with limits in the solid formalism. This notion will play no further role in this paper. If $\mathscr{X}$ is affine then $\square$-completeness can be replaced by completeness in Lemma \ref{lemmacomplete} and Prop. \ref{completeandbounded}.

\begin{lemma}\label{lemmacomplete} Assume $F^{\geq\star}\cm\in\cd(\mathfrak{X})$ has filtered amplitude in $[a,b]$.
\begin{enumerate}
\item[a)] If the filtration $F^{\geq\star}\co_{\mathscr{X}}$ is $\square$-complete then the filtration $F^{\geq\star}\cm$ is $\square$-complete.
\item[b)] If the filtration $F^{\geq\star}\co_{\mathscr{X}}$ is bounded in $[0,r]$ then the filtration $F^{\geq\star}\cm$ is bounded in $[a,b+r]$.
\end{enumerate}
\end{lemma}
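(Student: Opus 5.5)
Both properties are stable under finite limits, finite colimits and retracts, so it suffices to check them on the generators. I will show that the full subcategory of $\cd(\mathfrak{X})$ of objects with the property ($\square$-complete, resp. bounded in $[a,b+r]$) is thick, and then verify it for the filtered shifts $F^{\geq\star-m}\co_{\mathscr{X}}$ with $m\in[a,b]$.

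For a), the functor $F^{\geq\star}\cm\mapsto\varprojlim_kF^{\geq k}\cm$ (computed in $\cd_\square(\mathscr{X})$) is exact. Hence the objects with vanishing limit form a stable subcategory, and it is closed under retracts. The definition of filtered amplitude is local: $F^{\geq\star}\cm$ has filtered amplitude in $[a,b]$ if its restriction to every affine open $U$ does. Since $j^*$ commutes with limits in the solid formalism, $\square$-completeness can also be checked on an affine cover. Indeed, a finite Zariski cover gives a finite Čech limit expressing $\varprojlim_k F^{\geq k}\cm$ in terms of its restrictions, and these commute. So I reduce to $\mathscr{X}=\Spf(R)$ affine. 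There $F^{\geq\star}\cm$ lies in the thick subcategory generated by the $F^{\geq\star-m}R$. For each generator, $\varprojlim_kF^{\geq k-m}R=\varprojlim_kF^{\geq k}R=0$, using the hypothesis restricted to $U$; this restriction is legitimate because of the left adjoint of $j^*$. This settles a).

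For b), boundedness in $[a,b+r]$ means completeness together with $\mathrm{gr}^i=0$ for $i\notin[a,b+r]$. Both $\mathrm{gr}^i$ and the limit are exact and commute with restriction to opens; for $\mathrm{gr}^i$ this is clear, and completeness follows from the finiteness of the filtration. So again the condition defines a thick subcategory, and the question is local. For the generator $F^{\geq\star-m}R$ we have $\mathrm{gr}^i=\mathrm{gr}^{i-m}_FR$. This vanishes unless $0\le i-m\le r$, i.e.\ unless $i\in[m,m+r]\subseteq[a,b+r]$. Moreover a filtration with only finitely many nonzero graded pieces which is complete is bounded. Completeness of $F^{\geq\star-m}R$ holds because $F^{\geq\star}R$ is bounded, hence complete.

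The only point requiring care is the passage from local to global. It is not fully clear to me whether in b) one should also use $\square$-completeness; but boundedness gives $F^{\geq k}\co_{\mathscr{X}}=0$ for $k>r$ outright, by completeness plus vanishing of the graded pieces. Hence on each affine $F^{\geq k}\cm=0$ for $k>b+r$, which glues to global vanishing by Zariski descent in $\cd(\mathscr{X})$. Likewise the condition $\mathrm{gr}^i=0$ for $i<a$ means $F^{\geq i}\cm\simeq F^{\geq a}\cm$ for $i\le a$, which is local, so no limit issue arises in b). The main obstacle is therefore the descent step in a), which is exactly why the $\square$-formalism is used.
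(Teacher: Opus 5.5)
Your proof is correct and follows the paper's argument. You reduce to the affine case using that $j^*$ commutes with limits in $\cd_\square$ (the paper phrases this via an affine Zariski hypercovering), and then observe that each property is stable under fibres, shifts and retracts and holds for the generators $F^{\geq\star-m}R$. Your remark that in b) boundedness in $[a,b+r]$ amounts to the vanishing conditions $F^{\geq k}\cm=0$ for $k>b+r$ and $\mathrm{gr}^i\cm=0$ for $i<a$ is also correct: these conditions are Zariski-local in $\cd(\mathscr{X})$ without any solid formalism, which spells out a local-to-global step the paper leaves implicit.
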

\begin{proof} Choosing an affine Zariski hypercovering $j_i:U_i\to\mathscr{X}$ we have 
$$\cd_{\square}(\mathscr{X})\simeq\varprojlim_\Delta \cd_{\square}(U_i),\quad \cm\simeq \varprojlim_\Delta j_{i,*}j^*_i\cm$$ 
for each $\cm\in\cd_{\square}(\mathscr{X})$. So it suffices to assume that $\mathscr{X}$ is affine. In the affine case, $\square$-completeness of $F^{\geq \star}M$ is preserved by fibers, shifts and retracts, and is satisfied by $F^{\geq\star-m}R$ for $m\in\bz$ by assumption. This gives a). 

Similarly, boundedness of $F^{\geq \star}M$ in $[a,b+r]$ is preserved by fibers, shifts and retracts, and is satisfied by $F^{\geq\star-m}R$ for $m\in [a,b]$. This gives b). 
\end{proof}

\begin{prop} \label{completeandbounded} Let $F^{\geq\star} A$ be a filtered $\delta$-ring, $\mathfrak{A}=F\Spf(A)$ and $\mathfrak{X}\to \mathfrak{A}$ a schematic morphism of derived formal stacks. Denote by $\mathscr{X}$ the underlying derived formal scheme of $\mathfrak{X}$ and set $A=F^{\geq 0}A$. Assume $F^{\geq \star}L_{\mathscr{X}/A}$ has filtered amplitude in $[a,b]$ with $b\geq 0$.
\begin{enumerate}
\item[a)] If the filtration $F^{\geq\star}\co_{\mathscr{X}}$ is $\square$-complete then for any $i\geq 0$ the filtrations 
$$F^{\geq \star}L\widehat{\Omega}^{i}_{\mathscr{X}/A},\quad F^{\geq \star}\mathrm{gr}^i_{\mathcal N}\prism^{(1)}_{\mathscr{X}/A}\{n\}$$
as well as the filtrations
$$F^{\geq \star}R\Gamma_{\add}(\mathscr{X}/A,\bz_p(n)),\  F^{\geq \star}R\Gamma_{\syn}(\mathscr{X}/A,\bz_p(n)),\  F^{\geq \star} \widehat{\mathrm{dR}}_{\mathscr{X}/A}^{<n}[-1]$$ 
are complete.
\item[b)] If the filtration $F^{\geq\star}\co_{\mathscr{X}}$ is bounded in $[0,r]$ then the filtrations
$$F^{\geq \star}R\Gamma_{\add}(\mathscr{X}/A,\bz_p(n)),\  F^{\geq \star}R\Gamma_{\syn}(\mathscr{X}/A,\bz_p(n)),\  F^{\geq \star} \widehat{\mathrm{dR}}_{\mathscr{X}/A}^{<n}[-1]$$ 
are bounded in $[0,(n-1)b+r]$.
\end{enumerate}
\end{prop}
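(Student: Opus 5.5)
The plan is to reduce everything to statements about the filtered cotangent complex and its derived exterior powers, and then to propagate completeness and boundedness through the fibre sequences and finite filtrations of Lemma \ref{filtration} and Definition \ref{globalsyntomic}.

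\emph{Step 1: exterior powers.} By Lemma \ref{exterior}, $F^{\geq\star}L\bigwedge^i L_{\mathscr{X}/A}$ has filtered amplitude in $[ia,ib]$. The $p$-completion of this object is $F^{\geq\star}L\widehat{\Omega}^i_{\mathscr{X}/A}$, and completion preserves the thick-subcategory description. Lemma \ref{lemmacomplete} a) then shows that $F^{\geq\star}L\widehat{\Omega}^i_{\mathscr{X}/A}$ is ($\square$-)complete. In the bounded case, Lemma \ref{lemmacomplete} b) shows it is bounded in $[ia,ib+r]$. Since $\mathrm{gr}^i_{Hod}\widehat{\mathrm{dR}}\simeq L\widehat{\Omega}^i[-i]$, the object $\widehat{\mathrm{dR}}^{<n}$ has a finite filtration with these pieces for $i<n$. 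Both properties are stable under finite extensions, so $F^{\geq\star}\widehat{\mathrm{dR}}^{<n}[-1]$ is complete, respectively bounded in $[0,(n-1)b+r]$. Here I use $a\geq 0$, or rather $\bn^{op}$-indexedness from Cor. \ref{globalfiltrations}, to bring the lower bound up to $0$.

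\emph{Step 2: Nygaard graded pieces.} Lemma \ref{filtration} d) gives $\mathrm{gr}^m_{\mathcal N}$ a finite filtration. Its pieces are $\mathrm{gr}^m_H D = L\widehat{\Omega}^m[-m]$ and the objects $R\Gamma(\mathrm{WCart}^{\mathrm{HT}},\pi^*\mathrm{gr}^{m-i}_H D\{i\})$. For the latter, $R\Gamma(\mathrm{WCart}^{\mathrm{HT}},\co\{i\})$ is a perfect $\bz_p$-complex, so tensoring with it commutes with limits and preserves completeness and boundedness. Hence $F^{\geq\star}\mathrm{gr}^m_{\mathcal N}\prism^{(1)}\{n\}$ is complete, and bounded in $[0,mb+r]$.

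\emph{Step 3: passing to $R\Gamma_{\add}$ and $R\Gamma_{\syn}$.} By Lemma \ref{completesyn} and its additive analogue, both may be computed from the Nygaard-completed objects. The additive term is the cofibre of $\mathcal N^{\geq n}\to\mathcal N^{\geq 0}$ shifted by $-1$, that is, an extension of $\mathrm{gr}^m_{\mathcal N}$ for $0\leq m<n$, which is finite. Step 2 then gives completeness and bounds in $[0,(n-1)b+r]$; this is where the exponent $n-1$ comes from.

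For $R\Gamma_{\syn}$, Prop. \ref{griso} supplies the needed control. It identifies $F^{[i,pi[}R\Gamma_{\syn}$ with $F^{[i,pi[}R\Gamma_{\add}$ for $i\geq1$, so vanishing of the additive graded pieces beyond $(n-1)b+r$ forces vanishing of the syntomic ones. For completeness, write $\varprojlim_k F^{\geq k}R\Gamma_{\syn}$ as the fibre of $\varprojlim_k F^{\geq k}$ applied to $\mathcal N^{\geq n}\widehat{\prism}^{(1)}\to\widehat{\prism}^{(1)}$. It then suffices that $F^{\geq\star}\mathcal N^{\geq n}\widehat{\prism}^{(1)}\{n\}$ and $F^{\geq\star}\widehat{\prism}^{(1)}\{n\}$ are complete.

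\emph{Main obstacle.} The hard step is this completeness of the full Nygaard-completed objects. They are limits over $m$ of finite extensions of the complete objects of Step 2, and limits commute with limits, so the bifiltered inverse limit should vanish. The care needed is in interchanging $\varprojlim_m$ (Nygaard) with $\varprojlim_k$ (the $F$-filtration) and in the $\square$ versus ordinary completeness on non-affine $\mathscr{X}$. I would handle this by first reducing to affine $\mathscr{X}$ via the hypercover argument of Lemma \ref{lemmacomplete}, and then applying the Beck--Chevalley-free interchange of limits. In the bounded case, the fibre sequence shows $F^{\geq k}R\Gamma_{\syn}$ is the fibre of maps between objects whose $F^{\geq k}$ vanishes for large $k$. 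For $\mathcal N^{\geq0}$ this may fail term by term, so I would instead argue via $\mathrm{triv}$ as above. Combining this with completeness gives boundedness.
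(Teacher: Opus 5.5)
Your proposal is correct and follows the paper's proof essentially step for step. The shared chain is: Lemma \ref{exterior} and Lemma \ref{lemmacomplete} applied to the sheaves $L\bigwedge^i F^{\geq\star}L_{\mathscr{X}/A}$ and then global sections; the finite filtrations of Lemma \ref{filtration} d) and of $\mathcal N^{[0,n[}$ for the Nygaard graded pieces, $R\Gamma_{\add}$ and $\widehat{\mathrm{dR}}^{<n}$; and, for $R\Gamma_{\syn}$, the fact that a limit of complete filtrations is complete combined with Lemma \ref{completesyn}. Your ``main obstacle'' is not really one, since limits commute with limits and with $F^{\geq k}$, so no affine reduction is needed there; and your use of Prop. \ref{griso} for the upper bound on $F^{\geq\star}R\Gamma_{\syn}$ in b) spells out a step the paper leaves implicit.
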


\begin{proof} a) By Lemma \ref{exterior} $L\bigwedge^i F^{\geq \star}L_{\mathscr{X}/A}$ has filtered amplitude in $[ia,ib]$. By Lemma \ref{lemmacomplete} a) the filtration $L\bigwedge^i F^{\geq \star}L_{\mathscr{X}/A}$ is $\square$-complete hence complete, hence so is 
$$F^{\geq \star}L\widehat{\Omega}^{i}_{\mathscr{X}/A}=R\Gamma(\mathscr{X},L\bigwedge{}^i F^{\geq \star}L_{\mathscr{X}/A}).$$
By Lemma \ref{filtration} d) $F^{\geq \star}\mathrm{gr}^i_{\mathcal N}\prism^{(1)}_{\mathscr{X}/A}\{n\}$ has a bounded filtration with graded pieces
\begin{equation}\mathrm{gr}^j=\begin{cases} F^{\geq \star}L\widehat{\Omega}^{i}_{\mathscr{X}/A}[-i] & j=0\\ 
\fibre\left(F^{\geq \star}L\widehat{\Omega}^{i-j}_{\mathscr{X}/A}\xrightarrow{j}F^{\geq \star}L\widehat{\Omega}^{i-j}_{\mathscr{X}/A}\right)[j-i] &j = 1,\dots,i\end{cases} \label{nygaardfilt}\end{equation}
where we use the fact that the Sen operator \cite{bhatt-lurie-22}[Prop. 3.5.11] operates trivially on $\pi^*M$ for any $M$ and by multiplication with $j$ on $\co_{\mathrm{WCart}^\mathrm{HT}}\{j\}$. We conclude that $F^{\geq \star}\mathrm{gr}^i_{\mathcal N}\prism^{(1)}_{\mathscr{X}/A}\{n\}$ is complete.

The filtration $F^{\geq \star}R\Gamma_{\add}(\mathscr{X}/A,\bz_p(n))$, resp. $F^{\geq \star} \widehat{\mathrm{dR}}_{\mathscr{X}/A}^{<n}$, has a bounded filtration with graded pieces 
\begin{equation}F^{\geq \star}\mathrm{gr}^i_{\mathcal N}\prism^{(1)}_{\mathscr{X}/A}\{n\}[-1],\quad\text{resp. } F^{\geq \star}L\widehat{\Omega}^{i}_{\mathscr{X}/A}[-i], \quad i=0,\dots,n-1,\label{addfilt}\end{equation}
hence is complete. Since a limit of complete filtrations is complete,  the filtration 
$F^{\geq \star}\mathcal N^{\geq m}\widehat{\prism}^{(1)}_{\mathscr{X}/A}\{n\}$ is complete for any $m$. It follows from Lemma \ref{completesyn} that the filtration $F^{\geq \star}R\Gamma_{\syn}(\mathscr{X}/A,\bz_p(n))$ is complete.

b) This is immediate from Lemma \ref{lemmacomplete} b), the filtrations (\ref{nygaardfilt}) and (\ref{addfilt}) and Cor. \ref{globalfiltrations} b).

\end{proof}

We note the following consequence of the proof of Prop. \ref{completeandbounded}, generalizing \cite{bhatt-lurie-22}[Prop. 5.5.12] (the case $A=\bz_p$ and $\star=0$).

\begin{lemma} Let $F^{\geq\star} A$ be a filtered $\delta$-ring, $\mathfrak{A}=F\Spf(A)$ and $\mathfrak{X}\to \mathfrak{A}$ a schematic morphism of derived formal stacks. Denote by $\mathscr{X}$ the underlying derived formal scheme of $\mathfrak{X}$ and set $A=F^{\geq 0}A$. The map
$$F^{\geq\star} \gamma^{dR,<n}_{\prism, \mathscr{X}/A}\{n\}[-1]: F^{\geq\star} R\Gamma_{\add}(\mathscr{X}/A,\bz_p(n))\to F^{\geq\star} \widehat{\mathrm{dR}}_{\mathscr{X}/A}^{<n}[-1]
$$
is a rational isomorphism for any $n\geq 1$ and an isomorphism for $0<n<p+1$.
\label{gammaiso}\end{lemma}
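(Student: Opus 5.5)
The plan is to reduce to the graded pieces of the Nygaard filtration, as in the proof of Prop. \ref{completeandbounded}. By construction (Def. \ref{globalnygaard} and the fibre product (\ref{gammadef})), the map $\gamma^{dR}_\prism$ is compatible with the Nygaard and Hodge filtrations. Taking $\can$-cofibres, the source $F^{\geq\star}R\Gamma_{\add}(\mathscr{X}/A,\bz_p(n))\simeq F^{\geq\star}\bigl(\mathcal N^{\geq 0}/\mathcal N^{\geq n}\bigr)\prism^{(1)}_{\mathscr{X}/A}\{n\}[-1]$ and the target $F^{\geq\star}\widehat{\mathrm{dR}}^{<n}_{\mathscr{X}/A}[-1]$ acquire finite filtrations, indexed by $i=0,\dots,n-1$, with graded pieces as in (\ref{addfilt}). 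The map $F^{\geq\star}\gamma^{dR,<n}_{\prism}\{n\}[-1]$ respects these finite filtrations. Since they are finite, it suffices to prove that for each $0\le i\le n-1$ the induced map
$$F^{\geq\star}\mathrm{gr}^i_{\mathcal N}\prism^{(1)}_{\mathscr{X}/A}\{n\}\to F^{\geq\star}L\widehat{\Omega}^i_{\mathscr{X}/A}[-i]$$
is a rational isomorphism for all $n\ge1$, and an integral isomorphism when $n\le p$.

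Next I identify this map on graded pieces using Lemma \ref{filtration} d), which gives the bounded filtration (\ref{nygaardfilt}) on $F^{\geq\star}\mathrm{gr}^i_{\mathcal N}\prism^{(1)}\{n\}$. Its $j=0$ piece is $F^{\geq\star}L\widehat{\Omega}^i[-i]=\mathrm{gr}^i_H D$. The map $\mathrm{gr}^i\gamma^{dR}$ is the projection of the fibre product (\ref{nygaardef}) onto the $D$-factor. Under the filtration of Lemma \ref{filtration} c) it therefore restricts to the identity on the $j=0$ piece. Hence the fibre of $\mathrm{gr}^i\gamma$ has a finite filtration with pieces $\fibre\bigl(F^{\geq\star}L\widehat{\Omega}^{i-j}\xrightarrow{j}F^{\geq\star}L\widehat{\Omega}^{i-j}\bigr)[j-i]$ for $j=1,\dots,i$. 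These pieces come from $R\Gamma(\mathrm{WCart}^{\mathrm{HT}},\pi^*(-)\{j\})$ via the Sen operator description, as recalled in the proof of Prop. \ref{completeandbounded}.

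Rationally, multiplication by $j\ge1$ is invertible, so all these fibres vanish after inverting $p$, which gives the rational statement. Integrally, we have $j\le i\le n-1$. If $n<p+1$ then $j\le p-1$ is a $p$-adic unit, so the fibres vanish in the $p$-complete category and the map is an isomorphism. All these identifications are compatible with the $F$-filtration, because $F^{\geq k}$ commutes with the fibre products and with global sections on $\mathrm{WCart}$ and $\mathrm{WCart}^{\mathrm{HT}}$, as in the proof of Cor. \ref{globalfiltrations}. So one can argue levelwise in $\star$.

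The step I expect to be the main obstacle is checking that $\mathrm{gr}^i_{\mathcal N}\gamma^{dR}$ really is the projection onto the $j=0$ piece of the filtration of Lemma \ref{filtration} d), i.e.\ that the $j\ge1$ pieces map to zero. I would argue directly from the fibre square (\ref{nygaardef}): the pieces with $j\ge1$ come from $R\Gamma(\mathrm{WCart},\mathscr{I}\fil^{\geq\star-1}_{\mathcal N}\mathscr{E}'\cdots)$, and these die on projection to $D$. For the identification of $R\Gamma(\mathrm{WCart}^{\mathrm{HT}},\pi^*M\{j\})$ with $\fibre(M\xrightarrow{j}M)$, I would invoke \cite{bhatt-lurie-22}[Prop. 3.5.11].
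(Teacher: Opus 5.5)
Your proposal is correct and follows the paper's own argument. You use the finite filtration (\ref{addfilt}) to reduce to the Nygaard/Hodge graded pieces. You then use (\ref{nygaardfilt}), whose pieces with $j\geq 1$ are fibres of multiplication by $j\leq n-1$; these vanish rationally, and integrally when $n\leq p$. Your extra check, read off from the fibre square (\ref{nygaardef}), that $\mathrm{gr}^i_{\mathcal N}\gamma^{dR}$ is the projection whose fibre is exactly the $j\geq 1$ part, is the step the paper's one-line proof leaves implicit.
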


\begin{proof} This follows from the filtrations (\ref{addfilt}) and (\ref{nygaardfilt}) as multiplication by $j$ in (\ref{nygaardfilt}) is an equivalence for $j\leq n-1\leq p-1$ and a rational equivalence for any $j$. 
\end{proof}

\section{The fundamental square and the de Rham logarithm}\label{beilsection}

The following result generalizes \cite{bhatt-lurie-22}[Thm. 5.4.2] which corresponds to the special case $A=\bz_p$.

\begin{prop} (de Rham comparison relative to a $\delta$-ring) For a derived formal scheme $\mathscr{X}$ over a $\delta$-ring $A$ there is a commutative diagram, contravariantly functorial in the pair $\mathscr{X}/\Spf(A)$,
\begin{equation}\xymatrix{ 
  \prism^{(1)}_{\mathscr{X}/A}\{n\}\ar[rr] \ar[d]_{\gamma^{dR}_{\prism,{\mathscr{X}/A}}\{n\}}& &\prism^{(1)}_{(\mathscr{X}/p)/A}\{n\}\ar[dll]_{\beta_{\mathscr{X}/A}}^{\sim}\\
 \widehat{\mathrm{dR}}_{\mathscr{X}/A}&&
}\notag\end{equation}
where $\beta_{\mathscr{X}/A}$ is an isomorphism.
\label{betaprop}\end{prop}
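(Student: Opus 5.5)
We follow the proof of \cite{bhatt-lurie-22}[Thm. 5.4.2], now using the syntomic package $\underline{\mathscr{E}}(\mathscr{X}/A)\in\csheaf_A$ of Def. \ref{sheafpackage}. The idea is that $\prism^{(1)}$ of any $\mathbb F_p$-algebra, taken relative to $A$, is the crystalline cohomology of its Frobenius twist, and this in turn computes derived de Rham cohomology of any $p$-adic lift.

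\emph{Step 1: define $\beta$ affine-locally.} By Zariski descent (Thm. \ref{packageproperties} b) and Prop. \ref{deRham} b)) we may assume $\mathscr{X}=\Spf(R)$ with $(R,A)$ a $\delta$-pair. By sifted colimits (Thm. \ref{packageproperties} e)) we may further reduce to free pairs, so $A$ is a free $\delta$-ring on finitely many generators and $R$ is a polynomial $A$-algebra. There $\gamma^{dR}_\prism$ is $\rho_{dR}^*$ of the crystal followed by $c_{dR}:\rho_{dR}^*\mathscr{H}^{(1)}_\prism(R/A)\simeq \widehat{\mathrm{dR}}_{R/A}$, which is \cite{akn23}[Lemma 6.4]. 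The same construction applied to $R/p$ gives $\prism^{(1)}_{(R/p)/A}\to\widehat{\mathrm{dR}}_{(R/p)/A}$. By base change (Prop. \ref{deRham} a)) the target is $\widehat{\mathrm{dR}}_{R/A}\otimes^L_{\bz}\bF_p$ completed, but this is the wrong object. Instead we define $\beta$ through the crystalline comparison. By Lemma \ref{alphadef} and base change along $A\to A\hat\otimes\bz_p$, the prism $(A\hat\otimes\bz_p, (p))$ is crystalline, and evaluating the crystal $F^*\mathscr{H}^{(1)}_\prism((R/p)/A)$ there identifies it with $\widehat{\mathrm{dR}}$ of the Frobenius twist of $R/p$ relative to $A$. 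This is exactly the relative Berthelot--Ogus/crystalline comparison \cite{bhatt-lurie-22}[Prop. 5.2.5] together with the isomorphism $\widehat{\mathrm{dR}}_{R/A}\simeq\widehat{\mathrm{dR}}_{(R/p)^{(1)}\text{-lift}}$ of \cite{bhatt12}.

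\emph{Step 2: show $\beta$ is an isomorphism and the triangle commutes.} Both sides carry finite filtrations: the conjugate filtration on $\widehat{\mathrm{dR}}$ and the Hodge--Tate/Nygaard filtration of Lemma \ref{filtration} on $\prism^{(1)}$. By Lemma \ref{filtration} c) and Prop. \ref{deRham}, we identify the graded pieces with $L\widehat\Omega^i$ of the Frobenius twist. It therefore suffices to check that $\beta$ respects these filtrations and induces the identity on graded pieces. This is a Cartier-isomorphism compatibility, which for $A=\bz_p$ is \cite{bhatt-lurie-22}[Thm. 5.4.2]. For general $A$ we reduce to that case by base change along $\bz_p\to A$ (Thm. \ref{packageproperties} a)), using that the free $\delta$-pairs are base changed from free pairs over $\bz_p\{x_1,\dots\}$. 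Commutativity of the triangle holds by functoriality of the crystal along $R\to R/p$. Once the diagram is functorial, it extends to general $\mathscr{X}$ by left Kan extension and Zariski descent.

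\emph{Main obstacle.} The hardest part is producing $\beta$ functorially in the pair, with all coherences, rather than only objectwise on polynomial algebras. The left Kan extension must commute with the map to $\mathscr{X}/p=\mathscr{X}\otimes^L\bF_p$. This needs the fact that $(R,A)\mapsto\prism^{(1)}_{(R/p)/A}$ also commutes with sifted colimits, which I would deduce from Thm. \ref{packageproperties} e), since $-\otimes^L\bF_p$ commutes with colimits. We would then handle the $\delta$-structure on $A$ via \cite{akn23}[Lemma 6.4]. Here I would first try to realize $\beta$ directly as $\rho_{dR}^*$ applied to the map of crystals $\mathscr{H}^{(1)}_\prism((R/p)/A)\to\mathscr{H}^{(1)}_\prism(R/A)$ over the de Rham point. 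I would then check that this is an inverse to the restriction, using the fact that the de Rham point factors through the crystalline locus where $p\in\mathscr{I}$.
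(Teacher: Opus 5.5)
Your proposal has a genuine gap, both in the construction of $\beta$ and in the proof that it is an isomorphism. In Step 1, evaluating the crystal $\mathscr{H}^{(1)}_\prism((R/p)/A)$ at the de Rham point gives $\prism^{(1),\rel}_{(R/p)\otimes^L\bF_p/(A,(p))}\simeq\widehat{\mathrm{dR}}_{(R/p)/A}$; this is \cite{akn23}[Lemma 6.4] for the pair $(R/p,A)$. That is exactly the object you discarded a sentence earlier. Correctly described, it is $\widehat{\mathrm{dR}}_{R/A}\hat{\otimes}_{\bz_p}\widehat{\mathrm{dR}}_{\bF_p/\bz_p}$ with $\widehat{\mathrm{dR}}_{\bF_p/\bz_p}\simeq\bz_p\oplus T$ (cf.\ the proof of Lemma \ref{swlemma}), not a mod $p$ reduction. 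Your appeal to \cite{bhatt12} does not identify it with $\widehat{\mathrm{dR}}_{R/A}$.

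What is needed is the map your last paragraph only gestures at, made precise. The multiplication $(R/p)\otimes^L\bF_p\to R/p$ induces
$$\mu:\prism^{(1),\rel}_{(R/p)\otimes^L\bF_p/(A,(p))}\to\prism^{(1),\rel}_{(R/p)/(A,(p))}\simeq\rho_{dR}^*\mathscr{H}^{(1)}_\prism(R/A),$$
and $\beta$ is the composite of $c_{\square,*}^{-1}$, restriction of global sections on $\mathrm{WCart}$ to the de Rham point, $\mu$, and $c_{dR}$. This is the paper's construction. It is natural in the pair from the outset, so no reduction to free pairs or left Kan extension is needed. The triangle commutes because $R\otimes^L\bF_p\to(R/p)\otimes^L\bF_p\to R/p$ is the identity, so $\mu$ splits the restriction along $R\to R/p$ at the de Rham point.

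With this $\beta$, the isomorphism claim says the following: for an $A$-algebra on which $p$ vanishes, $\delta$-relative prismatic cohomology (global sections over all of $\mathrm{WCart}$) agrees, via restriction to the de Rham point followed by $\mu$, with prismatic cohomology relative to the crystalline prism $(A,(p))$. This is \cite{akn23}[Prop. 5.17], the key input of the paper's proof, and nothing in your proposal supplies it.

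Your Step 2 does not replace it, for three reasons:
\begin{itemize}
\item The filtration of Lemma \ref{filtration} c) has graded pieces $\fil^{\geq\star}_HD$ and $R\Gamma(\mathrm{WCart}^{\mathrm{HT}},(\pi^*\fil^{\geq\star-i}_HD)\{i\})$, not $L\widehat{\Omega}^i$ of a Frobenius twist.
\item The conjugate filtration on $\widehat{\mathrm{dR}}_{R/A}$ is increasing and exhaustive, not finite.
\item Nothing shows that $\beta$ is compatible with either filtration, so ``identity on graded pieces'' has no content yet.
\end{itemize}

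A reduction to \cite{bhatt-lurie-22}[Thm. 5.4.2] by base change along $\bz_p\to A$ could plausibly replace \cite{akn23}[Prop. 5.17]. But you would first have to construct $\beta$ via $\mu$ and check that for $A=\bz_p$ it agrees with the Bhatt--Lurie map. You would also have to show that the global sections $\prism^{(1)}_{(R/p)/A}$ commute with this base change and with sifted colimits. That does not follow from Theorem \ref{packageproperties} a), e) alone, since $R\Gamma(\mathrm{WCart},-)$ is a limit. The paper proves the base change statement in the proof of Theorem \ref{filteredbeilsquare}, using that in characteristic $p$ the Hodge--Tate filtration is the $p$-adic filtration.
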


\begin{proof} There is a commutative diagram
\begin{equation}\xymatrix{
\prism^{(1)}_{\mathscr{X}/A}\{n\}\ar[d]^{\sim}_{c_{\square,*}^{-1}}\ar[r]^\psi & \prism^{(1)}_{(\mathscr{X}/p)/A}\{n\}\ar[d]^{\sim} \\
R\Gamma(\mathrm{WCart},\mathscr{H}^{(1)}_\prism(\mathscr{X}/A)\{n\})\ar[r]\ar[d] & R\Gamma(\mathrm{WCart},\mathscr{H}^{(1)}_\prism((\mathscr{X}/p)/A)\{n\})\ar[d]\\
\rho_{dR}^*\mathscr{H}^{(1)}_\prism(\mathscr{X}/A)\{n\} \ar[d]^{\sim}\ar[r]& \rho_{dR}^*\mathscr{H}^{(1)}_\prism((\mathscr{X}/p)/A)\{n\}\ar[d]^{\sim}\\
 \prism^{(1),\rel}_{(\mathscr{X}/p)/(A,(p))}\ar[r] &  \prism^{(1),\rel}_{(\mathscr{X}/p)\otimes^L\bF_p/(A,(p))}\ar@/_/[l]_\mu
}
\label{relativecomparison}\end{equation}
where the bottom vertical isomorphisms are evaluation of the prismatic crystal at the de Rham point \cite{akn23}[Rem. 6.2] and $\mu$ is induced by the map of $A$-algebras 
$$(\co_\mathscr{X}/p)\otimes^L\bF_p\to \co_\mathscr{X}/p:=\co_\mathscr{X}\otimes^L\bF_p$$
 given by multiplication. The composite map
$$ \alpha:\prism^{(1)}_{(\mathscr{X}/p)/A}\{n\}\to  \prism^{(1),\rel}_{(\mathscr{X}/p)/(A,(p))}$$
in (\ref{relativecomparison}) is the comparison map (between prismatic cohomology relative to the $\delta$-ring $A$ and prismatic cohomology relative to the (animated) prism $(A,(p))$) which was constructed in the proof of \cite{akn23}[Prop. 5.17]. We define $\beta_{\mathscr{X}/A}$ as the composite
\begin{equation} \beta_{\mathscr{X}/A}: \prism^{(1)}_{(\mathscr{X}/p)/A}\xrightarrow{\alpha} \prism^{(1),\rel}_{(\mathscr{X}/p)/(A,(p))}\simeq \rho_{dR}^*\mathscr{H}^{(1)}_\prism(\mathscr{X}/A)\{n\}\simeq \widehat{\mathrm{dR}}_{\mathscr{X}/A}\label{betadef}\end{equation}
where the last isomorphism is $c_{dR}$ \cite{akn23}[Lemma 6.4].  By \cite{akn23}[Prop. 5.17] $\alpha$ is an isomorphism, hence so is $\beta_{\mathscr{X}/A}$.

By definition of $\gamma^{dR}_{\prism,\mathscr{X}/A}\{n\}$ in (\ref{gammadef}) there is a commutative diagram
\begin{equation}\xymatrix{
\prism^{(1)}_{\mathscr{X}/A}\{n\}\ar[d]_{\gamma^{dR}_{\prism,\mathscr{X}/A}\{n\}} & R\Gamma(\mathrm{WCart},\mathscr{H}^{(1)}_\prism(\mathscr{X}/A)\{n\})\ar[l]^(.65){\sim}_(.65){c_{\square,*}} \ar[d] \\
 \widehat{\mathrm{dR}}_{\mathscr{X}/A}& \rho_{dR}^*\mathscr{H}^{(1)}_\prism(\mathscr{X}/A)\{n\} \ar[l]_(.6){c_{dR}}^(.6){\sim}
}
\notag\end{equation}
which together with (\ref{betadef}) and (\ref{relativecomparison}) shows that $\gamma^{dR}_{\prism,\mathscr{X}/A}\{n\}=\beta_{\mathscr{X}/A}\circ\psi$. 
\end{proof}

\begin{prop} (The fundamental square relative to a $\delta$-ring) For a derived formal scheme $\mathscr{X}$ over a $\delta$-ring $A$ and $?=\syn,\add$ one has a commutative diagram, contravariantly functorial in the pair $\mathscr{X}/\Spf(A)$,
\begin{equation}\xymatrix@C-10pt{ 
R\Gamma_{?}(\mathscr{X}/A,\bz_p(n)) \ar[rr]\ar[d] & & R\Gamma_{?}((\mathscr{X}/p)/A,\bz_p(n))\ar[d] & \\ 
\fil^{\geq n}_{\mathcal N}\prism^{(1)}_{\mathscr{X}/A}\{n\}\ar[dd]_{\fil^{\geq n}\gamma^{dR}_{\prism,\mathscr{X}/A}\{n\}} \ar@{-->}[dr]^\can \ar@{-->}[rr]& &\fil^{\geq n}_{\mathcal N}\prism^{(1)}_{(\mathscr{X}/p)/A}\{n\}\ar[dd]_(.3){\beta_{\mathscr{X}/A}\circ\can} \ar@{-->}[dr]^\can &\\
 & \prism^{(1)}_{\mathscr{X}/A}\{n\}\ar@{-->}[rr] \ar@{-->}[dr]^{\gamma^{dR}_{\prism,\mathscr{X}/A}\{n\}}& &\prism^{(1)}_{(\mathscr{X}/p)/A}\{n\}\ar@{-->}[dl]_{\beta_{\mathscr{X}/A}}\\
\fil^{\geq n}_{Hod}\widehat{\mathrm{dR}}_{\mathscr{X}/A}\ar[rr]  && \widehat{\mathrm{dR}}_{\mathscr{X}/A}& 
}\label{relative-beilinson}\end{equation}
where $\beta_{\mathscr{X}/A}$ was defined in Prop. \ref{betaprop}. Taking horizontal fibres one obtains two maps
\begin{equation} R\Gamma_{\syn}(\mathscr{X}/A,\bz_p(n))^{\rel} \xrightarrow{\log_{\mathscr{X}/A} } \widehat{\mathrm{dR}}_{\mathscr{X}/A}^{<n}[-1]\xleftarrow{\gamma_{\mathscr{X}/A}} R\Gamma_{\add}(\mathscr{X}/A,\bz_p(n))^{\rel}\label{comp3}\end{equation}
contravariantly functorial in the pair $\mathscr{X}/\Spf(A)$.
\label{relativebeilsquare}\end{prop}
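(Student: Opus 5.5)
The diagram will be assembled face by face, using only the functoriality of the syntomic package $\underline{\mathscr{E}}(\mathscr{X}/A)$ in the pair $\mathscr{X}/\Spf(A)$ and Prop. \ref{betaprop}. The map $\mathscr{X}/p\to\mathscr{X}$ over $\Spf(A)$ induces a map $\underline{\mathscr{E}}(\mathscr{X}/A)\to\underline{\mathscr{E}}((\mathscr{X}/p)/A)$ in $\csheaf_A$. Applying the functors $R\Gamma_{\mathcal N}^{\geq\bullet}$, $R\Gamma_{\syn}$, $R\Gamma_{\add}$ of Def. \ref{globalnygaard} and Def. \ref{globalsyntomic} gives the dashed horizontal maps and the top horizontal map. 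These are compatible with $\can$, and with $c^0\varphi\{n\}$ in the syntomic case, since all of these are natural transformations of functors on $\gsheaf_A$. The top vertical arrows are the tautological maps out of the fibres defining $R\Gamma_{?}$. Hence the top face and the back face (Nygaard $\to$ prismatic via $\can$) commute by naturality.

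Next I treat the left face. By (\ref{gammadef}) the map $\fil^{\geq\bullet}\gamma^{dR}_{\prism,\mathscr{X}/A}\{n\}$ is the projection of the fibre product onto $\fil^{\geq\bullet}_{Hod}\widehat{\mathrm{dR}}_{\mathscr{X}/A}$. This projection is a map of $\bn^{op}$-indexed filtered objects, so the square relating degree $n$ and degree $0$ (via $\can$ on one side and $\fil^{\geq n}_{Hod}\to\fil^{\geq 0}_{Hod}$ on the other) commutes. Here $\fil^{\geq 0}\gamma^{dR}=\gamma^{dR}$.

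The right face commutes tautologically, since its vertical map is defined as $\beta_{\mathscr{X}/A}\circ\can$. For the bottom triangle, Prop. \ref{betaprop} gives $\gamma^{dR}_{\prism,\mathscr{X}/A}\{n\}=\beta_{\mathscr{X}/A}\circ\psi$, where $\psi$ is the dashed horizontal map. Combining the left face, the bottom triangle and naturality of $\can$ shows that the outer solid square commutes:
$$\beta\circ\can\circ(\text{restriction})=\beta\circ\psi\circ\can=\gamma^{dR}\circ\can=(\text{inclusion})\circ\fil^{\geq n}\gamma^{dR}.$$

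To obtain (\ref{comp3}), I compose the top vertical maps with the square just built. This gives a commutative square from $R\Gamma_{?}(\mathscr{X}/A)\to R\Gamma_{?}((\mathscr{X}/p)/A)$ to $\fil^{\geq n}_{Hod}\widehat{\mathrm{dR}}_{\mathscr{X}/A}\to\widehat{\mathrm{dR}}_{\mathscr{X}/A}$. Taking horizontal fibres then yields a map to $\fibre(\fil^{\geq n}_{Hod}\to\widehat{\mathrm{dR}})\simeq\widehat{\mathrm{dR}}^{<n}_{\mathscr{X}/A}[-1]$.

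The main obstacle is making the commutativity coherent in the $\infty$-categorical sense rather than merely on homotopy classes, since a fibre of a square requires a specified homotopy. I will handle this by noting that every map involved is either a natural transformation of functors out of $\csheaf_A$ or $\beta$. The map $\beta$ is itself natural, being a composite of $\alpha$ from \cite{akn23}[Prop. 5.17], crystal evaluation and $c_{dR}$. The equality $\gamma^{dR}=\beta\circ\psi$ is produced in the proof of Prop. \ref{betaprop} by a diagram of natural transformations, so it carries a canonical homotopy. Functoriality in the pair then follows because all constructions are functorial in $\mathscr{X}/\Spf(A)$, including the passage to $\mathscr{X}/p$.
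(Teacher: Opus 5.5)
Your proposal is correct and follows essentially the same route as the paper, whose proof just cites functoriality of the syntomic package for $\mathscr{X}/p\to\mathscr{X}$ over $A$ together with Prop.~\ref{betaprop}. Your face-by-face verification, including the explicit homotopy chain $\beta\circ\can\circ(\text{restriction})=\beta\circ\psi\circ\can=\gamma^{dR}\circ\can$ and the coherence remark, spells out what the paper leaves as ``clear''.
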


\begin{proof} The commutativity of (\ref{relative-beilinson}) is clear from functoriality of the syntomic package for the map $\mathscr{X}/p\to\mathscr{X}$ over $A$ together with Prop. \ref{betaprop}.  \end{proof}

\begin{remark} Note that $\gamma_{\mathscr{X}/A}$ can be obtained from (\ref{relative-beilinson}) {\em without} the maps involving the isomorphism $\beta_{\mathscr{X}/A}$. One simply has a commutative diagram 
$$\xymatrix@C-8pt{R\Gamma_{\add}(\mathscr{X}/A,\bz_p(n))^{\rel}\ar[r]\ar[dr]^{\gamma_{\mathscr{X}/A}}\ar[d]_{\gamma^{dR,<n,\rel}_{\prism,\mathscr{X}/A}\{n\}[-1]} & R\Gamma_{\add}(\mathscr{X}/A,\bz_p(n))\ar[r]\ar[d]^{\gamma^{dR,<n}_{\prism, \mathscr{X}/A}\{n\}[-1]} &R\Gamma_{\add}((\mathscr{X}/p)/A,\bz_p(n))
\ar[d]^{\gamma^{dR,<n}_{\prism, (\mathscr{X}/p)/A}\{n\}[-1]}\\
\widehat{\mathrm{dR}}_{\mathscr{X}/A}^{<n,\rel}[-1]\ar[r]&\widehat{\mathrm{dR}}_{\mathscr{X}/A}^{<n}[-1]\ar[r]&\widehat{\mathrm{dR}}_{(\mathscr{X}/p)/A}^{<n}[-1]
}$$
where the vertical maps are induced by ${\fil^{\geq \bullet}\gamma^{dR}_{\prism, -\,/A}\{n\}}$ in (\ref{gammadef}).
\label{gammaremark}\end{remark}

\begin{theorem} (The filtered fundamental square) Let $F^{\geq\star} A$ be a filtered $\delta$-ring, $\mathfrak{A}=F\Spf(A)$ and $\mathfrak{X}\to \mathfrak{A}$ a schematic morphism of derived formal stacks. Denote by $\mathscr{X}$ the underlying derived formal scheme of $\mathfrak{X}$ and set $A=F^{\geq 0}A$.
Then for $?=\syn,\add$ there is a commutative diagram of $\bn^{op}$-indexed filtrations
\begin{equation}\xymatrix@C-10pt{ 
R\Gamma_{?}(\mathfrak{X}/\mathfrak{A},\bz_p(n)) \ar[rr]\ar[d] & & R\Gamma_{?}((\mathfrak{X}/p)/\mathfrak{A},\bz_p(n))\ar[d] & \\ 
\fil^{\geq n}_{\mathcal N}\prism^{(1)}_{\mathfrak{X}/\mathfrak{A}}\{n\}\ar[dd]_{\fil^{\geq n}\gamma^{dR}_{\prism,\mathfrak{X}/\mathfrak{A}}\{n\}} \ar@{-->}[dr]^\can \ar@{-->}[rr]& &\fil^{\geq n}_{\mathcal N}\prism^{(1)}_{(\mathfrak{X}/p)/\mathfrak{A}}\{n\}\ar[dd]_(.3){\beta_{\mathfrak{X}/\mathfrak{A}}\circ\can} \ar@{-->}[dr]^\can &\\
 & \prism^{(1)}_{\mathfrak{X}/\mathfrak{A}}\{n\}\ar@{-->}[rr] \ar@{-->}[dr]^{\gamma^{dR}_{\prism,\mathfrak{X}/\mathfrak{A}}\{n\}}& &\prism^{(1)}_{(\mathfrak{X}/p)/\mathfrak{A}}\{n\}\ar@{-->}[dl]_{\beta_{\mathfrak{X}/\mathfrak{A}}}\\
\fil^{\geq n}_{Hod}\widehat{\mathrm{dR}}_{\mathfrak{X}/\mathfrak{A}}\ar[rr]  && \widehat{\mathrm{dR}}_{\mathfrak{X}/\mathfrak{A}}& 
}\label{filtered-relative-beilinson}\end{equation}
functorial in the pair $\mathfrak{X}/\mathfrak{A}$, whose underlying diagram (i.e. $F^{\geq 0}$) coincides with  (\ref{relative-beilinson}). Taking horizontal fibres one obtains two maps of $\bn^{op}$-indexed filtrations
\begin{equation} R\Gamma_{\syn}(\mathfrak{X}/\mathfrak{A},\bz_p(n))^{\rel} \xrightarrow{\log_{\mathfrak{X}/\mathfrak{A}}}  \widehat{\mathrm{dR}}_{\mathfrak{X}/\mathfrak{A} }^{<n}[-1]\xleftarrow{\gamma_{\mathfrak{X}/\mathfrak{A}}} R\Gamma_{\add}(\mathfrak{X}/\mathfrak{A},\bz_p(n))^{\rel}\label{comp67}\end{equation}
functorial in the pair $\mathfrak{X}/\mathfrak{A}$, and with underlying maps (i.e. $F^{\geq 0}$) those in (\ref{comp3}). 
\label{filteredbeilsquare}\end{theorem}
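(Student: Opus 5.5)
The plan is to run the construction of Prop. \ref{relativebeilsquare} in the category $\csheaf_{\mathfrak{A}}$, with $\mathfrak{A}=F\Spf(A)$, in place of $\csheaf_A$, and then apply the global-sections functors of Def. \ref{globalnygaard} and Def. \ref{globalsyntomic}. These functors produce filtered objects, because $\cd(\mathfrak{A})\simeq\cd(F^{\geq\star}A)$ by Prop. \ref{filtered}.

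The first step is to define $\mathfrak{X}/p:=\mathfrak{X}\otimes^L\bF_p$. This is again schematic over $\mathfrak{A}$, with filtered structure sheaf $F^{\geq\star}\co_{\mathscr{X}}\otimes^L\bF_p$. The map $\mathfrak{X}/p\to\mathfrak{X}$ over $\mathfrak{A}$ induces, by functoriality of Def. \ref{package}, a map $\underline{\mathscr{E}}(\mathfrak{X}/\mathfrak{A})\to\underline{\mathscr{E}}((\mathfrak{X}/p)/\mathfrak{A})$ in $\csheaf_{\mathfrak{A}}$. Applying $R\Gamma^{\geq\bullet}_{\mathcal N}$, $R\Gamma_{\syn}$ and $R\Gamma_{\add}$ gives the top face of (\ref{filtered-relative-beilinson}) together with the dashed $\can$ maps and the horizontal dashed arrow. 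The map $\fil^{\geq\bullet}\gamma^{dR}_{\prism,\mathfrak{X}/\mathfrak{A}}\{n\}$ is the projection in the fibre square (\ref{nygaardef}), taken in $\cd(\mathfrak{A})$.

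The only genuinely new ingredient is the filtered isomorphism $\beta_{\mathfrak{X}/\mathfrak{A}}$. I would build it by descent. Write $\mathfrak{A}=\mathrm{colim}\,\Spf(\ca^\bullet)$, where $\ca^\bullet$ is the $\widehat{\bg}_m$-action groupoid on $\mathrm{Rees}(F^{\geq\star}A)$; this is a cosimplicial $\delta$-ring with flat differentials by Example \ref{deltacoaction}. Set $\mathscr{X}_i=\mathfrak{X}\times_{\mathfrak{A}}\Spf(\ca^i)$. Prop. \ref{betaprop} gives isomorphisms $\beta_{\mathscr{X}_i/\ca^i}$ that are contravariantly functorial in the pair $\mathscr{X}_i/\Spf(\ca^i)$, so they are compatible with the cosimplicial structure maps. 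They also commute with the identifications $(\mathscr{X}_i/p)\simeq(\mathscr{X}/p)_i$ and with base change along $\ca^i\to\ca^j$. For the last point I would use base change for the syntomic package (Thm. \ref{packageproperties} a)) together with base change for $\alpha$ of \cite{akn23}[Prop. 5.17] and for $c_{dR}$.

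Hence the $\beta_{\mathscr{X}_i/\ca^i}$ assemble to an isomorphism in $\Tot\cd(\Spf(\ca^\bullet))\simeq\cd(\mathfrak{A})$, and this is $\beta_{\mathfrak{X}/\mathfrak{A}}$. The identity $\gamma^{dR}_{\prism,\mathfrak{X}/\mathfrak{A}}\{n\}=\beta_{\mathfrak{X}/\mathfrak{A}}\circ\psi$ holds levelwise by Prop. \ref{betaprop}, so it holds in the totalization. Commutativity of the remaining faces then follows exactly as in the proof of Prop. \ref{relativebeilsquare}.

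The main obstacle is making the naturality of $\beta$ fully coherent, that is, producing it as a morphism of cosimplicial objects rather than as a family of levelwise isomorphisms. I would handle this by noting that every ingredient of (\ref{betadef}) is a natural transformation of functors on $\delta$-pairs compatible with base change: evaluation at the de Rham point, $\alpha$, $\mu$ and $c_{dR}$. One may work first with affine pairs, where everything is a left Kan extension from polynomial pairs by Thm. \ref{packageproperties} e), and then Zariski-globalize.

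It remains to identify the underlying objects and the filtration indexing. Taking horizontal fibres gives the maps (\ref{comp67}). By Cor. \ref{globalfiltrations} a), b) all filtrations are $\bn^{op}$-indexed and their $F^{\geq 0}$ are the unfiltered objects. Since $F^{\geq 0}=F^\infty$ is base change along $\Spf(A)\to\mathfrak{A}$ (Prop. \ref{filtered}), and $\beta$ is compatible with base change, the underlying diagram is (\ref{relative-beilinson}) by Prop. \ref{schematicbc} a). This last step also needs $\mathscr{X}/p$ to correspond under this base change, which is immediate from Lemma \ref{schematiclemma}.
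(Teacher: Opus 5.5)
Your proposal is correct and is essentially the paper's proof. In both, $\beta_{\mathfrak{X}/\mathfrak{A}}$ is obtained by descent along the $\widehat{\bg}_m$-action groupoid $\ca^\bullet$ from the base-change-compatible isomorphisms $\beta_{\mathscr{X}_i/\ca^i}$; the rest follows as in Prop. \ref{relativebeilsquare}, and the filtration statements come from Cor. \ref{globalfiltrations}.

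The paper makes one point explicit that you only obtain implicitly (via $\alpha$, resp.\ $\beta$, being an isomorphism onto a base-change-compatible object): base change of the syntomic package passes to the Nygaard global sections of $\mathscr{X}/p$, because in characteristic $p$ the Hodge--Tate filtration is the $p$-adic one. You should state it, since this is exactly what makes the levelwise system $(\prism^{(1)}_{(\mathscr{X}_i/p)/\ca^i}\{n\})$ cartesian.
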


\begin{proof} The base change isomorphism of Thm. \ref{packageproperties} a) induces an isomorphism
$$\prism^{(1)}_{(\mathscr{X}/p)/A}\{n\}\otimes_AA'\xrightarrow{\sim}\prism^{(1)}_{(\mathscr{X}/p)\times_{\Spf(A)}\Spf(A')/A'}\{n\}$$
on Nygaard global sections since $\mathscr{X}/p$ has characteristic $p$ and therefore the Hodge-Tate filtration coincides with the $p$-adic filtration.
Functoriality of $\beta_{\mathscr{X}/A}$ gives equivalences
$$\beta_{\mathscr{X}/A}\otimes_AA'\simeq \beta_{\mathscr{X}\times_{\Spf(A)}\Spf(A')/A'}.$$
Using the notation of Def. \ref{package} this defines an isomorphism $\beta_{\mathfrak{X}/\mathfrak{A}}:=(\beta_{\mathscr{X}_\bullet/\ca^\bullet})$ in  $\Tot \cd(\ca^\bullet)\simeq \cd(\mathfrak{A})$ and a commutative diagram  
\begin{equation}\xymatrix{ 
  \prism^{(1)}_{\mathfrak{X}/\mathfrak{A}}\{n\}\ar[rr] \ar[d]_{\gamma^{dR}_{\prism,{\mathfrak{X}/\mathfrak{A}}}\{n\}}& &\prism^{(1)}_{(\mathfrak{X}/p)/\mathfrak{A}}\{n\}
  \ar[dll]_{\beta_{\mathfrak{X}/\mathfrak{A}}}^{\sim}\\
 \widehat{\mathrm{dR}}_{\mathfrak{X}/\mathfrak{A}}.&&
}\notag\end{equation}
Together with functoriality of the syntomic package for $(\mathfrak{X}/p)\to \mathfrak{X}$ this implies Theorem \ref{filteredbeilsquare}, as in the proof of  Prop. \ref{relativebeilsquare}. The statements about the filtrations follow from Cor. \ref{globalfiltrations}.
\end{proof}

\begin{definition} We refer to the maps $\log_{\mathscr{X}/A}$ in (\ref{comp3}) and $\log_{\mathfrak{X}/\mathfrak{A}}$ in (\ref{comp67}) as the \underline{de Rham logarithm}.
\end{definition}

\section{The syntomic logarithm}\label{synlogsection}

In this section we use the filtered fundamental square of Thm. \ref{filteredbeilsquare} to investigate the de Rham logarithm $\log_{\mathscr{X}/A}$ for a given quasi-compact and quasi-separated derived formal scheme $\mathscr{X}$ over a $\delta$-ring $A$. We endow $\co_{\mathscr{X}}$ with the $p$-adic filtration, or rather a slight generalization which also covers Example \ref{keyexample} and which we call "$e$-filtration". We endow $A$ with any filtration which turns it into a filtered $\delta$-ring in the sense of Def. \ref{fildefs} and so that there is a filtered map $F^{\geq\star}A\to F^{\geq\star}\co_{\mathscr{X}}$ (typically the trivial filtration). The only extra condition we impose on the resulting schematic morphism of derived formal stacks $\mathfrak{X}\to\mathfrak{A}$ is that $L_{\mathfrak{X}/\mathfrak{A}}$ has finite weak filtered amplitude in the sense of Def. \ref{weakamplitude} below. This will allow us to prove completeness of the filtrations $F^{\geq \star}R\Gamma_{?}(\mathscr{X}/A,\bz_p(n))$ for $?=\syn,\add$ and it will allow the construction of the syntomic logarithm to go through. The condition of finite weak filtered amplitude for $L_{\mathfrak{X}/\mathfrak{A}}$ does not impose any restrictions on $L_{\mathscr{X}/A}$ such as perfectness.  It rather amounts to a simple condition on the filtration $F^{\geq\star}A$ that is satisfied in all examples of interest. 

We discuss weak filtered amplitude in \ref{filperf} and $e$-filtered rings in \ref{efiltered}. In \ref{gfiltration} we introduce the $G$-filtration in general and in \ref{gefiltration} we study its properties if $\mathfrak{X}\to\mathfrak{A}$ is $e$-filtered. In \ref{synlog} we construct the syntomic logarithm and establish its main properties in Thm. \ref{thm:synlog} and  Thm. \ref{thm:synlogproper}. We obtain Thm. \ref{addmain} of the introduction as an immediate consequence.

\subsection{Weak filtered amplitude}\label{filperf} The following is a weakening of the notion of finite filtered amplitude defined in Def. \ref{amplitude}. 

\begin{definition} Let $F^{\geq\star}R$ be an animated filtered ring. A filtered $F^{\geq\star}R$-module $F^{\geq\star}M$ has \underline{weak filtered amplitude in $[a,b]$} if $F^{\geq\star}M$ is contained in the subcategory of $\cd(F^{\geq\star}R)$ generated under colimits by $(F^{\geq\star-m}R)[k]$ with $m\in [a,b]$ and $k<0$. 

Let $g:\mathfrak{X}\to \widehat{\ba}^1/\widehat{\bg}_m$ be a schematic morphism of derived formal stacks.
An object $F^{\geq\star}\cm\in\cdf(\mathscr{X},F^{\geq\star}\co_{\mathscr{X}})\simeq\cd(\mathfrak{X})$ has \underline{weak filtered amplitude in $[a,b]$} if its restriction to any affine open has weak filtered amplitude in $[a,b]$.  

We say that $F^{\geq\star}\cm$ has \underline{finite weak filtered amplitude} if it has weak filtered amplitude in some interval $[a,b]$.
\label{weakamplitude}\end{definition}

\begin{lemma} If $F^{\geq\star}\cm$ has weak filtered amplitude in $[a,b]$ then the derived exterior power $L\bigwedge_{F^{\geq\star}\co_\mathscr{X}}^iF^{\geq\star}\cm$ has weak filtered amplitude in $[ia,ib]$.
\label{weakexterior}\end{lemma}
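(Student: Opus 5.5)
We reduce to the affine case $\mathfrak{X}=F\Spf(R)$, exactly as in Lemma \ref{exterior}: weak filtered amplitude is defined Zariski-locally, and derived exterior powers commute with restriction to affine opens. So let $\mathscr{C}_{[a,b]}\subseteq\cd(F^{\geq\star}R)$ denote the full subcategory generated under colimits by the objects $\mathcal L(m)[k]=(F^{\geq\star-m}R)[k]$ with $m\in[a,b]$ and $k<0$. This category is closed under colimits and under extensions. We must show that $\bigwedge^i:=L\bigwedge^i_{F^{\geq\star}R}$ sends $\mathscr{C}_{[a,b]}$ into $\mathscr{C}_{[ia,ib]}$.

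The approach replaces the polynomial-functor extension argument of Lemma \ref{exterior} by the standard left Kan extension description of derived exterior powers on connective objects. Since $k<0$, every object of $\mathscr{C}_{[a,b]}$ is a (shifted) connective object. Every such object is a geometric realization (a sifted colimit) of finite direct sums $P=\bigoplus_j\mathcal L(m_j)[k_j]$, and it suffices to consider $k_j\le -1$. The functor $\bigwedge^i$ is the animation, i.e. the left Kan extension from finite free objects, so it commutes with sifted colimits. Because $\mathscr{C}_{[ia,ib]}$ is closed under colimits, this reduces the claim to showing $\bigwedge^i P\in\mathscr{C}_{[ia,ib]}$ for such finite sums.

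For finite sums we use the same induction as in Lemma \ref{exterior}. The decomposition $\bigwedge^i(M\oplus N)\simeq\bigoplus_{j}\bigwedge^jM\otimes\bigwedge^{i-j}N$ reduces us to a single summand $\bigwedge^j(\mathcal L(m)[k])$. We also need the multiplicativity $\mathscr{C}_{[a,b]}\otimes\mathscr{C}_{[c,d]}\subseteq\mathscr{C}_{[a+c,b+d]}$. This holds because $\otimes$ commutes with colimits in each variable and $\mathcal L(m)[k]\otimes\mathcal L(m')[k']=\mathcal L(m+m')[k+k']$, where $k+k'<0$.

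The main obstacle is computing $\bigwedge^j$ of a shifted line bundle $\mathcal L(m)[k]$ with $k\le -1$ in the derived setting. Here we use décalage: $\bigwedge^j(M[1])\simeq(\Gamma^j M)[j]$ and $\mathrm{Sym}^j(M[1])\simeq(\bigwedge^jM)[j]$. Iterating these identities expresses $\bigwedge^j(\mathcal L(m)[k])$ as a shift by $jk$ of one of $\bigwedge^j\mathcal L(m)$, $\mathrm{Sym}^j\mathcal L(m)$ or $\Gamma^j\mathcal L(m)$. For an invertible object each of these is $\mathcal L(m)^{\otimes j}=\mathcal L(jm)$ when $j\ge1$; for $\bigwedge^j$ with $j\geq 2$ it may instead vanish. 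For $j\ge1$ the result is therefore $\mathcal L(jm)[jk]$ or $0$, with $jk<0$ and $jm\in[ja,jb]$. Indeed, these décalage formulas hold for the animated functors on connective filtered modules because the Rees equivalence is symmetric monoidal and $t$-exact. The case $j=0$ only appears inside the tensor products with total degree $i\ge1$, so those terms still carry a strictly negative shift, since at least one factor has $j\ge1$.

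Combining these steps, $\bigwedge^iP$ is a finite direct sum of objects $\mathcal L(\sum m_{j})[\,\text{negative}\,]$ with $\sum m_j\in[ia,ib]$, and hence lies in $\mathscr{C}_{[ia,ib]}$. The case $i=0$ is degenerate: there $\bigwedge^0=\mathcal L(0)$ has shift $0$, and we exclude it or treat it separately, as the statement is only of interest for $i\geq 1$.
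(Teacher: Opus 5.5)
Your reduction step misreads the definition. In this paper $[1]$ is the suspension (e.g.\ $\co_{\mathscr{X}/p}=\co_{\mathscr{X}}\oplus\co_{\mathscr{X}}[1]$), so the generators $\mathcal{L}(m)[k]$ with $k<0$ are desuspensions, not connective objects. Moreover, a full subcategory of a stable category that is closed under colimits is closed under $\Sigma$. Hence $\mathscr{C}_{[a,b]}$ contains every shift $\mathcal{L}(m)[k]$, $k\in\bz$, and is simply the localizing subcategory generated by the $\mathcal{L}(m)$, $m\in[a,b]$. It contains objects that are not bounded below, such as $\bigoplus_{k<0}\mathcal{L}(a)[k]$. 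So your claim that every object is a shifted connective object fails. With it goes the reduction to geometric realizations of finite sums of generators, on which the animated $\bigwedge^i$ could be computed termwise. Your exclusion of $i=0$ is a symptom of this: in the paper's reading $\mathcal{L}(0)$ lies in the class, so the case $i=0$ holds. It is also needed, since Prop.~\ref{corforA} a) applies the lemma for all $i\geq 0$, including $\bigwedge^0=\co$.

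The décalage computation also fails beyond a single shift. Décalage gives $\mathrm{Sym}^j(M[1])\simeq(\bigwedge^jM)[j]$ and $\bigwedge^j(M[1])\simeq(\Gamma^jM)[j]$, but there is no décalage for $\Gamma^j(M[1])$, so the iteration cannot continue. For example, $L\bigwedge^2_{\bz}(\bz[2])\simeq L\Gamma^2_{\bz}(\bz[1])[2]\simeq\bz/2[3]$, which is not a shifted line. What is true, by base change from graded $\bz$-modules, is $\bigwedge^j(\mathcal{L}(m)[k])\simeq\mathcal{L}(jm)\otimes_{\bz}L\bigwedge^j_{\bz}(\bz[k])$. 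This lies in the colimit closure of shifts of $\mathcal{L}(jm)$, and would repair your computation on generators.

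The paper avoids both problems. It writes every object of $\mathscr{D}^{[a,b]}_w$ as a filtered colimit of objects of the thick subcategory $\mathscr{D}^{[a,b]}$, via the essentially surjective functor $\Ind(\mathscr{D}^{[a,b]})\to\mathscr{D}^{[a,b]}_w$. It then uses only that $\bigwedge^i$ commutes with filtered colimits. On $\mathscr{D}^{[a,b]}$ it invokes Lemma~\ref{exterior}, where all shifts are handled abstractly by extending polynomial functors to the stable envelope, so no explicit computation of $\bigwedge^j$ on shifted line bundles is needed.
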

\begin{proof} We may assume $\mathfrak{X}=F\Spf(R)$. We continue to use the notation of the proof of Lemma \ref{exterior} and denote by $\mathscr{D}_w^{[a,b]}\subseteq \cd(F^{\geq\star}R)$ the full subcategory of modules of weak filtered amplitude in $[a,b]$. Since $\mathscr{D}_w^{[a,b]}$ has all filtered colimits the inclusion $\mathscr{D}^{[a,b]}\subseteq\mathscr{D}^{[a,b]}_w$ factors
$$\mathscr{D}^{[a,b]}\to \Ind(\mathscr{D}^{[a,b]})\xrightarrow{\pi}\mathscr{D}_w^{[a,b]}$$
where $\pi$ preserves filtered colimits and is essentially surjective by definition of $\mathscr{D}_w^{[a,b]}$ \cite{lurieHTT}[Prop. 5.3.5.12].
Since $\Lambda^i:=L\bigwedge_{F^{\geq\star}R}^i$ commutes with filtered colimits, the composite 
$$\Ind(\mathscr{D}^{[a,b]})\xrightarrow{\pi}\mathscr{D}_w^{[a,b]}\xrightarrow{\Lambda^i}\cd(F^{\geq\star}R)$$
is equivalent to
$$\Ind(\mathscr{D}^{[a,b]})\xrightarrow{\tilde{\Lambda}^i} \mathscr{D}_w^{[ia,ib]}\subseteq \cd(F^{\geq\star}R)$$
where $\tilde{\Lambda}^i$ is the unique filtered colimit preserving extension of $\Lambda^i:\mathscr{D}^{[a,b]}\to \mathscr{D}_w^{[ia,ib]}$. This is because both functors extend $\Lambda^i:\mathscr{D}^{[a,b]}\to \cd(F^{\geq\star}R)$. Since $\pi$ is essentially surjective we obtain the desired factorization $\Lambda^i:\mathscr{D}^{[a,b]}_w\to \mathscr{D}_w^{[ia,ib]}$.
\end{proof}

The following computation of weak filtered amplitude covers all examples of interest for this article.

\begin{lemma} Let $F^{\geq\star} A$ be a $\bn^{op}$-indexed filtered ring such that $F^{\geq k}A=I^k$ for an ideal $I$ generated by a regular sequence $z_0,\dots,z_s\in A$ (possibly empty). Let $F^{\geq\star} R$ be a $\bn^{op}$-indexed animated filtered ring whose filtration is isomorphic to the $\varpi$-adic filtration
$$ \cdots \xrightarrow{\cdot \varpi}R\xrightarrow{\cdot \varpi}R\xrightarrow{\cdot \varpi}R$$
on $R:=F^{\geq 0}R$ for some $\varpi\in\pi_0R$. For any map $F^{\geq\star} A\to F^{\geq\star} R$ of animated filtered rings
$L_{F\Spf(F^{\geq \star}R)/F\Spf(F^{\geq \star}A)}$
has weak filtered amplitude in $[0,1]$.
\label{lcicor}\end{lemma}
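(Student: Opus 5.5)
Throughout we work with Rees algebras, so that $\mathfrak{X}=\Spf(\mathrm{Rees}(F^{\geq\star}R))/\widehat{\bg}_m$ and $\mathfrak{A}=\Spf(\mathrm{Rees}(F^{\geq\star}A))/\widehat{\bg}_m$. Write $B:=\mathrm{Rees}(F^{\geq\star}A)$ and $S:=\mathrm{Rees}(F^{\geq\star}R)$. The cotangent complex $L_{\mathfrak{X}/\mathfrak{A}}$ corresponds to the graded $S$-module $L_{S/B}$; under the Rees equivalence, the filtered shift $F^{\geq\star-m}R$ is the free module $S\cdot t^{-m}$ on a generator of weight $m$. The plan is to compute $L_{S/B}$ via the transitivity triangle for $\bz[t]\to B\to S$:
$$ S\otimes_B L_{B/\bz[t]}\to L_{S/\bz[t]}\to L_{S/B}. $$
Since $\mathscr{D}_w^{[0,1]}$ is closed under colimits, and in particular under cofibres, it suffices to show that $L_{S/\bz[t]}\in\mathscr{D}_w^{[0,1]}$ and $S\otimes_BL_{B/\bz[t]}[1]\in\mathscr{D}_w^{[0,1]}$.

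\textbf{Step 1: the source $R$.} By assumption the filtration on $R$ is $\varpi$-adic, so $S\simeq R[t,u]/(ut-\varpi)$ with $u=\varpi t^{-1}$ of weight $1$. More precisely, $S\simeq R\otimes_{\bz[x]}\bz[t,u]$ along $x\mapsto ut$, where $R$ sits in weight $0$; here $R$ is simply $R$ and $u$ is the generator of $F^{\geq 1}$. Hence
$$ L_{S/\bz[t]}\simeq \bigl(L_{R/\bz}\otimes_R S\bigr)\ \text{glued with}\ S\,du, $$
concretely the cofibre of $S\,dx\to (L_{R/\bz}\otimes_RS)\oplus S\,du$, with $dx\mapsto (d\varpi, t\,du)$. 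Now $L_{R/\bz}\otimes_RS$ is a colimit of weight-$0$ free modules, and the complex is connective; $S\,du$ has weight $1$; and $S\,dx$ has weight $0$ but is shifted up by one in the cofibre. All three pieces therefore lie in $\mathscr{D}_w^{[0,1]}$: in the definition we require generators $[k]$ with $k<0$ in cohomological convention, i.e. connective objects. I must check this convention: "$k<0$" in $(F^{\geq\star-m}R)[k]$ presumably means homological shifts making the object connective, and free modules themselves are connective, so the intended meaning is likely ``connective colimits''. I will adopt that reading.

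\textbf{Step 2: the base.} Since $F^{\geq k}A=I^k$ with $I=(z_0,\dots,z_s)$ regular, we have $B\simeq A[t,v_0,\dots,v_s]/(tv_j-z_j)$ with each $v_j$ of weight $1$. Because the sequence is regular this is a derived quotient (regularity of the $tv_j-z_j$ in the extended Rees algebra is standard). Consequently $L_{B/\bz[t]}$ is built from $L_{A}\otimes B$ in weight $0$, from $B\,dv_j$ in weight $1$, and from the relation terms $B\,d(tv_j-z_j)[1]$ in weight $1$. It follows that $S\otimes_BL_{B/\bz[t]}[1]$ lies in the colimit closure of weight-$0$ and weight-$1$ connective free modules, as required.

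The main obstacle is this last point: verifying the derived presentation of the Rees algebra of a regular ideal, which amounts to checking that the map $A[t,v]/(tv-z)\to\mathrm{Rees}$ is an equivalence of animated rings. I would do this by base change from the universal case $\bz[z_0,\dots,z_s]$, where it is classical: the extended Rees algebra of the ideal of variables is $\bz[t,v_j]$ with $z_j=tv_j$, which is flat over $\bz[z]$ after the appropriate Tor vanishing, using regularity.
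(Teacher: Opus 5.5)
Your proof is correct, and it is organized differently from the paper's.

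\textbf{The paper's route.} It uses the transitivity triangles for $A\to R\to\mathrm{Rees}(R)$ and $A\to\mathrm{Rees}(A)\to\mathrm{Rees}(R)$ and reduces to a cofibre $K$. It presents $\mathrm{Rees}(A)$ as $\Sym^\star_A(I)[t]/(tu_i-z_i)$, using Micali's theorem $\Sym^\star_AI\cong\bigoplus_kI^k$ and an inductive check that the $tu_i-z_i$ form a regular sequence. It then has to note that the weight $-1$ class $dt$ occurs on both sides and cancels. Along the way it handles $L_{\Sym_A^\star(I)[t]/A}\simeq(I\oplus A)\otimes_A\Sym^\star_A(I)[t]$, which is only of finite projective dimension.

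\textbf{Your route.} You work relative to $\bz[t]=\mathrm{Rees}(F^{\geq\star}\bz)$, so $dt$ never appears. You present $\mathrm{Rees}(A)$ as the derived complete intersection $A[t,v_0,\dots,v_s]/\!/(tv_j-z_j)$ over a polynomial ring. Every building block is then a connective colimit of free modules on generators of weight $0$ ($L_{A/\bz}$, $L_{R/\bz}$, $dx$, the relation classes) or weight $1$ ($du$, $dv_j$). This is shorter and avoids both Micali's theorem and the cancellation; the paper's route gains nothing essential beyond keeping $L_{R/A}$ visible as the weight-$0$ part.

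On your worry about the shift condition: you prove membership in the colimit closure of the nonnegative shifts of these free modules. A colimit-closed subcategory of a stable category is closed under suspension, so your conclusion gives Definition \ref{weakamplitude} however its condition on $k$ is read.

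Two corrections are needed.

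First, the relation classes $d(tv_j-z_j)$ have weight $0$, not $1$. This is harmless for the argument.

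Second, in your last paragraph, $\bz[t,v_0,\dots,v_s]$ is not flat over $\bz[z_0,\dots,z_s]$ when $s\geq1$: for $i\geq1$ its weight-$i$ piece is the non-flat ideal $(z)^i$. What you actually need is Tor-independence:
\begin{itemize}
\item $\mathrm{Tor}^{\bz[z]}_{>0}(A,(z)^i)=0$;
\item $A\otimes_{\bz[z]}(z)^i\cong I^i$.
\end{itemize}
Both follow from $\mathrm{Tor}^{\bz[z]}_{>0}(A,\bz[z]/(z)^i)=0$. That vanishing holds because $\bz[z]/(z)^i$ is filtered by the free $\bz[z]/(z)$-modules $(z)^k/(z)^{k+1}$. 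Moreover, $\mathrm{Tor}^{\bz[z]}_{>0}(A,\bz[z]/(z))$ is the Koszul homology of the $A$-regular sequence $z$, hence zero.

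This gives $A[t,v]/\!/(tv-z)\simeq A\otimes^L_{\bz[z]}\bz[t,v]\simeq\mathrm{Rees}(F^{\geq\star}A)$ weight by weight. Then $L_{B/A[t,v]}\simeq B^{s+1}[1]$ holds for the derived quotient with no separate regularity check on the $tv_j-z_j$. Any such check would in any case belong in $A[t,v]$, not in the Rees algebra, where $tv_j-z_j=0$.
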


\begin{proof} For brevity we use the notation $\mathrm{Rees}(R):=\mathrm{Rees}(F^{\geq\star}R)$. We must show that $L_{\mathrm{Rees}(R)/\mathrm{Rees}(A)}\in\Mod_{\mathrm{Rees}(R)^{gr}}$ lies in the subcategory generated under colimits by $\mathrm{Rees}(R)[k]$ and $\mathrm{Rees}(F^{\geq\star-1}R)[k]$. 
The transitivity triangles for $A\to R\to \mathrm{Rees}(R)$ and $A\to \mathrm{Rees}(A)\to \mathrm{Rees}(R)$ give a commutative diagram with exact rows and columns
$$\begin{CD}{}@. L_{R/A}\underset{R}{\otimes}\mathrm{Rees}(R)@=L_{R/A}\underset{R}{\otimes}\mathrm{Rees}(R)\\
@.@VVV@VVV\\L_{\mathrm{Rees}(A)/A}\underset{\mathrm{Rees}(A)}{\otimes}\mathrm{Rees}(R) @>>> L_{\mathrm{Rees}(R)/A}@>>> L_{\mathrm{Rees}(R)/\mathrm{Rees}(A)}\\
\Vert@. @VVV@VVV\\
L_{\mathrm{Rees}(A)/A}\underset{\mathrm{Rees}(A)}{\otimes}\mathrm{Rees}(R) @>>> L_{\mathrm{Rees}(R)/R}@>>> K.
\end{CD}$$ 
Noting that $L_{R/A}$ is an iterated colimit of copies of  shifts of $R$ it suffices to show that $K$ has weak filtered amplitude in $[0,1]$. Since $I$ is generated by a regular sequence the natural surjection of non-negatively graded rings
$$S:=\Sym_A^\star I\to A\oplus I\oplus I^2\oplus I^3\oplus\cdots$$
is an isomorphism by \cite{micali}[Thm. 1]. Denoting by $u_i\in S$ the elements of degree $1$ given by the generators $z_i\in I$ we have an isomorphism of graded rings
$$ \mathrm{Rees}(A)\simeq S[t]/(tu_i-z_i)=S'/J$$
where $t$ has degree $-1$ and $S'=S[t]\simeq \Sym_A^\star (I\oplus A)$. One verifies that the $tu_i-z_i\in S'$ form a regular sequence. Indeed, $tu_i-z_i$ is a nonzerodivisor in $S'$ since $z_i$ is a nonzerodivisor in $A$ and hence in $S'$. One then concludes by induction using $S'/(u_s,tu_s-z_s)\simeq\Sym_{(A/z_s)}^\star (I/z_s\oplus A/z_s)$.  In the transitivity triangle
\begin{equation} L_{S'/A}\otimes_{S'}\mathrm{Rees}(A)\to L_{\mathrm{Rees}(A)/A}\to L_{\mathrm{Rees}(A)/S'}\simeq J/J^2[1]\label{trans}\end{equation}
$J/J^2$ is a free $\mathrm{Rees}(A)$-module on generators $tu_i-z_i$ of degree $0$ and $L_{S'/A}\simeq (I\oplus A)\otimes_AS'$ is a module of finite projective dimension on generators $u_i$ of degree $1$ and $t$ of degree $-1$ (we denote the images of these generators in $L_{S'/A}$ by $du_i$ and $dt$). We have $\mathrm{Rees}(R)\simeq R[t,u]/(tu-\varpi)$ and an analogous triangle to (\ref{trans}). The commutative diagram
$$\begin{CD} S' @>>> R[t,u]\\
@VVV@VVV\\
\mathrm{Rees}(A)@>>>\mathrm{Rees}(R)
\end{CD}$$ 
induces a diagram with exact rows and columns
$$\begin{CD}L_{S'/A}\underset{S'}{\otimes}\mathrm{Rees}(R) @>\phi>> L_{R[t,u]/R}\underset{R[t,u]}{\otimes}\mathrm{Rees}(R) @>>> K_1\\
@VVV @VVV@VVV\\
L_{\mathrm{Rees}(A)/A}\underset{\mathrm{Rees}(A)}{\otimes}\mathrm{Rees}(R) @>>> L_{\mathrm{Rees}(R)/R}@>>> K\\
@VVV@VVV@VVV\\
J/J^2[1]\underset{\mathrm{Rees}(A)}{\otimes}\mathrm{Rees}(R) @>>> (tu-\varpi)/(tu-\varpi)^2[1] @>>> K_0
\end{CD}$$ 
where the terms in the bottom row have generators of degree $0$, hence are colimits of copies of $\mathrm{Rees}(R)$. Both the source and the target of $\phi$ have a direct summand $\mathrm{Rees}(R)dt$ where $dt$ has degree $-1$ and $\phi$ induces an isomorphism between these direct summands. The remaining generators $du, du_i$ in the top row have degree $1$. Hence $K_1$ has a presentation by generators of degree $1$, i.e. is a colimit of copies of $\mathrm{Rees}(F^{\geq\star-1}R)[k]$. This concludes the proof that $K$ has weak filtered amplitude in $[0,1]$. \end{proof}

\begin{example} Let $g:\mathscr{X}\to \Spf(\co_K)$ be a morphism with $\mathscr{X}$ quasi-compact and quasi-separated, let
$$ F_\fm\mathscr{X}\to \widehat{\ba}^1/\widehat{\bg}_m\times \Spf(\co_K) $$ 
be the filtered formal stack of Example \ref{keyexample} and $A^s=W(k)[[z_0,\dots,z_s]]$ the filtered $\delta$-ring with $\delta(z_i)=0$ and the $(z_0,\dots,z_s)$-adic filtration \cite{akn23}[Example 10.38] over which $\co_K$ becomes an algebra by sending the $z_i$ to a uniformizer $\varpi$. Then Lemma \ref{lcicor} implies that the cotangent complex of
$$ F_\fm\mathscr{X}\to  F\Spf(F^{\geq \star}A^s)$$
has weak filtered amplitude in $[0,1]$. 
\label{keyexamplectd}\end{example}

\begin{example} \label{trivexample} If  $A$ is trivially filtered (i.e. $I=0$) and $R$ is $\varpi$-adically filtered, then Lemma \ref{lcicor} shows that the cotangent complex of 
$$F\Spf(F^{\geq \star}R)\to \widehat{\ba}^1/\widehat{\bg}_m\times \Spf(A)$$ 
has weak filtered amplitude in $[0,1]$.
\end{example}

\subsection{$e$-filtered rings and morphisms}\label{efiltered} The following notion is a mild generalization of the $p$-adic filtration (which corresponds to $e=1$).

\begin{definition} ($e$-filtered rings) Let $e\geq 1$ be an integer. We call a animated filtered ring $F^{\geq\star}R$ \underline{$e$-filtered} if the filtration is isomorphic to the $\varpi$-adic filtration
$$ \cdots \xrightarrow{\cdot \varpi}R\xrightarrow{\cdot \varpi}R\xrightarrow{\cdot \varpi}R$$
on $R:=F^{\geq 0}R$ for some $\varpi\in\pi_0R$ such that $\varpi^e=up$ with $u\in\pi_0R^\times$.
 We say a derived formal stack $\mathfrak{X}$, equipped with a schematic morphism $g:\mathfrak{X}\to \widehat{\ba}^1/\widehat{\bg}_m$, is \underline{$e$-filtered} if the filtered structure sheaf $F^{\geq \star}\co_{\mathscr{X}}$ of the underlying derived formal scheme $\mathscr{X}$ is $e$-filtered for some $\varpi\in H^0(\mathscr{X},\pi_0F^{\geq 0}\co_{\mathscr{X}})$.
\end{definition} 

\begin{remark} If $F^{\geq\star}R$ is $e$-filtered then the filtration is $\bn^{op}$-indexed and there exists $\tilde{p}\in \pi_0F^{\geq e}R$ and a commutative diagram of filtrations ($\star\geq 0$)
$$\xymatrix{F^{\geq \star+e}R\ar[r]  \ar[d]^{\cdot p} & F^{\geq \star}R\ar[dl]_{\cdot \tilde{p}}^{\sim} \ar[d]^{\cdot p} \\
F^{\geq \star+e}R\ar[r]  & F^{\geq \star}R }$$
where the maps $ \cdot\tilde{p}$ are isomorphisms. If $e=1$ then $F^{\geq \star} R$ is isomorphic to the $p$-adic filtration
$$ \cdots \xrightarrow{\cdot p}R\xrightarrow{\cdot p}R\xrightarrow{\cdot p}R$$
via the maps $\cdot\tilde{p}^k$. Taking $R$ to be a $\bF_p$-algebra this remark also shows that an $e$-filtration on a discrete ring need not be strict, nor need the element $\tilde{p}$ be unique. If $R$ is strictly $e$-filtered then $R$ is flat over $\bz_p$ and $\tilde{p}=p\in F^{\geq e}R$ is unique.
\label{ptilderemark}\end{remark}

\begin{example} Let $g:\mathscr{X}\to \Spf(\co_K)$ be a flat morphism of formal schemes where $\mathscr{X}$ is quasi-compact and quasi-separated. Then the filtered formal stack 
$F_\fm\mathscr{X}$
of Example \ref{keyexample} is strictly $e$-filtered  where $e$ is the ramification index of $K/\bq_p$. 
\end{example}

\begin{prop}\label{propperfect1} Let $\mathfrak{X}$ be $e$-filtered and assume $F^{\geq\star}\cm\in\cd(\mathfrak{X})$ has weak filtered amplitude in $[a,b]$.
\begin{enumerate}
\item[a)] If $k\geq b$ then 
$$ \cdot\tilde{p}: F^{\geq k}\cm \xrightarrow{\sim} F^{\geq k+e}\cm$$
is an equivalence.
\item[b)] The filtration $F^{\geq\star}\cm$ is complete. 
\end{enumerate}
\end{prop}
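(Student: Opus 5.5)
We reduce to the affine case $\mathfrak{X}=F\Spf(R)$ with $F^{\geq\star}R$ the $\varpi$-adic filtration and $\varpi^e=up$. Both assertions are local. For a), an equivalence of quasi-coherent sheaves can be checked on an affine Zariski cover. For b), completeness is a statement about the limit $\varprojlim_k F^{\geq k}\cm$ in $\cd(\mathscr{X})$, and we intend to deduce it from a) without any solid formalism.

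In the affine case, let $\mathscr{D}$ be the full subcategory of $\cd(F^{\geq\star}R)$ consisting of modules $F^{\geq\star}M$ for which $\cdot\tilde p: F^{\geq k}M\to F^{\geq k+e}M$ is an equivalence for all $k\geq b$. Here $\cdot\tilde p$ is multiplication by $\tilde p\in\pi_0F^{\geq e}R$, viewed as a map of filtered modules $F^{\geq\star}M\to F^{\geq \star+e}M$. This subcategory is stable, since fibres and shifts preserve equivalences. It is also closed under all colimits: $F^{\geq k}$ is evaluation at $k$ and therefore commutes with colimits, and the map $\cdot\tilde p$ is natural in $M$. It therefore suffices to check the generators $F^{\geq\star-m}R$ with $m\in[a,b]$. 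For these, $F^{\geq k}$ of the generator is $F^{\geq k-m}R$ with $k-m\geq 0$. By Remark \ref{ptilderemark}, $\cdot\tilde p: F^{\geq j}R\to F^{\geq j+e}R$ is an isomorphism for $j\geq 0$: under the identification of every $F^{\geq j}R$ with $R$, it is the identity when $\tilde p$ is chosen as $u\varpi^e$ in degree $e$. This proves a). The only point needing care is that $\cdot\tilde p$ is functorial at the level of filtered modules, and this holds because it is the module action of an element of $F^{\geq e}R$.

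For b), iterate a): for $k\geq b$ we have $F^{\geq k+je}\cm\simeq F^{\geq k}\cm$ via $\tilde p^j$. The transition maps $F^{\geq k+(j+1)e}\cm\to F^{\geq k+je}\cm$ then correspond to the endomorphisms $F^{\geq k}\cm\to F^{\geq k}\cm$ given by $\tilde p^{-j-1}(\mathrm{transition})\tilde p^{j}$. By the commutative diagram in Remark \ref{ptilderemark}, this composite is $F^{\geq k+e}\cm\to F^{\geq k}\cm$ followed by $\tilde p^{-1}$, and that equals multiplication by $p$. Hence $\varprojlim_k F^{\geq k}\cm\simeq \varprojlim(\cdots\xrightarrow{p}F^{\geq k}\cm\xrightarrow{p}F^{\geq k}\cm)$, restricting to the cofinal indices $k+je$. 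This limit vanishes because $F^{\geq k}\cm$ is $p$-complete, as it is an object of $\cd$, which is the $p$-complete category.

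In the global case we apply a) locally. The diagram is compatible with restriction, so we obtain the same identification of the tower globally. Completeness then follows from the $p$-completeness of $F^{\geq k}\cm$ in $\cd(\mathscr{X})$. The main obstacle is making sure the commutative square of Remark \ref{ptilderemark} holds as a homotopy-coherent identity of maps of filtered modules, so that the transition maps are truly identified with $p$. We would verify this on the Rees side, where $\tilde p$ is an element of weight $e$ with $t^e\tilde p=p$.
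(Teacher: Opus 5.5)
Your proposal is correct and is essentially the paper's proof. For a), the paper likewise reduces to an affine cover and notes that the property that $\cdot\tilde p\colon F^{\geq k}M\to F^{\geq k+e}M$ is an equivalence for $k\geq b$ is stable under shifts and colimits and holds for the generators $F^{\geq\star-m}R$ with $m\leq b$; for b), it identifies $\varprojlim_k F^{\geq b+ke}\cm$ with $\varprojlim\bigl(\cdots\xrightarrow{p}F^{\geq b}\cm\xrightarrow{p}F^{\geq b}\cm\bigr)$, which vanishes by $p$-completeness. Two bookkeeping slips should be fixed: with $\varpi^e=up$, the section $\tilde p$ corresponds to the unit $u^{-1}$ under $F^{\geq e}R\simeq R$, so $\cdot\tilde p$ is multiplication by a unit rather than the identity; and the transition $F^{\geq k+(j+1)e}\cm\to F^{\geq k+je}\cm$, conjugated back to $F^{\geq k}\cm$, is $\tilde p^{-j}\circ(\text{transition})\circ\tilde p^{\,j+1}$ (your exponents are swapped), which equals $p$ because the transition precomposed with $\tilde p$ is $p$ by Remark~\ref{ptilderemark}.
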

\begin{proof}
a) It suffices to check that $\cdot\tilde{p}$ is an equivalence on an affine cover. In the affine case, the property that $\cdot\tilde{p}:F^{\geq k }M\rightarrow F^{\geq k+e}M$ is an equivalence for $k\geq b$ is preserved by shifts and arbitrary colimits and is satisfied by $F^{\geq\star-m}R$ for $m\leq b$. This gives a). 

b) We have
$$\varprojlim_k F^{\geq k}\cm \simeq \varprojlim_k F^{\geq b+ke}\cm\simeq \varprojlim\left(\cdots \xrightarrow{p} F^{\geq b}\cm\xrightarrow{p}F^{\geq b}\cm\right)=0$$ since any $F^{\geq b}\cm\in\cd(\mathscr{X})$ is $p$-complete.
\end{proof}

In the following definition we could only require $L_{\mathfrak{X}/\mathfrak{A}}$ to have finite weak filtered amplitude without invalidating the results in this section but such generality seems unhelpful.

\begin{definition} ($e$-filtered morphisms) Let $F^{\geq\star} A$ be a filtered $\delta$-ring. We call a morphism of derived formal stacks
$$g:\mathfrak{X}\to \mathfrak{A}:=F\Spf(A)$$
\underline{$e$-filtered} if $g$ is schematic, $\mathfrak{X}$ is $e$-filtered and $L_{\mathfrak{X}/\mathfrak{A}}$ has weak filtered amplitude in $[0,1]$.
\end{definition}

We note that if $\mathfrak{X}$ is $e$-filtered then the morphism $g:\mathfrak{X}\to \widehat{\ba}^1/\widehat{\bg}_m$ is $e$-filtered in view of Example \ref{trivexample}.

For the following proposition note that $F^{\geq \star} \mathrm{gr}^i_{\mathcal N}\prism^{(1)}_{\mathscr{X}/A}\{n\}$ is a $F^{\geq \star}\mathrm{gr}^0_{\mathcal N}\prism^{(1)}_{\mathscr{X}/A}\simeq R\Gamma(\mathscr{X}, F^{\geq \star}\co_{\mathscr{X}})$-module.

\begin{prop} \label{corforA} Let $F^{\geq\star} A$ be a filtered $\delta$-ring, $\mathfrak{A}=F\Spf(A)$ and $\mathfrak{X}\to \mathfrak{A}$ a $e$-filtered morphism of derived formal stacks. 
\begin{enumerate}
\item[a)] For any $i\geq 0$ the filtrations $F^{\geq \star}L\widehat{\Omega}^{i}_{\mathscr{X}/A}$ and $F^{\geq \star}\mathrm{gr}^i_{\mathcal N}\prism^{(1)}_{\mathscr{X}/A}\{n\}$ are complete.
\item[b)] The filtrations
$$F^{\geq \star}R\Gamma_{\add}(\mathscr{X}/A,\bz_p(n)),\  F^{\geq \star}R\Gamma_{\syn}(\mathscr{X}/A,\bz_p(n)),\  F^{\geq \star} \widehat{\mathrm{dR}}_{\mathscr{X}/A}^{<n}[-1]$$ 
are complete.
\item[c)] Multiplication by $\tilde{p}\in H^0(\mathscr{X},\pi_0F^{\geq e}\co_{\mathscr{X}})$ induces equivalences
$$F^{\geq k}L\widehat{\Omega}^{i}_{\mathscr{X}/A} \stackrel{\sim}{\rightarrow} F^{\geq k+e}L\widehat{\Omega}^{i}_{\mathscr{X}/A} $$
and 
$$ F^{\geq k}\mathrm{gr}^i_{\mathcal N}\prism^{(1)}_{\mathscr{X}/A}\{n\} \stackrel{\sim}{\rightarrow} F^{\geq k+e}\mathrm{gr}^i_{\mathcal N}\prism^{(1)}_{\mathscr{X}/A}\{n\}$$
for $k\geq i$.  
\item[d)] $\mathfrak{X}/p\to \mathfrak{A}$ is $e$-filtered.
\item[e)] The filtrations
$$F^{\geq \star}R\Gamma_{\add}(\mathscr{X}/A,\bz_p(n))^{\rel},\  F^{\geq \star}R\Gamma_{\syn}(\mathscr{X}/A,\bz_p(n))^{\rel},\  F^{\geq \star} \widehat{\mathrm{dR}}_{\mathscr{X}/A}^{<n,\rel}[-1]$$ 
are complete.
\end{enumerate}
\end{prop}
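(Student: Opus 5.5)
The plan is to transfer the weak filtered amplitude of $L_{\mathfrak{X}/\mathfrak{A}}$ in $[0,1]$ to the derived exterior powers, and from there to the Nygaard graded pieces and to all the global-section filtrations. Completeness and the $\tilde p$-periodicity then come from Prop. \ref{propperfect1}, and the relative statements are obtained from the absolute ones together with part d).

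\emph{Parts a) and c).} By Lemma \ref{weakexterior}, $L\bigwedge^i L_{\mathfrak{X}/\mathfrak{A}}$, i.e. $F^{\geq\star}L\bigwedge^i L_{\mathscr{X}/A}$ with its $p$-completion, has weak filtered amplitude in $[0,i]$.
\begin{itemize}
\item Prop. \ref{propperfect1} b) shows that this filtration is complete.
\item Prop. \ref{propperfect1} a) shows that $\cdot\tilde p:F^{\geq k}\to F^{\geq k+e}$ is an equivalence for $k\geq i$.
\end{itemize}
Applying $R\Gamma(\mathscr{X},-)$ commutes with limits, so it preserves completeness and these equivalences. This gives both claims for $F^{\geq\star}L\widehat\Omega^i_{\mathscr{X}/A}$.

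For $F^{\geq\star}\mathrm{gr}^i_{\mathcal N}\prism^{(1)}_{\mathscr{X}/A}\{n\}$ I would use the bounded filtration (\ref{nygaardfilt}) coming from Lemma \ref{filtration} d). Its graded pieces are $F^{\geq\star}L\widehat\Omega^{i}[-i]$ and fibres of multiplication by $j$ on $F^{\geq\star}L\widehat\Omega^{i-j}$. Each piece is complete, and $\tilde p$ acts on each as an equivalence from degree $i-j\leq i$ on. The finite filtration is compatible with the $R\Gamma(\mathscr{X},F^{\geq\star}\co_{\mathscr{X}})$-module structure, so $\tilde p$ acts compatibly on all pieces. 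Both properties are stable under finite extensions, which gives the claims for the Nygaard graded pieces.

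\emph{Part b).} The filtrations on $R\Gamma_{\add}$ and on $\widehat{\mathrm{dR}}^{<n}$ have finite filtrations (\ref{addfilt}) whose graded pieces are the objects of part a), so they are complete. For $R\Gamma_{\syn}$, Lemma \ref{completesyn} identifies it with the fibre of a map between Nygaard-completed objects. Each $F^{\geq\star}\mathcal N^{\geq m}\widehat\prism^{(1)}\{n\}=\varprojlim_r F^{\geq\star}\mathcal N^{[m,r[}$ is a limit of finite extensions of the complete filtrations from a), hence complete. Fibres of complete filtrations are complete. The same argument runs exactly as in the proof of Prop. \ref{completeandbounded} a).

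\emph{Part d).} $\mathfrak{X}/p$ is schematic, with filtered structure sheaf $F^{\geq\star}\co_{\mathscr{X}}\otimes^L\bF_p$. This is still the $\varpi$-adic filtration with $\varpi^e=up$, so $\mathfrak{X}/p$ is $e$-filtered. Its cotangent complex sits in the transitivity triangle
\[
L_{\mathfrak{X}/\mathfrak{A}}|_{\mathfrak{X}/p}\to L_{(\mathfrak{X}/p)/\mathfrak{A}}\to L_{(\mathfrak{X}/p)/\mathfrak{X}}.
\]
The first term is the base change of an object of weak amplitude $[0,1]$, and base change preserves the generators. The third term is the base change of $L_{\bF_p/\bz_p}\simeq \bF_p[1]$ with trivial filtration. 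It is therefore generated by copies of $F^{\geq\star}R[k]$ with $k<0$ (cohomological shift $[1]$), so it has amplitude $[0,0]$. The middle term is then an extension of objects in the colimit-closed subcategory, hence lies in it. The step I expect to need the most care is keeping the shift convention for "$k<0$" consistent with that definition.

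\emph{Part e).} This follows from b) applied to $\mathfrak{X}$ and, via d), to $\mathfrak{X}/p$, since a fibre of complete filtrations is complete.
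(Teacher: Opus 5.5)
Your proposal is correct and follows the paper's proof essentially step for step: Lemma \ref{weakexterior} with Prop. \ref{propperfect1} for a) and c); the bounded filtrations (\ref{nygaardfilt}) and (\ref{addfilt}) together with Lemma \ref{completesyn} for b); the transitivity triangle with $L_{(\mathfrak{X}/p)/\mathfrak{X}}\simeq\co_{\mathfrak{X}/p}[1]$ for d); and b) applied to $\mathfrak{X}$ and $\mathfrak{X}/p$ for e). The shift convention you flag in d) causes no trouble: the generators $F^{\geq\star-m}R[k]$ with $k<0$ include arbitrarily negative desuspensions, and colimits supply suspensions, so the subcategory they generate is stable. In particular it contains $\co_{\mathfrak{X}/p}[1]$ and is closed under extensions.
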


\begin{proof} a) By Lemma \ref{weakexterior} $L\bigwedge^i L_{\mathfrak{X}/\mathfrak{A}}$ has weak filtered amplitude in $[0,i]$. By Prop. \ref{propperfect1} b) the filtration $L\bigwedge^i L_{\mathfrak{X}/\mathfrak{A}}$ is complete hence so is 
$$F^{\geq \star}L\widehat{\Omega}^{i}_{\mathscr{X}/A}=R\Gamma(\mathscr{X},L\bigwedge{}^i L_{\mathfrak{X}/\mathfrak{A}}).$$
The bounded filtration (\ref{nygaardfilt}) then shows that $F^{\geq \star}\mathrm{gr}^i_{\mathcal N}\prism^{(1)}_{\mathscr{X}/A}\{n\}$ is complete.

b) This follows exactly as in the proof of Prop. \ref{completeandbounded} a), using the bounded filtration (\ref{addfilt}) together with part a).

c) This is immediate from Prop. \ref{propperfect1} a) and the bounded filtration (\ref{nygaardfilt}).

d) Since $F^{\geq\star}\co_{\mathscr{X}/p}=(F^{\geq\star}\co_{\mathscr{X}})/p$ the image of $\varpi$ in $\pi_0\co_{\mathscr{X}/p}$ exhibits $\mathfrak{X}/p$ as $e$-filtered. The transitivity triangle for $(\mathfrak{X}/p)\xrightarrow{g}\mathfrak{X}\to\mathfrak{A}$   
$$ g^*L_{\mathfrak{X}/\mathfrak{A}}\to L_{(\mathfrak{X}/p)/\mathfrak{A}}\to L_{(\mathfrak{X}/p)/\mathfrak{X}}$$
and $L_{(\mathfrak{X}/p)/\mathfrak{X}}\simeq L_{\Spf(\bF_p)/\Spf(\bz_p)}\otimes\co_{\mathfrak{X}}\simeq \co_{\mathfrak{X}/p}[1]$ show that 
$L_{(\mathfrak{X}/p)/\mathfrak{A}}$ has weak filtered amplitude in $[0,1]$ if $L_{\mathfrak{X}/\mathfrak{A}}$ does.

e) This follows by combining b) for $\mathfrak{X}$ and $\mathfrak{X}/p$.
\end{proof}

\subsection{The G-filtration} \label{gfiltration} Let $F^{\geq\star} A$ be a filtered $\delta$-ring and $g:\mathfrak{X}\to \mathfrak{A}:=F\Spf(A)$ a schematic morphism of derived formal stacks.  If $\mathfrak{X}$ is $e$-filtered  we have seen (before the statement of Prop. \ref{corforA}) that multiplication by $p$ lifts to a map of filtered graded $F^{\geq\star} A$-modules
$$ \cdot\tilde{p}:F^{\geq \star}\mathrm{gr}^\bullet_{\mathcal N}\prism^{(1)}_{\mathscr{X}/A}\{n\}\to F^{\geq \star+e}\mathrm{gr}^\bullet_{\mathcal N}\prism^{(1)}_{\mathscr{X}/A}\{n\}$$
which by Prop. \ref{corforA} c) is an isomorphism if $g$ is $e$-filtered and $\star$ is large enough.
However it is not in general true that multiplication by $p$ lifts to a map of bifiltered $F^{\geq\star} A$-modules
$$ F^{\geq \star}\mathcal {N}^{\geq \star}\prism^{(1)}_{\mathscr{X}/A}\{n\}\to F^{\geq \star+e}\mathcal {N}^{\geq \star} \prism^{(1)}_{\mathscr{X}/A}\{n\}$$
and therefore also to a map of filtered modules
$$ F^{\geq \star}R\Gamma_{?}(\mathscr{X}/A,\bz_p(n))\to F^{\geq \star+e}R\Gamma_{?}(\mathscr{X}/A,\bz_p(n))$$
for $?=\syn,\add$. In order to achieve this property we introduce the following modified bifiltration.

\begin{definition} \label{gdef}
Let $F^{\geq \star}\mathcal {N}^{\geq \star}C$ be a bifiltered $F^{\geq\star}A$-module and $e\geq 1$. Define the trifiltered $G^{\geq\star}F^{\geq\star}A$-module
$G^{\geq \star}F^{\geq \star}\mathcal {N}^{\geq \star}C$, resp. bifiltered $G^{\geq\star}A$-module
$G^{\geq \star}\mathcal {N}^{\geq \star}C$  by left Kan extension along the map
$$ \rho:\bz\times\bz\to\bz\times\bz\times\bz ,\quad (a,b)\mapsto (a+be,a,b),$$
resp. the map $\mathrm{pr_{1,3}}\circ\rho: \bz\times\bz \to \bz\times\bz ,\ (a,b)\mapsto (a+be,b)$. That is we have
$$G^{\geq k}F^{\geq j}\mathcal {N}^{\geq i}C:=\underset{a+be\geq k, a\geq j, b\geq i}{\mathrm{colim}} F^{\geq a}\mathcal {N}^{\geq b}C$$
and
$$ G^{\geq k}{\mathcal N}^{\geq i}C:=  \underset{a+be\geq k, b\geq i}{\mathrm{colim}} F^{\geq a}{\mathcal N}^{\geq b}C.$$
\end{definition}

In order to clarify multiplicative structures in Def. \ref{gdef} we recall the following well known Lemma on functoriality of the Day convolution symmetric monoidal structure.

\begin{lemma} \label{day}
Let $\tau:M\to N$ be an additive, order preserving map between ordered commutative monoids and 
$$ \xymatrix{\Fun(M^{op},\Mod_\bz)\ar@<1ex>[r]^{\tau_!} & \Fun(N^{op},\Mod_\bz)\ar[l]^{\tau^*}}$$
the induced adjunction. Then $\tau_!$ is symmetric monoidal and its right adjoint $\tau^*$ is lax symmetric monoidal.
\end{lemma}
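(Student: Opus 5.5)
The plan is to recall how Day convolution is defined on $\Fun(M^{op},\Mod_\bz)$ and then check that left Kan extension along $\tau$ commutes with the convolution product. Regard $M^{op}$ as a symmetric monoidal $\infty$-category (a poset with addition, which is monotone). For $X,Y\in\Fun(M^{op},\Mod_\bz)$ the Day convolution is
$$X\otimes Y:=\mu_!(X\boxtimes Y),\qquad \mu:M^{op}\times M^{op}\to M^{op},$$
where $X\boxtimes Y(m,m')=X(m)\otimes Y(m')$ and $\mu_!$ is left Kan extension along addition. The unit is $\mu_{0,!}\bz$, i.e. the left Kan extension of $\bz$ from the point $0$. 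Since $\tau$ is additive and order preserving, it gives a strict symmetric monoidal functor of posets $\tau:M^{op}\to N^{op}$ with $\tau\circ\mu_M=\mu_N\circ(\tau\times\tau)$ and $\tau(0)=0$.

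The key step is a transitivity computation:
$$\tau_!(X\otimes Y)=\tau_!\mu_{M,!}(X\boxtimes Y)\simeq\mu_{N,!}(\tau\times\tau)_!(X\boxtimes Y)\simeq\mu_{N,!}(\tau_!X\boxtimes\tau_!Y)=\tau_!X\otimes\tau_!Y.$$
The first equivalence holds because left Kan extensions compose. The second holds because the external product $\boxtimes$ commutes with left Kan extension in each variable separately. This is where we use that $\otimes$ on $\Mod_\bz$ preserves colimits in each variable, so that the pointwise colimit formula can be applied factorwise. For the unit, $\tau_!\mu_{0,!}\bz\simeq(\tau\circ 0)_!\bz=0_!\bz$ on $N$.

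For the coherent version (an honest symmetric monoidal functor, not just compatible binary products), the cleanest route is the universal property of Day convolution: $\Fun(M^{op},\Mod_\bz)$ is the symmetric monoidal presentable category freely generated under colimits over $\Mod_\bz$ by the symmetric monoidal category $M^{op}$ (Lurie, HA 4.8.1.12 / Glasman). Under this description $\tau_!$ is the unique colimit-preserving extension of the symmetric monoidal composite $M^{op}\xrightarrow{\tau}N^{op}\xrightarrow{y}\Fun(N^{op},\Mod_\bz)$, so it is symmetric monoidal. The computation above serves as a check on the underlying maps. I expect this coherence bookkeeping to be the only real obstacle. If one wants to avoid it, one can cite that left Kan extension along a symmetric monoidal functor is symmetric monoidal for Day convolution (Nikolaus or HA 4.8.1).

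Finally, $\tau^*$ is right adjoint to the symmetric monoidal functor $\tau_!$, so it inherits a canonical lax symmetric monoidal structure by the general fact that right adjoints of symmetric monoidal functors are lax monoidal (HA Cor. 7.3.2.7). Concretely, the structure maps are the adjoints of
$$\tau_!(\tau^*X\otimes\tau^*Y)\simeq\tau_!\tau^*X\otimes\tau_!\tau^*Y\to X\otimes Y,$$
which is built from the counits.
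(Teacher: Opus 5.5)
Your argument is correct and is essentially the paper's: the paper simply cites \cite{nikolaus16}[Cor. 3.8], which is exactly the general fact you invoke. That fact says left Kan extension along a symmetric monoidal functor is symmetric monoidal for Day convolution, and its right adjoint is then lax symmetric monoidal. One small slip: in your universal-property step the generating symmetric monoidal category is $M$ (resp.\ $N$), not $M^{op}$, because the Yoneda embedding goes $N\to\Fun(N^{op},\Mod_\bz)$; with that variance corrected, $\tau_!$ is the colimit-preserving extension of $M\xrightarrow{\tau}N\to\Fun(N^{op},\Mod_\bz)$, as you intended.
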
 

\begin{proof} See for example \cite{nikolaus16}[Cor. 3.8]. \end{proof}

Given $F^{\geq\star}A\in\CAlg(\Fun(\bz^{op},\Mod_\bz))$, in Def. \ref{gdef} we also use the notation $F^{\geq\star}A$ for 
$$F^{\geq\star}A\otimes F^{\geq\star}_{\mathrm{triv}}\bz\in\CAlg(\Fun(\bz^{op}\times\bz^{op},\Mod_\bz)).$$
Likewise in the following we shall omit the factor $\otimes F^{\geq\star}_{\mathrm{triv}}\bz$ for the last index.
By definition we have $G^{\geq\star}F^{\geq\star}A:=\rho_!F^{\geq\star}A$, $G^{\geq\star}A:=(\mathrm{pr_{1,3}}\circ\rho)_!F^{\geq\star}A$
and an easy computation shows
$$ G^{\geq k}F^{\geq j}A\simeq F^{\geq\max(k,j)}A,\quad \quad G^{\geq k}A\simeq F^{\geq k}A.$$

\begin{lemma} Let $F^{\geq \star}\mathcal {N}^{\geq \star}C$ be a bifiltered $F^{\geq\star}A$-module and $G^{\geq \star}F^{\geq \star}\mathcal {N}^{\geq \star}C$ 
and $G^{\geq \star}\mathcal {N}^{\geq \star}C$ as in Def. \ref{gdef}.
\begin{enumerate}
\item[a)] We have \begin{equation}\notag
\mathrm{gr}^k_G\mathcal{N}^{\geq i}C\simeq\bigoplus_{j\geq i}\mathrm{gr}^{k-je}_F\mathrm{gr}^{j}_{\mathcal{N}} C.
\end{equation}
In particular if $F^{\geq \star}\mathcal {N}^{\geq \star}C$ is $\bn^{op}\times\bn^{op}$-indexed, so is $G^{\geq \star}\mathcal {N}^{\geq \star}C$.
\item[b)] If $F^{\geq \star}\mathcal {N}^{\geq \star}C$ is multiplicative (i.e. a bifiltered $\be_\infty$-$F^{\geq\star}A$-algebra) then $G^{\geq \star}F^{\geq \star}\mathcal {N}^{\geq \star}C$, resp. $G^{\geq \star}{\mathcal N}^{\geq \star}C$ is multiplicative (i.e. a trifiltered $\be_\infty$-$G^{\geq \star}F^{\geq\star}A$-algebra, resp. bifiltered $\be_\infty$-$G^{\geq \star}A$-algebra).
\item[c)] 
On $\mathcal N^{[0,n[}C$  the $F$-filtration and the $G$-filtration are cofinal in each other. More precisely, there is a factorization 
$$ F^{\geq k}{\mathcal N}^{[0,n[}C\to G^{\geq k}{\mathcal N}^{[0,n[}C\to F^{\geq k-e(n-1)}{\mathcal N}^{[0,n[}C \to G^{\geq k-e(n-1)}{\mathcal N}^{[0,n[}C$$
of the transition maps from index $k$ to index $k-e(n-1)$ in both filtrations.
\item[d)] The filtration $G^{\geq \star}{\mathcal N}^{[0,n[}C$ is complete if and only if the filtration $F^{\geq \star}{\mathcal N}^{[0,n[}C$ is complete.
\end{enumerate}
\label{Ggeneralities}\end{lemma}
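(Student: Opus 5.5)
The plan is to compute everything directly from the colimit formula of Def. \ref{gdef}, using that left Kan extension along $\rho$ (and along $\mathrm{pr}_{1,3}\circ\rho$) is computed by colimits over the indicated up-closed index sets.

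\textbf{Part a).} We first describe $\mathrm{gr}^k_G\mathcal N^{\geq i}C$. For fixed $i$, regard $(a,b)\mapsto F^{\geq a}\mathcal N^{\geq b}C$, $b\geq i$, as a bifiltered object. Rewrite it via its bigraded pieces: a bifiltration on $\bz\times\bz$ (assumed complete enough, or just formally) has a filtration by the pieces $\mathrm{gr}^a_F\mathrm{gr}^b_{\mathcal N}C$. Left Kan extension is symmetric monoidal and exact, so it commutes with taking bigraded pieces in the sense that $\mathrm{gr}$ of $\tau_!$ is $\tau_!$ of $\mathrm{gr}$ on graded objects. Concretely, the cleanest route is via Rees modules: a bifiltered object is a $\bz[t_1,t_2]$-module, and $\rho_!$ corresponds to base change along $\bz[t_1,t_2]\to\bz[s]$, $t_1\mapsto s$, $t_2\mapsto s^e$ (weights $(a,b)\mapsto a+be$). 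Then $\mathrm{gr}_G=(-)\otimes_{\bz[s]}\bz$, and since $\bz[t_1,t_2]/(t_1,t_2)$ is $\bz[t_1,t_2]\otimes_{\bz[s]}\bz$ modulo $t_1$ only up to the relation $t_2^e$... To avoid this mismatch I would instead argue by hand: $G^{\geq k}\mathcal N^{\geq i}C/G^{\geq k+1}\mathcal N^{\geq i}C$ is the colimit over the staircase region, and filtering the staircase by $b$ gives a finite/convergent filtration whose $b=j$ layer is $F^{\geq k-je}\mathrm{gr}^j_{\mathcal N}C/F^{\geq k-je+1}\mathrm{gr}^j_{\mathcal N}C$. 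The key observation is that for each $j$ the contributions are split, because the transition maps shifting $b$ land in strictly higher $G$-degree, giving the direct sum. The $\bn^{op}\times\bn^{op}$-indexed claim follows since all summands vanish unless $j\geq 0$ and $k-je\geq 0$.

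\textbf{Part b).} This is immediate from Lemma \ref{day}: $\rho$ and $\mathrm{pr}_{1,3}\circ\rho$ are additive order preserving maps of ordered commutative monoids, so $\rho_!$ is symmetric monoidal, sending $\be_\infty$-$F^{\geq\star}A$-algebras to $\be_\infty$-$\rho_!F^{\geq\star}A=G^{\geq\star}F^{\geq\star}A$-algebras.

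\textbf{Parts c), d).} On $\mathcal N^{[0,n[}C$ only $b\in[0,n-1]$ contribute. The map $F^{\geq k}\to G^{\geq k}$ is the inclusion of the index $(k,0)$ (since $k+0\cdot e\geq k$); the map $G^{\geq k}\to F^{\geq k-e(n-1)}$ exists because each index $(a,b)$ with $a+be\geq k$, $b\leq n-1$ has $a\geq k-e(n-1)$, so each $F^{\geq a}\mathcal N^{\geq b}$ maps compatibly to $F^{\geq k-e(n-1)}\mathcal N^{[0,n[}$ (forgetting $\mathcal N$). Composites are the transition maps by construction. Part d) then follows formally: cofinal interleaved inverse systems have equivalent limits, so $\varprojlim G^{\geq k}=0$ iff $\varprojlim F^{\geq k}=0$. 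The main obstacle is the splitting claim in a); I'd verify it first on free bifiltered generators $F^{\geq\star-m}\mathcal N^{\geq\star-l}$ and extend by colimits, since both sides commute with colimits.
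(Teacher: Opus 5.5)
Your arguments for b), c) and d) are fine and follow the paper. For b) you invoke Lemma~\ref{day}. In c), your observation that $a+be\geq k$ with $0\leq b\leq n-1$ forces $a\geq k-e(n-1)$ is exactly what underlies the paper's equivalence $\kappa\colon G^{\geq k}F^{\geq j}\mathcal N^{[0,n[}C\simeq G^{\geq k}\mathcal N^{[0,n[}C$ for $j\leq k-e(n-1)$; the paper routes the map $G^{\geq k}\to F^{\geq k-e(n-1)}$ through this trifiltration so that the required compatibilities are formal.

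The gap is in a), the one non-formal part, and your by-hand argument does not prove the formula.
\begin{itemize}
\item You do not say in what sense the filtration by $b$ converges, and for the natural one, $j\mapsto \mathrm{gr}^k_G\mathcal N^{\geq j}C$, it does not. Without assuming the $F$-filtration is $\bn^{op}$-indexed, infinitely many $j$ contribute. For example, take the sum over $j\geq 0$ of your free generators with $(m,l)=(k-je,j)$, which is complete in both variables; the tower is then the tower of tails of an infinite sum, which has nonzero $\lim^1$.
\item In any case, a complete decreasing filtration with these layers would produce $\prod_{j\geq i}$, not $\bigoplus_{j\geq i}$.
\item ``The contributions split because the transition maps shifting $b$ land in strictly higher $G$-degree'' is a heuristic, not a splitting of extensions in a stable $\infty$-category.
\item Your fallback (check on the generators $F^{\geq\star-m}\mathcal N^{\geq\star-l}$ and extend by colimits) is sound only once you have a natural transformation between the two colimit-preserving functors; objectwise agreement on generators proves nothing. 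One can be built: the unit maps $F^{\geq a'}\mathcal N^{\geq b'}C\to G^{\geq a'+b'e}\mathcal N^{\geq b'}C$ send the square computing $\mathrm{gr}^a_F\mathrm{gr}^b_{\mathcal N}C$ to one whose three corners other than $(a,b)$ land in $G^{\geq a+be+1}$, precisely because $e\geq 1$. But this has to be said.
\end{itemize}

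The cleanest repair is the Rees argument you abandoned, which is the paper's proof in different clothing. Write bifiltered objects as bigraded $\bz[t_1,t_2]$-modules, set $\sigma(a,b)=a+be$, and let $\sigma^{gr}_!$ denote collapsing the bigrading along $\sigma$. Then $\sigma_!X\simeq \sigma^{gr}_!X\otimes_{\sigma^{gr}_!\bz[t_1,t_2]}\bz[s]$ with $t_1\mapsto s$, $t_2\mapsto s^e$, hence
\begin{equation*}
\mathrm{gr}_G(\sigma_!X)\simeq \sigma^{gr}_!X\otimes_{\sigma^{gr}_!\bz[t_1,t_2]}\bz[s]/(s).
\end{equation*}
Since $e\geq 1$, both $t_1$ and $t_2$ act by zero on $\bz[s]/(s)$, so as a module it is $\sigma^{gr}_!$ of $\bz[t_1,t_2]/(t_1,t_2)$. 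Monoidality of $\sigma^{gr}_!$ then gives $\mathrm{gr}_G(\sigma_!X)\simeq\sigma^{gr}_!\bigl(X\otimes_{\bz[t_1,t_2]}\bz[t_1,t_2]/(t_1,t_2)\bigr)$, i.e.\ $\mathrm{gr}^k_G=\bigoplus_{a+be=k}\mathrm{gr}^a_F\mathrm{gr}^b_{\mathcal N}$; general $i$ follows by truncating in $b$. The mismatch you sensed is real but concerns the opposite base change: $\bz[s]\otimes_{\bz[t_1,t_2]}\bz[t_1,t_2]/(t_1,t_2)$ is the derived quotient of $\bz[s]$ by $(s,s^e)$ and carries an extra shifted copy of $\bz[s]/(s)$. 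That is the paper's $\sigma_!\bz^{0,0}$, and the paper then base changes along $\sigma_!\bz^{0,0}\to\bz^0$, which amounts to the computation above.
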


\begin{proof} a) It suffices to treat the case $i=-\infty$, the general case follows by looking at the $\bz^{op}\times\bn^{op}_{\geq i}$-indexed truncation of $F^{\geq \star}\mathcal {N}^{\geq \star}C$. Let $\sigma:\bz\times\bz\to\bz$ be the map $(a,b)\mapsto a+be$ and denote by $\bz^{0,0}\in\CAlg(\Fun(\bz^{op}\times\bz^{op},\Mod_\bz))$ the bifiltered algebra consisting of $\bz$ in bidegree $(0,0)$ and $0$ elsewhere so that $-\otimes\bz^{0,0}$ is the functor of passing to the associated bigraded. There is a commutative diagram
 \begin{equation}  \begin{CD} \Fun(\bz^{op}\times\bz^{op},\Mod_\bz) @>\sigma_!>> \Fun(\bz^{op},\Mod_\bz)\\
 @VV-\otimes\bz^{0,0} V @VV-\otimes\sigma_!\bz^{0,0}V\\
\Mod_{\bz^{0,0}} \Fun(\bz^{op}\times\bz^{op},\Mod_\bz) @>\sigma_!>> \Mod_{\sigma_!\bz^{0,0}} \Fun(\bz^{op},\Mod_\bz).
  \end{CD}\notag\end{equation}
For $M^{\star,\star}\in\Mod_{\bz^{0,0}} \Fun(\bz^{op}\times\bz^{op},\Mod_\bz)$ (a bifiltered module with transition maps all zero) one checks that
$$(\sigma_!M^{\star,\star})^k=\bigoplus_{i+je=k}M^{i+j}\oplus \bigoplus_{i+je=k+1}M^{i+j}[1],$$
in particular $\sigma_!\bz^{0,0}=\bz^0\oplus\bz^0\langle-1\rangle[1]$. So the associated graded functor $-\otimes\bz^0=(-\otimes {\sigma_!\bz^{0,0}})\otimes_{\sigma_!\bz^{0,0}} \bz^0$ sends $G^{\geq \star}C=\sigma_!F^{\geq \star}\mathcal {N}^{\geq \star}C$ to 
$$\bigoplus_{i+je=k}\mathrm{gr}^{i}_F\mathrm{gr}^{j}_{\mathcal{N}} C.$$

b) This follows from Lemma \ref{day} since both $\rho$ and $\mathrm{pr_{1,3}}\circ\rho$ are additive and order preserving. 

c) The map $F^{\geq k}{\mathcal N}^{[0,n[}C\to G^{\geq k}{\mathcal N}^{[0,n[}C$ is induced by a map of  $\bz^{op}\times\bn^{op}$-indexed bifiltrations. Indeed there is a natural map
$$ F^{\geq k}{\mathcal N}^{\geq i}C\to  \underset{a+be\geq k, b\geq i}{\mathrm{colim}} F^{\geq a}{\mathcal N}^{\geq b}C \simeq G^{\geq k}{\mathcal N}^{\geq i}C $$
as long as $i\geq 0$. There is an equivalence
\begin{equation} \begin{CD} G^{\geq k}{\mathcal N}^{[0,n[}C @<\kappa<\sim< G^{\geq k}F^{\geq j}{\mathcal N}^{[0,n[}C \\
\Vert @. \Vert @.\\
\underset{\substack{a+be\geq k\\ n-1\geq b\geq 0}}{\mathrm{colim}} F^{\geq a}{\mathcal N}^{[b,n[}C@=\underset{\substack{a+be\geq k\\ n-1\geq b\geq 0, a\geq j}}{\mathrm{colim}} F^{\geq a}{\mathcal N}^{[b,n[}C\end{CD}\label{kappadef}\end{equation}
as long as $0\leq j\leq k-e(n-1)$ and both composite maps 
$$ G^{\geq k}{\mathcal N}^{[0,n[}C\xrightarrow{\kappa^{-1}}G^{\geq k}F^{\geq k-\epsilon}{\mathcal N}^{[0,n[}C\to F^{\geq k-\epsilon}{\mathcal N}^{[0,n[}C\to G^{\geq k-\epsilon}{\mathcal N}^{[0,n[}C$$
and 
$$ F^{\geq k}{\mathcal N}^{[0,n[}C\to G^{\geq k}{\mathcal N}^{[0,n[}C\xrightarrow{\kappa^{-1}}G^{\geq k}F^{\geq k-\epsilon}{\mathcal N}^{[0,n[}C\to F^{\geq k-\epsilon}{\mathcal N}^{[0,n[}C$$
are the natural transition maps (abbreviating $\epsilon=e(n-1)$). 

We prove d). Assume $F^{\geq \star}{\mathcal N}^{[0,n[}C$ is complete, i.e. $\mathrm{lim}_{k}\,F^{\geq k}{\mathcal N}^{[0,n[}C=0$. The canonical equivalence 
$$\mathrm{lim}_{k}\,G^{\geq k+e(n-1)} {\mathcal N}^{[0,n[}C\stackrel{\sim}{\rightarrow} \mathrm{lim}_{k}\,G^{\geq k} {\mathcal N}^{[0,n[}C $$
factors through
$$\mathrm{lim}_{k}\,G^{\geq k+e(n-1)} {\mathcal N}^{[0,n[}C \to \mathrm{lim}_{k}\,F^{\geq k}{\mathcal N}^{[0,n[}C=0\to \mathrm{lim}_{k}\,G^{\geq k}{\mathcal N}^{[0,n[}C$$
by part c). Hence $\mathrm{lim}_{k}\,G^{\geq k} {\mathcal N}^{[0,n[}C=0$. The other implication follows similarly.
\end{proof}

We now discuss the $G$-filtration in the situation of Theorem \ref{filteredbeilsquare}. Let $F^{\geq\star} A$ be a filtered $\delta$-ring, $\mathfrak{A}:=F\Spf(A)$ and $\mathfrak{X}\to \mathfrak{A}$ a schematic morphism of derived formal stacks. Denote by $\mathscr{X}$ the underlying derived formal scheme of $\mathfrak{X}$ and set $A=F^{\geq 0}A$. The map of $\bn^{op}\times\bn^{op}$-indexed bifiltrations
$$F^{\geq \star}\gamma^{dR}_{\prism,\mathscr{X}/A}\{n\}:  F^{\geq \star}\mathcal N^{\geq\star}\prism^{(1)}_{\mathscr{X}/A}\{n\} \to F^{\geq \star}\fil^{\geq \star}_{Hod}\widehat{\mathrm{dR}}_{\mathscr{X}/A}$$
from Def. \ref{package} (multiplicative for $n=0$, a map of $F^{\geq \star}\mathcal N^{\geq\star}\prism^{(1)}_{\mathscr{X}/A}$-modules for any $n$) induces a map of $\bn^{op}\times\bn^{op}$-indexed bifiltrations
$$G^{\geq \star}\gamma^{dR}_{\prism,\mathscr{X}/A}\{n\}:  G^{\geq \star}\mathcal N^{\geq\star}\prism^{(1)}_{\mathscr{X}/A}\{n\} \to G^{\geq \star}\fil^{\geq \star}_{Hod}\widehat{\mathrm{dR}}_{\mathscr{X}/A}$$
(multiplicative for $n=0$, a map of $G^{\geq \star}\mathcal N^{\geq\star}\prism^{(1)}_{\mathscr{X}/A}$-modules for any $n$), functorial in $\mathfrak{X}$, and hence also a map of $\bn^{op}$-indexed filtrations (see Remark \ref{gammaremark})
\begin{equation} G^{\geq \star} \widehat{\mathrm{dR}}_{\mathscr{X}/A}^{<n}[-1]  \leftarrow G^{\geq \star}R\Gamma_{\add}(\mathscr{X}/A,\bz_p(n))^{\rel}:G^{\geq \star}\gamma_{\mathscr{X}/A}.\notag\end{equation}
In contrast, the map $\log_{\mathfrak{X}/\mathfrak{A}}=F^{\geq \star}\log_{\mathscr{X}/A}$ in (\ref{comp67}) does not immediately induce a map of $G$-filtrations for two reasons. First, $c^0\varphi\{n\}$ does not preserve the Nygaard filtration and hence not the $F^{\geq \star}\mathcal{N}^{\geq \star}$-bifiltration so that there is no immediate $G$-filtration on $R\Gamma_{\syn}(\mathscr{X}/A,\bz_p(n))$. Second, the isomorphism $\beta_{\mathscr{X}/A}$ in Prop. \ref{betaprop} does not extend to a map from the Nygaard to the Hodge filtration, hence also not to a map of bifiltrations and $G$-filtrations.  Therefore the diagram (\ref{filtered-relative-beilinson}) of $F$-filtrations does not induce a corresponding diagram of $G$-filtrations. We now explain how to resolve each of these issues. 

\subsubsection{The $G$-filtration on $R\Gamma_{\syn}$} It turns out that $G^{\geq k}F^{\geq j}R\Gamma_{\syn}(\mathscr{X}/A,\bz_p(n))$ does exist for large enough $k$. In order to see this, first note that for $0\leq j\leq k-e(n-1)$ we have the equivalence (\ref{kappadef}) 
\begin{align*}&G^{\geq k}F^{\geq j}R\Gamma_{\add}(\mathscr{X}/A,\bz_p(n))\\ =\,& G^{\geq k}F^{\geq j}{\mathcal N}^{[0,n[}\prism^{(1)}_{\mathscr{X}/A}\{n\} \xrightarrow{\sim} G^{\geq k}F^{\geq 0}\mathcal N^{[0,n[}\prism^{(1)}_{\mathscr{X}/A}\{n\}\\
=\,&G^{\geq k}R\Gamma_{\add}(\mathscr{X}/A,\bz_p(n))
\end{align*}
whereas for $j\geq k-e(n-1)$ we have
\begin{align}
&G^{\geq k}F^{\geq j}R\Gamma_{\add}(\mathscr{X}/A,\bz_p(n))\label{rewriting}\\
\simeq &\fibre\left( G^{\geq k}F^{\geq j}{\mathcal N}^{\geq n}\prism^{(1)}_{\mathscr{X}/A}\{n\}\xrightarrow{\can} G^{\geq k}F^{\geq j}\prism^{(1)}_{\mathscr{X}/A}\{n\}\right)\notag\\
\simeq &\fibre\left( F^{\geq j}{\mathcal N}^{\geq n}\prism^{(1)}_{\mathscr{X}/A}\{n\}\xrightarrow{\can}  G^{\geq k}F^{\geq j}\prism^{(1)}_{\mathscr{X}/A}\{n\}\right)\notag
\end{align} 
since $a\geq j\geq k-e(n-1)$ and $b\geq n$ imply $a+be\geq k$. Finally, for $j\geq k$ we have 
\begin{align}
&G^{\geq k}F^{\geq j}R\Gamma_{\add}(\mathscr{X}/A,\bz_p(n))
\simeq F^{\geq j}R\Gamma_{\add}(\mathscr{X}/A,\bz_p(n))\notag
\end{align} 
since $a\geq j\geq k$ implies $a+be\geq k$.

If $pj\geq p(k-e(n-1))\geq k$ then $c^0\varphi\{n\}$ maps  $F^{\geq j}{\mathcal N}^{\geq n}\prism^{(1)}_{\mathscr{X}/A}\{n\}$    to $F^{\geq k}\prism^{(1)}_{\mathscr{X}/A}\{n\}$ which in turn maps to $G^{\geq k}F^{\geq j}\prism^{(1)}_{\mathscr{X}/A}\{n\}$ as long as $j\leq k$. This motivates the following

\begin{definition}\label{Achimdef3} 
For $k\geq k_0:= pe(n-1)/(p-1)$ we set
\begin{multline*} G^{\geq k}F^{\geq j}R\Gamma_{\syn}(\mathscr{X}/A,\bz_p(n)):=\\
\begin{cases} F^{\geq j}R\Gamma_{\syn}(\mathscr{X}/A,\bz_p(n)) & j\geq k\\
\fibre\left( F^{\geq j}{\mathcal N}^{\geq n}\prism^{(1)}_{\mathscr{X}/A}\{n\}\xrightarrow{\can-c^0\varphi\{n\}}  G^{\geq k}F^{\geq j}\prism^{(1)}_{\mathscr{X}/A}\{n\}\right) & k-e(n-1)\leq j\leq k\\
G^{\geq k}F^{\geq k-e(n-1)}R\Gamma_{\syn}(\mathscr{X}/A,\bz_p(n)) & 0\leq j\leq k-e(n-1)\end{cases} 
\end{multline*}
We consider $G^{\geq \star}F^{\geq\star}R\Gamma_{\syn}(\mathscr{X}/A,\bz_p(n))$ as an $\mathbb{N}_{\geq k_{0}}^{op}\times\bn^{op}$-indexed bifiltration, and deduce an $\mathbb{N}_{\geq k_{0}}^{op}$-indexed filtration
\begin{multline} G^{\geq k}R\Gamma_{\syn}(\mathscr{X}/A,\bz_p(n)):= G^{\geq k}F^{\geq 0}R\Gamma_{\syn}(\mathscr{X}/A,\bz_p(n))\\
=\fibre\left( F^{\geq k-e(n-1)}{\mathcal N}^{\geq n}\prism^{(1)}_{\mathscr{X}/A}\{n\}\xrightarrow{\can-c^0\varphi\{n\}}  G^{\geq k}F^{\geq k-e(n-1)}\prism^{(1)}_{\mathscr{X}/A}\{n\}\right)
\notag\end{multline}

\end{definition}

This filtration is functorial in $\mathfrak{X}$, hence we also have an $\mathbb{N}_{\geq k_{0}}^{op}$-indexed filtration $G^{\geq k}R\Gamma_{\syn}(\mathscr{X}/A,\bz_p(n))^{\rel}$. There is the following analogue of Lemma \ref{Ggeneralities} c) and d).

\begin{lemma} 
On $C:=R\Gamma_{\syn}(\mathscr{X}/A,\bz_p(n))$  the $F$-filtration and the $G$-filtration are cofinal in each other. More precisely, for $k\geq k_0+e(n-1)$  there is a factorization 
$$ F^{\geq k}C\to G^{\geq k}C\to F^{\geq k-e(n-1)}C \to G^{\geq k-e(n-1)}C$$
of the transition maps from index $k$ to index $k-e(n-1)$ in both filtrations. Consequently, $G^{\geq \star}R\Gamma_{\syn}(\mathscr{X}/A,\bz_p(n))$ is complete if and only if $F^{\geq \star}R\Gamma_{\syn}(\mathscr{X}/A,\bz_p(n))$ is complete. 
\label{syncofinal}\end{lemma}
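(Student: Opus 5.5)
The plan is to construct the two middle maps directly from Definition \ref{Achimdef3}, by writing both $F^{\geq k}C$ and $G^{\geq k}C$ as fibres of maps with the same source shape, and then to compare the targets. Recall that $G^{\geq k}C=G^{\geq k}F^{\geq k-\epsilon}C$ with $\epsilon=e(n-1)$. This is the fibre of
$$F^{\geq k-\epsilon}\mathcal N^{\geq n}\prism^{(1)}\{n\}\xrightarrow{\can-c^0\varphi\{n\}}G^{\geq k}F^{\geq k-\epsilon}\prism^{(1)}\{n\},$$
while $F^{\geq j}C$ is the fibre of
$$F^{\geq j}\mathcal N^{\geq n}\prism^{(1)}\{n\}\to F^{\geq j}\prism^{(1)}\{n\}.$$
The trifiltered object $G^{\geq k}F^{\geq j}\prism^{(1)}\{n\}$ is natural in $(k,j)$, and there are canonical maps
$$F^{\geq k}\prism^{(1)}\{n\}\simeq G^{\geq k}F^{\geq k}\prism^{(1)}\{n\}\to G^{\geq k}F^{\geq k-\epsilon}\prism^{(1)}\{n\}\to F^{\geq k-\epsilon}\prism^{(1)}\{n\}.$$
The first identification holds since $a\geq k$ forces $a+be\geq k$. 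The last map comes from forgetting the $G$-index, i.e.\ $G^{\geq k}F^{\geq j}\to G^{\geq -\infty}F^{\geq j}=F^{\geq j}$, which is valid because the colimit over $a+be\geq -\infty,a\geq j,b\geq 0$ is $F^{\geq j}$ for $\bn^{op}$-indexed Nygaard filtration.

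First I build $F^{\geq k}C\to G^{\geq k}C$. On sources I take the transition map $F^{\geq k}\mathcal N^{\geq n}\to F^{\geq k-\epsilon}\mathcal N^{\geq n}$, and on targets the composite $F^{\geq k}\prism\to G^{\geq k}F^{\geq k-\epsilon}\prism$ above. Commutativity with $\can$ is formal. For $c^0\varphi\{n\}$ I use that it factors through $F^{\geq pk}\prism\to F^{\geq k}\prism$, and that naturality of $c^0\varphi\{n\}$ with respect to the filtration (Proposition \ref{griso}'s argument) gives compatible squares. In effect this is the map $G^{\geq k}F^{\geq k}C\to G^{\geq k}F^{\geq k-\epsilon}C$ of the bifiltration in Definition \ref{Achimdef3}, so I would just observe that the cases in that definition glue into a functor in $j$ for fixed $k$. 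The hypothesis $k\geq k_0$ is exactly what makes the middle case well defined. Next I build $G^{\geq k}C\to F^{\geq k-\epsilon}C$ using the identity on sources and the forgetful map $G^{\geq k}F^{\geq k-\epsilon}\prism\to F^{\geq k-\epsilon}\prism$ on targets; both $\can$ and $c^0\varphi\{n\}$ visibly commute with it. Finally, $F^{\geq k-\epsilon}C\to G^{\geq k-\epsilon}C$ is an instance of the first map with $k$ replaced by $k-\epsilon$. This step is where $k\geq k_0+\epsilon$ is needed, so that $k-\epsilon\geq k_0$ and $G^{\geq k-\epsilon}C$ is defined.

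Then I check the two composites. The composite $F^{\geq k}C\to G^{\geq k}C\to F^{\geq k-\epsilon}C$ is the identity-type transition on sources. On targets it is $F^{\geq k}\prism\to G^{\geq k}F^{\geq k-\epsilon}\prism\to F^{\geq k-\epsilon}\prism$, which is the $F$-transition map by functoriality of the colimits in Definition \ref{gdef}, so it is the $F$-transition. The composite $G^{\geq k}C\to F^{\geq k-\epsilon}C\to G^{\geq k-\epsilon}C$ has on targets
$$G^{\geq k}F^{\geq k-\epsilon}\prism\to F^{\geq k-\epsilon}\prism\simeq G^{\geq k-\epsilon}F^{\geq k-\epsilon}\prism\to G^{\geq k-\epsilon}F^{\geq k-2\epsilon}\prism,$$
and I must identify this with the $G$-transition $G^{\geq k}F^{\geq k-\epsilon}\to G^{\geq k-\epsilon}F^{\geq k-2\epsilon}$. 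Both maps are induced by the natural maps of the trifiltration indexed by $(k,k-\epsilon)\to(k-\epsilon,k-2\epsilon)$: one factors through $(-\infty,k-\epsilon)$ and $(k-\epsilon,k-\epsilon)$, and the canonical $F^{\geq k-\epsilon}\to G^{\geq k-\epsilon}F^{\geq k-\epsilon}$ is an equivalence. I expect the main obstacle to be exactly this coherence, namely making sure the middle and outer cases of Definition \ref{Achimdef3} glue coherently, including the compatibility of $c^0\varphi\{n\}$ with these maps as homotopies rather than just on objects. I would handle it by realizing everything as fibres of a single natural transformation of functors on the poset $\{(k,j)\}\cup\{(-\infty,j)\}$.

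Completeness then follows exactly as in Lemma \ref{Ggeneralities} d). The limit $\lim_k G^{\geq k+\epsilon}C\simeq\lim_k G^{\geq k}C$ factors through $\lim_k F^{\geq k}C$, and the analogous factorization holds in the other direction, so one limit vanishes if and only if the other does.
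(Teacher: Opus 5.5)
Your proposal is correct and follows the paper's proof. The paper likewise obtains $G^{\geq k+e(n-1)}C\to F^{\geq k}C\to G^{\geq k}C$ as maps on fibres of $\can-c^0\varphi\{n\}$. On the sources $F^{\geq \cdot}\mathcal N^{\geq n}\prism^{(1)}_{\mathscr{X}/A}\{n\}$ it uses the identity, respectively the transition map. On the targets it uses the canonical maps $G^{\geq k+e(n-1)}F^{\geq k}\prism^{(1)}_{\mathscr{X}/A}\{n\}\to F^{\geq k}\prism^{(1)}_{\mathscr{X}/A}\{n\}\to G^{\geq k}F^{\geq k-e(n-1)}\prism^{(1)}_{\mathscr{X}/A}\{n\}$. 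It then deduces the completeness statement by the argument of Lemma \ref{Ggeneralities} d). Your extra checks (that the composites are the transition maps, and where $k\geq k_0$ enters) make explicit what the paper leaves implicit.
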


\begin{proof} This is just the map on fibres induced by the commutative diagram
$$\begin{CD}
F^{\geq k}{\mathcal N}^{\geq n}\prism^{(1)}_{\mathscr{X}/A}\{n\}@>\can-c^0\varphi\{n\} >>  G^{\geq k+e(n-1)}F^{\geq k}\prism^{(1)}_{\mathscr{X}/A}\{n\}\\
@VVV@VVV\\
F^{\geq k}{\mathcal N}^{\geq n}\prism^{(1)}_{\mathscr{X}/A}\{n\}@>\can-c^0\varphi\{n\} >>  F^{\geq k}\prism^{(1)}_{\mathscr{X}/A}\{n\}\\
@VVV@VVV\\
F^{\geq k-e(n-1)}{\mathcal N}^{\geq n}\prism^{(1)}_{\mathscr{X}/A}\{n\}@>\can-c^0\varphi\{n\} >>  G^{\geq k}F^{\geq k-e(n-1)}\prism^{(1)}_{\mathscr{X}/A}\{n\}.
\end{CD}$$
The argument in the proof of Lemma \ref{Ggeneralities} d) gives the second statement.
\end{proof} 

\subsubsection{The map $G_\epsilon^{\geq \star}\log_{\mathscr{X}/A}$} It turns out that $G^{\geq \star}\log_{\mathscr{X}/A}$ exists with a filtration shift. 

\begin{lemma}\label{add-G-epsilon} Set $\epsilon:=e(n-1)$. The composite map
\begin{equation} G^{\geq \star}_\epsilon\gamma_{\mathscr{X}/A}: G^{\geq \star}R\Gamma_{\add}(\mathscr{X}/A,\bz_p(n))^{\rel} \xrightarrow{G^{\geq \star}\gamma_{\mathscr{X}/A}} G^{\geq \star} \widehat{\mathrm{dR}}_{\mathscr{X}/A}^{<n}[-1]\to G^{\geq \star-\epsilon} \widehat{\mathrm{dR}}_{\mathscr{X}/A}^{<n}[-1] \notag\end{equation}
factors as follows:
\[ \xymatrix{
G^{\geq k} R\Gamma_{\add}(\mathscr{X}/A,\bz_p(n))^{\rel}\ar[r]&F^{\geq k-\epsilon} R\Gamma_{\add}(\mathscr{X}/A,\bz_p(n))^{\rel}\ar[d]^{F^{\geq k-\epsilon}\gamma_{\mathscr{X}/A}}\ar[ld]& \\
F^{\geq k-\epsilon}{\mathcal N}^{\geq n}\Prism^{(1),\rel}_{\mathscr{X}/A}\{n\}\ar[r]&F^{\geq k-\epsilon} \widehat{\mathrm{dR}}_{\mathscr{X}/A}^{< n}[-1]\ar[r]&G^{\geq k-\epsilon} \widehat{\mathrm{dR}}_{\mathscr{X}/A}^{< n}[-1]
}
\] 
\end{lemma}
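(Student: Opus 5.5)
We would first produce the diagonal arrow $G^{\geq k}R\Gamma_{\add}(\mathscr{X}/A,\bz_p(n))^{\rel}\to F^{\geq k-\epsilon}{\mathcal N}^{\geq n}\prism^{(1),\rel}_{\mathscr{X}/A}\{n\}$ and the horizontal arrow to $F^{\geq k-\epsilon}R\Gamma_{\add}^{\rel}$. Both come from the rewriting (\ref{rewriting}) and the equivalence $\kappa$ of (\ref{kappadef}). We work with the absolute objects for $\mathscr{X}$ and for $\mathscr{X}/p$ separately; everything is functorial in $\mathfrak{X}$, so we can pass to fibres at the end.

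\textbf{Step 1 (the arrows).} For $j=k-\epsilon$, $\kappa$ identifies $G^{\geq k}R\Gamma_{\add}$ with $G^{\geq k}F^{\geq k-\epsilon}R\Gamma_{\add}$. By (\ref{rewriting}) the latter is the fibre of
$$\can:F^{\geq k-\epsilon}{\mathcal N}^{\geq n}\prism^{(1)}\{n\}\to G^{\geq k}F^{\geq k-\epsilon}\prism^{(1)}\{n\}.$$
Projection to the source of $\can$ gives the diagonal arrow. The forgetful map $G^{\geq k}F^{\geq k-\epsilon}\to F^{\geq k-\epsilon}$ (omit the $G$-index) maps this fibre to $F^{\geq k-\epsilon}R\Gamma_{\add}$; this is the horizontal arrow. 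By the proof of Lemma \ref{Ggeneralities} c), this composite is the transition map $G^{\geq k}\to F^{\geq k-\epsilon}$. The triangle formed by the diagonal arrow and the projection $F^{\geq k-\epsilon}R\Gamma_{\add}\to F^{\geq k-\epsilon}{\mathcal N}^{\geq n}\prism^{(1)}\{n\}$ commutes because both are the projection to the first term of a fibre, and that projection is compatible with forgetting $G$.

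\textbf{Step 2 (the lower triangle).} We need the composite
$$F^{\geq k-\epsilon}{\mathcal N}^{\geq n}\prism^{(1),\rel}\to F^{\geq k-\epsilon}\widehat{\mathrm{dR}}^{<n}[-1]$$
with $F^{\geq k-\epsilon}\gamma$ restricted from $F^{\geq k-\epsilon}R\Gamma_{\add}^{\rel}$. The natural candidate uses that $\gamma_{\mathscr{X}/A}$ is induced by the vertical maps of (\ref{relative-beilinson}). Specifically, $R\Gamma_{\add}\to\widehat{\mathrm{dR}}^{<n}[-1]$ comes from the map of fibre sequences
$$\bigl({\mathcal N}^{\geq n}\to\prism^{(1)}\bigr)\to\bigl(\fil^{\geq n}_{Hod}\to\widehat{\mathrm{dR}}\bigr).$$
The fibre of $\fil^{\geq n}_{Hod}\widehat{\mathrm{dR}}\to\widehat{\mathrm{dR}}$ is $\widehat{\mathrm{dR}}^{<n}[-1]$. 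On the relative parts, the fundamental square (Prop. \ref{relativebeilsquare}) gives a factorization through ${\mathcal N}^{\geq n}\prism^{(1),\rel}$. The term $\widehat{\mathrm{dR}}_{\mathscr{X}}$ is the same on both sides via $\beta$, so the relative fibre of $\widehat{\mathrm{dR}}$ is killed. Hence the relative de Rham logarithm factors as
$$R\Gamma_{\add}^{\rel}\to{\mathcal N}^{\geq n}\prism^{(1),\rel}\xrightarrow{\fil^{\geq n}\gamma^{dR}}\fil^{\geq n}_{Hod}\widehat{\mathrm{dR}}^{\rel}\to\widehat{\mathrm{dR}}^{<n}[-1],$$
where the last arrow is the boundary map. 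We would check this carefully using the cube (\ref{filtered-relative-beilinson}) in its filtered form, Theorem \ref{filteredbeilsquare}, restricted to $F^{\geq k-\epsilon}$. The bottom-left map is then this composite.

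\textbf{Step 3.} The right-hand map is the transition $F^{\geq k-\epsilon}\to G^{\geq k-\epsilon}$ of Lemma \ref{Ggeneralities} c). The outer composite then equals $G^{\geq\star}\gamma$ followed by the transition $G^{\geq k}\to G^{\geq k-\epsilon}$, because
$$G^{\geq k}\to F^{\geq k-\epsilon}\to G^{\geq k-\epsilon}$$
is the $G$-transition and $\gamma$ is natural for these maps of filtrations.

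\textbf{Main obstacle.} The delicate point is Step 2: the filtered compatibility of $\beta$. We have to verify that the relative boundary factorization respects the $F$-filtration, even though $\beta$ is not a map of bifiltrations. We expect to resolve this by using only $F^{\geq\star}$ (not $G$) on $\widehat{\mathrm{dR}}_{\mathscr{X}}$, where $\beta_{\mathfrak{X}/\mathfrak{A}}$ is a filtered isomorphism by Theorem \ref{filteredbeilsquare}.
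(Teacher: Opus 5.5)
Your Steps 1 and 3 are the paper's proof, which simply invokes Lemma~\ref{Ggeneralities} c). The factorization $G^{\geq k}\xrightarrow{\kappa^{-1}}G^{\geq k}F^{\geq k-\epsilon}\to F^{\geq k-\epsilon}\to G^{\geq k-\epsilon}$ on $\mathcal N^{[0,n[}$ (i.e.\ (\ref{kappadef}) and (\ref{rewriting}) with $j=k-\epsilon$) is natural in the bifiltered object. It therefore commutes with the bifiltered map $F^{\geq\star}\gamma^{dR}_{\prism,\mathscr{X}/A}\{n\}$, and doing this for $\mathscr X$ and $\mathscr X/p$ gives the square.

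This should be run with $\gamma$ in the form of Remark~\ref{gammaremark}, namely $R\Gamma_{\add}^{\rel}\to R\Gamma_{\add}\xrightarrow{\gamma^{dR,<n}}\widehat{\mathrm{dR}}^{<n}[-1]$, since that is how $G^{\geq\star}\gamma_{\mathscr X/A}$ is defined. Then $G^{\geq k}\gamma$ and $F^{\geq k-\epsilon}\gamma$ come from one bifiltered map, and the ``naturality'' in your Step 3 holds literally.

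Step 2, however, is wrong as written. Your composite $\mathcal N^{\geq n}\prism^{(1),\rel}\xrightarrow{\fil^{\geq n}\gamma^{dR}}\fil^{\geq n}_{Hod}\widehat{\mathrm{dR}}^{\rel}\to\widehat{\mathrm{dR}}^{<n}[-1]$, with the last arrow a ``boundary map'', is not available. The connecting map of $\fil^{\geq n}_{Hod}\widehat{\mathrm{dR}}\to\widehat{\mathrm{dR}}\to\widehat{\mathrm{dR}}^{<n}$ goes the other way, $\widehat{\mathrm{dR}}^{<n}[-1]\to\fil^{\geq n}_{Hod}\widehat{\mathrm{dR}}$. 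Moreover, the $\mathscr X/p$-column of (\ref{filtered-relative-beilinson}) is $\beta\circ\can$ into $\widehat{\mathrm{dR}}_{\mathscr X/A}$, not $\fil^{\geq n}\gamma^{dR}$ into $\fil^{\geq n}_{Hod}\widehat{\mathrm{dR}}_{(\mathscr X/p)/A}$.

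The bottom-left arrow is instead the map on horizontal fibres of the lower square of (\ref{filtered-relative-beilinson}) at level $F^{\geq k-\epsilon}$. Its bottom row $\fil^{\geq n}_{Hod}\widehat{\mathrm{dR}}\to\widehat{\mathrm{dR}}$ has fibre $\widehat{\mathrm{dR}}^{<n}[-1]$. Since $\gamma$ is by definition the horizontal-fibre map of the pasted square (top square stacked on this one), the triangle commutes by construction. Its agreement with the Remark~\ref{gammaremark} description at the $F$-level is just the $F$-filtered form of that remark. This is available because Theorem~\ref{filteredbeilsquare} supplies $\beta_{\mathfrak X/\mathfrak A}$ and makes the whole diagram $F$-filtered.

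So your ``main obstacle'' is not an obstacle. $\beta$ never needs to be compatible with $G$ or with the Nygaard/Hodge bifiltration, because the $G$-side of the argument never uses $\beta$.
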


\begin{proof}This is clear from Lemma \ref{Ggeneralities} c).\end{proof}

\begin{definition}\label{syn-G-epsilon} Define the map 
\begin{equation} G^{\geq \star}_\epsilon\log_{\mathscr{X}/A}: G^{\geq \star}R\Gamma_{\syn}(\mathscr{X}/A,\bz_p(n))^{\rel} \to G^{\geq \star-\epsilon} \widehat{\mathrm{dR}}_{\mathscr{X}/A}^{<n}[-1] \notag\end{equation}
as the composite
\[ \xymatrix{
G^{\geq k} R\Gamma_{\syn}(\mathscr{X}/A,\bz_p(n))^{\rel}\ar[r]&F^{\geq k-\epsilon} R\Gamma_{\syn}(\mathscr{X}/A,\bz_p(n))^{\rel}\ar[d]^{F^{\geq k-\epsilon}\log_{\mathscr{X}/A}}\ar[ld]& \\
F^{\geq k-\epsilon}{\mathcal N}^{\geq n}\Prism^{(1),\rel}_{\mathscr{X}/A}\{n\}\ar[r]&F^{\geq k-\epsilon} \widehat{\mathrm{dR}}_{\mathscr{X}/A}^{< n}[-1]\ar[r]&G^{\geq k-\epsilon} \widehat{\mathrm{dR}}_{\mathscr{X}/A}^{< n}[-1]
}
\] for $k\geq k_0$ using Lemma \ref{syncofinal}.
\end{definition}

\subsubsection{The definition of $\mathrm{triv}$} The following Lemma is the analogue of Prop. \ref{griso} for the $G$-filtration and proves the isomorphism (\ref{gtriv}) from the introduction. 

\begin{lemma}\label{lem-triv}
Set $\epsilon:=e(n-1)$. For $c\geq 1$ and $$k\geq  \frac{pe(n-1)+c}{p-1}=\frac{p\epsilon+c}{p-1}$$ there is an isomorphism of bounded filtrations $\mathrm{triv}$ fitting into a commutative diagram
\[ \xymatrix@C-40pt{
G^{[k,k+c[} R\Gamma_{\syn}(\mathscr{X}/A,\bz_p(n)) \ar[dr]\ar[rr]^{\mathrm{triv}}_{\sim}&&
G^{[k,k+c[} R\Gamma_{\add}(\mathscr{X}/A,\bz_p(n))\ar[dl] \\
&F^{[k-\epsilon, k-\epsilon+c[}{\mathcal N}^{\geq n}\Prism^{(1)}_{\mathscr{X}/A}\{n\} &
}
\] 
\end{lemma}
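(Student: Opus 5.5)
The plan is to imitate the proof of Prop. \ref{griso}: write both $G^{[k,k+c[}R\Gamma_{\syn}$ and $G^{[k,k+c[}R\Gamma_{\add}$ as fibres of maps with the \emph{same} source, and show that the Frobenius term $c^0\varphi\{n\}$ vanishes on the graded piece $G^{[k,k+c[}$. Set $j:=k-\epsilon$. Since $k\geq k_0$ and $k+c\geq k_0$, Def. \ref{Achimdef3} gives
$$G^{\geq k}R\Gamma_{\syn}=\fibre\left(F^{\geq j}{\mathcal N}^{\geq n}\prism^{(1)}\{n\}\xrightarrow{\can-c^0\varphi\{n\}}G^{\geq k}F^{\geq j}\prism^{(1)}\{n\}\right),$$
and similarly for $k+c$ with $j+c$. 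By (\ref{rewriting}) and the equivalence $\kappa$ of (\ref{kappadef}), $G^{\geq k}R\Gamma_{\add}$ is the fibre of $\can$ on the same terms.

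The transition map $G^{\geq k+c}\to G^{\geq k}$ comes from the map of these fibre sequences. Taking cofibres, $G^{[k,k+c[}R\Gamma_{?}$ is the fibre of
$$F^{[j,j+c[}{\mathcal N}^{\geq n}\prism^{(1)}\{n\}\longrightarrow \Cone\left(G^{\geq k+c}F^{\geq j+c}\prism^{(1)}\{n\}\to G^{\geq k}F^{\geq j}\prism^{(1)}\{n\}\right),$$
with the map given by $\can-c^0\varphi\{n\}$ for $?=\syn$ and by $\can$ for $?=\add$. The source is exactly $F^{[k-\epsilon,k-\epsilon+c[}{\mathcal N}^{\geq n}\prism^{(1)}\{n\}$, and both fibre maps to it are the natural projections. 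So once the two maps are identified, $\mathrm{triv}$ is the induced identification of fibres and the triangle commutes automatically.

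It remains to show that $c^0\varphi\{n\}$ induces the zero map on this relative cofibre, functorially. As in the proof of Prop. \ref{griso}, $c^0\varphi\{n\}$ multiplies $F$-degrees by $p$. It therefore maps $F^{\geq j}{\mathcal N}^{\geq n}$ to $F^{\geq pj}\prism^{(1)}\{n\}$, and similarly maps $F^{\geq j+c}{\mathcal N}^{\geq n}$ into $F^{\geq p(j+c)}$.

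The hypothesis $k\geq (p\epsilon+c)/(p-1)$ is equivalent to $pj=p(k-\epsilon)\geq k+c$. Hence $F^{\geq pj}\prism^{(1)}\{n\}$ maps to $G^{\geq k+c}F^{\geq j+c}\prism^{(1)}\{n\}$ compatibly with the map to $G^{\geq k}F^{\geq j}$. This uses $pj\geq j+c$, which holds since $j\geq c/(p-1)$, and $pj\geq k+c$; it is also compatible with the $(k+c,j+c)$ level, since $p(j+c)\geq pj$. Consequently, on the cofibre $F^{[j,j+c[}$ of the source, the map $c^0\varphi\{n\}$ factors through the square
$$\begin{CD} F^{\geq j+c}{\mathcal N}^{\geq n} @>>> G^{\geq k+c}F^{\geq j+c}\\ @VVV @VVV\\ F^{\geq j}{\mathcal N}^{\geq n} @>>> G^{\geq k}F^{\geq j},\end{CD}$$
and this square is null-homotopic in a coherent way. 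The null-homotopy comes from factoring the whole map of arrows through $F^{\geq pj}\prism^{(1)}\{n\}\to G^{\geq k+c}F^{\geq j+c}\prism^{(1)}\{n\}$, the same factorization at both levels. Therefore $\can-c^0\varphi\{n\}\simeq\can$ on graded pieces.

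The main obstacle is precisely this coherence: producing the null-homotopy as a map of filtered objects (for the whole bounded filtration on $G^{[k,k+c[}$) rather than degreewise. I would handle it by working with the filtered objects $F^{\geq p\star}$ and using that $c^0\varphi\{n\}$ is a map of filtered objects $F^{\geq\star}{\mathcal N}^{\geq n}\to F^{\geq p\star}\prism^{(1)}\{n\}$, as in Prop. \ref{griso}. This makes the factorization functorial in $\mathscr{X}$, so that $\mathrm{triv}$ is natural and hence also defined on the relative versions.
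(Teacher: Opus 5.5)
Your proposal is correct and follows essentially the paper's proof. The paper likewise writes $G^{[k,k+c[}R\Gamma_{\syn}$ as the fibre of $\can-c^0\varphi\{n\}$ from $F^{[k-\epsilon,k-\epsilon+c[}\mathcal N^{\geq n}\prism^{(1)}_{\mathscr{X}/A}\{n\}$ to the quotient $G^{\geq k}F^{\geq k-\epsilon}\prism^{(1)}_{\mathscr{X}/A}\{n\}/G^{\geq k+c}F^{\geq k-\epsilon+c}\prism^{(1)}_{\mathscr{X}/A}\{n\}$, kills the Frobenius term using $p(k-\epsilon)\geq k+c$, and identifies the resulting fibre of $\can$ with $G^{[k,k+c[}R\Gamma_{\add}$ via (\ref{kappadef}) and (\ref{rewriting}) at $j=k-\epsilon$; the only cosmetic difference is that you apply those two equivalences before passing to graded pieces, while the paper does so afterwards in the third row of its diagram.
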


\begin{proof} This follows from the commutative diagram with exact rows
\[ \xymatrix{
G^{[k,k+c[} R\Gamma_{\syn}(\mathscr{X}/A,\bz_p(n))\ar[d]^{\simeq}\ar[r]^{}&F^{[k-\epsilon,k-\epsilon+c[} \mathcal{N}^{\geq n}\Prism\{n\}\ar[d]^{\mathrm{Id}}\ar[r]^{\hspace{0.4cm}\mathrm{can}-c^0\varphi\{n\}}& 
\frac{G^{\geq k}F^{\geq k-\epsilon}\Prism\{n\}}{G^{\geq k+c}F^{\geq k-\epsilon+c}\Prism\{n\}}\ar[d]^{\mathrm{Id}}\\
\mathrm{Fib}\ar[r]\ar[d]^{\simeq}&F^{[k-\epsilon,k-\epsilon+c[} \mathcal{N}^{\geq n}\Prism\{n\}\ar[r]^{\hspace{0.2cm}\mathrm{can}}\ar[d]&\frac{G^{\geq k}F^{\geq k-\epsilon}\Prism\{n\}}{G^{\geq k+c}F^{\geq k-\epsilon+c}\Prism\{n\}}\ar[d]\\
G^{[k,k+c[} R\Gamma_{\add}(\mathscr{X}/A,\bz_p(n))\ar[r]&G^{[k,k+c[}\mathcal{N}^{\geq n}\Prism\{n\}\ar[r]&G^{[k,k+c[}\Prism\{n\}
}
\] 
where we abbreviate  $\prism=\prism^{(1)}_{\mathscr{X}/A}$. The upper right square commutes since the map $c^0\varphi\{n\}$ sends $F^{\geq k-\epsilon} \mathcal{N}^{\geq n}\prism$ to $G^{\geq k+c}F^{\geq k-\epsilon+c}\prism$ by our assumption $p(k-\epsilon)\geq k+c$. The isomorphism of the fibres in the lower two rows is a consequence of the equivalences (\ref{kappadef}) and (\ref{rewriting}) for $j=k-e(n-1)$.
\end{proof}

We note that $\mathrm{triv}$ is functorial in $\mathfrak{X}$, hence we also have a commutative diagram
\begin{equation}\xymatrix@C-55pt{
G^{[k,k+c[} R\Gamma_{\syn}(\mathscr{X}/A,\bz_p(n))^{\rel} \ar[dr]\ar[rr]^{\mathrm{triv}}_{\sim}&&
G^{[k,k+c[} R\Gamma_{\add}(\mathscr{X}/A,\bz_p(n))^{\rel}\ar[dl] \\
&F^{[k-\epsilon, k-\epsilon+c[}{\mathcal N}^{\geq n}\Prism^{(1),\rel}_{\mathscr{X}/A}\{n\} &
}
\label{trivrel}\end{equation} 

\begin{lemma}\label{fundlemma}
For $c\geq 1$ and $k\geq (pe(n-1)+c)/(p-1)$ there is a commutative diagram
\[ \xymatrix@C-40pt{
G^{[k,k+c[} R\Gamma_{\syn}(\mathscr{X}/A,\bz_p(n))^{\rel} \ar[dr]_(.4){G^{[k,k+c[}_\epsilon\log_{\mathscr{X}/A}}\ar[rr]^{\mathrm{triv}}_{\sim}& &
G^{[k,k+c[} R\Gamma_{\add}(\mathscr{X}/A,\bz_p(n))^{\rel}\ar[dl]^(.4){G^{[k,k+c[}_\epsilon\gamma_{\mathscr{X}/A}} \\
&G^{[k-\epsilon, k-\epsilon+c[} \widehat{\mathrm{dR}}_{\mathscr{X}/A}^{< n}[-1]&
}
\] 
\end{lemma}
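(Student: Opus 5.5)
Both slanted maps in the claimed diagram are built from the same kind of factorization. By Definition \ref{syn-G-epsilon} and Lemma \ref{add-G-epsilon}, $G^{\geq k}_\epsilon\log_{\mathscr{X}/A}$ factors as
$$G^{\geq k}R\Gamma_{\syn}^{\rel}\to F^{\geq k-\epsilon}\mathcal N^{\geq n}\prism^{(1),\rel}_{\mathscr{X}/A}\{n\}\to F^{\geq k-\epsilon}\widehat{\mathrm{dR}}^{<n}_{\mathscr{X}/A}[-1]\to G^{\geq k-\epsilon}\widehat{\mathrm{dR}}^{<n}_{\mathscr{X}/A}[-1],$$
and $G^{\geq k}_\epsilon\gamma_{\mathscr{X}/A}$ factors in the same way. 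So my plan is to show that the middle map $F^{\geq k-\epsilon}\mathcal N^{\geq n}\prism^{(1),\rel}\{n\}\to F^{\geq k-\epsilon}\widehat{\mathrm{dR}}^{<n}[-1]$ is the same in both cases. Combined with the commutative triangle (\ref{trivrel}), which says that $\mathrm{triv}$ commutes with the projections to $F^{[k-\epsilon,k-\epsilon+c[}\mathcal N^{\geq n}\prism^{(1),\rel}\{n\}$, this gives the lemma once everything is passed to the quotients $G^{[k,k+c[}$. The functoriality of all constructions in $k$ (they are maps of filtrations) ensures that the factorizations descend to these graded quotients. The first step is therefore to record the factorizations at the level of $[k,k+c[$ pieces, using Lemma \ref{syncofinal} and the maps in the diagram in the proof of Lemma \ref{lem-triv}.

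The second step is to identify the middle map. Both $\log_{\mathscr{X}/A}$ and $\gamma_{\mathscr{X}/A}$ arise from the fundamental square (\ref{filtered-relative-beilinson}) by taking horizontal fibres. On relative objects, the composite $R\Gamma_?^{\rel}\to \mathcal N^{\geq n}\prism^{(1),\rel}\{n\}\to \widehat{\mathrm{dR}}^{<n}[-1]$ should be the connecting map. It is obtained from the fibre of $\fil^{\geq n}\gamma^{dR}$ on $\mathcal N^{\geq n}$ on the $\mathscr{X}$-side versus $\beta\circ\can$ on the $\mathscr{X}/p$-side. Precisely, the relative fibre of $\fil^{\geq n}_{\mathcal N}\prism^{(1)}\{n\}$ maps to the fibre of $\fil^{\geq n}_{Hod}\widehat{\mathrm{dR}}\to\widehat{\mathrm{dR}}$, which is $\widehat{\mathrm{dR}}^{<n}[-1]$. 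This map depends only on the bottom part of the cube, not on whether the top row is $\syn$ or $\add$. I would make this explicit: the map from $R\Gamma_?(\mathscr{X})^{\rel}$ to $\widehat{\mathrm{dR}}^{<n}[-1]$ factors through the fibre of the map
$$\fil^{\geq n}_{\mathcal N}\prism^{(1)}_{\mathscr{X}/A}\{n\}^{\rel}\to$$
fibre of the Hodge square, in a way independent of $?$. For $\add$ this should agree with the description in Remark \ref{gammaremark}, by commutativity of the prism in (\ref{filtered-relative-beilinson}) with $\gamma^{dR}=\beta\circ\psi$ from Prop. \ref{betaprop}.

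The main obstacle is coherence. In the $\infty$-categorical setting, the triangle (\ref{trivrel}) only identifies the two maps to $F^{[k-\epsilon,k-\epsilon+c[}\mathcal N^{\geq n}\prism^{(1),\rel}$ up to a specified homotopy. I also need the maps from $R\Gamma_?^{\rel}$ to the de Rham side to factor through $\mathcal N^{\geq n}\prism^{\rel}$ compatibly with the homotopies that make the square in (\ref{filtered-relative-beilinson}) commute. I would handle this by building both factorizations as maps of fibre sequences out of one diagram: the common diagram of $\mathcal N^{\geq n}$-terms mapping to the $\fil^{\geq n}_{Hod}\to\widehat{\mathrm{dR}}$ terms, restricted to the $F^{\geq k-\epsilon}$ part. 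I would then check that the $\mathrm{triv}$ isomorphism of Lemma \ref{lem-triv} is induced by the identity on that common $\mathcal N^{\geq n}$-term, which it is by construction (the middle vertical map $\mathrm{Id}$ there). Pasting these diagrams should yield the required commutative triangle.
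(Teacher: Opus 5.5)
Your proposal is correct and is the paper's argument. The paper's one-line proof combines the triangle (\ref{trivrel}) with the factorizations in Lemma \ref{add-G-epsilon} and Definition \ref{syn-G-epsilon}, both of which pass through the same map $F^{\geq k-\epsilon}\mathcal N^{\geq n}\prism^{(1),\rel}_{\mathscr{X}/A}\{n\}\to F^{\geq k-\epsilon}\widehat{\mathrm{dR}}^{<n}_{\mathscr{X}/A}[-1]$. Your checks that this map does not depend on $?=\syn,\add$, and that the $\mathrm{triv}$ maps agree on the graded pieces, only make explicit what the paper leaves implicit.
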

\begin{proof}
This follows from (\ref{trivrel}) combined with Lemma \ref{add-G-epsilon} and Definition \ref{syn-G-epsilon}. 
\end{proof}

\subsection{The $G$-filtration for $e$-filtered morphisms} \label{gefiltration} In the last section we have recorded properties of the $G$-filtration for an arbitrary schematic morphism $\mathfrak{X}\to \mathfrak{A}$. In this section we see what can be said if $\mathfrak{X}\to \mathfrak{A}$ is $e$-filtered (for the same integer $e$ used in the $G$-filtration). 

\begin{lemma}\label{gcomplete}Let $F^{\geq\star} A$ be a filtered $\delta$-ring, $\mathfrak{A}=F\Spf(A)$ and $\mathfrak{X}\to \mathfrak{A}$ a $e$-filtered morphism of derived formal stacks.
\begin{itemize}
\item[a)] The filtrations 
$$G^{\geq \star}R\Gamma_{\add}(\mathscr{X}/A,\bz_p(n)),\  G^{\geq \star}R\Gamma_{\syn}(\mathscr{X}/A,\bz_p(n)),\  G^{\geq \star} \widehat{\mathrm{dR}}_{\mathscr{X}/A}^{<n}[-1]$$ 
are complete.
\item[b)] The filtrations 
$$G^{\geq \star}R\Gamma_{\add}(\mathscr{X}/A,\bz_p(n))^{\rel},\  G^{\geq \star}R\Gamma_{\syn}(\mathscr{X}/A,\bz_p(n))^{\rel},\  G^{\geq \star} \widehat{\mathrm{dR}}_{\mathscr{X}/A}^{<n,\rel}[-1]$$ 
are complete.
\end{itemize}
\end{lemma}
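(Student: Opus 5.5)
The plan is to reduce completeness of the $G$-filtrations to completeness of the corresponding $F$-filtrations, which is already established in Prop. \ref{corforA}, and then to use the cofinality results Lemma \ref{Ggeneralities} d) and Lemma \ref{syncofinal}.

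For a) we treat the three filtrations separately.

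For the additive case we have $R\Gamma_{\add}(\mathscr{X}/A,\bz_p(n))\simeq \mathcal N^{[0,n[}\prism^{(1)}_{\mathscr{X}/A}\{n\}[-1]$. Here the $G$-filtration is obtained by applying Def. \ref{gdef} to the bifiltration $F^{\geq\star}\mathcal N^{\geq\star}\prism^{(1)}_{\mathscr{X}/A}\{n\}$ and passing to the cofibre of $\mathcal N^{\geq n}\to\mathcal N^{\geq 0}$. Since left Kan extension commutes with cofibres, this is $G^{\geq\star}\mathcal N^{[0,n[}C$ with $C=\prism^{(1)}_{\mathscr{X}/A}\{n\}$. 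Lemma \ref{Ggeneralities} d) then says that this filtration is complete if and only if $F^{\geq\star}\mathcal N^{[0,n[}C$ is complete, and the latter holds by Prop. \ref{corforA} b).

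The de Rham case $\widehat{\mathrm{dR}}^{<n}_{\mathscr{X}/A}[-1]=\fil^{[0,n[}_{Hod}\widehat{\mathrm{dR}}_{\mathscr{X}/A}[-1]$ is identical: apply Lemma \ref{Ggeneralities} d) to the bifiltration $F^{\geq\star}\fil^{\geq\star}_{Hod}\widehat{\mathrm{dR}}_{\mathscr{X}/A}$, whose truncation is complete again by Prop. \ref{corforA} b).

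For the syntomic case, $G^{\geq\star}$ is only defined for $\star\geq k_0$ (Def. \ref{Achimdef3}). Completeness only concerns the limit, so this restriction is harmless. Lemma \ref{syncofinal} gives the interleaving of $F$ and $G$ for $k\geq k_0+e(n-1)$, and the second sentence of that lemma reduces completeness to that of $F^{\geq\star}R\Gamma_{\syn}(\mathscr{X}/A,\bz_p(n))$, again Prop. \ref{corforA} b).

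For b) we use that $G^{\geq\star}(-)^{\rel}$ is, by definition and functoriality in $\mathfrak{X}$, the fibre of $G^{\geq\star}(-)(\mathscr{X})\to G^{\geq\star}(-)(\mathscr{X}/p)$. By Prop. \ref{corforA} d), $\mathfrak{X}/p\to\mathfrak{A}$ is again $e$-filtered, so part a) applies to both $\mathfrak{X}$ and $\mathfrak{X}/p$. A fibre of complete filtrations is complete, since limits commute with fibres. For de Rham, the relative object $\widehat{\mathrm{dR}}^{<n,\rel}$ is likewise the fibre for $\mathscr{X}\to\mathscr{X}/p$, so the same argument applies.

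The only point requiring care is the identification of $G^{\geq\star}R\Gamma_{\add}$ with the $G$-filtration of the Nygaard truncation $\mathcal N^{[0,n[}$ to which Lemma \ref{Ggeneralities} d) applies. This holds because $\rho_!$ is exact. The indexing for $R\Gamma_{\syn}$ only starting at $k_0$ is the other mild subtlety, but it does not affect the limit.
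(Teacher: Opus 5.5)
Your proposal is correct and follows essentially the same route as the paper. Part a) comes from the $F$-completeness of Prop.~\ref{corforA} b), transferred to the $G$-filtration via Lemma~\ref{Ggeneralities} d) for the additive and de Rham cases and via Lemma~\ref{syncofinal} for the syntomic case. Part b) comes from applying a) to both $\mathfrak{X}$ and $\mathfrak{X}/p$, which is again $e$-filtered by Prop.~\ref{corforA} d), and taking fibres.
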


\begin{proof} Part a) is an immediate consequence of Prop. \ref{corforA} b) combined with Lemma \ref{Ggeneralities} d) and Lemma \ref{syncofinal}.   Applying a) to $\mathfrak{X}$ and $\mathfrak{X}/p$ (see Prop. \ref{corforA} d)) gives b).
\end{proof}

\subsubsection{Multiplication by $p$} We come to the key property of the $G$-filtration.

\begin{lemma} Given a commutative square $bf=f'a$ in a stable $\infty$-category 
$$\xymatrix@R-18pt{A \ar[dd]^a\ar[r]^f& B\ar[dd]^b\ar[r]\ar[ddl]_(.45){\tilde{b}} & B/A \ar[dd]^c\ar[dr] &\\
 & & &0\ar[dl] \\
A'\ar[r]^{f'} & B'\ar[r] & B'/A'&}$$
the space of lifts $\tilde{b}$ is homotopy equivalent to the space of homotopies between $0$ and the induced map $c$ on cofibres. 
\label{plift}\end{lemma}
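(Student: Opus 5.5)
The argument reduces the lifting problem to a nullhomotopy problem via the cofibre functor and mapping spaces. We read the statement as follows: the space of lifts $\tilde{b}:B\to A'$ with $f'\tilde{b}\simeq b$ and $\tilde{b}f\simeq a$ compatibly with the given homotopy $bf\simeq f'a$ (equivalently, a lift of the square to the indicated diagonal) is equivalent to the space of nullhomotopies of $c:B/A\to B'/A'$. Here $c$ is the map induced on cofibres by the square.

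The first step is to describe lifts as fibres of a map of mapping spaces. A diagonal $\tilde{b}$ with both triangles filled compatibly with the square is a point in the fibre of
$$\Map(B,A')\to \Map(A,A')\times_{\Map(A,B')}\Map(B,B')$$
over the point $(a,b,\text{homotopy})$. The map sends $\tilde{b}$ to $(\tilde{b}f,f'\tilde{b})$. Since the category is stable, mapping spaces are infinite loop spaces and all maps are maps of spectra. We may therefore work with mapping spectra and compute the fibre of this map.

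The second step computes that fibre. Using the fibre sequences $A\to B\to B/A$ and $A'\to B'\to B'/A'$, we have the identifications
\begin{align*}
\Map(B/A,A') &\simeq \fibre\bigl(\Map(B,A')\to\Map(A,A')\bigr),\\
\Map(B/A,B') &\simeq \fibre\bigl(\Map(B,B')\to\Map(A,B')\bigr).
\end{align*}
From these, the fibre of the map above is $\fibre\bigl(\Map(B/A,A')\to\Map(B/A,B')\bigr)$, which is $\Omega\Map(B/A,B'/A')$ since $A'\to B'\to B'/A'$ is a fibre sequence. Concretely, the obstruction to a lift is the point of $\Map(B/A,B'/A')$ given by $c$: the triple $(a,b,h)$ determines a map of cofibre sequences, and its image in $\Map(B/A,B'/A')$ is exactly $c$.

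Hence the fibre over our chosen point is the space of paths from $c$ to $0$ in $\Map(B/A,B'/A')$, that is, the space of nullhomotopies of $c$. The main care needed is the identification of the point $(a,b,h)$ with $c$ under the boundary map of the long fibre sequence. I would handle it by writing the whole situation as a map of fibre sequences of mapping spectra and invoking the naturality of the connecting map.
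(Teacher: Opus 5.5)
Your proof is correct, but it is organized differently from the paper's. The paper argues with torsors. From $\Map(X,B'/A'[-1])\simeq\fibre\bigl(\Map(X,A')\to\Map(X,B')\bigr)$ it deduces that lifts of $b$ through $f'$ form (if nonempty) a torsor under $\Map(B,B'/A'[-1])$. Those lifts which restrict to $a$ along $f$ then form (if nonempty) a torsor under $\Map(B/A,B'/A'[-1])$. The paper identifies this with the space of nullhomotopies of $c$, which is likewise empty or a torsor under $\Omega\Map(B/A,B'/A')$. That argument is short, but it leaves implicit both a comparison map between the two torsors and the fact that they are empty simultaneously.

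Your route exhibits $\Map(B,A')\simeq P\times_{\Map(B/A,B'/A')}\{0\}$ with $P=\Map(A,A')\times_{\Map(A,B')}\Map(B,B')$, and takes the fibre over $(a,b,h)$. This gives a canonical equivalence with the space of paths from $c$ to $0$ and handles the empty case automatically.

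The step you flag can be settled without tracking connecting maps:
\begin{itemize}
\item In $\Fun(\Delta^1,\mathscr{C})$ the cofibre functor is left adjoint to $X\mapsto(0\to X)$.
\item There is a fibre sequence $\id_{A'}\to f'\to(0\to B'/A')$ whose second map is the unit of this adjunction.
\item One has $\Map(f,\id_{A'})\simeq\Map(B,A')$.
\end{itemize}
Applying $\Map(f,-)$ gives exactly your fibre sequence $\Map(B,A')\to\Map(f,f')\to\Map(B/A,B'/A')$. Here $\Map(f,f')\simeq P$, and the last map is ``take the induced map on cofibres''. Even an identification only up to sign would suffice, since negation on $\Map(B/A,B'/A')$ identifies paths from $c$ to $0$ with paths from $-c$ to $0$.
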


\begin{proof} We have an equivalence
$$\Map(A,B'/A'[-1])\simeq \Map(A,A')\times_{\Map(A,B')}\{bf\},\quad\lambda\mapsto a+\lambda.$$
The fibre product  $\Map(B,A')\times_{\Map(B,B')}\{b\}$ is either empty or a principal homogeneous space $\tilde{b}+\Map(B,B'/A'[-1])$ under $\Map(B,B'/A'[-1])$. It follows that the space of lifts is either empty or a principal homogeneous space
$$ (\tilde{b}+\Map(B,B'/A'[-1]))\times_{(a+\Map(A,B'/A'[-1])}\{a\}$$
 under $\Map(B/A,B'/A'[-1])$, i.e. the space of zero homotopies on the cofibre.
\end{proof}

 If $\mathfrak{X}$ is $e$-filtered, Lemma \ref{plift} implies that a choice of $\tilde{p}\in H^0(\mathscr{X},\pi_0F^{\geq e}\co_{\mathscr{X}})$ as in Rem. \ref{ptilderemark} gives a homotopy between $p$ and $0$ on  
$$ R\Gamma(\mathscr{X}, F^{[0,e[}\co_{\mathscr{X}})\simeq F^{[0,e[}\mathrm{gr}^0_{\mathcal N}\prism^{(1)}_{\mathscr{X}/A}\{0\}\simeq G^{[0,e[}\prism^{(1)}_{\mathscr{X}/A}$$
and hence a commutative diagram
$$\xymatrix{G^{\geq e}\prism^{(1)}_{\mathscr{X}/A}\ar[r]  \ar[d]^{\cdot p} & G^{\geq 0}\prism^{(1)}_{\mathscr{X}/A}\ar[dl]_{\tilde{p}} \ar[d]^{\cdot p} \\
G^{\geq e}\prism^{(1)}_{\mathscr{X}/A}\ar[r]  & G^{\geq 0}\prism^{(1)}_{\mathscr{X}/A}.}$$
Composing with the unit map of the $\be_\infty$-$A$-algebra $G^{\geq 0}\prism^{(1)}_{\mathscr{X}/A}$ we may also view $\tilde{p}$ as a section 
\begin{equation}\tilde{p}\in\pi_0G^{\geq e}\prism^{(1)}_{\mathscr{X}/A}\simeq \Map_{h\Mod_A}(HA,G^{\geq e}\prism^{(1)}_{\mathscr{X}/A})\label{choice}\end{equation}
mapping to the section
$ p\in\pi_0G^{\geq 0}\prism^{(1)}_{\mathscr{X}/A}$, as well as to the image of the given section in 
$$\pi_0R\Gamma(\mathscr{X}, F^{\geq e}\co_{\mathscr{X}})\simeq \pi_0F^{\geq e}\mathrm{gr}^0_{\mathcal N}\prism^{(1)}_{\mathscr{X}/A}\simeq   \pi_0G^{\geq e}\mathrm{gr}^0_{\mathcal N}\prism^{(1)}_{\mathscr{X}/A}.$$
The image of $\tilde{p}$ in $\pi_0G^{\geq e}\prism^{(1)}_{(\mathscr{X}/p)/A}$ likewise maps to the image of the given section in $\pi_0R\Gamma(\mathscr{X}, F^{\geq e}\co_{(\mathscr{X}/p)})\simeq\pi_0G^{\geq e}\mathrm{gr}^0_{\mathcal N}\prism^{(1)}_{(\mathscr{X}/p)/A}$.

\begin{lemma}\label{mult-by-p}
Let $F^{\geq\star} A$ be a filtered $\delta$-ring, $\mathfrak{A}=F\Spf(A)$ and $\mathfrak{X}\to \mathfrak{A}$ a $e$-filtered morphism of derived formal stacks.
\begin{itemize} 
\item[a)] Multiplication with the section (\ref{choice}) induces a commutative diagram of maps of bifiltrations
\begin{equation}\xymatrix@C-40pt{ 
&G^{\geq \star+e}\mathcal N^{\geq\star}\prism^{(1)}_{\mathscr{X}/A}\{n\}\ar[rr] \ar[dd]&& G^{\geq \star+e}\mathcal N^{\geq\star}\prism^{(1)}_{(\mathscr{X}/p)/A}\{n\} \ar[dd]\\
G^{\geq \star}\mathcal N^{\geq\star}\prism^{(1)}_{\mathscr{X}/A}\{n\}\ar[rr] \ar[dd]\ar[ur]^{\cdot\tilde{p}}&& G^{\geq \star}\mathcal N^{\geq\star}\prism^{(1)}_{(\mathscr{X}/p)/A}\{n\} \ar[dd] \ar[ur]^{\cdot\tilde{p}}& \\ 
&\ar[rr] G^{\geq \star+e}\fil^{\geq \star}_{Hod}\widehat{\mathrm{dR}}_{\mathscr{X}/A}  & &G^{\geq \star+e}\fil^{\geq \star}_{Hod}\widehat{\mathrm{dR}}_{(\mathscr{X}/p)/A}\\
\ar[rr] G^{\geq \star}\fil^{\geq \star}_{Hod}\widehat{\mathrm{dR}}_{\mathscr{X}/A} \ar[ur]^{\cdot\tilde{p}} & &G^{\geq \star}\fil^{\geq \star}_{Hod}\widehat{\mathrm{dR}}_{(\mathscr{X}/p)/A}\ar[ur]^{\cdot\tilde{p}}&
}\label{mult-by-e}\end{equation}
where the maps $\cdot\tilde{p}$ lift multiplication by $p$ on the source.
\item[b)] 
If $k\geq (e+1)(n-1)$ the induced maps
$ \cdot\tilde{p}: G^{\geq k}C\to G^{\geq k+e}C $
are equivalences for $C$ any of the complexes 
$$R\Gamma_{\add}(\mathscr{X}/A,\bz_p(n)),\  R\Gamma_{\add}((\mathscr{X}/p)/A,\bz_p(n)),\  R\Gamma_{\add}(\mathscr{X}/A,\bz_p(n))^{\rel} $$
or
$$\widehat{\mathrm{dR}}_{\mathscr{X}/A}^{<n},\  \widehat{\mathrm{dR}}_{(\mathscr{X}/p)/A}^{<n},\  \widehat{\mathrm{dR}}_{\mathscr{X}/A}^{<n,\rel}.$$ 
\end{itemize}
\end{lemma}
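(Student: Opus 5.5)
For part a) I would use the multiplicative structure of the $G$-filtration. By Lemma \ref{Ggeneralities} b), $G^{\geq\star}\mathcal N^{\geq\star}\prism^{(1)}_{\mathscr{X}/A}\{n\}$ is a bifiltered module over the bifiltered $\be_\infty$-algebra $G^{\geq\star}\mathcal N^{\geq\star}\prism^{(1)}_{\mathscr{X}/A}$. The section $\tilde{p}\in\pi_0G^{\geq e}\prism^{(1)}_{\mathscr{X}/A}$ of (\ref{choice}) sits in Nygaard degree $0$, since $G^{\geq e}\prism=G^{\geq e}\mathcal N^{\geq 0}\prism$. Multiplication by it therefore gives a map of bifiltrations $G^{\geq\star}\mathcal N^{\geq\star}\to G^{\geq\star+e}\mathcal N^{\geq\star}$. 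Under the transition map to $G^{\geq\star}$ this map becomes multiplication by the image of $\tilde p$, which is $p$.

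The same construction works on the de Rham side, using the image of $\tilde p$ under the multiplicative map $G^{\geq\star}\gamma^{dR}_{\prism}$ (for $n=0$). The vertical faces of (\ref{mult-by-e}) then commute because $G^{\geq\star}\gamma^{dR}_{\prism}\{n\}$ is a map of modules over the $n=0$ algebra map. The horizontal faces commute because $\mathscr{X}/p\to\mathscr{X}$ induces algebra maps that send $\tilde p$ to its image, which is compatible by the remark after (\ref{choice}).

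For part b) I would pass to associated graded pieces and reduce to Prop. \ref{corforA} c). For the additive complexes, $G^{\geq k}R\Gamma_{\add}$ is the fibre of $\can$ on $G^{\geq k}\mathcal N^{\geq n}\to G^{\geq k}\mathcal N^{\geq 0}$, that is $G^{\geq k}\mathcal N^{[0,n[}[-1]$. This carries a finite filtration by Nygaard degree $j=0,\dots,n-1$ with graded pieces $G^{\geq k}\mathrm{gr}^j_{\mathcal N}$. The map $\cdot\tilde p$ respects this filtration, so it suffices to treat each $j$.

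Lemma \ref{Ggeneralities} a) (more precisely its proof, restricted to the $\mathrm{gr}^j_{\mathcal N}$ column) identifies $G^{\geq k}\mathrm{gr}^j_{\mathcal N}C$ with $F^{\geq k-je}\mathrm{gr}^j_{\mathcal N}C$. Under this identification $\cdot\tilde p$ becomes the map $F^{\geq k-je}\to F^{\geq k-je+e}$. By Prop. \ref{corforA} c) this is an equivalence once $k-je\geq j$, i.e. $k\geq j(e+1)$. For $j\leq n-1$ this holds whenever $k\geq (e+1)(n-1)$.

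The same argument handles $\widehat{\mathrm{dR}}^{<n}$, using $\fil_{Hod}$ with graded pieces $L\widehat\Omega^j[-j]$. It also applies to $\mathscr{X}/p$ because Prop. \ref{corforA} d) shows that $\mathfrak{X}/p\to\mathfrak{A}$ is $e$-filtered. The relative statements then follow by taking fibres, using the compatibility from a).

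The main obstacle is to check that the identification of $G^{\geq k}\mathrm{gr}^j_{\mathcal N}$ with an $F$-filtration stage is compatible with multiplication by $\tilde p$. In other words, the image of $\tilde p$ in $G^{\geq e}\mathrm{gr}^0_{\mathcal N}\simeq F^{\geq e}\mathrm{gr}^0_{\mathcal N}$ must be the section used in Prop. \ref{corforA} c), and the module structure on $\mathrm{gr}^j_{\mathcal N}$ must be the one over $R\Gamma(\mathscr{X},F^{\geq\star}\co_{\mathscr{X}})$. I would deduce both facts from the multiplicativity in Lemma \ref{Ggeneralities} b) together with the description of $\tilde p$ given after (\ref{choice}).
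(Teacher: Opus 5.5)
Your proposal is correct and follows the paper's proof essentially verbatim. Part a) comes from the $G^{\geq\star}\prism^{(1)}_{\mathscr{X}/A}$-module structure and the multiplicativity of the maps induced by $\gamma^{dR}_{\prism}$ and by $\mathscr{X}/p\to\mathscr{X}$. Part b) filters by Nygaard (resp.\ Hodge) degree, identifies the graded pieces with $F^{\geq k-ei}\mathrm{gr}^i$, applies Prop.~\ref{corforA} c) for $k\geq (e+1)(n-1)$, and uses Prop.~\ref{corforA} d) for $\mathscr{X}/p$ and the relative terms. The compatibility you flag as the main obstacle is exactly what the paper records in the discussion after (\ref{choice}), which its proof uses implicitly.
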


\begin{proof}
Part a) is immediate from the fact that $G^{\geq \star}\mathcal N^{\geq\star}\prism^{(1)}_{\mathscr{X}/A}\{n\}$ is a $G^{\geq \star}\prism^{(1)}_{\mathscr{X}/A}$-module, that  the maps 
$$\begin{CD} G^{\geq \star}\prism^{(1)}_{\mathscr{X}/A}@>>> G^{\geq \star}\prism^{(1)}_{(\mathscr{X}/p)/A}\\
@VVV@VVV\\
G^{\geq \star}\widehat{\mathrm{dR}}_{\mathscr{X}/A}@>>> G^{\geq \star}\widehat{\mathrm{dR}}_{(\mathscr{X}/p)/A}
\end{CD}$$ are multiplicative and that
$$G^{\geq \star}\gamma^{dR}_{\prism,A}\{n\}:  G^{\geq \star}\mathcal N^{\geq\star}\prism^{(1)}_{\mathscr{X}/A}\{n\} \to G^{\geq \star}\fil^{\geq \star}_{Hod}\widehat{\mathrm{dR}}_{\mathscr{X}/A}$$
is a map of $G^{\geq \star}\prism^{(1)}_{\mathscr{X}/A}$-modules. 

b)  The filtration $G^{\geq \star}R\Gamma_{\add}(\mathscr{X}/A,\bz_p(n))$, resp. $G^{\geq \star} \widehat{\mathrm{dR}}_{\mathscr{X}/A}^{<n}$, has a bounded filtration with graded pieces 
$$G^{\geq \star}\mathrm{gr}^i_{\mathcal N}\prism^{(1)}_{\mathscr{X}/A}\{n\}[-1]\simeq F^{\geq \star-ei}\mathrm{gr}^i_{\mathcal N}\prism^{(1)}_{\mathscr{X}/A}\{n\}[-1],$$ resp. $G^{\geq \star}L\widehat{\Omega}^{i}_{\mathscr{X}/A}[-i]\simeq F^{\geq \star-ei}L\widehat{\Omega}^{i}_{\mathscr{X}/A}[-i]$,  for $i=0,\dots,n-1$.  By part a) this filtration is preserved by multiplication with $\tilde{p}$.  By Prop. \ref{corforA} c) the maps $\cdot\tilde{p}$ are equivalences on graded pieces for 
$$ k\geq \max_{0\leq i\leq n-1}\{i+ei\}=(1+e)(n-1),$$
hence they are equivalences for $k$ in the same range. 
By Prop. \ref{corforA} d) the same is true for $\mathfrak{X}/p$ and hence for the relative groups.
\end{proof}

\begin{lemma}\label{psyn} Let $F^{\geq\star} A$ be a filtered $\delta$-ring, $\mathfrak{A}=F\Spf(A)$ and $\mathfrak{X}\to \mathfrak{A}$ a $e$-filtered morphism of derived formal stacks. 

a) For $k_1\geq \frac{pe(n-1)+e}{p-1}$ there is a map of $\bn^{op}_{\geq k_1}$-indexed filtrations
$$\tilde{p}_{\syn}:G^{\geq \star}R\Gamma_{\syn}(\mathscr{X}/A,\bz_p(n))\rightarrow G^{\geq \star+e}R\Gamma_{\syn}(\mathscr{X}/A,\bz_p(n))$$
lifting multiplication by $p$ on the source. 

b) For $c\geq 1$ and $k\geq \frac{pe(n-1)+c+e}{p-1}$ there is a commutative diagram 
\begin{equation} \xymatrix{
G^{[k,k+c[} R\Gamma_{\syn}(\mathscr{X}/A,\bz_p(n))\ar[d]_{\tilde{p}_{\syn}}\ar[r]^{\mathrm{triv}}&G^{[k,k+c[} R\Gamma_{\add}(\mathscr{X}/A,\bz_p(n))\ar[d]^{\cdot\tilde{p}} \\
G^{[k+e,k+c+e[} R\Gamma_{\syn}(\mathscr{X}/A,\bz_p(n))\ar[r]^{\mathrm{triv}}&G^{[k+e,k+c+e[} R\Gamma_{\add}(\mathscr{X}/A,\bz_p(n))
}
\label{triv-and-p}\end{equation}

c) If $k_1\geq \frac{pe(n-1)+e+1}{p-1}$ is large enough so that 
$$\cdot \tilde{p}:G^{\geq \star}R\Gamma_{\add}(\mathscr{X}/A,\bz_p(n))\rightarrow G^{\geq \star+e}R\Gamma_{\add}(\mathscr{X}/A,\bz_p(n))$$
is an equivalence of $\mathbb{N}^{op}_{\geq k_1}$-indexed filtrations, then so is $\tilde{p}_{\syn}$.
\end{lemma}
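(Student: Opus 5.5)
The plan is to build $\tilde{p}_{\syn}$ directly from the fibre description in Def. \ref{Achimdef3}. Set $\epsilon=e(n-1)$. For $k\geq k_1$ we have
$$G^{\geq k}R\Gamma_{\syn}=\fibre\bigl(F^{\geq k-\epsilon}\mathcal N^{\geq n}\prism\{n\}\xrightarrow{\can-c^0\varphi\{n\}}G^{\geq k}F^{\geq k-\epsilon}\prism\{n\}\bigr),$$
and we want a compatible map to the analogous fibre with $k$ replaced by $k+e$. On the source term we use the lift $\cdot\tilde p:F^{\geq k-\epsilon}\mathcal N^{\geq n}\to F^{\geq k-\epsilon+e}\mathcal N^{\geq n}$. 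Here $\tilde p$ is the section (\ref{choice}), which lies in $F^{\geq e}$ of $\mathrm{gr}^0_{\mathcal N}$; we regard it as an element of the bifiltered ring $F^{\geq\star}\mathcal N^{\geq\star}\prism$ with $F$-weight $e$ and Nygaard weight $0$. On the target term we use $\cdot\tilde p:G^{\geq k}F^{\geq k-\epsilon}\prism\to G^{\geq k+e}F^{\geq k-\epsilon+e}\prism$, obtained from the trifiltered module structure of Lemma \ref{Ggeneralities} b). This commutes with $\can$, since $\can$ is module-linear.

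The issue is $c^0\varphi\{n\}$. It is $\varphi$-semilinear, so $c^0\varphi\{n\}(\tilde p x)=\varphi(\tilde p)\,c^0\varphi\{n\}(x)$, and $\varphi(\tilde p)$ is not $\tilde p$; it only agrees with $p$ up to filtration weight $pe$. We therefore do not try to make the square commute on the nose. Instead we produce a homotopy, using Lemma \ref{plift}: it suffices to show that the difference $\tilde p\circ c^0\varphi\{n\}-c^0\varphi\{n\}\circ\tilde p$, viewed as a map into $G^{\geq k+e}F^{\geq k-\epsilon+e}$, is canonically nullhomotopic. To see this, write it as $(\tilde p-\varphi(\tilde p))\cdot c^0\varphi\{n\}(x)$. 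The term $c^0\varphi\{n\}(x)$ lies in $F^{\geq p(k-\epsilon)}$. Both $\tilde p$ and $\varphi(\tilde p)$ lift $p$, so their difference is a lift of $0$, which is an element of weight $e$ (indeed $\varphi(\tilde p)$ has weight $pe\geq e$). The product therefore lands in $F^{\geq p(k-\epsilon)+e}$, and this maps into $G^{\geq k+e}F^{\geq k-\epsilon+e}$ exactly when $p(k-\epsilon)\geq k$, i.e. $k\geq p\epsilon/(p-1)$; that holds under the hypothesis on $k_1$.

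A cleaner way to organise this, which I would attempt first, is to avoid semilinearity entirely. Multiply both the source and the target by the single element $\tilde p$, and use that $c^0\varphi\{n\}$ is linear over the Frobenius-fixed ring $(F^{\geq 0}A)^{\varphi=1}$ together with the fact that $p$ is $\varphi$-fixed. The defect between multiplying by $\tilde p$ and multiplying by $p$ then lives in a filtration degree $\geq k+e$ once $p(k-\epsilon)\geq k+e$, which is precisely $k\geq(pe(n-1)+e)/(p-1)$. At that point Lemma \ref{plift} gives the lift of $p$. Functoriality in $k$ follows because every choice is made once and for all via the fixed section $\tilde p$. I expect the coherence of these homotopies across all $k$, i.e. making them a genuine map of filtrations rather than levelwise maps, to be the main obstacle. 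I would handle it by doing the construction at the level of the trifiltered objects $G^{\geq\star}F^{\geq\star}$ all at once and then applying the fibre.

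For part b), restrict the construction to $G^{[k,k+c[}$ and compare with the proof of Lemma \ref{lem-triv}. In the range $p(k-\epsilon)\geq k+c+e$, which is the stated hypothesis, $c^0\varphi\{n\}$ and the homotopy above vanish on the graded pieces. So $\tilde p_{\syn}$ reduces to $\cdot\tilde p$ on $F^{[\,]}\mathcal N^{\geq n}$ and on the target, which is exactly the map inducing $\cdot\tilde p$ on $R\Gamma_{\add}$; this gives the square (\ref{triv-and-p}). For part c), the hypothesis is the case $c=1$ of b), so on each graded piece $G^{[k,k+1[}$ the map $\tilde p_{\syn}$ is identified via $\mathrm{triv}$ with $\cdot\tilde p$ on $R\Gamma_{\add}$, which is an equivalence by assumption. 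Both filtrations are complete by Lemma \ref{gcomplete}, so an equivalence on all graded pieces in the range $\geq k_1$ gives an equivalence of filtrations.
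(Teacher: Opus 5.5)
Your construction in a) fails at its first step. You treat the section $\tilde p$ of (\ref{choice}) as a class of $F$-weight $e$ and Nygaard weight $0$. That is what you need for $\cdot\tilde p$ to carry the source $F^{\geq k-\epsilon}\mathcal N^{\geq n}\prism^{(1)}\{n\}$ of the fibre defining $G^{\geq k}R\Gamma_{\syn}$ into the source $F^{\geq k+e-\epsilon}\mathcal N^{\geq n}\prism^{(1)}\{n\}$ of the fibre defining $G^{\geq k+e}R\Gamma_{\syn}$. The paper only provides $\tilde p\in\pi_0G^{\geq e}\prism^{(1)}_{\mathscr X/A}$. This class is obtained via Lemma \ref{plift} from a nullhomotopy of $p$ on $G^{[0,e[}\prism^{(1)}_{\mathscr X/A}\simeq R\Gamma(\mathscr X,F^{[0,e[}\co_{\mathscr X})$, and $G^{\geq e}$ also receives $F^{\geq 0}\mathcal N^{\geq 1}$.

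In general $p$ does not lift to $\pi_0F^{\geq e}\mathcal N^{\geq 0}$. Take $\bz_p$ with the $p$-adic filtration over $A^0=\bz_p[[z]]$, $z\mapsto p$, which is a case of Example \ref{keyexamplectdctd} with $e=1$. Computing via the graded prism $\mathrm{Rees}(A^0)$, one gets $F^{\geq i}\mathcal N^{\geq j}\prism^{(1)}=z^i(z-p)^jA^0$. So $p\notin F^{\geq 1}$, whereas $p=z-(z-p)$ does come from $G^{\geq 1}$. Moreover $p\cdot z^a(z-p)^j$ has the summand $-z^a(z-p)^{j+1}$, which raises the Nygaard index rather than the $F$-index.

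Hence there is no map of fibres of the shape you write down, and no square to correct by a homotopy. This is exactly the obstruction that $R\Gamma_{\syn}$ is not a module over $G^{\geq\star}\prism^{(1)}$. The semilinearity estimate is also unsupported: a difference of two lifts of $p$ to $G^{\geq e}$ only comes from $\pi_1G^{[0,e[}$; it does not acquire extra weight $e$. Your ``cleaner way'' still multiplies source and target of the fibre by $\tilde p$, and never says which nullhomotopy is fed into Lemma \ref{plift}.

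The missing idea is to avoid the fibre terms altogether. Apply Lemma \ref{plift} directly to the square of $\bn^{op}_{\geq k_1}$-indexed filtrations $G^{\geq\star+e}R\Gamma_{\syn}\to G^{\geq\star}R\Gamma_{\syn}$, with both vertical maps multiplication by $p$. A lift is then the same as a nullhomotopy of $p$ on the cofibre $G^{[\star,\star+e[}R\Gamma_{\syn}$. Your bound $p(k-\epsilon)\geq k+e$ is precisely the hypothesis of Lemma \ref{lem-triv} for $c=e$. So $\mathrm{triv}$ identifies this cofibre with $G^{[\star,\star+e[}R\Gamma_{\add}$, where $c^0\varphi\{n\}$ plays no role. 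There the genuine module map $\cdot\tilde p$ of Lemma \ref{mult-by-p} gives, by Lemma \ref{plift} read backwards, a nullhomotopy of $p$, which you transport through $\mathrm{triv}$. Since everything happens in the stable category of filtrations, the coherence in $k$ you were worried about is automatic.

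For b) you likewise cannot read the square off a fibrewise formula. The map $\tilde p_{\syn}$ on $G^{[k,k+c[}$ is the map induced on cofibres by the lifts on $G^{\geq k+c}$ and $G^{\geq k}$. Equivalently, it is determined by the nullhomotopies on $G^{[k+c,k+c+e[}$ and $G^{[k,k+e[}$, as in diagram (\ref{truncation}). The hypothesis $k\geq (pe(n-1)+c+e)/(p-1)$ is what makes $\mathrm{triv}$ available on $G^{[k,k+c+e[}$, where it is compatible with these nullhomotopies by construction. Your argument for c) uses the graded isomorphism from b) with $c=1$ plus completeness from Lemma \ref{gcomplete}. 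It agrees with the paper and goes through once a) and b) are repaired.
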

\begin{proof} 


a) By Lemma \ref{plift}, in order to construct $\tilde{p}_{\syn}$ it suffices to exhibit a homotopy between $p$ and $0$ on
$$ G^{[\star,\star+e[} R\Gamma_{\syn}(\mathscr{X}/A,\bz_p(n))\underset{\sim}{\xrightarrow{\mathrm{triv}}}G^{[\star,\star+e[} R\Gamma_{\add}(\mathscr{X}/A,\bz_p(n))$$
where the isomorphism $\mathrm{triv}$ exists by the assumption on $k_1$.  Again using Lemma \ref{plift}, such a homotopy is induced by multiplication with $\tilde{p}$ on $G^{\geq \star}R\Gamma_{\add}(\mathscr{X}/A,\bz_p(n))$ (see Lemma \ref{mult-by-p}).


b) The arrows $\tilde{p}_{\syn}$ and $\cdot\tilde{p}$ in (\ref{triv-and-p}) are the pushout of the corresponding arrows $\tilde{p}$ on $G^{\geq k+c}$ and $G^{\geq k}$:
$$\xymatrix{G^{\geq k+c+e}\ar[r]\ar[d] & G^{\geq k+c}\ar[r]\ar[d]\ar@/_1pc/[l]_{\tilde{p}} & G^{[k+c,k+c+e[}\ar[d]\\
G^{\geq k+e} \ar[r]\ar[d] & G^{\geq k} \ar[r]\ar[d]\ar@/_1pc/[l]_{\tilde{p}}&G^{[k,k+e[}\\
G^{[k+e,k+c+e[} \ar[r] & G^{[k,k+c[}\ar@/_1pc/[l]_{\tilde{p}}  &
}$$
They are also the pushout of the arrows $\tilde{p}$ on $G^{[k+c,k+c+e[}$ and $G^{[k,k+c+e[}$
\begin{equation}\xymatrix{ 0 \ar[r]\ar[d] &  G^{[k+c,k+c+e[} \ar[r]^{\sim}\ar[d]\ar@/_1pc/[l]_{\tilde{p}} & G^{[k+c,k+c+e[}\ar[d]\\
G^{[k+e,k+c+e[} \ar[r]\ar[d] & G^{[k,k+c+e[} \ar[r]\ar[d]\ar@/_1pc/[l]_{\tilde{p}}&G^{[k,k+e[}\\
G^{[k+e,k+c+e[} \ar[r] & G^{[k,k+c[}\ar@/_1pc/[l]_{\tilde{p}}  &
}\label{truncation}\end{equation}
since the latter are induced via Lemma \ref{plift} by the same homotopies between $p$ and $0$ on $G^{[k+c,k+c+e[}$ and $G^{[k,k+e[}$ respectively. By the assumption on $k$ the isomorphism $\mathrm{triv}$ exists on all terms in (\ref{truncation}) and moreover commutes with the maps $\tilde{p}$ in the top and middle row by construction. Hence $\mathrm{triv}$ commutes with $\tilde{p}$ in the bottom row, i.e. (\ref{triv-and-p}) commutes.

c)  For $k\geq k_1$ the map 
$$\tilde{p}_{\syn}:\mathrm{gr}_G^{k}R\Gamma_{\syn}(\mathscr{X}/A,\bz_p(n))\rightarrow \mathrm{gr}_G^{k+e}R\Gamma_{\syn}(\mathscr{X}/A,\bz_p(n))$$
induced by $\tilde{p}_{\syn}$ on associated graded is an isomorphism by (\ref{triv-and-p}) for $c=1$. Since the filtration $G^{\geq \star}R\Gamma_{\syn}(\mathscr{X}/A,\bz_p(n))$ is complete by Lemma \ref{gcomplete} a) it follows that $\tilde{p}_{\syn}$ is an equivalence of $\mathbb{N}_{\geq k_1}$-indexed filtrations.
\end{proof}

\subsection{The map $G^{\geq k}\synlog_{\mathscr{X}/A}$} \label{synlog} We combine the results of the last section to define the syntomic logarithm following the outline in the introduction.

\begin{prop}\label{logdef}
Let $F^{\geq\star} A$ be a filtered $\delta$-ring, $\mathfrak{A}=F\Spf(A)$ and $\mathfrak{X}\to \mathfrak{A}$ a $e$-filtered morphism of derived formal stacks.
Let $k_1\geq \frac{pe(n-1)+e+1}{p-1}$ be such that 
\begin{equation}  \cdot\tilde{p}: G^{\geq \star}R\Gamma_{\add}(\mathscr{X}/A,\bz_p(n))\xrightarrow{\sim} G^{\geq \star+e}R\Gamma_{\add}(\mathscr{X}/A,\bz_p(n))  \notag\end{equation}
is an equivalence of $\mathbb{N}^{op}_{\geq k_1}$-indexed filtrations. Then there is a unique isomorphism of $\bn^{op}_{\geq k_1}$-indexed filtrations
  \[
   G^{\geq \star}\synlog_{\mathscr{X}/A}: G^{\geq \star}R\Gamma_{\syn}(\mathscr{X}/A,\bz_p(n))\xrightarrow{\sim} G^{\geq \star}R\Gamma_{\add}(\mathscr{X}/A,\bz_p(n))
  \]
  characterized by the following two properties:
  \begin{enumerate}
    \item[a)] The diagrams
     \[
      \begin{CD}
        G^{\geq \star}R\Gamma_{\syn}(\mathscr{X}/A,\bz_p(n)) @>G^{\geq \star}\synlog_{\mathscr{X}/A}>> G^{\geq \star}R\Gamma_{\add}(\mathscr{X}/A,\bz_p(n))\\
        @V\tilde{p}_{\syn} VV  @V\cdot\tilde{p}VV\\
        G^{\geq \star+e}R\Gamma_{\syn}(\mathscr{X}/A,\bz_p(n)) @>G^{\geq \star+e}\synlog_{\mathscr{X}/A}>> G^{\geq \star+e}R\Gamma_{\add}(\mathscr{X}/A,\bz_p(n))
      \end{CD}
    \]
    commute.
  \item[b)] The diagrams
     \[
      \begin{CD}
      G^{\geq k}R\Gamma_{\syn}(\mathscr{X}/A,\bz_p(n)) @>G^{\geq k}\synlog_{\mathscr{X}/A}>> G^{\geq k}R\Gamma_{\add}(\mathscr{X}/A,\bz_p(n))\\
        @VVV @VVV\\
        G^{[k,k+c[}R\Gamma_{\syn}(\mathscr{X}/A,\bz_p(n)) @>\mathrm{triv}>> G^{[k,k+c[}R\Gamma_{\add}(\mathscr{X}/A,\bz_p(n))
      \end{CD}
    \]
      commute for $k\geq (pe(n-1)+c)/(p-1)$, where $\mathrm{triv}$ was defined in Lemma \ref{lem-triv}.
       \end{enumerate}
\end{prop}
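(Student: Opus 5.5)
Write $S:=R\Gamma_{\syn}(\mathscr{X}/A,\bz_p(n))$ and $T:=R\Gamma_{\add}(\mathscr{X}/A,\bz_p(n))$. The plan imitates the classical construction (\ref{multiplicationbyp}) from the introduction, now inside the derived $\infty$-category. It proceeds in three steps: build maps on finite quotients $G^{[k,k+c[}$, show they are compatible as $c$ varies, and pass to the limit using completeness.

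\emph{Step 1 (finite quotients).} Fix $c\geq 1$ and $k\geq k_1$. Choose $m\geq 0$ with $k+me\geq (pe(n-1)+c+e)/(p-1)$, so that $\mathrm{triv}$ of Lemma \ref{lem-triv} exists on $G^{[k+me,k+me+c[}$. By Lemma \ref{psyn} c), $\tilde{p}_{\syn}^m:G^{[k,k+c[}S\to G^{[k+me,k+me+c[}S$ is an equivalence, and the hypothesis gives the same for $\cdot\tilde{p}^m$ on $T$. We define
$$\synlog_{k,c}:=(\cdot\tilde{p}^m)^{-1}\circ\mathrm{triv}\circ\tilde{p}_{\syn}^m:G^{[k,k+c[}S\to G^{[k,k+c[}T.$$
The square (\ref{triv-and-p}) of Lemma \ref{psyn} b) shows that this is independent of $m$, up to a canonical homotopy: replacing $m$ by $m+1$ inserts the commuting square. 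It also shows that $\synlog_{k,c}$ agrees with $\mathrm{triv}$ whenever $\mathrm{triv}$ is already defined on $G^{[k,k+c[}$, and that it intertwines $\tilde{p}_{\syn}$ with $\cdot\tilde{p}$.

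\emph{Step 2 (compatibility).} We check that $\synlog_{k,c}$ is compatible with the projections $G^{[k,k+c+1[}\to G^{[k,k+c[}$ and with the restrictions $G^{[k+1,k+c[}\to G^{[k,k+c[}$. Since $\mathrm{triv}$ is a map of bounded filtrations (Lemma \ref{lem-triv}), it is functorial in these truncations for large indices, and $\tilde{p}$ commutes with them. This step is the main obstacle. We need a coherent system of maps indexed by the interval category $\{[k,k+c[\}$, not merely compatibilities up to homotopy, and the isomorphism $\mathrm{triv}$ and the maps $\tilde{p}_{\syn}$ are only constructed via the lifting Lemma \ref{plift}.

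To handle this, we first obtain $\mathrm{triv}$ coherently in $c$ and $k$ as a natural equivalence of functors on the poset of intervals $[k,k+c[$ with $k$ large. The fibre-sequence description in the proof of Lemma \ref{lem-triv} is natural in the interval, so this should be available. We then use that, for $m$ large, the functor $[k,k+c[\mapsto[k+me,k+me+c[$ is cofinal in the limit we need. Concretely, we define
$$G^{\geq k}\synlog:=\varprojlim_{m}\varprojlim_c\,(\cdot\tilde{p}^m)^{-1}\circ\mathrm{triv}\circ\tilde{p}_{\syn}^m,$$
taking the limit over the diagram of pairs $(m,c)$ with $k+me\geq (pe(n-1)+c+e)/(p-1)$. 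This diagram has contractible nerve, since for fixed $c$ all sufficiently large $m$ work. The coherence between consecutive values of $m$ is supplied by the commuting squares (\ref{triv-and-p}), which we lift to natural transformations using the pushout description (\ref{truncation}).

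\emph{Step 3 (limit, properties, uniqueness).} By Lemma \ref{gcomplete} a) both $G$-filtrations are complete, so $G^{\geq k}S\simeq\varprojlim_c G^{[k,k+c[}S$, and likewise for $T$. Hence the system from Step 2 yields a map of $\bn^{op}_{\geq k_1}$-indexed filtrations $G^{\geq\star}\synlog$. It is an equivalence because each $\synlog_{k,c}$ is one. Property b) holds by construction. Property a) follows from the intertwining noted in Step 1.

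For uniqueness, let $\psi$ be another such map and consider $\psi-G^{\geq\star}\synlog$. By b) it vanishes on $G^{[k,k+c[}$ for large $k$. By a), together with the fact that $\tilde{p}_{\syn}$ and $\cdot\tilde{p}$ are equivalences, it vanishes on all $G^{[k,k+c[}$ with $k\geq k_1$. Completeness then gives vanishing on $G^{\geq k}$. Uniqueness is meant up to contractible choice, with the homotopies controlled through the mapping spaces $\varprojlim_c\Map(G^{\geq k}S,G^{[k,k+c[}T)$.
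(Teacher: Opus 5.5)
Your construction is the paper's. On $G^{[k,k+c[}$ you conjugate $\mathrm{triv}$ at level $k+me$ by the equivalences $\tilde{p}_{\syn}^m$ and $\cdot\tilde{p}^m$, use (\ref{triv-and-p}) for independence of $m$ and hence compatibility in $c$, and pass to $\varprojlim_c$ via the completeness of Lemma~\ref{gcomplete}~a). Your Step 2 is simply more explicit about coherence than the paper, which goes directly from homotopy-commutative squares to the limit.

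The uniqueness argument, which the paper does not spell out, has a gap at its last step. Knowing that every composite $G^{\geq k}S\xrightarrow{\psi-G^{\geq k}\synlog}G^{\geq k}T\to G^{[k,k+c[}T$ is nullhomotopic only places the difference in the term $\varprojlim^1_c[G^{\geq k}S,G^{[k,k+c[}T[-1]]$ of the Milnor sequence for $\Map(G^{\geq k}S,\varprojlim_cG^{[k,k+c[}T)$. Completeness does not make that term vanish in general. To get uniqueness, even up to homotopy, you must treat the homotopies in a) and b) as coherent data, compatible in $c$ and along $\tilde{p}$. The statement as written does not record such data.

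A smaller point: (\ref{triv-and-p}) is only proved for $k\geq (pe(n-1)+c+e)/(p-1)$. So your remark that $\synlog_{k,c}$ agrees with $\mathrm{triv}$ whenever $\mathrm{triv}$ is defined needs an extra argument for $k$ between $(pe(n-1)+c)/(p-1)$ and that bound. Property b) in the paper's statement has the same looseness.
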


\begin{proof} Given $k\geq k_1$ and $c\geq 1$, pick $m$ with $k+me\geq (pe(n-1)+c+e)/(p-1) $ so that the equivalence $\mathrm{triv}$ is defined on $G^{[k+me,k+me+c[}$. Then define the equivalence $G^{[k,k+c[}\synlog_{\mathscr{X}/A}$ by the commutative diagram
  \[
      \xymatrix{
      G^{[k,k+c[}R\Gamma_{\syn}(\mathscr{X}/A,\bz_p(n)) \ar[r]^{G^{[k,k+c[}\synlog_{\mathscr{X}/A}} \ar[d]^{\tilde{p}_{\syn}^m}& G^{[k,k+c[}R\Gamma_{\add}(\mathscr{X}/A,\bz_p(n))\ar[d]^{\cdot\tilde{p}^m}\\
       G^{[k+me,k+me+c[}R\Gamma_{\syn}(\mathscr{X}/A,\bz_p(n)) \ar[r]^{\mathrm{triv}} & G^{[k+me,k+me+c[}R\Gamma_{\add}(\mathscr{X}/A,\bz_p(n))
      }
  \]
  where the vertical maps are equivalences by Lemma \ref{psyn} c). This definition is independent of $m$ by (\ref{triv-and-p}). Hence we also have commutative diagrams
  \[
      \xymatrix@C+40pt{
      G^{[k,k+c+1[}R\Gamma_{\syn}(\mathscr{X}/A,\bz_p(n)) \ar[r]^{G^{[k,k+c+1[}\synlog_{\mathscr{X}/A}} \ar[d]& G^{[k,k+c+1[}R\Gamma_{\add}(\mathscr{X}/A,\bz_p(n))\ar[d]\\
      G^{[k,k+c[}R\Gamma_{\syn}(\mathscr{X}/A,\bz_p(n)) \ar[r]^{G^{[k,k+c[}\synlog_{\mathscr{X}/A}}& G^{[k,k+c[}R\Gamma_{\add}(\mathscr{X}/A,\bz_p(n)).} 
  \]
  Passing to the inverse limit over $c$ and using completeness of the $G$-filtration proven in Lemma \ref{gcomplete} a) we obtain the desired isomorphism
 \[
    G^{\geq k}\synlog_{\mathscr{X}/A}: G^{\geq k}R\Gamma_{\syn}(\mathscr{X}/A,\bz_p(n))\xrightarrow{\sim} G^{\geq k}R\Gamma_{\add}(\mathscr{X}/A,\bz_p(n)).
  \]
  
\end{proof}

\begin{corollary}\label{relativelogdef} Let $F^{\geq\star} A$ be a filtered $\delta$-ring, $\mathfrak{A}=F\Spf(A)$ and $\mathfrak{X}\to \mathfrak{A}$ a $e$-filtered morphism of derived formal stacks.  Then for
$$k_1:=\max\left\{\frac{pe(n-1)+e+1}{p-1},(e+1)(n-1)\right\}$$
there are isomorphisms of $\bn^{op}_{\geq k_1}$-indexed filtrations
\[
G^{\geq \star}\synlog_{\mathscr{X}/A}: G^{\geq \star}R\Gamma_{\syn}(\mathscr{X}/A,\bz_p(n))\xrightarrow{\sim} G^{\geq \star}R\Gamma_{\add}(\mathscr{X}/A,\bz_p(n))
\]
and 
  \[
   G^{\geq \star}\synlog^{\rel}_{\mathscr{X}/A}: G^{\geq \star}R\Gamma_{\syn}(\mathscr{X}/A,\bz_p(n))^{\rel}\xrightarrow{\sim} G^{\geq \star}R\Gamma_{\add}(\mathscr{X}/A,\bz_p(n))^{\rel}
  \]
  satisfying a) and b) in Prop. \ref{logdef}.
\end{corollary}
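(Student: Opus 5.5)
The first isomorphism will come directly from Prop. \ref{logdef}; the relative one will come from applying the same construction to $\mathfrak{X}/p$ and taking fibres.

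\emph{Hypotheses of Prop. \ref{logdef}.} The bound $k_1\geq \frac{pe(n-1)+e+1}{p-1}$ holds by definition of $k_1$. The only other hypothesis is that $\cdot\tilde{p}:G^{\geq\star}R\Gamma_{\add}(\mathscr{X}/A,\bz_p(n))\to G^{\geq\star+e}R\Gamma_{\add}(\mathscr{X}/A,\bz_p(n))$ is an equivalence of $\bn^{op}_{\geq k_1}$-indexed filtrations. This is Lemma \ref{mult-by-p} b), since $k_1\geq(e+1)(n-1)$. Prop. \ref{logdef} then produces $G^{\geq\star}\synlog_{\mathscr{X}/A}$ with properties a) and b).

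\emph{The case $\mathfrak{X}/p$.} By Prop. \ref{corforA} d), $\mathfrak{X}/p\to\mathfrak{A}$ is again $e$-filtered, with the same $e$, the image of $\varpi$, and the image of the section $\tilde{p}$ from (\ref{choice}). Lemma \ref{mult-by-p} b) covers $R\Gamma_{\add}((\mathscr{X}/p)/A,\bz_p(n))$ explicitly for the same range of $k$. Hence Prop. \ref{logdef} also gives $G^{\geq\star}\synlog_{(\mathscr{X}/p)/A}$.

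\emph{Compatibility along $\mathscr{X}/p\to\mathscr{X}$.} I need a commutative square of filtrations between $G^{\geq\star}\synlog_{\mathscr{X}/A}$ and $G^{\geq\star}\synlog_{(\mathscr{X}/p)/A}$; the relative isomorphism is then the induced map on fibres, and it is an isomorphism because both other maps are. I will argue as follows.
\begin{itemize}
\item $\mathrm{triv}$ is functorial in $\mathfrak{X}$ (noted before (\ref{trivrel})).
\item The maps $\cdot\tilde{p}$ are compatible with the restriction maps by the cube (\ref{mult-by-e}).
\item The maps $\tilde{p}_{\syn}$ of Lemma \ref{psyn} are built via Lemma \ref{plift} from the zero-homotopies supplied by $\mathrm{triv}$ and $\cdot\tilde{p}$ on the additive side. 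Naturality of that construction makes them compatible with restriction.
\end{itemize}
On each truncation $G^{[k,k+c[}$, the construction in the proof of Prop. \ref{logdef} is a composite of $\tilde{p}_{\syn}^m$, $\mathrm{triv}$ and $(\cdot\tilde{p}^m)^{-1}$, all natural in $\mathfrak{X}$. So the two logarithms commute with restriction on each $G^{[k,k+c[}$, compatibly in $c$. By completeness (Lemma \ref{gcomplete}) the squares pass to the limit and give a map of fibres, i.e. $G^{\geq\star}\synlog^{\rel}_{\mathscr{X}/A}$.

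\emph{Properties a) and b) for the relative map.} These follow by taking fibres in the corresponding squares for $\mathscr{X}$ and $\mathscr{X}/p$. For b) one uses the relative version (\ref{trivrel}) of $\mathrm{triv}$.

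\emph{Main obstacle.} The delicate point is coherence: the construction is in an $\infty$-category, so I need naturality of the whole diagram, with homotopies, not just of individual maps. I would handle this by running the entire construction of Prop. \ref{logdef} in the functor category $\Fun(\Delta^1,-)$ of arrows $C(\mathscr{X})\to C(\mathscr{X}/p)$. Then $\tilde{p}$, $\tilde{p}_{\syn}$ and $\mathrm{triv}$ are maps of arrows, and the uniqueness statement in Prop. \ref{logdef} pins down the result. Equivalently, one can apply Prop. \ref{logdef} verbatim to the fibres: the relative additive $\cdot\tilde{p}$ is an equivalence by Lemma \ref{mult-by-p} b), and the relative $G$-filtrations are complete by Lemma \ref{gcomplete} b).
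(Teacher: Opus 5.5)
Your proposal is correct and follows the paper's proof. The paper verifies the hypotheses of Prop.~\ref{logdef} via Lemma~\ref{mult-by-p}~b), applies Prop.~\ref{logdef} to $\mathfrak{X}$ and to $\mathfrak{X}/p$ (which is $e$-filtered by Prop.~\ref{corforA}~d)), and passes to fibres, leaving implicit the naturality along $\mathscr{X}/p\to\mathscr{X}$ that you spell out via functoriality of $\mathrm{triv}$, the cube (\ref{mult-by-e}), and the construction of $\tilde{p}_{\syn}$.
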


\begin{proof} This follows from Lemma \ref{mult-by-p} b) combined with Prop. \ref{logdef} for $\mathfrak{X}$ and $\mathfrak{X}/p$.
\end{proof}

\begin{remark} The construction of $G^{\geq \star}\synlog_{\mathscr{X}/A}$ depends on a choice of $\tilde{p}$ and the maps $G^{\geq \star}\synlog_{\mathscr{X}/A}$ are only functorial for maps of $e$-filtered pairs $(\mathfrak{X}/\mathfrak{A},\tilde{p})\to (\mathfrak{X}'/\mathfrak{A}',\tilde{p}')$ compatible with $\tilde{p}$. 
\end{remark} 

\begin{theorem} \label{thm:synlog} Let $F^{\geq\star} A$ be a filtered $\delta$-ring, $\mathfrak{A}=F\Spf(A)$ and $\mathfrak{X}\to \mathfrak{A}$ a $e$-filtered morphism of derived formal stacks.

a) For large enough $k$ there is a commutative diagram
\begin{equation} \xymatrix{
G^{\geq k} R\Gamma_{\syn}(\mathscr{X}/A,\bz_p(n))^{\rel}\ar[d]^{\iota_{\syn}}\ar[dr]^{G^{\geq k}_\epsilon\log_{\mathscr{X}/A}}\ar[rr]^{G^{\geq k}\synlog^{\rel}_{\mathscr{X}/A}}_{\sim}&&G^{\geq k} R\Gamma_{\add}(\mathscr{X}/A,\bz_p(n))^{\rel}\ar[dl]_{G^{\geq k}_\epsilon\gamma_{\mathscr{X}/A}} \ar[d]^{\iota_{\add}}\\
R\Gamma_{\syn}(\mathscr{X}/A,\bz_p(n))^{\rel}\ar[dr]_{\log_{\mathscr{X}/A}}&G^{\geq k-\epsilon}\widehat{\mathrm{dR}}_{\mathscr{X}/A}^{< n}[-1]\ar[d]^{\iota_{dR}}&R\Gamma_{\add}(\mathscr{X}/A,\bz_p(n))^{\rel}\ar[dl]^{\gamma_{\mathscr{X}/A}}\\
&\widehat{\mathrm{dR}}_{\mathscr{X}/A}^{< n}[-1]&
}
\label{synlogdiagram2}\end{equation}
functorial in the pair $(\mathfrak{X}/\mathfrak{A},\tilde{p})$.

b) For $?=\syn,\add$ the filtration 
\begin{multline*}G^{[0,k[}F^{\geq \star}R\Gamma_{?}(\mathscr{X}/A,\bz_p(n))^{\rel}\\ :=\Cone(G^{\geq k}F^{\geq\star}R\Gamma_{?}(\mathscr{X}/A,\bz_p(n))^{\rel}\to F^{\geq\star}R\Gamma_{?}(\mathscr{X}/A,\bz_p(n))^{\rel})\end{multline*}
is $\bn^{op}$-indexed and bounded with underlying complex $\Cone(\iota_?)$. There is an isomorphism of associated graded
\begin{equation}G^{[0,k[}\mathrm{gr}^m_FR\Gamma_{\syn}(\mathscr{X}/A,\bz_p(n))^{\rel}\simeq G^{[0,k[}\mathrm{gr}^m_F R\Gamma_{\add}(\mathscr{X}/A,\bz_p(n))^{\rel}\label{grfiso}\end{equation}
for $m\geq 1$. For $m=0$ the complex 
$$G^{[0,k[}\mathrm{gr}^0_FR\Gamma_{?}(\mathscr{X}/A,\bz_p(n))^{\rel}\simeq \mathrm{gr}^0_FR\Gamma_{?}(\mathscr{X}/A,\bz_p(n))^{\rel}$$ has a bounded filtration $\Phi^{\geq\star}\mathrm{gr}^0_FR\Gamma_{?}(\mathscr{X}/A,\bz_p(n))^{\rel}$ for both $?=\syn, \add$ such that there is an isomorphism of associated graded
\begin{equation}\mathrm{gr}^i_\Phi\mathrm{gr}^0_FR\Gamma_{\syn}(\mathscr{X}/A,\bz_p(n))^{\rel}\simeq \mathrm{gr}^i_\Phi\mathrm{gr}^0_F R\Gamma_{\add}(\mathscr{X}/A,\bz_p(n))^{\rel}.\label{grf0iso}\end{equation}
The complexes in (\ref{grfiso}) and (\ref{grf0iso}) are $p^N$-torsion for some large enough $N$, and therefore $\iota_{\syn}$ and $\iota_{\add}$ are rational isomorphisms.

c) The map $\gamma_{\mathscr{X}/A}$ and therefore the map $\log_{\mathscr{X}/A}$ is a rational isomorphism.  
\end{theorem}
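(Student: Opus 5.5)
For part a), I will take $k\geq k_1$ (Cor. \ref{relativelogdef}) and also $k\geq k_0+e(n-1)$, so that $G^{\geq k}\synlog^{\rel}_{\mathscr{X}/A}$, the maps $G^{\geq k}_\epsilon\log_{\mathscr{X}/A}$ (Def. \ref{syn-G-epsilon}) and $G^{\geq k}_\epsilon\gamma_{\mathscr{X}/A}$ (Lemma \ref{add-G-epsilon}) are all defined. The two outer squares of (\ref{synlogdiagram2}) commute by construction. Both $G^{\geq k}_\epsilon\log$ and $G^{\geq k}_\epsilon\gamma$ factor through $F^{\geq k-\epsilon}$ and the $F$-filtered de Rham logarithm, resp. $\gamma$, of Thm. \ref{filteredbeilsquare}, and $\iota_{dR}$ is the transition map to $G^{\geq 0}=F^{\geq 0}$. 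So the only real content is the triangle $G^{\geq k}_\epsilon\gamma\circ G^{\geq k}\synlog^{\rel}= G^{\geq k}_\epsilon\log$.

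To prove the triangle, I will use completeness of $G^{\geq\star}\widehat{\mathrm{dR}}^{<n}_{\mathscr{X}/A}[-1]$ (Lemma \ref{gcomplete}). The target is then $\lim_c G^{[k-\epsilon,k-\epsilon+c[}$, so it suffices to check compatibility after projecting to each $G^{[k-\epsilon,k-\epsilon+c[}$. This must be done coherently, so I would work with the tower rather than just its terms. Lemma \ref{fundlemma} gives the triangle modulo $G^{\geq k-\epsilon+c}$ with $\synlog$ replaced by $\mathrm{triv}$, provided $k$ is large relative to $c$. For general $k$, I will conjugate by $\tilde p^m$. On the syntomic side, $\tilde p_{\syn}$ intertwines $\synlog$ with $\mathrm{triv}$ by Prop. \ref{logdef} a) and b). On the de Rham side, $\cdot\tilde p$ intertwines $G_\epsilon\gamma$ by Lemma \ref{mult-by-p} a).

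The hard point is showing that $G_\epsilon\log$ commutes with $\tilde p_{\syn}$ and $\cdot\tilde p$. I would deduce this from Lemma \ref{plift}: both lifts of multiplication by $p$ are induced by the same null-homotopy on $G^{[\star,\star+e[}$, transported along $\mathrm{triv}$, and Lemma \ref{fundlemma} identifies the two sides there. Then I invert $\cdot\tilde p^m$ on the de Rham side, which is an equivalence by Lemma \ref{mult-by-p} b) after enlarging $k$. Functoriality in $(\mathfrak{X}/\mathfrak{A},\tilde p)$ follows because every construction is functorial.

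For b), I will use the equivalences (\ref{kappadef}) and (\ref{rewriting}) together with Def. \ref{Achimdef3}. In $F$-degree $j\geq k$, $G^{\geq k}F^{\geq j}=F^{\geq j}$, so $G^{[0,k[}F^{\geq j}=0$, which gives boundedness. For $1\leq m$, I will identify $G^{[0,k[}\mathrm{gr}^m_F$ for $?=\syn$ and $?=\add$ by the argument of Prop. \ref{griso}: on $\mathrm{gr}^m_F$ with $m\geq 1$, $c^0\varphi\{n\}$ lands in $F^{\geq pm}$ and so vanishes modulo $F^{\geq m+1}$. For $m=0$, Cor. \ref{globalfiltrations} c) reduces the statement to $\mathrm{gr}^0_F\mathscr{X}$ over $\mathrm{gr}^0_FA$, where $p$ is nilpotent up to $e$-filtration. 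I will take $\Phi$ to be the Nygaard/Hodge–Tate-type filtration of Lemma \ref{filtration} d), on whose graded pieces $\varphi$ and $\can$ differ by maps that I expect to be nilpotent. This is the step where I expect the most difficulty in identifying $\Phi$ precisely.

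Torsion follows because each graded piece is a module over $G^{[0,e[}$-type rings, hence over $\co/p$, so it is killed by $p$. There are only boundedly many such pieces, so a power $p^N$ kills each cone. Therefore $\iota_?$ are rational isomorphisms.

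For c), I will show that $\gamma_{\mathscr{X}/A}$ is a rational isomorphism by applying Lemma \ref{gammaiso} to $\mathscr{X}$ and to $\mathscr{X}/p$ and taking relative terms. Then from (\ref{synlogdiagram2}) I get $\log\circ\iota_{\syn}=\gamma\circ\iota_{\add}\circ\synlog$, in which every map other than $\log$ is a rational isomorphism. Hence $\log_{\mathscr{X}/A}$ is a rational isomorphism.
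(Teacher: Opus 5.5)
\paragraph{Part a) and most of b), c).} Your argument for a) is the paper's: reduce by completeness to the quotients $G^{[k,k+c[}$, conjugate by $\tilde p^m$, and handle the left face via Lemma \ref{plift} together with Lemma \ref{fundlemma} for $c=e$. You are right that $\cdot\tilde p^m$ must be cancelled on the de Rham side; Lemma \ref{mult-by-p} b) allows this for $k$ large. Your boundedness argument and the case $m\geq 1$ of b) also match the paper.

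\paragraph{The gap: the case $m=0$ of b).} The filtration of Lemma \ref{filtration} d) does not induce any filtration on $\fibre(\can-c^0\varphi\{n\})$. The reason is that $c^0\varphi\{n\}$ maps $\mathcal N^{\geq n}$ into all of $\prism$ and does not respect the Nygaard or Hodge--Tate filtration. A nilpotence statement would not help either. On a graded piece where $\can$ is not invertible, $\fibre(\can-\varphi)$ and $\fibre(\can)$ need not agree even if $\varphi$ is nilpotent.

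What the mechanism of Prop. \ref{griso} requires is a filtration on the whole syntomic package along which $c^0\varphi\{n\}$ multiplies degrees by $p$. Then $c^0\varphi\{n\}$ vanishes on positive graded pieces. This step cannot be skipped. The complex $\mathrm{gr}^0_FR\Gamma_{\syn}(\mathscr X/A,\bz_p(n))^{\rel}$ is not a module over any ring of characteristic $p$. Without (\ref{grf0iso}) you therefore have no way to show it is $p^N$-torsion, and so no way to show $\iota_{\syn}$ is a rational isomorphism.

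\paragraph{The missing idea (Prop. \ref{charp}).} Set $\mathscr Y:=\mathrm{gr}^0_F\mathscr X$, which has characteristic $p$.
\begin{enumerate}
\item The map $\co_{\mathscr Y}\to\co_{\mathscr Y/p}$ is functorially split by multiplication, giving $\co_{\mathscr Y/p}\simeq\co_{\mathscr Y}\oplus\co_{\mathscr Y}[1]\simeq\mathrm{LSym}_{\co_{\mathscr Y}}(\co_{\mathscr Y}[1])$.
\item Grade this ring with $\co_{\mathscr Y}[1]$ in weight $1$, and let $\Phi$ be the associated split filtration.
\item Prop. \ref{schematicbc} c) applied to this filtered ring gives $R\Gamma_?(\mathscr Y/\mathrm{gr}^0_FA,\bz_p(n))^{\rel}[1]\simeq\Phi^{\geq1}R\Gamma_?((\mathscr Y/p)/\mathrm{gr}^0_FA,\bz_p(n))$ for both $?=\syn,\add$.
\item $L_{(\mathscr Y/p)/\mathscr Y}\simeq\co_{\mathscr Y/p}\langle1\rangle[1]$, combined with (\ref{nygaardfilt}) and (\ref{addfilt}), shows that $\Phi$ is bounded in $[0,n]$.
\item Since the Frobenius multiplies $\Phi$-weights by $p$, Prop. \ref{griso} gives the isomorphisms on $\mathrm{gr}^i_\Phi$ for $i\geq1$.
\item The additive graded pieces are modules over $\mathrm{gr}^0_\Phi\mathrm{gr}^0_{\mathcal N}\simeq R\Gamma(\mathscr Y,\co_{\mathscr Y})$, which is an $\bF_p$-algebra. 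This gives the torsion.
\end{enumerate}

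\paragraph{A smaller omission in c).} The map $\gamma_{\mathscr X/A}$ lands in $\widehat{\mathrm{dR}}^{<n}_{\mathscr X/A}[-1]$, not in the relative complex. Lemma \ref{gammaiso} for $\mathscr X$ and $\mathscr X/p$ only shows that $\gamma^{dR,<n,\rel}_{\prism,\mathscr X/A}\{n\}[-1]$ is a rational isomorphism. You also need $\iota:\widehat{\mathrm{dR}}^{<n,\rel}_{\mathscr X/A}[-1]\to\widehat{\mathrm{dR}}^{<n}_{\mathscr X/A}[-1]$ to be a rational isomorphism. This holds because $\widehat{\mathrm{dR}}^{<n}_{(\mathscr X/p)/A}$ has a finite filtration whose graded pieces $L\widehat{\Omega}^i_{(\mathscr X/p)/A}[-i]$ are $\bF_p$-modules.
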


\begin{proof} a) The maps $\iota_{\add}$ and $\iota_{dR}$ are simply the canonical maps to $G^0$, so the right hand square in (\ref{synlogdiagram2}) commutes. The map $\iota_{\syn}$ is the composite $G^{\geq k}\to F^{\geq k-\epsilon}\to F^{\geq 0}$ (see Def. \ref{syn-G-epsilon}), so the left hand square commutes as well. 

 By completeness of all three filtrations (see Lemma \ref{gcomplete}) commutativity of the top triangle in (\ref{synlogdiagram2}) reduces to commutativity of the top triangle in the following diagram for all $c\geq 1$. Here we choose $k_1$ as in Cor. \ref{relativelogdef}, i.e. large enough so that the maps $\cdot\tilde{p}$ and $\tilde{p}_{\syn}$ are equivalences for $k\geq k_1$ (see Lemma \ref{mult-by-p} b) and Lemma \ref{psyn} c)).
 \[
      \xymatrix@C-80pt{
      G^{[k,k+c[}R\Gamma_{\syn}(\mathscr{X}/A,\bz_p(n))^{\rel} \ar[rr]^{G^{[k,k+c[}\synlog_{\mathscr{X}/A}} \ar[dd]^{\tilde{p}_{\syn}^m} \ar[dr]&& G^{[k,k+c[}R\Gamma_{\add}(\mathscr{X}/A,\bz_p(n))^{\rel}\ar[dd]^{\cdot\tilde{p}^m}\ar[dl]\\
      &G^{[k-\epsilon,k-\epsilon+c[}\widehat{\mathrm{dR}}_{\mathscr{X}/A}^{< n,\rel}[-1]\ar[dd]^(.25){\cdot\tilde{p}^m}&\\
       G^{[k+me,k+me+c[}R\Gamma_{\syn}(\mathscr{X}/A,\bz_p(n))^{\rel} \ar[rr]^(.45){\mathrm{triv}} \ar[dr]& &G^{[k+me,k+me+c[}R\Gamma_{\add}(\mathscr{X}/A,\bz_p(n))^{\rel}\ar[dl]\\
      &G^{[k+me-\epsilon,k+me-\epsilon+c[}\widehat{\mathrm{dR}}_{\mathscr{X}/A}^{< n,\rel}[-1]&
      }
  \]
Since we know commutativity of the bottom triangle by Lemma \ref{fundlemma} it suffices to verify commutativity of the three faces. The right hand face commutes by Lemma \ref{mult-by-p} a) and the back face by the definition of $G^{[k,k+c[}\synlog_{\mathscr{X}/A}$. By Lemma \ref{plift}, commutativity of the left hand face is equivalent to compatibility of the homotopies between $p$ and $0$ with $G^{[k,k+e[}_\epsilon\log_{\mathscr{X}/A}$. By Lemma \ref{fundlemma} for $c=e$, and the fact that the nullhomotopies are compatible with $\mathrm{triv}$ by definition, it remains to remark that they are also compatible with $G^{[k,k+e[}_\epsilon\gamma_{\mathscr{X}/A}$ by Lemma \ref{mult-by-p} a) (and Lemma \ref{plift}).

b) By Def. \ref{Achimdef3} and the discussion before it we have 
$$G^{[0,k[}F^{\geq j}R\Gamma_{?}(\mathscr{X}/A,\bz_p(n))^{\rel}=\begin{cases} 0 & j\geq k\\ \Cone(\iota_?) & j=0  \end{cases}$$
for both $?=\syn,\add$, i.e. the filtrations are bounded with underlying complex $\Cone(\iota_?)$. For $m\geq 1$ the isomorphism (\ref{grfiso}) follows as in the proof of Prop. \ref{griso}: in Def. \ref{Achimdef3} the map $c^0\varphi\{n\}$ factors through $F^{\geq pm}$, hence through $F^{\geq m+1}$. For $m=0$ on the other hand we have
\begin{equation}G^{[0,k[}\mathrm{gr}^0_FR\Gamma_{?}(\mathscr{X}/A,\bz_p(n))^{\rel}=\mathrm{gr}^0_FR\Gamma_{?}(\mathscr{X}/A,\bz_p(n))^{\rel}\simeq R\Gamma_{?}(\mathrm{gr}^0_F\mathscr{X}/\mathrm{gr}^0_FA,\bz_p(n))^{\rel}\notag\end{equation}
for $?=\syn,\add$ by Def. \ref{Achimdef3} and Cor. \ref{globalfiltrations}. Note that  the derived formal scheme $\mathrm{gr}^0_F\mathscr{X}$ has characteristic $p$ since $\mathfrak{X}$ is $e$-filtered.

\begin{prop} Let $\mathscr{X}$ be a derived formal scheme of characteristic $p$ over a $\delta$-ring $A$. Consider the grading
$$ \co_{\mathscr{X}/p}=\co_{\mathscr{X}}\oplus\co_{\mathscr{X}}[1]=:\co_{\mathscr{X}/p}^0\oplus\co_{\mathscr{X}/p}^1$$
and the corresponding split $\bn^{op}$-indexed filtration 
$$\Phi^{\geq \star}\co_{\mathscr{X}/p}:=\bigoplus_{i\geq\star}\co_{\mathscr{X}/p}^i.$$ 

a) There is a functorial decomposition in $\gsheaf_A$
$$\underline{\mathscr{E}}((\mathscr{X}/p)/A)\simeq \underline{\mathscr{E}}(\mathscr{X}/A)\oplus\Phi^{\geq 1}\underline{\mathscr{E}}((\mathscr{X}/p)/A)$$ 
and hence an isomorphism
$$ \underline{\mathscr{E}}(\mathscr{X}/A)^{\rel}[1]\simeq\Phi^{\geq 1}\underline{\mathscr{E}}((\mathscr{X}/p)/A)$$
functorial in $\mathscr{X}$.

b) The filtrations $\Phi^{\geq \star}L\widehat{\Omega}^{i}_{(\mathscr{X}/p)/A}$ and $\Phi^{\geq \star}\mathrm{gr}^i_{\mathcal N}\prism^{(1)}_{(\mathscr{X}/p)/A}\{n\}$ are bounded in $[0,i+1]$.

c) The filtrations 
$$\Phi^{\geq \star} \widehat{\mathrm{dR}}_{(\mathscr{X}/p)/A}^{<n},\ \Phi^{\geq \star}R\Gamma_{\add}((\mathscr{X}/p)/A,\bz_p(n)),\  \Phi^{\geq \star}R\Gamma_{\syn}((\mathscr{X}/p)/A,\bz_p(n))$$
are bounded in $[0,n]$.
\label{charp}\end{prop}

\begin{proof} Note that $\co_{\mathscr{X}}\to\co_{\mathscr{X}/p}=\co_{\mathscr{X}}\otimes^L_\bz\bF_p$ is functorially split by the multiplication map. Part a) is an immediate consequence of Prop. \ref{schematicbc} c) for the filtration $\Phi^{\geq \star}\co_{\mathscr{X}/p}$.

The animated $\co_{\mathscr{X}}$-algebra
$$\co_{\mathscr{X}/p}=\co_{\mathscr{X}}\oplus\co_{\mathscr{X}}[1]\simeq \bigoplus_{i\geq 0}(\bigwedge^i_{\co_{\mathscr{X}}}\co_{\mathscr{X}})[i]\simeq \mathrm{LSym}^\star_{\co_{\mathscr{X}}}\co_{\mathscr{X}}[1]$$
is free on the $\co_{\mathscr{X}}$-module $\co_{\mathscr{X}}[1]\simeq\co_{\mathscr{X}/p}^1$, hence 
$$L_{(\mathscr{X}/p)/\mathscr{X}}\simeq \co_{\mathscr{X}}[1]\otimes_{\co_{\mathscr{X}}}\co_{\mathscr{X}/p}\simeq \co_{\mathscr{X}/p}\langle1\rangle[1]$$
where we denote by $M\langle k\rangle^i:=M^{i-k}$ a shift in grading weight. The transitivity triangle for the cotangent complex
$$ L_{\mathscr{X}/A}\otimes_{\co_{\mathscr{X}}}\co_{\mathscr{X}/p}\to L_{(\mathscr{X}/p)/A}\to L_{(\mathscr{X}/p)/\mathscr{X}}$$
shows that $\bigwedge^iL_{(\mathscr{X}/p)/A}$ has a bounded filtration with graded pieces
$$\bigwedge^{i-j}(L_{\mathscr{X}/A}\otimes_{\co_{\mathscr{X}}}\co_{\mathscr{X}/p})\otimes_{\co_{\mathscr{X}/p}}\bigwedge^jL_{(\mathscr{X}/p)/\mathscr{X}}\simeq 
\bigwedge^{i-j}L_{\mathscr{X}/A}\otimes_{\co_{\mathscr{X}}}\co_{\mathscr{X}/p}\langle j\rangle[j]$$
for $j=0,\dots,i$. Hence $L\widehat{\Omega}^{i}_{(\mathscr{X}/p)/A} = R\Gamma(\mathscr{X},\bigwedge^iL_{(\mathscr{X}/p)/A})$ has a bounded filtration with graded pieces $R\Gamma(\mathscr{X}, \bigwedge^{i-j}L_{\mathscr{X}/A}\otimes_{\co_{\mathscr{X}}}\co_{\mathscr{X}/p}\langle j\rangle[j])$ in grading weights $j,j+1$ for $j=0,\dots,i$. We conclude that $L\widehat{\Omega}^{i}_{(\mathscr{X}/p)/A}$ is bounded in grading weights $[0,i+1]$. Part b) then follows from the filtration (\ref{nygaardfilt}). Part c) for the first two filtrations follows from the filtration (\ref{addfilt}). The filtration $\Phi^{\geq \star}R\Gamma_{\syn}((\mathscr{X}/p)/A,\bz_p(n))$ is complete by the argument in the proof of Prop. \ref{completeandbounded} a) and bounded in $[0,n]$ because of the isomorphism on associated graded of Prop. \ref{griso}.\end{proof}

Applying Prop. \ref{charp} to $\mathrm{gr}^0_F\mathscr{X}/\mathrm{gr}^0_FA$ we find for $?=\syn,\add$
$$R\Gamma_{?}(\mathrm{gr}^0_F\mathscr{X}/\mathrm{gr}^0_FA,\bz_p(n))^{\rel}[1]\simeq\Phi^{\geq 1}R\Gamma_{?}((\mathrm{gr}^0_F\mathscr{X}/p)/\mathrm{gr}^0_FA,\bz_p(n))$$
where the filtration $\Phi^{\geq\star}$ is bounded in $[0,n]$. The isomorphism (\ref{grf0iso}) on associated graded then follows again from Prop. \ref{griso}.

To show that $G^{[0,k[}\mathrm{gr}^m_F R\Gamma_{\add}(\mathscr{X}/A,\bz_p(n))^{\rel}$ is $p^N$-torsion for suitable $N$, note that the complex $G^{[0,k[}\mathrm{gr}^m_F R\Gamma_{\add}(\mathscr{X}/A,\bz_p(n))$ has a bounded filtration with graded pieces
\begin{equation} \mathrm{gr}^m_F\mathrm{gr}^i_{\mathcal N}\prism^{(1)}_{\mathscr{X}/A}\{n\}[-1],\quad\quad i=0,\dots,\min(n-1,\bigl\lfloor \frac{k-m}{e} \bigr\rfloor)\label{grffilt}\end{equation} 
and $\mathrm{gr}^m_F\mathrm{gr}^i_{\mathcal N}\prism^{(1)}_{\mathscr{X}/A}\{n\}$ is a $\mathrm{gr}^0_F\mathrm{gr}^0_{\mathcal N}\prism^{(1)}_{\mathscr{X}/A}\simeq R\Gamma(\mathscr{X}, \mathrm{gr}^0_F\co_{\mathscr{X}})$-module. This is an $\bF_p$-algebra since $\mathfrak{X}$ is $e$-filtered. The same reasoning applies to $\mathfrak{X}/p$, concluding the proof. For $m=0$ we note in addition that $ \mathrm{gr}^j_\Phi R\Gamma_{\add}((\mathrm{gr}^0_F\mathscr{X}/p)/\mathrm{gr}^0_FA,\bz_p(n))$ has a bounded filtration (\ref{addfilt}) with graded pieces
$$ \mathrm{gr}^j_\Phi\mathrm{gr}^i_{\mathcal N}\prism^{(1)}_{(\mathrm{gr}^0_F\mathscr{X}/p)/\mathrm{gr}^0_FA}\{n\}[-1], \quad \quad i=0,\dots,n-1$$
which are modules over the $\bF_p$-algebra
$$\mathrm{gr}^0_\Phi\mathrm{gr}^0_{\mathcal N}\prism^{(1)}_{(\mathrm{gr}^0_F\mathscr{X}/p)/\mathrm{gr}^0_FA}\simeq
R\Gamma(\mathrm{gr}^0_F\mathscr{X}, \mathrm{gr}^0_\Phi\co_{\mathrm{gr}^0_F\mathscr{X}/p})\simeq
R\Gamma(\mathrm{gr}^0_F\mathscr{X}, \co_{\mathrm{gr}^0_F\mathscr{X}}).$$
This shows that the modules in (\ref{grf0iso}) are $p^N$-torsion for suitable $N$.

c) By Lemma \ref{gammaiso} for $\mathscr{X}$ and $\mathscr{X}/p$ the map $\gamma^{dR,<n,\rel}_{\prism, \mathscr{X}/A}\{n\}[-1]$ is a rational isomorphism.  
So is the map
$$\iota:\widehat{\mathrm{dR}}_{\mathscr{X}/A}^{< n,\rel}[-1]\to\widehat{\mathrm{dR}}_{\mathscr{X}/A}^{< n}[-1]$$
since $\widehat{\mathrm{dR}}_{(\mathscr{X}/p)/A}^{< n}$ has a bounded filtration with graded pieces
$L\widehat{\Omega}^{i}_{(\mathscr{X}/p)/A}[-i]$, $i=0,\dots,n-1$ which are $\bF_p$-modules.
It follows that $\gamma_{\mathscr{X}/A}$ is a rational isomorphism. By part b) and diagram (\ref{synlogdiagram2}) $\log_{\mathscr{X}/A}$ is a rational isomorphism.

\end{proof}

\begin{corollary} (The fundamental fibre square relative to a $\delta$-ring) Let $\mathscr{X}$ be a quasi-compact, quasi-separated derived formal scheme over a $\delta$-ring $A$. Then the square
\begin{equation}
\xymatrix{ 
R\Gamma_{\syn}(\mathscr{X}/A,\bz_p(n)) \ar[r]\ar[d] &  R\Gamma_{\syn}((\mathscr{X}/p)/A,\bz_p(n))\ar[d] \\ 
\fil^{\geq n}_{\mathcal N}\prism^{(1)}_{\mathscr{X}/A}\{n\}\ar[d]_{\fil^{\geq n}\gamma^{dR}_{\prism,\mathscr{X}/A}\{n\}}  & \fil^{\geq n}_{\mathcal N}\prism^{(1)}_{(\mathscr{X}/p)/A}\{n\}\ar[d]^{\beta_{\mathscr{X}/A}\circ\can} \\
\fil^{\geq n}_{Hod}\widehat{\mathrm{dR}}_{\mathscr{X}/A}\ar[r] & \widehat{\mathrm{dR}}_{\mathscr{X}/A}
}
\notag\end{equation}
of Prop. \ref{relativebeilsquare} is rationally Cartesian. In particular Thm. \ref{main} a) holds true.
\label{beilinsonfibresquare}\end{corollary}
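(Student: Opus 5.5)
The square is rationally Cartesian precisely when the induced map on horizontal fibres, $\log_{\mathscr{X}/A}:R\Gamma_{\syn}(\mathscr{X}/A,\bz_p(n))^{\rel}\to\widehat{\mathrm{dR}}^{<n}_{\mathscr{X}/A}[-1]$ from Prop. \ref{relativebeilsquare}, is a rational isomorphism. The fibre of the bottom row is $\widehat{\mathrm{dR}}^{<n}_{\mathscr{X}/A}[-1]$. For the top row one uses that $\beta_{\mathscr{X}/A}$ is an isomorphism by Prop. \ref{betaprop}, so the bottom-right corner may be replaced by $\prism^{(1)}_{(\mathscr{X}/p)/A}\{n\}$ without changing the fibre comparison. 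So the whole task is to show that $\log_{\mathscr{X}/A}$ becomes an equivalence after inverting $p$. This is what Thm. \ref{thm:synlog} c) provides, once $\mathscr{X}$ is placed in a filtered setting to which that theorem applies.

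\textbf{Step 1: choose the filtered structure.} Equip $\co_{\mathscr{X}}$ with the $p$-adic filtration, i.e. the $1$-filtration with $\varpi=p$ and $u=1$. Equip $A$ with the trivial filtration $F^{\geq\star}_{\mathrm{triv}}A$; this is a filtered $\delta$-ring in the sense of Def. \ref{fildefs}, and $F\Spf(A)=\widehat{\ba}^1/\widehat{\bg}_m\times\Spf(A)$.

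\textbf{Step 2: verify the hypotheses.} The resulting stack $\mathfrak{X}=\mathscr{S}pf(\mathrm{Rees}(F^{\geq\star}\co_{\mathscr{X}}))/\widehat{\bg}_m$ is schematic over $\mathfrak{A}$, because the $p$-adic filtration is $\bn^{op}$-indexed, quasi-coherent and compatible with $p$-complete localization; here we use that $\mathscr{X}$ is qcqs. By Example \ref{trivexample}, which is Lemma \ref{lcicor} with $I=0$, the cotangent complex $L_{\mathfrak{X}/\mathfrak{A}}$ has weak filtered amplitude in $[0,1]$. Hence $\mathfrak{X}\to\mathfrak{A}$ is an $e$-filtered morphism with $e=1$, and we take $\tilde p=p\cdot t^{-1}$.

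\textbf{Step 3: descend to the unfiltered statement.} By Cor. \ref{globalfiltrations} and Thm. \ref{filteredbeilsquare}, the underlying objects ($F^{\geq 0}$) of the filtered fundamental square recover the unfiltered square of Prop. \ref{relativebeilsquare}. In particular $F^{\geq 0}\log_{\mathfrak{X}/\mathfrak{A}}=\log_{\mathscr{X}/A}$. Therefore Thm. \ref{thm:synlog} c) applies and gives that $\log_{\mathscr{X}/A}$ is a rational isomorphism. Since $\beta_{\mathscr{X}/A}$ is invertible, the square is rationally Cartesian.

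\textbf{Step 4: the case $A=\bz_p$.} Here prismatic cohomology relative to the $\delta$-ring $\bz_p$ (with $\varphi=\id$) recovers the absolute theory of \cite{bhatt-lurie-22}; this is the compatibility discussed after Def. \ref{globalpackage}, going back to \cite{akn23}. Consequently $\prism^{(1)}_{\mathscr{X}/\bz_p}=\prism_{\mathscr{X}}$, the maps $\beta$ and $\gamma^{dR}_{\prism}$ agree with those of \cite{bhatt-lurie-22}, and the square is the fundamental square of Prop. \ref{beilsquareprop}. This gives Thm. \ref{main} a).

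The main obstacle is bookkeeping rather than mathematics: one must confirm that the chosen filtrations genuinely satisfy the $e$-filtered hypotheses, in particular schematicity for a derived, non-affine $\mathscr{X}$. One must also confirm that the identification of underlying objects in Thm. \ref{filteredbeilsquare} matches the maps used in the square, including $\beta$.
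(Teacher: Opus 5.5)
Your proposal is correct and is essentially the paper's own argument. You equip $A$ with the trivial filtration and $\co_{\mathscr{X}}$ with the $p$-adic filtration, use Example \ref{trivexample} to see that $\mathfrak{X}\to\mathfrak{A}$ is $1$-filtered, and then invoke Thm. \ref{thm:synlog} c) to conclude that $\log_{\mathscr{X}/A}$ is a rational isomorphism, with $A=\bz_p$ giving Thm. \ref{main} a). (The invertibility of $\beta_{\mathscr{X}/A}$ is not actually needed to pass from $\log_{\mathscr{X}/A}$ to the square, since $\log_{\mathscr{X}/A}$ is by definition the map on horizontal fibres of the square as drawn.)
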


\begin{proof} Equipping $A$ with the trivial filtration and $\co_{\mathscr{X}}$ with the $p$-adic filtration  the resulting morphism $\mathfrak{X}\to\mathfrak{A}$ is $1$-filtered by Example \ref{trivexample}. By Thm. \ref{thm:synlog} the map $\log_{\mathscr{X}/A}$ is a rational isomorphism.  Corollary \ref{beilinsonfibresquare} follows and Thm. \ref{main} a) is the special case $A=\bz_p$.
\end{proof}

\begin{example} In the situation of Example \ref{keyexamplectd} the morphism
$$F_\fm\mathscr{X}\to F\Spf(F^{\geq \star}A^s)$$
is $e$-filtered for any $s\geq -1$. Assume $\mathscr{X}=\Spf(R)$ is affine for simplicity. The $A^s=W(k)[[z_0,\dots,z_s]]$ for $s\geq 0$ form a cosimplicial filtered $\delta$-ring which is in fact the Cech conerve of the map of filtered $\delta$-rings $W(k)\to W(k)[[z_0]]$. Hence the cosimplicial filtered $\delta$-pair $(F_\fm^{\geq \star}R, F^{\geq \star}A^\bullet)$ is the Cech conerve of the map of filtered $\delta$-pairs 
$$(F_\fm^{\geq \star}R, F^{\geq \star}_{triv}W(k))\to (F_\fm^{\geq \star}R, F^{\geq \star}A^0).$$
By the proof of \cite{akn24}[Thm. 2.22] if $R$ is quasisyntomic then all terms of (\ref{synlogdiagram2}) satisfy descent for this map of filtered $\delta$-pairs. Functoriality of (\ref{synlogdiagram2}) then gives an isomorphism 
$$ D(F_\fm^{\geq \star}R, F^{\geq \star}_{triv}W(k))\simeq \Tot D(F_\fm^{\geq \star}R, F^{\geq \star}A^\bullet)$$
where $D(\mathfrak{X},\mathfrak{A})$ denotes the diagram $(\ref{synlogdiagram2})$. Such descent computations were the initial motivation for introducing prismatic cohomology relative to a (filtered) $\delta$-ring \cite{akn24}.
\label{keyexamplectdctd}\end{example}

\begin{theorem} \label{thm:synlogproper} Let $\mathfrak{X}\to \widehat{\ba}^1/\widehat{\bg}_m$ be a $e$-filtered morphism of derived formal stacks with underlying derived formal scheme $\mathscr{X}/\bz_p$. Assume that $\mathscr{X}/\bz_p$ is proper of finite tor-amplitude and that $L_{\mathscr{X}/\bz_p}\in\cd(\mathscr{X})$ is perfect.

a) All complexes in (\ref{synlogdiagram2}), (\ref{grfiso}) and (\ref{grf0iso}) are perfect complexes of $\bz_p$-modules.

b) Writing $\mathscr{X}$ for $\mathscr{X}/\bz_p$ we have 
$$\mydet_{\bq_p}(\log'_{\bq_p})\left(\mydet_{\bz_p}R\Gamma_{\syn}(\mathscr{X},\bz_p(n))^{\rel}\right)=\mydet_{\bz_p}R\Gamma_{\add}(\mathscr{X},\bz_p(n))^{\rel}$$
where $\log'_{\bq_p}=\iota_{\add,\bq_p}\circ G^{\geq k}\synlog_{\mathscr{X},\bq_p}^{\rel} \circ\iota^{-1}_{syn,\bq_p}$.

c)  For any $n\geq 1$ we have
\begin{equation} \mydet_{\bq_p}(\gamma_{\mathscr{X},\bq})\left(\mydet_{\bz_p}R\Gamma_{\add}(\mathscr{X},\bz_p(n))^{\rel}\right)=\mydet_{\bq_p}(\iota_\bq)\bigl(\mydet^{-1}_{\bz_p}\widehat{\mathrm{dR}}_\mathscr{X}^{<n,\rel}\bigr) \cdot C_\infty(\mathscr{X},n)^{-1}
\notag\end{equation}
where
\begin{equation}\notag
\label{cinftydef}
C_\infty(\mathscr{X},n):=\prod_{ i\leq n-1;\, j}(n-1-i)!^{(-1)^{i+j}\mathrm{dim}_{\bq_p}H^j(\mathscr{X}_{\bq_p},L\Omega^i)}
\end{equation}
was defined in \cite{Flach-Morin-20}[Eq. (2)].
\end{theorem}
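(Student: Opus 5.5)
The plan is to reduce everything to perfectness of Hodge-type pieces, and then to compare determinants through the filtrations already constructed in Thm.~\ref{thm:synlog}.

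\emph{Part a).} I would start from the observation that properness, finite tor-amplitude and perfectness of $L_{\mathscr{X}/\bz_p}$ make each $L\widehat{\Omega}^i_{\mathscr{X}}=R\Gamma(\mathscr{X},L\bigwedge^iL_{\mathscr{X}/\bz_p})$ a perfect $\bz_p$-complex. The same holds for $\mathscr{X}/p$, whose cotangent complex is an extension of $L_{\mathscr{X}}\otimes\co_{\mathscr{X}/p}$ by $\co_{\mathscr{X}/p}[1]$, as in the proof of Prop.~\ref{charp}. From here:
\begin{itemize}
\item The filtration (\ref{nygaardfilt}) exhibits each $\mathrm{gr}^i_{\mathcal N}\prism$ as built from fibres of multiplication by $j$ on perfect complexes, so these are perfect.
\item The bounded filtration (\ref{addfilt}) then gives perfectness of $R\Gamma_{\add}$ and of $\widehat{\mathrm{dR}}^{<n}$, for both $\mathscr{X}$ and $\mathscr{X}/p$, and hence of their relative versions.
\item $R\Gamma_{\syn}^{\rel}$ sits in the cofibre sequence with $G^{\geq k}R\Gamma_{\syn}^{\rel}\simeq G^{\geq k}R\Gamma_{\add}^{\rel}$ and $\Cone(\iota_{\syn})$. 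So it suffices to show that $\Cone(\iota_{\syn})$ and $\Cone(\iota_{\add})$, and the $G^{\geq k}$ pieces, are perfect.
\item The graded pieces (\ref{grfiso}) and (\ref{grf0iso}) are finite extensions of $\mathrm{gr}^m_F$ and $\mathrm{gr}^j_\Phi$ of the $\mathrm{gr}^i_{\mathcal N}$'s, with the bounded filtrations (\ref{grffilt}). These are perfect once the $\mathrm{gr}^m_F L\widehat{\Omega}^i$ are, and those are cofibres of $F^{\geq m+1}\to F^{\geq m}$ on perfect complexes: $F^{\geq m}L\widehat{\Omega}^i$ is perfect since by Prop.~\ref{corforA} c) it is isomorphic via $\tilde{p}$ to $F^{\geq m-e}$ for $m$ large, and it is a finite extension for small $m$.
\end{itemize}
The step I expect to need care here is perfectness of $F^{\geq m}L\widehat{\Omega}^i$ itself. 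I would handle it by writing $\mathrm{gr}_F L\bigwedge^i$ over $\mathrm{gr}^0_F\co_{\mathscr{X}}$, which is a base change of $L_{\mathscr{X}}$ to $\mathscr{X}\otimes^L\co/\varpi$; this is proper of finite tor-amplitude, so $R\Gamma$ of perfect complexes on it is $\bz_p$-perfect. Boundedness of $\Cone(\iota_?)$ (Thm.~\ref{thm:synlog} b)) then finishes a).

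\emph{Part b).} I would argue via the rational isomorphisms $\iota_{\syn},\iota_{\add}$:
\[
\mydet(\log')(\mydet R\Gamma_{\syn}^{\rel}) = \mydet R\Gamma_{\add}^{\rel}\cdot \mydet\Cone(\iota_{\syn})^{-1}\mydet\Cone(\iota_{\add}).
\]
Here the $G^{\geq k}$ parts cancel because $G^{\geq k}\synlog^{\rel}$ is an integral isomorphism. The determinant of the torsion complex $\Cone(\iota_?)$, viewed as a lattice index, is multiplicative along the bounded filtration $G^{[0,k[}F^{\geq\star}$ and further along $\Phi$. Its graded pieces agree for $\syn$ and $\add$ by (\ref{grfiso}) and (\ref{grf0iso}), so the two index factors coincide. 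The main obstacle is making sure the trivialization $\mydet_{\bq_p}$ of a torsion perfect complex is compatible with these filtrations. This is the standard multiplicativity of determinants in exact triangles, combined with the canonical trivialization of torsion complexes after $\otimes\bq_p$.

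\emph{Part c).} I would factor $\gamma_{\mathscr{X}}=\iota\circ\gamma^{dR,<n,\rel}_{\prism}\{n\}[-1]$ (Remark~\ref{gammaremark}) and filter both source and target by (\ref{addfilt}) and (\ref{nygaardfilt}). On $\mathrm{gr}^i_{\mathcal N}$, the comparison with $L\widehat{\Omega}^i[-i]$ kills the pieces $j=1,\dots,i$ on the prismatic side; these are $\fibre(L\widehat{\Omega}^{i-j}\xrightarrow{j}L\widehat{\Omega}^{i-j})$ inside the twist $n$. Over $\bq_p$ the pieces are acyclic, but integrally they contribute $\mydet$ of multiplication by $j$ on $L\widehat{\Omega}^{i-j}$. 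Collecting exponents gives $\prod_{l\le n-1}\prod_{j=1}^{n-1-l}j^{\pm\chi(L\widehat{\Omega}^l)}$, which is $(n-1-l)!^{\pm\chi}$, i.e.\ $C_\infty(\mathscr{X},n)^{-1}$. This is essentially \cite{bhatt-lurie-22}[Prop. 5.5.12]; I would cite it and the relative argument. I would only need to check that the $\mathscr{X}/p$ contributions cancel, which holds because their Euler characteristics over $\bq_p$ vanish.
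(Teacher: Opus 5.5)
Your proposal is correct and follows the paper's own proof. Perfectness is reduced through (\ref{addfilt}), (\ref{nygaardfilt}) and (\ref{grffilt}) to Hodge-type pieces. Part b) comes from $\mydet_{\bz_p}\Cone(\iota_{\syn})=\mydet_{\bz_p}\Cone(\iota_{\add})$ via (\ref{grfiso}) and (\ref{grf0iso}). Part c) is the same Euler characteristic count using $\chi^\times\fibre(C\xrightarrow{j}C)=j^{-\chi(C_\bq)}$, with trivial contributions from $\mathscr{X}/p$.

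The only variation is how you get perfectness of $\mathrm{gr}^m_F L\widehat{\Omega}^i_{\mathscr{X}}$. You use the associated graded cotangent complex over $\mathrm{gr}^0_F\co_{\mathscr{X}}\simeq\co_{\mathscr{X}}/\!/\varpi$, whereas the paper uses a thick-subcategory argument with $F^{\geq j}R\simeq R$. Your route is valid and properly replaces your first attempt via $\tilde{p}$, which was circular. Two small points:
\begin{enumerate}
\item This graded cotangent complex is not a plain base change of $L_{\mathscr{X}}$. It is an extension built from $L_{\mathscr{X}}\otimes\co_{\mathscr{X}}/\!/\varpi$, $(\co_{\mathscr{X}}/\!/\varpi)[1]$ and a free generator $du$, all of which are perfect, so your conclusion still holds.
\item The resulting perfectness of $L_{\mathrm{gr}^0_F\mathscr{X}}$ is exactly what the $\Phi$-pieces in (\ref{grf0iso}) need, through the filtration in the proof of Prop.~\ref{charp}. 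Your sketch lumps these pieces in with the $\mathrm{gr}^m_F$ ones, so this step should be stated explicitly.
\end{enumerate}
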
 

\begin{proof} a) We first show that $R\Gamma_{\add}(\mathscr{X},\bz_p(n))$ is $\bz_p$-perfect. Using the bounded filtrations (\ref{addfilt}) and (\ref{nygaardfilt}) it suffices to show that $L\widehat{\Omega}^{i}_{\mathscr{X}}$ is $\bz_p$-perfect. This follows from perfectness of $L\bigwedge^i L_{\mathscr{X}}$  over $\co_{\mathscr{X}}$ and the fact that $\mathscr{X}$ is proper of finite tor-amplitude over $\bz_p$. The same reasoning applies to $\mathscr{X}/p$ relative to $\bz_p$, showing that $R\Gamma_{\add}(\mathscr{X},\bz_p(n))^{\rel}$ is $\bz_p$-perfect. An analogous but simpler argument shows that $\widehat{\mathrm{dR}}_{\mathscr{X}/A}^{< n,\rel}$ is $\bz_p$-perfect.

We next show that $G^{[0,k[}\mathrm{gr}^m_F R\Gamma_{\add}(\mathscr{X},\bz_p(n))$ is $\bz_p$-perfect. Using the bounded filtrations (\ref{grffilt}) and (\ref{nygaardfilt}) it suffices to show that $\mathrm{gr}^m_F L\widehat{\Omega}^{i}_{\mathscr{X}}$ is $\bz_p$-perfect. Since $\mathscr{X}/\bz_p$ is proper of finite tor-amplitude it suffices to show that $\mathrm{gr}^m_F L\bigwedge^i L_{\mathscr{X}}$ is a bounded complex of coherent $\co_\mathscr{X}$-modules. This can be checked locally on affines $\Spf(R)$. We know that $F^{\geq\star}L\bigwedge^i L_{\mathscr{X}}$ lies in the thick subcategory of $\cd(F^{\geq\star}R)$ generated by $F^{\geq \star-j}R$, $j\in\bz$. Hence, for any $m$, the complexes $F^{\geq m} L\bigwedge^i L_{\mathscr{X}}$ and $\mathrm{gr}^m_F L\bigwedge^i L_{\mathscr{X}}$  lie in the thick subcategory of $\cd(R)$ generated by the $R$-modules $F^{\geq j}R$, $j\in\bz$. But $F^{\geq j}R\simeq R$ since $\mathfrak{X}$ is $e$-filtered.  The same reasoning shows that $\mathrm{gr}^m_F L\widehat{\Omega}^{i}_{\mathscr{X}/p}$ is $\bz_p$-perfect, hence so is
$G^{[0,k[}\mathrm{gr}^m_F R\Gamma_{\add}(\mathscr{X},\bz_p(n))^{\rel}$.

We next show that  
$$\mathrm{gr}^j_\Phi\mathrm{gr}^0_F R\Gamma_{\add}(\mathscr{X},\bz_p(n))^{\rel}\simeq \mathrm{gr}^j_\Phi R\Gamma_{\add}((\mathrm{gr}^0_F\mathscr{X})/p,\bz_p(n))$$ 
is $\bz_p$-perfect. Using the bounded filtrations (\ref{addfilt}) and (\ref{nygaardfilt}) it suffices to prove that $\mathrm{gr}^j_\Phi L\widehat{\Omega}^{i}_{(\mathrm{gr}^0_F\mathscr{X})/p}$ is $\bz_p$-perfect. It was shown in the proof of Prop. \ref{charp} b) that $L\widehat{\Omega}^{i}_{(\mathrm{gr}^0_F\mathscr{X})/p}$ has a bounded filtration with graded pieces 
$$R\Gamma(\mathscr{X}, \bigwedge^{i-j}L_{ \mathrm{gr}^0_F\mathscr{X}         }\otimes_{\co_{\mathrm{gr}^0_F\mathscr{X} }}\co_{(\mathrm{gr}^0_F\mathscr{X})/p}\langle j\rangle[j])$$ 
in $\Phi$-grading weights $j,j+1$ for $j=0,\dots,i$. Since $\mathrm{gr}^{j+\varepsilon}_\Phi\co_{(\mathrm{gr}^0_F\mathscr{X})/p}\langle j\rangle\simeq \co_{\mathrm{gr}^0_F\mathscr{X} }[\varepsilon]$ for $\varepsilon=0,1$ it suffices to show that $R\Gamma(\mathscr{X}, \bigwedge^{i-j}L_{ \mathrm{gr}^0_F\mathscr{X}})$ is $\bz_p$-perfect. Since $\mathscr{X}$ is proper this reduces to perfectness of $L_{ \mathrm{gr}^0_F\mathscr{X}}$ which follows from the fact that  $\mathrm{gr}^0_F\mathscr{X}\to \mathscr{X}$ is a regular closed embedding in the derived sense given by the generalized Cartier divisor $\co_{\mathscr{X}}\xrightarrow{\varpi}\co_{\mathscr{X}}$.

Having shown that (\ref{grfiso}), (\ref{grf0iso}) and $R\Gamma_{\add}(\mathscr{X},\bz_p(n))^{\rel}$ are $\bz_p$-perfect, it follows from Thm. \ref{thm:synlog} b) that $G^{\geq k}R\Gamma_{\add}(\mathscr{X},\bz_p(n))^{\rel}$ is $\bz_p$-perfect, and that $G^{\geq k}R\Gamma_{\syn}(\mathscr{X},\bz_p(n))^{\rel}$ is $\bz_p$-perfect if and only if $R\Gamma_{\syn}(\mathscr{X},\bz_p(n))^{\rel}$ is. Since $G^{\geq k}\synlog^{\rel}_{\mathscr{X}}$ is an isomorphism both complexes are indeed $\bz_p$-perfect.

Perfectness of $G^{\geq k-\epsilon}\widehat{\mathrm{dR}}_{\mathscr{X}/A}^{< n,\rel}$ follows from that of $G^{[0,k-\epsilon[}\widehat{\mathrm{dR}}_{\mathscr{X}/A}^{< n,\rel}$ which has a bounded $F$-filtration whose graded are perfect by the facts established above.

b) It follows from Thm. \ref{thm:synlog} b) and part a) that $\Cone(\iota_?)$ is a perfect complex of $\bz_p$-modules with torsion cohomology and that
$$ \mydet_{\bz_p}\Cone(\iota_{\syn})=\mydet_{\bz_p}\Cone(\iota_{\add})$$
inside $\mydet_{\bq_p}\Cone(\iota_{\syn})_{\bq_p}=\bq_p=\mydet_{\bq_p}\Cone(\iota_{\add})_{\bq_p}$. This implies b).

c) Denote by
$$ \chi^\times(C):=\prod_{i\in\bz} |H^i(C)|^{(-1)^i}$$
the multiplicative Euler characteristic of a perfect complex of $\bz_p$-modules $C$  with torsion cohomology. We abbreviate
$\gamma^{dR,<n}_{\prism,\mathscr{X}}:= \gamma^{dR,<n}_{\prism, \mathscr{X}/\bz_p}\{n\}[-1]$ for the map in Lemma \ref{gammaiso} (with $A=\bz_p$ and $\star=0$). For any $n\geq 1$ the filtrations (\ref{addfilt}) and (\ref{nygaardfilt}) give
\begin{align*}  \chi^\times(\fibre( \gamma^{dR,<n}_{\prism,\mathscr{X}} )) = & \prod_{i=0}^{n-1}\chi^\times\fibre\left(\mathrm{gr}^i_{\mathcal N}\prism^{(1)}_{\mathscr{X}}\{n\}[-1]\to \mathrm{gr}^i_{Hod}\widehat{\mathrm{dR}}_{\mathscr{X}}[-1]  \right)\\
=& \prod_{i=1}^{n-1} \prod_{j=1}^{i} \chi^\times\fibre\left(L\widehat{\Omega}^{i-j}_{\mathscr{X}}\xrightarrow{j}L\widehat{\Omega}^{i-j}_{\mathscr{X}}\right)^{-(-1)^{j-i}}\\
=& \prod_{i=1}^{n-1}\prod_{j=1}^{i}j^{(-1)^{j-i}\chi(\mathscr{X}_{\bq_p},i-j)}\\
=&\prod_{i=0}^{n-2}(n-1-i)!^{(-1)^i\chi(\mathscr{X}_{\bq_p},i)}=C_\infty(\mathscr{X},n)
\end{align*} 
where we denote by 
$$ \chi(\mathscr{X}_{\bq_p},i):=\sum_{j\in\bz}(-1)^j\dim_{\bq_p}(\pi_{-j}L\widehat{\Omega}_{\mathscr{X}}^i)_{\bq_p}=\sum_{j\in\bz}(-1)^j\dim_{\bq_p}H^j(\mathscr{X}_{\bq_p},L\Omega_{\mathscr{X}_{\bq_p}}^i)$$
the rational Euler characteristic of derived Hodge cohomology (using properness of $\mathscr{X}/\bz_p$ for the second identity). We also use the fact that for any perfect complex of $\bz_p$-modules $C$ and integer $j$ one has
$$ \chi^\times\fibre (C\xrightarrow{j} C) = j^{-\chi(C_\bq)} $$
where $\chi(C_\bq)=\sum_{a\in\bz}(-1)^a\dim_{\bq_p} H^a(C_\bq)$ is the rational Euler characteristic of $C$. 

Since the groups $\pi_{-j}L\widehat{\Omega}_{\mathscr{X}/p}^i$ are torsion they have trivial rational Euler characteristic. The above computation then shows that $\chi^\times(\fibre(\gamma^{dR,<n}_{\prism,\mathscr{X}/p} ))=1$ and hence
$$ \chi^\times(\fibre(\gamma^{dR,<n,\rel}_{\prism,\mathscr{X}}))=C_\infty(\mathscr{X},n).$$
The identity in Thm. \ref{thm:synlogproper} c) then follows from the fact that
$$ \mydet_{\bz_p}C=\chi^\times(C)^{-1}\cdot\bz_p\subseteq \bq_p=\mydet_{\bq_p}C_{\bq_p} $$
for a perfect complex of $\bz_p$-modules $C$  with torsion cohomology.
\end{proof}

The following result was proven in two different ways in \cite{akn24}[Prop. 2.18, Rem. 4.66]. Here we add a third proof, a variant of the proof of Thm. \ref{thm:synlogproper}. 

\begin{prop} (Angeltveit quotient) Let $K/\bq_p$ be a finite extension with residue field $\kappa$ of cardinality $q$ and put $R=\co_K/\varpi^k$. Then for $n\geq 1$
$$ \chi^\times R\Gamma_{\syn}(R/\bz_p,\bz_p(n))=q^{-n(k-1)}$$
\label{angeltveit}\end{prop}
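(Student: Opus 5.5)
Since $R=\co_K/\varpi^k$ is annihilated by $p^{\lceil k/e\rceil}$, all the complexes involved are torsion, and the plan is to run the syntomic logarithm argument of Thm. \ref{thm:synlogproper} with the $\varpi$-adic filtration on $R$.

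\textbf{Setup.} Give $R$ the filtration $F^{\geq i}R=\varpi^iR$ (image of $\fm^i$). This is bounded in $[0,k-1]$ rather than $e$-filtered, but it is the base change of the $e$-filtered $F_\fm\co_K$ along a filtered map. I expect Prop. \ref{completeandbounded} b) and Lemma \ref{lcicor}, applied to $\co_K\to R$, to make $F^{\geq\star}R\Gamma_{?}(R,\bz_p(n))$ bounded for $?=\syn,\add$. The cotangent complex $L_{R/\bz_p}$ is perfect (lci), so each graded piece should be $\bz_p$-perfect with finite cohomology, and $\chi^\times$ can be computed by multiplying over graded pieces.

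\textbf{Key steps.} First, for $m\geq1$, Prop. \ref{griso} with $i=m$, $c=1$ gives $\mathrm{gr}^m_FR\Gamma_{\syn}\simeq\mathrm{gr}^m_FR\Gamma_{\add}$. Hence
\[\chi^\times R\Gamma_{\syn}(R,\bz_p(n))=\chi^\times R\Gamma_{\syn}(\kappa,\bz_p(n))\cdot\frac{\chi^\times R\Gamma_{\add}(R,\bz_p(n))}{\chi^\times R\Gamma_{\add}(\kappa,\bz_p(n))},\]
where Cor. \ref{globalfiltrations} c) identifies $\mathrm{gr}^0_F$ with the corresponding complexes for $\mathrm{gr}^0_FR=\kappa$.

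Second, for $\kappa$ perfect: $R\Gamma_{\syn}(\kappa,\bz_p(n))$ is $\fibre(W(\kappa)\xrightarrow{p^n\cdot\mathrm{id}-\varphi}W(\kappa))$-type for $n\geq1$ and is acyclic, because $1-p^{-n}\varphi$ is invertible on $W(\kappa)$ (equivalently, $\varphi$ is bijective and $p^n-\varphi$ is an isomorphism after checking mod $p$). So $\chi^\times=1$. Similarly $R\Gamma_{\add}(\kappa,\bz_p(n))$: since $\kappa$ is perfect, $L\Omega^i_\kappa=0$ for $i\geq1$, so the filtrations (\ref{addfilt}) and (\ref{nygaardfilt}) reduce $R\Gamma_{\add}$ to $\mathrm{gr}^0$ pieces $\kappa\{n\}[-1]$ for $i=0$ only, contributing $q^{\pm1}$. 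I will track this carefully, possibly via the crystalline description $\prism_\kappa=W(\kappa)$ with Nygaard filtration $p^iW(\kappa)$, which gives $R\Gamma_{\add}(\kappa,\bz_p(n))\simeq W(\kappa)/p^n[-1]$ with $\chi^\times=q^{n}$.

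Third, compute $\chi^\times R\Gamma_{\add}(R,\bz_p(n))$. Using $W(\kappa)$-relative computations, or equivalently the fibre of $\gamma$ in Lemma \ref{gammaiso}, and Prop. \ref{deRham}, $R\Gamma_{\add}$ has graded pieces $\mathrm{gr}^i_{\mathcal N}\prism_R\{n\}[-1]$ for $i<n$. Each of these is built from $L\widehat\Omega^{i-j}_R$ via (\ref{nygaardfilt}) and fibres of multiplication by $j$. For $R=W(\kappa)[z]/(E(z),z^k)$ relative to $W(\kappa)$ (invariant by Prop. \ref{deRham} d)), $L_R$ is a two-term complex of free $R$-modules, so each $L\widehat\Omega^{i}_R$ is finite with computable length. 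Rational Euler characteristics vanish, but the multiplicative ones do not, and I expect a total of $\chi^\times R\Gamma_{\add}(R,\bz_p(n))=q^{-n(k-1)}\cdot q^{n}$. This should come out as the count of lengths of $\mathrm{gr}^m_F$ for $m=1,\dots,k-1$, each $\mathrm{gr}^m_F\mathrm{gr}^i_{\mathcal N}$ contributing a $\kappa$-vector space whose alternating dimensions sum to $n$.

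\textbf{Main obstacle.} The hard part is the third step: computing the alternating $\kappa$-dimension of $\mathrm{gr}^m_F R\Gamma_{\add}(R,\bz_p(n))$ for $1\leq m\leq k-1$. The approach I would try is to show this graded piece depends only on the graded ring $\mathrm{gr}_F R=\kappa[\bar\varpi]/\bar\varpi^{k}$ (Prop. \ref{affinebc} b)), and then compute prismatic cohomology of this graded truncated polynomial ring weight by weight. Each weight $m$ in $[1,k-1]$ should contribute $\chi^\times=q^{-n}$, mirroring the $e=1$ computation of $\mathrm{gr}^m_F R\Gamma_{\syn}(\bF_p[z]/z^k)$ in \cite{akn24}.
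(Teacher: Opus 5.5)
Your first two steps match the paper: the $\varpi$-adic filtration, Prop.~\ref{griso} on $\mathrm{gr}^m_F$ for $m\geq 1$, Cor.~\ref{globalfiltrations} c) on $\mathrm{gr}^0_F$, and acyclicity of $R\Gamma_{\syn}(\kappa/\bz_p,\bz_p(n))$ for $n\geq 1$. The gap is the third step. It carries the whole content of the proposition, you leave it as an expectation, and the value you expect is wrong.

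The correct values are:
\begin{itemize}
\item $\chi^\times R\Gamma_{\add}(R/\bz_p,\bz_p(n))=q^{-kn}$, not $q^{-n(k-1)}q^{n}$.
\item $\chi^\times R\Gamma_{\add}(\kappa/\bz_p,\bz_p(n))=q^{-n}$, not $q^{n}$, because $W(\kappa)/p^n[-1]$ has its cohomology in degree $1$.
\end{itemize}
Your first description of the $\kappa$ case is also wrong. $L_{\kappa/\bz_p}\simeq\kappa[1]$ is not zero: perfectness over $\bF_p$ does not kill the cotangent complex over $\bz_p$. Hence $L\widehat\Omega^i_{\kappa/\bz_p}\simeq\kappa[i]$, and every Nygaard graded piece of $\prism_\kappa$ contributes. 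The two sign errors cancel in your formula, so you reach $q^{-n(k-1)}$ only by working backwards from it.

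The weight-by-weight plan of your last paragraph also rests on a false picture. Compute via Prop.~\ref{affinebc} b) with the graded ring $\mathrm{gr}_FR\simeq\kappa[t]/t^k$. Its cotangent complex has a generator of weight $k$, coming from the relation $t^k$. One finds $\chi^\times\mathrm{gr}^m_FR\Gamma_{\add}(R/\bz_p,\bz_p(n))=q^{-1}$ for $m\in\bigcup_{i=0}^{n-1}[ik+1,ik+k-1]$ and $=1$ for the other $m\geq 1$. So a weight $m\in[1,k-1]$ contributes $q^{-1}$, not $q^{-n}$, and weights up to $nk-1$ occur.

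The missing idea is that no graded computation is needed. Since $\prod_{m\geq 1}\chi^\times\mathrm{gr}^m_FR\Gamma_{\add}(R)=\chi^\times R\Gamma_{\add}(R)/\chi^\times R\Gamma_{\add}(\kappa)$, it suffices to compute the two totals, and the paper gets both from (\ref{addfilt}) and (\ref{nygaardfilt}) at $\star=0$.
\begin{itemize}
\item All pieces are $\bz_p$-perfect with finite cohomology. So the pieces $\fibre(L\widehat\Omega^{i-j}\xrightarrow{j}L\widehat\Omega^{i-j})$ have $\chi^\times=1$, and $\chi^\times R\Gamma_{\add}(R/\bz_p,\bz_p(n))=\prod_{i=0}^{n-1}\chi^\times(L\widehat\Omega^i_{R/\bz_p}[-i])^{-1}$.
\item The paper shows $\chi^\times(L\widehat\Omega^i_{R/\bz_p})=|R|^{(-1)^i}$ via the transitivity triangle for $\bz_p\to\co_K\to R$. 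This gives $L\widehat\Omega^i_{R/\bz_p}$ a filtration with pieces $\bigwedge^jL_{\co_K/\bz_p}\otimes_{\co_K}R[i-j]$.
\item For $j>0$, $\bigwedge^jL_{\co_K/\bz_p}$ has finite cohomology, so tensoring with $R=\co_K/\varpi^k$ gives $\chi^\times=\chi^\times\Cone(\varpi^k)=1$. Only $R[i]$ survives.
\end{itemize}
Your own lci presentation $L_{R/\bz_p}\simeq[R^2\to R]$ works equally well. Its $i$-th exterior power has pieces $\Gamma^i(R^2)[i]$ and $\Gamma^{i-1}(R^2)[i-1]$ of ranks $i+1$ and $i$, giving the same $|R|^{(-1)^i}$.

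Hence $\chi^\times R\Gamma_{\add}(R)=|R|^{-n}=q^{-kn}$. The case $k=1$ gives $q^{-n}$ for $\kappa$, and your step one then yields $q^{-kn}/q^{-n}=q^{-n(k-1)}$.
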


\begin{proof} The transitivity triangle for the cotangent complex associated to $\bz_p\to\co_K\to R$ shows that $L\widehat{\Omega}^{i}_{R/\bz_p}$ has a finite filtration with graded pieces
\begin{align*} &\bigwedge^jL_{\co_K/\bz_p}\otimes_{\co_K}\bigwedge^{i-j}L_{R/\co_K}\simeq  \bigwedge^jL_{\co_K/\bz_p}\otimes_{\co_K}\bigwedge^{i-j} (R[1])\\
\simeq  &\bigwedge^jL_{\co_K/\bz_p}\otimes_{\co_K}(\Gamma^{i-j} R)[i-j]\simeq  \bigwedge^jL_{\co_K/\bz_p}\otimes_{\co_K}R[i-j] 
\end{align*}
for $j=0,\dots,i$. By \cite{Flach-Morin-16}[Prop. 5.36] for $j>0$ the $\co_K$-complex $\bigwedge^jL_{\co_K/\bz_p}$ is concentrated in degree $j-1$ with finite cohomology.  For a perfect complex of $\co_K$-modules $F$ with finite cohomology we have  
\begin{equation} \chi^\times(F\otimes_{\co_K}^LR)=\chi^\times\Cone(F\xrightarrow{\varpi^k}F)=\chi^\times(F)\chi^\times(F)^{-1}=1.\label{fincoh}\end{equation}
Therefore
$$\chi^\times (L\widehat{\Omega}^{i}_{R/\bz_p}) =  \chi^\times\bigwedge^0L_{\co_K/\bz_p}\otimes_{\co_K}R[i]=|R|^{(-1)^i}.$$
By (\ref{addfilt}) and (\ref{nygaardfilt}) for $\star=0$ we have 
$$\chi^\times R\Gamma_{\add}(R/\bz_p,\bz_p(n))^{-1}=\prod_{i=0}^{n-1}\chi^\times (L\widehat{\Omega}^{i}_{R/\bz_p}[-i])=|R|^n=q^{kn}$$
where we have also used an argument analogous to (\ref{fincoh}) in (\ref{nygaardfilt}). Now consider $R$ with its $\varpi$-adic filtration and apply Prop. \ref{griso} and Cor. \ref{globalfiltrations} c). For $n\geq 1$ we find 
\begin{align*} &\chi^\times R\Gamma_{\syn}(R/\bz_p,\bz_p(n))=\prod_{j\geq 0} \chi^\times \mathrm{gr}_F^j R\Gamma_{\syn}(R/\bz_p,\bz_p(n))\\
=&\chi^\times R\Gamma_{\syn}(\kappa/\bz_p,\bz_p(n))\cdot\prod_{j\geq 1} \chi^\times \mathrm{gr}_F^j R\Gamma_{\add}(R/\bz_p,\bz_p(n))\\
=&\chi^\times R\Gamma_{\add}(\kappa/\bz_p,\bz_p(n))^{-1}\cdot \chi^\times R\Gamma_{\add}(R/\bz_p,\bz_p(n))=q^{n}\cdot q^{-kn}=q^{-n(k-1)}.
\end{align*}
\end{proof}

\section{The de Rham logarithm for small Tate twist}\label{smalltwist} 

In this section we show the following generalization of Theorem \ref{main} b). 

\begin{theorem} Let $\mathscr{X}$ be a quasi-compact, quasi-separated derived formal scheme over a $\delta$-ring $A$. For $0<n<p-1$ the sequence 
$$ R\Gamma_{\syn}(\mathscr{X}/A,\bz_p(n))^{\rel} \xrightarrow{\log_{\mathscr{X}/A}}  \widehat{\mathrm{dR}}_{\mathscr{X}/A}^{<n}[-1]\xrightarrow{} \widehat{\mathrm{dR}}_{(\mathscr{X}/p)/A}^{<n}[-1]$$
is a fibre sequence.
\label{smalln}\end{theorem}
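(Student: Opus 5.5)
The claim amounts to saying that $\log_{\mathscr{X}/A}$ identifies $R\Gamma_{\syn}(\mathscr{X}/A,\bz_p(n))^{\rel}$ with $\widehat{\mathrm{dR}}_{\mathscr{X}/A}^{<n,\rel}[-1]$, compatibly with $\iota$. Taking horizontal fibres in the fundamental square (Prop. \ref{relativebeilsquare}) already gives $\log_{\mathscr{X}/A}$ with $\log_{\mathscr{X}/A}\circ(\text{syn}^{\rel}\to\text{syn})$ compatible with the de Rham restriction. So it suffices to show that the fundamental square is integrally Cartesian when $0<n<p-1$, i.e. that the induced map on total fibres is an equivalence.

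I plan to reduce this to a statement about Fontaine--Messing syntomic cohomology, as the introduction indicates. First I reduce to the affine case $\mathscr{X}=\Spf(R)$ by Zariski descent. All terms of the square commute with finite limits, and they satisfy descent by Thm. \ref{packageproperties} b). Then I would write $R$ as a sifted colimit of polynomial $A$-algebras. This is not quite enough, because syntomic cohomology does not obviously commute with sifted colimits in $R$ (the $\varphi$-fibre does, but $\can$ and the Nygaard pieces are also fine by Thm. \ref{packageproperties} e)). Everything in the square is a finite limit of functors commuting with sifted colimits, so we may assume $R$ is a $p$-completed polynomial $A$-algebra, hence quasisyntomic. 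Next I use quasisyntomic descent (Thm. \ref{packageproperties} c1)) to pass to quasiregular semiperfectoid-type covers relative to $A$. This requires a version over $A$, possibly after a flat cover $A\to A^0$ via c2), and is only needed if the direct argument fails.

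The core step is the following. Define Fontaine--Messing syntomic cohomology $R\Gamma_{FM}(\mathscr{X}/A,\bz_p(n))$ as the fibre of $\varphi/p^n-1$ from $\fil^{\geq n}$ of the derived crystalline cohomology $\widehat{\mathrm{dR}}_{\mathscr{X}/A}$, i.e. crystalline cohomology of $\mathscr{X}/p$ with divided power filtration, to all of it. This makes sense because $\varphi$ divides $\fil^{\geq n}_{Hod}$ by $p^n$ when $n<p$, since $\gamma_n$ has no denominators beyond $p^n$ integrally. I then prove two comparisons.

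\textbf{Comparison (i).} For $n\leq p-2$ there is an equivalence $R\Gamma_{\syn}\simeq R\Gamma_{FM}$, compatible with the fundamental square. This would follow from the Nygaard-to-Hodge map $\fil^{\geq\bullet}\gamma^{dR}_\prism$ together with Lemma \ref{gammaiso}. The graded pieces (\ref{nygaardfilt}) differ from Hodge graded pieces only by fibres of multiplication by $j\leq n-1<p$, which are invertible. Hence $\mathcal N^{\geq m}\prism/\mathcal N^{\geq n}\to\fil^{\geq m}/\fil^{\geq n}$ is an equivalence for $m\leq n$, and one compares the Frobenius fibres.

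\textbf{Comparison (ii).} For the Fontaine--Messing version, the square is Cartesian by construction. The crystalline term for $\mathscr{X}$ equals that for $\mathscr{X}/p$ (this is $\beta$, Prop. \ref{betaprop}), and $\fil^{\geq n}$ of $\mathscr{X}/p$ is killed relative to $\mathscr{X}$ except through the Hodge quotient. Hence the relative FM cohomology is $\fibre(\fil^{\geq n}\widehat{\mathrm{dR}}_{\mathscr{X}}\to\fil^{\geq n}\widehat{\mathrm{dR}}_{\mathscr{X}/p})$. Identifying this with $\widehat{\mathrm{dR}}^{<n,\rel}[-1]$ uses that the underlying dR complexes agree.

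The main obstacle is comparison (i), specifically making the divided Frobenius $\varphi/p^n$ on the Hodge side agree with $\varphi\{n\}$ on the Nygaard side under $\gamma^{dR}_\prism$. The bound $n<p-1$, rather than $n<p$, presumably enters here: either through the $\mathscr{X}/p$ term, where one extra unit of $p$-divisibility is needed, or through the shift by one in the Nygaard-versus-Hodge comparison of Lemma \ref{gammaiso} applied to $\fil^{\geq n+1}$. I would first try to check that comparison on quasiregular semiperfectoid covers, where everything is discrete and explicit, and then descend.
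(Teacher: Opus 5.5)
\textbf{There is a genuine gap.} Your reduction target is false: the fundamental square is not integrally Cartesian for $0<n<p-1$. The theorem itself says the cofibre of $\log_{\mathscr{X}/A}$ is $\widehat{\mathrm{dR}}^{<n}_{(\mathscr{X}/p)/A}[-1]$, which is nonzero torsion. For $\mathscr{X}=\Spf(\bz_p)$ and $n=1$ it is $\bF_p[-1]$, and on $H^1$ the map $\log$ identifies $1+p\bz_p$ with $p\bz_p\subsetneq\bz_p$. Your first sentence gets this right; the ``so it suffices'' sentence contradicts it.

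The correct target therefore has to come out of Comparison (ii), and that step does not work as written. The fibre of $\sigma\colon R\Gamma^{\mathrm{FM}}_{\syn}(\mathscr{X}/A,\bz_p(n))\to R\Gamma_{\syn}((\mathscr{X}/p)/A,\bz_p(n))$ is $\fibre(\fil^{\geq n}\beta^{-1}_{\mathscr{X}/A})$. This map goes from $\fil^{\geq n}_{Hod}\widehat{\mathrm{dR}}_{\mathscr{X}/A}$ into the \emph{Nygaard} filtration $\fil^{\geq n}_{\mathcal N}\prism^{(1)}_{(\mathscr{X}/p)/A}\{n\}$, not into the Hodge filtration of $\widehat{\mathrm{dR}}_{(\mathscr{X}/p)/A}$. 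Its existence for $n\leq p-1$ is itself a lemma (Prop.~\ref{filteredbeta}), proved on relatively qrsp covers from the valuation of $p^j/j!$; it is also what makes your ``$\varphi/p^n$'' meaningful in the derived setting.

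Moreover, the underlying de Rham complexes do not agree: $\widehat{\mathrm{dR}}_{(\mathscr{X}/p)/A}\simeq\widehat{\mathrm{dR}}_{\mathscr{X}/A}\otimes_{\bz_p}\widehat{\mathrm{dR}}_{\bF_p/\bz_p}$ with $\widehat{\mathrm{dR}}_{\bF_p/\bz_p}\simeq\bz_p\oplus T$, where $T\neq 0$ is torsion. Since $\beta_{\mathscr{X}/A}$ is an equivalence, what one actually gets is $\fibre(\sigma)\simeq\fibre\bigl(\widehat{\mathrm{dR}}^{<n}_{\mathscr{X}/A}[-1]\to R\Gamma_{\add}((\mathscr{X}/p)/A,\bz_p(n))\bigr)$. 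Lemma~\ref{gammaiso} for $\mathscr{X}/p$ then turns the target into $\widehat{\mathrm{dR}}^{<n}_{(\mathscr{X}/p)/A}[-1]$, but via $\gamma^{dR}_{\prism,(\mathscr{X}/p)/A}\{n\}\circ\beta^{-1}_{\mathscr{X}/A}$, which is not $i^*$.

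The missing idea is Lemma~\ref{swlemma}. It says $i^*=\tau\circ\gamma^{dR}_{\prism,(\mathscr{X}/p)/A}\{n\}\circ\beta^{-1}_{\mathscr{X}/A}$, where $\tau$ is induced by the automorphism of $\mathscr{X}\otimes\bF_p\otimes\bF_p$ swapping the two factors. One must also check that $\tau$ preserves $\fil^{\geq n}_{Hod}\widehat{\mathrm{dR}}_{(\mathscr{X}/p)/A}$ for $n\leq p$, using strictness and $\fil^{\geq n}_{Hod}\widehat{\mathrm{dR}}_{\bF_p/\bz_p}=p^n\bz_p\oplus T$. Only then is the fibre identified with $\widehat{\mathrm{dR}}^{<n,\rel}_{\mathscr{X}/A}[-1]$ over $\widehat{\mathrm{dR}}^{<n}_{\mathscr{X}/A}[-1]$.

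Comparison (i) is left open exactly where the work lies, and neither of your guesses locates where $n<p-1$ enters. The Frobenius compatibility you single out disappears if you define the Hodge-side Frobenius by transport, $\varphi\{n\}_{dR}:=\beta\circ\varphi\{n\}\circ\fil^{\geq n}\beta^{-1}$. The real content is that $\fibre(\kappa)$ is the fibre of $\can-\varphi\{n\}\colon\fibre(\fil^{\geq n}\gamma^{dR}_{\prism,\mathscr{X}/A}\{n\})\to\fibre(\gamma^{dR}_{\prism,\mathscr{X}/A}\{n\})$. Your Lemma~\ref{gammaiso} argument correctly shows that $\can$ is an equivalence on these fibres, but that does not make $\can-\varphi\{n\}$ one.

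What is needed is a factorization $\varphi\{n\}=p\,\tilde\varphi\{n+1\}$ on these fibres, so that the difference is invertible by $p$-completeness. On relatively qrsp $R=A'/J$, choose a Breuil--Kisin orientation $s$, with associated generator $d_s$ of $I\subseteq J$. Then $\varphi\{n\}=\varphi(d_s)\,\varphi\{n+1\}\,s^{-1}$ on the relevant discrete quotient for $n\leq p-2$, and $d_s$ becomes divisible by $p$ in $\prism_{(R/p)/A'}$. This is precisely where $n\leq p-2$ is used, and without it Comparison (i) is not established.
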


Our proof uses a generalization of Fontaine-Messing syntomic cohomology to the relative situation. In the absolute case ($A=\bz_p$) Fontaine-Messing syntomic cohomology was defined in \cite{antieau-mmn21}[Def. 6.9] for quasisyntomic $R/\bz_p$ and already in \cite{fm87}, \cite{kato87} for smooth $R/\bz_p$. Theorem \ref{smalln} was proven for quasisyntomic $\mathscr{X}/\bz_p$ in \cite{antieau-mmn21}[Thm. 6.17] by homotopy theoretic techniques and then used to prove that syntomic cohomology and Fontaine-Messing syntomic cohomology are isomorphic \cite{antieau-mmn21}[Thm. 6.22]. Here we go the opposite route and first give a direct argument for the latter isomorphism and then deduce Theorem \ref{smalln} as a consequence.

\begin{remark} Theorem \ref{smalln} is equivalent to the existence of an isomorphism
$$ \log'_{\mathscr{X}/A}:R\Gamma_{\syn}(\mathscr{X}/A,\bz_p(n))^{\rel}\simeq \widehat{\mathrm{dR}}^{<n,\rel}_{\mathscr{X}/A}[-1]$$
over $\widehat{\mathrm{dR}}^{<n}_{\mathscr{X}/A}[-1]$ for $0<n<p-1$. By Lemma \ref{gammaiso} there is then also an isomorphism
$$\bigl(\gamma^{dR,<n,\rel}_{\prism,\mathscr{X}/A}\{n\}[-1]\bigr)^{-1}\circ \log'_{\mathscr{X}/A}:R\Gamma_{\syn}(\mathscr{X}/A,\bz_p(n))^{\rel}\simeq R\Gamma_{\add}(\mathscr{X}/A,\bz_p(n))^{\rel}$$
over $\widehat{\mathrm{dR}}^{<n}_{\mathscr{X}/A}[-1]$ for $0<n<p-1$. It is tempting to try to use the syntomic logarithm to construct this isomorphism. We cannot choose $k=0$ in Thm. \ref{thm:synlog} in order for the maps $\iota_{\syn}$ and $\iota_{\add}$ in (\ref{synlogdiagram2}) to be isomorphisms. We may instead use the $p$-adic filtration ($e=1$) in which case $R\Gamma_{?}(\mathscr{X}/A,\bz_p(n))^{\rel}\simeq F^{\geq 1}R\Gamma_{?}(\mathscr{X}/A,\bz_p(n))$ and see if we can apply Cor. \ref{relativelogdef} with a value of $k_1$ so that $G^{\geq k_1}=F^{\geq 1}$. This works precisely for $n=1$ and $p\geq 3$ in which case $k_1=1$, the $F$- and $G$-filtration coincide and we recover the construction of the logarithm described in the introduction.
\end{remark}

\subsection{Fontaine-Messing syntomic cohomology relative to a $\delta$-ring} \label{FM} Given the discussion in \cite{akn23}[Sec. 9] we can define Fontaine-Messing syntomic cohomology relative to a $\delta$-ring for quasisyntomic $R/A$ by quasisyntomic descent, as in the absolute case \cite{antieau-mmn21}[Def. 6.9]. We then generalize to  all $R/A$ by left Kan extension.

\begin{definition} ( \cite{akn23}[Def. 9.2]) A $\delta$-pair $(R,A)$ with $R$ $p$-complete is called \underline{relatively} \underline{quasiregular semiperfectoid} (relatively qrsp) if there is a factorization $A\to A'/J\simeq R$ where $A\to A'$ is a $p$-completely relatively perfect map of $\delta$-rings and such that there exists $I\subseteq J$ turning $(A')_{(p,I)}^\wedge$ into a prism with $L_{R/(A'/I)}$ having $p$-complete tor-amplitude in $[-1,-1]$.
\end{definition}

Note that since $I$ defines a prism structure it is locally generated by a regular element. This implies that $L_{R/A'}$ also has $p$-complete tor-amplitude in $[-1,-1]$, i.e. $J$ is locally generated by a (possible infinite) $p$-completely regular sequence.

\begin{lemma} Assume $(R,A)$ is a $p$-complete $\delta$-pair with $A$ $\bz_p$-flat. Then $L_{R/A}$ has $p$-complete tor-amplitude in $[-1,0]$ if and only if there exists a quasisyntomic faithfully flat $R\to R'=A'/J$ with $(R',A)$ relatively qrsp such that $A'$ is $\bz_p$-flat and an algebra over $A'_0:=A[z]^\wedge_p$ with $\delta(z)=0$ and $z-p\in J$.
\label{qrsplemma}\end{lemma}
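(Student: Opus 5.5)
The plan is to prove both directions of Lemma \ref{qrsplemma} by reduction to the standard "quasisyntomic covers by qrsp rings" construction (as in \cite{bms19} and \cite{akn23}[Sec. 9]), carried out relative to $A$.

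\emph{The direction "$\Leftarrow$".} Suppose given a quasisyntomic faithfully flat $R\to R'=A'/J$ with $(R',A)$ relatively qrsp. The transitivity triangle for $A\to A'\to R'$ together with $L_{A'/A}\simeq 0$ after $p$-completion (since $A\to A'$ is relatively perfect) gives $L_{R'/A}\simeq L_{R'/A'}$, which has tor-amplitude in $[-1,-1]$ by the remark following the definition. For $R\to R'$ quasisyntomic, $L_{R'/R}$ has amplitude in $[-1,0]$. The triangle
$$L_{R/A}\otimes_RR'\to L_{R'/A}\to L_{R'/R}$$
then shows that $L_{R/A}\otimes_RR'$ has tor-amplitude in $[-1,1]$, and in fact in $[-1,0]$: the map $L_{R'/A}\to L_{R'/R}$ induces a surjection on $H_0$-level tor, because both have vanishing $H_0$ contribution from $L_{R'/A}$. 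I will check this by a direct Tor-computation with an arbitrary discrete $R'$-module. Faithfully flat descent of tor-amplitude along $R\to R'$ then concludes.

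\emph{The direction "$\Rightarrow$".} This is the substantive direction. Choose a surjection onto $R$ from a $p$-completed free $\delta$-$A$-algebra $P=A\{x_s\}^\wedge_p$ on a set of generators of $R$. Set $A_1:=P[z]$ with $\delta(z)=0$, sending $z\mapsto p$. Then pass to the relative perfection
$$A'_{\infty}:=\mathrm{colim}_\varphi(A_1)^\wedge_p \quad\text{relative to } A[z],$$
that is, adjoin all $p$-power roots of the variables $x_s$ and $z$ compatibly with $\varphi$. Define
$$R':=R\hat\otimes_{P[z]}A'_\infty/(z-p).$$
Then $R\to R'$ is quasisyntomic and faithfully flat: it is a base change of the map $\bz_p[x]\to\bz_p[x^{1/p^\infty}]$ and its analogue for $z$, which are $p$-completely faithfully flat with cotangent complex vanishing mod $p$.

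Next, write $R'=A'/J$ with $A':=A'_\infty$ and $I:=(z-p)$. The prism condition for $(A',(z-p))$ holds because $\delta(z)=0$ forces $\delta(z-p)$ to be a unit mod $(p,z-p)$. This is the usual argument for $\bz_p[z]$ with $z\mapsto p$ and $\delta(z-p)\equiv -\delta(p)\equiv -1$. The map $A\to A'$ is relatively perfect over $A'_0=A[z]^\wedge_p$ by construction, and $A'$ is $\bz_p$-flat because $A$ is. For the condition that $L_{R'/(A'/I)}$ has tor-amplitude $[-1,-1]$, I will use that $L_{R/A}$ has amplitude $[-1,0]$ and that $L_{(A'/I)/A}$ modulo $p$ is generated by $d$ of the perfected variables, which becomes zero. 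From this, $L_{R'/(A'/I)}\simeq$ the $H_{-1}$ part, i.e. flat in degree $-1$.

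The step I expect to be hardest is exactly this last tor-amplitude computation. It has to be done carefully in the derived setting with $p$-completions, following \cite{bms19}[Lemma 4.28] and \cite{akn23}[Prop. 9.5].
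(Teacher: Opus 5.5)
Your route is the standard one that the paper's one-line proof points to (\cite{akn23}[Lemma 9.13]). For ``$\Leftarrow$'' you descend along $R\to R'$ using $L_{R'/A}\simeq L_{R'/A'}$. For ``$\Rightarrow$'' you relatively perfect a presentation $P[z]\to R$ with $z\mapsto p$. The ``$\Leftarrow$'' direction is fine: the upgrade from $[-1,1]$ to $[-1,0]$ is just connectivity of $L_{R/A}\otimes_RR'$. In ``$\Rightarrow$'' there are two slips and one real gap.

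\emph{The slips.} First, $A'$ must be the relative perfection of $P[z]$ over $A$, not over $A[z]$. If $z$ is not perfected, then $L_{A'/A}\otimes\bF_p\simeq A'/p\,dz\neq 0$, so $A\to A'$ is not relatively perfect. With $z$ perfected, $A'$ is an algebra over $A[z^{1/p^\infty}]^\wedge_p$, hence over $A'_0$, as the statement requires. Second, the perfection maps do not have cotangent complex vanishing mod $p$; for example $L_{\bF_p[x^{1/p^\infty}]/\bF_p[x]}\simeq \bF_p[x^{1/p^\infty}]\,dx[1]$. What is true is that $L_{A'/P[z]}\simeq (L_{P[z]/A}\hat{\otimes}_{P[z]}A')[1]$ is flat in homological degree $1$, and that is what makes $R\to R'$ quasisyntomic. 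Taking $P=A[x_s]^\wedge_p$ with $\delta(x_s)=0$ instead of a free $\delta$-algebra makes $A'=A[x_s^{1/p^\infty},z^{1/p^\infty}]^\wedge_p$ explicit, and then your ``base change of $\bz_p[x]\to\bz_p[x^{1/p^\infty}]$'' is literally true.

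\emph{The gap} is the step you call hardest. Your reason, that $L_{(A'/I)/A}$ vanishes mod $p$, is false. Since $L_{A'/A}$ vanishes $p$-completely and $z-p$ is a nonzerodivisor on $A'$, you get $L_{(A'/I)/A}\simeq I/I^2[1]\simeq (A'/I)[1]$. So in the triangle $L_{(A'/I)/A}\otimes R'\to L_{R'/A}\to L_{R'/(A'/I)}$ both left terms sit in homological degree $1$. The cofibre then lies in $[-1,-1]$ only if the class of $z-p$ maps universally injectively into $J/J^2$ mod $p$, and that is exactly what has to be proved; your sketch does not address it.

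The fix is base change. The map $P[z]\to R$ factors through $P[z]\to P$, $z\mapsto p$. Hence $A'/I\simeq A'\hat{\otimes}_{P[z]}P$ and $R'\simeq R\hat{\otimes}_{P[z]}A'\simeq R\hat{\otimes}_P(A'/I)$, which gives $L_{R'/(A'/I)}\simeq L_{R/P}\hat{\otimes}_RR'$. Now $L_{R/P}=\mathrm{cofib}(L_{P/A}\otimes_PR\to L_{R/A})$, with $L_{P/A}$ free in degree $0$ and $L_{R/A}$ in $[-1,0]$, so $L_{R/P}$ has tor-amplitude in $[-1,0]$. Moreover $\pi_0L_{R/P}=0$ because $P\to R$ is surjective. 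Therefore $L_{R/P}$, and hence $L_{R'/(A'/I)}$, has $p$-complete tor-amplitude in $[-1,-1]$. This is the only place where the hypothesis on $L_{R/A}$ enters.
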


\begin{proof} Follow the proof of \cite{akn23}[Lemma 9.13].
\end{proof}

\begin{prop} \label{filteredbeta} For a derived formal scheme $\mathscr{X}$ over a $\delta$-ring $A$ the commutative diagram of Prop. \ref{betaprop}
\begin{equation}\xymatrix{ 
  \prism^{(1)}_{\mathscr{X}/A}\{n\}\ar[rr]^{i^*} \ar[d]_{\gamma^{dR}_{\prism,{\mathscr{X}/A}}\{n\}}& &\prism^{(1)}_{(\mathscr{X}/p)/A}\{n\}\\
 \widehat{\mathrm{dR}}_{\mathscr{X}/A}\ar[urr]^{\beta_{\mathscr{X}/A}^{-1}}_{\sim}&&
}\notag\end{equation}
 extends to a commutative diagram of bounded filtrations indexed by $0\leq k\leq p-1$
\begin{equation}\xymatrix{ 
 \fil^{\geq k}_{\mathcal N} \prism^{(1)}_{\mathscr{X}/A}\{n\}\ar[rr]^{\fil^{\geq k}_{\mathcal N}(i^*)} \ar[d]_{\fil^{\geq k}\gamma^{dR}_{\prism,{\mathscr{X}/A}}\{n\}}& &\fil^{\geq k}_{\mathcal N}\prism^{(1)}_{(\mathscr{X}/p)/A}\{n\}\\
\fil^{\geq k}_{Hod} \widehat{\mathrm{dR}}_{\mathscr{X}/A}\ar[urr]^{\fil^{\geq k}\beta_{\mathscr{X}/A}^{-1}}&&
}\label{extendedbeta}\end{equation}
functorially in $\mathscr{X}/A$.
\end{prop}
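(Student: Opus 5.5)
The plan is to reduce, by descent, to relatively quasiregular semiperfectoid pairs, where all objects are discrete and the Nygaard and Hodge filtrations are explicit ideals. The diagram (\ref{extendedbeta}) is a diagram of functors of $\mathscr{X}/A$. Zariski descent (Thm. \ref{packageproperties} b), Prop. \ref{deRham} b)) reduces us to the affine case $\mathscr{X}=\Spf(R)$. Sifted-colimit compatibility (Thm. \ref{packageproperties} e)) together with left Kan extension from polynomial pairs reduces further to $R/A$ quasisyntomic. For this we need that the truncated filtrations $\fil^{\geq k}$ with $k\leq p-1$ commute with the relevant colimits; this holds because on polynomial pairs everything is controlled by the bounded filtrations of Lemma \ref{filtration}. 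We may also assume $A$ is $\bz_p$-flat, for instance by first base changing along a free $\delta$-ring surjecting onto $A$ and using base change (Thm. \ref{packageproperties} a)). Lemma \ref{qrsplemma} then provides a quasisyntomic cover $R\to R'=A'/J$ with $(R',A)$ relatively qrsp. Quasisyntomic descent (Thm. \ref{packageproperties} c1), Prop. \ref{deRham} c1)) reduces the construction of $\fil^{\geq k}\beta^{-1}$, together with its functoriality, to the qrsp case, provided the construction there is functorial in the Čech nerve.

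In the qrsp case, write $\widehat{\mathrm{dR}}_{R/A}$ for the $p$-completed PD envelope $D_J(A')$. Its Hodge filtration is given by the divided power ideals $J^{[k]}$. The ring $\prism^{(1)}_{R/A}$ is discrete and is computed via the prism $(A',I)$ as in \cite{akn23}, with Nygaard filtration $\{x:\varphi(x)\in I^k\cdot\ldots\}$. The mod $p$ reduction $\mathscr{X}/p$ corresponds to $R/p=A'/(J,p)$, and since $z-p\in J$ it is again of the same shape. Under these identifications the de Rham comparison $\beta$ of Prop. \ref{betaprop} becomes the crystalline comparison $\prism^{(1)}_{(R/p)/A}\simeq D_{(J,p)}(A')=D_J(A')$. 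The second equality holds because $p$ already has divided powers.

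The key step is to show that $\beta^{-1}$ carries $J^{[k]}$ into $\fil^{\geq k}_{\mathcal N}\prism^{(1)}_{(R/p)/A}$ for $k\leq p-1$. The Nygaard filtration on the crystalline side in characteristic $p$ is $\{x:\varphi(x)\in p^kD\}$. For $x\in J$ and $0\leq k\leq p-1$ we have $\varphi(\gamma_k(x))=x^{pk}/k!$. Moreover $x^p=p!\,\gamma_p(x)\in pD$, so $\varphi(\gamma_k(x))\in p^kD$, and $k!$ is a unit because $k<p$. Elements of $J^{[k]}$ are sums of products $\gamma_{i_1}(x_1)\cdots\gamma_{i_r}(x_r)$ with $\sum i_j\geq k$, so the same argument applies to them. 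This gives $\fil^{\geq k}\beta^{-1}$ as an honest map of filtered discrete modules. Commutativity with $\fil^{\geq k}_{\mathcal N}(i^*)$ and $\fil^{\geq k}\gamma^{dR}$ is then a statement about subobjects of a commuting diagram of discrete rings, hence automatic. The Breuil–Kisin twist $\{n\}$ is handled by trivializing it compatibly on the qrsp cover, or by using the module structure over $\prism^{(1)}$.

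The main obstacle is matching the abstract Nygaard filtration of Def. \ref{globalnygaard} on $\prism^{(1)}_{(R/p)/A}$ with the explicit description $\{x:\varphi(x)\in p^kD\}$. This requires comparing with the Frobenius description in \cite{akn23}[Sec. 7, 9] and checking that the fibre product (\ref{nygaardef}) is discrete in the qrsp case. A second, lesser point is checking that the qrsp identifications are strictly functorial, so that descent yields a map of filtrations rather than merely homotopy-compatible ones. Working with discrete objects throughout makes this a $1$-categorical check.
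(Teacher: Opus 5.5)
Your overall route is the paper's: Zariski descent, left Kan extension from free $\delta$-pairs, quasisyntomic descent to relatively qrsp pairs, and then strictness of both filtrations, so that everything reduces to the inclusion $J^{[k]}\subseteq \fil^{\geq k}_{\mathcal N}\prism^{(1)}_{(R/p)/A}$. For the descent step you also need that the terms of the Čech nerve of the qrsp cover are again relatively qrsp; the paper gets this from \cite{akn23}[Prop. 9.14].

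The gap is in the inclusion itself, which is the real content of the proposition. Your formula $\varphi(\gamma_k(x))=x^{pk}/k!$ is wrong. The correct formula is $\varphi(x)^k/k!$, where $\varphi(x)=x^p+p\delta(x)=p\,y$ with $y=(p-1)!\gamma_p(x)+\delta(x)\in D$. That slip is harmless.

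What does fail is the claim that ``the same argument applies'' to all generators of $J^{[k]}$. The ideal $J^{[k]}$ contains $\gamma_i(x)$ for every $i\geq k$, in particular for $i\geq p$, and there $i!$ is not a unit. For these you need $\varphi(\gamma_i(x))=(p^i/i!)\,y^i$ together with the estimate $v_p(p^i/i!)\geq p-1$ for $i\geq p$. Combined with $v_p(p^i/i!)=i$ for $i\leq p-1$, this gives $\varphi(\gamma_{i_1}(x_1)\cdots\gamma_{i_r}(x_r))\in p^{\min(\sum i_j,\,p-1)}D$. This estimate is exactly where the hypothesis $k\leq p-1$ is used (note $v_p(p^p/p!)=p-1$), and your argument never reaches it.

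There is a second issue. The description $\{x:\varphi(x)\in p^kD\}$, using the absolute Frobenius of $D=D_J(A')$, is a priori only a superset of the Nygaard filtration. To see this, write $\varphi_D=b\circ\varphi_{\rel}$, where:
\begin{itemize}
\item $\varphi_{\rel}:\prism^{(1)}_{(R/p)/A'}\to\prism_{(R/p)/A'}$ is the relative Frobenius;
\item $b:\prism_{(R/p)/A'}\to\prism_{(R/p)/A'}\hat\otimes_{A',\varphi}A'$ is the map $x\mapsto x\otimes 1$;
\item $\fil^{\geq k}_{\mathcal N}$ is by definition the preimage of $p^k\prism_{(R/p)/A'}$ under $\varphi_{\rel}$.
\end{itemize}
So showing $\varphi_D(J^{[k]})\subseteq p^kD$ proves nothing until you know that $b$ is injective modulo $p^k$. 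This does hold here, since $\varphi_{A'}$ is $p$-completely faithfully flat because $A'$ is relatively perfect over a free $\delta$-ring. You flagged this as the main obstacle but left it open.

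The paper avoids both issues by working with the relative Frobenius directly. It identifies $\prism^{(1)}_{(R/p)/A'}\simeq D_J(A')\simeq A'\{\varphi(J)/p\}^\wedge_p\subseteq A'\{J/p\}^\wedge_p\simeq\prism_{(R/p)/A'}$, with $\fil^{\geq k}_{\mathcal N}$ the preimage of $p^kA'\{J/p\}^\wedge_p$ (via the proof of \cite{akn24}[Lemma 3.14]). The inclusion then becomes the one-line computation $\xi^j/j!=(p^j/j!)(\xi/p)^j$, using $(\xi/p)^j\in A'\{J/p\}^\wedge_p$ and $v_p(p^j/j!)\geq\min(j,p-1)$, with no Frobenius computation at all.
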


\begin{proof} By Zariski descent we can assume $\mathscr{X}=\Spf(R)$ is affine. Then all terms in (\ref{extendedbeta}) commute with sifted colimits of $\delta$-pairs $(R,A)$ where we view $\fil^{\geq k}_{\mathcal N} \prism^{(1)}_{R/A}\{n\}$ and $\fil^{\geq k}_{\mathcal N}\prism^{(1)}_{(R/p)/A}\{n\}$ (endowed with the complete filtration of Lemma \ref{filtration} c)) as taking values in complete filtrations. In particular all terms in (\ref{extendedbeta}) are left Kan extended from free $\delta$-pairs $(R,A)$ where $A$ is a free $\delta$-ring and $R$ is a polynomial $A$-algebra. Hence we can assume that both $R$ and $A$ are $\bz_p$-flat and $R/A$ is quasisyntomic. By  quasisyntomic descent (Thm. \ref{packageproperties} c1)) together with \cite{akn23}[Prop. 9.14] we can further assume that $A\to R=A'/J$ is relatively qrsp (and $R$ and $A'$ $\bz_p$-flat by Lemma \ref{qrsplemma}).  In this case 
$$  \widehat{\mathrm{dR}}_{R/A}\simeq \widehat{\mathrm{dR}}_{R/A'}\simeq D_J(A')$$
is the $p$-complete divided power envelope of $J$ and $\fil^{\geq k}_{Hod} \widehat{\mathrm{dR}}_{R/A}\simeq D^{(k)}_J(A')$ is the $k$-th divided power ideal. In particular
$$D_J(A')\simeq \prism^{(1)}_{(R/p)/A'}\{n\}\simeq \prism^{(1)}_{(R/p)/A}\{n\}$$
is discrete and the Nygaard filtration is given by the pullback of the $p$-adic filtration under the (injective) relative Frobenius (see the proof of \cite{akn24}[Lemma 3.14])
$$\varphi_{(R/p)/A'}:\prism^{(1)}_{(R/p)/A'}\{n\}\simeq A'\left\{\frac{\varphi(J)}{p}\right\}_{p}^\wedge\subseteq A'\left\{\frac{J}{p}\right\}_{p}^\wedge\simeq\prism_{(R/p)/A'}\{n\}.$$
Since both the Hodge and the Nygaard filtrations are strict, the existence (and uniqueness and functoriality on relatively qrsp $R/A$) of $\fil^{\geq k}\beta_{R/A}^{-1}$ amounts to the inclusion $D^{(k)}_J(A')\subseteq \fil^{\geq k}_{\mathcal N}\prism^{(1)}_{(R/p)/A'}\{n\}$. This is equivalent to the inclusion
\begin{equation}D^{(k)}_J(A')=\left\langle \frac{\xi^j}{j!}\,\,\vert\,\, \xi\in J, j\geq k\right\rangle_p^\wedge\subseteq p^kA'\left\{\frac{J}{p}\right\}_{p}^\wedge.\label{divpowers}\end{equation}
For $\xi\in J$ we have
$$ \frac{\xi^j}{j!}=\frac{p^j}{j!}\left(\frac{\xi}{p}\right)^j$$
and $\frac{p^j}{j!}$ has $p$-adic valuation at least $p-1$ for $j\geq p$ and valuation $j$ for $j\leq p-1$. This implies (\ref{divpowers}) for $k\leq p-1$.
\end{proof}

In the following we abbreviate $\varphi\{n\}$ for $c^0\varphi\{n\}$.

\begin{definition} For $0\leq n\leq p-1$ define
$$ \varphi\{n\}_{dR}:=\beta_{\mathscr{X}/A}\circ\varphi\{n\}\circ \fil^{\geq n}\beta_{\mathscr{X}/A}^{-1}:\fil^{\geq n}_{Hod} \widehat{\mathrm{dR}}_{\mathscr{X}/A}\to \widehat{\mathrm{dR}}_{\mathscr{X}/A}$$
and define Fontaine-Messing syntomic cohomology by the fibre sequence
$$ R\Gamma^{\mathrm{FM}}_{\syn}(\mathscr{X}/A,\bz_p(n)) \to \fil^{\geq n}_{Hod}\widehat{\mathrm{dR}}_{\mathscr{X}/A}\xrightarrow{\can- \varphi\{n\}_{dR} } \widehat{\mathrm{dR}}_{\mathscr{X}/A}.$$
\end{definition}

\begin{prop} a)  For $0\leq n\leq p-1$ there is a commutative diagram with exact rows
\begin{equation}\xymatrix@C+15pt{ 
R\Gamma_{\syn}(\mathscr{X}/A,\bz_p(n)) \ar[r]\ar[d]_\kappa &\fil^{\geq n}_{\mathcal N}\prism^{(1)}_{\mathscr{X}/A}\{n\} \ar[d]_{\fil^{\geq n}\gamma^{dR}_{\prism,\mathscr{X}/A}\{n\}}\ar[r]^{\can-\varphi\{n\}} & \prism^{(1)}_{\mathscr{X}/A}\{n\} \ar[d]_{\gamma^{dR}_{\prism,\mathscr{X}/A}\{n\}} \\
R\Gamma^{\mathrm{FM}}_{\syn}(\mathscr{X}/A,\bz_p(n)) \ar[r] \ar[d]_\sigma&\fil^{\geq n}_{Hod}\widehat{\mathrm{dR}}_{\mathscr{X}/A}\ar[r]^{\can- \varphi\{n\}_{dR}} \ar[d]_{\fil^{\geq n}\beta_{\mathscr{X}/A}^{-1}}  & \widehat{\mathrm{dR}}_{\mathscr{X}/A}\ar[d]_{\beta_{\mathscr{X}/A}^{-1}}^{\sim}\\
R\Gamma_{\syn}((\mathscr{X}/p)/A,\bz_p(n)) \ar[r] &\fil^{\geq n}_{\mathcal N}\prism^{(1)}_{(\mathscr{X}/p)/A}\{n\} \ar[r]^(.55){\can-\varphi\{n\}} & \prism^{(1)}_{(\mathscr{X}/p)/A}\{n\} }\notag\end{equation}

b) The map $\kappa$ is an isomorphism for $0\leq n\leq p-2$.
\end{prop}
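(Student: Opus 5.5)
For part a) I will construct the diagram square by square and then take fibres. The top-right square already commutes: the left vertical map is $\fil^{\geq n}\gamma^{dR}_{\prism,\mathscr{X}/A}\{n\}$, and compatibility with $\can$ holds by construction (\ref{gammadef}). Compatibility with Frobenius comes from $\gamma^{dR}_{\prism}=\beta\circ i^*$ (Prop. \ref{betaprop}) together with $\fil^{\geq n}\beta^{-1}\circ\fil^{\geq n}\gamma^{dR}=\fil^{\geq n}_{\mathcal N}(i^*)$, which is the commutativity of (\ref{extendedbeta}) for $k=n\leq p-1$. Indeed,
$$\varphi\{n\}_{dR}\circ\fil^{\geq n}\gamma^{dR}=\beta\varphi\{n\}\,\fil^{\geq n}_{\mathcal N}(i^*)=\beta\, i^*\varphi\{n\}=\gamma^{dR}\varphi\{n\},$$
using that $\varphi\{n\}$ commutes with the functoriality map $i^*$ for $\mathscr{X}/p\to\mathscr{X}$.

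The bottom-right square commutes tautologically: $\can$ commutes with $\fil^{\geq n}\beta^{-1}$ because (\ref{extendedbeta}) is a map of filtrations, and $\varphi\{n\}_{dR}$ is by definition conjugate to $\varphi\{n\}$ via $\beta$. Taking horizontal fibres then produces $\kappa$ and $\sigma$. I would also check that the composite $\sigma\kappa$ is the restriction map, which follows from $\fil^{\geq n}\beta^{-1}\circ\fil^{\geq n}\gamma^{dR}=\fil^{\geq n}(i^*)$. All of this is functorial, since (\ref{extendedbeta}) is.

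For part b) I will compare the fibres of the vertical maps in the top two rows. Because $\beta$ is an isomorphism, $\kappa$ is an isomorphism if and only if the map induced on fibres
$$\fibre\bigl(\fil^{\geq n}\gamma^{dR}\bigr)\xrightarrow{\can-\varphi\{n\}}\fibre\bigl(\gamma^{dR}\bigr)$$
is an isomorphism. The strategy is to show that $\can$ induces an isomorphism on these fibres and that $\varphi\{n\}$ is topologically nilpotent relative to it, so that $\can-\varphi\{n\}$ is invertible. For $\can$, I will use the filtration of Lemma \ref{filtration} c): the fibre of $\fil^{\geq m}_{\mathcal N}\prism^{(1)}\to\fil^{\geq m}_{Hod}\widehat{\mathrm{dR}}$ has graded pieces $R\Gamma(\mathrm{WCart}^{\mathrm{HT}},\pi^*\fil^{\geq m-i}_HD\{i\})$ for $i\geq1$. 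Combined with (\ref{nygaardfilt}), the cofibre of $\can$ on fibres is built from the pieces $\fibre(L\widehat\Omega^{m-j}\xrightarrow{j}L\widehat\Omega^{m-j})$ for $1\leq j\leq m\leq n-1$. These vanish because $j\leq n-1\leq p-3$ is a unit, exactly as in Lemma \ref{gammaiso}, so the restriction $n\leq p-2$ is what makes this work.

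The main obstacle is controlling $\varphi\{n\}$ on $\fibre(\fil^{\geq n}\gamma^{dR})$. My plan is to reduce, as in the proof of Prop. \ref{filteredbeta}, to the case where $R/A$ is relatively qrsp, with $R$ and $A'$ flat over $\bz_p$, by left Kan extension and quasisyntomic descent. In that case everything is discrete, and I want to show that $\varphi\{n\}$ maps the relevant fibre into $p\cdot(\text{fibre})$, i.e. that it is divisible by $p$ with respect to the $\mathscr{I}$-adic filtration of Lemma \ref{filtration} a), using $\varphi(\mathscr{I})\subseteq p$ Hodge–Tate-wise. Completeness of that filtration then gives that $\can-\varphi\{n\}=\can(1-\can^{-1}\varphi\{n\})$ is an equivalence. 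I expect the delicate point to be making this divisibility precise, namely that the divided powers $\xi^j/j!$ with $j\leq p-1$ behave as in (\ref{divpowers}) with one extra power of $p$, which would account for the bound $n\leq p-2$ rather than $p-1$.
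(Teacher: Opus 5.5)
Part a) is correct and is the paper's argument: the $\can$- and $\varphi\{n\}$-squares are checked separately, using Prop.~\ref{filteredbeta} for $k=n\le p-1$ and $\gamma^{dR}_{\prism}=\beta\circ i^*$. Your skeleton for part b) also matches the paper. The map $\kappa$ is an isomorphism iff $\can-\varphi\{n\}$ is one on $\fibre(\fil^{\geq n}\gamma^{dR})\to\fibre(\gamma^{dR})$. The map $\can$ is an isomorphism there, so one wants a factorization $\varphi\{n\}=p\,\tilde\varphi$ and concludes by $p$-completeness.

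\textbf{Gap in b).} That factorization is exactly what you do not supply, and the mechanisms you point to do not produce it. The estimate (\ref{divpowers}) only compares the Hodge and Nygaard filtrations; on its own it says nothing about $p$-divisibility of $\varphi\{n\}$. The appeal to the $\mathscr{I}$-adic filtration of Lemma~\ref{filtration} a) together with ``$\varphi(\mathscr{I})\subseteq p$'' is not an argument. You would first have to show that $c^0\varphi\{n\}$ is compatible with that filtration on the fibres, and nothing in the setup gives this.

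\textbf{Misplaced bound.} The $\can$ step needs only $n\le p$, because Lemma~\ref{gammaiso} only uses that $j\le n-1\le p-1$ is a unit. So that step is not where $n\le p-2$ enters.

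\textbf{The missing idea.} You need the explicit Breuil--Kisin-twisted Frobenius in the relatively qrsp case $R=A'/J$, with $A'$ and $R$ $\bz_p$-flat.
\begin{enumerate}
\item[1.] In this case all terms are discrete, $\gamma^{dR}$ and $\fil^{\geq n}\gamma^{dR}$ are injective, and the fibres are $[-1]$-shifts of the quotients in (\ref{quotient}).
\item[2.] Choose a Breuil--Kisin orientation $s$, functorially via Lemma~\ref{qrsplemma}: $A'$ is an algebra over $A[z]^\wedge_p$ with $\delta(z)=0$ and $z-p\in J$. The orientation determines a generator $d_s$ of $I$ with $d_s\in J$, and $\varphi\{n\}(xs^n)=\varphi(d_s)^{-n}\varphi(x)s^n$.
\item[3.] Hence on the quotient $\varphi\{n\}=\varphi(d_s)\,\varphi\{n+1\}\,s^{-1}$. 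For this to make sense, classes must be representable in $D^{(n+1)}_J(A')\subseteq\fil^{\geq n+1}_{\mathcal N}\prism^{(1)}_{(R/p)/A'}$, which is (\ref{divpowers}) at $k=n+1\le p-1$. This, not the $\can$ step, is the real source of the bound $n\le p-2$.
\item[4.] Since $d_s\in J$, it becomes divisible by $p$ in $\prism_{(R/p)/A'}\simeq A'\{J/p\}^\wedge_p$. Therefore $\varphi\{n\}=p\cdot\tilde\varphi\{n+1\}$ with $\tilde\varphi\{n+1\}:=\varphi(d_s/p)\,\varphi\{n+1\}\,s^{-1}$, functorial on the relatively qrsp site.
\item[5.] Then $\can-\varphi\{n\}=\can\circ\bigl(1-p\,\can^{-1}\tilde\varphi\{n+1\}\bigr)$ is invertible by $p$-completeness.
\end{enumerate}
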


\begin{proof} a) We claim that the right hand squares commute separately for the $\can$-maps and the $\varphi\{n\}$-maps. For the $\can$-maps this is clear for the top square and the content of Prop. \ref{filteredbeta} for the bottom square. For the $\varphi\{n\}$-maps the bottom square commutes by definition of $\varphi\{n\}_{dR}$. The top square commutes since 
\begin{align*} \varphi\{n\}_{dR}\circ\fil^{\geq n}\gamma^{dR}_{\prism,\mathscr{X}/A}\{n\}=&\beta_{\mathscr{X}/A}\circ\varphi\{n\}\circ \fil^{\geq n}\beta_{\mathscr{X}/A}^{-1}\circ\fil^{\geq n}\gamma^{dR}_{\prism,\mathscr{X}/A}\{n\}\\
=&\beta_{\mathscr{X}/A}\circ\varphi\{n\}\circ \fil^{\geq n}_{\mathcal N}(i^*) \\
=&\beta_{\mathscr{X}/A}\circ i^*\circ\varphi\{n\} = \gamma^{dR}_{\prism,\mathscr{X}/A}\{n\}\circ\varphi\{n\}\end{align*}
by Prop. \ref{filteredbeta}.

b) By Lemma \ref{gammaiso} the fibres of the $\can$-maps in the top right hand square are isomorphic for $n\leq p$. Hence $\can$ induces an isomorphism
\begin{equation}\can:\fibre(\fil^{\geq n}\gamma^{dR}_{\prism,\mathscr{X}/A}\{n\})\simeq \fibre(\gamma^{dR}_{\prism,\mathscr{X}/A}\{n\}).\label{cangamma}\end{equation}
For $0\leq n\leq p-2$ we claim that there exists a morphism
$$\tilde{\varphi}\{n+1\}:\fibre(\fil^{\geq n}\gamma^{dR}_{\prism,\mathscr{X}/A}\{n\})\to\fibre(\gamma^{dR}_{\prism,\mathscr{X}/A}\{n\})$$
such that $\varphi\{n\}=p\tilde{\varphi}\{n+1\}$. By $p$-completeness this implies that
$$\can-\varphi\{n\}: \fibre(\fil^{\geq n}\gamma^{dR}_{\prism,\mathscr{X}/A}\{n\})\to\fibre(\gamma^{dR}_{\prism,\mathscr{X}/A}\{n\})$$
is an isomorphism and therefore that $\kappa$ is an isomorphism.

To verify the claim we follow the proof of Prop. \ref{filteredbeta} and reduce to relatively qrsp $R/A$ with $R=A'/J$ and $R$ and $A'$ $\bz_p$-flat. In this case
$$\prism_{R/A}\{n\}\simeq\prism_{R/A'}\{n\}\simeq A'\{n\}\left\{\frac{J}{I}\right\}_{(p,I)}^\wedge$$
is discrete, isomorphic to a prismatic envelope \cite{bhatt-scholze}[Example 7.9] and moreover $(p,I)$-completely flat over $A'$ \cite{bhatt-scholze}[Prop. 3.13]. Hence
$$\prism^{(1)}_{R/A'}\{n\}=(\prism_{R/A'}\{n\}\otimes_{A',\varphi}A')^\wedge_{(p,I)}\simeq A'\{n\}\left\{\frac{\varphi(J)}{\varphi(I)}\right\}_{(p,I)}^\wedge$$
is also discrete and the Nygaard filtration is given by the pullback of the $I$-adic filtration under the relative Frobenius
$$\varphi_{R/A'}:\prism^{(1)}_{R/A'}\{n\}\simeq A'\{n\}\left\{\frac{\varphi(J)}{\varphi(I)}\right\}_{(p,I)}^\wedge\subseteq  A'\{n\}\left\{\frac{J}{I}\right\}_{(p,I)}^\wedge \simeq\prism_{R/A'}\{n\},$$
in particular is strict. The map
$$ \gamma^{dR}_{\prism,R/A'}\{n\}:\prism^{(1)}_{R/A'}\{n\}\to\widehat{\mathrm{dR}}_{R/A'}\simeq\prism^{(1)}_{(R/p)/A'}\{n\}$$
is injective since it is isomorphic to the inclusion 
$$A'\{n\}\left\{\frac{\varphi(J)}{\varphi(I)}\right\}_{(p,I)}^\wedge\subseteq A'\{n\}\left\{\frac{\varphi(J)}{\varphi(I)},\frac{p}{\varphi(I)}\right\}_{(p,I)}^\wedge\simeq 
A'\{n\}\left\{\frac{\varphi(J)}{p}\right\}_{(p)}^\wedge.$$
Since the Hodge filtration was shown to be strict in the proof of Prop. \ref{filteredbeta} it follows that $\fil^{\geq n}\gamma^{dR}_{\prism,R/A'}\{n\}$ is also injective. The isomorphism (\ref{cangamma}) becomes the $[-1]$-shift of the isomorphism 
\begin{equation} \frac{D^{(n)}_J(A')}{\fil^{\geq n}_{\mathcal N}\prism^{(1)}_{R/A'}\{n\}}\simeq \frac{D_J(A')}{\prism^{(1)}_{R/A'}\{n\}}\simeq \frac{\prism^{(1)}_{(R/p)/A'}\{n\}}{\prism^{(1)}_{R/A'}\{n\}}\underset{\sim}{\xleftarrow{\cdot s^n}}\frac{\prism^{(1)}_{(R/p)/A'}}{\prism^{(1)}_{R/A'}}\label{quotient}\end{equation}
induced by the inclusions. Here the last isomorphism is induced by the choice of a Breuil-Kisin orientation of the orientable prism $(A')^\wedge_{(p,I)}$
 \cite{bhatt-lurie-22}[Rem. 2.5.8], \cite{akn24}[Lemma 4.7]. Moreover by Lemma \ref{qrsplemma}, after choosing $s$ on the orientable prism $((A'_0)^\wedge_{(p,z)},(z-p))$ we obtain a functorial choice on the qrsp-site relative to $A$.  By \cite{akn24}[Lemma 4.6] a Breuil-Kisin orientation $s$ determines an orientation $(d_s)=I=(z-p)$ of $(A')^\wedge_{(p,I)}$.  By \cite{akn24}[Def. 4.22] the map
$\varphi\{n\}$ on $\fil^{\geq n}_{\mathcal N}\prism^{(1)}_{(R/p)/A'}\{n\}$ is given by
$\varphi\{n\}(x\cdot s^n)= \varphi(d_s)^{-n}\varphi(x)s^n$ where $\varphi$ is the Frobenius of $\prism^{(1)}_{(R/p)/A'}$. Hence on the quotient (\ref{quotient}) we have
$\varphi\{n\}=\varphi(d_s)\varphi\{n+1\}s^{-1}$ for $0\leq n\leq p-2$. It remains to remark that $d_s\in J$ and therefore $d_s$ becomes divisible by $p$ in the prismatic envelope $A'\left\{\frac{J}{p}\right\}_{p}^\wedge \simeq\prism_{(R/p)/A'}$. Hence we may define $\tilde{\varphi}\{n+1\}:=\varphi(d_s/p)\varphi\{n+1\}s^{-1}$ functorially on the qrsp-site relative to $A$.
\end{proof}

\subsection{The proof of Theorem \ref{smalln}} We insert Fontaine-Messing syntomic cohomology into the fundamental square and obtain the following commutative diagram
\begin{equation}\xymatrix@C-14pt{ 
R\Gamma_{\syn}(\mathscr{X}/A,\bz_p(n)) \ar[r]^\kappa\ar[d] &R\Gamma^{\mathrm{FM}}_{\syn}(\mathscr{X}/A,\bz_p(n))\ar[r]^(.45){\sigma}\ar[d] & R\Gamma_{\syn}((\mathscr{X}/p)/A,\bz_p(n))\ar[d] & \\ 
\fil^{\geq n}_{\mathcal N}\prism^{(1)}_{\mathscr{X}/A}\{n\}\ar[dd]^{\fil^{\geq n}\gamma^{dR}_{\prism,\mathscr{X}/A}\{n\}}  \ar[r]& 
\fil^{\geq n}_{Hod}\widehat{\mathrm{dR}}_{\mathscr{X}/A}\ar[r]^(.4){\fil^{\geq n}\beta_{\mathscr{X}/A}^{-1}}\ar@{=}[dd]&\fil^{\geq n}_{\mathcal N}\prism^{(1)}_{(\mathscr{X}/p)/A}\{n\}\ar[d]_{\can'} \ar[r]^{\tilde{\gamma}^n}&\fil^{\geq n}_{Hod}\widehat{\mathrm{dR}}_{(\mathscr{X}/p)/A}\ar[d]_{\can_{dR}'}\\
 && \prism^{(1)}_{(\mathscr{X}/p)/A}\{n\} \ar[d]_{\beta_{\mathscr{X}/A}}^{\sim}\ar[r]^{  \tilde{\gamma}^0     }&\widehat{\mathrm{dR}}_{(\mathscr{X}/p)/A}\\
\fil^{\geq n}_{Hod}\widehat{\mathrm{dR}}_{\mathscr{X}/A}\ar@{=}[r]  &\fil^{\geq n}_{Hod}\widehat{\mathrm{dR}}_{\mathscr{X}/A}\ar[r]^{\can_{dR}}& \widehat{\mathrm{dR}}_{\mathscr{X}/A}& 
}
\notag\end{equation}
where we abbreviate $\tilde{\gamma}^n:=\fil^{\geq n}\gamma^{dR}_{\prism,(\mathscr{X}/p)/A}\{n\}$.  This diagram induces the following sequence of isomorphisms over $\fibre(\can_{dR})\simeq \widehat{\mathrm{dR}}^{<n}_{\mathscr{X}/A}[-1]$ for $0\leq n\leq p-2$
\begin{align} 
&R\Gamma_{\syn}(\mathscr{X}/A,\bz_p(n))^{\rel}\label{chain}\\
\simeq& \fibre{\sigma}\simeq\fibre({\fil^{\geq n}\beta_{\mathscr{X}/A}^{-1}})\notag\\
\simeq &\fibre\left(\fibre(\can_{dR})\xrightarrow{\fibre(\beta_{\mathscr{X}/A}^{-1},\fil^{\geq n}\beta_{\mathscr{X}/A}^{-1})}\fibre(\can')\right)\notag\\
\simeq &\fibre\left(\fibre(\can_{dR})\xrightarrow{\fibre(\tilde{\gamma}^0\circ\beta_{\mathscr{X}/A}^{-1},\tilde{\gamma}^n\circ\fil^{\geq n}\beta_{\mathscr{X}/A}^{-1})}\fibre(\can'_{dR})\right)\notag
\end{align}
where we have used Lemma \ref{gammaiso} for this last isomorphism. 

\begin{lemma} Denote by $i:\mathscr{X}/p\to\mathscr{X}$ the natural map. For $0\leq n\leq p$ there exists a commutative diagram
$$\xymatrix{ \fil^{\geq n}_{Hod}\widehat{\mathrm{dR}}_{(\mathscr{X}/p)/A} \ar[d]^{\can'_{dR}} \ar[r]^{\tau^n}_{\sim}&\fil^{\geq n}_{Hod}\widehat{\mathrm{dR}}_{(\mathscr{X}/p)/A}\ar[d]^{\can'_{dR}}\\
\widehat{\mathrm{dR}}_{(\mathscr{X}/p)/A} \ar[r]^{\tau^0}_\sim&\widehat{\mathrm{dR}}_{(\mathscr{X}/p)/A}
}$$
where $\tau^n$ is an automorphism such that $i^*=\tau^n\circ \tilde{\gamma}^n \circ \fil^{\geq n}\beta_{\mathscr{X}/A}^{-1}$. Moreover
$$\tau^0:\widehat{\mathrm{dR}}_{(\mathscr{X}/p)/A}\xrightarrow{\beta_{(\mathscr{X}/p)/A}^{-1}}\prism^{(1)}_{(\mathscr{X}/p)/p/A}\{n\}\xrightarrow{\mathrm{sw}^*}\prism^{(1)}_{(\mathscr{X}/p)/p/A}\{n\}\xrightarrow{\beta_{(\mathscr{X}/p)/A}}\widehat{\mathrm{dR}}_{(\mathscr{X}/p)/A}$$
is induced by the automorphism $\mathrm{sw}$ of $(\mathscr{X}/p)/p\simeq\mathscr{X}\otimes\bF_p\otimes\bF_p$ switching the two factors $\bF_p$.
\label{swlemma}\end{lemma}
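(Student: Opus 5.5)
Our plan is to obtain $\tau^n$ by applying the filtered comparison of Prop. \ref{filteredbeta} a second time, now to the characteristic $p$ scheme $\mathscr{X}/p$, and to read off $\tau^0$ from naturality of $\beta$. Consider the two maps $j_1,j_2:(\mathscr{X}/p)/p\simeq\mathscr{X}\otimes\bF_p\otimes\bF_p\to\mathscr{X}/p$ induced by the two inclusions of $\mathscr{X}/p$ as the first resp. second tensor factor. The map $j_1$ is the canonical map $(\mathscr{X}/p)/p\to\mathscr{X}/p$ used in the definition of $\beta_{(\mathscr{X}/p)/A}$, and $j_2=j_1\circ\mathrm{sw}$. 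Functoriality of Prop. \ref{betaprop} for $i:\mathscr{X}/p\to\mathscr{X}$ gives $\beta_{\mathscr{X}/A}$ and $\beta_{(\mathscr{X}/p)/A}$ compatibly with the maps $i^*$. In characteristic $p$, the map $i^*:\widehat{\mathrm{dR}}_{\mathscr{X}/A}\to\widehat{\mathrm{dR}}_{(\mathscr{X}/p)/A}$ should factor as $\beta_{(\mathscr{X}/p)/A}\circ j_2^*\circ\beta_{\mathscr{X}/A}^{-1}$, since $\beta_{\mathscr{X}/A}^{-1}$ lands in $\prism^{(1)}_{(\mathscr{X}/p)/A}$ and we must transport along the copy of $\mathscr{X}/p$ that is not used for the de Rham point. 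On the other hand $\tilde{\gamma}^0=\beta_{(\mathscr{X}/p)/A}\circ j_1^*$ by Prop. \ref{betaprop} applied to $\mathscr{X}/p$. Comparing these two expressions gives $i^*=\tau^0\circ\tilde\gamma^0\circ\beta_{\mathscr{X}/A}^{-1}$ with $\tau^0=\beta_{(\mathscr{X}/p)/A}\circ\mathrm{sw}^*\circ\beta_{(\mathscr{X}/p)/A}^{-1}$, which is the asserted formula.

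For the filtered statement we define
$$\tau^n:=\fil^{\geq n}\beta_{(\mathscr{X}/p)/A}\circ\fil^{\geq n}_{\mathcal N}(\mathrm{sw}^*)\circ\fil^{\geq n}\beta^{-1}_{(\mathscr{X}/p)/A}.$$
This requires $\fil^{\geq n}\beta_{(\mathscr{X}/p)/A}$ to be an isomorphism between $\fil^{\geq n}_{\mathcal N}\prism^{(1)}_{(\mathscr{X}/p)/p/A}\{n\}$ and $\fil^{\geq n}_{Hod}\widehat{\mathrm{dR}}_{(\mathscr{X}/p)/A}$ for $n\leq p$. Prop. \ref{filteredbeta} only gives the map $\fil^{\geq n}\beta^{-1}$, and only for $n\leq p-1$. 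We therefore redo its proof for $\mathscr{X}/p$. By left Kan extension and quasisyntomic descent we reduce to relatively qrsp $R/A$ with $R=A'/J$. Then $R/p=A'/(J,p)$, and both sides are the divided power envelope $D_{(J,p)}(A')$ with its strict divided power filtration, respectively the Nygaard filtration. The latter is the preimage of the $p$-adic filtration under the relative Frobenius. Here the generator $p$ lies in the ideal, and on $\varphi(J)/p$ and $p$ the Nygaard condition should match divided power degree up to $j=p$, because $v_p(p^j/j!)=j$ for $j<p$ while $p^p/p!$ has valuation $p-1$, which is offset by $p\in$ ideal. The main obstacle is this equality of filtrations for $n\leq p$, as opposed to mere inclusion, and we would verify it on generators $\gamma_j(\xi)$ with $\xi\in(J,p)$.

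Commutativity of the square with $\can'_{dR}$ is then formal, since all three constituents of $\tau^n$ are filtered lifts of the constituents of $\tau^0$. The identity $i^*=\tau^n\circ\tilde\gamma^n\circ\fil^{\geq n}\beta^{-1}_{\mathscr{X}/A}$ follows from the unfiltered identity by strictness in the qrsp case: all maps are injective maps of discrete modules, so equality after forgetting the filtration implies equality, and the result globalizes by descent and Kan extension as in Prop. \ref{filteredbeta}.
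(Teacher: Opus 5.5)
Your derivation of $\tau^0$ is the same as the paper's. Functoriality of $\beta$ along $i$ is induced by inserting $\bF_p$ as the second factor. Prop.~\ref{betaprop} for $\mathscr{X}/p$ uses the first factor. The two differ by $\mathrm{sw}$.

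The construction of $\tau^n$, however, rests on a false claim. There is no filtered isomorphism $\fil^{\geq n}_{\mathcal N}\prism^{(1)}_{((\mathscr{X}/p)/p)/A}\{n\}\simeq\fil^{\geq n}_{Hod}\widehat{\mathrm{dR}}_{(\mathscr{X}/p)/A}$ lifting $\beta_{(\mathscr{X}/p)/A}$, already for $n=1$:
\begin{itemize}
\item By Lemma~\ref{filtration}~d) with $m=0$, the cofibre of $\fil^{\geq 1}_{\mathcal N}\to\fil^{\geq 0}_{\mathcal N}$ is $R\Gamma$ of $\co_{(\mathscr{X}/p)/p}\simeq\co_{\mathscr{X}/p}\oplus\co_{\mathscr{X}/p}[1]$.
\item On the Hodge side the corresponding cofibre is $R\Gamma$ of $\co_{\mathscr{X}/p}$.
\item For $\mathscr{X}=\Spf(\bz_p)$, $A=\bz_p$ these are $\bF_p\oplus\bF_p[1]$ versus $\bF_p$.
\end{itemize}
In that example $\prism^{(1)}_{\bF_p\otimes^L\bF_p}\simeq\widehat{\mathrm{dR}}_{\bF_p/\bz_p}\simeq\bz_p\oplus T$ is discrete. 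Hence $\pi_0\fil^{\geq 1}_{\mathcal N}\to\pi_0\prism^{(1)}$ has kernel $\bF_p$. So the Nygaard filtration on this side is not strict, and it cannot be the Frobenius-preimage of the $p$-adic filtration; the $p$-torsion $T$ also breaks that description.

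Your qrsp identification of both filtrations with the divided power filtration on $D_{(J,p)}(A')$ therefore fails. The equality you flagged as the main obstacle is simply false. Even for $\mathscr{X}$ itself, Prop.~\ref{filteredbeta} only gives an inclusion $D^{(k)}_J(A')\subsetneq\fil^{\geq k}_{\mathcal N}$: for instance $p^k$ lies in the Nygaard but not in the Hodge filtration, since $p\neq 0$ in $R$. So there is no filtered inverse to conjugate with.

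The missing idea is that no filtered comparison on the prismatic side is needed. Take $\tau^n$ to be the restriction of $\tau^0$, and check that $\tau^0$ preserves the Hodge filtration.
\begin{itemize}
\item After reducing to relatively qrsp $R/A$ with $R$ and $A$ $\bz_p$-flat, the K\"unneth formula gives $\widehat{\mathrm{dR}}_{(R/p)/A}\simeq D_J(A')\hat\otimes_{\bz_p}\widehat{\mathrm{dR}}_{\bF_p/\bz_p}$. This is discrete, with strict Hodge filtration $\sum_j D^{(n-j)}_J(A')\otimes\fil^{\geq j}_{Hod}\widehat{\mathrm{dR}}_{\bF_p/\bz_p}$.
\item Here $\fil^{\geq j}_{Hod}\widehat{\mathrm{dR}}_{\bF_p/\bz_p}=p^j\bz_p\oplus T$, at least for $j\leq p-1$, which covers the range needed in Thm.~\ref{smalln}.
\item By functoriality of $\beta$, $\tau^0$ is induced from the corresponding automorphism of $\widehat{\mathrm{dR}}_{\bF_p/\bz_p}$. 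That is a ring automorphism, so it fixes $\bz_p\cdot 1$ and preserves $T=\ker(\prism_{\bF_p\otimes\bF_p}\to\prism_{\bF_p})$. Hence $\tau^0$ preserves every $\fil^{\geq n}$.
\item Since $\mathrm{sw}$ is an involution, so is $\tau^0$. Its restriction $\tau^n$ is therefore an automorphism, and the square with $\can'_{dR}$ commutes by construction.
\end{itemize}
Your closing strictness argument (injectivity of $\fil^{\geq n}_{Hod}\to\widehat{\mathrm{dR}}$ in the qrsp case) then upgrades the unfiltered identity to $i^*=\tau^n\circ\tilde\gamma^n\circ\fil^{\geq n}\beta^{-1}_{\mathscr{X}/A}$.
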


\begin{proof} We first prove the statement about $\tau^0$. For clarity of notation we assume $\mathscr{X}=\Spf(R)$ is affine. For $i=1,2$ let $\psi_i:R\otimes\bF_p\to R\otimes\bF_p\otimes\bF_p$ be the morphism of animated rings sending $\bF_p$ to the first and second factor, respectively. Then we have a commutative diagram
$$\xymatrix{
\prism^{(1)}_{R\otimes\bF_p /A}\{n\}\ar[d]^{\beta_{R/A}}\ar[r]^{\psi_{2,*}} & \prism^{(1)}_{R\otimes\bF_p\otimes\bF_p /A}\{n\}\ar[d]^{\beta_{R\otimes\bF_p/A}} \\ 
 \widehat{\mathrm{dR}}_{R/A}\ar[r]^{i^*}  & \widehat{\mathrm{dR}}_{R\otimes\bF_p/A} 
}$$
since $\beta_{R/A}$ is functorial in $R$. On the other hand we have a commutative diagram
$$\xymatrix{
\prism^{(1)}_{R\otimes\bF_p /A}\{n\}\ar[dr]_{\gamma^{dR}_{\prism,R\otimes\bF_p/A}\{n\}}\ar[r]^{\psi_{1,*}} & \prism^{(1)}_{R\otimes\bF_p\otimes\bF_p /A}\{n\}\ar[d]^{\beta_{R\otimes\bF_p/A}} \\ 
{} & \widehat{\mathrm{dR}}_{R\otimes\bF_p/A} 
}$$
by Prop. \ref{betaprop}. An easy computation using that $\mathrm{sw}\circ\psi_1=\psi_2$ then shows that $i^*=\tau^0\circ\gamma^{dR}_{\prism,R\otimes\bF_p/A}\{n\}\circ \beta_{R/A}^{-1}$.

In order to define $\tau^n$ we may assume $R/A$ is relatively qrsp and $R$ and $A$ are $\bz_p$-flat. By the Kuenneth formula for derived de Rham cohomology 
$$ \widehat{\mathrm{dR}}_{R\otimes\bF_p/A} \simeq  \widehat{\mathrm{dR}}_{R/A}\otimes_A \widehat{\mathrm{dR}}_{(A/p)/A}\simeq \widehat{\mathrm{dR}}_{R/A}\otimes_{\bz_p} \widehat{\mathrm{dR}}_{\bF_p/\bz_p}\simeq D_J(A')\otimes_{\bz_p} \widehat{\mathrm{dR}}_{\bF_p/\bz_p}$$
is discrete, as $\widehat{\mathrm{dR}}_{\bF_p/\bz_p}\simeq\bz_p\oplus T$ is discrete and $D_J(A')$ is $\bz_p$-flat. Here 
$T\simeq\bigl(\bigoplus_{i=1}^\infty\bz/i\bz\bigr)^\wedge_p$ \cite{bhatt12}[Cor. 8.6].
Moreover, for $n\leq p$ one has $\fil^{\geq n}_{Hod}\widehat{\mathrm{dR}}_{\bF_p/\bz_p}=p^n\bz_p\oplus T$. It follows that the filtration
\begin{align*}\fil^{\geq n}_{Hod}\widehat{\mathrm{dR}}_{R\otimes\bF_p/A}=&\sum_{j=0}^n\fil^{\geq n-j}_{Hod}\widehat{\mathrm{dR}}_{R/A}\otimes_{\bz_p}\fil^{\geq j}_{Hod}\widehat{\mathrm{dR}}_{\bF_p/\bz_p}\\
=&\sum_{j=0}^nD^{(n-j)}_J(A')\otimes_{\bz_p}(p^j\bz_p\oplus T)\end{align*}
is strict (for $n\leq p$) and the existence of $\tau^n$ reduces to the question whether $\tau^0$ preserves $\fil^{\geq n}_{Hod}\widehat{\mathrm{dR}}_{R\otimes\bF_p/A}$. By functoriality of $\beta_{R/A}$ the automorphism $\tau^0$ of $\widehat{\mathrm{dR}}_{R\otimes\bF_p/A}$ is induced by $\tau^0$ of $\widehat{\mathrm{dR}}_{\bF_p/\bz_p}$. Being an algebra automorphism $\tau^0$ acts trivially on $\bz_p$. It also preserves $T=\ker \left(\prism_{\bF_p\otimes\bF_p}\to\prism_{\bF_p}=\bz_p\right)$. So clearly $\tau^0$ preserves $\fil^{\geq n}_{Hod}\widehat{\mathrm{dR}}_{R\otimes\bF_p/A}$.
\end{proof}

We can now continue the chain of isomorphisms (\ref{chain}) over $\fibre(\can_{dR})\simeq \widehat{\mathrm{dR}}^{<n}_{\mathscr{X}/A}[-1]$
\begin{align} 
&R\Gamma_{\syn}(\mathscr{X}/A,\bz_p(n))^{\rel}\notag\\
\simeq &\fibre\left(\fibre(\can_{dR})\xrightarrow{\fibre(\tilde{\gamma}^0\circ\beta_{\mathscr{X}/A}^{-1},\tilde{\gamma}^n\circ\fil^{\geq n}\beta_{\mathscr{X}/A}^{-1})}\fibre(\can'_{dR})\right)\notag\\
\simeq &\fibre\left(\fibre(\can_{dR})\xrightarrow{\fibre(\tau^0\circ\tilde{\gamma}^0\circ\beta_{\mathscr{X}/A}^{-1},\tau^n\circ\tilde{\gamma}^n\circ\fil^{\geq n}\beta_{\mathscr{X}/A}^{-1})}\fibre(\can'_{dR})\right)\notag\\
\simeq &\fibre\left(\widehat{\mathrm{dR}}^{<n}_{\mathscr{X}/A}[-1]\xrightarrow{i^*} \widehat{\mathrm{dR}}^{<n}_{(\mathscr{X}/p)/A}[-1]\right)=\widehat{\mathrm{dR}}^{<n,\rel}_{\mathscr{X}/A}[-1].\notag
\end{align}
This concludes the proof of Theorem \ref{smalln}.

\begin{remark} \label{bke} For $0\leq n\leq p-1$ the above proof gives an isomorphism 
$$\fibre{\sigma}\simeq \widehat{\mathrm{dR}}^{<n,\rel}_{\mathscr{X}/A}[-1].$$
If $A=\bz_p$, $\mathscr{X}=\Spf(R)$ is affine for simplicity and $R/\bz_p$ is smooth then $\widehat{\mathrm{dR}}_R\simeq \widehat{\Omega}_{R}^{\bullet}$ is the usual  $p$-complete de Rham complex and (see the proof of Lemma \ref{swlemma})
$$\widehat{\mathrm{dR}}_{R\otimes\bF_p}\simeq \widehat{\mathrm{dR}}_R\otimes\widehat{\mathrm{dR}}_{\bF_p}\simeq \widehat{\Omega}_{R}^{\bullet}\oplus\widehat{\Omega}_{R}^{\bullet}\otimes T$$
with $\fil^{\geq n}_{Hod}\widehat{\mathrm{dR}}_{R\otimes\bF_p}\simeq p^{n-\bullet}\widehat{\Omega}_{R}^{\bullet}\oplus\widehat{\Omega}_{R}^{\bullet}\otimes T$ for $n<p$. Hence 
$$\widehat{\mathrm{dR}}_{R\otimes\bF_p}^{<n}\simeq  \bz/p^{n-\bullet}\bz\otimes\widehat{\Omega}_{R}^{\bullet,<n} $$
and
$$\fibre{\sigma}\simeq\widehat{\mathrm{dR}}_{R}^{<n,\rel}[-1]\simeq p^{n-\bullet}\widehat{\Omega}_{R}^{\bullet,<n}[-1]=:p(n) \widehat{\Omega}_{R}^{\bullet,<n}[-1].$$
On the other hand by \cite{bms19}[Sec. 8]
$$R\Gamma_{\syn}((R/p)/\bz_p,\bz_p(n))\simeq W\Omega^n_{R/p,\log}[-n]$$
so that we obtain a fibre sequence
$$ p(n) \widehat{\Omega}_{R}^{\bullet,<n}[-1]\to R\Gamma^{\mathrm{FM}}_{\syn}(R/\bz_p,\bz_p(n))\to W\Omega^n_{R/p,\log}[-n]$$
for $0\leq n\leq p-1$. This is precisely the statement of \cite{Bloch-Esnault-Kerz-14}[Thm. 5.4].

\end{remark}

\newcommand{\XX}{\mathscr{X}}
\section{The Bloch-Kato exponential map}\label{bksection}

We first recall the definition of the Bloch-Kato exponential map. Theorem \ref{main} a) gives a fiber sequence
\begin{equation}\label{fundamental-fib-seq}
R\Gamma_{\syn}(\XX,\bq_p(n))\rightarrow R\Gamma_{\syn}(\XX/p,\bq_p(n))\rightarrow (\widehat{\mathrm{dR}}_{\XX}^{<n})_{\bq_p}
\end{equation}
functorial in the quasi-compact, quasi-separated derived formal scheme $\XX$, where the left map is the canonical map and the right map is the composition
\begin{equation}\label{cristodRmap}
R\Gamma_{\syn}(\XX/p,\bq_p(n))\rightarrow R\Gamma_{\syn}(\XX,\bq_p(n))^{\mathrm{rel}}[1]\underset{\sim}{\xrightarrow{(\mathrm{log}_{\XX})_{\bq_p}[1]}}
(\widehat{\mathrm{dR}}_{\XX}^{<n})_{\bq_p}.
\end{equation}
By construction, (\ref{cristodRmap}) is induced by the isomorphism $\beta:\Prism_{\XX/p}\stackrel{\sim}{\rightarrow}\widehat{\mathrm{dR}}_{\XX}$ of \cite[Thm. 5.4.2]{bhatt-lurie-22}, i.e. by the 
crystalline to de Rham isomorphism, as follows:
$$R\Gamma_{\syn}(\XX/p,\bq_p(n))\stackrel{\sim}{\rightarrow} (\Prism_{\XX/p})^{\varphi=p^n}_{\bq_p}\rightarrow (\Prism_{\XX/p})_{\bq_p}\simeq (\widehat{\mathrm{dR}}_{\XX})_{\bq_p}\rightarrow (\widehat{\mathrm{dR}}_{\XX}^{<n})_{\bq_p}$$
Let ${\bc_p}$ be the completion of an algebraic closure of $\bq_p$. For $\XX=\mathrm{Spf}(\mathcal{O}_{\bc_p})$, the fiber sequence (\ref{fundamental-fib-seq}) recovers the fundamental exact sequence 
$$0\rightarrow \bq_p(n)\rightarrow B_{cris}^+(\mathcal{O}_{\bc_p})^{\varphi=p^n}\rightarrow B_{dR}^+(\mathcal{O}_{\bc_p})/F^{\geq n}\rightarrow 0$$
of $p$-adic Hodge theory. One may write the left term as $\bq_p(n)=\bq_p\cdot t^n$. Dividing out by $t^n$ and taking the colimit over $n\geq 0$, we obtain 
\begin{equation}\label{norm-fundamental-fib-seq}
0\rightarrow \bq_p\rightarrow B_{cris}^{\varphi=1}\rightarrow B_{dR}/F^{\geq 0}\rightarrow 0
\end{equation}
where $B_{cris}=B_{cris}^+(\mathcal{O}_{\bc_p})[1/t]$ and $B_{dR}=B_{dR}^+(\mathcal{O}_{\bc_p})[1/t]$. For any finite extension $K/\bq_p$ and any  $G_K$-representation on a finite dimensional $\bq_p$-vector space $V$, we set 
$$D_{cris}(V)=(B_{cris}\otimes_{\bq_p} V)^{G_K}\hspace{0cm}\mathrm{,}\hspace{0.5cm}D_{dR}(V)=(B_{dR}\otimes_{\bq_p} V)^{G_K}$$
and we define
$$R\Gamma_{f}(K,V):=\mathrm{Fibre}\left(D_{cris}(V)\stackrel{(\varphi-1,\iota)}{\longrightarrow}D_{cris}(V)\oplus D_{dR}(V)/F^{\geq 0}\right)$$
where $\iota$ is the composite map $D_{cris}(V)\rightarrow D_{dR}(V)\rightarrow D_{dR}(V)/F^{\geq 0}$. The exact sequence (\ref{norm-fundamental-fib-seq}) induces 
an equivalence
$$R\Gamma(K,V)\simeq\mathrm{Fibre}\left(R\Gamma(K,B_{cris}\otimes_{\bq_p} V)^{\varphi=1}\to R\Gamma(K,B_{dR}/F^{\geq 0}\otimes_{\bq_p} V)\right)$$
and hence a map
$$\widetilde{\mathrm{exp}}_V:R\Gamma_{f}(K,V)\longrightarrow R\Gamma(K,V).$$
The Bloch-Kato exponential map $\exp_V$ is defined as $H^0$ of the composite
\begin{equation}\label{Bloch-Kato-map}
D_{dR}(V)/F^{\geq 0}\longrightarrow R\Gamma_{f}(K,V)[1]\xrightarrow{\widetilde{\mathrm{exp}}_V[1]} R\Gamma(K,V)[1].
\end{equation}
The boundary map associated to (\ref{fundamental-fib-seq}) followed by the étale comparison map \cite[Thm. 8.3.1]{bhatt-lurie-22}
$$(\gamma_{\syn}^{\et}\{n\})_{\bq_p}:R\Gamma_{\syn}(\XX,\bq_p(n))\longrightarrow R\Gamma_{\et}(\XX_K,\bq_p(n))$$
gives 
\begin{equation}\label{exp}
(\widehat{\mathrm{dR}}_{\XX}^{<n})_{\bq_p}\rightarrow R\Gamma_{\syn}(\XX,\bq_p(n))[1]\rightarrow R\Gamma_{\et}(\XX_K,\bq_p(n))[1].
\end{equation}
The goal of this section is to compare (\ref{Bloch-Kato-map}) and (\ref{exp}), at least for $\XX/\mathcal{O}_K$ smooth proper. In this case we may choose a $G_K$-equivariant decomposition
\begin{equation}\label{Deligne-decomposition}
R\Gamma_{\et}(\XX_{{\bc_p}},\bq_p)\simeq \bigoplus_{i\geq 0}V^{i}[-i]
\end{equation}
where $V^i$ is the $G_K$-representation $V^i:=H^i_{\et}(\mathscr{X}_{\bc_p},\bq_p)$.
By proper base change and Galois descent, we obtain
\begin{equation}\label{Deligne-decomposition-induces}
R\Gamma_{\et}(\XX_{K},\bq_p(n))\simeq R\Gamma(K,R\Gamma_{\et}(\XX_{{\bc_p}},\bq_p(n)))\simeq \bigoplus_{i\geq 0}R\Gamma(K,V^{i}(n))[-i].
\end{equation}
Theorem \ref{main} d) is a consequence of the following statement.
\begin{theorem}\label{exptheorem} Let $K/\bq_p$ be a finite extension and $\XX/\mathcal{O}_K$ a smooth proper formal scheme. The $G_K$-equivariant decomposition (\ref{Deligne-decomposition}) induces an isomorphism of fiber sequences
\[ \xymatrix@C-10pt{
R\Gamma_{\syn}(\XX,\bq_p(n))\ar[r]^{}\ar[d]^{\alpha_f}&R\Gamma_{\syn}(\XX/p,\bq_p(n))\ar[d]^{\alpha_{cris}}\ar[r]^{}&(\widehat{\mathrm{dR}}_{\XX/\mathcal{O}_K}^{<n})_{\bq_p}\ar[d]^{\alpha_{dR}} \\
\bigoplus_{i}R\Gamma_f(K,V^{i}(n))[-i]\ar[r]&\bigoplus_{i}D_{cris}(V^{i}(n))^{\varphi=1}[-i]\ar[r]&\bigoplus_{i}D_{dR}(V^{i}(n))/F^{\geq 0}[-i] 
}
\]
where the isomorphism $\alpha_{cris}$ (resp. $\alpha_{dR}$) is induced by the crystalline (resp. de Rham) comparison theorem (see e.g. \cite{bms18}), and such that the square
\[ \xymatrix@C60pt{
R\Gamma_{\syn}(\XX,\bq_p(n))\ar[r]^{(\gamma_{\syn}^{\et}\{n\})_{\bq_p}}\ar[d]^{\alpha_f}&R\Gamma_{\et}(\XX_{K},\bq_p(n))\ar[d]^{(\ref{Deligne-decomposition-induces})} \\
\bigoplus_{i}R\Gamma_f(K,V^{i}(n))[-i]\ar[r]^{\oplus_i\widetilde{\mathrm{exp}}_{V^{i}(n)}[-i]}&\bigoplus_{i}R\Gamma(K,V^{i}(n))[-i] 
}
\] 
commutes.  
\end{theorem}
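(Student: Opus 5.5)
The plan is to deduce Theorem \ref{exptheorem} from functoriality of the fundamental fibre sequence (\ref{fundamental-fib-seq}) along $\Spf(\co_{\bc_p})\to\XX$, in the pro-étale or Galois-equivariant sense, by comparing with the case $\XX=\Spf(\co_{\bc_p})$, where the sequence is the fundamental exact sequence. We begin with the two right-hand vertical maps. For $\alpha_{cris}$ we use $R\Gamma_{\syn}(\XX/p,\bq_p(n))\simeq(\prism_{\XX/p})^{\varphi=p^n}_{\bq_p}$ together with the crystalline comparison of \cite{bms18}, which gives a $\varphi$-equivariant isomorphism
$$(\prism_{\XX/p})_{\bq_p}\simeq \bigl(R\Gamma_{\et}(\XX_{\bc_p},\bq_p)\otimes B_{cris}\bigr)^{G_K}.$$
Feeding the decomposition (\ref{Deligne-decomposition}) into this identifies the target with $\bigoplus_i D_{cris}(V^i)[-i]$. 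Taking $\varphi=p^n$ invariants and twisting by $t^n$ yields $\alpha_{cris}$. We obtain $\alpha_{dR}$ in the same way, from the de Rham comparison (\ref{comp1}) and compatibility of Hodge filtrations. The right square then commutes because (\ref{cristodRmap}) is induced by $\beta$, and the comparisons of \cite{bms18} are compatible with $\beta$ via the map $B_{cris}^+\to B_{dR}^+$. We check this last point on $\Spf(\co_{\bc_p})$, where $\beta$ becomes the inclusion $A_{cris}\to B^+_{dR}$ after base change along $\co_K\to\co_{\bc_p}$.

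Next we construct $\alpha_f$ as the induced map on fibres. Since the rows are fibre sequences and $\alpha_{cris},\alpha_{dR}$ are isomorphisms, $\alpha_f$ is an isomorphism. Here we must be careful that $R\Gamma_f(K,V)$ is exactly the fibre of $D_{cris}(V)^{\varphi=1}\to D_{dR}(V)/F^0$, and that $D_{cris}^{\varphi=1}$ is read as a derived fibre of $\varphi-1$. We also need the $\varphi=p^n$ invariants to match, i.e. that $(\varphi-p^n)$ on the syntomic side corresponds to $(\varphi-1)$ after the twist. This is bookkeeping.

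For the second square, we apply functoriality of the fibre sequence (\ref{fundamental-fib-seq}) to the pro-system of $\Spf(\co_L)\times\XX$ over finite $L/K$, or directly to $\XX_{\co_{\bc_p}}$. By the étale comparison, the sequence for $\XX_{\co_{\bc_p}}$ is
$$R\Gamma_{\et}(\XX_{\bc_p},\bq_p(n))\to (\text{crys})\to(\text{dR}),$$
and by the Künneth/comparison theorems it equals $\bigoplus_i V^i(n)[-i]$ tensored with the fundamental exact sequence of Cor. \ref{funseq}. Taking $G_K$-homotopy fixed points gives a map of fibre sequences from the $\XX$-row to
$$R\Gamma(K,V)\to R\Gamma(K,B_{cris}\otimes V)^{\varphi=1}\to R\Gamma(K,B_{dR}/F^0\otimes V),$$
whose first vertical arrow is $c_{\et}$. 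This row receives the map from the $R\Gamma_f$ row that defines $\widetilde{\exp}_V$ (invariants into derived invariants). So it remains to see that the middle and right vertical maps factor as $\alpha_{cris},\alpha_{dR}$ followed by these inclusions; this is exactly how the comparison isomorphisms were built. Both squares then commute on fibres, which gives the second square.

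The main obstacle is coherence in the $\infty$-category: a map between fibres is determined by the maps on the other two terms only together with the homotopy of the right square, so the homotopies must also match. I would try to handle this by building everything as one diagram of functors in the pro-étale cover $\Spf(\co_{\bc_p})\to\Spf(\co_K)$, and by using the fact that (\ref{Deligne-decomposition}) is only needed after passing to $\bq_p$-coefficients and cohomology. Since the decomposition is only a choice, I would also verify that the result is insensitive to the relevant ambiguity, which lives in maps $V^i[-i]\to V^j[-j]$ for $j>i$.
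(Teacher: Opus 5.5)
Your overall architecture is the paper's: base change to $\tilde{\XX}=\XX\otimes_{\co_K}\co_{\bc_p}$, compare with $V^i\otimes$ the fundamental exact sequence, and descend using that $G_K$ acts trivially on the $\XX$-row. The constructions of $\alpha_{cris},\alpha_{dR},\alpha_f$ are also fine. The gap is in the second square, at the claim that the first vertical arrow ``is $c_{\et}$''.

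The map $R\Gamma_{\syn}(\tilde{\XX},\bq_p(n))\to R\Gamma_{\et}(\XX_{\bc_p},\bq_p(n))$ that your comparison produces is built from two ingredients: the crystalline and de Rham comparison isomorphisms of \cite{bms18}, which come from $A_{\mathrm{inf}}$-cohomology, and the fundamental exact sequence. By contrast, $\gamma^{\et}_{\syn}\{n\}$ is Bhatt--Lurie's étale comparison, built from $\prism[1/\mathcal{I}]^{\varphi=1}$. That these two maps agree is not bookkeeping. It is exactly Prop. \ref{compatibility-comp-thms}, which is the substantive part of the paper's proof. There the paper:
\begin{itemize}
\item reduces to $n=0$ by cupping with $\epsilon$;
\item uses $F$-smoothness of $\tilde{\XX}$ and \cite{bhatt-mathew} to see that $\mathrm{colim}_m R\Gamma_{\syn}(\tilde{\XX},\bz_p(m))$ stabilizes and is $p$-complete;
\item rewrites this colimit, via the Breuil--Kisin trivialization $e_{A_{\mathrm{inf}}}=t/\tilde{\mu}$, as $((\prism_{\tilde{\XX}}\otimes_{A_{\mathrm{inf}}}A_{\mathrm{inf}}[1/\tilde{\mu}])^\wedge_p)^{\varphi=1}$;
\item checks compatibility with the étale comparison for $A_{\mathrm{inf}}$-cohomology.
\end{itemize}
Without such an argument you have only shown that $\widetilde{\exp}$ is compatible with \emph{some} identification of $R\Gamma_{\syn}(\XX,\bq_p(n))$ with étale cohomology, not with $c_{\et}$.

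A related claim is also false as stated. For general $n$, the fibre sequence of $\tilde{\XX}$ is not $\bigoplus_i V^i(n)[-i]$ tensored with Cor. \ref{funseq}. Its middle and right terms are $(R\Gamma_{cris}(\XX_k/W(k))\otimes B^+_{cris})^{\varphi=p^n}$ and $(\widehat{\mathrm{dR}}_{\XX/\co_K}\otimes B^+_{dR})/F^{\geq n}$. The lattice $D_{cris}(V^i)\otimes B^+_{cris}$ differs from $V^i\otimes B^+_{cris}$; the two agree only after inverting $t$. Correspondingly, $\gamma^{\et}_{\syn}\{n\}$ for $\tilde{\XX}$ is an isomorphism only for $n$ large (Bhatt--Mathew). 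So in low weights the first term of that sequence is not $R\Gamma_{\et}(\XX_{\bc_p},\bq_p(n))$. One must pass to $B_{cris}$ and $B_{dR}$, which the paper does by taking the colimit over multiplication by $\epsilon$. This yields (\ref{norm-fundamental-fib-seq}) tensored with étale cohomology, and the compatibility above is then proved on this colimit.

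The obstacles you flag are not where the difficulty lies. The decomposition is fixed in the statement, so no independence check is needed. Homotopy coherence is handled by working with maps of pullback squares throughout.
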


\begin{proof}
Let $\varpi\in \mathcal{O}_K$ be a uniformizer and set $k:=\mathcal{O}_K/\varpi$. The residue field $\overline{k}$ of $\mathcal{O}:=\mathcal{O}_{\bc_p}$ is an algebraic closure of $k$. We choose an embedding $W(\overline{k})\rightarrow\mathcal{O}$, which gives a section
$\overline{k}\rightarrow \mathcal{O}/p$. We set $\tilde{\XX}:=\XX\otimes_{\mathcal{O}_K}\mathcal{O}$. Finally, we denote by $\widehat{\otimes}$ the $p$-completed tensor product. 

We consider the pull-back squares
\[ \xymatrix{
R\Gamma_{\syn}(\tilde{\XX},\bq_p(n))\ar[d]\ar[r]^{}&R\Gamma_{\syn}(\tilde{\XX}/p,\bq_p(n))\ar[d]&  \\
(\widehat{\mathrm{dR}}^{\geq n}_{\tilde{\XX}})_{\bq_p}\ar[r]\ar[d]&(\widehat{\mathrm{dR}}_{\tilde{\XX}})_{\bq_p}\ar[d]&\\
(\widehat{\mathrm{dR}}^{\mathrm{hc},\geq n}_{\tilde{\XX}/\mathcal{O}_K})_{\bq_p}\ar[r]&(\widehat{\mathrm{dR}}^{\mathrm{hc}}_{\tilde{\XX}/\mathcal{O}_K})_{\bq_p}&
}
\] 
where the lower row is given by Hodge completed de Rham cohomology relative to $\mathcal{O}_K$. We have isomorphisms \cite[Section 1.5]{beil12}
$$B_{dR}^+\stackrel{\sim}{\rightarrow}(\widehat{\mathrm{dR}}^{\mathrm{hc}}_{\mathcal{O}})_{\mathbb{Q}_p}\stackrel{\sim}{\rightarrow} (\widehat{\mathrm{dR}}^{\mathrm{hc}}_{\mathcal{O}/\mathcal{O}_K})_{\mathbb{Q}_p} $$
and an isomorphism of filtered $\mathbb{E}_{\infty}$-algebras over $\mathcal{O}_K$ 
\begin{equation}\label{identdR}
(\widehat{\mathrm{dR}}^{\mathrm{hc}}_{\tilde{\XX}/\mathcal{O}_K})_{\bq_p}\simeq (\widehat{\mathrm{dR}}^{\mathrm{hc}}_{\XX/\mathcal{O}_K}\widehat{\otimes}_{\mathcal{O}_K} \widehat{\mathrm{dR}}^{\mathrm{hc}}_{\mathcal{O}/\mathcal{O}_K})_{\bq_p}\simeq \widehat{\mathrm{dR}}_{\XX/\mathcal{O}_K}\otimes_{\mathcal{O}_K} B^+_{dR}
\end{equation}
where we use the fact that $\widehat{\mathrm{dR}}_{\XX/\mathcal{O}_K}$ is Hodge complete and perfect as a filtered complex of $\mathcal{O}_K$-modules. Since $R\Gamma_{\add}(\tilde{\XX}/p,\bz_p(n))$ is torsion, we have 
$$R\Gamma_{\syn}(\tilde{\XX}/p,\bq_p(n))\simeq (\Prism_{\tilde{\XX}/p}\{n\})^{\varphi\{n\}=1}_{\bq_p}\simeq (\Prism_{\tilde{\XX}/p})^{\varphi=p^n}_{\bq_p}.$$
Moreover, we have isomorphisms 
\begin{eqnarray*}
(\Prism_{\tilde{\XX}/p})_{\bq_p}&\simeq&  (\Prism^{(1)}_{(\tilde{\XX}\otimes_{\mathcal{O}}\mathcal{O}/p)/A_{\mathrm{inf}}})_{\bq_p}\\
&\simeq&(\Prism^{(1)}_{\tilde{\XX}/A_{\mathrm{inf}}}\widehat{\otimes}_{A_{\mathrm{inf}}} \Prism^{(1)}_{(\mathcal{O}/p)/A_{\mathrm{inf}}})_{\bq_p}\\
&\simeq& (R\Gamma_{A_{\mathrm{inf}}}(\tilde{\XX})\widehat{\otimes}_{A_{\mathrm{inf}}} A_{cris})_{\bq_p}\\
&\simeq& R\Gamma_{cris}((\tilde{\XX}/p)/A_{cris})_{\bq_p}\\
&\simeq& R\Gamma_{cris}(\XX_k/W(k))\otimes_{W(k)}B^+_{cris}
\end{eqnarray*}
given respectively by \cite[Rem. 4.1.8]{bhatt-lurie-22}, \cite[Prop. 4.4.12]{bhatt-lurie-22}, \cite[Thm. 17.2]{bhatt-scholze}, \cite[Thm. 1.8(iii)]{bms18} and \cite[Prop. 13.21]{bms18}. Note that $R\Gamma_{cris}(\XX_k/W(k))$ is a perfect complex of $W(k)$-modules \cite[Thm. 1.8]{bhatt-scholze}.
Note also that the Frobenius twist appearing above merely twists the $A_{\mathrm{inf}}$-module structure. We obtain 
\begin{equation}\label{identcris}
R\Gamma_{\syn}(\tilde{\XX}/p,\bq_p(n))\simeq  (R\Gamma_{cris}(\XX_k/W(k))\otimes_{W(k)}B^+_{cris})^{\varphi=p^n}.
\end{equation}
In view of (\ref{identdR}) and (\ref{identcris}), the total pull-back square above can be written as follows:
\[ \xymatrix{
R\Gamma_{\syn}(\tilde{\XX},\bq_p(n))\ar[d]\ar[r]^{}&(R\Gamma_{cris}(\XX_k/W(k))\otimes_{W(k)}B^+_{cris})^{\varphi=p^n}\ar[d] \\
F^{\geq n}(\widehat{\mathrm{dR}}_{\XX/\mathcal{O}_K}\otimes_{\mathcal{O}_K} B^+_{dR})\ar[r]&\widehat{\mathrm{dR}}_{\XX/\mathcal{O}_K}\otimes_{\mathcal{O}_K} B^+_{dR}
}
\] 
We denote this pull-back square by $\mathrm{PB}(n)$. Here the right vertical map is induced by the canonical map 
\begin{equation}\label{cris-dR}
R\Gamma_{cris}(\XX_k/W(k))\otimes_{W(k)}B^+_{cris}\longrightarrow \widehat{\mathrm{dR}}_{\XX/\mathcal{O}_K}\otimes_{\mathcal{O}_K}B^+_{dR}
\end{equation}
which is in turn defined as the lower horizontal map of the commutative diagram
\[ \xymatrix{
(\Prism_{\tilde{\XX}/p})_{\bq_p}\ar[r]^{(\beta_{\tilde{\XX}})_{\bq_p}}&(\widehat{\mathrm{dR}}_{\tilde{\XX}})_{\bq_p} \\
(\Prism_{\tilde{\XX}}\widehat{\otimes}_{\Prism_{\mathcal{O}}} \Prism_{\mathcal{O}/p})_{\bq_p}\ar[d]^{\simeq}\ar[r]^{(\gamma_{\Prism,\tilde{\XX}}^{dR}\widehat{\otimes}_{\gamma^{dR}_{\Prism,\mathcal{O}}} \beta_{\mathcal{O}})_{\bq_p}}\ar[u]_{\simeq}&(\widehat{\mathrm{dR}}_{\tilde{\XX}}\widehat{\otimes}_{\widehat{\mathrm{dR}}_{\mathcal{O}}}\widehat{\mathrm{dR}}_{\mathcal{O}})_{\bq_p}\ar[d]\ar[u]_{\simeq}\\
(\Prism^{(1)}_{\tilde{\XX}/A_{\mathrm{inf}}}\widehat{\otimes}_{A_{\mathrm{inf}}} A_{cris})_{\bq_p}\ar[d]^{\simeq}\ar[r]&(\widehat{\mathrm{dR}}_{\XX/\mathcal{O}_K}\otimes_{\mathcal{O}_K}\widehat{\mathrm{dR}}^{\mathrm{hc}}_{\mathcal{O}/\mathcal{O}_K})_{\bq_p}\ar[d]^{\simeq}\\
R\Gamma_{cris}(\XX_k/W(k))\otimes_{W(k)} B^+_{cris}\ar[r]^{\hspace{0.5cm}(\ref{cris-dR})}&\widehat{\mathrm{dR}}_{\XX/\mathcal{O}_K}\otimes_{\mathcal{O}_K}B^+_{dR}
}
\] 
where the top square commutes by functoriality of $\beta$ and $\gamma_{\Prism}^{dR}$, and by definition of $\gamma_{\Prism}^{dR}$.

We denote by $$\epsilon=(1,\zeta_p,\zeta_{p^2},\cdots)\in \bz_p(1)=T_p(\mathcal{O}^{\times}_{\bc_p})\stackrel{\sim}{\rightarrow}H_{\syn}^0(\mathrm{Spf}(\mathcal{O}_{\bc_p}),\bz_p(1))$$ a generator. Let $t\in B^{+}_{\mathrm{dR}}$ be the image of $\epsilon$ under the maps
$$\bz_p(1)\rightarrow B^{+}_{cris}\subset B^{+}_{\mathrm{dR}}$$
which is a uniformizer of  $B^{+}_{\mathrm{dR}}$. Multiplication by $\epsilon$ gives a morphism from the pull-back square $\mathrm{PB}(n)$ to $\mathrm{PB}(n+1)$, which can be written as  
\[ \xymatrix{
R\Gamma_{\syn}(\tilde{\XX},\bq_p(n+1))\ar[d]\ar[r]^{}&(R\Gamma_{cris}(\XX_k/W(k))\otimes_{W(k)}t^{-1}B^+_{cris})^{\varphi=p^n}\ar[d] \\
F^{\geq n}(\mathrm{dR}_{\XX/\mathcal{O}_K}\otimes_{\mathcal{O}_K} t^{-1}B^+_{dR})\ar[r]&\mathrm{dR}_{\XX/\mathcal{O}_K}\otimes_{\mathcal{O}_K}t^{-1} B^+_{dR}
}
\] 
The colimit $\mathrm{colim}_m\,\mathrm{PB}(n+m)$, where the transition maps are given by multiplication by $\epsilon$, is the pull-back square
\[ \xymatrix{
\mathrm{colim}\,R\Gamma_{\syn}(\tilde{\XX},\bq_p(n+m))\ar[d]\ar[r]^{}&(R\Gamma_{cris}(\XX_k/W(k))\otimes_{W(k)}B_{cris})^{\varphi=p^n}\ar[d] \\
F^{\geq n}(\mathrm{dR}_{\XX/\mathcal{O}_K}\otimes_{\mathcal{O}_K} B_{dR})\ar[r]&\mathrm{dR}_{\XX/\mathcal{O}_K}\otimes_{\mathcal{O}_K} B_{dR}
}
\] 
where $m$ runs over the non-negative natural numbers $\mathbb{N}$, and the right vertical map is induced by (\ref{cris-dR}). Using the comparison theorems, this  pull-back square is isomorphic to
\[ \xymatrix{
\mathrm{colim}\,R\Gamma_{\syn}(\tilde{\XX},\bq_p(n+m))\ar[d]\ar[r]^{}&R\Gamma_{\et}(\XX_{\bc_p},\bq_p(n))\otimes_{\bq_p}B_{cris}^{\varphi=1}\ar[d] \\
R\Gamma_{\et}(\XX_{\bc_p},\bq_p(n))\otimes_{\bq_p} F^{\geq 0}B_{dR}\ar[r]&R\Gamma_{\et}(\XX_{\bc_p},\bq_p(n))\otimes_{\bq_p} B_{dR}
}
\] 
where the right vertical map (resp. the lower horizontal map) is induced by the inclusion $B_{cris}^{\varphi=1}\rightarrow B_{dR}$ (resp. $F^{\geq 0}B_{dR}\rightarrow B_{dR}$). This last pullback square 
together with the fundamental exact sequence
$$0\rightarrow \bq_p\rightarrow B_{cris}^{\varphi=1}\rightarrow B_{dR}/F^{\geq 0}\rightarrow 0$$
yield an isomorphism
\begin{equation*}\label{BLmap}
\alpha(n):\mathrm{colim}\,R\Gamma_{\syn}(\tilde{\XX},\bq_p(n+m))\stackrel{\sim}{\longrightarrow}R\Gamma_{\et}(\XX_{\bc_p},\bq_p(n)).
\end{equation*}

\begin{prop}\label{compatibility-comp-thms}
The map $\alpha(n)$ is the composition
\begin{eqnarray*}
\mathrm{colim}\,R\Gamma_{\syn}(\tilde{\XX},\bq_p(n+m))&\simeq&(\mathrm{colim}\,R\Gamma_{\syn}(\tilde{\XX},\bz_p(n+m)))_{\bq_p}\\
\label{BL-map}&\rightarrow& ((\mathrm{colim}\,R\Gamma_{\syn}(\tilde{\XX},\bz_p(n+m)))^{\wedge}_p)_{\bq_p}\\
&\stackrel{\sim}{\rightarrow}&R\Gamma_{\et}(\XX_{\bc_p},\bq_p(n)).
\end{eqnarray*}
where the last isomorphism is given by \cite[Thm. 8.5.1]{bhatt-lurie-22}. 
\end{prop}

\begin{proof}
The map $\alpha(n+1)$ is obtained from $\alpha(n)$ using multiplication by $\epsilon$, and a similar observation applies to the composite map of the proposition. Therefore, it suffices to treat the case $n=0$. We have a commutative square
\[ \xymatrix{
R\Gamma_{\syn}(\tilde{\XX},\bz_p(m))\ar[d]^{\gamma_{\syn}^{\et}\{m\}}\ar[r]^{\cup\epsilon\hspace{0.5cm}}& R\Gamma_{\syn}(\tilde{\XX},\bz_p(m+1))\ar[d]^{\gamma_{\syn}^{\et}\{m+1\}}\\
R\Gamma_{\et}(\XX_{{\bc_p}},\bz_p(m))\ar[r]^{\cup\epsilon \hspace{0.5cm}} \ar[r]& R\Gamma_{\et}(\XX_{{\bc_p}},\bz_p(m+1))
}
\] 
where the lower horizontal map is an isomorphism. By \cite{bhatt-mathew}[Prop. 4.12]  the formal scheme $\tilde{\XX}$ is $F$-smooth in the sense of \cite{bhatt-mathew}[Def. 1.7]. It follows from \cite{bhatt-mathew}[Thm. 1.8] that the vertical maps are isomorphisms for $m$ large enough, hence so is the upper horizontal map. In particular, $\mathrm{colim}_m\,R\Gamma_{\syn}(\tilde{\XX},\bz_p(m))$ is $p$-complete. 

Let $(A_0,I)$ be the perfection of the $q$-de Rham prism, so that $A_0$ is the $(p,q-1)$-completion of $\bz[q^{1/p^{\infty}}]$ and $I$ is the ideal generated by $[p]_q=(\frac{q^p-1}{q-1})$, and let $\bz_p^{\mathrm{cyc}}:=A_0/I$. Consider the map of perfect prisms 
$f:(A_0,I)\rightarrow (A_{\mathrm{inf}},\mathrm{Ker}(\theta))$ induced by the map of perfectoid rings $\bz_p^{\mathrm{cyc}}\rightarrow\mathcal{O}_{\bc_p}$ given by $(q^p,q,q^{1/p},\cdots)\mapsto\epsilon$, see \cite[Thm. 3.10]{bhatt-scholze}. Then $q^p\in A_0$ maps to $[\epsilon]\in A_{\mathrm{inf}}$ and $q$ maps to $[\epsilon^{1/p}]$. We denote by $\mu\in A_{\mathrm{inf}}$ the image of $q^p-1$, by $\tilde{\mu}=\varphi^{-1}(\mu)$ the image of $q-1$ and by $\xi$ the image of $[p]_q$ via the map $f:A_0\rightarrow A_{\mathrm{inf}}$. Following  \cite[Notation 2.6.3]{bhatt-lurie-22}, we trivialize the Breuil-Kisin twist $A_{\inf}\{1\}=A_{\inf}\cdot e_{A_{\inf}}$, where  
$$e_{A_{\inf}}=\frac{\mathrm{log}_{\Prism}([\epsilon])}{\tilde{\mu}}=\frac{t}{\tilde{\mu}}\in A_{\inf}\{1\}$$ 
is the image of $\frac{\mathrm{log}_{\Prism}(q^p)}{q-1}\in A_0\{1\}$ via the map $f\{1\}:A_0\{1\}\rightarrow A_{\mathrm{inf}}\{1\}$. Under this trivialization, multiplication by $t$ induces
$$\fil_{\mathcal N}^{\geq 0}\Prism_{\tilde{\XX}}\stackrel{\cdot\tilde{\mu}}{\longrightarrow}\fil_{\mathcal N}^{\geq 1}\Prism_{\tilde{\XX}}\stackrel{\cdot\tilde{\mu}}{\longrightarrow}\fil_{\mathcal N}^{\geq 2}\Prism_{\tilde{\XX}}\stackrel{\cdot\tilde{\mu}}{\longrightarrow}\cdots$$
We have an isomorphism
\begin{eqnarray*}
&&\mathrm{colim}_m\,R\Gamma_{\syn}(\tilde{\XX},\bz_p(m))\\
& \simeq& \mathrm{Fibre}\left(\mathrm{colim}_m\,\fil_{\mathcal N}^{\geq m}\Prism_{\tilde{\XX}}\stackrel{1-\varphi}{\longrightarrow} \Prism_{\tilde{\XX}}\otimes_{A_{\mathrm{inf}}} A_{\mathrm{inf}}[1/\tilde{\mu}]\right)\\
& \stackrel{\sim}{\rightarrow}& \mathrm{Fibre}\left(\mathrm{colim}_m\,\fil_{\mathcal N}^{\geq m}\Prism_{\tilde{\XX}}\stackrel{1-\varphi}{\longrightarrow} \Prism_{\tilde{\XX}}\otimes_{A_{\mathrm{inf}}} A_{\mathrm{inf}}[1/\tilde{\mu}]\right)^{\wedge}_p\\
& \stackrel{\sim}{\rightarrow}& ((\Prism_{\tilde{\XX}}\otimes_{A_{\mathrm{inf}}} A_{\mathrm{inf}}[1/\tilde{\mu}])^{\wedge}_p)^{\varphi=1}
\end{eqnarray*}
since $\mathrm{colim}_m\,R\Gamma_{\syn}(\tilde{\XX},\bz_p(m))$ is $p$-complete and since 
$$\mathrm{colim}_m\,\fil_{\mathcal N}^{\geq m}\Prism_{\tilde{\XX}}\rightarrow \mathrm{colim}_m\,\Prism_{\tilde{\XX}}=  \Prism_{\tilde{\XX}}[1/\tilde{\mu}]$$
is an isomorphism after $p$-completion by \cite[Prop. 8.5.3]{bhatt-lurie-22}. The canonical map $$\mathrm{colim}_m\,R\Gamma_{\syn}(\tilde{\XX},\bz_p(m))\rightarrow \mathrm{colim}_m\,\fil_{\mathcal N}^{\geq m}\Prism_{\tilde{\XX}}\rightarrow\Prism_{\tilde{\XX}}\otimes_{A_{\mathrm{inf}}} A_{\mathrm{inf}}[1/\tilde{\mu}] $$
and the tautological map
$$R\Gamma_{\et}(\XX_{{\bc_p}},\bz_p)\rightarrow R\Gamma_{\et}(\XX_{{\bc_p}},\bz_p)\otimes_{\mathbb{Z}_p} A_{\mathrm{inf}}[1/\tilde{\mu}]$$
sit in the following commutative diagram
\[ \xymatrix{
\mathrm{colim}_m\,R\Gamma_{\syn}(\tilde{\XX},\bz_p(m))\ar[d]^{\simeq}\ar[r]^{\simeq}&R\Gamma_{\et}(\XX_{{\bc_p}},\bz_p)\ar[d]^{\simeq}\\
((\Prism_{\tilde{\XX}}\otimes_{A_{\mathrm{inf}}} A_{\mathrm{inf}}[1/\tilde{\mu}])^{\wedge}_p)^{\varphi=1}\ar[r]^{\simeq \hspace{0.8cm}}\ar[d]& ((R\Gamma_{\et}(\XX_{{\bc_p}},\bz_p)\otimes_{\mathbb{Z}_p} A_{\mathrm{inf}}[1/\tilde{\mu}])^{\wedge}_p)^{\varphi=1}\ar[d] \\
\Prism_{\tilde{\XX}}\otimes_{A_{\mathrm{inf}}} A_{\mathrm{inf}}[1/\tilde{\mu}]\ar[r]^{\simeq\hspace{0.8cm}}& R\Gamma_{\et}(\XX_{{\bc_p}},\bz_p)\otimes_{\mathbb{Z}_p} A_{\mathrm{inf}}[1/\tilde{\mu}]
}
\] 
where the horizontal isomorphisms are the étale comparison maps. Here the top commutative square defines the upper horizontal isomorphism, which is given by the proof of \cite[Thm. 8.5.1]{bhatt-lurie-22}, and  the lower horizontal isomorphism is induced by the isomorphisms
\begin{eqnarray*}
\varphi_{A_{\mathrm{inf}}}^*(\Prism_{\tilde{\XX}}[1/\tilde{\mu}])&\simeq& \varphi_{A_{\mathrm{inf}}}^*(\Prism_{\tilde{\XX}/A_{\mathrm{inf}}}[1/\tilde{\mu}])\\
&\simeq& R\Gamma_{A_{\mathrm{inf}}}(\tilde{\XX})[1/\mu]\\
&\simeq& R\Gamma_{\et}(\XX_{{\bc_p}},\bz_p)\otimes_{\mathbb{Z}_p} A_{\mathrm{inf}}[1/\mu]
\end{eqnarray*}
given by \cite[Prop. 4.4.12]{bhatt-lurie-22}, \cite[Thm. 17.2]{bhatt-scholze} and \cite[Thm. 1.8(iv)]{bms18} respectively. 

The pull-back square
\[ \xymatrix{
\mathrm{colim}_m\,R\Gamma_{\syn}(\tilde{\XX},\bq_p(m))\ar[d]\ar[r]^{}&R\Gamma_{\et}(\XX_{\bc_p},\bq_p)\otimes_{\bq_p}B_{\mathrm{crys}}^{\varphi=1}\ar[d] \\
R\Gamma_{\et}(\XX_{\bc_p},\bq_p)\otimes_{\bq_p}B^+_{\mathrm{dR}}\ar[r]&R\Gamma_{\et}(\XX_{\bc_p},\bq_p)\otimes_{\bq_p} B_{\mathrm{dR}}
}
\] 
obtained before the statement of Proposition \ref{compatibility-comp-thms} is induced by the total square of the diagram below
{\small{
\[ \xymatrix{
\mathrm{colim}_m\,R\Gamma_{\syn}(\tilde{\XX},\bz_p(m))\ar[d]\ar[r]&\mathrm{colim}_m\,R\Gamma_{\syn}(\tilde{\XX}/p,\bz_p(m)) \ar[r]& R\Gamma_{\et}(\XX_{{\bc_p}},\bz_p)\otimes_{\bz_p} B_{\mathrm{cris}}^{\varphi=1}\ar[dd]\\
\mathrm{colim}_m\,\fil_{\mathcal N}^{\geq m}\Prism_{\tilde{\XX}}\ar[r]\ar[d]& \Prism_{\tilde{\XX}}\otimes_{A_{\mathrm{inf}}} A_{\mathrm{inf}}[1/\tilde{\mu}]\ar[d]^{\simeq} &\\
F^{\geq 0}(\mathrm{dR}_{\XX/\mathcal{O}_K}\otimes_{\mathcal{O}_K} B_{\mathrm{dR}})\ar[d]^{\simeq}& R\Gamma_{\et}(\XX_{{\bc_p}},\bz_p)\otimes_{\bz_p} A_{\mathrm{inf}}[1/\tilde{\mu}]\ar[r]& R\Gamma_{\et}(\XX_{{\bc_p}},\bz_p)\otimes_{\bz_p} B_{\mathrm{cris}}\ar[d]\\
R\Gamma_{\et}(\XX_{{\bc_p}},\bz_p)\otimes_{\bz_p} B^+_{\mathrm{dR}}\ar[rr]& & R\Gamma_{\et}(\XX_{{\bc_p}},\bz_p)\otimes_{\bz_p} B_{\mathrm{dR}}
}
\] 
}}
The total square gives a  map 
\begin{eqnarray*}
\mathrm{colim}\,R\Gamma_{\syn}(\tilde{\XX},\bz_p(m))
&\rightarrow&R\Gamma_{\et}(\XX_{\bc_p},\bz_p)\otimes_{\bz_p} (B_{\mathrm{dR}}^+\times_{B_{\mathrm{dR}}}B_{\mathrm{cris}}^{\varphi=1})\\
&=&R\Gamma_{\et}(\XX_{\bc_p},\bq_p)
\end{eqnarray*}
which factors through an isomorphism
\begin{eqnarray*}
\mathrm{colim}\,R\Gamma_{\syn}(\tilde{\XX},\bz_p(m))&\rightarrow&((\Prism_{\tilde{\XX}}\otimes_{A_{\mathrm{inf}}} A_{\mathrm{inf}}[1/\tilde{\mu}])^{\wedge}_p)^{\phi=1}\\
&\simeq& ((R\Gamma_{\et}(\XX_{{\bc_p}},\bz_p)\otimes_{\bz_p} A_{\mathrm{inf}}[1/\tilde{\mu}])^{\wedge}_p)^{\phi=1}\\
&\simeq& R\Gamma_{\et}(\XX_{{\bc_p}},\bz_p)\otimes_{\bz_p} ((A_{\mathrm{inf}}[1/\tilde{\mu}])^{\wedge}_p)^{\phi=1}\\
&\simeq& R\Gamma_{\et}(\XX_{{\bc_p}},\bz_p).
\end{eqnarray*}
which is the map given by \cite[Thm. 8.5.1]{bhatt-lurie-22}.

\end{proof}

Now we conclude the proof of Theorem \ref{exptheorem}. We obtain $G_K$-equivariant morphisms
of fiber sequences
{\small{
\[ \xymatrix@C-10pt{
R\Gamma_{\syn}(\tilde{\XX},\bq_p(n))\ar[r]\ar[d]&R\Gamma_{\syn}(\tilde{\XX}/p,\bq_p(n))\ar[r]\ar[d]&(\mathrm{dR}_{\tilde{\XX}/\mathcal{O}_K}^{<n})_{\bq_p}\ar[d]\\
R\Gamma_{\et}(\XX_{{\bc_p}},\bq_p(n))\ar[r]\ar[d]&(B_{cris}\otimes_{\bq_p}R\Gamma_{\et}(\XX_{\bc_p},\bq_p(n)))^{\varphi=1}\ar[r]\ar[d]&(B_{dR}\otimes_{\bq_p}R\Gamma_{\et}(\XX_{\bc_p},\bq_p(n)))/F^{\geq 0}\ar[d]\\
\bigoplus_{i}V^{i}(n)[-i]\ar[r]&\bigoplus_{i}(B_{cris}\otimes_{\bq_p}V^{i}(n))^{\varphi=1}[-i]\ar[r]&\bigoplus_{i}(B_{dR}\otimes_{\bq_p}V^{i}(n))/F^{\geq 0}[-i]
}
\] }}
given by the discussion above and by the  $G_K$-equivariant decomposition (\ref{Deligne-decomposition}).
Hence we have  morphisms of fiber sequences
{\small{
\[ \xymatrix{
R\Gamma_{\syn}(\XX,\bq_p(n))\ar[r]^{}\ar[d]&R\Gamma_{\syn}(\XX/p,\bq_p(n))\ar[d]\ar[r]^{}&(\mathrm{dR}_{\XX/\mathcal{O}_K}^{<n})_{\bq_p}\ar[d] \\
R\Gamma_{\syn}(\tilde{\XX},\bq_p(n))\ar[r]^{}\ar[d]&R\Gamma_{\syn}(\tilde{\XX}/p,\bq_p(n))\ar[d]\ar[r]^{}&(\mathrm{dR}_{\tilde{\XX}/\mathcal{O}_K}^{<n})_{\bq_p}\ar[d] \\
\bigoplus_{i}V^{i}(n)[-i]\ar[r]&\bigoplus_{i}(B_{cris}\otimes_{\bq_p}V^{i}(n))^{\varphi=1}[-i]\ar[r]&\bigoplus_{i}(B_{dR}\otimes_{\bq_p}V^{i}(n))/F^{\geq 0}[-i]
}
\] 
}}
where the lower left vertical map is the étale comparison map \cite[Thm. 8.3.1]{bhatt-lurie-22}
$$(\gamma_{\syn}^{\et}\{n\})_{\bq_p}:R\Gamma_{\syn}(\tilde{\XX},\bq_p(n))\rightarrow R\Gamma_{\et}(\XX_{{\bc_p}},\bq_p(n))$$
followed by the decomposition (\ref{Deligne-decomposition}). The morphism from the top fiber sequence to the bottom fiber sequence factors through an equivalence
\[ \xymatrix{
R\Gamma_{\syn}(\XX,\bq_p(n))\ar[r]^{}\ar[d]\ar[d]_{\alpha_{f}}^{\simeq}&R\Gamma_{\syn}(\XX/p,\bq_p(n))\ar[d]\ar[d]_{\alpha_{cris}}^{\simeq}\ar[r]^{}&(\mathrm{dR}_{\XX/\mathcal{O}_K}^{<n})_{\bq_p}\ar[d]_{\alpha_{dR}}^{\simeq} \\
\bigoplus_{i}R\Gamma_f(K,V^{i}(n))[-i]\ar[r]&\bigoplus_{i}D_{cris}(V^{i}(n))^{\varphi=1}[-i]\ar[r]&\bigoplus_{i}D_{dR}(V^{i}(n))/F^{\geq 0}[-i] 
}
\] 
Here the left vertical map $\alpha_f$ is the map induced on the fibers of the right horizontal maps. It follows that we have a commutative square
\[ \xymatrix{
R\Gamma_{\syn}(\XX,\bq_p(n))\ar[r]^{}\ar[d]^{\simeq}_{\alpha_f}&R\Gamma_{\et}(\XX_{{\bc_p}},\bq_p(n))\ar[d]_{(\ref{Deligne-decomposition})}^{\simeq} \\
\bigoplus_{i}R\Gamma_f(K,V^{i}(n))[-i]\ar[r]&\bigoplus_{i}V^{i}(n)[-i] 
}
\] 
where the upper horizontal map is the composite map
$$R\Gamma_{\syn}(\XX,\bq_p(n))\rightarrow R\Gamma_{\syn}(\tilde{\XX},\bq_p(n))\xrightarrow{(\gamma_{\syn}^{\et}\{n\})_{\bq_p}} R\Gamma_{\et}(\XX_{{\bc_p}},\bq_p(n))$$
and the lower horizontal map is induced by (\ref{norm-fundamental-fib-seq}).  This commutative square is $G_K$-equivariant, where $G_K$ acts naturally on the right columns and trivially on the left columns. Functoriality of $\gamma_{\syn}^{\et}\{n\}$ then gives a commutative square
\[ \xymatrix@C60pt{
R\Gamma_{\syn}(\XX,\bq_p(n))\ar[r]^{(\gamma_{\syn}^{\et}\{n\})_{\bq_p}}\ar[d]^{\simeq}_{\alpha_f}&R\Gamma_{\et}(\XX_{K},\bq_p(n))\ar[d]_{R\Gamma(K,(\ref{Deligne-decomposition}))}^{\simeq} \\
\bigoplus_{i}R\Gamma_f(K,V^{i}(n))[-i]\ar[r]^{\oplus_i\widetilde{\mathrm{exp}}_{V^{i}(n)}[-i]}&\bigoplus_{i}R\Gamma(K,V^{i}(n))[-i] 
}
\] 
which concludes the proof of Theorem \ref{exptheorem}.

\end{proof}

\section{Special values of Zeta functions}\label{zetasection}

We summarize very briefly the simplifications to \cite{Flach-Morin-16} which are possible in light of the results of the present paper. Let $F$ be a number field and $\X/\co_F$ a smooth projective scheme. The Zeta function of $\X$
$$\zeta(\mathcal{X},s)=\prod_{\underset{ \text{$x$ closed}}{x\in\mathcal{X}}}\dfrac{1}{1-N(x)^{-s}}$$
is defined by a convergent product for $\mathrm{Re}(s)>\mathrm{dim}(\mathcal{X})$ and is expected to have a meromorphic continuation to all $s\in\bc$. The vanishing order and the leading Taylor coefficient of $\zeta(\mathcal{X},s)$ at integer arguments $s\in\bz$ has been a subject of considerable interest, beginning with the analytic class number formula in the 19th century (where $\X=\Spec(\co_F)$ and $s=1$). The prevailing paradigm for attempts to generalize the analytic class number formula has been that of Tamagawa numbers of algebraic groups, beginning with the results of Ono for algebraic Tori, continuing with the conjecture of Birch and Swinnerton-Dyer  and culminating in the Tamagawa number conjecture of Bloch and Kato for motives over $F$ \cite{bk88} (for $\X=\Spec(\co_F)$ this applies to all $s=n\geq 1$). Fontaine and Perrin-Riou found an equivalent formulation of the conjecture of Bloch and Kato which applies to all $s\in\bz$ and is stated in terms of the "fundamental line", a certain product of determinants of rational vector spaces associated to a motive over $F$ \cite{fpr91}. Lichtenbaum has been advocating the point view that special value conjectures should be simpler for Zeta functions than for motivic L-functions and this intuition is in some sense borne out by the results we describe below. If we assume, as we do in this section, that $\X/\co_F$ is smooth and projective (rather than $\X$ regular, proper over $\Spec(\bz)$ as in \cite{Flach-Morin-16, Flach-Morin-20}) there is a tight connection between the two types of functions since we have
\[ \zeta(\X,s)=\prod_{i\in\bz}L(h^i(X),s)^{(-1)^i},\]
i.e. the Zeta function is the motivic L-function associated to the total motive
\begin{equation}\notag
h(X)\simeq \bigoplus_{0\leq i\leq 2(d-1)} h^i(X)[-i]
\end{equation}
of the generic fibre $X:=\X_F$ of $\X$. Here $d$ is the Krull dimension of $\X$ and our notion of motive is a naive one: a motive over $F$ is a collection of rational vector spaces with comparison maps ("motivic structure") as described in \cite{fpr91}, \cite{flach03}. Inspired by the ideas of Lichtenbaum we gave in \cite{Flach-Morin-16}[Thm. 5.27] a further reformulation of the Tamagawa number conjecture of Bloch, Kato, Fontaine and Perrin-Riou for the Zeta function $\zeta(\X,s)$ at any $s\in\bz$ in terms of an integral fundamental line. In this section we recall this conjecture and describe how it can be simplified further.

\subsection{Globalisation of syntomic, additive syntomic and derived de Rham cohomology} The integral fundamental line is an invertible $\bz$-module given by the tensor product of determinants of perfect complexes of abelian groups closely related to the theories in the title of this section.

The definition of derived de Rham cohomology for arbitrary rings and schemes is well known and simply given by omitting $p$-completion in Def.  \ref{padicderiveddeRham}.

\begin{definition} (Hodge filtered derived de Rham cohomology)  Define $\fil^{\geq \star}_{Hod}\mathrm{dR}_{R/A}$ as the left Kan extension of the functor
$$\mathrm{Poly}_{\mathrm{Pairs}} \to \CAlg(\Fun(\bn^{op},\Mod_?));\quad\quad (R,A)\mapsto \Omega^{\geq \star}_{R/A}$$
along $\mathrm{Poly}_{\mathrm{Pairs}}\to\mathrm{Pairs}^{\ani}$ where $\Omega^\bullet_{R/A}$ is the algebraic de Rham complex and the target is the category of pairs $(A,F^{\geq\star}M)$ with $A$ an animated ring and $F^{\geq\star}M\in \CAlg(\Fun(\bn^{op},\Mod_A))$. For a map of derived schemes $X\to\Spec^{der}(A)$ define $\fil^{\geq \star}_{Hod}\mathrm{dR}_{X/A}$ by Zariski descent.
\label{deriveddeRham}\end{definition}

Additive syntomic cohomology is a new integral structure on rational Hodge truncated derived de Rham cohomology which we define here only for the absolute base $A=\bz$.

\begin{definition} (Additive syntomic cohomology) For a derived scheme $X$ define $R\Gamma_{\add}(X/\bz,\bz(n))$ by the pullback square
$$\xymatrix@C+50pt{R\Gamma_{\add}(X/\bz,\bz(n))\ar[r]\ar[d] & \mathrm{dR}_{X/\bz}^{<n}[-1]\ar[d]\\ \prod\limits_pR\Gamma_{\add}(X^\wedge_p/\bz_p,\bz_p(n))
\ar[r]^(.55){\prod\limits_p\gamma^{dR,<n}_{\prism, X^\wedge_p/\bz_p}\{n\}[-1]} &
\prod\limits_p \mathrm{dR}_{(X^\wedge_p)/\bz_p}^{<n}[-1]}$$
\end{definition}

\begin{remark} For $X$ such that $L_{X/\bz}$ has tor-amplitude in $[-1,0]$ $R\Gamma_{\add}(X/\bz,\bz(n))$ was first defined in \cite{Morin20} as the associated graded of a motivic filtration on $TC^+(X):=THH(X)_{S^1}$ (up to shift). There the alternative notation $\mathrm{dR}_{X/\bs}^{<n}[-1]$ is used for $R\Gamma_{\add}(X/\bz,\bz(n))$.
\end{remark} 

\begin{lemma} Let $X$ be a quasicompact, quasiseparated derived scheme.

a) One has an isomorphism $R\Gamma_{\add}(X/\bz,\bz(n))_\bq\simeq \mathrm{dR}_{X/\bz}^{<n}[-1]_\bq$.

b) If $X/\bz$ is proper of finite tor-amplitude and $L_{X/\bz}\in\QCoh(X)$ is perfect then  $\mathrm{dR}_{X/\bz}^{<n}[-1]$ and $R\Gamma_{\add}(X/\bz,\bz(n))$ are perfect complexes of abelian groups.
\end{lemma}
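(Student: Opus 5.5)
Both parts come from the defining pullback square of $R\Gamma_{\add}(X/\bz,\bz(n))$ together with Lemma \ref{gammaiso}.

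For a), I would rationalize the pullback square. Rationalization is exact, so the square stays a pullback after $-\otimes\bq$. The bottom map is $\prod_p\gamma^{dR,<n}_{\prism,X^\wedge_p/\bz_p}\{n\}[-1]$. By Lemma \ref{gammaiso} (with $A=\bz_p$, trivial filtration, $\star=0$) each factor has fibre that is killed by a uniform power of each prime. More precisely, the fibre has a bounded filtration, from (\ref{addfilt}) and (\ref{nygaardfilt}), whose graded pieces are fibres of multiplication by $j$ with $1\le j\le n-1$. So the fibre is annihilated by $N:=\bigl((n-1)!\bigr)^{n}$, or some such bound independent of $p$. Since $N$ is a fixed integer, the fibre of the product map is also killed by $N$, because products preserve this property. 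Hence the bottom map is a rational isomorphism, and so is the top map $R\Gamma_{\add}(X/\bz,\bz(n))\to \mathrm{dR}^{<n}_{X/\bz}[-1]$, since the fibres of the vertical pullback maps agree. The step that needs care is the uniformity in $p$: rationalization does not commute with infinite products, so it is exactly this uniform bound on the fibre that lets us avoid computing $(\prod_p -)_\bq$.

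For b), I would first show that $\mathrm{dR}^{<n}_{X/\bz}$ is perfect. It has a finite filtration with graded pieces $R\Gamma(X,L\bigwedge^i L_{X/\bz})[-i]$ for $i<n$. Each $L\bigwedge^i L_{X/\bz}$ is perfect because $L_{X/\bz}$ is perfect. Pushforward along the proper map $X\to\Spec\bz$ of finite tor-amplitude preserves perfect complexes, which gives perfectness. For $R\Gamma_{\add}$, the pullback square gives a fibre sequence
\[
R\Gamma_{\add}(X/\bz,\bz(n))\to \mathrm{dR}^{<n}_{X/\bz}[-1]\to \prod_p \mathrm{cofib}\bigl(\gamma^{dR,<n}_{\prism,X^\wedge_p/\bz_p}\{n\}[-1]\bigr).
\]
It then suffices to show that the right-hand term is a bounded complex with finite total cohomology, since such a complex is perfect over $\bz$. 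Two inputs give this.

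First, by Lemma \ref{gammaiso} the $p$-th factor vanishes for $p>n-1$. This follows because multiplication by $j\le n-1$ is invertible $p$-adically, even in the range $n<p+1$. So the product is finite.

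Second, for each of the finitely many remaining $p$, the cofibre has a bounded filtration. Its graded pieces are fibres of multiplication by $j$ on $L\widehat\Omega^{i-j}_{X^\wedge_p/\bz_p}$. These are $p$-completions of perfect $\bz$-complexes by base change, hence perfect over $\bz_p$ and killed by a power of $p$. A perfect $\bz_p$-complex killed by $p^k$ has finite cohomology.

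The main point to check is the boundedness of $L\widehat\Omega^i$ in the derived setting, which follows from finite tor-amplitude together with properness.
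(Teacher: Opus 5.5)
Your proof is correct and follows the paper's route: part a) comes from Lemma \ref{gammaiso} applied in the defining pullback square, and part b) from the Hodge filtration on $\mathrm{dR}^{<n}_{X/\bz}$ together with the filtrations (\ref{addfilt}) and (\ref{nygaardfilt}). You also note that the cofibre of $\gamma^{dR,<n}_{\prism,X^\wedge_p/\bz_p}\{n\}[-1]$ vanishes for $p\geq n$, so only finitely many primes contribute; this spells out a step the paper compresses into ``together with a)'', since rational perfectness plus $\bz_p$-perfectness at every $p$ would not suffice on its own.
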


\begin{proof} Part a) is immediate from Lemma \ref{gammaiso}. For part b) note that $\mathrm{dR}_{X/\bz}^{<n}$ has a finite filtration with graded pieces
$R\Gamma(X,L\bigwedge{}^i L_{X/\bz})[-i]$ which are perfect over $\bz$ by our assumptions on $X$. It follows from the filtrations (\ref{addfilt}) and (\ref{nygaardfilt}) that $R\Gamma_{\add}(X^\wedge_p/\bz_p,\bz_p(n))$ is $\bz_p$-perfect for any prime $p$. Together with a) this implies perfectness of $R\Gamma_{\add}(X/\bz,\bz(n))$ over $\bz$.
\end{proof}

Syntomic cohomology globalizes to \'etale motivic cohomology. With any reasonable definition of the motivic complex $\bz(n)$ one has an isomorphism of \'etale sheaves $\bz(n)/p^\nu\simeq \mu_{p^\nu}^{\otimes n}$ on schemes over $\bz[\frac{1}{p}]$. This motivates the following definition of global syntomic cohomology given in \cite{bhatt-lurie-22}[Construction 8.4.1].

\begin{definition} (Global syntomic cohomology a.k.a. $p$-adic \'etale motivic cohomology) For a derived scheme $X$, prime $p$ and $n\in\bz$ define $R\Gamma(X,\bz_p(n))$ by the fibre product 
$$\xymatrix@C+50pt{R\Gamma(X,\bz_p(n))\ar[r]\ar[d] & R\Gamma(X[\frac{1}{p}]_{\et},\bz_p(n))\ar[d]\\ R\Gamma_{\syn}(X^\wedge_p/\bz_p,\bz_p(n))
\ar[r]^{\gamma^{\et}_{\syn}\{n\}} &
R\Gamma(X^\wedge_p[\frac{1}{p}]_{\et},\bz_p(n))}$$
where $\gamma^{\et}_{\syn}\{n\}$ is the \'etale comparison map of \cite{bhatt-lurie-22}[Thm. 8.3.1].
\end{definition}

In order to globalize further and remove the $p$-completion from coefficients we resort to the most classical definition of motivic cohomology which we also use in \cite{Flach-Morin-16}.

\begin{definition} (Motivic cohomology via higher Chow complexes) For a regular scheme $X$ and $n\geq 0$ we consider Bloch's cycle complex 
$$\mathbb{Z}(n):=z^n(-,2n-*)$$ as a complex of sheaves
on the small \'etale topos $X_{\et}$ of the scheme $X$ (see \cite{bloch86}, \cite{Levine01}, \cite{Levine99}, \cite{Geisser04a}). For $n<0$ we define the complex $\mathbb{Z}(n)$ on $X_{\et}$ by
$$\mathbb{Z}(n):=\bigoplus_{p}j_{p,!}(\mu_{p^{\infty}}^{\otimes n})[-1]$$
where $j_p$ is the open immersion $j_p:X[1/p]\rightarrow X$.
\end{definition}

The following result can be proven in slightly greater generality but is sufficient for the discussion in this section.

\begin{prop} Let $X$ be a scheme smooth over a Dedekind ring or a field. For any prime $p$ and $n\in\bz$ there is an isomorphism 
$$ R\Gamma(X_{\et},\bz(n))^\wedge_p\simeq R\Gamma(X,\bz_p(n)).$$
\label{pcomp}\end{prop}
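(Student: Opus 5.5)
The plan is to compare both sides through the defining fibre square of $R\Gamma(X,\bz_p(n))$, that is, the pullback of $R\Gamma_{\syn}(X^\wedge_p/\bz_p,\bz_p(n))$ and $R\Gamma(X[\frac1p]_{\et},\bz_p(n))$ over $R\Gamma(X^\wedge_p[\frac1p]_{\et},\bz_p(n))$. The strategy is to produce a matching fibre square for $R\Gamma(X_{\et},\bz(n))^\wedge_p$ and a compatible map between the two squares.

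\emph{Step 1: the localization square.} For $n\geq 0$ we use the fact that $p$-completion of $\bz(n)$ is computed by $\bz(n)/p^\nu$. This gives a square of étale cohomologies with $\bz/p^\nu(n)$ coefficients. On one side it has $X$ and $X[\frac1p]$; on the other it has the henselization (or formal completion) along $X/p$ and its generic fibre. The square is cartesian by the standard argument: the relevant fibres are cohomology with supports in $X/p$, and these agree by excision together with Gabber's affine analogue of proper base change (henselian pairs). Passing to the limit over $\nu$ then gives a cartesian square for the $p$-completions. On $X[\frac1p]$ we have $\bz(n)/p^\nu\simeq\mu_{p^\nu}^{\otimes n}$ (Geisser–Levine / Bloch–Kato), so the right-hand column matches the étale terms in the definition.

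\emph{Step 2: the local term.} We must identify $R\Gamma((X^h_{p})_{\et},\bz(n))^\wedge_p$ with $R\Gamma_{\syn}(X^\wedge_p,\bz_p(n))$, compatibly with the maps to $R\Gamma(X^\wedge_p[\frac1p]_{\et},\bz_p(n))$ via $\gamma^{\et}_{\syn}\{n\}$. For $X$ smooth over a Dedekind ring, this is the $p$-adic comparison between motivic cohomology and syntomic cohomology. One route is through the motivic filtration on $TC$ and $K$-theory [bms19], [bhatt-lurie-22, Thm. 8.4.?]. Another route is the Geisser–Sato–Schneider-type comparison of $\bz(n)/p^\nu$ with Fontaine–Messing/Kato syntomic complexes, combined with Thm. \ref{smalln}-type identifications in the range where those apply and the Beilinson–Lichtenbaum truncation in general. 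Here I would first try to reduce to the case of the cited Bhatt–Lurie result, stated for $p$-torsionfree smooth schemes, using that the Chow complex satisfies Beilinson–Lichtenbaum. For characteristic $p$ fields, one instead uses Geisser–Levine, $\bz(n)/p^\nu\simeq W_\nu\Omega^n_{\log}[-n]$, together with [bms19, Sec. 8].

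\emph{Step 3: negative $n$.} Here $\bz(n)^\wedge_p$ is $(j_{p,!}\mu_{p^\infty}^{\otimes n}[-1])^\wedge_p\simeq j_{p,!}\bz_p(n)$. Meanwhile $R\Gamma_{\syn}(X^\wedge_p,\bz_p(n))=0$ for $n<0$ in the relevant sense, since $\mathcal N^{\geq n}=\prism$ and $\varphi\{n\}-\can$ is invertible. The fibre square then yields compactly supported étale cohomology away from $p$ on both sides, matching $j_{p,!}$ via Step 1.

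The main obstacle is Step 2: the compatibility of the motivic-to-syntomic comparison with $\gamma^{\et}_{\syn}\{n\}$, and its validity for arbitrary $n$ rather than only $n<p-1$.
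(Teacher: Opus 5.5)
\paragraph{Assessment.}
Your treatment of fields matches what the paper cites: Geisser--Levine plus \cite{bms19}[Sec. 8] in characteristic $p$, and $\bz(n)/p^\nu\simeq\mu_{p^\nu}^{\otimes n}$ when $p$ is invertible. Your treatment of $n<0$ also matches: $R\Gamma_{\syn}(X^\wedge_p,\bz_p(n))$ vanishes, so both sides become $R\Gamma(X_{\et},j_{p,!}\bz_p(n))$, which is the content of \cite{bhatt-lurie-22}[Example 8.4.5].

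The genuine gap is the mixed characteristic case with $n\geq 0$, and it sits exactly in Step 2, which you leave open. Step 1 is only a formal reduction. By excision, the statement for $X$ is equivalent to the same statement for the henselization $X^h$ along $X/p$, and the identification $R\Gamma(X^h_{\et},\bz(n))^\wedge_p\simeq R\Gamma_{\syn}(X^\wedge_p,\bz_p(n))$ that Step 2 requires \emph{is} the proposition for $X^h$. Nothing has become easier. None of the routes you list closes it:
\begin{itemize}
\item the motivic filtration on $TC$ and $K$ from \cite{bms19} is not compared with Bloch's cycle complex in mixed characteristic;
\item Fontaine--Messing/Kato-type comparisons only reach $n<p-1$, and Theorem~\ref{smalln} concerns the de Rham logarithm, not motivic cohomology;
\item there is no Bhatt--Lurie theorem relating syntomic cohomology to higher Chow groups that you could reduce to.
\end{itemize}

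\paragraph{The missing idea.}
Compare the two sides as complexes of sheaves on $X_{\et}$, through an object pinned down by the generic and special fibres. Let $j:X[\frac1p]\to X$ be the inclusion of the generic fibre and $i:Y\to X$ the inclusion of the reduced fibre over the primes above $p$.

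For the motivic side, Geisser's argument for \cite{Geisser04a}[Thm. 1.3] combines three inputs: localization for the cycle complex, Geisser--Levine on $Y$, and Beilinson--Lichtenbaum on $X[\frac1p]$. It shows that $\bz(n)/p^r$ on $X_{\et}$ sits in a distinguished triangle
\[ i_*W_r\Omega^{n-1}_{Y,\log}[-n-1]\to \bz(n)/p^r\to \tau_{\leq n}Rj_*\mu_{p^r}^{\otimes n}\xrightarrow{\sigma} i_*W_r\Omega^{n-1}_{Y,\log}[-n] \]
with $\sigma$ the canonical symbol/residue map. In other words, $\bz(n)/p^r$ is Sato's $p$-adic étale Tate twist, which this triangle determines up to unique isomorphism.

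For the syntomic side, the key new input is \cite{bhatt-mathew}[Thm. 1.8]. It identifies syntomic complexes of $p$-torsionfree $F$-smooth schemes (e.g.\ smooth over a Dedekind ring) with the same Tate twists, for all $n\geq 0$. Concretely, the étale-sheafified global syntomic complex $\bz_p(n)/p^r$, together with its étale comparison map to $Rj_*\mu_{p^r}^{\otimes n}$, sits in the same triangle.

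Hence both sides are identified with one object, compatibly with their maps to $\tau_{\leq n}Rj_*\mu_{p^r}^{\otimes n}$. The compatibility with $\gamma^{\et}_{\syn}\{n\}$ that you worry about is therefore built in, and no restriction $n<p-1$ arises. Applying $R\Gamma(X_{\et},-)$ and passing to $\lim_r$ gives the proposition. This is what the paper means by following Geisser's proof with Bhatt--Mathew as the new ingredient.
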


\begin{proof} First assume $n\geq 0$. Then the case of schemes over a field is well known \cite{Geisser04a}[Sec. 5] \cite{bms19}[Sec. 8]. For schemes over Dedekind rings one follows the proof of \cite{Geisser04a}[Thm. 1.3] with \cite{bhatt-mathew}[Thm. 1.8] as the key new ingredient. The case $n<0$ follows from \cite{bhatt-lurie-22}[Example 8.4.5]. \end{proof}

While Prop. \ref{pcomp} provides a complex of abelian groups whose $p$-completion identifies with global syntomic cohomology for all primes $p$ this complex is not in general $\bz$-perfect for arithmetic schemes. The following optimistic expectation generalizes classical finiteness results in number theory such as finite generation of units and class groups ($d=1$, $n=1$, $i=1,2$) or the Mordell-Weil theorem ($d=2$, $n=1$, $i=2$).

\begin{conjecture} ${\bf L}(\mathcal{X}_{\et},n)$: 
Let $F$ be a number field and $\X/\co_F$ a smooth projective scheme of Krull dimension $d$. Then the group $H^i(\mathcal{X}_{\et},\mathbb{Z}(n))$ is finitely generated for $n\in\bz$ and $i\leq 2n+1$.
\label{lxn}\end{conjecture}

If in the situation of ${\bf L}(\mathcal{X}_{\et},n)$ one also assumes conjecture ${\bf L}(\mathcal{X}_{\et},d-n)$ one can show that for $i\geq 2n+1$ there is an isomorphism
$$H^i(\mathcal{X}_{\et},\mathbb{Z}(n))\simeq (\bq/\bz)^{r_{i,n}}\oplus F_{i,n}$$ 
for integers $r_{i,n}$ and finite groups $F_{i,n}$. So if $R\Gamma(\mathcal{X}_{\et},\mathbb{Z}(n))$ is cohomologically bounded there {\em could} be a perfect complex of abelian groups whose $p$-completion is identical to that of $R\Gamma(\mathcal{X}_{\et},\mathbb{Z}(n))$ for all primes $p$. Cohomological boundedness fails for the $2$-primary part if $\X$ has real points but this can be rectified by working with motivic complexes on the Artin-Verdier \'etale topos $\overline{\X}_{\et}$ as we show in \cite{Flach-Morin-16}[App. A]. We then prove the following

\begin{prop} Let $F$ be a number field and $\X/\co_F$ a smooth projective scheme of Krull dimension $d$ so that ${\bf L}(\mathcal{X}_{\et},n)$ and ${\bf L}(\mathcal{X}_{\et},d-n)$ hold true for some $n\in\bz$. Then there exists a perfect complex of abelian groups $R\Gamma_W(\overline{\X},\bz(n))$ and a natural isomorphism
\[ R\Gamma_W(\overline{\X},\bz(n))^\wedge_p\simeq R\Gamma(\X,\bz_p(n)) \]
for odd primes $p$ and an exact triangle
\[ R\Gamma_{\X_\infty}(\overline{\X},\bz_2(n))\to R\Gamma_W(\overline{\X},\bz(n))^\wedge_2\to R\Gamma(\X,\bz_2(n)) \]
where $R\Gamma_{\X_\infty}(\overline{\X},\bz_2(n))$ has finite 2-torsion cohomology groups and is bounded below.
Moreover there is an exact triangle of perfect complexes of abelian groups
\begin{equation}\label{triangle-cpctsupp-fgcoh}
R\Gamma_{W,c}(\mathcal{X},\mathbb{Z}(n))\longrightarrow R\Gamma_{W}(\overline{\mathcal{X}},\mathbb{Z}(n))
\stackrel{i_{\infty}^*}{\longrightarrow}R\Gamma_W(\mathcal{X}_{\infty},\mathbb{Z}(n))
\notag\end{equation}
where $R\Gamma_W(\mathcal{X}_{\infty},\mathbb{Z}(n))$ is defined in \cite{Flach-Morin-16}[Def. 3.23] and satisfies
$$R\Gamma_W(\mathcal{X}_{\infty},\mathbb{Z}(n))_\bq\simeq R\Gamma(\X(\bc),(2\pi i)^n\bq)^{\Gal(\bc/\br)}.$$ 

\end{prop}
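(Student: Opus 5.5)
The plan is to construct $R\Gamma_W(\overline{\X},\bz(n))$ following \cite{Flach-Morin-16}: first build it from motivic cohomology on the Artin--Verdier étale topos $\overline{\X}_{\et}$, then show it is perfect. We start from the complex $R\Gamma(\overline{\X}_{\et},\bz(n))$ of \cite{Flach-Morin-16}[App. A], which is built by gluing $\bz(n)$ on $\X_{\et}$ with a suitable Tate-modified complex at the archimedean places. Under ${\bf L}(\X_{\et},n)$ its cohomology is finitely generated in degrees $\leq 2n+1$. Under ${\bf L}(\X_{\et},d-n)$ and Artin--Verdier/Milne duality, its cohomology in degrees $\geq 2n+2$ is of cofinite type, namely $(\bq/\bz)^{r_{i,n}}\oplus F_{i,n}$, and it vanishes in large degree (this is where the Artin--Verdier modification enters, to kill the unbounded $2$-primary contributions from real points).

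The first step is to define the Weil-étale complex $R\Gamma_W(\overline{\X},\bz(n))$ as the cone of a map
\[\alpha_{\X,n}: R\Hom(R\Gamma(\overline{\X}_{\et},\bz(d-n)),\bq[-2d-2])\to R\Gamma(\overline{\X}_{\et},\bz(n)).\]
This map is constructed via duality, and it is unique because of the finite-generation hypotheses: the relevant $\Hom$ groups in the derived category are rigid, since $\Hom$ from a $\bq$-vector space complex into a complex with finitely generated cohomology in low degrees vanishes in the appropriate range. Taking the cone replaces the divisible parts $(\bq/\bz)^{r_{i,n}}$ by lattices $\bz^{r_{i,n}}$ placed in the previous degree. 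Hence the cohomology of $R\Gamma_W(\overline{\X},\bz(n))$ is finitely generated and bounded, that is, the complex is perfect.

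The $p$-adic statements then follow from the cone triangle. Since $\bq$-complexes vanish after derived $p$-completion, $R\Gamma_W(\overline{\X},\bz(n))^\wedge_p\simeq R\Gamma(\overline{\X}_{\et},\bz(n))^\wedge_p$. For odd $p$ the archimedean correction is invisible, and Prop.~\ref{pcomp} identifies this with $R\Gamma(\X,\bz_p(n))$. For $p=2$ the archimedean correction contributes the term $R\Gamma_{\X_\infty}(\overline{\X},\bz_2(n))$. Its cohomology is that of $\X(\br)$ with $2$-torsion Tate coefficients, so it is finite $2$-torsion and bounded below.

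For the triangle with compact support, we define $R\Gamma_W(\X_\infty,\bz(n))$ as in \cite{Flach-Morin-16}[Def. 3.23], from $\Gal(\bc/\br)$-equivariant cohomology of $\X(\bc)$ with $(2\pi i)^n\bz$-coefficients, suitably truncated. We construct $i_\infty^*$ compatibly with the cone, and set $R\Gamma_{W,c}$ to be its fibre, which is perfect since the other two terms are. The rational identification $R\Gamma_W(\X_\infty,\bz(n))_\bq\simeq R\Gamma(\X(\bc),(2\pi i)^n\bq)^{\Gal(\bc/\br)}$ holds because rationally the group cohomology of $\Gal(\bc/\br)$ reduces to invariants.

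I expect the main obstacle to be the construction and uniqueness of $\alpha_{\X,n}$ together with its compatibility with $i_\infty^*$. This requires checking the vanishing of the relevant $\Hom$ and $\mathrm{Ext}^{-1}$ groups using both ${\bf L}$-conjectures and duality on $\overline{\X}_{\et}$, and for the details I would follow \cite{Flach-Morin-16}[Sec. 3].
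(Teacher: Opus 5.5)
Your proposal is correct and takes the same route as the paper, whose proof is simply a reference to \cite{Flach-Morin-16}[Def. 3.6, Prop. 3.8, Cor. 6.8, Def. 3.26]: $R\Gamma_W(\overline{\X},\bz(n))$ is the cone of $\alpha_{\X,n}$ on the Artin--Verdier \'etale topos, perfectness comes from the ${\bf L}$-conjectures plus duality, the $p$-adic statements follow by $p$-completing the cone triangle, and $R\Gamma_{W,c}$ is the fibre of $i_\infty^*$. The paper's one-line citation leaves implicit that Prop.~\ref{pcomp} is needed to identify $R\Gamma(\X_{\et},\bz(n))^\wedge_p$ with $R\Gamma(\X,\bz_p(n))$, and your sketch supplies exactly that step.
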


\begin{proof} This is \cite{Flach-Morin-16}[Def. 3.6, Prop. 3.8, Cor. 6.8] and \cite{Flach-Morin-16}[Def. 3.26]. 
\end{proof}

\subsection{Special value conjectures} The following version of the fundamental fibre square was proven in \cite{Flach-Morin-16}[Prop. 7.21]. It uses work in syntomic cohomology \cite{colniz15}, \cite{ertlniziol16} which predates the introduction of prismatic cohomology.

\begin{prop} Let $F$ be a number field and $\X/\co_F$ a smooth projective scheme. For any prime $p$ there is a fibre sequence in $\Mod_{\bq_p}$
\begin{equation}\mathrm{dR}_{\mathcal{X}_{\mathbb{Q}_p}/\mathbb{Q}_p}^{<n}[-1]\rightarrow R\Gamma(\mathcal{X}_{\mathbb{Z}_p,\et},\bz(n))^\wedge_{p,\bq_p}\rightarrow R\Gamma(\mathcal{X}^{\mathrm{red}}_{\mathbb{F}_p,\et},\bz(n))^\wedge_{p,\bq_p}\label{oldbeil}\end{equation}
where we denote by $\mathcal{Y}^{\mathrm{red}}$ the reduction of a scheme $\mathcal{Y}$.
\label{beilseq}\end{prop}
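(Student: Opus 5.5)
The plan is to deduce the fibre sequence (\ref{oldbeil}) from the rational fundamental fibre square of Cor. \ref{beilinsonfibresquare}, applied to $\mathscr{X}:=\X^\wedge_p$ over $A=\bz_p$. We identify each term with the corresponding term in (\ref{oldbeil}), using the comparison results stated earlier.

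First, Cor. \ref{beilinsonfibresquare} (Thm. \ref{main} a)) gives a rational fibre sequence
$$(\widehat{\mathrm{dR}}^{<n}_{\mathscr{X}})_{\bq_p}[-1]\to R\Gamma_{\syn}(\mathscr{X},\bz_p(n))_{\bq_p}\to R\Gamma_{\syn}(\mathscr{X}/p,\bz_p(n))_{\bq_p}.$$
The first map is the de Rham logarithm $\log_{\mathscr{X}}$ composed with the inverse of the rational equivalence from the relative term. For the de Rham term, $\mathscr{X}$ is smooth and proper over $\co_{F_v}$ for the places $v\mid p$. Hence $\widehat{\mathrm{dR}}_{\mathscr{X}}$ is the $p$-completed de Rham complex, whose Hodge-truncated pieces are perfect over $\bz_p$ with graded pieces $R\Gamma(\mathscr{X},\Omega^i)[-i]$. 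Rationally, by base change and GAGA-type comparison for proper schemes, this gives $\mathrm{dR}^{<n}_{\X_{\bq_p}/\bq_p}$. One has to take some care that derived de Rham cohomology over $\bz_p$ rather than $\co_{F_v}$ agrees rationally: $L_{\co_{F_v}/\bz_p}$ is torsion, so the two agree after inverting $p$.

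Second, for the middle term, the definition of global syntomic cohomology together with Prop. \ref{pcomp} gives $R\Gamma(\X_{\bz_p,\et},\bz(n))^\wedge_p\simeq R\Gamma(\X_{\bz_p},\bz_p(n))$. The pullback square defining the latter, for $X=\X_{\bz_p}$ (whose $p$-completion is $\mathscr{X}$ and where $X[1/p]$ agrees with $X^\wedge_p[1/p]$ for étale cohomology with $\bz_p$-coefficients by Gabber–Fujiwara affine analogue / proper base change), reduces this to $R\Gamma_{\syn}(\mathscr{X},\bz_p(n))$. For the right-hand term, Prop. \ref{pcomp} applied to the smooth $\bF_p$-scheme $\X^{\mathrm{red}}_{\bF_p}$ identifies its $p$-completed motivic cohomology with $R\Gamma_{\syn}(\X^{\mathrm{red}}_{\bF_p},\bz_p(n))$. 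It then remains to compare $R\Gamma_{\syn}(\mathscr{X}/p,\bz_p(n))_{\bq_p}$ with $R\Gamma_{\syn}(\X_{\bF_p}^{\mathrm{red}},\bz_p(n))_{\bq_p}$.

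This last comparison is the main obstacle, since $\mathscr{X}/p$ is derived and non-reduced when $F$ is ramified at $p$. I would use nil-invariance of rational syntomic cohomology in characteristic $p$. By Prop. \ref{charp}, $R\Gamma_{\syn}(\mathscr{X}/p)$ differs from $R\Gamma_{\syn}(\mathscr{X}\otimes\bF_p)$ by a $\Phi$-filtered piece whose graded pieces are $p$-power torsion, as in the proof of Thm. \ref{thm:synlog} b). Next, the quotient $\co_{\mathscr{X}}/p\to\co_{\X^{\mathrm{red}}_{\bF_p}}$ is a filtered nilpotent thickening. Using the $\varpi$-adic ($e$-)filtration and Prop. \ref{griso}, the graded pieces of the fibre agree with additive syntomic pieces, which are modules over an $\bF_p$-algebra and hence torsion, as in the proof of Prop. \ref{angeltveit}. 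Completeness and boundedness of these filtrations (Prop. \ref{completeandbounded} b)) then make the fibre bounded torsion, hence rationally trivial. Combining the three identifications yields (\ref{oldbeil}).
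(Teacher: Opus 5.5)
Your argument is correct, but it is not how the paper proves this proposition. At this point the paper only cites \cite{Flach-Morin-16}[Prop. 7.21], whose proof uses the pre-prismatic syntomic cohomology of \cite{colniz15} and \cite{ertlniziol16}. What you propose is instead, almost step by step, the argument the paper gives later: the unlabelled lemma in the proof of Theorem \ref{correctionfactor}, combined with Lemma \ref{mulemma}. That argument has four steps:
\begin{itemize}
\item start from the rational square of Corollary \ref{beilinsonfibresquare} for $\X^\wedge_p$;
\item identify the de Rham term via $\bz_p$-perfectness of $\mathrm{dR}^{<n}_{\X_{\bz_p}/\bz_p}$;
\item identify the syntomic terms via Prop. \ref{pcomp}, together with the comparison of \'etale cohomology of $\X_{\bq_p}$ with that of the rigid generic fibre;
\item remove the nilpotent thickening of $\X^{\mathrm{red}}_{\bF_p}$ inside $\mathscr{X}/p$ using the $\varpi_v$-adic filtration, Prop. \ref{griso} and Prop. \ref{completeandbounded} b).
\end{itemize}

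The two routes buy different things. The cited proof is what fixes the maps entering $d_p(\X,n)$ and Theorem \ref{tncthm}. Yours makes the proposition a formal consequence of Theorem \ref{maintheorem} a), with the first map explicitly the de Rham logarithm (so induced by $\beta$). That explicitness is what lets Theorem \ref{maintheorem} c) compute $C(\X,n)$. However, $d_p(\X,n)$ depends on the maps, not just the terms. So to substitute your sequence for the old one you must still check, as the paper does, that the logarithm of \cite{Flach-Morin-16} is also induced by the crystalline-to-de Rham comparison.

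Some slips to fix:
\begin{itemize}
\item $\X$ is flat over $\co_F$, so $\mathscr{X}/p$ is the classical scheme $\coprod_{v\mid p}\X_{\co_{F_v}/p}$. It is non-reduced when $F$ is ramified at $p$, but it is not derived. Your step through Prop. \ref{charp}, which concerns a scheme already of characteristic $p$, is therefore vacuous and should be dropped.
\item The graded pieces of $\widehat{\mathrm{dR}}^{<n}_{\mathscr{X}}$ are $R\Gamma(\mathscr{X},L\bigwedge^iL_{\mathscr{X}/\bz_p})[-i]$, not $R\Gamma(\mathscr{X},\Omega^i)[-i]$. Your remark that $L_{\co_{F_v}/\bz_p}$ is torsion repairs this rationally, which is all you need.
\item In the last step the filtration must be the strict finite one $\varpi_v^i\co_{\mathscr{X}/p}$, which vanishes for $i\geq e_v$. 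It must not be an $e$-filtration in the sense of Section \ref{efiltered}, which is unbounded on an $\bF_p$-algebra. A merely complete filtration with torsion graded pieces need not be rationally trivial: think of $\bz_p$ with its $p$-adic filtration.
\end{itemize}
Once you use the finite filtration, Prop. \ref{completeandbounded} b) applies. Its finite-filtered-amplitude hypothesis holds because, locally, $\X_{\co_{F_v}/p}$ is smooth over $\kappa_v[x]/(x^{e_v})$ with the $x$-adic filtration, where $\kappa_v$ is the residue field of $F_v$.
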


In \cite{Flach-Morin-16}[Def. 5.6] we define
$$c_{p}(\mathcal{X},n):=p^{\chi(\mathcal{X}_{\mathbb{F}_p},\mathcal{O},n)}\cdot d_{p}(\mathcal{X},n)\in p^\bz$$
where
$$\chi(\mathcal{X}_{\mathbb{F}_p},\mathcal{O},n):= \sum_{i\leq n, j}(-1)^{i+j} \cdot (n-i)\cdot \mathrm{dim}_{\mathbb{F}_p}H^j_{Zar}(\mathcal{X}^{\mathrm{red}}_{\mathbb{F}_p},\Omega^i)$$
and
\begin{align*}&\mydet_{\bz_p}R\Gamma(\mathcal{X}_{\mathbb{Z}_p,\et},\bz(n))^\wedge_{p}\\=d_{p}(\mathcal{X},n)\,\cdot\,&\mydet_{\bz_p}^{-1}\mathrm{dR}_{\mathcal{X}_{\bz_p}/\mathbb{Z}_p}^{<n}\,\cdot\,\mydet_{\bz_p}R\Gamma(\mathcal{X}^{\mathrm{red}}_{\mathbb{F}_p,\et},\bz(n))^\wedge_{p}\end{align*}
under the rational isomorphism induced by (\ref{oldbeil}). It was shown in \cite{Flach-Morin-16}[Prop. 5.10] that $c_{p}(\mathcal{X},n)=1$ for almost all primes $p$. Hence we can define
\begin{equation}C(\mathcal{X},n):=\prod_{p<\infty} c_p(\mathcal{X},n) \in\bq^\times.\label{cdef}\end{equation}

\begin{definition}\label{deltadef}(The fundamental line) Let $F$ be a number field, $\X/\co_F$ a smooth projective scheme of Krull dimension $d$ and $n\in\bz$ so that ${\bf L}(\mathcal{X}_{\et},n)$ and ${\bf L}(\mathcal{X}_{\et},d-n)$  hold true. Define the fundamental line 
$$\Delta(\mathcal{X},n):=\mathrm{det}_{\mathbb{Z}}R\Gamma_{W,c}(\mathcal{X},\mathbb{Z}(n))
\otimes_{\mathbb{Z}}\mathrm{det}_{\mathbb{Z}}\,\mathrm{dR}_{\mathcal{X}/\mathbb{Z}}^{<n}.$$
\end{definition}

It was shown in \cite{Flach-Morin-16}[Prop. 5.2] that if $\mathcal{X}$ satisfies Conjecture ${\bf B}(\mathcal{X},n)$ as stated in \cite{Flach-Morin-16}[Conj. 2.5] (a version of Beilinson's conjecture relating motivic and Deligne cohomology) then there is a canonical trivialization 
$$\lambda_{\infty}(\mathcal{X},n):\mathbb{R} \stackrel{\sim}{\longrightarrow} \Delta(\mathcal{X},n)\otimes_{\mathbb{Z}}\mathbb{R}.$$

\begin{theorem} Let $F$ be a number field, $\X/\co_F$ a smooth projective scheme of Krull dimension $d$ and $n\in\bz$ so that ${\bf L}(\mathcal{X}_{\et},n)$, ${\bf L}(\mathcal{X}_{\et},d-n)$ and ${\bf B}(\mathcal{X},n)$ hold true. Assume $\zeta(\mathcal{X},s)$ has a meromorphic continuation to $s=n$ and denote  by $\zeta^*(\mathcal{X},n)\in\br$ its leading Taylor coefficient. Then the Tamagawa number conjecture \cite{fpr91} for the motivic structure $h(\X_F)(n)$ (for all primes $p$) is equivalent to the identity
$$\lambda_{\infty}(\zeta^*(\mathcal{X},n)^{-1}\cdot C(\mathcal{X},n)\cdot\mathbb{Z})= \Delta(\mathcal{X},n).
$$
\label{tncthm}\end{theorem}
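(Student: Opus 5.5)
The plan is to reduce the statement to the reformulation already established in \cite{Flach-Morin-16}[Thm. 5.27], and to check that every ingredient used there agrees with the objects defined above. Concretely, \cite{Flach-Morin-16} proves the following: under ${\bf L}(\mathcal{X}_{\et},n)$, ${\bf L}(\mathcal{X}_{\et},d-n)$ and ${\bf B}(\mathcal{X},n)$, the Tamagawa number conjecture of \cite{fpr91} for $h(\X_F)(n)$ at all primes is equivalent to the identity $\lambda_\infty(\zeta^*(\mathcal{X},n)^{-1}C(\mathcal{X},n)\bz)=\Delta(\mathcal{X},n)$. So the proof consists of making sure that the data entering that statement are the ones fixed in this section:
\begin{itemize}
\item the complexes $R\Gamma_{W,c}(\mathcal{X},\bz(n))$ and $\mathrm{dR}^{<n}_{\mathcal{X}/\bz}$;
\item the trivialization $\lambda_\infty$ from \cite{Flach-Morin-16}[Prop. 5.2];
\item the correction factor $C(\mathcal{X},n)$ of (\ref{cdef}), defined through the rational isomorphism induced by (\ref{oldbeil}).
\end{itemize}

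The key steps, in order, are as follows.

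First, the $p$-adic part. Taking $\det_{\bz_p}$ of the triangle (\ref{triangle-cpctsupp-fgcoh}) after $p$-completion, and using Prop. \ref{pcomp}, identifies $\Delta(\mathcal{X},n)\otimes\bz_p$ with a product of the following determinants:
\begin{itemize}
\item $\det_{\bz_p}R\Gamma(\X,\bz_p(n))$;
\item the archimedean term;
\item $\det_{\bz_p}\mathrm{dR}^{<n}_{\mathcal{X}_{\bz_p}}$.
\end{itemize}
For $p=2$ one also carries the finite correction $R\Gamma_{\X_\infty}(\overline{\X},\bz_2(n))$, whose contribution is absorbed exactly as in \cite{Flach-Morin-16}. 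Localization at $p$ then decomposes $R\Gamma(\X,\bz_p(n))$ into the generic fibre over $\bq_p$ and the local pieces at places above $p$.

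Second, comparison with the Bloch–Kato local factors. At each place above $p$, the sequence (\ref{oldbeil}) rationally reproduces the $p$-adic Hodge-theoretic description of $R\Gamma_f$ used by Fontaine–Perrin-Riou. In the present paper this compatibility can be read off Thm. \ref{exptheorem}: the fibre sequence there matches $R\Gamma_{\syn}$, crystalline and de Rham terms with $R\Gamma_f(K,V^i(n))$, $D_{cris}$ and $D_{dR}/F^0$, and the comparison is compatible with $\widetilde{\exp}$. Hence the integral line $\det_{\bz_p}$ appearing in the local Tamagawa factor differs from $\det_{\bz_p}\mathrm{dR}^{<n}$ exactly by $d_p(\mathcal{X},n)$. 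Together with the Euler-factor and Hodge-number bookkeeping, this gives $c_p(\mathcal{X},n)$.

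Third, the global assembly. Since $c_p=1$ for almost all $p$ by \cite{Flach-Morin-16}[Prop. 5.10], one multiplies the local statements over all $p$. The result is precisely the equivalence stated in \cite{Flach-Morin-16}[Thm. 5.27], which I will then quote.

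The main obstacle is the second step. One must verify that the rational isomorphism in (\ref{oldbeil}), which \cite{Flach-Morin-16} builds from Fontaine–Messing and Colmez–Nizioł syntomic cohomology, agrees with the one obtained here from $\log_{\mathscr{X}}$ (Thm. \ref{main} a)), or at least induces the same determinant comparison; only then is $C(\mathcal{X},n)$ unambiguous. I would attempt this by base change to $\co_{\bc_p}$ and the period-ring description from the proof of Thm. \ref{exptheorem}. Both maps are characterized there as the boundary of the fundamental exact sequence tensored with the crystalline and de Rham comparison isomorphisms, and those are known to coincide for the two constructions. Any residual discrepancy, including the corrections noted in Remark \ref{errata}, should be tracked through equation (89) of \cite{Flach-Morin-16}.
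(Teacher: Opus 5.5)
Your proposal ends where the paper's proof does: the statement is literally \cite{Flach-Morin-16}[Thm. 5.27], and the paper proves it by that citation alone. Your three-step re-derivation and the ``main obstacle'' are unnecessary, though. Here $\Delta(\X,n)$, $\lambda_\infty$ and $C(\X,n)$ are defined exactly as in \cite{Flach-Morin-16}, and $C(\X,n)$ is defined through the sequence (\ref{oldbeil}) of \cite{Flach-Morin-16}[Prop. 7.21] itself, so it is unambiguous without any comparison with $\log_{\mathscr{X}}$. That comparison only enters in Theorem \ref{correctionfactor}, where the paper proves it in the lemma at the start of that proof. Likewise, the errata relevant here are items 1) and 2) of Remark \ref{errata}, not eq. (89), which belongs to Prop. 5.34.
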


\begin{proof} This is \cite{Flach-Morin-16}[Thm. 5.27]. \end{proof}

\begin{remark} The reader who is uncomfortable making the rather serious assumptions of Thm. \ref{tncthm} (which are of course also inherent to \cite{bk88}, \cite{fpr91}) may restrict to the example $\X=\Spec(\co_F)$ and any $n\in\bz$ where all assumptions are known \cite{Flach-Morin-16}[Sec. 5.8.3]. \end{remark}

The usefulness of Thm. \ref{tncthm} is limited by the inexplicit nature of the rational factor $C(\mathcal{X},n)$. In \cite{Flach-Morin-16} we were only able to show $C(\mathcal{X},n)=1$ for $n\leq 0$ and $C(\Spec(\co_F),n)=(n-1)!^{-[F:\bq]}$ for $n\geq 1$ if $F$ is a number field all of whose completions $F_v$ are absolutely abelian \cite{Flach-Morin-16}[Prop. 5.3.4]. With the results of the present paper we can show the following result.

\begin{theorem}  Let $F$ be a number field and $\X/\co_F$ a smooth projective scheme. Define
\begin{equation}\notag
C_\infty(\X,n):=\prod_{ i\leq n-1;\, j}(n-1-i)!^{(-1)^{i+j}\mathrm{dim}_{\bq}H^j(\X_{\bq},\Omega^i)}.
\end{equation}
Then for $n\geq 1$
\begin{equation}\notag
C(\X,n)=C_\infty(\X,n)^{-1}. 
\end{equation}
\label{correctionfactor}\end{theorem}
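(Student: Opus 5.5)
The plan is to compute each local factor $c_p(\X,n)$ prime by prime and show that $c_p(\X,n)=C_\infty(\X_{\bz_p},n)^{-1}$ up to the contributions that cancel. Since $\X/\co_F$ is smooth projective, $\X_{\bz_p}:=\X\times\Spec(\bz_p)$ is proper over $\bz_p$, of finite tor-amplitude, with perfect cotangent complex. So Thm. \ref{maintheorem} c) applies to the formal completion $\mathscr{X}=(\X_{\bz_p})^\wedge_p$. By Prop. \ref{pcomp}, $R\Gamma(\mathcal{X}_{\mathbb{Z}_p,\et},\bz(n))^\wedge_p\simeq R\Gamma_{\syn}(\mathscr{X},\bz_p(n))$ (via the fibre product defining global syntomic cohomology, using that $\mathscr{X}[1/p]$ étale cohomology agrees on both sides). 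For the special fibre, $R\Gamma(\X^{\mathrm{red}}_{\bF_p,\et},\bz(n))^\wedge_p\simeq R\Gamma_{\syn}(\X^{\mathrm{red}}_{\bF_p},\bz_p(n))$ by \cite{bms19}[Sec. 8]. The first step is to check that the rational isomorphism (\ref{oldbeil}) of \cite{Flach-Morin-16} agrees, on determinants, with the one induced by the fundamental fibre square of Thm. \ref{maintheorem} a). This means comparing the older Colmez--Nizio{\l} syntomic map with $\log_{\mathscr{X}}$. I would reduce this to the $\exp$-compatibility of Thm. \ref{maintheorem} d), since both maps are characterized by compatibility with the Bloch--Kato exponential via the étale comparison.

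Next I would compare the special fibre $\X^{\mathrm{red}}_{\bF_p}$ with the derived $\mathscr{X}/p$. Here $\mathscr{X}/p=\X_{\bF_p}$ is derived only in the sense $\co\otimes^L\bF_p$; for $\bz_p$-flat $\X$ this is the classical special fibre, which may be nonreduced when $F$ is ramified at $p$. I write
\[ \mydet R\Gamma_{\syn}(\mathscr{X},\bz_p(n))=\mydet R\Gamma_{\syn}(\mathscr{X},\bz_p(n))^{\rel}\cdot \mydet R\Gamma_{\syn}(\mathscr{X}/p,\bz_p(n)) \]
and apply Thm. \ref{maintheorem} c) to the relative term. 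This gives $\mydet^{-1}\widehat{\mathrm{dR}}^{<n,\rel}_{\mathscr{X}}\cdot C_\infty^{-1}$, that is $\mydet^{-1}\widehat{\mathrm{dR}}^{<n}_{\mathscr{X}}\cdot\chi^\times(\widehat{\mathrm{dR}}^{<n}_{\mathscr{X}/p})^{\pm1}\cdot C_\infty^{-1}$. It then remains to show that
\[ \chi^\times\bigl(\widehat{\mathrm{dR}}^{<n}_{\mathscr{X}/p}\bigr)^{\pm1}\cdot \frac{\mydet R\Gamma_{\syn}(\mathscr{X}/p,\bz_p(n))}{\mydet R\Gamma_{\syn}(\X^{\mathrm{red}}_{\bF_p},\bz_p(n))} \]
equals exactly $p^{-\chi(\X_{\bF_p},\co,n)}$, so that it cancels the factor built into $c_p$.

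For this comparison I would use the $\varpi$-adic filtration exactly as in the proof of Prop. \ref{angeltveit}, now with $\X$ replaced by $\mathscr{X}\otimes_{\co_{F_v}}\co_{F_v}/p=\mathscr{X}\otimes\co_{F_v}/\varpi^{e}$ for each place $v\mid p$. Prop. \ref{griso} identifies $\mathrm{gr}^j_F$ of syntomic and additive syntomic cohomology for $j\geq1$. Cor. \ref{globalfiltrations} c) identifies $\mathrm{gr}^0_F$ with the reduced fibre $\X^{\mathrm{red}}_{\bF_p}$ (the $\varpi$-adic quotient). The ratio then becomes a product of $\chi^\times$ of additive syntomic cohomology of $\mathscr{X}/p$ and of the reduced fibre. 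Using the filtrations (\ref{addfilt}), (\ref{nygaardfilt}), flat base change of $L\Omega^i$, and the fact that $\fibre(\cdot j)$ on $\bF_p$-modules contributes trivially to $\chi^\times$, this reduces to Hodge Euler characteristics $\sum(-1)^{i+j}(n-i)\dim H^j(\X^{\mathrm{red}}_{\bF_p},\Omega^i)$. That sum is exactly $\chi(\X_{\bF_p},\co,n)$.

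Combining these gives $c_p(\X,n)=C_\infty(\X_{\bz_p},n)^{-1}$. Since $\dim_{\bq_p}H^j(\X_{\bq_p},\Omega^i)=\dim_\bq H^j(\X_\bq,\Omega^i)$, the $p$-part of $C_\infty(\X,n)$ is recovered for every $p$, and taking the product over $p$ yields the theorem. I expect the main obstacle to be the first step, namely matching (\ref{oldbeil}) with $\log_{\mathscr{X}}$ on determinants. A secondary risk is bookkeeping of the $\mathrm{gr}^0_F$ (Frobenius) contribution and of signs in the second and third steps.
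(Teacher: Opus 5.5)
Apart from its first step, your argument is the paper's proof. You split off $R\Gamma_{\syn}(\mathscr{X}/p,\bz_p(n))$ and apply Thm.~\ref{thm:synlogproper} (i.e.\ Thm.~\ref{maintheorem} c)) to the relative term. You then compare $\mathscr{X}/p\simeq\coprod_{v\mid p}\X_{\co_{F_v}/\varpi_v^{e_v}}$ with its reduction via the $\varpi_v$-adic filtration, Prop.~\ref{griso} and Cor.~\ref{globalfiltrations}; this is Lemma~\ref{mulemma}, $\mu_{\syn}=\mu_{\add}$. Finally you compute $\chi^\times$ of $\widehat{\mathrm{dR}}^{<n}$ of the smooth special fibre. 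For that last computation the paper quotes \cite{Morin15}. Your direct count is a fine substitute: the transitivity triangle for $\bz_p\to k_v\to\co_{\X_{k_v}}$, with $L_{k_v/\bz_p}\simeq k_v[1]$ $p$-completely, gives $H^j(\X_{k_v},\Omega^i)$ exactly the multiplicity $n-i$ appearing in $\chi(\X_{\bF_p},\co,n)$.

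The justification you propose for the first step does not work. Compatibility with the Bloch--Kato exponential (Thm.~\ref{maintheorem} d)) does not characterize the fibre sequence. The map $\exp_V$ kills the image of $D_{cris}(V)^{\varphi=1}$ in $D_{dR}(V)/F^{\geq 0}$. For $V=V^{2n}(n)$ this image is nonzero as soon as there are crystalline Tate classes in $D_{cris}(V^{2n})$ not lying in $F^nD_{dR}(V^{2n})$. On that piece the crystalline-to-de Rham boundary map $R\Gamma_{\syn}(\mathscr{X}/p,\bq_p(n))\to(\widehat{\mathrm{dR}}^{<n}_{\mathscr{X}})_{\bq_p}$ cannot be recovered from the $\exp$-square. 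Yet the determinant trivialization defining $d_p(\X,n)$ depends on it.

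Argue directly instead, as the paper does. The logarithm implicit in (\ref{oldbeil}) is induced by the crystalline-to-de Rham comparison map (see (128) in \cite{Flach-Morin-16}[Prop. 7.21]). The boundary map of (\ref{fundamental-fib-seq}) is induced by $\beta$ by construction, so the two agree. If you prefer to go through the Galois side, the needed information is carried by the full isomorphism of fibre sequences in Thm.~\ref{exptheorem}, not by the $\exp$-square alone.

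This identification also requires the rational isomorphism $R\Gamma_{\syn}((\mathscr{X}/p)^{\mathrm{red}},\bq_p(n))\simeq R\Gamma_{\syn}(\mathscr{X}/p,\bq_p(n))$, so that both sequences have the same third term. It follows from your Step 3, since the graded pieces $\mathrm{gr}^j_F$ with $j\geq 1$ are torsion.
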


\begin{proof} We fix a prime $p$ and verify the $p$-primary part of Thm. \ref{correctionfactor}. In the following all identities are understood up to elements of $\bz_p^\times$. 
\begin{lemma} There are isomorphisms 
\begin{align*}\widehat{\mathrm{dR}}_{\mathcal{X}^\wedge_p/\mathbb{Z}_p}^{<n}\simeq &\mathrm{dR}_{\mathcal{X}_{\bz_p}/\mathbb{Z}_p}^{<n}\\
R\Gamma_{\syn}(\mathcal{X}^\wedge_p,\bz_p(n))\simeq &R\Gamma(\mathcal{X}_{\mathbb{Z}_p,\et},\bz(n))^\wedge_{p}\\
R\Gamma_{\syn}((\mathcal{X}^\wedge_p/p)^{\mathrm{red}},\bz_p(n))\simeq &R\Gamma(\mathcal{X}^{\mathrm{red}}_{\mathbb{F}_p,\et},\bz(n))^\wedge_{p}\end{align*}
such that (\ref{oldbeil}) is isomorphic to the fibre sequence (\ref{fundamental-fib-seq}) arising from the de Rham logarithm. 
\end{lemma}

\begin{proof} Since $\mathcal{X}_{\bz_p}/\mathbb{Z}_p$ is proper the complex $\mathrm{dR}_{\mathcal{X}_{\bz_p}/\mathbb{Z}_p}^{<n}$ is $\bz_p$-perfect and hence coincides with its $p$-completion. Therefore
$$\mathrm{dR}_{\mathcal{X}_{\bz_p}/\mathbb{Z}_p}^{<n}\simeq \widehat{\mathrm{dR}}_{\mathcal{X}_{\bz_p}/\mathbb{Z}_p}^{<n}\simeq \widehat{\mathrm{dR}}_{\mathcal{X}^\wedge_p/\mathbb{Z}_p}^{<n}$$
where the second isomorphism follows from $p$-completeness. 
The second isomorphism in the Lemma is a consequence of Prop. \ref{pcomp} since $\mathcal{X}_{\mathbb{Z}_p}$ is smooth over the product of Dedekind rings $(\co_F)_{\bz_p}$. One also needs to use the isomorphism of \'etale cohomology with $\mu_{p^\nu}^{\otimes n}$ coefficients of the scheme $\X_{\bq_p}$ on the one hand and the rigid analytic variety $\X^\wedge_p[\frac{1}{p}]$ on the other. The third isomorphism is also a consequence of Prop. \ref{pcomp} since $\mathcal{X}^{\mathrm{red}}_{\mathbb{F}_p}$ is smooth over $\bF_p$. The proof of Lemma \ref{mulemma} below shows that the natural map induces an isomorphism
$$ R\Gamma_{\syn}((\mathcal{X}^\wedge_p/p)^{\mathrm{red}},\bz_p(n))_{\bq_p}\simeq R\Gamma_{\syn}(\mathcal{X}^\wedge_p/p,\bz_p(n))_{\bq_p}$$
and that therefore the fibre sequence (\ref{fundamental-fib-seq}) induces a fibre sequence with the same terms as in (\ref{oldbeil}). Like the de Rham logarithm, the logarithm map implicit in (\ref{oldbeil}) is induced by the cristalline-to-de Rham comparison map, as is clear from (128) in \cite{Flach-Morin-16}[Prop. 7.21]. Therefore we can identify the two fibre sequences.
\end{proof}

Using Thm. \ref{thm:synlogproper} we have 
\begin{align*}&\mydet_{\bz_p}R\Gamma_{\syn}(\mathcal{X}^\wedge_p,\bz_p(n))\\
\simeq &\mydet_{\bz_p}R\Gamma_{\syn}(\mathcal{X}^\wedge_p,\bz_p(n))^{\rel}\otimes\mydet_{\bz_p}R\Gamma_{\syn}(\mathcal{X}^\wedge_p/p,\bz_p(n))\\
\simeq &\mydet_{\bz_p}^{-1}\widehat{\mathrm{dR}}_{\mathcal{X}^\wedge_p/\mathbb{Z}_p}^{<n,\rel}\cdot C_\infty(\X,n)^{-1} \otimes\mydet_{\bz_p}R\Gamma_{\syn}(\mathcal{X}^\wedge_p/p,\bz_p(n))\\
\simeq &\mydet_{\bz_p}^{-1}\widehat{\mathrm{dR}}_{\mathcal{X}^\wedge_p/\mathbb{Z}_p}^{<n}\cdot \mydet_{\bz_p}\widehat{\mathrm{dR}}_{(\mathcal{X}^\wedge_p/p)/\mathbb{Z}_p}^{<n}\cdot C_\infty(\X,n)^{-1} \otimes\mydet_{\bz_p}R\Gamma_{\syn}(\mathcal{X}^\wedge_p/p,\bz_p(n))
\end{align*}
Define $\mu_{\syn},\mu_{\add}\in p^\bz$ by
\begin{align*}\mydet_{\bz_p}R\Gamma_{\syn}(\mathcal{X}^\wedge_p/p,\bz_p(n))=&\mu_{\syn}\cdot\mydet_{\bz_p}R\Gamma_{\syn}((\mathcal{X}^\wedge_p/p)^{\mathrm{red}},\bz_p(n))\\
\mydet_{\bz_p}\widehat{\mathrm{dR}}_{(\mathcal{X}^\wedge_p/p)/\mathbb{Z}_p}^{<n}=&\mu^{-1}_{\add}\cdot\mydet_{\bz_p}\widehat{\mathrm{dR}}_{(\mathcal{X}^\wedge_p/p)^{\mathrm{red}}/\mathbb{Z}_p}^{<n}.
\end{align*}
Then we have
\begin{align*}d_{p}(\mathcal{X},n)=&\mydet_{\bz_p}\widehat{\mathrm{dR}}_{(\mathcal{X}^\wedge_p/p)/\mathbb{Z}_p}^{<n}\cdot C_\infty(\X,n)^{-1}\cdot\mu_{\syn}\\
=&\mu^{-1}_{\add}\cdot\mydet_{\bz_p}\widehat{\mathrm{dR}}_{(\mathcal{X}^\wedge_p/p)^{\mathrm{red}}/\mathbb{Z}_p}^{<n}\cdot C_\infty(\X,n)^{-1}\cdot\mu_{\syn}
\notag\end{align*}
and since $(\mathcal{X}^\wedge_p/p)^{\mathrm{red}}=\X_{\bF_p}^{\mathrm{red}}$ is smooth projective over $\bF_p$ the main theorem of \cite{Morin15} gives
$$ \mydet_{\bz_p}\widehat{\mathrm{dR}}_{(\mathcal{X}^\wedge_p/p)^{\mathrm{red}}/\mathbb{Z}_p}^{<n}=\chi^\times\left(\widehat{\mathrm{dR}}_{(\mathcal{X}^\wedge_p/p)^{\mathrm{red}}/\mathbb{Z}_p}^{<n}\right)^{-1}=p^{-\chi(\mathcal{X}_{\mathbb{F}_p},\mathcal{O},n)}.$$
Hence we find
$$ c_{p}(\mathcal{X},n)=p^{\chi(\mathcal{X}_{\mathbb{F}_p},\mathcal{O},n)}\cdot d_{p}(\mathcal{X},n)=\mu^{-1}_{\add}\cdot C_\infty(\X,n)^{-1}\cdot\mu_{\syn}.$$
Theorem \ref{correctionfactor} then follows from the next Lemma.
\end{proof}

\begin{lemma}\label{mulemma} With notation introduced above one has $\mu_{\add}=\mu_{\syn}$. 
\end{lemma}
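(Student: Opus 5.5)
We need to show that the passage from $Y:=\mathcal{X}^\wedge_p/p$ to $Y^{\mathrm{red}}$ changes the determinants of syntomic cohomology and of Hodge-truncated de Rham cohomology by the same power of $p$. The plan is to filter $\co_Y$ by a nilpotent filtration, so that on positive filtration pieces syntomic and additive syntomic cohomology agree by Prop. \ref{griso}. The filtration $\mathrm{gr}^0$ will be the reduced scheme, and $\mu_{\syn}$ and $\mu_{\add}$ will be multiplicative Euler characteristics of $F^{\geq 1}$.

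\textbf{Step 1: choose a filtration.} Over each $p$-adic place $v$ of $F$ we have $\co_{F_v}$ with uniformizer $\varpi$ and $e=e_v$. Write $\mathcal{X}_v$ for the corresponding component. Then $\co_{\mathcal{X}_v}/p = \co_{\mathcal{X}_v}\otimes^L\bF_p$ receives a map from $\co_{\mathcal{X}_v}/\varpi^e$, which is an equivalence because $\mathcal{X}_v$ is flat over $\co_{F_v}$ and $p=u\varpi^e$. Give it the $\varpi$-adic filtration of Example \ref{keyexample}, i.e. the filtration induced by $F_\fm$ on $\co_{\mathcal{X}_v}/p$. This filtration is bounded in $[0,e-1]$, and $\mathrm{gr}^0_F=\co_{\mathcal{X}_v}/\varpi=\co_{Y^{\mathrm{red}}}$ since $\mathcal{X}_v$ is smooth. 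The graded pieces are $\mathrm{gr}^j_F=\varpi^j\co/\varpi^{j+1}\co\simeq\co_{Y^{\mathrm{red}}}$ for $0\le j\le e-1$.

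By Lemma \ref{lcicor}, with $A=\bz_p$ trivially filtered, the cotangent complex has weak filtered amplitude in $[0,1]$. Since the filtration on $\co_Y$ is bounded, Prop. \ref{completeandbounded} b) (or a direct argument via the graded pieces (\ref{nygaardfilt}) and (\ref{addfilt})) shows that
$$F^{\geq\star}R\Gamma_{?}(Y,\bz_p(n)) \quad\text{and}\quad F^{\geq\star}\widehat{\mathrm{dR}}^{<n}_Y$$
are complete with finitely many nonzero graded pieces. This is the brief appearance of Prop. \ref{completeandbounded} announced earlier.

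\textbf{Step 2: the additive side.} Cor. \ref{globalfiltrations} c) identifies $\mathrm{gr}^0_F$ of each theory with the corresponding theory of $Y^{\mathrm{red}}$. Hence
$$\mu_{\add}^{-1}=\prod_{m\geq1}\chi^\times\bigl(\mathrm{gr}^m_F\widehat{\mathrm{dR}}^{<n}_Y\bigr)^{-1}$$
up to units; the sign convention is to be checked.

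Next, Lemma \ref{gammaiso} gives a map $F^{\geq\star}R\Gamma_{\add}(Y,\bz_p(n))\to F^{\geq\star}\widehat{\mathrm{dR}}^{<n}_Y[-1]$. Its fibre on $\mathrm{gr}^m_F$ is built from $\fibre(\mathrm{gr}^m_F L\widehat\Omega^{i-j}\xrightarrow{j}\mathrm{gr}^m_F L\widehat\Omega^{i-j})$. Each $\mathrm{gr}^m_F L\widehat\Omega^{i}$ is a finite $\bF_p$-module complex, i.e. a perfect complex with torsion cohomology, so it has rational Euler characteristic $0$. Therefore $\chi^\times$ of each such fibre is $1$, exactly as in the computation in the proof of Thm. \ref{thm:synlogproper} c). It follows that
$$\prod_{m\geq1}\chi^\times\,\mathrm{gr}^m_F R\Gamma_{\add}(Y,\bz_p(n))=\prod_{m\geq 1}\chi^\times\,\mathrm{gr}^m_F\widehat{\mathrm{dR}}^{<n}_Y[-1].$$

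\textbf{Step 3: the syntomic side.} By Prop. \ref{griso} with $c=1$, for every $m\ge1$ there is an isomorphism
$$\mathrm{gr}^m_FR\Gamma_{\syn}(Y,\bz_p(n))\simeq\mathrm{gr}^m_FR\Gamma_{\add}(Y,\bz_p(n)).$$
Combining this with completeness and boundedness gives
$$\mu_{\syn}=\prod_{m\ge1}\chi^\times\bigl(\mathrm{gr}^m_FR\Gamma_{\syn}(Y,\bz_p(n))\bigr)^{-1}=\prod_{m\ge1}\chi^\times\bigl(\mathrm{gr}^m_F R\Gamma_{\add}\bigr)^{-1}.$$
This mirrors the argument in Prop. \ref{angeltveit}. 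Comparing with Step 2, where the shift $[-1]$ inverts $\chi^\times$, yields $\mu_{\syn}=\mu_{\add}$.

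The same torsion argument also proves the rational isomorphism $R\Gamma_{\syn}(Y^{\mathrm{red}})_{\bq_p}\simeq R\Gamma_{\syn}(Y)_{\bq_p}$ that was used in the preceding lemma.

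\textbf{Expected obstacle.} The main risk is bookkeeping rather than anything conceptual. One must track the signs and shifts relating $\mydet$ to $\chi^\times$ for the $[-1]$ shift in $R\Gamma_{\add}$, and confirm that $\mu_{\add}$ was defined with the inverse exponent matching that shift. One must also check that all $\mathrm{gr}^m_F$ are $\bz_p$-perfect, which follows from properness together with the fact that the graded pieces are coherent over $Y^{\mathrm{red}}$.
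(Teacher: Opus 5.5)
Your proof follows the paper's. You decompose over $v\mid p$, filter by $\varpi_v$ with finite length $e_v$, identify $\mathrm{gr}^0$ with the reduced scheme via Cor.~\ref{globalfiltrations}, match $\mathrm{gr}^m$ for $m\geq 1$ via Prop.~\ref{griso}, and use $p$-torsionness to get $\chi^\times=1$ for the fibre of $\gamma^{dR,<n}_{\prism}$. You check that last point on each $\mathrm{gr}^m_F$, while the paper checks it on $\mathcal{X}_{\co_{F_v}/p}$ and $\mathcal{X}_{\co_{F_v}/\varpi_v}$; the two are equivalent, and your signs are right.

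The boundedness justification in Step 1 does not work as written, though it is easily repaired.

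\begin{itemize}
\item The filtration of Example~\ref{keyexample} on $R=\co_{\mathcal X}/p$ is the derived base change $\cdots\xrightarrow{\varpi}R\xrightarrow{\varpi}R$. It is unbounded, every graded piece is $\co_{\mathcal X}/\varpi\oplus\co_{\mathcal X}/\varpi[1]$, and so its $\mathrm{gr}^0$ is not $\co_{Y^{\mathrm{red}}}$.
\item What you actually describe, and need, is the strict filtration $F^{\geq j}=\varpi^j\co_{\mathcal X}/\varpi^e\co_{\mathcal X}$. Lemma~\ref{lcicor} does not apply to it: its hypothesis fails, since $F^{\geq e}=0$.
\item The lemma's conclusion is even false here for $e\geq 2$. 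The Rees algebra is $\co_{\mathcal X}[t,u]/(tu-\varpi,u^e)$. In the graded cotangent complex, $u$ times the generator attached to the relation $u^e$ is a nonzero $H_1$-class in $\mathrm{gr}^{e+1}$, because $u\cdot e u^{e-1}du=0$ in $\mathrm{gr}$. Weak amplitude in $[0,1]$ over a ring filtered in $[0,e-1]$ would force $\mathrm{gr}^j=0$ for $j>e$.
\item Prop.~\ref{completeandbounded}~b) is stated for finite filtered amplitude, not weak.
\end{itemize}

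The correct input is that this Rees algebra is a graded complete intersection. It has generator $u$ in degree $1$, relations in degrees $0$ and $e$, and everything else in degree $0$. So the filtered cotangent complex has finite filtered amplitude in $[0,e]$, and Prop.~\ref{completeandbounded}~b) then gives the boundedness you use; this is the input the paper needs when it invokes that proposition. Your parenthetical alternative via (\ref{nygaardfilt}) and (\ref{addfilt}) needs the same input, to know that $\mathrm{gr}^m_F L\widehat\Omega^i=0$ for $m\gg 0$.
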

\begin{proof} We have $\co_F/p=\prod_{v\mid p}\co_{F_v}/p$ and
$$\mathcal{X}^\wedge_p/p\simeq \coprod_{v\mid p}\mathcal{X}_{\co_{F_v}/p},\quad (\mathcal{X}^\wedge_p/p)^{\mathrm{red}}\simeq \coprod_{v\mid p}\mathcal{X}_{\co_{F_v}/\varpi_v}$$ 
where $\varpi_v$ is a uniformizer of $\co_{F_v}$. With obvious notation there are factorizations $\mu_{\syn}=\prod_{v\mid p}\mu_{\syn,v}$ and $\mu_{\add}=\prod_{v\mid p}\mu_{\add,v}$ and we shall show $\mu_{\syn,v}= \mu_{\add,v}$ for all $v\mid p$.

Consider $\X_v:=\mathcal{X}_{\co_{F_v}/p}$ as a (formal) scheme with filtered structure sheaf $\Phi^{\geq\star}\co_{\mathcal{X}_v}$ given by the $\varpi_v$-adic filtration of finite length $e_v:=e(F_v/\bq_p)$. By Cor. \ref{globalfiltrations}
$$\mathrm{gr}^0_\Phi R\Gamma_{?}(\X_v/\bz_p,\bz_p(n))\simeq R\Gamma_{?}(\X_{\co_{F_v}/\varpi_v}/\bz_p,\bz_p(n))$$
for $?=\syn,\add$ and therefore $\mu_{?,v}=\chi^\times\Phi^{\geq 1} R\Gamma_{?}(\X_v/\bz_p,\bz_p(n))$
where in the case $?=\add$ we also use $\chi^\times(\fibre(\gamma^{dR,<n}_{\prism,\X_{\co_{F_v}/\varpi_v}} ))=1$ and $\chi^\times(\fibre(\gamma^{dR,<n}_{\prism,\X_v} ))=1$ as in the proof of Thm. \ref{thm:synlogproper} c).
By Prop. \ref{griso} we have for $i\geq 1$
\begin{equation}\mathrm{gr}^i_\Phi R\Gamma_{\syn}(\X_v/\bz_p,\bz_p(n))\simeq \mathrm{gr}^i_\Phi R\Gamma_{\add}(\X_v/\bz_p,\bz_p(n))\notag\end{equation}
and these complexes are torsion and $\bz_p$-perfect and vanish for all but finitely many $i$ by Prop. \ref{completeandbounded} b).  Therefore
\begin{align*} \mu_{\syn,v}=&\chi^\times\Phi^{\geq 1} R\Gamma_{\syn}(\X_v/\bz_p,\bz_p(n))=\prod_{i\geq 1}\chi^\times \mathrm{gr}^i_\Phi R\Gamma_{\syn}(\X_v/\bz_p,\bz_p(n)) \\
=&\prod_{i\geq 1}\chi^\times \mathrm{gr}^i_\Phi R\Gamma_{\add}(\X_v/\bz_p,\bz_p(n))=\chi^\times\Phi^{\geq 1} R\Gamma_{\add}(\X_v/\bz_p,\bz_p(n))=\mu_{\add,v} 
\end{align*}
concluding the proof.
\end{proof}

Replacing Hodge truncated derived de Rham cohomology by additive syntomic cohomology we can eliminate the correction factor $C(\X,n)$ entirely.

\begin{corollary} Let $F$ be a number field, $\X/\co_F$ a smooth projective scheme of Krull dimension $d$ and $n\in\bz$ so that ${\bf L}(\mathcal{X}_{\et},n)$, ${\bf L}(\mathcal{X}_{\et},d-n)$ and ${\bf B}(\mathcal{X},n)$ hold true. Assume $\zeta(\mathcal{X},s)$ has a meromorphic continuation to $s=n$ and denote  by $\zeta^*(\mathcal{X},n)\in\br$ its leading Taylor coefficient. Then the Tamagawa number conjecture \cite{fpr91} for the motivic structure $h(\X_F)(n)$ (for all primes $p$) is equivalent to the identity
$$\lambda_{\infty}(\zeta^*(\mathcal{X},n)^{-1}\cdot\mathbb{Z})= \Delta(\mathcal{X},n)^{\mathrm{new}}
$$
where
$$\Delta(\mathcal{X},n)^{\mathrm{new}}:=\mathrm{det}_{\mathbb{Z}}R\Gamma_{W,c}(\mathcal{X},\mathbb{Z}(n))
\otimes_{\mathbb{Z}}\mathrm{det}^{-1}_{\mathbb{Z}}\,R\Gamma_{\add}(\X/\bz,\bz(n)).$$
\label{tnccor}\end{corollary}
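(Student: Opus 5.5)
Since Theorem \ref{tncthm} already states that the Tamagawa number conjecture is equivalent to $\lambda_{\infty}(\zeta^*(\mathcal{X},n)^{-1}\cdot C(\mathcal{X},n)\cdot\mathbb{Z})= \Delta(\mathcal{X},n)$, it suffices to show that this identity and the one in the corollary agree. Both sides are compared inside $\Delta(\mathcal{X},n)\otimes\br$. The determinants of $\mathrm{dR}_{\X/\bz}^{<n}$ and $R\Gamma_{\add}(\X/\bz,\bz(n))$ are identified rationally via the isomorphism $R\Gamma_{\add}(\X/\bz,\bz(n))_\bq\simeq \mathrm{dR}_{\X/\bz}^{<n}[-1]_\bq$ of the globalisation lemma. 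The shift by $[-1]$ is what turns $\mydet$ into $\mydet^{-1}$, which matches the sign convention in $\Delta^{\mathrm{new}}$. The claim to prove is then the equality of $\bz$-lattices
$$\mydet_\bz\,\mathrm{dR}^{<n}_{\X/\bz}=C(\X,n)^{\epsilon}\cdot\mydet^{-1}_\bz R\Gamma_{\add}(\X/\bz,\bz(n))$$
inside the common rational line, with the sign $\epsilon=\pm1$ fixed so that multiplying through by $C(\X,n)$ converts one identity into the other.

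I would check this lattice equality prime by prime. By the defining pullback square of $R\Gamma_{\add}(X/\bz,\bz(n))$ and the fact that both complexes are perfect, the $p$-part is governed by
$$\gamma^{dR,<n}_{\prism,\X^\wedge_p/\bz_p}\{n\}[-1]:R\Gamma_{\add}(\X^\wedge_p/\bz_p,\bz_p(n))\to \widehat{\mathrm{dR}}^{<n}_{\X^\wedge_p/\bz_p}[-1].$$
The discrepancy between the two lattices at $p$ is therefore $\chi^\times$ of the fibre of this map. The computation in the proof of Thm. \ref{thm:synlogproper} c), using the filtrations (\ref{addfilt}) and (\ref{nygaardfilt}), gives this fibre's multiplicative Euler characteristic as $\prod_{i\le n-2}(n-1-i)!^{(-1)^i\chi(\X_{\bq_p},i)}=C_\infty(\X,n)$. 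This holds up to units, and only primes $p\le n-1$ contribute. Multiplying over $p$ yields $C_\infty(\X,n)$, because $\dim_{\bq_p}H^j(\X_{\bq_p},\Omega^i)=\dim_\bq H^j(\X_\bq,\Omega^i)$ by flat base change.

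Thm. \ref{correctionfactor} gives $C(\X,n)=C_\infty(\X,n)^{-1}$ for $n\ge1$, so the two correction factors cancel. For $n\le0$ one has $\mathrm{dR}^{<n}=0=R\Gamma_{\add}$, and $C(\X,n)=1$ by \cite{Flach-Morin-16}, so that case is trivial. The main obstacle I expect is bookkeeping of signs and exponents: one must confirm that $\mydet^{-1}$ of $R\Gamma_{\add}$, the $\chi^\times$ convention $\mydet_{\bz_p}C=\chi^\times(C)^{-1}\bz_p$, and the placement of $C(\X,n)$ in Theorem \ref{tncthm} combine to give exact cancellation rather than $C^2$. To settle this I would recheck the direction of the identity in Thm. \ref{thm:synlogproper} c) and follow it through without changing conventions.
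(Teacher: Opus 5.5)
Your proposal is correct and is essentially the paper's argument: the paper reduces to $\mydet^{-1}_{\bz}R\Gamma_{\add}(\X/\bz,\bz(n))=C_\infty(\X,n)\cdot\mydet_{\bz}\mathrm{dR}^{<n}_{\X/\bz}$, checks it after $p$-completion for every $p$ using the filtrations (\ref{addfilt}) and (\ref{nygaardfilt}) as in the proof of Thm. \ref{thm:synlogproper} c), and combines this with Thm. \ref{correctionfactor}. The sign you left open is $\epsilon=+1$: from $\chi^\times(\fibre\gamma)=C_\infty(\X,n)$ and $\mydet_{\bz_p}C=\chi^\times(C)^{-1}\bz_p$ one gets $\Delta(\X,n)=C(\X,n)\cdot\Delta(\X,n)^{\mathrm{new}}$, so the correction factors cancel rather than square.
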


\begin{proof} It suffices to show
$$ \mathrm{det}^{-1}_{\mathbb{Z}}\,R\Gamma_{\add}(\X/\bz,\bz(n))=C_\infty(\X,n)\cdot \mathrm{det}_{\mathbb{Z}}\,\mathrm{dR}_{\mathcal{X}/\mathbb{Z}}^{<n}$$
which can be checked after $p$-completion for all primes $p$. The $p$-complete statement follows from the filtrations (\ref{addfilt}) and (\ref{nygaardfilt}) as in the proof of Thm. \ref{thm:synlogproper} c).
\end{proof}

\begin{corollary} Let $F$ be a number field, $\X/\co_F$ a smooth projective scheme of Krull dimension $d$ and $n\in\bz$ so that ${\bf L}(\mathcal{X}_{\et},n)$, ${\bf L}(\mathcal{X}_{\et},d-n)$ and ${\bf B}(\mathcal{X},n)$ hold true. Assume $\zeta(\mathcal{X},s)$ has a meromorphic continuation to $s=n$ and $s=d-n$ and satisfies the expected functional equation \cite{Flach-Morin-20}[Conj. 1.3]. Then the Tamagawa number conjecture \cite{fpr91} (at any prime $p$) holds true for the motivic structure $h(\X_F)(n)$ if and only if it holds true for the motivic structure $h(\X_F)(d-n)$. 
\label{funeqcor}\end{corollary}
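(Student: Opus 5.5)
The plan is to deduce Corollary \ref{funeqcor} from Corollary \ref{tnccor}, or equivalently from Theorem \ref{tncthm} together with Theorem \ref{correctionfactor}, by the same route the authors took in \cite{Flach-Morin-20}. There it is shown that, assuming the expected functional equation \cite{Flach-Morin-20}[Conj. 1.3], the special value conjecture in the form $\lambda_\infty(\zeta^*(\X,n)^{-1}C(\X,n)\bz)=\Delta(\X,n)$ at $s=n$ is equivalent to the same statement at $s=d-n$. The equivalence holds precisely when the correction factors satisfy the compatibility
\[ C(\X,n)=C(\X,d-n)^{\pm 1}\cdot(\text{explicit factor}), \]
in the form \cite{Flach-Morin-20}[Conj. ...] predicts, namely a relation of the shape $C(\X,n)C(\X,d-n)^{-1}=C_\infty(\X,n)^{-1}C_\infty(\X,d-n)$, or whichever precise identity was isolated there. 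My first step is therefore to recall exactly which identity of $C$-factors \cite{Flach-Morin-20} reduces the equivalence to. The paper's abstract indicates that computing $C(X,n)$ is what was needed.

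Second, I split into cases according to the sign of $n$ and $d-n$. If $n\geq 1$ and $d-n\geq 1$, Theorem \ref{correctionfactor} gives $C(\X,n)=C_\infty(\X,n)^{-1}$ and $C(\X,d-n)=C_\infty(\X,d-n)^{-1}$. If $n\leq 0$, then $d-n\geq d\geq 1$; here \cite{Flach-Morin-16} gives $C(\X,n)=1$, which matches $C_\infty(\X,n)=1$ because the product defining $C_\infty$ is empty for $n\leq 0$. In every case we get $C(\X,m)=C_\infty(\X,m)^{-1}$ for all $m\in\bz$. Hence the fundamental line $\Delta(\X,m)^{\mathrm{new}}$ of Corollary \ref{tnccor} carries no correction factor for either $m=n$ or $m=d-n$.

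Third, I verify that $C_\infty$ has exactly the form required. In \cite{Flach-Morin-20} the factor $C_\infty(\X,n)$ was introduced as the archimedean/de Rham term arising from the functional equation's Gamma factors. There it was shown that the functional equation is compatible with the conjecture if and only if $C(\X,n)=C_\infty(\X,n)^{-1}$ for all $n$, or at least for the pair $n,d-n$. This is likely the precise content of \cite{Flach-Morin-20}, and if so the corollary is immediate. The main obstacle is pinning down this reduction precisely, including signs and the conventions on the Hodge numbers $H^j(\X_\bq,\Omega^i)$ entering $C_\infty$. In particular I must check that Serre duality on $\X_\bq$, which relates $\dim H^j(\Omega^i)$ to $\dim H^{d-1-j}(\Omega^{d-1-i})$, turns the product for $C_\infty(\X,d-n)$ into what the functional equation demands.
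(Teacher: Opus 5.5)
Your plan is the paper's proof. Cor. \ref{tnccor} packages Thm. \ref{tncthm} together with $C(\X,m)=C_\infty(\X,m)^{-1}$, including the trivial case $m\leq 0$, and so turns the conjecture at both $n$ and $d-n$ into the correction-free identity for $\Delta(\X,m)^{\mathrm{new}}$; \cite{Flach-Morin-20}[Thm. 1.4] is exactly the compatibility of that form with the functional equation, so the Serre-duality bookkeeping you worry about is already done there. The one point you omit is that only ${\bf B}(\X,n)$ is assumed, so applying Cor. \ref{tnccor} at $d-n$ also needs ${\bf B}(\X,n)\Rightarrow{\bf B}(\X,d-n)$, which is \cite{Flach-Morin-16}[Rem. 2.6].
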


\begin{proof} This follows immediately from Cor. \ref{tnccor} and \cite{Flach-Morin-20}[Thm. 1.4]. Note also that ${\bf B}(\mathcal{X},n)$ implies ${\bf B}(\mathcal{X},d-n)$ \cite{Flach-Morin-16}[Rem. 2.6]. \end{proof}

\begin{remark} \label{errata} (Errata to \cite{Flach-Morin-16}) We take this opportunity to correct a few typos in \cite{Flach-Morin-16}.
\begin{itemize}
\item[1)] In the definition of $C(\X,n)$ after Conj. 5.11 omit $|\ |_p$. The correct definition is (\ref{cdef}).
\item[2)] Replace $C(\X,n)$ by $C(\X,n)^{-1}$ in the displayed formula following the displayed formula defining $\gamma_p$ in the proof of Thm. 5.27.
\item[3)] In the definition of $\eta_V(\omega)$ in the proof of Prop. 5.34 replace $|\ |^{-1}_p$ by $|\ |_p$ (see \cite{pr95}[App. C.2.8]) so that $\eta_V(\omega)\sim D_K^{1-n}$. In 
\cite{Flach-Morin-16}[eq. (89)] replace $D_K^{n-1}$ by  $D_K^{1-n}$. 
\item[4)] In the last displayed formula of the proof of Prop. 5.34 omit $|\ |_p$ and replace $[F_v:\bq_p]$ by $-[F_v:\bq_p]$.
\end{itemize}
\end{remark}

\begin{theorem} Let $K/\bq_p$ be a finite extension with discriminant $D_K$ and residue field of cardinality $q$. Then for $n\geq 2$
$$\mydet_{\bz_p}\left(\exp_{\bq_p(n)}\co_K\right)\cdot (1-q^{-n})^{-1}\cdot (n-1)!^{[K:\bq_p]}\cdot D_K^{n-1}=\mydet_{\bz_p}^{-1}R\Gamma(K,\bz_p(n)) $$
inside $\mydet_{\bq_p}^{-1}R\Gamma(K,\bq_p(n))=\mydet_{\bq_p}H^1(K,\bq_p(n))$.
\label{bktheorem2} \end{theorem}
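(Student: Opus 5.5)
We apply the machinery to $\mathscr{X}=\Spf(\co_K)$, which is proper of finite tor-amplitude over $\bz_p$ with perfect $(L_{\co_K/\bz_p})^\wedge_p$. For $n\geq 2$ the diagram (\ref{logandexp}) (Thm. \ref{exptheorem} for $\Spf(\co_K)$) gives $c_{\et}\circ\iota\circ\log_{\co_K}^{-1}=\exp_{\bq_p(n)}$ on $K=H^0(\widehat{\mathrm{dR}}^{<n}_{\co_K})_{\bq}$. By \cite{bhatt-mathew}[Thm. 1.8] the map $c_{\et}$ is an integral isomorphism $R\Gamma_{\syn}(\co_K,\bz_p(n))\simeq R\Gamma(K,\bz_p(n))$. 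Hence $\mydet^{-1}_{\bz_p}R\Gamma(K,\bz_p(n))$ is the image of $\mydet^{-1}_{\bz_p}R\Gamma_{\syn}(\co_K,\bz_p(n))$. We then split the latter into the relative part and $R\Gamma_{\syn}(\co_K/p,\bz_p(n))$.

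\textbf{Step 1: the relative part.} Thm. \ref{main} c) gives
$$\mydet(\log)\bigl(\mydet R\Gamma_{\syn}(\co_K,\bz_p(n))^{\rel}\bigr)=\mydet(\iota)\bigl(\mydet^{-1}\widehat{\mathrm{dR}}^{<n,\rel}_{\co_K}\bigr)\cdot C_\infty(\co_K,n)^{-1}.$$
Here $H^j(K,L\Omega^i_{K/\bq_p})$ is nonzero only for $i=0$, $j=0$, where it has dimension $[K:\bq_p]$. So $C_\infty=(n-1)!^{[K:\bq_p]}$.

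\textbf{Step 2: the mod $p$ parts.} By Prop. \ref{angeltveit} with $k=e$, we have $\chi^\times R\Gamma_{\syn}(\co_K/p,\bz_p(n))=q^{-n(e-1)}$. Rationally we also know $R\Gamma_{\syn}(\co_K/p)_\bq\simeq R\Gamma_{\syn}(\kappa)_\bq$, which has cohomology $W(\kappa)[1/p]^{\varphi=p^n}=0$, so this complex is torsion. For $\widehat{\mathrm{dR}}^{<n}_{\co_K/p}$, we follow the proof of Prop. \ref{angeltveit}. The graded pieces $L\widehat\Omega^i_{(\co_K/p)/\bz_p}$ have $\chi^\times$ contributions: a factor $|\co_K/p|^{\pm1}=q^{\pm e}$ from $j=0$, while the $j\geq1$ pieces $\bigwedge^jL_{\co_K/\bz_p}\otimes\co_K/p$ contribute trivially by (\ref{fincoh}). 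This gives $\chi^\times(\widehat{\mathrm{dR}}^{<n}_{\co_K/p})^{\pm1}=q^{en}$.

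The torsion complexes that remain must then be matched. The determinant of $\widehat{\mathrm{dR}}^{<n}_{\co_K}=\co_K$ (concentrated in degree $0$ via $\fil^{\geq1}$ considerations) is compared with the lattice $\co_K$ in $K$. A trace-form comparison with $\bz_p$-bases introduces $D_K$, and it is this $D_K$ that pairs with the powers $q^{en}$ and $q^{-n(e-1)}$.

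\textbf{Step 3: bookkeeping.} We assemble
$$\mydet^{-1}R\Gamma(K,\bz_p(n))=\mydet(\exp)\bigl(\mydet\widehat{\mathrm{dR}}^{<n,\rel}_{\co_K}\bigr)\cdot(n-1)!^{[K:\bq_p]}\cdot\chi^\times\text{-factors}.$$
Writing $\mydet\widehat{\mathrm{dR}}^{<n,\rel}=\mydet\widehat{\mathrm{dR}}^{<n}\cdot\mydet^{-1}\widehat{\mathrm{dR}}^{<n}_{\co_K/p}$, the $q$-powers combine to $q^{en}\cdot q^{-n(e-1)}=q^{n}$ up to the contribution of $\kappa$. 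The residual factor from $R\Gamma_{\syn}(\kappa,\bz_p(n))$ is $\chi^\times=|\kappa|$-adjusted, namely $|1-q^{n}|_p$-type. This must be rewritten so that $q^n(1-q^{-n})$, with $q^n$ a $p$-power, yields the factor $(1-q^{-n})^{-1}$ up to units.

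\textbf{Main obstacle.} I expect the main difficulty to be the precise sign and normalization bookkeeping: matching $\mydet_{\bz_p}(\exp_{\bq_p(n)}(\co_K))$ with the lattice $\co_K\subset\widehat{\mathrm{dR}}^{<n}_{\co_K,\bq}$, and extracting exactly $D_K^{n-1}$ rather than $D_K^{1-n}$ (compare Remark \ref{errata} 3)). The extra $D_K$-dependence should come from the fact that $\widehat{\mathrm{dR}}^{<n}_{\co_K}$ is not $\co_K$ integrally: the derived de Rham complex of $\co_K$ involves $L_{\co_K/\bz_p}$, whose torsion $\co_K/\mathfrak{D}$ has length governed by the different. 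Its $\bigwedge^i$ for $1\leq i\leq n-1$ (degree $i-1$, shifted by $-i$) contributes $|D_K|^{\pm1}$ each. This gives $D_K^{n-1}$, and the first thing I would compute is $\chi^\times$ of these pieces using \cite{Flach-Morin-16}[Prop. 5.36].
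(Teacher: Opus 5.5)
Your outline is the paper's proof. Bhatt--Mathew makes $c_{\et}$ an integral isomorphism. Theorem~\ref{main}~c) handles the relative part with $C_\infty(\co_K,n)=(n-1)!^{[K:\bq_p]}$. The discriminant comes from \cite{Flach-Morin-16}[Prop. 5.36], i.e.\ $\mydet_{\bz_p}\widehat{\mathrm{dR}}^{<n}_{\co_K}=\mydet_{\bz_p}(\co_K)\cdot D_K^{n-1}$, which is where your last paragraph ends up.

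The only variation is in the mod~$p$ factor. The paper uses Lemma~\ref{mulemma} ($\mu_{\add}=\mu_{\syn}$), $R\Gamma_{\syn}(\kappa,\bz_p(n))=0$ and \cite{Morin15} to replace $\mydet^{-1}_{\bz_p}R\Gamma_{\syn}(\co_K/p,\bz_p(n))\cdot\mydet^{-1}_{\bz_p}\widehat{\mathrm{dR}}^{<n}_{\co_K/p}$ by $\mydet^{-1}_{\bz_p}\widehat{\mathrm{dR}}^{<n}_{\kappa}=q^n\bz_p$. You instead compute the two Euler characteristics separately: $q^{-n(e-1)}$ from Prop.~\ref{angeltveit} with $k=e$, and $q^{en}$ from its proof. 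Prop.~\ref{angeltveit} rests on the same $\varpi$-adic filtration argument via Prop.~\ref{griso}, so this is a repackaging, and it works.

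Two passages are wrong, contradict your own last paragraph, and should be deleted.

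In Step~2, $\widehat{\mathrm{dR}}^{<n}_{\co_K}$ is not $\co_K$ for $n\geq 2$, and no trace form enters. The statement compares with the lattice $\co_K\subset K$ itself. The factor $D_K^{n-1}$ comes only from the pieces $L\widehat{\Omega}^i_{\co_K}[-i]$ for $1\le i\le n-1$, each a copy of $\co_K/\mathfrak{D}$ in cohomological degree~$1$, where $\mathfrak{D}$ is the different.

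In Step~3 there is no residual factor coming from $\kappa$. Prop.~\ref{angeltveit} already gives the full Euler characteristic of $R\Gamma_{\syn}(\co_K/p,\bz_p(n))$, and in any case $R\Gamma_{\syn}(\kappa,\bz_p(n))=0$ for $n\geq 1$. The relation $(1-q^{-n})^{-1}\sim q^n$ holds simply because $q^n-1\in\bz_p^\times$.

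Once you use $\mydet_{\bz_p}C=\chi^\times(C)^{-1}\bz_p$ for torsion perfect $C$, the bookkeeping you were worried about is forced:
\[\mydet^{-1}_{\bq_p}(\log_{\co_K,\bq})\bigl(\mydet^{-1}_{\bz_p}R\Gamma_{\syn}(\co_K,\bz_p(n))\bigr)=\mydet_{\bz_p}\widehat{\mathrm{dR}}^{<n}_{\co_K}\cdot(n-1)!^{[K:\bq_p]}\cdot q^{-n(e-1)}\cdot q^{en}=\mydet_{\bz_p}(\co_K)\cdot D_K^{n-1}\cdot(n-1)!^{[K:\bq_p]}\cdot q^{n}.\]
This is exactly the paper's identity (\ref{localvolume}).
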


\begin{proof} We continue the argument of the introduction. As remarked there, for $n\geq 2$ all maps in (\ref{logandexp}) are isomorphisms and  \cite{bhatt-mathew}[Thm. 1.8] gives an isomorphism
$$ c_{\et}: R\Gamma_{\syn}(\Spf(\co_K),\bz_p(n))\simeq R\Gamma(K,\bz_p(n)).$$
So Thm. \ref{bktheorem2} amounts to the identity 
\begin{equation}\alpha=(1-q^{-n})^{-1}\cdot (n-1)!^{[K:\bq_p]}\cdot D_K^{n-1}\sim q^n\cdot (n-1)!^{[K:\bq_p]}\cdot D_K^{n-1}\label{localvolume}\end{equation}  
where $\alpha\in \bq_p^\times$ is such that 
$$\mydet_{\bz_p}\left(\co_K\right)\cdot\alpha=\mydet_{\bq_p}^{-1}(\log_{\co_K,\bq})\left(\mydet_{\bz_p}^{-1}R\Gamma_{\syn}(\Spf(\co_K),\bz_p(n))\right)$$
inside $\mydet_{\bq_p}K$. By Thm. \ref{main} c) the right hand side is equal to
\begin{align*}&\mydet_{\bq_p}^{-1}\log_{\co_K,\bq}\left(\mydet_{\bz_p}^{-1}R\Gamma_{\syn}(\Spf(\co_K),\bz_p(n))^{\rel}\right)\cdot \mydet_{\bz_p}^{-1}R\Gamma_{\syn}(\Spf(\co_K/p),\bz_p(n))\\
= &\mydet_{\bz_p}\widehat{\mathrm{dR}}^{<n,\rel}_{\co_K}\cdot  (n-1)!^{[K:\bq_p]}\cdot \mydet_{\bz_p}^{-1}R\Gamma_{\syn}(\Spf(\co_K/p),\bz_p(n))\\
= &\mydet_{\bz_p}\widehat{\mathrm{dR}}^{<n}_{\co_K}\cdot  (n-1)!^{[K:\bq_p]}\cdot \mydet_{\bz_p}^{-1}R\Gamma_{\syn}(\Spf(\co_K/p),\bz_p(n))\cdot\mydet^{-1}_{\bz_p}\widehat{\mathrm{dR}}^{<n}_{\co_K/p}
\end{align*}
Denote by $\kappa\simeq(\co_K/p)^\mathrm{red}$ the residue field of $K$. Lemma \ref{mulemma} and the fact that finite fields have trivial higher $p$-adic K-theory show that this last determinant equals
$$\mydet_{\bz_p}\widehat{\mathrm{dR}}^{<n}_{\co_K}\cdot  (n-1)!^{[K:\bq_p]}\cdot\mydet^{-1}_{\bz_p}\widehat{\mathrm{dR}}^{<n}_{\kappa}.$$
By  \cite{Flach-Morin-16}[Prop. 5.36] one has $\mydet_{\bz_p}\widehat{\mathrm{dR}}^{<n}_{\co_K}=\mydet_{\bz_p}\left(\co_K\right)\cdot D_K^{n-1}$ and the main theorem of \cite{Morin15} shows $\mydet^{-1}_{\bz_p}\widehat{\mathrm{dR}}^{<n}_{\kappa}=q^n\cdot\bz_p$. This finishes the proof of (\ref{localvolume}).
\end{proof}

\begin{remark} By Cor. \ref{fneqcomp} Thm. \ref{bktheorem2} proves Conjecture $C_{EP}(\bq_p(n))$ of \cite{pr95}[App. C.2.9]. Since the de Rham and the syntomic logarithm are functorial we expect that the equivariant version of Conjecture $C_{EP}(\bq_p(n))$, the local epsilon conjecture of Fukaya and Kato \cite{fk} for Tate motives \cite{flach-daigle-16}, also readily follows from our methods. Addressing Conjecture $C_{EP}(V)$ for more general $p$-adic representations $V$ would require to first extend the construction of the de Rham and the syntomic logarithm to filtered prismatic F-gauges, i.e. (certain) objects of   $\gsheaf_{\widehat{\ba}^1/\widehat{\bg}_m}$. We hope to come back to this question in a subsequent article.
\end{remark}
\section{Appendix A}

In this appendix we discuss generalities on $A$-modules in presentable stable $\infty$-categories and give the precise definition of $\gsheaf_\mathfrak{A}$.

\begin{definition} Let $\mathscr{M}$ be a presentable stable $\infty$-category tensored over $\Mod_\bz$. For any $A\in\CAlg_\bz$ we define the presentable stable category of $A$-modules in $\mathscr{M}$
$$ \mathscr{M}_A:= \mathscr{M}\otimes_{\Mod_\bz}\Mod_A$$
where the relative tensor product is discussed for example in \cite{lurieSAG}[App. D.2.1].
\end{definition}

Clearly, the definition of $\mathscr{M}_A$ can be further generalized to $\mathscr{M}\otimes_{\Mod_\bz}\mathscr{N}$ for any presentable stable $\infty$-category $\mathscr{N}$  tensored over $\Mod_\bz$. We will only be interested in the example
\begin{equation}\mathscr{N}= \cd(\mathfrak{A})\simeq \varprojlim_{\Delta}\cd(A^\bullet)\label{stackexample}\end{equation}
for a formal $\delta$-stack $\mathfrak{A}$ in the sense of Def. \ref{deltastackdef}. Here the second isomorphism follows from faithfully flat descent \cite{lurieSAG}[Cor. D.6.3.3]. In this example we use the notation 
$$\mathscr{M}_\mathfrak{A}:=\mathscr{M}\otimes_{\Mod_\bz} \cd(\mathfrak{A}).$$

Recall from \cite{akn23}[Def. A.9] that $\mathscr{M}$ is called $p$-complete if 
$$\mathscr{M}\to\mathscr{M}^\wedge_p:=\mathscr{M}\otimes_{\Mod_\bz}\cd(\bz)$$ 
is an equivalence where $\cd(\bz)=\cd(\Spf(\bz))\simeq\varprojlim_n\Mod_{\bz/p^n}$ is the category of $p$-complete objects of $\Mod_\bz$.

\begin{lemma} Let $\mathscr{M}$ be a presentable stable $\infty$-category tensored over $\Mod_\bz$.

a) For any homomorphism $A\to A'$ in $\CAlg_\bz$ there is a pair of adjoint functors 
$$ -\otimes_AA': \mathscr{M}_A\rightleftarrows\mathscr{M}_{A'}:\mathrm{Res}^{A'}_A.$$

b) Assume $X\in\mathscr{M}$ is an $A$-module in the sense that there is a morphism of $\be_1$-$\bz$-algebras 
$$ a:A\to\End_{\mathscr{M}}(X).$$
Then $X$ gives rise to an object of $\mathscr{M}_A$ and conversely, any object of $\mathscr{M}_A$ restricts to an $A$-module in $\mathscr{M}$.

c) If $\mathscr{M}$ is $p$-complete then $\mathscr{M}_A\simeq \mathscr{M}\otimes_{\Mod_\bz}\cd(A)$ where $\cd(A)=(\Mod_A)^\wedge_p$ is the category of $p$-complete objects of $\Mod_A$.

d) If $\mathscr{M}$ is compactly assembled (in particular if $\mathscr{M}$ is compactly generated) and $\mathfrak{A}$ is as in (\ref{stackexample}) we have
$$  \mathscr{M}_{\mathfrak{A}} \simeq  \varprojlim_{\Delta} \mathscr{M}_{A^\bullet}.$$

e) If $\mathscr{M}\in\CAlg(\Mod_{\Mod_\bz}(\mathcal{P}r^L))$ then $\mathscr{M}_A\in\CAlg(\Mod_{\Mod_\bz}(\mathcal{P}r^L))$.
\label{DAgeneralities}\end{lemma}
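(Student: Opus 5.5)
The plan is to reduce everything to the formal properties of the relative tensor product $\mathscr{M}\otimes_{\Mod_\bz}(-)$ in $\Mod_{\Mod_\bz}(\mathcal{P}r^L)$ as developed in \cite{lurieSAG}[App. D]. For a) I take the adjunction $\Mod_A\rightleftarrows\Mod_{A'}$ given by extension of scalars and restriction. Both functors are $\Mod_\bz$-linear and colimit preserving: restriction preserves colimits because it is computed on underlying $\bz$-modules. Applying $\mathscr{M}\otimes_{\Mod_\bz}(-)$, a 2-functor on $\Mod_{\Mod_\bz}(\mathcal{P}r^L)$, sends adjunctions internal to this 2-category to adjunctions. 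This yields $-\otimes_AA'$ and $\mathrm{Res}^{A'}_A$ on $\mathscr{M}_A$.

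For b) I use $\Mod_A\simeq\mathrm{RMod}_A(\Mod_\bz)$ together with the standard identification $\mathscr{M}\otimes_{\Mod_\bz}\mathrm{RMod}_A(\Mod_\bz)\simeq\mathrm{RMod}_A(\mathscr{M})$ \cite{lurieHA}[Thm. 4.8.4.6], where $\mathrm{RMod}_A(\mathscr{M})$ denotes modules for the action of $\Mod_\bz$ on $\mathscr{M}$. An $\be_1$-map $A\to\End_{\mathscr{M}}(X)$, with $\End$ the $\bz$-linear endomorphism object, is by definition an $A$-module structure on $X$. Conversely, the forgetful functor sends an object of $\mathscr{M}_A$ to such a structure.

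For c) I use associativity of the relative tensor product:
\[
\mathscr{M}\otimes_{\Mod_\bz}\cd(A)\simeq\mathscr{M}\otimes_{\Mod_\bz}\cd(\bz)\otimes_{\Mod_\bz}\Mod_A\simeq\mathscr{M}^\wedge_p\otimes_{\Mod_\bz}\Mod_A .
\]
Here the first step uses $\cd(A)\simeq\cd(\bz)\otimes_{\Mod_\bz}\Mod_A$, which is \cite{akn23}[App. A]. Since $\mathscr{M}\simeq\mathscr{M}^\wedge_p$, the right-hand side is $\mathscr{M}_A$.

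For d), the main point is commuting $\mathscr{M}\otimes_{\Mod_\bz}-$ with the totalization $\cd(\mathfrak{A})\simeq\varprojlim_\Delta\cd(A^\bullet)$. When $\mathscr{M}$ is compactly assembled it is dualizable in $\mathcal{P}r^L_{\mathrm{st}}$, and hence also dualizable over $\Mod_\bz$. Tensoring with a dualizable object is then $\Fun^L(\mathscr{M}^\vee,-)$, which commutes with limits, and limits in $\mathcal{P}r^L$ are computed underlying. Combined with c) for each $A^\bullet$, this gives $\mathscr{M}_{\mathfrak A}\simeq\varprojlim_\Delta\mathscr{M}_{A^\bullet}$. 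I expect the only real obstacle to be the dualizability over $\Mod_\bz$, as opposed to over $\mathrm{Sp}$. My first attempt is to reduce it via $\mathscr{M}\simeq\mathscr{M}\otimes_{\Mod_\bz}\Mod_\bz$ and \cite{lurieSAG}[Prop. D.7.2.3].

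For e), I note that $\Mod_A$ is a commutative algebra in $\Mod_{\Mod_\bz}(\mathcal{P}r^L)$ and that the relative tensor product is symmetric monoidal. The tensor product of two commutative algebras is therefore again commutative.
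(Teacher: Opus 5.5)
Your proposal is correct and follows the paper's proof closely. For c), d) and e) the arguments coincide: associativity of the relative tensor product with $\cd(\bz)$; dualizability of a compactly assembled $\mathscr{M}$ over $\Mod_\bz$, so that $\mathscr{M}\otimes_{\Mod_\bz}-\simeq\Fun^L_{\Mod_\bz}(\mathscr{M}^\vee,-)$ commutes with the totalization; and $\Mod_A\in\CAlg(\Mod_{\Mod_\bz}(\mathcal{P}r^L))$. The step you flag in d), dualizability over $\Mod_\bz$ rather than over spectra, is simply cited from \cite{lurieSAG} in the paper; it holds because $\Mod_\bz$ is rigid.

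The only differences are in a) and b), and both routes work. In a) the paper obtains the right adjoint of $\id_{\mathscr{M}}\otimes(-\otimes_AA')$ from the adjoint functor theorem. Tensoring the $\Mod_\bz$-linear adjunction, as you do, additionally shows that $\mathrm{Res}^{A'}_A=\id_{\mathscr{M}}\otimes\mathrm{Res}$ preserves colimits and is literally restriction of module structures, which the paper uses after (\ref{basechange}). In b) the paper extends $BA\to\mathscr{M}$ to a $\Mod_\bz$-linear functor $\Mod_A\to\mathscr{M}$ and uses self-duality of $\Mod_A$. You instead use $\mathscr{M}\otimes_{\Mod_\bz}\Mod_A\simeq\mathrm{LMod}_A(\mathscr{M})$ together with the universal property of $\End_{\mathscr{M}}(X)$; these are equivalent inputs.

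One caveat is shared with the paper: invoking c) in d) requires $\mathscr{M}$ to be $p$-complete, and d) fails without it. For $\mathscr{M}=\Mod_\bz$ and the constant diagram $A^\bullet=\bz_p$ one gets $\cd(\bz_p)$ on one side versus $\Mod_{\bz_p}$ on the other. This is harmless in the paper, since all categories in (\ref{list}) to which d) is applied are $p$-complete.
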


\begin{proof} a) The left adjoint $-\otimes_AA'=\id_\mathscr{M}\otimes (-\otimes_AA')$ is induced by the corresponding functor $-\otimes_AA':\Mod_A\to \Mod_{A'}$ and functoriality of the tensor product in $\Mod_{\Mod_\bz}(\mathcal{P}r^L)$. The right adjoint exists for any functor in $\Mod_{\Mod_\bz}(\mathcal{P}r^L)$.

b) We may view $a$ as a $\Mod_\bz$ enriched functor $Ba:BA\to\mathscr{M}$ where $BA$ is the category with a single object which has endomorphism algebra $A$. Since $\Mod_A$ is freely generated under colimits by $A$ the functor $Ba$ extends uniquely to a $\Mod_\bz$-linear functor $F:\Mod_A\to\mathscr{M}$. We have 
$$\Fun_{\Mod_\bz}(\Mod_A,\mathscr{M})\simeq \Mod_A\otimes_{\Mod_\bz} \mathscr{M}=\mathscr{M}_A$$ 
since $\Mod_A$ is dualizable with dual $\Mod_{A^{op}}=\Mod_A$. Conversely, any object of $\mathscr{M}_A$ gives rise to such a functor $F$ which we can evaluate on $A\in\Mod_A$ to obtain $a$.

c) Since $\cd(\bz)=(\Mod_\bz)^\wedge_p$ is an idempotent algebra in $\Mod_{\Mod_\bz}(\mathcal{P}r^L)$ \cite{akn23}[Rem. A.8] we have
$$ \mathscr{M}_A= \mathscr{M}\otimes_{\Mod_\bz}\Mod_A\simeq  \mathscr{M}\otimes_{\Mod_\bz}\cd(\bz)\otimes_{\Mod_\bz}\Mod_A\simeq \mathscr{M}\otimes_{\Mod_\bz}\cd(A).$$

d) If $\mathscr{M}$ is compactly assembled it is dualizable in  $\Mod_{\Mod_\bz}(\mathcal{P}r^L)$ \cite{lurieSAG}[Thm. 7.0.7] and therefore
\begin{align*}\mathscr{M}_\mathfrak{A}=&\mathscr{M}\otimes_{\Mod_\bz} \cd(\mathfrak{A}) \simeq \Fun_{\Mod_\bz}(\mathscr{M}^\vee,\varprojlim_{\Delta}\cd(A^\bullet))\\\simeq &\varprojlim_{\Delta}  \Fun_{\Mod_\bz}(\mathscr{M}^\vee,\cd(A^\bullet))
\simeq \varprojlim_{\Delta}\mathscr{M}\otimes_{\Mod_\bz} \cd(A^\bullet)\simeq \varprojlim_{\Delta}\mathscr{M}_{A^\bullet}.
\end{align*}

e) This follows since $\Mod_A\in\CAlg(\Mod_{\Mod_\bz}(\mathcal{P}r^L))$.
\end{proof}

If $A$ is a $\delta$-ring and $\mathfrak{A}=\Spf(A)$ then the following definition makes precise Def. \ref{fgauges}. Denote by $\cdf_\bn\subseteq\cdf$ the full subcategory of $\bn^{op}$-indexed filtrations.

\begin{definition} (Lax prismatic F-gauges with coefficients in a formal $\delta$-stack) Let $\mathfrak{A}$ be a formal $\delta$-stack in the sense of Def. \ref{deltastackdef}. Define
\begin{align*}  \tau_1: &\cdf_\bn(\fil^{\geq\star}_{\mathcal N}F^*\co_{\mathrm{WCart}}) _\mathfrak{A}\xrightarrow{-\otimes_{ \fil^{\geq\star}_{\mathcal N}F^*\co_{\mathrm{WCart}},\Phi}\mathscr{I}^\star\co_{\mathrm{WCart}} } \cdf(\mathscr{I}^\star\co_{\mathrm{WCart}}) _\mathfrak{A} \\
\tau_2: &\cdf(\mathscr{I}^\star\co_{\mathrm{WCart}}) _\mathfrak{A} \xrightarrow{\mathscr{I}^\star\mathscr{E}\mapsto \mathscr{I}^0\mathscr{E}=:\mathscr{E}}
\cd(\mathrm{WCart}) _\mathfrak{A}\\
\varphi^*_{\mathfrak{A}}: &\cd(\mathrm{WCart}) _\mathfrak{A}\to \cd(\mathrm{WCart}) _\mathfrak{A}\\
(F^*,\rho_{dR}^*): &\cd(\mathrm{WCart}) _\mathfrak{A}\to \cd(\mathrm{WCart}) _\mathfrak{A} \times_{\iota^*,\mathcal{D}(\mathrm{WCart}^{\mathrm{HT}}) _\mathfrak{A},\pi^*} \mathcal{D}(\mathfrak{A})
\end{align*}
where the last functor is induced by the commutative square (\ref{wcart}). Define
\begin{align*}\fil\iota^*: &\cdf_\bn(\fil^{\geq\star}_{\mathcal N}F^*\co_{\mathrm{WCart}}) _\mathfrak{A}\to \cdf_\bn(\mathrm{WCart}^{\mathrm{HT}}) _\mathfrak{A},\ \fil^{\geq\star}_{\mathcal N}\mathscr{E}'\mapsto \fil^{\geq\star}_{\mathcal N}\mathscr{E}'/\mathscr{I}\fil^{\geq\star-1}_{\mathcal N}\mathscr{E}'\\
\fil\pi^*: &\cdf_\bn(\mathfrak{A})\to \cdf_\bn(\mathrm{WCart}^{\mathrm{HT}}) _\mathfrak{A},\ \fil^{\geq\star}_{H}D\mapsto \fil^{\geq\star}_{H}D\otimes\co_{\mathrm{WCart}^{\mathrm{HT}}} 
\end{align*}
and
$$\mathscr{G}^{\mathcal N} _\mathfrak{A}:=\cdf_\bn(\fil^{\geq\star}_{\mathcal N}F^*\co_{\mathrm{WCart}}) _\mathfrak{A} \times_{\fil\iota^*,\cdf_\bn(\mathrm{WCart}^{\mathrm{HT}}) _\mathfrak{A},\fil\pi^*} \cdf_\bn(\mathfrak{A})$$ 
$$\fil^{\geq 0}: \mathscr{G}^{\mathcal N} _\mathfrak{A}\to \cd(\mathrm{WCart}) _\mathfrak{A} \times_{\iota^*,\mathcal{D}(\mathrm{WCart}^{\mathrm{HT}}) _\mathfrak{A},\pi^*} \mathcal{D}(\mathfrak{A}).$$
Let $\gsheaf _\mathfrak{A}$ be the lax equalizer of the ordered pair of functors
$$(F^*,\rho_{dR}^*)\,\varphi^*_{\mathfrak{A}}\,\tau_2\,\tau_1\,\mathrm{pr}_1,\  \fil^{\geq 0}:\mathscr{G}^{\mathcal N} _\mathfrak{A}\to \cd(\mathrm{WCart}) _\mathfrak{A} \times_{\iota^*,\mathcal{D}(\mathrm{WCart}^{\mathrm{HT}}) _\mathfrak{A},\pi^*} \mathcal{D}(\mathfrak{A}).$$
Here the lax equalizer $\mathrm{Leq}(F,G)$ of an ordered pair of functors $F,G:\mathscr{C}\to\mathscr{D}$ is the fibre product
$$\begin{CD}\mathrm{Leq}(F,G) @>>> \mathscr{C}\\
@VVV @VV (F,G) V\\
\Fun(\Delta^1,\mathscr{D}) @>(s,t)>> \mathscr{D}\times\mathscr{D}.
\end{CD}
$$
\label{rigorousfgauges}\end{definition} 

\begin{lemma} a)  Let $A^\bullet$ be a cosimplicial $\delta$-ring with flat differentials such that $\Spf(A^\bullet)$ is a groupoid object in the $\infty$-topos of formal stacks and put
\begin{equation} \mathfrak{A} \simeq \varinjlim_{\Delta^{op}}\Spf(A^\bullet).
\notag\end{equation}
Then the natural functor
$$\gsheaf _\mathfrak{A}\to \varprojlim_\Delta\gsheaf_{A^\bullet}=\Tot \gsheaf_{A^\bullet}$$
is an equivalence. In particular, $\gsheaf_A$ satisfies descent for fpqc-covers $A\to A^0$ of $\delta$-rings.

b) In the situation of a) the functor $\gsheaf _\mathfrak{A}\to\gsheaf _{A^0}$ is conservative.

\label{omnibus}\end{lemma}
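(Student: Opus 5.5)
The plan is to reduce everything to descent for the individual building blocks of $\gsheaf_\mathfrak{A}$ as given in Def. \ref{rigorousfgauges}. That category is assembled from the categories
\[
\cdf_\bn(\fil^{\geq\star}_{\mathcal N}F^*\co_{\mathrm{WCart}})_\mathfrak{A},\quad \cdf(\mathscr{I}^\star\co_{\mathrm{WCart}})_\mathfrak{A},\quad \cd(\mathrm{WCart})_\mathfrak{A},\quad \cd(\mathrm{WCart}^{\mathrm{HT}})_\mathfrak{A},\quad \cdf_\bn(\mathfrak{A}),\quad \cd(\mathfrak{A}),
\]
using fibre products, $\Fun(\Delta^1,-)$ and lax equalizers. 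Each of these operations is a finite limit in $\mathcal{P}r^L$ (or in $\mathrm{Cat}_\infty$), so it commutes with the totalization $\varprojlim_\Delta$. Part a) will therefore follow once two things are checked. First, each building block $\mathscr{M}_\mathfrak{A}$ satisfies $\mathscr{M}_\mathfrak{A}\simeq\varprojlim_\Delta\mathscr{M}_{A^\bullet}$. Second, the functors $\tau_1,\tau_2,\varphi^*_\mathfrak{A},(F^*,\rho_{dR}^*),\fil\iota^*,\fil\pi^*,\fil^{\geq 0}$ are compatible with the cosimplicial structure.

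For the first point I would apply Lemma \ref{DAgeneralities} d). This requires each underlying $\mathscr{M}$ to be compactly assembled. Here $\cd(\mathrm{WCart})$, $\cd(\mathrm{WCart}^{\mathrm{HT}})$ and $\cd(\bz_p)$ are compactly generated, $p$-complete categories; for the WCart ones this follows from \cite{bhatt-lurie-22}, since $\mathrm{WCart}$ is covered by $\Spf$ of a flat ring. Module categories over a (filtered) algebra in a compactly generated category are compactly generated. Filtered objects $\Fun(\bn^{op},-)$ and $\Fun(\bz^{op},-)$ of compactly generated categories are compactly generated as well, and so are the $\bn^{op}$-indexed subcategories $\cdf_\bn$, which are generated by shifted filtered generators in degrees $\geq0$. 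Also, $\cdf(\mathscr{I}^\star\co_{\mathrm{WCart}})\simeq\cd(\mathrm{WCart})$.

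For the second point, the functors are all $\Mod_\bz$-linear functors tensored with $\cd(\mathfrak{A})$. The one exception is $\varphi^*_\mathfrak{A}$, which is obtained from $\varphi_\mathfrak{A}$ on $\mathfrak{A}$ and hence compatible with $\varphi_{A^\bullet}$ levelwise because $\varphi_{\mathfrak{A}}$ is induced by the Frobenii of the cosimplicial $\delta$-ring. The fpqc-descent statement for $\gsheaf_A$ is then the special case where $A^\bullet$ is the Čech conerve of $A\to A^0$ and $\mathfrak{A}=\Spf(A)$. The identification $\cd(\Spf A)\simeq\varprojlim_\Delta\cd(A^\bullet)$ used there is faithfully flat descent, \cite{lurieSAG}[Cor. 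D.6.3.3].

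I expect the main obstacle to be the compact-assembledness (dualizability) input needed to commute $-\otimes_{\Mod_\bz}\cd(\mathfrak{A})$ with the limit. Some care is also needed because the lax equalizer involves a fibre product whose legs are not themselves of the form $\mathscr{M}\otimes\cd(\mathfrak{A})$ for a single $\mathscr{M}$. I would handle this by first establishing descent for each vertex category separately and then invoking that limits commute with limits.

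For b), I would compose the equivalence from a) with the projection to the $0$-th term. A morphism in $\varprojlim_\Delta\gsheaf_{A^\bullet}$ is an equivalence iff it is so in each $\gsheaf_{A^m}$. Each $A^0\to A^m$ comes from the cosimplicial structure, and the $m$-th component is the base change along it of the $0$-th component, by the cartesian nature of the limit. Hence an equivalence on $A^0$ gives equivalences at every level, and the functor $\gsheaf_\mathfrak{A}\to\gsheaf_{A^0}$ is conservative.
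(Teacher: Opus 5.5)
Your proof is the paper's proof. You commute $\varprojlim_\Delta$ past the fibre products, $\Fun(\Delta^1,-)$ and the lax equalizer. You then reduce to $\mathscr{M}_\mathfrak{A}\simeq\varprojlim_\Delta\mathscr{M}_{A^\bullet}$ for the vertex categories via Lemma \ref{DAgeneralities} d) and compact generation. Part b) is the general fact about totalizations: a morphism in a cartesian section is an equivalence if its $0$-th component is.

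The one step whose justification does not hold up is compact generation of $\cd(\mathrm{WCart}^{\mathrm{HT}})$ and $\cd(\mathrm{WCart})$. A flat affine cover does not by itself make the quasi-coherent derived category of a stack compactly generated; this fails, e.g., for $B\bg_a$ over $\bF_p$. The paper proves it directly in Lemma \ref{cg}:
\begin{enumerate}
\item[(i)] By \cite{bhatt-lurie-22}[Cor. 3.5.13], global sections on $\mathrm{WCart}^{\mathrm{HT}}$ commute with colimits. Hence the objects $\co_{\mathrm{WCart}^{\mathrm{HT}}}\{n\}[k]/p$ are compact.
\item[(ii)] These objects generate, by \cite{bhatt-lurie-22}[Prop. 3.5.15] together with $p$-completeness.
\item[(iii)] One passes to $\mathrm{WCart}$ via $\iota_*$. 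Its right adjoint $\iota^!\simeq\iota^*\mathscr{I}^{-1}(-)[-1]$ preserves colimits, so the images are compact. They generate because $\iota^*\mathcal{E}=0$ forces $\mathcal{E}=0$ by $\mathscr{I}$-completeness.
\end{enumerate}
With this input in place, the rest of your argument goes through as written.
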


\begin{proof} a) The limit $\varprojlim_\Delta$ commutes with the fibre products in the definition of $\gsheaf _\mathfrak{A}$ and with the functor category $\Fun(\Delta^1,\mathscr{C})$ since the latter is a limit over a constant diagram with value $\mathscr{C}$ indexed by $\Delta^1$ \cite{lurieHTT}[Cor. 3.3.3.2]. We are therefore reduced to show the equivalence
$$  \mathscr{M}_{\mathfrak{A}} \simeq  \varprojlim_{\Delta} \mathscr{M}_{A^\bullet}$$
for $\mathscr{M}$ any of the following categories:
\begin{align}&\cd(\mathrm{WCart}), \mathcal{D}(\mathrm{WCart}^{\mathrm{HT}}), \mathcal{D}(\bz_p),\label{list}\\ 
&\cdf_\bn(\fil^{\geq\star}_{\mathcal N}F^*\co_{\mathrm{WCart}}),\cdf_\bn(\mathrm{WCart}^{\mathrm{HT}}), \cdf_\bn(\bz_p)
\notag\end{align}
This follows from  Lemma \ref{cg} and Lemma \ref{DAgeneralities} d), noting that $\cdf_\bn(\mathfrak{X})\simeq\Fun(\bn^{op},\cd(\mathfrak{X}))$.

b) This is a general property of totalizations.
\end{proof}

\begin{lemma}\label{cg} a) The categories
$$\mathcal{D}(\mathrm{WCart}^{\mathrm{HT}}),\ \cd(\mathrm{WCart}),\ \mathcal{D}(\bz_p)$$
are compactly generated.

b) If $\mathscr{C}$ is compactly generated and $I$ is a small category then $\Fun(I,\mathscr{C})$ is compactly generated.

c) If $\mathscr{C}$ is compactly generated and symmetric monoidal with tensor product preserving colimits in both variables, and $A\in\CAlg(\mathscr{C})$ then $\Mod_A(\mathscr{C})$ is compactly generated.
\end{lemma}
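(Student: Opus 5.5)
For a) I would treat the three categories separately. The category $\mathcal{D}(\bz_p)$ of $p$-complete $\bz_p$-modules is compactly generated by the single object $\bz/p$: an object $M$ with $\Hom(\bz/p,M)=0$ satisfies $M/p=0$, hence $M=0$ by $p$-completeness. Moreover $\bz/p$ is compact in $\mathcal{D}(\bz_p)$, since colimits there are $p$-completed colimits and $\Hom(\bz/p,-)$ only sees $M/p$.

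For $\mathrm{WCart}$ and $\mathrm{WCart}^{\mathrm{HT}}$ I would use the explicit descriptions from \cite{bhatt-lurie-22}. The Hodge–Tate stack is $\mathrm{WCart}^{\mathrm{HT}}\simeq B\mathbb{G}_m^\sharp$ (over $\Spf\bz_p$). Quasicoherent sheaves on it are $p$-complete $\bz_p$-modules equipped with a Sen operator $\Theta$ satisfying the nilpotence condition that $\Theta^p-\Theta$ is locally nilpotent on $M/p$ \cite{bhatt-lurie-22}[Thm. 3.5.8]. Generators would be $\co\{i\}/p$ for $i\in\bz$, or equivalently the pullback along $\Spf\bz_p\to B\mathbb{G}_m^\sharp$ of the compact generator $\bz/p$. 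The pushforward $\pi_*$ from the point is conservative and preserves colimits, so its left adjoint (pullback along the cover) sends compact generators to compact generators; this shows compact generation.

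For $\cd(\mathrm{WCart})$ I would argue in the same way. There is the fpqc cover $\rho:\Spf A_{\inf}\to\mathrm{WCart}$, or the cover by the prismatization of the $q$-de Rham or perfect prism. I would also use that $\cd(\mathrm{WCart})$ is a limit of $p$-complete module categories; alternatively I would cite \cite{bhatt-lurie-22}[Prop. 3.5.15] style results, namely that it is generated by the Breuil–Kisin twists $\co_{\mathrm{WCart}}\{n\}/(p,\mathscr{I})$-type objects. Concretely, the objects $\co\{n\}\otimes\mathscr{I}^m/p$ should be compact, because $R\Gamma(\mathrm{WCart},-)$ commutes with colimits on $p$-torsion bounded objects; this uses that $\mathrm{WCart}$ has finite cohomological dimension mod $p$, via the Hodge–Tate filtration reducing to the Sen operator. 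These objects should also detect zero: if all their mapping spectra into $\mathscr{E}$ vanish, then by the $\mathscr{I}$-adic filtration and $p$-completeness the restriction $\iota^*\mathscr{E}=0$ on $\mathrm{WCart}^{\mathrm{HT}}$, hence $\mathscr{E}=0$ by $\mathscr{I}$-completeness.

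\textbf{Main obstacle.} The main obstacle is this compactness statement on $\mathrm{WCart}$, i.e. that global sections commute with filtered colimits after reduction mod $p$. I would deduce it from the Hodge–Tate case, where $R\Gamma(B\mathbb{G}_m^\sharp,-)$ is the fibre of $\Theta$ and hence commutes with colimits, using induction along the $\mathscr{I}$-adic filtration on objects killed by $p$.

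For b), the functor category $\Fun(I,\mathscr{C})$ is generated by the left Kan extensions $j_!(i,c)$ along the inclusions of objects $i\in I$, with $c$ a compact generator of $\mathscr{C}$. These are compact because $\Map(j_!c,F)=\Map(c,F(i))$, and evaluation at $i$ commutes with colimits. They generate because the evaluations are jointly conservative.

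For c), the free-forgetful adjunction gives $\Map_A(A\otimes c,M)=\Map(c,M)$. The forgetful functor preserves colimits because the tensor product preserves colimits in each variable, and it is conservative. Hence the objects $A\otimes c$, with $c$ compact in $\mathscr{C}$, are compact generators of $\Mod_A(\mathscr{C})$.
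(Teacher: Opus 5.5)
\textbf{Verdict: there is a genuine gap in part a), for $\cd(\mathrm{WCart})$.} Parts b) and c) are fine. Your left Kan extensions from the objects of $I$ are a more elementary substitute for the paper's identification $\Fun(I,\mathscr{C})\simeq\mathscr{P}(I^{op})\otimes\mathscr{C}$, and c) is the argument the paper cites. The case $\cd(\bz_p)$ is also fine.

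The problem is that the objects $\co\{n\}\otimes\mathscr{I}^m/p$ are \emph{not} compact. So the ``main obstacle'' you isolate is not merely hard but false. Colimits in $\cd(\mathrm{WCart})$ are $(p,\mathscr{I})$-completed. The situation is the same as for $(p,x)$-complete $\bz_p[[x]]$-modules, where $\bF_p[[x]]$ is not compact but $\bF_p$ is. Here is an explicit counterexample:
\begin{itemize}
\item $\mathrm{colim}_k\mathscr{I}^{-k}=0$ in $\cd(\mathrm{WCart})$, because on every prism $(A,I)$ the $(p,I)$-completion of $A[1/I]$ vanishes.
\item On the other hand $\Map(\co/p,M)\simeq R\Gamma(\mathrm{WCart},M/p)[-1]$, so $H^1$ of $\Map(\co/p,\mathscr{I}^{-k})$ is $H^0(\mathrm{WCart},\mathscr{I}^{-k}/p)$.
\item This group contains the image of $1$, which is nonzero for every $k$. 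Pulled back to $(A_{\mathrm{inf}},(\xi))$ and identified via $\xi^{-k}A_{\mathrm{inf}}/p\simeq A_{\mathrm{inf}}/p$, it becomes $\xi^k$, which is nonzero in the domain $A_{\mathrm{inf}}/p$.
\end{itemize}
Hence $\mathrm{colim}_k\Map(\co/p,\mathscr{I}^{-k})\neq 0=\Map(\co/p,\mathrm{colim}_k\mathscr{I}^{-k})$. Your d\'evissage along the $\mathscr{I}$-adic filtration only handles objects killed by a power of $\mathscr{I}$. Reaching $\co/p$ requires $\lim_m(-/\mathscr{I}^m)$, which does not commute with colimits, and finite cohomological dimension does not help with this.

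The fix, which is the paper's argument, is to use only objects killed by $\mathscr{I}$: the $\iota_*\overline{\co}\{n\}[k]/p$, i.e.\ the ``$\co\{n\}/(p,\mathscr{I})$'' you mention in passing.
\begin{itemize}
\item They are compact because $\iota_*$ has the colimit-preserving right adjoint $\iota^!\mathscr{E}\simeq\iota^*(\mathscr{I}^{-1}\mathscr{E})[-1]$, and $\overline{\co}\{n\}/p$ is compact in $\cd(\mathrm{WCart}^{\mathrm{HT}})$.
\item They generate because vanishing of all maps out of them gives $\iota^!\mathscr{E}=0$, hence $\iota^*\mathscr{E}=0$, hence $\mathscr{E}=0$ by $\mathscr{I}$-completeness.
\end{itemize}

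Your Hodge--Tate paragraph also needs repair. The left adjoint of the pushforward from the point is a functor into $\cd(\bz_p)$, not into $\cd(\mathrm{WCart}^{\mathrm{HT}})$, so it produces nothing useful here. Global sections $\pi_*$ preserve colimits but are not conservative: they kill $\overline{\co}\{n\}$ for $p\nmid n$. So $\pi^*(\bz/p)=\overline{\co}/p$ alone does not generate. What works is the following.
\begin{itemize}
\item Compactness: $\Map(\overline{\co}\{n\}/p,-)\simeq R\Gamma(\mathrm{WCart}^{\mathrm{HT}},-\otimes\overline{\co}\{-n\})/p[-1]$ commutes with colimits, because $R\Gamma$ is the fibre of $\Theta$, as you note yourself.
\item Generation: if all these mapping objects vanish on $M$, then $\Theta^p-\Theta=\prod_{n\in\bF_p}(\Theta-n)$ is invertible on $M/p$. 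Together with local nilpotence this forces $M/p=0$, hence $M=0$. Alternatively, cite \cite{bhatt-lurie-22}[Prop. 3.5.15], as the paper does.
\end{itemize}
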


\begin{proof} a) Put $\overline{\co}:=\co_{\mathrm{WCart}^{\mathrm{HT}}}$. By \cite{bhatt-lurie-22}[Cor. 3.5.13] the global section functor 
$$R\Gamma(\mathrm{WCart}^{\mathrm{HT}},-)\simeq R\Hom_{\mathrm{WCart}^{\mathrm{HT}}}(\overline{\co},-)$$
commutes with colimits when viewed as taking values in $\cd(\bz_p)$. It follows that the functor
$$R\Hom_{\mathrm{WCart}^{\mathrm{HT}}}(\overline{\co}\{n\}/p,-)\simeq R\Hom_{\mathrm{WCart}^{\mathrm{HT}}}(\overline{\co},-\otimes\overline{\co}\{-n\})/p[-1]$$ commutes with colimits when viewed as taking values in $\Mod_\bz$ and with filtered colimits when viewed as taking values in Anima. Hence $\overline{\co}\{n\}/p$ is a compact object of $\mathcal{D}(\mathrm{WCart}^{\mathrm{HT}})$. If
$$R\Hom_{\mathrm{WCart}^{\mathrm{HT}}}(\overline{\co}\{n\}[k]/p,\mathcal{F})\simeq R\Hom_{\mathrm{WCart}^{\mathrm{HT}}}(\overline{\co}\{n\}[k],\mathcal{F})/p[-1]=0$$
then $R\Hom_{\mathrm{WCart}^{\mathrm{HT}}}(\overline{\co}\{n\}[k],\mathcal{F})=0$ by $p$-completeness and therefore $\mathcal{F}=0$ by
\cite{bhatt-lurie-22}[Prop. 3.5.15] (stating that $\mathcal{D}(\mathrm{WCart}^{\mathrm{HT}})$ is generated under shifts and colimits by $\overline{\co}\{n\}$ with $n\in\bz$). It follows that the objects $\overline{\co}\{n\}[k]/p$ are compact generators of $\mathcal{D}(\mathrm{WCart}^{\mathrm{HT}})$. 

The functor $\iota_*:\mathcal{D}(\mathrm{WCart}^{\mathrm{HT}})\to \cd(\mathrm{WCart})$ has right adjoint
$$\iota^!(\mathcal{E})\simeq R\Hom_{\co}(\overline{\co},\mathcal{E})\simeq R\Hom_{\co}((\mathcal{I}\to\co),\mathcal{E})\simeq (\mathcal{E}\to\mathcal{I}^{-1}\mathcal{E})[-1]\simeq \iota^*\mathcal{I}^{-1}\mathcal{E}[-1]$$
which commutes with all (filtered) colimits. Hence the objects $\iota_*\overline{\co}\{n\}[k]/p$ for $k,n\in\bz$ are compact objects of $\cd(\mathrm{WCart})$. They are also generators since if $\iota^!\mathcal{E}=0$ then $\iota^*\mathcal{E}=0$ and hence $\mathcal{E}=0$ by $\mathcal{I}$-completeness. It follows that $\cd(\mathrm{WCart})$ is compactly generated.

Finally, reasoning as above one shows that $\cd(\bz_p)$ is compactly generated by $\bF_p[k]$ for $k\in\bz$ (more generally, $\cd(R)$ is compactly generated by $R/p[k]$ for any animated ring $R$).

b) We have $\Fun(I,\mathscr{C})\simeq \Fun^L(\mathscr{P}(I),\mathscr{C})$ by the universal property of presheaf categories \cite{lurieHTT}[Thm. 5.1.5.6]. By \cite{lurieHTT}[Thm. 5.3.5.12] $\mathscr{P}(I)$ is compactly generated. Moreover $\mathscr{P}(I)$ is dualizable in $\mathcal{P}r^L$ with dual $\mathscr{P}(I^{op})$. Hence
$\Fun^L(\mathscr{P}(I),\mathscr{C})\simeq \mathscr{P}(I^{op})\otimes\mathscr{C}$ is  compactly generated by \cite{lurieHA}[Lemma 5.3.2.11].

c) This follows as in \cite{lurieHA}[Lemma 5.3.2.12 (3)].
\end{proof}

\begin{prop} There exist symmetric monoidal structures on $\mathscr{G}^{\mathcal N} _\mathfrak{A}$ and $\gsheaf_{\mathfrak{A}}$ such that the natural functors
$$\gsheaf_{\mathfrak{A}}\to\mathscr{G}^{\mathcal N} _\mathfrak{A}\to\cdf_\bn(\fil^{\geq\star}_{\mathcal N}F^*\co_{\mathrm{WCart}}) _\mathfrak{A} \times \cdf_\bn(\mathfrak{A})$$ 
are symmetric monoidal.
\end{prop}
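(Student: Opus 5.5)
The plan is to transport symmetric monoidal structures through the constructions of Def. \ref{rigorousfgauges}, one piece at a time. Everything happens inside $\Mod_{\Mod_\bz}(\mathcal{P}r^L)$, where limits of symmetric monoidal categories along symmetric monoidal functors are again symmetric monoidal, with the projections symmetric monoidal.

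\textbf{The constituent categories.} First I equip each piece with a symmetric monoidal structure in $\CAlg(\Mod_{\Mod_\bz}(\mathcal{P}r^L))$:
\begin{itemize}
\item[a)] $\cdf_\bn(\fil^{\geq\star}_{\mathcal N}F^*\co_{\mathrm{WCart}})$ carries the relative tensor product over the commutative algebra $\fil^{\geq\star}_{\mathcal N}F^*\co_{\mathrm{WCart}}$ in $\cdf(\mathrm{WCart})$ with Day convolution. The $\bn^{op}$-indexed subcategory is closed under this tensor product and contains the unit.
\item[b)] $\cdf_\bn(\mathrm{WCart}^{\mathrm{HT}})$ and $\cdf_\bn(\mathfrak{A})$ carry Day convolution.
\item[c)] $\cd(\mathrm{WCart})$, $\cd(\mathrm{WCart}^{\mathrm{HT}})$ and $\cd(\mathfrak{A})$ carry the usual tensor products.
\end{itemize}
Tensoring with $\cd(\mathfrak{A})$ preserves commutative algebras. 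This is Lemma \ref{DAgeneralities} e), combined with the descent description of Lemma \ref{DAgeneralities} d) and Lemma \ref{cg}. Hence each $(-)_\mathfrak{A}$ is again symmetric monoidal.

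\textbf{The functors.} Next I check that every functor appearing in the definition is symmetric monoidal:
\begin{itemize}
\item[a)] $\fil\pi^*$, being a pullback.
\item[b)] $\fil\iota^*$: it is base change along the map of filtered algebras $\fil^{\geq\star}_{\mathcal N}F^*\co\to\fil^{\geq\star}_{\mathcal N}F^*\co/\mathscr{I}\fil^{\geq\star-1}_{\mathcal N}F^*\co\simeq\co_{\mathrm{WCart}^{\mathrm{HT}}}\otimes F^{\geq\star}_{\mathrm{triv}}$. Identifying this quotient as a commutative algebra with $\pi^*$ of the unit is where Lemma \ref{filtration} a) enters for the unit object.
\item[c)] $\tau_1$, which is base change along $\Phi$.
\item[d)] $\tau_2$, which passes to weight $0$ under the symmetric monoidal equivalence analogous to (\ref{modequivalence}).
\item[e)] $\varphi_{\mathfrak{A}}^*$, $F^*$, $\rho_{dR}^*$ and $\iota^*$, all of which are pullbacks.
\item[f)] $\fil^{\geq 0}$, which is evaluation at $0$ of $\bn^{op}$-indexed filtrations. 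It is symmetric monoidal because, for $\bn^{op}$-indexed objects, Day convolution in degree $0$ is the colimit over $a+b\geq 0$, which is cofinally $F^{\geq 0}\otimes F^{\geq 0}$.
\end{itemize}
The fibre product $\cd(\mathrm{WCart})_\mathfrak{A}\times\cd(\mathfrak{A})$ then inherits a structure, and $\mathscr{G}^{\mathcal N}_\mathfrak{A}$ inherits one as a pullback in $\CAlg(\mathcal{P}r^L)$. With these structures the projections are symmetric monoidal.

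\textbf{The lax equalizer (main obstacle).} Here I expect the real difficulty. $\Fun(\Delta^1,\mathscr{D})$ carries the pointwise symmetric monoidal structure, and both $s$ and $t$ are symmetric monoidal for it. So $\mathrm{Leq}(F,G)$ is a pullback of symmetric monoidal categories along symmetric monoidal functors and inherits a structure, provided $(F,G)$ is symmetric monoidal. The subtle point is $F=(F^*,\rho_{dR}^*)\varphi^*_\mathfrak{A}\tau_2\tau_1\mathrm{pr}_1$. Here $\varphi^*_\mathfrak{A}$ is semilinear along the Frobenius of $\mathfrak{A}$, so I need to phrase it as an honest symmetric monoidal functor of $\cd(\mathfrak{A})$-linear categories after restricting scalars along $\varphi_\mathfrak{A}$. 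The plan is to check that the relevant restriction of scalars is compatible with the algebra structures. Given that, composing the symmetric monoidal functors gives the structure on $\gsheaf_\mathfrak{A}$ with $\gsheaf_\mathfrak{A}\to\mathscr{G}^{\mathcal N}_\mathfrak{A}$ symmetric monoidal.
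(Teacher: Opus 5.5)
Your proposal is correct and follows essentially the same route as the paper. Both put symmetric monoidal structures on each $\mathscr{M}_\mathfrak{A}$, treat $\fil\iota^*$ and $\tau_1$ as scalar extensions, use that the pullback functors are symmetric monoidal, show $\fil^{\geq 0}$ is monoidal on $\bn^{op}$-indexed filtrations, and then pass to the fibre products and the lax equalizer at the level of $\infty$-operads.

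The step you single out as the main obstacle is not actually one. The statement only asserts a plain symmetric monoidal structure, not a $\cd(\mathfrak{A})$-linear one, and $\varphi^*_\mathfrak{A}=\id\otimes\varphi^*_\mathfrak{A}$ is symmetric monoidal simply as pullback along the Frobenius of $\mathfrak{A}$. That is exactly how the paper treats it, so no restriction-of-scalars check is needed.
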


\begin{proof} We note that all categories $\mathscr{M}$ in (\ref{list}) are symmetric monoidal, as is $\cd(\mathfrak{A})$, hence we obtain symmetric monoidal structures on all $\mathscr{M}_{\mathfrak{A}}$. The functor $\fil\iota^*$ is symmetric monoidal, being equivalent to scalar extension along the map of filtered, quasi-coherent algebras
$\fil^{\geq\star}_{\mathcal N}F^*\co_{\mathrm{WCart}}\to\iota_*\co_{\mathrm{WCart}^{\mathrm{HT}}}$ where the target is equipped with the trivial filtration. Similarly, $\fil\pi^*$ is symmetric monoidal, hence so is $\mathscr{G}^{\mathcal N} _\mathfrak{A}$.

The functors $F^*$, $\rho_{dR}^*$ and $\varphi^*_{\mathfrak{A}}$ are symmetric monoidal, being pullback functors associated to morphisms of stacks, $\tau_2$ is a symmetric monoidal equivalence and $\tau_1$ is again scalar extension along a map of filtered algebras, hence symmetric monoidal. Finally $\fil^{\geq 0}$ is symmetric monoidal since we restrict to $\bn^{op}$-indexed filtrations. It follows that $\gsheaf _\mathfrak{A}$ is symmetric monoidal. More concretely, one replaces all categories $\mathscr{M}$ in Def. \ref{rigorousfgauges} by their corresponding $\infty$-operads $\mathscr{M}^{\otimes}$ in order to define the symmetric monoidal structure.
\end{proof}

\end{document}